
\documentclass[final]{daj}


\usepackage[notcite,notref]{showkeys}  

\usepackage{amsmath,amsfonts,amssymb,amsthm,mathrsfs,lipsum,amsfonts}
\usepackage[english,algoruled,lined,noresetcount,norelsize]{
algorithm2e}

\usepackage{tikz}
\usetikzlibrary{decorations.pathmorphing}
\tikzset{snake it/.style={decorate, decoration={snake,segment length=1.5mm,amplitude=0.5mm}}}
\usetikzlibrary{patterns}

\usepackage{verbatim,textcase}

\usepackage{tabto}
\usepackage{calc}

\usepackage{enumitem}


\newcommand{\cA}{\mathcal{A}}

\def\cH{\mathcal{H}}

\def\cK{\mathcal{K}}
\def\cI{\mathcal{I}}
\def\cJ{\mathcal{J}}

\def\hist{\mathscr{H}}


\setitemize{leftmargin=\parindent,itemsep=2pt,parsep=0pt,topsep=2pt}
\setenumerate{leftmargin=*,itemsep=2pt,parsep=0pt,topsep=2pt}
\setdescription{style=sameline,leftmargin=.7cm,itemsep=2pt,parsep=0pt,topsep=2pt}

 
\def\itm#1{\rm ({#1})} 
\def\itmit#1{\itm{\it #1\,}} 
\def\rom{\itmit{\roman{*}}} 
\def\abc{\itmit{\alph{*}}}

\def\itmarab#1{\mbox{\itm{{\it #1\,}\arabic{*}\hspace{.05em}}}}
\def\itmarabp#1{\mbox{\itm{{\it #1\,}\arabic{*}'\hspace{.05em}}}}


\newcommand{\By}[2]{\overset{\mbox{\tiny{#1}}}{#2}} 
\newcommand{\ByRef}[2]{   \By{\eqref{#1}}{#2} }

\newcommand{\geBy}[1]{    \By{#1}{\ge} }

\newcommand{\leByRef}[1]{ \ByRef{#1}{\le} } 
\newcommand{\geByRef}[1]{ \ByRef{#1}{\ge} }

\newcommand{\NATS}{\mathbb{N}} 
\newcommand{\cF}{\mathcal{F}}

\renewcommand{\Pr}{\mathbb{P}}
\newcommand{\ev}{\mathcal{E}}

\DeclareMathOperator{\degen}{degen}

\DeclareMathOperator{\bw}{bw}
\DeclareMathOperator{\mindeg}{mindeg}

\newcommand{\Exp}{\mathbb{E}}
\newcommand{\im}{\mathrm{Im}}
\newcommand{\dom}{\mathrm{Dom}}
\newcommand{\RGend}{\mathrm{RGA end}}
\newcommand{\Vmain}{V^{\mathrm{main}}}
\newcommand{\Cmain}{C^{\mathrm{main}}}
\newcommand{\Amain}{A^{\mathrm{main}}}
\newcommand{\Umain}{U^{\mathrm{main}}}
\newcommand{\Xmain}{X^{\mathrm{main}}}
\newcommand{\Vbuf}{V^{\mathrm{buf}}}
\newcommand{\Cbuf}{C^{\mathrm{buf}}}
\newcommand{\Abuf}{A^{\mathrm{buf}}}
\newcommand{\Ubuf}{U^{\mathrm{buf}}}
\newcommand{\Xbuf}{X^{\mathrm{buf}}}

\newcommand{\Vq}{V^{\mathrm{q}}}
\newcommand{\Cq}{C^{\mathrm{q}}}
\newcommand{\Aq}{A^{\mathrm{q}}}
\newcommand{\Uq}{U^{\mathrm{q}}}
\newcommand{\Xq}{X^{\mathrm{q}}}
\newcommand{\Vc}{V^{\mathrm{c}}}
\newcommand{\Cc}{C^{\mathrm{c}}}
\newcommand{\Ac}{A^{\mathrm{c}}}
\newcommand{\Uc}{U^{\mathrm{c}}}
\newcommand{\Xc}{X^{\mathrm{c}}}

\newcommand{\Rbl}{R_{\mathrm{BL}}}
\newcommand{\Rpbl}{R'_{\mathrm{BL}}}
\newcommand{\cXbl}{\mathcal{X}^{\mathrm{BL}}}
\newcommand{\cVbl}{\mathcal{V}^{\mathrm{BL}}}
\newcommand{\Xbl}{X^{\mathrm{BL}}}
\newcommand{\Vbl}{V^{\mathrm{BL}}}
\newcommand{\rbl}{r_{\mathrm{BL}}}
\newcommand{\ronebl}{r_1^{\mathrm{BL}}}
\newcommand{\cIbl}{\mathcal{I}^{\mathrm{BL}}}
\newcommand{\Ibl}{I^{\mathrm{BL}}}
\newcommand{\tcXbl}{\mathcal{\tilde{X}}^{\mathrm{BL}}}
\newcommand{\tXbl}{\tilde{X}^{\mathrm{BL}}}
\newcommand{\alphabl}{\alpha^{\mathrm{BL}}}
\newcommand{\kappabl}{\kappa^{\mathrm{BL}}}
\newcommand{\epsbl}{\eps^{\mathrm{BL}}}
\newcommand{\rhobl}{\rho^{\mathrm{BL}}}
\newcommand{\zetabl}{\zeta^{\mathrm{BL}}}
\newcommand{\DeltaRpbl}{\DeltaRp^{\mathrm{BL}}}
\newcommand{\taubl}{\tau^{\mathrm{BL}}}

\newcommand{\DeltaRp}{\Delta_{R'}}
\newcommand{\comN}{N^*}
\newcommand{\psiRGA}{\psi_{\mathrm{RGA}}}
\newcommand{\psiq}{\psi_{\mathrm{q}}}
\newcommand{\psigood}{\psi_{\mathrm{good}}}

\newcommand{\pitau}{\pi^{\tau}}
\newcommand{\Int}{\mathrm{Int}}
\newcommand{\dist}{\mathrm{dist}}

\newcommand{\RI}{\mathrm{RI}}
\newcommand{\NS}{\mathrm{NS}}
\newcommand{\LNS}{\mathrm{LNS}}
\newcommand{\AG}{\mathrm{CG}}
\newcommand{\CON}{\mathrm{CON}}
\newcommand{\LCON}{\mathrm{LCON}}

\renewcommand{\subset}{\subseteq}
\renewcommand{\supset}{\supseteq}
\def\subsc#1{\textsc{\MakeTextLowercase{#1}}} 
\def\sublem#1{\subsc{L\tiny{\ref{#1}}}}

\newcommand{\eps}{\varepsilon}
\renewcommand{\rho}{\varrho}
\renewcommand{\phi}{\varphi}

\newcommand{\dcup}{\mathbin{\dot\cup}}
\newcommand{\tZ}{\tilde{Z}}

\newcommand{\cE}{\mathcal{E}}
\newcommand{\tX}{\tilde X}

\newcommand{\hatU}{\hat{U}}
\newcommand{\hatA}{\hat{A}^{\mathrm{main}}}

\newcommand{\cV}{{\mathcal{V}}}
\newcommand{\cX}{{\mathcal{X}}}
\newcommand{\tcX}{\tilde{\mathcal{X}}}


\newtheorem{theorem}{Theorem}[chapter]
\newtheorem{lemma}[theorem]{Lemma}
\newtheorem{claim}[theorem]{Claim}
\newtheorem{proposition}[theorem]{Proposition}
\newtheorem{corollary}[theorem]{Corollary}
\newtheorem{fact}[theorem]{Fact}

\newtheorem*{oneRI}{Lemma~\ref{lem:oneRI}}
\newtheorem*{twoRI}{Lemma~\ref{lem:twoRI}}

\theoremstyle{definition}
\newtheorem{definition}[theorem]{Definition}

\newtheorem{problem}[theorem]{Problem}

\theoremstyle{remark}
\newtheorem{remark}[theorem]{Remark}

\newcommand{\oldqed}{}
\def\endofClaim{\hfill\scalebox{.6}{$\Box$}}
\newenvironment{claimproof}[1][Proof]{
  \renewcommand{\oldqed}{\qedsymbol}
  \renewcommand{\qedsymbol}{\endofClaim}
  \begin{proof}[#1]
}{
  \end{proof}
  \renewcommand{\qedsymbol}{\oldqed}
} 


\dajAUTHORdetails{%
  title = {Blow-up Lemmas for Sparse Graphs}, 
  author = {Peter Allen, Julia B\"ottcher, Hi\d{\^{e}}p H\`an, Yoshiharu
    Kohayakawa, and Yury Person},
  plaintextauthor = {Peter Allen, Julia Boettcher, Hiep Han, Yoshiharu
    Kohayakawa, and Yury Person},
    %
    %
  plaintexttitle = {Blow-up Lemmas for Sparse Graphs}, 
    %
  runningtitle = {Blow-up Lemmas for Sparse Graphs}, 
    %
  runningauthor = {P.~Allen, J.~B\"ottcher, H.~H\`an, Y.~Kohayakawa, and Y.~Person},
    %
  copyrightauthor = {P.~Allen, J.~B\"ottcher, H.~H\`an, Y.~Kohayakawa, and Y.~Person},
   %
  keywords = {regularity method, extremal problems in random and pseudorandom graphs}
}   


\dajEDITORdetails{%
   year={2025},
   number={8},
   received={21 March 2019},   
   published={28 August 2025},  
   doi={10.19086/da.143410},       
}



\begin{document}



\begin{frontmatter}[classification=text]

  \title{Blow-up Lemmas for Sparse Graphs\thanks{The cooperation of
      the authors was supported by a joint CAPES-DAAD PROBRAL project
      (Proj.\ no.~430/15, 57143515).  The authors are grateful to
      NUMEC/USP, N\'ucleo de Modelagem Estoc\'astica e Complexidade of
      the University of S\~ao Paulo, for supporting this research.
      This research was partially supported by CAPES (Finance Code
      001).  } } 

\author[pa]{Peter Allen}
\author[jb]{Julia B\"ottcher}
\author[hiep]{Hi\d{\^{e}}p H\`an \thanks{Supported by the FONDECYT Iniciación grant 11150913, ANID Regular grant 1231599, by Millenium Nucleus Information and Coordination in Networks and by ANID Basal Grant CMM FB210005.}
}
\author[yk]{Yoshiharu Kohayakawa \thanks{Supported by FAPESP (2013/03447-6) and CNPq
    (406248/2021-4, 407970/2023-1, 315258/2023-3).}
}
\author[yp]{Yury Person \thanks{Supported by DFG grant PE 2299/1-1.}
}

\begin{dedication}
  Dedicated to Vojt\v ech R\" odl, on the occasion of his 75th birthday.
\end{dedication}

\begin{abstract}
  The blow-up lemma states that a system of super-regular pairs contains all
  bounded degree spanning graphs as subgraphs that embed into a
  corresponding system of complete pairs. This lemma has far-reaching
  applications in extremal combinatorics.

  We prove sparse analogues of the blow-up lemma for subgraphs of random
  and of pseudorandom graphs. Our main results are the following three
  sparse versions of the blow-up lemma: one for embedding spanning graphs
  with maximum degree $\Delta$ in subgraphs of $G(n,p)$ with $p=C(\log
  n/n)^{1/\Delta}$; one for embedding spanning graphs with maximum degree
  $\Delta$ and degeneracy~$D$ in subgraphs of $G(n,p)$ with
  $p=C\big(\log n/n\big)^{1/(2D+1)}$; and one for embedding spanning
  graphs with maximum degree $\Delta$ in $(p,cp^{\max(4,(3\Delta+1)/2)}n)$-bijumbled
  graphs.

  We also consider various applications of these lemmas.
\end{abstract}
\end{frontmatter}

\tableofcontents

\chapter{Introduction, applications, results and proof overview}
\section{Introduction}\label{sec:intro}

Szemer\'edi's \emph{regularity lemma}~\cite{Szemeredi_RL}, originally developed
for the proof of Szemer\'edi's celebrated result on arithmetic
progressions~\cite{Szemeredi_AP}, is one of the most influential tools in
modern Discrete Mathematics. Numerous variants of this lemma, which is an
approximate structure theorem for graphs, have been established for
applications in other areas of mathematics, such as additive number theory,
information theory, and statistical mechanics~\cite{BCO16,COPS15}.

Applications of the regularity lemma include a wealth of results in such
diverse areas as extremal combinatorics, Ramsey theory, property testing,
or discrete geometry.  In such applications the regularity lemma is usually
complemented by the \emph{counting lemma} or the \emph{blow-up lemma}.  The
former allows one to deduce estimates on small substructure counts from the
(finitely sized) approximate structure provided by the regularity lemma.
The latter, on the other hand, is powerful for establishing global
structural properties. More precisely, the blow-up lemma, proved by
Koml\'os, S\'ark\"ozy and Szemer\'edi~\cite{KSS_bl}, permits the embedding
of certain bounded degree spanning graphs.  Alternative proofs can be found
in~\cite{KSS_blalg,RR99,RodRucTar}; for a nice introduction to the blow-up
lemma and explanations about how it is used in applications see the
surveys~\cite{Komlos_survey,KomShoSimSze}.

\smallskip

One limitation of the original regularity lemma is that, because of the error terms, this lemma is suitable only for dense graphs, that is
$n$-vertex graphs with $\Omega(n^2)$ edges. Nonetheless it is desirable to
have equally effective tools at hand for sparse graphs.  The most prominent
example of why such sparse structures are of importance is without doubt
the famous Green-Tao Theorem~\cite{GreenTao} on arithmetic progressions in
the primes, which uses an approximate structure theorem for certain sparse
subsets of the integers.  Moreover, the modern branch of extremal
combinatorics concerned with resilience results (a term coined by Sudakov
and Vu~\cite{SudVu}), which recently received much interest, investigates
such sparse graphs.

Analogues of the regularity lemma that also work in a sparse setting, that
is, for $n$-vertex graphs with $o(n^2)$ edges, do
exist~\cite{Kohayakawa97Szemeredi,Scott}. However, a corresponding counting
lemma simply fails to be true in general (see, e.g., \cite{KLRfour}). This
impediment can be overcome by posing additional restrictions on the graphs
under consideration. The existing counterexamples are known not to occur
in random or certain pseudorandom graphs, and it was a major breakthrough
when recently counting lemmas could finally be established in these
settings: Counting lemmas for subgraphs of random graphs were proved
in~\cite{BalMorSam,ConGowSamSch,SaxTho}, and for subgraphs of pseudorandom
graphs in~\cite{CFZ}.

What was so far missing in this effort to transfer these tools to the
sparse setting was a sparse version of the blow-up lemma. An important step
in this direction was taken in~\cite{ChvRand}, where an embedding lemma for
bounded degree graphs on $cn$ vertices for some very small constant~$c$ in
$n$-vertex subgraphs of random graphs was established. In~\cite{BKT}
it was then shown that methods developed in~\cite{millenium} can be used to
prove a blow-up type result for embedding almost spanning bipartite graphs.
Moreover, in~\cite{BalLeeSam} a sparse embedding lemma for the special case
of spanning triangle factors was proved. Analogues of these partial results
for pseudorandom graphs are not known. But in~\cite{KriSudSza} the
importance of a generalisation of the blow-up lemma to pseudorandom
graphs was acknowledged.

In this paper we provide this missing piece and establish several sparse versions of the
blow-up lemma for random and for pseudorandom graphs. We also discuss a
variety of relatively straightforward applications of these lemmas and
indicate more intricate applications, which will appear elsewhere.

\subsection*{Organisation}

We first motivate our blow-up lemmas in Section~\ref{sec:appl} by
collecting various applications of these lemmas, some of which will be
proved in Chapter~\ref{chap:appproofs} and some of which will be proved in future papers. In Section~\ref{sec:results} we
then provide our blow-up lemmas together with the necessary notation. We also state \emph{regularity inheritance lemmas} which are necessary in applications.
In Section~\ref{sec:proof_overview} we provide an outline of the proofs of the blow-up
lemmas. In Chapters~\ref{chap:toolsetc}--\ref{chap:degen} we give the proofs of our blow-up lemmas. We will describe
the purposes of these various chapters in more detail in the proof outline (Section~\ref{sec:proof_overview}).
We give the proofs of our applications in Chapter~\ref{chap:appproofs}, and finish off with some concluding remarks in Chapter~\ref{chap:concl}.

\subsection*{Notational remarks}

We will routinely omit floor and ceiling signs when they do not
affect the argument. All logarithms are taken to base $2$.

\section{Applications}
\label{sec:appl}

In this section we collect a number of applications of our main results,
establishing new structural properties of random and of pseudorandom
graphs, and improving on a variety of earlier work in this area. We also
present applications in Ramsey theory and the theory of positional games.
We defer proofs of all these results to Chapter~\ref{chap:appproofs}.

Before we provide the results, let us introduce the models of random and
pseudorandom graphs that we use in this paper. The random graph model we
work with is the binomial model $G(n,p)$, where each potential edge is
included in a graph with~$n$ vertices independently with probability
$p=p(n)$, and whose study was pioneered in an influential sequence of
papers by Erd\H{o}s and R\'enyi\footnote{In fact Erd\H{o}s and R\'enyi studied the
  related model $G(n,m)$.} (see, e.g., \cite{Bol01book,JLRbook} for the background).
If $G(n,p)$ has some property with probability tending to~$1$
as~$n$ tends to infinity, we say $G(n,p)$ has this property
\emph{asymptotically almost surely}, abbreviated a.a.s.

The study of pseudorandom graphs was initiated by Thomason~\cite{Tho87},
who asked for a set of easy deterministic properties enjoyed by $G(n,p)$
a.a.s.\ which by themselves imply many of the complex
structural properties we know to hold for $G(n,p)$.
The pseudorandomness notion we use is closely related to the notion
suggested by Thomason, and is among the most widely used ones by now (for example,
the sparse counting lemma in~\cite{CFZ} is developed for this notion as well).
We say a graph $\Gamma$ is \emph{$(p,\beta)$-bijumbled} if for all subsets $X$, $Y\subseteq V(\Gamma)$ we have 
\begin{equation}\label{eq:expander_mixing}
\left|e_\Gamma(X,Y)-p|X||Y|\right|\le \beta\sqrt{|X||Y|}\,,
\end{equation}
where~$e_\Gamma(A,B)$ is the number of pairs in $A\times B$ which form edges in~$\Gamma$. 
The random graph $G(n,p)$ is with
high probability $(p,\beta)$-bijumbled with $\beta=O(\sqrt{pn})$, which
justifies this definition.

Another class of pseudorandom graphs we shall refer to in the applications
are $(n,d,\lambda)$-graphs. These have been studied extensively and are a
special case of bijumbled graphs.
For a graph~$\Gamma$ let $\lambda_1\geq\lambda_2\geq\dots\geq
\lambda_n$ be the eigenvalues of the adjacency matrix of~$\Gamma$. We call
$\lambda(\Gamma):=\max\{|\lambda_2|,|\lambda_n|\}$ the \emph{second eigenvalue}
of~$\Gamma$.  An \emph{$(n,d,\lambda)$-graph}~$\Gamma$ is a $d$-regular graph on $n$
vertices with $\lambda(\Gamma)\leq \lambda$.  The connection between
$(n,d,\lambda)$-graphs and bijumbled graphs is provided by the well-known
\emph{expander mixing lemma} (see, e.g., \cite{AS00}), which states that if
$\Gamma$ is an $(n,d,\lambda)$-graph, then
\begin{equation*}
  \left|e_\Gamma(A,B)-\tfrac{d}{n}|A||B|\right|\le \lambda(\Gamma) \sqrt{|A||B|}
\end{equation*}
for all disjoint subsets $A,B\subset V(\Gamma)$. This implies that $\Gamma$
is $\big(\frac dn,\lambda(\Gamma)\big)$-bijumbled.

\medskip

 \subsection{Universal graphs}
 It is interesting to ask when random, or quasirandom graphs are
 $\cH$-universal for a family of graphs~$\cH$, and recently this class of
 questions enjoyed popularity for various families~$\cH$.  We say $G$
 is \emph{$\cH$-universal} if $H\subseteq G$ for each $H\in\cH$. Two
 particular classes of interest are $\cH(n,\Delta)$, the $n$-vertex graphs
 with maximum degree $\Delta$, and $\cH(n,d,\Delta)$, the $n$-vertex graphs
 with maximum degree $\Delta$ and degeneracy $d$.
  
  An easy corollary of our blow-up lemma
  for random graphs (Lemma~\ref{lem:rg_image}) is that for $\Delta\ge 2$,
  the random graph $G(n,p)$ is a.a.s.\ $\cH(n,\Delta)$-universal if $p\ge
  C\big(\tfrac{\log n}{n}\big)^{1/\Delta}$, reproving a result of
  Dellamonica, Kohayakawa, R\"odl and Ruci\'nski~\cite{DKRR} (for
  $\Delta\ge 3$) and Kim and Lee~\cite{KimLee} (for $\Delta=2$). We omit
  this proof here; it follows along similar lines to that of
  Theorem~\ref{thm:degenuniv} below. Very recently Ferber and Nenadov~\cite{FNuniv} were able to improve on these results, showing $p\ge \big(n^{-1}\log^3 n\big)^{\frac{1}{\Delta-1/2}}$ suffices for $\Delta\ge 3$.

  Turning to $d$-degenerate graphs,
  one would expect that $G(n,p)$ is already $\cH(n,d,\Delta)$-universal for
  some rather smaller $p$ if $d$ is much less than $\Delta$. For $d=1$,
  i.e.\ for bounded degree trees, Montgomery (see~\cite{MontTree})
  announced that one may take $p=C(\Delta)\log^2 n/n$, which is optimal up
  to $\log$ factors. For $d\ge 2$, the best previous result, due to Ferber,
  Nenadov and Peter~\cite{FNP} is that we can take
  $p=\omega\big(\Delta^{12}n^{-1/4d}\log^3 n\big)$. We prove the following
  strengthening\footnote{In fact Ferber, Nenadov and Peter
    proved a universality result in terms of a constraint on the `maximum
    average degree'; in the class of graphs with degeneracy $d$ this
    quantity is between $d$ and $2d$.}, using our blow-up lemma for degenerate graphs (Lemma~\ref{lem:degen}).

  \begin{theorem}\label{thm:degenuniv}
    For each $d\ge 2$, $\Delta\in\NATS$ and each $\gamma>0$ there exists $C$
    such that the random graph $G(n,p)$ is a.a.s.
    \begin{enumerate}[label=\abc]
    \item\label{degenuniv:a} $\cH(n,d,\Delta)$-universal if $p\ge C\big(\tfrac{\log
        n}{n}\big)^{1/(2d+1)}$, and
    \item\label{degenuniv:b} $\cH((1-\gamma)n,d,\Delta)$-universal if $p\ge C\big(\tfrac{\log n}{n}\big)^{1/(2d)}$.
    \end{enumerate}
  \end{theorem}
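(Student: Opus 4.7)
The plan is to deduce Theorem~\ref{thm:degenuniv} from the degenerate blow-up lemma (Lemma~\ref{lem:degen}), combined with a sparse regularity lemma and the regularity inheritance lemmas mentioned in Section~\ref{sec:results}. The strategy is to condition on a fine equitable partition of $G(n,p)$ whose reduced graph is almost complete, and then, given any $H$ in the relevant universality class, find a homomorphism of $H$ into that partition meeting the hypotheses of Lemma~\ref{lem:degen}.

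First I would apply a sparse regularity lemma to $G(n,p)$ to obtain a.a.s.\ an equipartition $V(G(n,p)) = V_1 \cup \dots \cup V_k$ into parts of size $n/k$, with $k = k(\eps, \Delta, d, \gamma)$ a large constant, such that all but an $\eps$-fraction of pairs $(V_i, V_j)$ are $(\eps, p)$-regular of density $(1 \pm \eps)p$. The reduced graph $R$ on $[k]$ then has density at least $1 - 2\eps$, and after a cleaning step via the regularity inheritance lemmas one may pass to a spanning subgraph $R' \subseteq R$ on which the restricted partition of $V(G(n,p))$ is super-regular.

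Given any $H \in \cH(n, d, \Delta)$, greedy coloring yields $\chi(H) \le \Delta + 1$, and the Hajnal--Szemer\'edi theorem then supplies an equitable $(\Delta+1)$-coloring of $V(H)$. Since $R'$ is nearly complete on $k$ vertices, it contains suitable covering structures (for instance, a $K_{\Delta+1}$-factor, or more flexibly a Hamilton-connected arrangement of $K_{\Delta+1}$-cliques); using such a structure as a template, I would refine the $(\Delta+1)$-coloring of $V(H)$ into an equitable $k$-coloring whose induced map $\phi \colon V(H) \to V(R')$ is a graph homomorphism with fibers of size exactly $n/k$. Invoking Lemma~\ref{lem:degen} with host graph $G(n,p)$, target $H$, and coloring $\phi$ then produces the embedding required for part~\ref{degenuniv:a}.

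For part~\ref{degenuniv:b}, the $\gamma n$ slack allows me to embed $H$ into only a $(1-\gamma)$-fraction of each $V_i$; this keeps candidate sets for every as-yet-unembedded vertex of $H$ linearly large throughout the degenerate embedding, which relaxes the $p$-requirement coming from Lemma~\ref{lem:degen} by one factor of $p$ (intuitively: we never need the ``last'' extension that forces the $(2d+1)$-th root), yielding the improved exponent $1/(2d)$. The main obstacle is the spanning case~\ref{degenuniv:a}: the equitable coloring of $V(H)$ and the cleaned partition of $V(G(n,p))$ must have fibers matching $n/k$ exactly, whereas the trimming to enforce super-regularity necessarily introduces small imbalances. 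These are typically handled by reserving a small buffer of vertices (either from the regularity lemma's exceptional class or from an absorbing substructure inside $H$) and then redistributing them at the end of the argument; the bookkeeping is delicate but introduces no conceptually new ideas beyond the blow-up lemma itself.
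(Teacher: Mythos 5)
Your broad plan---apply Lemma~\ref{lem:degen} after partitioning both $G(n,p)$ and $H$---is the right one, but you have introduced unnecessary machinery where the paper needs none, and you have skipped the two steps that actually carry the argument.

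On the unnecessary side: since the host graph is $\Gamma=G(n,p)$ itself (not a sparse subgraph of it), there is no need to invoke the sparse regularity lemma, no cleaning step, no exceptional set to redistribute, and no $K_{\Delta+1}$-factor or Hamilton-connected template in a reduced graph. The paper simply fixes an \emph{arbitrary} equipartition of $[n]$ into $8\Delta$ parts, observes that the neighbourhood size property $\NS$ (Lemma~\ref{lem:det_Gnp}\ref{det_gnp:randns}) makes every pair of large subsets automatically $(\eps,\tfrac12,p)$-lower-regular, and takes $R=R'=K_{8\Delta}$; super-regularity and one-/two-sided inheritance hold directly. The ``small imbalances'' and ``absorbing buffer'' you worry about in your last paragraph are phantom problems here.

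On the missing side: you never explain how to satisfy the $(D,p,m)$-bounded order condition~\ref{ord:Dx}, and in particular its requirement that every potential buffer vertex $x\in\tX$ has $\deg(x)\le D$. With $D=2d+1$ this is far below $\Delta$, so you cannot pick buffer vertices arbitrarily. The crucial combinatorial observation, which you omit, is that a $d$-degenerate $n$-vertex graph has at most $dn$ edges and hence at least $n/(2d+1)$ vertices of degree at most $2d$; Lemma~\ref{lem:nicepartition} then equipartitions $V(H)$ into independent sets while simultaneously equipartitioning this low-degree set, giving enough buffer candidates of degree $\le 2d < D$ in every part. For part~\ref{degenuniv:b}, the gain of one power of $p$ is not the soft ``candidate sets stay linearly large'' heuristic you give; it comes concretely from padding each $X_i$ with \emph{isolated} vertices and designating those as the buffer $\tX_i$. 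Then $N(\tX)=\emptyset$, so the stricter clauses of~\ref{ord:Dx} and all of~\ref{ord:NtX} become vacuous, and the degeneracy order is $(2d,p,m)$-bounded rather than $(2d+1,p,m)$-bounded. Without these two ideas there is no way to verify the hypotheses of Lemma~\ref{lem:degen}, so the proposal as written has a genuine gap.
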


The almost spanning universality result with $p\ge C(\frac{\log
  n}{n})^{1/2d}$ should be compared to the recent result of Conlon, Ferber,
Nenadov and {\v{S}}kori{\'c}~\cite{CFNS15}, who showed that $G(n,p)$ is
$\cH((1-\gamma)n,\Delta)$-universal when $p=\omega(n^{-1/(\Delta-1)}\log^5
n)$ for $\Delta\ge3 $. Our result is better when $\Delta\ge 2d+1$. 
Conlon and Nenadov (see~\cite[Theorem~3.6]{nenadov16:_ramsey}) showed that the random graph 
$G(n,p)$ is universal for the class of $d$-degenerate graphs on $(1-\gamma)n$ vertices 
for $p\ge \left(\frac{C\log^2n}{n\log\log n}\right)^{1/d}$ improving part~\ref{degenuniv:b} of our 
Theorem~\ref{thm:degenuniv} to the essentially best possible bound on $p$.

Another well-studied question is how big $e(G)$ must be for a
universal graph $G$. Alon and Capalbo showed that the
correct answer is $\Theta\big(n^{2-2/\Delta}\big)$ in~\cite{AC08} for the
the class $\cH(n,\Delta)$, and that if one
further insists that $v(G)=n$ then the correct answer is still
$O\big(n^{2-2/\Delta}\log^{4/\Delta} n\big)$ in~\cite{AlCap} (where the
extra polylog-factor is believed to be
unnecessary). Theorem~\ref{thm:degenuniv} provides sparser
$\cH(n,d,\Delta)$-universal graphs when $d\le \Delta/4$.
  
Finally, we are able to show that sufficiently pseudorandom graphs are
$\cH(n,\Delta)$-universal. This was first suggested as an important
direction of investigation by Krivelevich, Sudakov
and Szab\'o~\cite{KriSudSza}, who established a corresponding result for
$(n,d,\lambda)$-graphs~$\Gamma$ in the special case that one wants to embed
a spanning triangle factor in~$\Gamma$.
In earlier work~\cite{ABHKP14} we were able to improve on their result, showing
that there is $\eps>0$ such that $(p,\eps p^{5/2}n)$-bijumbled $n$-vertex
graphs $G$ with minimum degree $\tfrac12pn$ actually contain the square of
a Hamilton cycle, which implies the theorem of Krivelevich, Sudakov and
Szab\'o with a better bound on $\lambda$. Observe that some minimum degree
condition is necessary in this result since bijumbled graphs may contain
isolated vertices.

For the class $\cH(n,\Delta)$, to the best of our knowledge the only
existing universality result is that obtained by applying the original
blow-up lemma of Koml\'os, S\'ark\"ozy and Szemer\'edi~\cite{KSS_bl}, which
is possible when $\beta\le n^{-\Omega(\Delta^{-2})}$. Using our sparse
blow-up lemma for pseudorandom graphs (Lemma~\ref{lem:psr_main}) we can at
least achieve the correct power of $\Delta$ in the exponent.
  
  \begin{theorem}\label{thm:pseudouniv}
    For each $\Delta\ge 2$ there exists $c>0$ such that for any $p>0$, if
    \[\beta\le cp^{\max(4,3\Delta/2+1/2)}n\,,\] any $n$-vertex
    $(p,\beta)$-bijumbled graph $G$ with $\delta(G)\ge\tfrac12 pn$ is
    $\cH(n,\Delta)$-universal.
  \end{theorem}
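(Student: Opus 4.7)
The plan is to follow the classical Koml\'os--S\'ark\"ozy--Szemer\'edi template for universality results but with the pseudorandom blow-up lemma Lemma~\ref{lem:psr_main} playing the role of the original blow-up lemma. Given $H\in\cH(n,\Delta)$, I would first apply the Hajnal--Szemer\'edi theorem to obtain an equitable proper $(\Delta+1)$-colouring of $H$, producing a partition $V(H)=W_1\cup\dots\cup W_{\Delta+1}$ into independent sets with $\bigl||W_i|-|W_j|\bigr|\le 1$. This encodes $H$ as a graph whose reduced graph is a subgraph of $R=K_{\Delta+1}$, matching the standard input of any blow-up lemma.

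Next I would take a uniformly random equipartition $V(G)=V_1\cup\dots\cup V_{\Delta+1}$ with $|V_i|=|W_i|$, and verify that with positive probability this partition, together with the reduced graph $R=K_{\Delta+1}$, satisfies the hypotheses of Lemma~\ref{lem:psr_main}. The bijumbledness inequality~\eqref{eq:expander_mixing}, applied to pairs of subsets of $V_i$ and $V_j$ (both of size $\Theta(n/\Delta)$), inherits to give sparse regularity at density $p$ on each pair; the exponent $\max\bigl(4,(3\Delta+1)/2\bigr)$ in the hypothesis $\beta\le cp^{\max(4,(3\Delta+1)/2)}n$ has been chosen precisely so that the bijumbledness of these restricted pairs is strong enough to meet whatever sparse-regularity input Lemma~\ref{lem:psr_main} demands. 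For the one-sided degree condition of super-regularity I would combine $\delta(G)\ge \tfrac12 pn$ with bijumbledness-based concentration: a standard application of~\eqref{eq:expander_mixing} to the set of vertices with atypically few neighbours in some $V_j$ shows that only a tiny exceptional set is missed, while every other vertex $v\in V_i$ has $|N(v)\cap V_j|\ge \tfrac{1}{2}p|V_j|(1-o(1))$. The few exceptional vertices can be absorbed either by random relocation between parts or by using the image-restriction mechanism of the blow-up lemma to forbid them as targets for the vertices of $H$ that need full connectivity.

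Once the system $(V_1,\dots,V_{\Delta+1})$ is super-regular with respect to $R=K_{\Delta+1}$ (at some density $d\ge p/2$, say), I would invoke Lemma~\ref{lem:psr_main} to embed $H$ into $G$ respecting the assignment $W_i\mapsto V_i$, yielding the desired copy of $H$. The main obstacle I expect is the second step: the min-degree hypothesis $\delta(G)\ge \tfrac12pn$ is only half of the ``typical'' value $(1-o(1))pn$, so the super-regularity one obtains is at density $\approx p/2$ rather than $p$, and one must also show that the few vertices with atypical degree distribution across the parts can be fit into the image-restriction framework without violating the sparse regularity inheritance that Lemma~\ref{lem:psr_main} needs. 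Verifying this---and checking that the exponent $\max\bigl(4,(3\Delta+1)/2\bigr)$ is indeed what Lemma~\ref{lem:psr_main} asks for on the reduced graph $K_{\Delta+1}$---is the main technical content.
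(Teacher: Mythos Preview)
Your plan is the same as the paper's: randomly equipartition $V(G)$ into $\Delta+1$ parts, partition $H$ via Hajnal--Szemer\'edi, and apply Lemma~\ref{lem:psr_main} with $R=R'=K_{\Delta+1}$ and no image restrictions; the bijumbledness exponent in the hypothesis is exactly what Lemma~\ref{lem:psr_main} demands.

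Where you go astray is in flagging the super-regularity step as ``the main obstacle'' --- it is in fact trivial. You reach for the bijumbledness inequality~\eqref{eq:expander_mixing} to control degrees into the parts, which indeed would only bound the number of bad vertices; but the right tool is concentration for the random partition itself. For fixed $v$ and $i$, the quantity $\deg(v;V_i)$ is hypergeometric with mean at least $\tfrac14 p|V_i|$ (using only $\delta(G)\ge\tfrac12 pn$), so by Theorem~\ref{thm:chernoff} and a union bound over the $n(\Delta+1)$ pairs $(v,i)$ there is, with positive probability, a partition in which \emph{every} vertex satisfies $\deg(v;V_i)\ge\tfrac18 p|V_i|$. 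There are no exceptional vertices whatsoever; the paper simply takes $d=\tfrac{1}{16}$ in Lemma~\ref{lem:psr_main} (your worry about ``density $\approx p/2$ rather than $p$'' is a misreading of the notation---the constant is absorbed into $d$). With $\Gamma=G$ and every $N_G(v;V_j)$ of size at least $\tfrac18 p|V_j|$, the same bijumbledness computation that gives $(\eps,d,p)$-regularity of $(V_i,V_j)$ also yields both inheritance conditions for every $v$. As a side note, your proposed fallback of image restrictions has the mechanism backwards: in Definition~\ref{def:restrict} they constrain where vertices of $H$ may be embedded, not which vertices of $G$ may serve as images.
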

  
  We note that Alon and Bourgain~\cite{AlonBourgain} observed that results
  of this type have applications in combinatorial number theory. More
  precisely, they showed that the Cayley sum-graphs $G$ of any
  multiplicative subgroup $U$ of a finite field $\mathbb{F}_q$, where
  $V(G)=U$ and $uv\in E(G)$ whenever $u+v\in U$, is an
  $(n,d,\lambda)$-graph with $n=|U|$ and $\lambda=\sqrt{q}$.  Thus the
  above theorem shows that sufficiently large multiplicative subgroups of
  finite fields contain all bounded-degree additive patterns;
  see~\cite{ABHKP14} or~\cite{AlonBourgain} for the corresponding
  definitions and a more detailed discussion.
  
 \subsection{Partition universality}
 It is also possible to apply our blow-up lemmas to derive certain
 Ramsey-type results.  Given a set $\cH$ of graphs, the graph $G$ is
 \emph{$r$-partition universal} for $\cH$ if in any $r$-colouring of the edge-set
 $E(G)$ there is a colour class which is $\cH$-universal. Kohayakawa,
 R\"odl, Schacht and Szemer\'edi~\cite{ChvRand} showed that for each $r$
 and $\Delta$ there exists $C$ such that if $p\ge C\big(\tfrac{\log
   n}{n}\big)^{1/\Delta}$ then $G(n,p)$ is a.a.s.\ $r$-partition universal
 for $\cH(n,\Delta)$. This result is also an easy corollary of our blow-up
 lemma for random graphs. The proof is, again, omitted, and follows along
 the same lines as that of the following result. Moreover, using our
 blow-up lemma for degenerate graphs, we can provide better
 lower bounds for $r$-partition universality for $\cH(n,d,\Delta)$ if~$d$
 is small compared to~$\Delta$.
  
  \begin{theorem}\label{thm:rpartuniv}
    For each $r,d,\Delta\in\NATS$ there exists $C$ so that if $p\ge
    C\big(\tfrac{\log n}{n}\big)^{1/(2d)}$, then a.a.s.\ $G(Cn,p)$ is
    $r$-partition universal for $\cH(n,d,\Delta)$.
  \end{theorem}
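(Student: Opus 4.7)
The plan is to combine a sparse regularity lemma, a multicolour Ramsey argument on the reduced graph, and our blow-up lemma for degenerate graphs (Lemma~\ref{lem:degen}). Set $s := \Delta + 1$, so that every $H \in \cH(n, d, \Delta)$ is $s$-colourable, and let $R := R_r(K_s)$ be the $r$-colour Ramsey number for $K_s$. Choose $\eps, \gamma > 0$ small and $C$ large in terms of $r, d, \Delta$ and the constant produced by the almost-spanning form of Lemma~\ref{lem:degen}; the strategy will parallel that of Kohayakawa, R\"odl, Schacht and Szemer\'edi~\cite{ChvRand} but using our degenerate blow-up lemma in place of the Koml\'os--S\'ark\"ozy--Szemer\'edi one.

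Given any $r$-colouring of $E(G(Cn, p))$, apply a sparse regularity lemma for $G(n,p)$ (for instance, \cite{Kohayakawa97Szemeredi, Scott}) to each colour class simultaneously, producing an equipartition $V_1, \ldots, V_t$ of $V(G(Cn, p))$ with $t \ge R$, such that all but at most an $\eps$-fraction of pairs $(V_i, V_j)$ are $(\eps, p)$-regular simultaneously in each of the $r$ colours. Assign to every such regular pair a colour in which its $p$-normalised density is at least $1/(2r)$ (which exists by pigeonhole). By Ramsey's theorem applied to the resulting near-complete $r$-coloured graph on $[t]$, there exist indices $i_1, \ldots, i_s$ on which all pairs carry the same colour, say red.

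Next, a standard clean-up removes at most a $\gamma$-fraction of vertices from each $V_{i_j}$ to obtain subsets $U_1, \ldots, U_s$ of equal size $m = (1 - o(1))|V_1|$ so that every pair $(U_j, U_k)$ is $(\eps', p)$-super-regular in red with density at least $p/(3r)$. Since $t$ depends only on $r$ and $\eps$, choosing $C$ large enough guarantees $sm \ge (1 + 2\gamma)n$. Given $H \in \cH(n, d, \Delta)$, take any proper $(\Delta+1)$-colouring of $H$ and, using that $\Delta+1 = s$ and $H$ has bounded degree, rebalance so each colour class has size at most $(1+\gamma)n/s \le m$ (for instance via a Hajnal--Szemer\'edi style rearrangement within the slack afforded by large $C$). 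Assigning the colour classes of $H$ to $U_1, \ldots, U_s$ yields a valid homomorphism to $K_s$, and the almost-spanning form of Lemma~\ref{lem:degen}, fed the hypothesis $p \ge C(\log n/n)^{1/(2d)}$, embeds $H$ into the red super-regular $s$-partite subgraph on $U_1 \cup \cdots \cup U_s$.

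The main obstacle I expect is verifying that the output of the sparse regularity lemma, after clean-up, fits the hypotheses of Lemma~\ref{lem:degen}. In particular this requires the \emph{regularity inheritance} properties mentioned in Section~\ref{sec:results}: that for a typical vertex $v \in V(G(Cn,p))$ the neighbourhood $N_\Gamma(v) \cap U_k$ inherits the $(\eps', p)$-regularity of the pair $(U_j, U_k)$ with $N_\Gamma(v) \cap U_j$. These hold a.a.s.\ at the density $p \ge C(\log n/n)^{1/(2d)}$ by the regularity inheritance lemmas stated in Section~\ref{sec:results}, so once they are installed the conclusion follows directly from Lemma~\ref{lem:degen}; the only remaining bookkeeping is to track that the various error parameters ($\eps$, $\eps'$, $\gamma$, densities $p/(3r)$) can be chosen in a consistent hierarchy satisfying the input requirements of Lemma~\ref{lem:degen}.
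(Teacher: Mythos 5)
Your proof follows the same high-level skeleton as the paper's --- sparse coloured regularity lemma, then Ramsey's theorem on the reduced graph to isolate a monochromatic $K_{\Delta+1}$ of regular pairs, then the degenerate blow-up lemma with $D=2d$ (and the ``pad $H$ with isolated vertices to reach the almost-spanning regime'' trick is indeed what makes $D=2d$ rather than $2d+1$ available). The minor difference at the Ramsey step (the paper first extracts a $4^{\Delta+1}$-clique in the ``all colours regular'' reduced graph via Tur\'an, then applies Ramsey inside it; you go directly to Ramsey on the near-complete coloured graph) is cosmetic.

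But there is a genuine gap, and it is exactly the thing you flag as ``the main obstacle I expect.'' You aim to clean up the clusters to obtain \emph{super-regular} pairs and then to verify one- and two-sided \emph{regularity inheritance}, and this is where the argument is left open. In the sparse setting this cleanup is not ``standard'': the super-regularity condition in this paper also has the term $\deg_\Gamma(u;B)/2$, and the inheritance conditions in Lemma~\ref{lem:degen} are ``for each $u\in V_i$'' statements, while Lemmas~\ref{lem:oneRI} and~\ref{lem:twoRI} only guarantee ``for all but a few $u$''. Removing the exceptional vertices threatens to perturb the degrees and the inheritance witnesses of the vertices you keep, so this becomes a delicate bootstrap that you do not carry out.

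The paper sidesteps all of this by making a single structural choice you missed: take $R'$ to be the \emph{empty} graph, and designate the isolated padding vertices as the potential buffer vertices $\tcX$. Isolated vertices vacuously satisfy the requirement that their first and second neighbourhoods ``go along $R'$'', so $\tcX$ is a valid $(\alpha,R')$-buffer. And with $R'$ empty, the hypotheses that $(G,\cV)$ is $(\eps,d,p)$-super-regular on $R'$, has one-sided inheritance on $R'$, and has two-sided inheritance on $R'$ for $\tcX$ are all vacuous. So no cleanup whatsoever is needed: the sparse coloured regularity lemma already produces the needed $(\eps,d,p)$-regular $R$-partition on $R=K_{\Delta+1}$, the Hajnal--Szemer\'edi partition of $H$ plus padding gives a size-compatible $R$-partition with the isolated vertices as buffer (coming last in the degeneracy order), and Lemma~\ref{lem:degen} applies directly. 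If you insist on your route with $R'=K_{\Delta+1}$, you must close the cleanup-and-inheritance gap; if you adopt the empty-$R'$ choice, your proof collapses to the paper's.
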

  
  We remark that a result of Conlon and Nenadov
  (see~\cite[Theorem~6.3]{nenadov16:_ramsey}) improves  
  the bound from~\cite{ChvRand} on $p$ to 
   $p\ge C\big(\tfrac{\log n}{n}\big)^{1/(\Delta-1/2)}$ for the class of triangle-free graphs
   of maximum degree $\Delta$ for $\Delta\ge 5$ and is somewhat complementary to our 
   Theorem~\ref{thm:rpartuniv}.\footnote{For newer developments, which appeared
   after the completion of our work, see~\cite{AlBo,ConNenTruj,DraPet}.}
   
  Kohayakawa, R\"odl, Schacht and Szemer\'edi~\cite{ChvRand} also asked if
  a sufficiently pseudorandom graph is $r$-partition universal for
  $\cH(n,\Delta)$. Using our blow-up lemma for pseudorandom graphs
  (Lemma~\ref{lem:psr_main}) we can answer this in the affirmative.

  \begin{theorem}\label{thm:rpartunivdegen}
    For each $r,\Delta\in\NATS$ there exists $C$ such that if $p>0$ and $G$
    is a graph on $Cn$ vertices which is
    $\big(p,\frac{1}{C}p^{\max(4,(3\Delta+1)/2)}n\big)$-bijumbled, then $G$
    is $r$-partition universal for $\cH(n,\Delta)$.
  \end{theorem}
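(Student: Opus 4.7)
The plan is to apply the sparse regularity method for bijumbled graphs: partition $V(G)$ by a regularity lemma, colour the reduced graph by a majority rule, extract a large monochromatic clique via Ramsey's theorem, and feed the resulting super-regular tuple into the pseudorandom blow-up lemma (Lemma~\ref{lem:psr_main}).

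\textbf{Set-up.} Let $R=R_r(\Delta+1)$ be the classical Ramsey number and choose $C=C(r,\Delta)$ large enough that the various error terms below are absorbed; in particular we will need $C$ to dominate $R$, $1/\eps'$, and the constant $1/c$ appearing in Lemma~\ref{lem:psr_main}. Note that $G$ being $(p,\tfrac1Cp^{\max(4,(3\Delta+1)/2)}n)$-bijumbled on $N=Cn$ vertices translates into $G$ being $(p,\tfrac{1}{C^2}p^{\max(4,(3\Delta+1)/2)}N)$-bijumbled, so bijumbledness with any prescribed constant $c$ on the host $G$ of order $N$ is guaranteed by taking $C$ large.

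\textbf{Regularity and colouring of the reduced graph.} Apply a sparse (bijumbled) regularity lemma to $G$ with parameters depending only on $r$ and $\Delta$; this produces an equipartition $V_1,\dots,V_t$ of $V(G)$ such that all but at most $\eps\binom{t}{2}$ pairs are $(\eps,p)$-regular. For each regular pair we have $e_G(V_i,V_j)\ge \tfrac12 p|V_i||V_j|$ by bijumbledness, so pigeonhole on the $r$-colouring yields a colour $c_{ij}\in[r]$ in which the restricted bipartite graph has density at least $p/(2r)$ and remains $(\eps',p)$-regular for some $\eps'=\eps'(\eps,r)$. Colour the edge $ij$ of the reduced graph with $c_{ij}$, and leave the at most $\eps\binom{t}{2}$ irregular pairs uncoloured.

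\textbf{Ramsey extraction and super-regularisation.} Since almost all edges of the complete reduced graph on $[t]$ are $r$-coloured, taking $t$ large enough allows us to delete a small set of clusters to obtain a complete $r$-coloured subgraph, and then apply Ramsey's theorem to find a monochromatic clique of size $R(\Delta+1)$ — in particular a monochromatic clique on some $\Delta+1$ clusters $V_{i_1},\dots,V_{i_{\Delta+1}}$ in some colour $c^\star$. Discarding from each cluster the $\eps'$-many vertices of atypical one-sided degree in $G_{c^\star}$ to neighbouring clusters yields a $(\Delta+1)$-tuple of clusters which is $(\eps'',p/(2r))$-super-regular in $G_{c^\star}$.

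\textbf{Embedding.} Given any $H\in\cH(n,\Delta)$, properly $(\Delta+1)$-colour $V(H)$; each colour class has size at most $n$, and (by our choice of $C$) at most the size $|V_{i_k}|$ of any cluster. Embed by applying the pseudorandom blow-up lemma (Lemma~\ref{lem:psr_main}) with reduced graph $K_{\Delta+1}$, host $G$ (whose bijumbledness parameter $\tfrac1Cp^{\max(4,(3\Delta+1)/2)}n$ meets the lemma's hypothesis once $C$ is sufficiently large), and the super-regular tuple above (which lies inside $G_{c^\star}\subseteq G$). This produces a copy of $H$ entirely within $G_{c^\star}$, as required.

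\textbf{Main obstacle.} The genuine subtlety is the first step: one needs a sparse regularity lemma which both operates inside a bijumbled host and produces a partition whose regular pairs preserve enough of the bijumbled structure that Lemma~\ref{lem:psr_main} can subsequently be invoked on the monochromatic sub-system — the colour class $G_{c^\star}$ itself is not assumed to be bijumbled, so it is crucial that the blow-up lemma only requires the super-regular tuple to sit inside the bijumbled $G$. Balancing the regularity parameter $\eps$, the constant $c$ in Lemma~\ref{lem:psr_main}, and the Ramsey number $R$ against $C$ is then straightforward.
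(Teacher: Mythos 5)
The paper's proof (sketch, following the proof of Theorem~\ref{thm:rpartuniv}) also runs through the coloured sparse regularity lemma and Ramsey's theorem to extract a monochromatic $K_{\Delta+1}$ of clusters, then invokes Lemma~\ref{lem:psr_main}, so your general strategy is in the right spirit. However, the way you set up the call to Lemma~\ref{lem:psr_main} has a genuine gap.

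You propose to ``super-regularise'' the monochromatic tuple by discarding vertices of atypical one-sided degree. But Lemma~\ref{lem:psr_main} requires considerably more than super-regularity on $R'$: it requires one-sided and two-sided regularity inheritance on $R'$ as well (the pairs $\big(N_\Gamma(v;V_j),V_k\big)$ and $\big(N_\Gamma(v;V_j),N_\Gamma(v;V_k)\big)$ must be $(\eps,d,p)$-regular \emph{in the monochromatic subgraph} for every $v$). Your proposal never establishes these. The regularity inheritance lemmas for bijumbled graphs (Lemmas~\ref{lem:jrione} and~\ref{lem:jritwo}) only give inheritance for \emph{most} vertices, and after discarding the offenders one must recheck that inheritance survives the discarding for the reduced clusters — this does not come for free, and you do not address it. You also gloss over size-compatibility: Lemma~\ref{lem:psr_main} requires $|X_i|=|V_i|$ for each cluster, and you never explain how a colour class of $H$ of size at most $n$ is matched against a cluster of size roughly $Cn/t$.

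The paper sidesteps both problems at once with a cleaner device: it takes $R=K_{\Delta+1}$ but $R'$ to be the \emph{empty} graph. With $R'$ empty, no super-regularity and no inheritance conditions are demanded of $(G,\cV)$ at all — they are vacuous. The price is that the $(\alpha,R')$-buffer condition now forces potential buffer vertices to be isolated in $H$ (their neighbourhoods must go along $R'=\emptyset$). This is handled by padding each colour class of $H$ with isolated vertices until it is size-compatible with the corresponding cluster, and designating the padding vertices as the buffer. Since $G$ has $Cn$ vertices, each cluster has size $\Omega(n)$ with plenty of room, so the isolated vertices comprise a constant fraction $\alpha$ of each $X_i$ when $C$ is large. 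This padding trick simultaneously delivers size-compatibility, a valid buffer, and dispenses entirely with the super-regularisation and inheritance verification that your proposal leaves undone. If you wish to keep $R'=K_{\Delta+1}$ as you implicitly do, you need a separate argument that inheritance survives the cleaning of the clusters, which is a nontrivial addition to the proof.
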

  
  Folkman~\cite{Folkman} showed that for each~$k$ there are $K_{k+1}$-free
  graphs~$G$ which are \emph{Ramsey} for~$K_k$, that is, in any colouring
  of the edges of~$G$ with two colours there is a monochromatic copy of~$K_k$.
  We note that using a proof similar to that of Theorem~\ref{thm:rpartuniv}
  we can show the following
  Folkman-type result (which though can also be proved using the embedding
  lemma from~\cite{ChvRand} instead of our blow-up lemma).

  \begin{theorem}\label{thm:folkmantype}
    For each $r\in\NATS$ and $\Delta\ge 2$, there exists a
    graph which contains no subgraph isomorphic to $K_{2\Delta,2\Delta}$ and is $r$-partition universal for
    $\cH(n,\Delta)$.
  \end{theorem}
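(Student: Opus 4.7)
The plan is to take $G = G(N,p)$ with $N=Cn$ and $p = c\bigl(\tfrac{\log n}{n}\bigr)^{1/\Delta}$ for suitable constants $C$ and $c=c(r,\Delta)$, delete the (small) set of vertices that lie in copies of $K_{2\Delta,2\Delta}$, and then verify that the pruned graph $G'$ remains $r$-partition universal for $\cH(n,\Delta)$. This follows the scheme of the proof of Theorem~\ref{thm:rpartuniv}, with an extra step that excises the forbidden bipartite subgraphs.

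First, I would invoke the $r$-partition universality result for $\cH(n,\Delta)$ of Kohayakawa, R\"odl, Schacht and Szemer\'edi~\cite{ChvRand} (which, as the excerpt already notes, also follows easily from Lemma~\ref{lem:rg_image}): for suitably chosen $C$ and $c$, the graph $G$ is a.a.s.\ $r$-partition universal for $\cH(n,\Delta)$. Second, a first-moment calculation shows that the expected number of copies of $K_{2\Delta,2\Delta}$ in $G$ is of order $N^{4\Delta}p^{4\Delta^2}=\Theta\bigl((\log n)^{4\Delta}\bigr)$, so by Markov's inequality a.a.s.\ $G$ contains at most $(\log n)^{5\Delta}$ such copies. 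Hence the set $V_{\mathrm{bad}}\subseteq V(G)$ of vertices lying in at least one copy of $K_{2\Delta,2\Delta}$ satisfies $|V_{\mathrm{bad}}|\le 4\Delta(\log n)^{5\Delta}=o(n)$, and by construction the graph $G' = G - V_{\mathrm{bad}}$ contains no $K_{2\Delta,2\Delta}$.

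The remaining task is to show that $G'$ still enjoys $r$-partition universality for $\cH(n,\Delta)$. Given any $r$-colouring of $E(G')$, one extends it arbitrarily to $E(G)$; by the $r$-partition universality of $G$ some colour class contains every $H\in\cH(n,\Delta)$, and what is needed is to arrange the embedding so that its image avoids $V_{\mathrm{bad}}$ and therefore lies entirely inside $G'$. This is precisely the type of avoidance which the sparse blow-up lemma (Lemma~\ref{lem:rg_image}) accommodates, because $|V_{\mathrm{bad}}|=o(n)$ while only $n$ of the $Cn$ available vertices of $G$ are used by the embedding. Concretely, I would re-run the proof of Theorem~\ref{thm:rpartuniv} on $G$ with $V_{\mathrm{bad}}$ declared off-limits from the outset: in the underlying sparse regular partition, which has $O(1)$ clusters of size $\Theta(N)$, removing $o(n)$ vertices perturbs each cluster by a vanishing fraction, so all $(\eps,p)$-regularity and regularity-inheritance properties survive on the restricted clusters.

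The main obstacle is exactly this last robustness argument: checking that a polylog-size vertex deletion does not spoil the sparse regularity and regularity-inheritance conditions on which the embedding rests. However, since $|V_{\mathrm{bad}}|/N\to 0$ much faster than the regularity parameters shrink and the embedding uses only a $1/C$-fraction of the host vertices, this check introduces no new ideas beyond those present in the proof of Theorem~\ref{thm:rpartuniv}; it amounts to choosing the regularity constants small enough to absorb the $o(n)$ perturbation.
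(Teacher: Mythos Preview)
Your approach is correct and follows essentially the same plan as the paper, with one cosmetic difference: the paper deletes a minimum set of \emph{edges} to destroy all copies of $K_{2\Delta,2\Delta}$, rather than deleting the offending vertices. The expected number of copies is $O(N^{4\Delta}p^{4\Delta^2})=o(pN^2)$, so by Markov a.a.s.\ only $o(pN^2)$ edges are removed; this is too few to affect the $(\eps,d,p)$-regularity of any cluster pair, and the remainder of the Theorem~\ref{thm:rpartuniv} argument (with Lemma~\ref{lem:rg_image} in place of Lemma~\ref{lem:degen}) goes through verbatim. Your vertex-deletion variant is equally valid---removing $\mathrm{polylog}(n)$ vertices from $\Theta(n)$-sized clusters is an even milder perturbation---but the edge-deletion framing has the minor advantage that the vertex set and hence the cluster partition are unchanged, so one need not re-check size-compatibility or the lower bound $|V_i|\ge n/(\kappa r_1)$ after pruning.

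One small remark on your exposition: invoking the $r$-partition universality of $G$ as a black box and then trying to steer the resulting embedding away from $V_{\mathrm{bad}}$ does not quite work, since the black-box statement gives no control over where the embedding lands. What is actually needed (and what you correctly say next) is to re-run the embedding argument with $V_{\mathrm{bad}}$ excised from the clusters \emph{before} the blow-up lemma is applied; the preceding black-box sentence is superfluous and slightly misleading.
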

  
  We note that the identical method also proves the existence of a $K_{2\Delta}$-free graph which is $r$-partition universal for $\cH(n,\Delta)$. However, one can more easily obtain graphs which are even $K_{\Delta+2}$-free and are $r$-partition universal for $\cH(n,\Delta)$ by starting with a $K_{\Delta+2}$-free graph which is $r$-colour Ramsey for $K_{\Delta+1}$ and blowing it up by $Cn$ for some large $C$. We would like to thank David Conlon for pointing this out to us.
  
  \subsection{Maker-Breaker games}
  The next application of our blow-up lemmas concerns positional games. Let
  us stress that the results we derive have, as such, nothing to
  do with random or pseudorandom graphs, and are in that sense surprising.

  The \emph{Maker-Breaker $H$-game on $K_n$ with bias $b$} is the following
  game. Maker and Breaker take turns to colour the edges of $K_n$ with
  respectively red and blue. In each turn, Maker colours one edge, while
  Breaker colours $b$ edges (and edges may not be recoloured). Maker's aim
  is to create a red copy of $H$, while Breaker's aim is to prevent Maker
  from creating a red $H$. This class of games has been studied
  extensively (see the book of Beck~\cite{Beck08} and a recent survey of
  Krivelevich~\cite{KriSurvey}). If Breaker wins the $H$-game on $K_n$ with
  bias $b$, then obviously Breaker also wins with bias $b+1$; it follows
  that there is a \emph{threshold bias} $b$ for each game which is the
  smallest $b$ such that Breaker wins the $H$-game.
   
  Let us briefly explain how random graphs are connected to these games.
  If either player, playing a randomised strategy, wins a given game with
  positive probability against perfect play from the opponent, then they
  have a deterministic winning strategy (since this is a finite two-person
  game of perfect information without draws). Bednarska and
  {\L}uczak~\cite{BLgame} used this observation to determine the order of
  magnitude of the threshold bias for the $H$-game on $K_n$ for each fixed
  graph $H$. They showed that this bias is $\Theta(p^{-1})$, where $p$ is the threshold
  probability such that $G(n,p)$ contains $H$ robustly, in the sense
  that removing any $\eps(H)pn^2$ edges fails to destroy all copies of
  $H$.

  However, for sequences $H=(H_n)$ with vertex number depending on $n$, much less is known. The critical
  bias for the Hamiltonicity game, with $H=C_n$, was determined only
  recently by Krivelevich~\cite{KriHam}. For more general bounded
  degree graphs it is only known that the critical bias tends to infinity
  as~$n$ tends to
  infinity~\cite{AlonKriSpeSza}\footnote{In~\cite{AlonKriSpeSza} it is only
    explicitly shown that the critical bias is at least two, but
    Krivelevich~\cite{KriPersComm} has explained that the techniques there
    would work for any constant $b$ if $n$ is large enough.}. 
  Using a
  recent result of Ferber, Krivelevich and Naves~\cite{FKN}, which
  informally shows that Maker can a.a.s.\ make a subgraph of $G(n,p)$ with
  minimum degree very close to $pn$, we can show that in fact Maker can win
  the $\cH(n,\Delta)$-universality game on $K_{(1+\delta)n}$ (i.e., Maker's
  graph contains simultaneously each $H\in\cH(n,\Delta)$) against a
  polynomially growing bias, and that this bias can be increased on the
  class $\cH(n,d,\Delta)$ if $d$ is small enough. For the subclasses
  $\cH'(n,\Delta)$ and $\cH'(n,d,\Delta)$ of triangle-free graphs in
  $\cH(n,\Delta)$ and $\cH(n,d,\Delta)$ respectively, we can even win on
  $K_n$.
   
   \begin{theorem}\label{thm:MakerBreaker}
     For each $d,\Delta\in\NATS$ and $\delta>0$ there exists $c>0$ such
     that the following holds.  With bias~$b$ Maker wins
    \begin{enumerate}[label=\abc]
     \item\label{thm:MakerBreaker:a} the $\cH(n,\Delta)$-universality
       game on $K_{(1+\delta)n}$ if $b\le c\big(\tfrac{n}{\log n}\big)^{1/\Delta}$,
     \item\label{thm:MakerBreaker:b} the $\cH'(n,\Delta)$-universality game on $K_n$ if $b\le c\big(\tfrac{n}{\log n}\big)^{1/\Delta}$,
     \item\label{thm:MakerBreaker:c} the $\cH(n,d,\Delta)$-universality game on $K_{(1+\delta)n}$ if $b\le c\big(\tfrac{n}{\log n}\big)^{1/(2d)}$,
     \item\label{thm:MakerBreaker:d} the $\cH'(n,d,\Delta)$-universality game on $K_n$ if $b\le c\big(\tfrac{n}{\log n}\big)^{1/(2d+1)}$.
    \end{enumerate}
  \end{theorem}
   
  We note that the proof of Theorem~\ref{thm:MakerBreaker} uses a
  randomised strategy for Maker which succeeds with high probability
  against any strategy of Breaker, but this strategy does not seem to be
  easy to derandomise. It would be interesting to find explicit
  deterministic Maker strategies for these games, even for substantially
  smaller bias.
   
  In~\cite{ABKNP} we strengthen the above result, showing analogues
  of~\ref{thm:MakerBreaker:b} and~\ref{thm:MakerBreaker:d} for the
  $\cH(n,\Delta)$-universality game and the $\cH(n,d,\Delta)$-universality
  game, respectively. The proof also relies on our blow-up lemmas for
  random graphs, but is more involved.

   \begin{theorem}[Allen, B\"ottcher, Kohayakawa, Naves,
     Person~\cite{ABKNP}]\label{thm:MakerBreakerspan} \mbox{}\\
    For each $d,\Delta\in\NATS$ and $\delta>0$ there exists $c>0$ such that
    the following holds. With bias~$b$ Maker wins
    \begin{enumerate}[label=\abc]
      \item the $\cH(n,\Delta)$-universality game on $K_{n}$ if $b\le c\big(\tfrac{n}{\log n}\big)^{1/\Delta}$,
      \item the $\cH(n,d,\Delta)$-universality game on $K_{n}$ if $b\le
        c\big(\tfrac{n}{\log n}\big)^{1/(2d+1)}$.
      \end{enumerate}
   \end{theorem}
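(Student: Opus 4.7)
The plan is to use a randomised strategy for Maker, and to reduce the analysis of the resulting graph to the hypotheses of our blow-up lemmas for random graphs and for degenerate graphs. Write $p = p(b) = c'/b$ for a small constant $c'>0$. Because Breaker claims $b$ edges per round for each edge Maker claims, the natural target density of Maker's graph is $\Theta(p)$, and the hypothesis in~\itmit{a} gives $p\ge C(\log n/n)^{1/\Delta}$, which is exactly the sparsity threshold appearing in the blow-up lemma for random graphs, Lemma~\ref{lem:rg_image}; similarly the hypothesis in~\itmit{b} matches the threshold in Lemma~\ref{lem:degen} for degenerate graphs of degeneracy~$d$.

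Maker's strategy will have two components.  The \emph{random component} has Maker pick an unclaimed edge according to a carefully chosen distribution at each move, arranged so that one can couple Maker's graph~$M$ (against any Breaker strategy) with a sample of $G(n, Cp)$ in such a way that a.a.s.\ Maker's graph contains a subgraph $M'$ which behaves like a random graph of density $\Theta(p)$.  The \emph{defensive component}, in the style of Ferber, Krivelevich and Naves~\cite{FKN}, ensures that Maker's graph attains minimum degree $(1-o(1))pn$; this requires Maker occasionally to deviate from random play in order to boost low-degree vertices, which we show is compatible with the coupling up to adjusting constants. It is exactly this defensive component that allows us to move from the almost-spanning result (Theorem~\ref{thm:MakerBreaker}) on $K_{(1+\delta)n}$ to a spanning result on $K_n$.

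Once Maker's graph contains a random-like subgraph $M'$ with near-optimal minimum degree, the embedding step is a direct application of the corresponding sparse blow-up lemma.  For~\itmit{a} we apply Lemma~\ref{lem:rg_image} and for~\itmit{b} we apply Lemma~\ref{lem:degen}: in both cases one partitions~$V(K_n)$ into parts indexed by (a blow-up of) a suitable reduced graph that can host homomorphic images of every $H\in\cH(n,\Delta)$ (resp.\ $\cH(n,d,\Delta)$), uses the regularity inheritance lemmas to certify super-regularity on the parts of~$M'$, and then embeds~$H$.  As is standard, universality follows because Maker commits to a graph before~$H$ is revealed.

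The principal obstacle is the passage from almost-spanning to spanning.  Without a multiplicative $(1+\delta)n$ slack, one cannot discard any exceptional vertices, and so the minimum-degree guarantee supplied by the defensive component of Maker's strategy must be tight enough that \emph{every} vertex can be slotted into the super-regular framework of the blow-up lemma (including the buffer/absorber machinery it provides internally). Controlling the interaction between the random component (which delivers the bulk of the super-regular structure) and the defensive component (which delivers the tight minimum-degree condition) without disturbing the coupling is the delicate part; this is where the argument becomes substantially more involved than the proof of Theorem~\ref{thm:MakerBreaker}, and is the main reason the full argument is deferred to~\cite{ABKNP}.
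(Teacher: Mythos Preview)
The paper does not contain a proof of this theorem. It is stated as a result of~\cite{ABKNP} and the only information given here is the one-sentence remark that ``the proof also relies on our blow-up lemmas for random graphs, but is more involved.'' So there is no proof in this paper against which to compare your attempt; your own closing paragraph, which defers the full argument to~\cite{ABKNP}, is in that sense exactly what the paper does.

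That said, one concrete point is worth flagging in your sketch. The paper explicitly notes (in the discussion around Theorem~\ref{thm:MakerBreaker}) that the Ferber--Krivelevich--Naves strategy, while it secures near-optimal minimum degree, does \emph{not} allow Maker to win the spanning triangle-factor game, ``ultimately because of a failure of two-sided regularity inheritance.'' This is precisely why Theorem~\ref{thm:MakerBreaker} only achieves a spanning result for the triangle-free classes $\cH'(n,\Delta)$ and $\cH'(n,d,\Delta)$. Your outline invokes an FKN-style defensive component together with a random component and a coupling to $G(n,Cp)$, but does not say how two-sided inheritance is recovered; simply layering random play on top of FKN is not obviously enough, and this is plausibly the central new idea needed in~\cite{ABKNP}. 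If you want your sketch to be an honest plan rather than a restatement of the difficulty, you should indicate what mechanism forces Maker's graph to inherit two-sided regularity into common neighbourhoods, since without that the blow-up lemmas (Lemmas~\ref{lem:rg_image} and~\ref{lem:degen}) cannot be applied to embed graphs containing triangles spanningly.
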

  
  \subsection{Resilience for low-bandwidth graphs}

  Resilience results for random graphs concern the question of how robustly
  the random graph possesses certain properties.  Sudakov and
  Vu~\cite{SudVu} promoted the systematic study of problems of this type,
  which henceforth enjoyed particular popularity in extremal graph theory.
  Our blow-up lemmas are particularly useful for obtaining resilience
  results for spanning or almost spanning subgraphs of random graphs. In
  this section we provide an example.

  The Bollob\'as-Koml\'os conjecture, now the Bandwidth Theorem, proved by
  B\"ottcher, Schacht and Taraz~\cite{BST}, generalises (up to a small
  error term) several embedding results for spanning graphs.
  To state it, we need the concept of \emph{bandwidth}
  $\bw(H)$, defined as the smallest natural $b$ such that there is
  a labelling of the vertices of $H$ by $1,\dots,v(H)$ such that
  $|i-j|\le b$ whenever $ij$ is an edge in that ordering.
   
   \begin{theorem}[Bandwidth Theorem~\cite{BST}]\label{thm:BST} For
     each $\gamma>0$ and $\Delta$ there exists $\beta>0$ and~$n_0$ such
     that for all $n\ge n_0$ the following holds. If $H$ is an $n$-vertex
     graph with maximum degree $\Delta(H)\le\Delta$, chromatic number
     $\chi(H)$, and bandwidth $\bw(H)\le\beta n$, and $G$ is an $n$-vertex
     graph with minimum degree
     $\delta(G)\ge\big(\frac{\chi(H)-1}{\chi(H)}+\gamma\big)n$, then~$H$ is
     a subgraph of~$G$.
   \end{theorem}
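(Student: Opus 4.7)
The plan is to use the standard \emph{regularity-plus-blow-up} strategy. First I would apply Szemer\'edi's regularity lemma to~$G$ with parameters $\eps \ll d \ll \gamma$, obtaining an equitable partition $V_0,V_1,\dots,V_t$ of $V(G)$ into an exceptional set and clusters, together with a reduced graph~$R$ on~$[t]$ whose edges correspond to $(\eps,d)$-regular pairs in~$G$. A routine degree-counting argument transfers the minimum degree of~$G$ to~$R$, yielding $\delta(R)\ge\bigl(\tfrac{k-1}{k}+\tfrac{\gamma}{2}\bigr)t$, where $k:=\chi(H)$.

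Next I would look for structure in~$R$. By the Hajnal-Szemer\'edi theorem applied to~$R$, there is a spanning $K_k$-factor in~$R$; the extra slack of $\gamma t/2$ above $(k-1)t/k$ allows me, by standard exchange arguments, to arrange this factor into a \emph{$K_k$-ladder}, namely a sequence $Q_1,Q_2,\dots,Q_{t/k}$ of $k$-cliques in~$R$ such that each consecutive pair $Q_i,Q_{i+1}$ shares a common $K_{k-1}$ (or is otherwise joined by enough edges of~$R$ to bridge embeddings from one clique to the next). This ladder serves as the skeleton along which $H$~is embedded.

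Now I would prepare~$H$. Since $\bw(H)\le \beta n$ with $\beta\ll 1/(kt)$, fix a bandwidth labelling of $V(H)$ and cut it into $t/k$ consecutive intervals $I_1,\dots,I_{t/k}$ of size $kn/t$. Edges of~$H$ appear only inside an interval or between consecutive intervals, and one can choose a proper $k$-colouring of~$H$ which is \emph{compatible} across intervals (the $k$~colour classes of~$I_i$ match bijectively with those of~$I_{i+1}$ under the bridging edges, using again the bandwidth bound). This refines each $I_i$ into colour classes $I_{i,1},\dots,I_{i,k}$, and the plan is to embed each $I_{i,j}$ into the $j$th cluster of the clique~$Q_i$.

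The main obstacle is the interface between consecutive cliques of the ladder: edges of~$H$ from~$I_i$ to~$I_{i+1}$ must be mapped into regular pairs of~$G$ linking $Q_i$ and $Q_{i+1}$, yet the blow-up lemma is naturally applied cluster-tuple by cluster-tuple. I would resolve this by a \emph{pre-embedding} step: for each~$i$, identify the few boundary vertices of~$H$ near the cut between~$I_i$ and~$I_{i+1}$ whose neighbourhoods straddle both sides, and embed them by hand into a shared $K_{k-1}$ between $Q_i$ and $Q_{i+1}$, deleting their images from the target clusters. A balancing step then adjusts cluster sizes so that they match the number of remaining $H$-vertices destined for them, after which a final application of the classical Koml\'os-S\'ark\"ozy-Szemer\'edi blow-up lemma inside each $Q_i$ completes the embedding. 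The crux throughout is to preserve super-regularity under these manipulations, which is handled by a standard clean-up that moves atypical vertices to an exceptional set and uses regularity inheritance before the blow-up lemma is invoked.
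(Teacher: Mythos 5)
Your high-level plan matches the actual structure of the proof in~\cite{BST} (note that the present paper does not prove Theorem~\ref{thm:BST} itself but imports it; the relevant machinery from~\cite{BST} is described in the proof sketches of Theorems~\ref{thm:sparsebandwidth} and~\ref{thm:robustbandwidth}). Regularity lemma, minimum degree transfer to~$R$, Hajnal--Szemer\'edi applied to~$R$, a linearly ordered `spine' of $k$-cliques, cutting $H$ along the bandwidth order, and finally the Koml\'os--S\'ark\"ozy--Szemer\'edi blow-up lemma plus a stitching/pre-embedding step across the clique boundaries --- all of this is correct and is indeed the right skeleton.

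However, there is a genuine gap, and it is the single hardest part of the argument. You map the $k$ colour classes of the interval $I_i$ to the $k$ clusters of $Q_i$ and then say a ``balancing step adjusts cluster sizes so that they match the number of remaining $H$-vertices destined for them.'' This cannot work, because the mismatch can be of \emph{linear} rather than negligible size and cluster sizes are essentially fixed by the regularity partition. Concretely, take $H$ to be a disjoint union of stars $K_{1,\Delta}$ (so $\chi(H)=2$, $\Delta(H)=\Delta$, $\bw(H)\le\Delta$). Any proper $2$-colouring of $H$ is forced: centres in one class, leaves in the other, with sizes $n/(\Delta+1)$ and $\Delta n/(\Delta+1)$. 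Within an interval of size $2n/t$ you then want to map roughly $\tfrac{2\Delta n}{(\Delta+1)t}$ vertices to a single cluster of size $n/t$, which overflows badly for any $\Delta\ge 2$. No after-the-fact adjustment of cluster sizes can absorb a discrepancy of that order.

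What is actually needed, and what \cite{BST} provides, is an additional structural feature of the reduced graph $R$ --- each $K_r$ in the spine must extend to a copy of $K_{r+1}$ somewhere in $R$, which follows from the minimum degree of $R$ --- together with a more elaborate `Lemma for $H$'. That lemma constructs the homomorphism $f\colon V(H)\to V(R)$ so that each vertex of $R$ is the preimage of approximately the same number of $H$-vertices; it achieves this by re-routing the excess of the overfull colour class through the extension vertices of the $K_{r+1}$'s, making essential use of the slack $\gamma n$ in the minimum degree (which is why the theorem is false with $\gamma=0$). Your proposal omits both the $K_{r+1}$-extension requirement on the skeleton and the balanced-homomorphism lemma, so as written the embedding would break on any $H$ with unbalanced colour classes. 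Two minor remarks: vertex-disjoint cliques from a $K_k$-factor cannot literally share a $K_{k-1}$ as your first option suggests, and the correct bridging structure between consecutive cliques is $K_{r,r}$ minus a perfect matching, which is precisely what lets a fixed global proper $r$-colouring of $H$ yield a homomorphism to the spine.
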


   What makes this result valuable is that many important bounded degree
   graph classes have bandwidth~$o(n)$, such as bounded degree
   planar graphs, or more generally any class of bounded degree graphs
   defined by some forbidden minor (see~\cite{BoePruTarWur}).

   B\"ottcher, Kohayakawa and Taraz~\cite{BKT} proved a sparse analogue of
   this theorem for almost-spanning bipartite graphs~$H$, working in
   typical random graphs $G(n,p)$ with $p\ge C\big(\tfrac{\log
     n}{n}\big)^{1/\Delta}$. Huang, Lee and Sudakov~\cite{HLS} on the other
   hand proved an analogue which allows for non-bipartite $H$, but only
   works in random graphs with $p$ constant. Using our blow-up lemma for
   random graphs we can prove the following
   common extension\footnote{Actually Huang, Lee and Sudakov allowed for
     spanning embeddings of some graphs $H$, so that the result here is not
     quite an extension of their theorem, though the result in~\cite{ABET}
     is.} of these results.

   \begin{theorem}\label{thm:sparsebandwidth}
     For each $\gamma>0$ and $\Delta$ there exist $\beta>0$ and $C$ such
     that if $p\ge C\big(\tfrac{\log n}{n}\big)^{1/\Delta}$, then a.a.s.\
     $\Gamma=G(n,p)$ has the following property. If $H$ is a
     $(1-\gamma)n$-vertex graph with maximum degree $\Delta(H)\le\Delta$,
     chromatic number $\chi(H)$, and bandwidth $\bw(H)\le\beta n$, and $G$
     is an $n$-vertex subgraph of $\Gamma$ with minimum degree
     $\delta(G)\ge\big(\tfrac{\chi(H)-1}{\chi(H)}+\gamma\big)pn$, then~$H$
     is a subgraph of~$G$.
   \end{theorem}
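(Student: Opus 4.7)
The plan is to follow the three-step strategy of the dense Bandwidth Theorem of B\"ottcher, Schacht and Taraz~\cite{BST} and of its almost-spanning sparse bipartite counterpart of B\"ottcher, Kohayakawa and Taraz~\cite{BKT}, replacing the dense blow-up lemma at the embedding step by our sparse blow-up lemma for random graphs (Lemma~\ref{lem:rg_image}). First I would apply the sparse regularity lemma of Kohayakawa and Scott to~$G$ to produce an equipartition $V_0\cup V_1\cup\dots\cup V_t$ in which almost every pair is $(\eps,d,p)$-regular. The degree hypothesis $\delta(G)\ge\big(\tfrac{\chi(H)-1}{\chi(H)}+\gamma\big)pn$ transfers, by a standard counting argument, to a reduced graph~$R$ on $[t]$ with $\delta(R)\ge\big(\tfrac{\chi(H)-1}{\chi(H)}+\tfrac\gamma2\big)t$.

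Next, invoking the ``Lemma for $G$'' of~\cite{BST} inside~$R$, I would find a spanning subgraph of~$R$ consisting of vertex-disjoint copies $K^{(1)},\dots,K^{(r)}$ of $K_{\chi(H)}$ joined together along a backbone cycle by auxiliary $K_{\chi(H)+1}$'s, so that after a switch of colour classes $H$ maps homomorphically into this structure while respecting its bandwidth ordering. A standard cleaning step makes every pair that will be used $(\eps',d,p)$-super-regular. Exploiting $\bw(H)\le\beta n$ and $v(H)=(1-\gamma)n$, the ``Lemma for $H$'' of~\cite{BST} then provides an assignment of $V(H)$ to the clusters $V_i$ such that each $H$-edge is mapped into a super-regular pair, each cluster contains a linear-size buffer of $H$-vertices whose entire $H$-neighbourhood lies inside the same $K^{(s)}$, and a balancing step using the $\gamma n$ slack in $v(H)$ together with $V_0$ ensures that the number of $H$-vertices assigned to $V_i$ equals $|V_i|$.

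Finally, I would apply Lemma~\ref{lem:rg_image} separately to each clique $K^{(s)}$, using this assignment as the input partition and the buffers as the sets on which final adjustments are permitted. The image-restriction and candidate-set conditions demanded by the blow-up lemma are verified with the help of the one-sided and two-sided regularity inheritance lemmas stated in Section~\ref{sec:results}, which a.a.s.\ hold throughout $\Gamma=G(n,p)$ for $p\ge C\big(\tfrac{\log n}{n}\big)^{1/\Delta}$. Concatenating the embeddings produced on each clique yields the required embedding of~$H$ into~$G$.

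The main obstacle is interfacing the $r$ separate invocations of the blow-up lemma along the backbone cycle. Vertices of~$H$ lying across the seams between two consecutive $K^{(s)}$ must be assigned images that are consistent with both blow-up calls in which they participate; as in the dense setting this is handled by pre-embedding a small set of such connector vertices, but in the sparse setting each pre-embedding must additionally preserve super-regularity and the candidate-set conditions on all pairs used by later embedding steps when one restricts to the joint neighbourhoods of the already embedded vertices. Controlling this inheritance is precisely what the regularity inheritance lemmas of Section~\ref{sec:results} are designed for, and verifying their hypotheses uniformly along the backbone (while keeping the buffer structure intact) is the most technically delicate part of the argument.
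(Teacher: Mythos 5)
Your proposal takes the classical BST/BKT route: split the reduced graph into the backbone cliques $K^{(1)},\dots,K^{(r)}$, apply the blow-up lemma separately to each, and stitch the pieces together across the seams with pre-embedded connector vertices. The paper's proof is deliberately designed to avoid exactly this: a central point of Lemma~\ref{lem:rg_image} (flagged in the discussion following Lemma~\ref{lem:simpBL}) is that the quantifier order is ``$\exists\eps,\rho\;\forall r_1\;\exists C$'', so $\eps$ depends on $\DeltaRp=\Delta(R')$ but \emph{not} on the number of clusters. Consequently the paper applies Lemma~\ref{lem:rg_image} \emph{once}, to the whole reduced graph $R$ at once, with $R'$ taken to be the empty graph on $V(R)$. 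The super-regularity and one- and two-sided inheritance conditions are then required only on $R'$ and are therefore vacuous; the $\gamma n$ slack is used to pad $H$ with isolated vertices, which serve as the potential buffer vertices; and no image restrictions are needed at all. There is no stitching and no seam to control.

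Beyond being unnecessarily complicated, your outline leaves a genuine gap at precisely the point you flag as ``the most technically delicate part.'' To pre-embed connector vertices in the sparse setting you would need to place them on $\Gamma$-typical vertices and then check that, for every $H$-vertex $x$ adjacent to a pre-embedded vertex, the resulting sets $J_x$ and $I_x=\comN_\Gamma(J_x;V_i)$ satisfy all six conditions of Definition~\ref{def:restrict} — in particular condition~\ref{itm:restrict:reg}, that the pairs $\big(\comN_\Gamma(J_x;V_i),\comN_\Gamma(J_y;V_j)\big)$ are $(\eps,d,p)$-regular in $G$. You appeal to the inheritance lemmas (Lemmas~\ref{lem:oneRI} and~\ref{lem:twoRI}) for this, but those only say that \emph{most} vertices preserve regularity; you must then argue that a connector set can be chosen simultaneously good for all the (up to $\Delta$) future neighbours it restricts, that doing so repeatedly around the backbone cycle never runs out of good vertices, and that the image-restricted vertex count per cluster stays below $\rho|X_i|$. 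None of this is carried out. This is exactly the technical overhead the paper's single-application strategy removes. (The paper does go down the pre-embedding road in~\cite{ABET} to prove the sharper, near-spanning Theorem~\ref{thm:ABET} — but that is acknowledged there as substantially harder; for Theorem~\ref{thm:sparsebandwidth} with the $\gamma n$ slack it is entirely avoidable.)
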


   In~\cite{ABET} the following strengthening of this result is proved,
   showing that we can actually have $v(H)=n-Cp^{-2}$, which is optimal, and for some $H$ even $v(H)=n$. 
  
  \begin{theorem}[Allen, B\"ottcher, Ehrenm\"uller and Taraz~\cite{ABET}]
     \label{thm:ABET}
     For each $\gamma >0$ and $\Delta$ there exist $\beta >0$ and $C $ such
     that if $p \ge C\big(\frac{\log n}{n}\big)^{1/\Delta}$, then a.a.s.\
     $\Gamma=G(n,p)$ has the following property.  If $H$ is an $n$-vertex
     graph with maximum degree $\Delta(H)\le\Delta$, chromatic number
     $\chi(H)$, bandwidth $\bw(H)\le\beta n$, such that at least $C
     p^{-2}$ vertices of~$H$ are not contained in
     a triangle, and $G$ is an $n$-vertex subgraph of $\Gamma$ with minimum
     degree $\delta(G)\ge\big(\tfrac{\chi(H)-1}{\chi(H)}+\gamma\big)pn$,
     then~$H$ is a subgraph of~$G$.
   \end{theorem}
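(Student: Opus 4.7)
The plan is to follow the blueprint of the proof of Theorem~\ref{thm:sparsebandwidth}, but augmented with a balancing step that uses the $Cp^{-2}$ triangle-free vertices of $H$ to upgrade the embedding from almost-spanning to exactly spanning. The key enabling tool is our blow-up lemma for random graphs (Lemma~\ref{lem:rg_image}), together with the associated regularity inheritance lemmas.

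First I would apply the sparse regularity lemma to $\Gamma=G(n,p)$, producing a partition $V_1,\dots,V_k$ into clusters of (almost) common size. Restricting to $G$ and using the minimum-degree hypothesis, one obtains a reduced graph $R$ on $[k]$ with $\delta(R)\geq\bigl(\tfrac{\chi(H)-1}{\chi(H)}+\tfrac{\gamma}{2}\bigr)k$. Via the spanning structure argument from the proof of the dense Bandwidth Theorem~\cite{BST}, find inside $R$ a $K_{\chi(H)}$-factor together with a Hamilton-type connecting backbone that allows one to walk between adjacent cliques of the factor through regular pairs. This is the skeleton into which $H$ must be mapped.

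Second, use the bandwidth condition $\bw(H)\le \beta n$ to partition $V(H)$ into consecutive intervals of length proportional to the cluster size, and assign these intervals along the skeleton of $R$ so that (a) adjacent intervals go to adjacent cliques; (b) within each clique, vertices of $H$ respect the proper $\chi(H)$-colouring induced by a homomorphism from $H$ into $K_{\chi(H)}$. At this stage the number of $H$-vertices assigned to each cluster $V_i$ only approximately equals $|V_i|$; the discrepancy is of order $p^{-2}$ per cluster, coming from the regularity error terms. To correct this, carry out a \emph{balancing step}: reroute triangle-free vertices of $H$ between adjacent clusters. Because such a vertex $v$ has all its $H$-neighbours in only two colour classes, moving $v$ to a different cluster of the same colour class (within the reach of the connecting backbone) preserves all adjacency constraints. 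The hypothesis that at least $Cp^{-2}$ vertices of $H$ lie in no triangle supplies exactly the pool of mobile vertices needed to zero out the imbalance.

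Third, after the balancing the assigned sizes match the cluster sizes exactly; I would then prune a small set of exceptional vertices from each cluster, using the regularity inheritance lemmas announced in Section~\ref{sec:results}, in order to render all the relevant cluster pairs super-regular with respect to $G$. Finally, apply Lemma~\ref{lem:rg_image} cluster-clique by cluster-clique (glued together along the backbone via the standard interface-vertex trick) to embed $H$ into $G$.

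The main obstacle is the balancing step: in the sparse setting one cannot move an arbitrary vertex between clusters without damaging the super-regularity structure, because a vertex living in a triangle of $H$ would impose joint regularity constraints on three (or more) clusters that are hard to preserve when shuffling. Triangle-free vertices, by contrast, only see two colour classes and can be relocated along the connecting backbone with only pair-regularity constraints, which inheritance arguments \emph{can} handle; and since the regularity error per cluster is $\Theta(p^{-2})$, the threshold $Cp^{-2}$ triangle-free vertices is precisely what the balancing calculation requires.
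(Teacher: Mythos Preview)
The paper does not itself prove Theorem~\ref{thm:ABET}; it is cited from~\cite{ABET}. Nevertheless, the paper gives enough information about that proof to see that your proposal misidentifies the central difficulty, and in particular misreads the r\^ole of the $Cp^{-2}$ triangle-free vertices.

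Your balancing argument treats these vertices as a device for correcting \emph{size} discrepancies between the $H$-classes and the $G$-clusters, and you claim the discrepancy is of order $p^{-2}$ ``coming from the regularity error terms''. But the size discrepancies from the Lemma for $H$ in~\cite{BST} are a constant fraction of each cluster, not $p^{-2}$, and they are already handled by the standard rebalancing via $K_{\chi(H)+1}$-extensions in the reduced graph, exactly as in the almost-spanning Theorem~\ref{thm:sparsebandwidth}. The quantity $p^{-2}$ enters for a different reason: by Lemma~\ref{lem:twoRI} (and the matching lower bound shown in Section~\ref{subsec:RI}), in each cluster of $G$ up to $\Theta(p^{-2})$ vertices $v$ can fail two-sided regularity inheritance, i.e.\ $(N_\Gamma(v;V_j),N_\Gamma(v;V_k))$ is not regular. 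In a \emph{spanning} embedding one cannot discard such vertices---your ``prune'' step is exactly what one is not allowed to do---so each of them must be hit by the embedding. As the paper explains (see the discussion of image restrictions after Definition~\ref{def:restrict}), the method of~\cite{ABET} is to pre-embed an $H$-vertex $x$ by hand onto each bad vertex $v$, which creates image restrictions $J_y=\{v\}$ for the neighbours $y$ of $x$, and then apply Lemma~\ref{lem:rg_image} once to the entire reduced graph.

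This is where triangle-freeness is actually used. If $x$ lies in a triangle $xyz$ of $H$, then both $y$ and $z$ have $v\in J_y\cap J_z$, and condition~\ref{itm:restrict:reg} of a restriction pair requires the pair $\big(N_\Gamma(v;V(y)),N_\Gamma(v;V(z))\big)$ to be $(\eps,d,p)$-regular---precisely what fails at a bad vertex $v$. If instead $x$ is in no triangle, its neighbours form an independent set in $H$, so no edge of $H$ runs between two image-restricted vertices with the same restricting vertex $v$, and the restriction pair conditions can be verified using only one-sided inheritance (Lemma~\ref{lem:oneRI}). Your explanation that a triangle-free vertex ``has all its $H$-neighbours in only two colour classes'' is neither correct nor relevant.

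In short: the balancing of cluster sizes is routine and does not need triangle-free vertices; the genuine obstacle is covering the $\Theta(p^{-2})$ inheritance-bad vertices of $G$, which forces a pre-embedding step with image restrictions, and the triangle-free hypothesis is what makes those image restrictions admissible.
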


   Furthermore a similar resilience statement for sufficiently bijumbled
   graphs, and an improvement when $H$ has small degeneracy, are obtained
   in~\cite{ABET}. The proofs of all these results rely again on our
   blow-up lemmas, but they are significantly harder to obtain than
   Theorem~\ref{thm:sparsebandwidth}, whose proof is a good illustration
   of how one can apply our blow-up lemmas to obtain resilience results.

  \subsection{Robustness of the Bandwidth Theorem}

  Robustness is a measure of how strongly an extremal theorem holds
  (alternative to resilience) and was proposed by Krivelevich, Lee and
  Sudakov~\cite{KLS}. They showed that if $G$ is a \emph{Dirac graph}, that
  is, a graph with $\delta(G)\ge\tfrac12 v(G)$, then there is $C$ such that
  for $p\ge C\tfrac{\log n}{n}$, a.a.s.\ the graph $G_p$ obtained by
  keeping edges of $G$ independently with probability $p$ is
  Hamiltonian. Here we prove a similar extension of the Bandwidth Theorem.

  \begin{theorem}\label{thm:robustbandwidth} 
     For each $\gamma>0$ and $\Delta$ there exist $\beta>0$ and $C$ such
     that if $p\ge C\big(\tfrac{\log n}{n}\big)^{1/\Delta}$, the following
     holds.  If $H$ is an $n$-vertex graph with maximum degree
     $\Delta(H)\le\Delta$, chromatic number $\chi(H)$, and bandwidth
     $\bw(H)\le\beta n$, and $G$ is an $n$-vertex graph with minimum degree
     $\delta(G)\ge\big(\tfrac{\chi(H)-1}{\chi(H)}+\gamma\big)n$, then
     a.a.s.~$H$ is a subgraph of~$G_p$.
  \end{theorem}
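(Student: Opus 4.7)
The plan is to combine Szemer\'edi's dense regularity lemma, applied to $G$, with our sparse blow-up lemma for random graphs (Lemma~\ref{lem:rg_image}) applied inside a typical random graph $\Gamma=G(n,p)$. Couple $G_p=G\cap\Gamma$ and aim to embed $H$ into this intersection a.a.s. The key point is that the dense graph $G$ provides the global structure needed to assign vertices of $H$ to clusters, while $\Gamma$ supplies the random environment in which Lemma~\ref{lem:rg_image} produces the actual embedding.

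First I would apply the dense Szemer\'edi regularity lemma to $G$ with a small parameter $\eps_0=\eps_0(\gamma,\Delta)$, obtaining an equipartition $V_1,\dots,V_k$ in which all but at most $\eps_0k^2$ pairs are $\eps_0$-regular. After discarding pairs of density below a threshold $d=d(\gamma)>0$, the minimum degree hypothesis on $G$ forces the reduced graph $R$ on $[k]$ to satisfy $\delta(R)\ge\bigl(\tfrac{\chi(H)-1}{\chi(H)}+\tfrac{\gamma}{2}\bigr)k$. Next I would invoke the structural \emph{Lemma for $G$} and \emph{Lemma for $H$} from the proof of the Bandwidth Theorem~\cite{BST}: combined with the bounds on $\Delta(H)$, $\chi(H)$ and $\bw(H)$, these produce a homomorphism $\phi\colon V(H)\to[k]$ whose fibres are of size $(1\pm o(1))n/k$ and that is compatible with a balanced $K_{\chi(H)}$-factor (with connecting structure) inside~$R$.

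I would then turn to the randomness. Conditional on $G$, for each edge $ij\in E(R)$ the bipartite graph $G_p[V_i,V_j]$ is a $p$-random subgraph of $G[V_i,V_j]$ with expected density $p\cdot d_{ij}$ where $d_{ij}\ge d$. Standard Chernoff bounds show that a.a.s., after removing $o(n/k)$ atypical vertices from each part (those with abnormal degrees into neighbouring clusters in $G_p$), every such pair becomes $(\eps_1,p\cdot d_{ij})$-super-regular in the sparse sense. Simultaneously, provided $p\ge C(\log n/n)^{1/\Delta}$ with $C$ sufficiently large, $\Gamma$ a.a.s.\ satisfies the typicality and regularity-inheritance hypotheses (Section~\ref{sec:results}) required by Lemma~\ref{lem:rg_image}. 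Feeding $\phi$, the partition and the super-regular system into Lemma~\ref{lem:rg_image} then yields an embedding of $H$ into $G_p\subseteq\Gamma$.

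The main obstacle is the third step: one must transfer super-regularity from the dense pairs of $G$ to the sparse pairs of $G_p$, and verify the `no bad vertex' and joint-neighbourhood hypotheses of Lemma~\ref{lem:rg_image} inside $\Gamma$. The latter control of joint $p$-neighbourhoods of up to $\Delta$ vertices is exactly the source of the exponent $1/\Delta$ in the bound on $p$. Once the constants are fixed in the correct hierarchy $\eps_1\ll\eps_0\ll d,\gamma,1/\Delta$ and $C$ is chosen sufficiently large, the remaining bookkeeping parallels the proof of Theorem~\ref{thm:sparsebandwidth} given in Chapter~\ref{chap:appproofs}.
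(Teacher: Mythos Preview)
Your overall architecture matches the paper's: apply the dense machinery of~\cite{BST} to $G$ to get the partition, reduced graph and homomorphism, then view $G_p=G\cap\Gamma$ with $\Gamma=G(n,p)$ and apply Lemma~\ref{lem:rg_image}. But there is a genuine gap in your third step, and one unnecessary complication.

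First, the unnecessary complication. You propose to remove $o(n/k)$ atypical vertices from each cluster to obtain super-regularity of $G_p$. This is problematic because $H$ is spanning: you cannot afford to discard any vertices of $G$. In fact no removal is needed. Since $(G,\cV)$ is already $(\eps,d,1)$-super-regular on $R'$, every $v\in V_i$ has $\deg_G(v;V_j)\ge(d-\eps)|V_j|$; the degree $\deg_{G_p}(v;V_j)$ is then binomial with mean at least $(d-\eps)p|V_j|$, and a Chernoff bound plus union bound over all $n$ vertices and bounded many clusters shows that a.a.s.\ \emph{every} vertex satisfies the sparse minimum degree condition.

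Second, and more seriously, you do not say how to verify the one- and two-sided inheritance conditions on $R'$ for \emph{every} vertex. Your sentence that ``$\Gamma$ a.a.s.\ satisfies the \dots regularity-inheritance hypotheses'' conflates two different things: Lemma~\ref{lem:rg_image} asserts a property of $\Gamma$, but its hypotheses require you to check that for each $v\in V_i$ the pair $\big(N_\Gamma(v;V_j),V_k\big)$ is $(\eps',d,p)$-regular in $G_p$. The inheritance lemmas of Section~\ref{subsec:reginherit} (Lemmas~\ref{lem:oneRI} and~\ref{lem:twoRI}) only say that \emph{most} vertices $v$ inherit, which is not enough for a spanning embedding with a nontrivial $R'$. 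The paper's key observation is that one should instead invoke the GKRS subset-inheritance results (Theorem~\ref{thm:OneSideInherit} and Corollary~\ref{cor:TwoSideInherit}) directly: having already shown by Chernoff that $(V_j,V_k)$ is $(\eps_*,d,p)$-regular in $G_p$, one notes that the $\Gamma$-edges from $v$ to $V_j$ are independent of those inside $V_j\times V_k$, so $N_\Gamma(v;V_j)$ is a uniformly random subset of $V_j$ of size roughly $p|V_j|$. Theorem~\ref{thm:OneSideInherit} with $\beta=\tfrac14$ then gives failure probability at most $(1/4)^{p|V_j|}$, small enough to union-bound over all vertices and all pairs of clusters; the two-sided condition follows the same way from Corollary~\ref{cor:TwoSideInherit}. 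This is the step your sketch identifies as the ``main obstacle'' but does not resolve.
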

   
  We give a fairly detailed sketch proof of this result in
  Section~\ref{sec:robust}. We note that in~\cite{ABEST} we propose a
  strengthening of Theorem~\ref{thm:ABET} which implies that
  under the conditions of
  Theorem~\ref{thm:robustbandwidth}, $G_p$ is in fact a.a.s.\ universal for all $n$-vertex
  graphs $H$ with $\Delta(H)\le\Delta$ and $\bw(H)\le\beta n$.

\section{Statements of the results}
\label{sec:results}

The full versions of our blow-up lemmas are technically complex and so, in
order to make their statement more compact, we will introduce a number of
lengthy definitions. To provide motivation for these definitions we first
state a simplified version of our blow-up lemma for random graphs, which is
useful only in a few applications.

We will then provide the full version of the blow-up lemma for
$G(n,p)$ in Section~\ref{sec:results:Gnp}, and
turn to a blow-up lemma for embedding degenerate graphs into subgraphs of $G(n,p)$
in Section~\ref{sec:results:deg}. In
Section~\ref{sec:results:jumbled} we present the blow-up lemma for
bijumbled graphs. Finally, in Section~\ref{subsec:reginherit} we state two \emph{regularity inheritance} lemmas which are often necessary in applications of our random graph blow-up lemmas.

\subsection{A simplified version}

The dense blow-up lemma is an embedding lemma for super-regular pairs.
Before we can formulate it we need some definitions.  Let $G=(V,E)$ be a
graph and~$A$ and~$B$ be disjoint subsets of~$V$.  We also write $V(G)$ and
$E(G)$ for the set of vertices and edges of~$G$, respectively, and $v(G)$
and $e(G)$ for the sizes of these sets.  For a vertex $v\in V$ we denote by
$N_G(v;A)$ the set of \emph{neighbours} of $v$ in $A$, that is, those vertices
$u\in A$ with $vu\in E$. We write $\deg_G(v;A)$ for its cardinality
$|N_G(v;A)|$.  We write $e_G(A)$ for the number of edges in~$G$ with both
vertices in~$A$ and $e_G(A,B)$ for the number of edges in~$G$ with one
vertex in~$A$ and one in~$B$. Given $X\subset V(G)$, we write $N(X;A):=\bigcup_{v\in X}N_G(v;A)$ for the \emph{joint neighbourhood} in~$A$ of the vertices. We let $\comN_G(X;A):=\bigcap_{v\in X}N_G(v;A)$
denote the \emph{common neighbourhood} in~$A$ of vertices from $X$, and write $\deg_G(X;A)$ for its size $\big|\comN_G(X;A)\big|$. We will often omit the set brackets in $\comN_G\big(\{v_1,\dots,v_\ell\};A\big)$, writing instead $\comN_G(v_1,\dots,v_\ell;A)$. If the graph~$G$ is clear from the context we
sometimes omit it in the subscripts. Furthermore, if we omit the set $A$ we intend $A=V(G)$.

The \emph{density} of the pair $(A,B)$ is $d_G(A,B)=e_G(A,B)/(|A||B|)$.
Let $\eps,d\ge 0$.  The pair $(A,B)$ is called \emph{$(\eps,d)$-regular}
(in $G$) if we have $d(A',B')\ge d-\eps$ for all $A'\subseteq A$ with
$|A'|\ge\eps|A|$ and $B'\subseteq B$ with $|B'|\ge\eps |B|$.

\begin{remark}
  Observe that this differs from the usual definition of $\eps$-regularity
  in that we only require a lower bound on $d(A',B')$. This is sometimes called \emph{dense} in the literature, in particular in~\cite{ChvRand}.
  Note also that with our definition the density of an $(\eps,d)$-regular
  pair $(A,B)$ is only lower-bounded by $d-\eps$. This, though non-standard, has the advantage (over the usual definition where we would require the density of $(A,B)$ to be lower-bounded by $d$) that large subpairs of $(\eps,d)$-regular pairs are $(\eps',d)$-regular (rather than $(\eps',d-\eps')$-regular). We have to use a different regularity parameter, which is unavoidable, but at least we can stick to $d$ for density throughout, and we advocate the use of this definition in future.

  We shall use (sparse versions
  of) this regularity concept whenever we work with random graphs in this
  paper. But for pseudorandom graphs we unfortunately need the usual
  regularity definition with an upper bound on
  $d(A',B')$ as well. Whenever the distinction between the two different concepts
  is essential, which is usually not the case, we will explicitly state which version we are using,
  calling the former `lower-regularity' and the latter `full-regularity'.
  In most of the paper however, we will refer to both versions as
  `regularity'. This will help us to unify much of the proofs of the
  blow-up lemmas for random graphs and for pseudorandom graphs.
  Why it is necessary to use these two different regularity variants is
  explained in Section~\ref{sec:results:jumbled}.
\end{remark}

An $(\eps,d)$-regular pair $(A,B)$ is called
\emph{$(\eps,d)$-super-regular} if for every $u\in A$ we have
$\deg_G(u;B)\ge(d-\eps)|B|$ and for every $v\in B$ we have
$\deg_G(v;A)\ge(d-\eps)|A|$.  The dense blow-up lemma, first proved by Koml\'os, S\'ark\"ozy and Szemer\'edi~\cite{KSS_bl}, then states the following. Let $G$ be a graph formed from a collection of $(\eps,d)$-super-regular pairs with density
$d\gg\eps$, and let $G^*$ be obtained from $G$ by replacing the super-regular pairs with complete bipartite graphs. If $H$ is a graph with maximum degree $\Delta$ which embeds into $G^*$, then $H$ embeds into $G$.

\medskip

This notion of regularity is not meaningful for sparse graphs, because
the definition above implies that \emph{any} pair with $o(n^2)$ edges is
regular (with $d=0$). For obtaining a meaningful sparse version of
regularity and super-regularity we need to relate the density of a pair to the overall
density of the graph under study. This can be obtained by replacing the density in the
definitions above with the $p$-density for a suitable~$p$.

For $0<p<1$, we define the \emph{$p$-density} of the pair $(A,B)$ to be 
$d_{G,p}(A,B)=e_G(A,B)/(p|A||B|)$. In other words, we simply scale the usual density by $p$. Sparse regular pairs are then defined
as follows.  The pair $(A,B)$ is \emph{$(\eps,d,p)$-regular} (in~$G$) if we
have $d_{G,p}(A',B')\ge d-\eps$ for all $A'\subseteq A$ with
$|A'|\ge\eps|A|$ and $B'\subseteq B$ with $|B'|\ge\eps |B|$. Similarly, we
define a pair to be a sparse super-regular pair if it is a sparse regular
pair and satisfies a minimum degree condition.

The setting we
will work with in this subsection is when the graph~$G$ into which we want
to embed is a subgraph of a random graph~$\Gamma$. As mentioned above, for
pseudorandom graphs we will work with a slightly different notion of regularity (see
Section~\ref{sec:results:jumbled}).  The parameter~$p$ will usually be (a constant
factor away from) the density of the ambient graph~$\Gamma$.

\begin{definition}[Sparse super-regularity]
  A pair $(A,B)$ in $G\subset\Gamma$ is called \emph{$(\eps,d,p)$-super-regular} (in~$G$) if it
  is $(\eps,d,p)$-regular and for every $u\in A$ and $v\in B$ we have
  \begin{align}\label{eq:super}  
    \deg_G(u;B) &>(d-\eps) \max\{p |B|, \deg_{\Gamma}(u;B)/2\}\,. \\
    \nonumber \deg_G(v;A) &>(d-\eps) \max\{p |A|, \deg_{\Gamma}(v;A)/2\}\,.
 \end{align}
\end{definition}

We remark that the term $(d-\eps)p|B|$ is a natural lower bound in the
minimum degree condition~\eqref{eq:super} by the following fact, which
easily follows from the definition of regularity (and which we shall use
throughout the paper).

\begin{fact}
  Let~$(A,B)$ be an $(\eps,d,p)$-regular pair. Then less than $\eps|A|$ vertices
  of~$A$ have less than $(d-\eps)p|B|$ neighbours in~$B$.
\end{fact}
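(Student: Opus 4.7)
The plan is to argue by contradiction directly from the definition of $(\eps,d,p)$-regularity. Suppose for contradiction that the set
\[ A' := \bigl\{u \in A : \deg_G(u;B) < (d-\eps)p|B| \bigr\} \]
has size $|A'| \ge \eps|A|$. Then summing the degree bound over vertices in $A'$ gives
\[ e_G(A',B) < |A'| \cdot (d-\eps)p|B|, \]
so the $p$-density satisfies
\[ d_{G,p}(A',B) = \frac{e_G(A',B)}{p|A'||B|} < d-\eps. \]

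On the other hand, taking the pair $(A',B)$ (with $B':=B$, which trivially satisfies $|B'|\ge\eps|B|$) as a witness pair in the definition of $(\eps,d,p)$-regularity of $(A,B)$, we obtain $d_{G,p}(A',B)\ge d-\eps$, contradicting the previous display. Hence $|A'| < \eps|A|$, as claimed.

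There is no real obstacle here; the statement is essentially an immediate contrapositive reformulation of the regularity condition, where one uses $B'=B$ and reads the density lower bound as an averaged degree bound. The only thing worth flagging is the mild edge case where $|B|=0$ or $p=0$, but these are excluded by the context of the definition (the pair is assumed to have positive parameters), so no separate treatment is needed.
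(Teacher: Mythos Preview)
Your proof is correct and is exactly the standard argument. The paper does not actually give a proof of this fact; it simply states that it ``easily follows from the definition of regularity,'' and your contradiction argument via the low-degree set $A'$ is precisely the intended one-line verification.
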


Observe though that in~\eqref{eq:super} we have the additional term
$(d-\eps)\deg_{\Gamma}(u;B)/2$, which dominates when~$u$ has an exceptionally
high $\Gamma$-degree into~$B$. The reason why this is necessary will become
clear in a little while after we discuss the need for regularity
inheritance. Note also that it follows from the definition above that in a
super-regular pair $(A,B)$ we have $\deg_\Gamma(u;B)\ge (d-\eps)p|B|$ for
each $u\in A$, since~$G\subset\Gamma$.

Unfortunately, this straightforward extension of the concept of
super-regular pairs to the sparse setting is not on its own enough for a
sparse blow-up lemma. To see this, suppose $A$, $B$ and $C$ are
equally-sized vertex sets, and each pair is super-regular. We would like to
find that there is a spanning triangle factor, but it is possible that for
some $a\in A$ there are no edges between $N_G(a;B)$ and
$N_G(a;C)$. Similarly, if $A$, $B$, $C$, $D$ form a $4$-cycle of
super-regular pairs in $G$, it is nevertheless possible that for some $a\in
A$ there is no $4$-cycle in $G$ using $a$ and one vertex of each of the
other three sets. These problems do not occur for dense
$(\eps,d)$-super-regular pairs for the following reason. In the first example, since $N_G(a;B)$ is
of size at least $(d-\eps)|B|$ and similarly for $N_G(a;C)$, the so-called
slicing lemma guarantees that $\big(N_G(a;B),N_G(a;C)\big)$ is
$(2\eps/d,d)$-regular, that is, it \emph{inherits} regularity from the pair
$(B,C)$ and thus in particular contains edges. A similar argument deals with the second example. The slicing lemma
can be generalised to the sparse setting, and again easily follows from the
definition of regularity.

\begin{lemma}[Slicing lemma]
\label{lem:slice}
  Let $(A,B)$ be an $(\eps,d,p)$-regular pair and $A'\subset A$, $B'\subset
  B$ be sets of sizes $|A'|\ge\alpha|A|$, $|B'|\ge\alpha|B|$. Then
  $(A',B')$ is $(\eps/\alpha,d,p)$-regular.
\end{lemma}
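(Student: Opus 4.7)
The plan is to prove this by directly unfolding the definition of sparse regularity; no probabilistic or inheritance argument is needed, since the statement is purely combinatorial.

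First I would fix arbitrary subsets $A'' \subseteq A'$ and $B'' \subseteq B'$ satisfying the regularity hypothesis for $(A',B')$, namely
\[
|A''| \ge (\eps/\alpha)|A'| \quand |B''| \ge (\eps/\alpha)|B'|.
\]
The goal is then to verify that $d_{G,p}(A'',B'') \ge d-\eps$.

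The key observation is that by the assumed lower bounds $|A'| \ge \alpha|A|$ and $|B'| \ge \alpha|B|$, we immediately get
\[
|A''| \ge (\eps/\alpha)\cdot \alpha |A| = \eps|A| \quand |B''| \ge \eps|B|.
\]
So the sets $A'' \subseteq A$ and $B'' \subseteq B$ now satisfy the threshold required by the $(\eps,d,p)$-regularity of the original pair $(A,B)$, which yields $d_{G,p}(A'',B'') \ge d-\eps$. Since $A''$, $B''$ were arbitrary such subpairs of $(A',B')$, this establishes $(\eps/\alpha,d,p)$-regularity of $(A',B')$.

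There is essentially no obstacle here — the proof is a one-line calculation chaining the two size inequalities. The only subtle point worth flagging in the write-up is that the definition of $(\eps,d,p)$-regularity bounds $d_{G,p}$ on subsets measured relative to $|A|$ and $|B|$, not relative to $|A'|$ and $|B'|$, which is exactly why the threshold scales by $1/\alpha$; I would make this change of reference set explicit in the one display above so the reader sees immediately where the factor $1/\alpha$ comes from.
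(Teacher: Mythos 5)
Your proof is correct and is exactly the argument the paper has in mind --- the paper states this lemma without proof, remarking only that it ``easily follows from the definition of regularity,'' and your unfolding of the definition is precisely that. One small imprecision worth fixing in the write-up: the goal should read $d_{G,p}(A'',B'')\ge d-\eps/\alpha$, since an $(\eps/\alpha,d,p)$-regular pair is required to meet density tolerance $\eps/\alpha$, not $\eps$; but because $A'\subset A$ forces $\alpha\le 1$, the stronger bound $d_{G,p}(A'',B'')\ge d-\eps$ that you actually derive implies the required one, so the argument goes through.
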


But this unfortunately does not solve the problem indicated above because the size of
$N_G(a;B)$ is only lower bounded by $(d-\eps)p|B|$, which can be tiny
compared to~$B$.

To remedy this, in our blow-up lemmas we will explicitly require that the pair
$\big(N_\Gamma(a;B),C\big)$, and similarly
$\big(N_\Gamma(a;B),N_\Gamma(a;C)\big)$, `inherit' regularity, that is,
they are themselves regular pairs. Since $\deg_G(a;B)$ is a large fraction
of $\deg_\Gamma(a,B)$ by super-regularity, the slicing lemma does show that
regularity of $\big(N_G(a;B),C\big)$ follows from regularity of
$\big(N_\Gamma(a;B),C\big)$, and similarly for
$\big(N_G(a;B),N_G(a;C)\big)$. Note that this is the point where we require
the second term in~\eqref{eq:super}.

Requiring this `inheritance of regularity' is reasonable because it is known that in $(\eps,d,p)$-regular
pairs contained in random graphs~$\Gamma$, for almost all vertices~$a$, the pairs
$\big(N_\Gamma(a;B),C\big)$ and similarly $\big(N_\Gamma(a;B),N_\Gamma(a;C)\big)$ do
inherit sparse regularity in this way. This phenomenon was studied in~\cite{GKRS,KohRod:pairs,ChvRand}. In this paper (see Section~\ref{subsec:reginherit}), we prove this statement with tight bounds on what `almost all vertices' means.

\begin{definition}[Regularity inheritance]
  Let $A$, $B$ and $C$ be vertex sets in
  $G\subset\Gamma$, where~$A$ and~$B$ are disjoint and~$B$ and~$C$
  are disjoint, but we do allow $A=C$. We say that $(A,B,C)$ has
  \emph{one-sided $(\eps,d,p)$-inheritance} if for each $u\in A$ the pair
  $\big(N_\Gamma(u;B),C\big)$ is $(\eps,d,p)$-regular.

  If in addition~$A$ and~$C$ are
  disjoint, then we say that $(A,B,C)$ has \emph{two-sided
    $(\eps,d,p)$-inheritance} if for each $u\in A$ the pair
  $\big(N_\Gamma(u;B),N_\Gamma(u;C)\big)$ is $(\eps,d,p)$-regular.
\end{definition}

In this definition we take neighbourhoods in the ambient
graph~$\Gamma$ instead of~$G$ because this turns out to be easier to handle
in applications (where we need to take care of the vertices in regular
pairs that do not satisfy these inheritance conditions).

The simplified version of our sparse blow-up lemma in random graphs now
states that if we are given an equitable vertex partition
$V_1\dcup\dots\dcup V_r$ of a graph $G\subset\Gamma$ such that all pairs are
super-regular and all triples have one- and two-sided regularity inheritance, then any $r$-partite
graph~$H$ with bounded maximum degree that can be embedded into the complete
$r$-partite graph with partition classes $V_1,\dots,V_r$ can
also be embedded into~$G$. Here, a partition of a
set is called \emph{equitable} if each pair of partition classes differ in
size by at most one.

\begin{lemma}[Simple blow-up lemma for $G(n,p)$]\label{lem:simpBL}
  For all $\Delta\ge 2,r\in\NATS$ and $d>0$ there exist $\eps>0$ and $C$ such that if
 $p\ge C(\log n/n)^{1/\Delta}$, then the random graph $\Gamma=G(n,p)$
 a.a.s.\ has the following property.  
  Let $G\subseteq\Gamma$ and $H$ with $\Delta(H)\le\Delta$ be graphs on~$n$
  vertices. Suppose that~$H$ has an $r$-colouring with colour classes
  $X_1,\ldots,X_{r}$ and that $V(G)=V_1\dcup\cdots\dcup V_{r}$ is an equitable
  partition with $|V_i|=|X_i|$ for all $i\in[r]$ and such that the following conditions hold.
  \begin{enumerate}[label=\rom]
    \item\label{simpBL:super} $(V_i,V_j)$ is
      $(\eps,d,p)$-super-regular in $G$ for each $i,j\in[r]$ with $i\neq j$.
    \item\label{simpBL:one-sided} $(V_i,V_j,V_k)$ has one-sided $(\eps,d,p)$-inheritance
      for each $i,j,k\in[r]$ with $i\neq j$ and $j\neq k$.
    \item\label{simpBL:two-sided}  $(V_i,V_j,V_k)$ has two-sided $(\eps,d,p)$-inheritance
      for each $i,j,k\in[r]$ with $i\neq j$, $j\neq k$, and $k\neq i$.
   \end{enumerate}
   Then $H$ is a subgraph of $G$.
\end{lemma}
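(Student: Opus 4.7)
The plan is to follow the spirit of the original Komlós--Sárközy--Szemerédi blow-up lemma, with modifications for the sparse regime. Choose auxiliary constants $\eps \ll \alpha \ll 1/(d\Delta r)$ and then $C$ large enough that a.a.s.\ $\Gamma = G(n,p)$ satisfies the standard concentration bounds on common neighborhoods of up to $\Delta$ vertices. Reserve in each colour class $X_i$ of $H$ a buffer set $\Xbuf_i$ of size $\alpha |X_i|$ of vertices whose $H$-neighbourhoods are well spread across the other classes, and in each $V_i$ pick a random buffer target set $\Vbuf_i$ of the same size, avoiding the $\eps |V_i|$ vertices that witness a failure of inheritance for some relevant triple (this bad set is small by hypotheses (ii) and (iii)). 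The embedding of $H$ then proceeds in two phases: a random greedy embedding of the non-buffer vertices, followed by a matching step that simultaneously places all buffer vertices.

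For the greedy phase, linearly order the non-buffer vertices $x_1, x_2, \ldots$ of $H$ and for each unembedded vertex $x$ maintain the candidate set $Q_t(x) \subseteq V_{i(x)} \setminus \Vbuf_{i(x)}$ consisting of vertices adjacent in $G$ to every already-embedded neighbour of $x$ and not yet used. Iterative application of one-sided inheritance along the embedded neighbours keeps $(Q_t(x), V_j)$ regular for the relevant classes $j$, and a short inductive argument gives $|Q_t(x)| \ge (d-O(\eps))^s p^s |V_{i(x)}|$, where $s$ is the number of embedded neighbours of $x$; two-sided inheritance is invoked whenever two not-yet-embedded neighbours of $x$ sit in different colour classes, to ensure that the relevant subpair of $\Gamma$ is still regular. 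Embed $x_t$ uniformly at random into $Q_t(x_t)$ minus a small set of ``dangerous'' candidates whose choice would shrink some $Q_{t+1}(x')$ below target. A union bound over $x$ and $t$, combined with Chernoff/Azuma-type concentration made possible by $p^\Delta n \ge C^\Delta \log n$, shows that a.a.s.\ the greedy phase succeeds.

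When the greedy phase ends, only the buffer vertices of $H$ remain, and all their $H$-neighbours have been embedded. For each colour class $i$, the bipartite auxiliary graph between $\Xbuf_i$ and $\Vbuf_i$ whose edges encode valid placements has, by the same inheritance-based calculation, minimum degree at least $(d/2)^\Delta p^\Delta |V_i|$ on both sides, and is in fact a super-regular pair (because $\Vbuf_i$ was chosen to avoid inheritance-bad vertices and $\Xbuf_i$ was chosen with spread $H$-neighbourhoods). A Hall-type perfect matching, or equivalently an application of the dense blow-up lemma to this super-regular structure (which by then has constant density), finishes the embedding.

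The main obstacle is the greedy phase: candidate sets $Q_t(x)$ can shrink to size $\Theta(p^\Delta n) = \Theta(C^\Delta \log n)$, which is precisely the threshold at which Chernoff-type concentration is strong enough. Preserving regularity inheritance through iterated neighbourhood restrictions while exposing the random choices one vertex at a time demands a careful martingale (or vertex-by-vertex exposure) argument, and one must reserve a small fraction of candidates at each step to absorb dangerous choices and to keep later super-regular pairs from being damaged by accumulated atypical embeddings. This is exactly where the hypothesis $p \ge C(\log n/n)^{1/\Delta}$, together with both inheritance assumptions, is used at full strength.
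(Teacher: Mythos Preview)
In the paper this lemma is not proved independently: it is simply the special case of the full blow-up lemma (Lemma~\ref{lem:rg_image}) obtained by taking $R=R'=K_r$, $\kappa=2$, every vertex a potential buffer vertex, and no image restrictions. Your outline is, in effect, a sketch of the proof of Lemma~\ref{lem:rg_image} itself, and it is on the right track in broad strokes (random greedy embedding, reserved buffer, final matching). However, two steps in your sketch do not work as stated.

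First, the greedy phase does not a.a.s.\ embed every non-buffer vertex. You are right that removing ``dangerous'' candidates keeps candidate sets $Q_t(x)$ large in the sense that $|C_t(x)|\ge\Theta(p^{\pi(x)}|V_i|)$; the problem is the \emph{available} set $Q_t(x)=C_t(x)\setminus\im(\psi_t)$. Since $\Theta(|V_i|)$ vertices are being embedded into $V_i\setminus\Vbuf_i$, and each lands in any fixed candidate set with probability $\Theta(p^\Delta)$, one expects $\Theta(p^\Delta|V_i|)$ of them to hit $C_t(x)$, which is exactly its size---so some $Q_t(x)$ will become too small. No union bound rescues this. The paper's remedy is a \emph{queue}: vertices whose available set falls below threshold are skipped, one proves (Lemma~\ref{lem:rga:welldistr}) that the queue stays tiny, and the queue is later embedded into a separate reserved set $\Vq_i$ using the congestion property $\CON$ of $\Gamma$.

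Second, your final matching step is mischaracterised. The candidate bipartite graph between $\Xbuf_i$ and the leftover $V_i$-vertices has density $\Theta(p^\Delta)=o(1)$, is not a super-regular pair in any useful sense, and no dense blow-up lemma applies to it. The paper verifies Hall's condition directly, splitting into small, medium and large test sets and ruling out counterexamples via $\CON$ (no ``dense spots''). The minimum-degree bound on the $V$-side---that every $v\in V_i$ is a candidate for many $x\in\Xbuf_i$---is genuinely hard: it needs a dedicated analysis (Lemma~\ref{lem:nonclique_buffer}) showing that for each $x\in\Xbuf_i$ the probability that $N_H(x)$ is embedded into $N_G(v)$ is at least $\Theta(p^{\deg(x)})$, and when buffer vertices sit in copies of $K_{\Delta+1}$ an additional ``buffer defect'' repair stage (Section~\ref{sec:fixbuffer}) is required, since two-sided inheritance has been exhausted.
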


Let us briefly comment on the lower bound $p\ge C(\log n/n)^{1/\Delta}$ on
the probability for which our result works. We do not believe that this
bound is in general best possible, though for $\Delta\in\{2,3\}$ it is optimal up to the $\log$-factor.
However, it matches the best known current lower bound~\cite{DKRR} for $p$ such that
$G(n,p)$ is universal for bounded degree spanning~$H$. This
universality result is easily implied by Lemma~\ref{lem:simpBL} with $G=\Gamma$. The problem of improving on~\cite{DKRR} has been prominent in random graph theory for a few years, and seems to be hard. For a more detailed discussion, see Section~\ref{sec:opt:random}.

The restriction $\Delta\ge2$ is necessary for our proof as written. The statement above is true for $\Delta=1$, when in fact we do not require conditions~\ref{simpBL:one-sided} or~\ref{simpBL:two-sided}, but to see this it is easiest to verify Hall's condition in $G$ directly rather than to modify our proof.

What is the difference between this simplified blow-up lemma and our
full-strength blow-up lemma for random graphs, Lemma~\ref{lem:rg_image} (to be introduced below)?
Firstly, in the latter we do not require the partition of~$G$ to be
equitable but allow the partition classes to differ in size by a constant
factor.

Secondly, we do not require all pairs in the partition to be
super-regular (or have regularity inheritance). Instead we will introduce
the concept of a `reduced graph' which encodes where we have super-regular
pairs in our partition. In fact we will even have two reduced graphs $R$
and $R'\subset R$ where the former represents regular pairs and the latter
super-regular pairs, the reason for which will become clear later.

Thirdly,
we do not require two-sided regularity inheritance everywhere in the
partition of~$G$, but only in certain cases where triangles of~$H$ need to
be embedded. This is helpful in some applications, for example in Theorem~\ref{thm:MakerBreaker} we can use the Ferber-Krivelevich-Naves strategy~\cite{FKN} to win Maker-Breaker games with spanning bounded degree triangle-free graphs. This strategy does not allow Maker to win (for example) the spanning triangle factor game, ultimately because of a failure of two-sided regularity inheritance.

Fourthly, the $\eps$ we can choose for our full-scale blow-up lemma does not
depend on~$r$, but only on the maximum degree of the reduced
graph~$R'$. This is a difference also in the dense setting (that is, when
$p=1$) to the blow-up lemma of~\cite{KSS_bl}. In typical applications of
the latter it is necessary to apply this blow-up lemma several times to
embed a spanning~$H$ since this lemma only applies to small subgraphs of
the reduced graph (because $\eps$ depends on~$r$). It is then necessary to
use so-called image restrictions and some manual embedding to connect up
the pieces of $H$, which technically complicates the proofs. Our blow-up
lemma on the other hand avoids this and is formulated with the intention
that in applications it typically only needs to be applied once, which
should make it simpler to use.

Finally, our blow-up lemma permits so-called image restrictions.  Roughly
speaking, image restriction means specifying, for certain vertices of~$H$,
small subsets of~$G$ into which these vertices are to be embedded.  As
explained in the last paragraph we believe that in the dense setting one
usually does not need these image restrictions with our blow-up
lemma. However, in the sparse setting they can be useful, as there may be a
few vertices which do not satisfy the super-regularity or inheritance
conditions required by the blow-up lemma in any sparse-regular partition
of~$G$. Hence we need to embed some $H$-vertices on these vertices `by
hand' before applying the blow-up lemma. These pre-embedded $H$-vertices
then create image restrictions for their neighbours.  Indeed in~\cite{ABET}
precisely this approach is used to prove Theorem~\ref{thm:ABET}.


\subsection{Random graphs}
\label{sec:results:Gnp}

As explained in the previous subsection one of the differences of our blow-up
lemma to the simplified version stated there is that it uses reduced graphs
to specify where edges can be embedded in the graph~$G$ (which is a
subgraph of a random graph~$\Gamma$). We will now first define this
concept and explain what we require of the graph~$H$ that we want to embed
to be `compatible' with such reduced graphs.

The setting in which we work is as follows.  Let $G$ and $H$ be two graphs,
on the same number of vertices, given with partitions
$\cV=\{V_i\}_{i\in[r]}$ and $\cX=\{X_i\}_{i\in[r]}$ of their respective vertex sets.  We call the parts~$V_i$ of~$G$ \emph{clusters}.  We say
that~$\cV$ and~$\cX$ are \emph{size-compatible} if $|V_i|=|X_i|$ for all
$i\in[r]$. Moreover, for $\kappa\geq 1$ we say that $(G,\cV)$ is
\emph{$\kappa$-balanced} if there exists $m\in\NATS$ such that we have
$m\leq |V_i|\leq \kappa m$ for all $i, j\in[r]$ (and thus for all
$X_i\in\cX$ if~$\cV$ and~$\cX$ are size-compatible).
Our goal will be to embed~$H$ into~$G$
respecting these partitions.

As mentioned before, we will have two reduced graphs~$R$ and~$R'\subset R$,
where~$R'$ represents super-regular pairs and~$R$ regular pairs. More
precisely, we require the following properties of~$R$ and~$R'$ and the
partitions~$\cV$ and~$\cX$ of~$G$ and~$H$.

\begin{definition}[Reduced graphs and one-sided inheritance]
  \label{def:RGHpartition}
  Let~$R$ and~$R'$ be graphs on~$r$ vertices.
  \begin{itemize}
  \item $(H,\cX)$ is an \emph{$R$-partition} if each part of $\cX$ is nonempty, and whenever there are edges of $H$ between $X_i$ and $X_j$, the pair $ij$ is an edge of $R$,
  \item $(G,\cV)$ is an \emph{$(\eps,d,p)$-regular $R$-partition} if for each
    edge $ij\in E(R)$ the pair $(V_i,V_j)$ is $(\eps,d,p)$-regular.
  \end{itemize}
  In this case we also say that~$R$ is a \emph{reduced graph} of the partition~$\cV$.
  \begin{itemize}
  \item $(G,\cV)$ is \emph{$(\eps,d,p)$-super-regular on $R'$} if for every
    $ij\in E(R')$ the pair $(V_i,V_j)$ is $(\eps,d,p)$-super-regular.
  \end{itemize}
  Suppose now that $(G,\cV)$ is an $(\eps,d,p)$-regular $R'$-partition.
  \begin{itemize}
  \item $(G,\cV)$ has \emph{one-sided inheritance} on $R'$ 
    if $(V_i,V_j,V_k)$ has one-sided $(\eps,d,p)$-inheritance
    for every $ij, jk\in E(R')$.
  \end{itemize}
  We occasionally also use these concepts when we only work on an induced
  subgraph of~$G$, that is, for a pair $(G,\cV')$
  where~$\cV'$ is a partition of a subset of the vertices of~$G$.
\end{definition}
As in Lemma~\ref{lem:simpBL}, we do require the case $i=k$ in the definition of one-sided inheritance.

It remains to describe where in our partitions we require two-sided
inheritance. For this we first need to define so-called `buffer sets' of
vertices in~$H$, containing `potential buffer vertices'. The purpose of
these buffer sets is that a subset of these vertices, later to be called
`buffer vertices', will be embedded last (for more detailed explanations
see Section~\ref{sec:proof_overview}). For this to work we require the
edges emanating from these vertices and their neighbours to be assigned to
the super-regular pairs given by~$R'$ (and not to other pairs of~$R$). It is only when a potential buffer vertex is contained in a triangle that we
require two-sided inheritance on~$R'$.

The buffer sets can be chosen by the user of the blow-up lemma.
Moreover, we stress that they do not make our blow-up lemma less
powerful in the dense setting than the blow-up lemma of ~\cite{KSS_bl},
because the latter requires all edges of~$H$ to be assigned to
super-regular pairs.


\begin{definition}[Buffer sets and two-sided inheritance]
  \label{def:RGHpartition2}
  Suppose $R'\subset R$ are graphs on $r$ vertices, and $(H,\cX)$ is an
  $R$-partition and $(G,\cV)$ a size-compatible $(\eps,d,p)$-regular $R$-partition.
  We say the family $\tcX=\{\tX_i\}_{i\in[r]}$ of subsets $\tX_i\subset X_i$ is
  an \emph{$(\alpha,R')$-buffer} for $(H,\cX)$ if
 \begin{itemize}
 \item $\tX_i\subset X_i$ and $|\tX_i|\ge\alpha |X_i|$ for all $i\in[r]$  and 
 \item for each $i\in[r]$ and each $x\in\tX_i$, the first and second
   neighbourhood of $x$ \emph{go along $R'$}, that is,
   for each $xy,yz\in E(H)$ with $y\in X_j$ and $z\in X_k$ we have $ij\in E(R')$ and $jk\in E(R')$.
 \end{itemize}
 We also call the vertices in $\tcX$ \emph{potential buffer vertices}.
 Moreover, $(G,\cV)$ has \emph{two-sided inheritance} on $R'$ for $\tcX$ if
 \begin{itemize}
 \item 
   $(V_i,V_j,V_k)$ has two-sided $(\eps,d,p)$-inheritance whenever there is
   a triangle $x_ix_jx_k$ in~$H$ with $x_i\in \tX_i$, $x_j\in X_j$, and
   $x_k\in X_k$.
 \end{itemize}
\end{definition}

We remark that we shall later also occasionally refer to the set of actual
buffer vertices as buffer, when it is clear from the context which set we mean.


Finally, our blow-up lemma allows image restrictions. These generalise the
image restrictions permitted in the dense blow-up lemma. However, in the
sparse setting the necessary conditions become somewhat more involved.  The
idea is as follows. Suppose we wish to embed a graph $H^*$ into a graph $G^*$. Unfortunately $G^*$ does not meet the conditions of our blow-up lemma, typically because regularity inheritance fails. We find a subgraph $G$ of $G^*$ which does meet the conditions of our blow-up lemma, and `pre-embed' some vertices of $H^*$ onto the vertices $V(G^*)\setminus V(G)$. This leaves the induced subgraph $H$ of $H^*$ to embed into $G$.
The image restrictions then originate from these
pre-embedded vertices: If $x\in X_i\subset V(H)$ has neighbours $\{z_1,\dots,z_\ell\}$
in $V(H^*)\setminus V(H)$ which are pre-embedded to
$\{u_1,\dots,u_\ell\}=J_x\subset V(\Gamma)\setminus V(G)$, then $J_x$
restricts the embedding of~$x$ to $I_x=\comN_{G^*}(J_x;V_i)$. In the
following definition we do not explicitly refer to the graphs~$H^*$
and~$G^*$, but only to abstract restricting sets $J_x$, so that we do not
need to include the graphs~$H^*$ and~$G^*$ in our blow-up lemma. For the
same reason we take neighbourhoods in~$\Gamma$ instead of~$G^*$ in this
definition. In addition, to simplify notation, we define an
image restriction set~$I_x$ for each vertex~$x$ of~$H$. For most vertices~$x$,
however, this set is the trivial set $I_x=X_i$ where $X_i$ is the part of
$\cX$ containing~$x$.

\begin{definition}[Image restrictions]\label{def:restrict}  
  Let $R$ be a graph on $r$ vertices, and $(H,\cX)$ be an
  $R$-partition and $(G,\cV)$ a size-compatible $(\eps,d,p)$-regular
  $R$-partition, where $G\subset\Gamma$.
  Let
  $\cI=\{I_x\}_{x\in V(H)}$ be a collection of subsets of $V(G)$, called
  \emph{image restrictions}, and $\cJ=\{J_x\}_{x\in V(H)}$ be a collection of
  subsets of $V(\Gamma)\setminus V(G)$, called \emph{restricting vertices}.
  We say that $\cI$ and $\cJ$ are a
  \emph{$(\rho,\zeta,\Delta,\Delta_J)$-restriction pair} if the
  following properties hold for each $i\in[r]$ and $x\in X_i$.
  \begin{enumerate}[label=\abc]
    \item\label{itm:restrict:Xis} The set $X_i^*\subset X_i$ of  \emph{image restricted}
    vertices in~$X_i$, that is, vertices such that $I_x\neq V_i$, has size $|X_i^*|\leq\rho|X_i|.$ 
    \item\label{itm:restrict:sizeIx} If $x\in X_i^*$, then $I_x\subseteq
    \comN_\Gamma(J_x;V_i)$ is of size at least $\zeta(dp)^{|J_x|}|V_i|$.
    \item\label{itm:restrict:Jx} If $x\in X_i^*$, then $|J_x|+\deg_H(x)\le\Delta$ and 
    if $x\not\in X_i^*$, then $J_x=\emptyset$.  
    \item Each $\Gamma$-vertex appears in at most $\Delta_J$ of the sets of $\cJ$.
    \item\label{itm:restrict:comnbh} We have
    $\big|\comN_\Gamma(J_x;V_i)\big|=(p\pm\eps p)^{|J_x|}|V_i|$.
    \item\label{itm:restrict:reg} If $x\in X_i^*$, for each $xy\in E(H)$ with $y\in X_j$, the pair
    $\big(\comN_\Gamma(J_x;V_i),\comN_\Gamma(J_y;V_j)\big)$ is
    $(\eps,d,p)$-regular in $G$.
 \end{enumerate}
\end{definition}

This definition does indeed generalise the dense image restrictions
of~\cite{KSS_bl}, since~\ref{itm:restrict:Xis} is one of the conditions of~\cite{KSS_bl}, while if $p=1$ and so $\Gamma$ is the complete graph,~\ref{itm:restrict:sizeIx} reduces to the other condition of~\cite{KSS_bl}. We
can set $J_x=\emptyset$ for all $x$, so that~\ref{itm:restrict:Jx}--\ref{itm:restrict:comnbh} become trivial, and~\ref{itm:restrict:reg} follows since $\cX$ is an $R$-partition and $\cV$ is an $(\eps,d,p)$-regular $R$-partition. In the
sparse setting the conditions amount to requiring that pre-embedded vertices
creating image restrictions are embedded on `typical vertices'.

We can now formulate our blow-up lemma for random graphs.

\begin{lemma}[Blow-up lemma for $G(n,p)$]\label{lem:rg_image}
  For all $\Delta\ge 2$, $\DeltaRp$, $\Delta_J$, $\alpha,\zeta, d>0$, $\kappa>1$
  there exist $\eps,\rho>0$ such that for all $r_1$ there is a $C$ such that for
  \[p>C\bigg(\frac{\log n}{n}\bigg)^{1/\Delta}\]
  the random graph $\Gamma=G(n,p)$ a.a.s.\ 
  satisfies the following.
   
  Let $R$ be a graph on $r\le r_1$ vertices and let $R'\subset R$ be a spanning
  subgraph with $\Delta(R')\leq \DeltaRp$.
  Let $H$ and $G\subset \Gamma$ be graphs with $\kappa$-balanced
  size-compatible vertex partitions
  $\cX=\{X_i\}_{i\in[r]}$ and $\cV=\{V_i\}_{i\in[r]}$, respectively, which have
  parts of size at least $m\ge n/(\kappa r_1)$.
  Let $\tcX=\{\tX_i\}_{i\in[r]}$ be a family of subsets of $V(H)$, let
  $\cI=\{I_x\}_{x\in V(H)}$ be a family of image restrictions, and
  $\cJ=\{J_x\}_{x\in  V(H)}$  be a family of restricting vertices.
  Suppose that
  \begin{enumerate}[label=\itmarab{BUL}]
  \item $\Delta(H)\leq \Delta$, $(H,\cX)$ is an $R$-partition, 
    and $\tcX$ is an $(\alpha,R')$-buffer for $(H,\cX)$,
  \item $(G,\cV)$ is an $(\eps,d,p)$-regular $R$-partition, which
    is $(\eps,d,p)$-super-regular on~$R'$, 
    has one-sided inheritance on~$R'$,
    and two-sided inheritance on~$R'$ for $\tcX$,
  \item $\cI$ and $\cJ$ form
    a $(\rho,\zeta,\Delta,\Delta_J)$-restriction pair.
  \end{enumerate}
  Then there is an embedding $\psi\colon V(H)\to V(G)$ such that $\psi(x)\in
  I_x$ for each $x\in H$.
\end{lemma}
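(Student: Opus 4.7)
\medskip

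\noindent\emph{Proof proposal.}
The plan is to follow the framework of the Komlós--Sárközy--Szemerédi blow-up lemma, adapted to the sparse setting: we set aside a subset of the potential buffer vertices in $\tcX$ to be embedded last, embed all remaining vertices one by one through a random greedy algorithm, and then complete the embedding of the buffer vertices by a Hall-type matching argument. First I would fix, for each $i\in[r]$, a subset $B_i\subseteq\tX_i$ of \emph{buffer vertices} of size roughly $\tfrac{\alpha}{2}|X_i|$, chosen so that the $B_i$ are well spread out (no two buffer vertices are at $H$-distance $\le 2$, say) and so that image-restricted vertices are avoided. This can be done greedily using $\Delta(H)\le\Delta$ and $|X_i^*|\le\rho|X_i|$, provided $\rho$ is chosen small enough relative to $\alpha$.

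Next I would order the non-buffer vertices $x_1,x_2,\dots,x_t$, roughly by putting image-restricted vertices first and buffer-neighbours last, and run the \emph{random greedy embedding}: at step $s$, choose $\psi(x_s)$ uniformly at random from the \emph{candidate set}
\[
C_{x_s} \;=\; I_{x_s}\,\cap\,\bigcap_{y\in N_H(x_s),\,y\text{ embedded}}N_G\!\bigl(\psi(y);V_{i(x_s)}\bigr),
\]
minus a small set of already-used vertices and a set of forbidden ``bad'' vertices where update invariants fail. For each unembedded vertex $x$ we track the evolving candidate set $C_x\subseteq \comN_\Gamma(J_x;V_{i(x)})\cap\bigcap_{y\in N_H(x)\text{ embedded}}N_\Gamma\bigl(\psi(y);V_{i(x)}\bigr)$. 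The key invariant to maintain is that $|C_x|$ stays close to its expected value $(dp)^{k}|V_{i(x)}|$, where $k$ counts $J_x$-vertices plus embedded $H$-neighbours of $x$; if this holds, one-sided inheritance on $R'$ (resp.\ two-sided inheritance when $x$ lies in a triangle through a buffer vertex) ensures that $C_x$ is still a regular-like slice, so that the subsequent neighbourhood intersection shrinks $C_x$ by a factor close to $dp$ on typical vertex choices.

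The concentration statement underpinning the invariant is the main technical engine. For fixed $x$ the process $|C_x|$ is a martingale-like quantity exposed one embedded neighbour at a time; using the sparse Chernoff bound for $G(n,p)$ (for the initial size $|\comN_\Gamma(J_x;V_{i(x)})|$ via condition~\ref{itm:restrict:comnbh}) together with the regularity inheritance properties and a union bound over all $x$, the deviation should be under control for all $x$ simultaneously as long as the predicted size $(dp)^{k}|V_{i(x)}|$ stays well above $\log n$. The choice $p\ge C(\log n/n)^{1/\Delta}$ is exactly what keeps $(dp)^{\Delta-1}|V_{i(x)}|\gg\log n$, so the invariant survives up to the point when only buffer vertices remain unembedded. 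A careful definition of the ``bad'' vertices at each step (those whose candidate sets would not update as expected, or who would force an image-restriction to drop below $\zeta(dp)^{|J_x|}|V_{i(x)}|/2$) and a probabilistic argument that they are few enough not to block the greedy choices is then needed.

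Finally, once all non-buffer vertices are embedded, it remains to embed each buffer vertex $b\in B_i$ into its candidate set $C_b\subseteq V_i$, all of whose $H$-neighbours are already embedded by construction. This is a perfect matching problem in the bipartite graph between $\bigcup_i B_i$ and the remaining vertices of $V(G)$. Using $(\eps,d,p)$-super-regularity on $R'$, the fact that buffer-vertex neighbourhoods go along $R'$, and the resulting expansion properties of the $C_b$'s (again via concentration and inheritance, now yielding Hall's condition for every set of buffer vertices), I expect a standard Hall/König-type argument to complete the embedding. The main obstacle is the interaction between the random greedy phase and the Hall phase: one must show that the greedy algorithm can be carried out without driving any $|C_b|$ below the threshold $\Omega(\log n)$ needed for Hall's condition to hold w.h.p., and that the ``bad'' sets of vertices forbidden in the greedy phase are of vanishing relative size. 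This forces a delicate balance between the buffer fraction $\alpha$, the image-restriction density $\rho$, and the regularity parameter $\eps$, matching the quantifier order $\alpha,\zeta,d,\Delta,\DeltaRp,\Delta_J,\kappa$ first, then $\eps,\rho$, then $r_1$, then $C$ in the statement.
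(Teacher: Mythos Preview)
Your outline captures the RGA-then-Hall structure, but two genuine structural components are missing and without them the argument does not close.

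First, the random greedy phase will not succeed for every non-buffer vertex. The obstruction is not that the bad set is large (it is indeed a small fraction of $C_x$), but that the \emph{available} set $A_x=C_x\setminus\im(\psi)$ can shrink below threshold simply because previously embedded vertices have used up the candidates. The paper shows this happens for at most $2\rho|X_i|$ vertices per part (via a uniform-distribution argument for candidate sets, Lemma~\ref{lem:rga:welldistr}), places them in a \emph{queue}, and embeds them afterwards into a reserved sub-part $V^{\mathrm q}_i\subset V_i$ by a separate Hall-type argument that relies on a ``no dense spots'' property of $G(n,p)$ (the congestion property $\CON$). Your outline allocates no such reservoir or fallback, and the martingale control you sketch for $|C_x|$ does not prevent $|A_x|$ from dropping.

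Second, the final Hall verification for the buffer is not standard in the sparse setting. The critical ``large $Y$'' case requires that \emph{every} leftover $v\in V_i$ is a candidate for $\Omega(p^b|X_i|)$ buffer vertices, and this must be secured during the RGA by analysing the probability that $N_H(x)$ is embedded into $N_G(v)$ for $x\in B_i$ (for which the paper embeds buffer-neighbours \emph{first} in the order, not last, so that super-regularity still holds when they are processed). This analysis breaks when $x$ lies in a copy of $K_{\Delta+1}$: the needed regularity inheritance at the last step would require the properties $\NS$ and $\RI$ with parameter $\Delta+1$ rather than $\Delta$, which your bound $p\ge C(\log n/n)^{1/\Delta}$ does not supply. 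The paper's remedy is to reserve, for each clique buffer, a small family of $K_{\Delta+1}$'s in $H$ and a separate reservoir $V^{\mathrm c}_i\subset V_i$, and to run an additional \emph{buffer-defect fixing} stage (Lemma~\ref{lem:deletebad}) that manually covers the at most $\rho|V_i|$ exceptional vertices $v$. Without this clique-buffer mechanism the Hall argument cannot be completed at the stated value of~$p$.
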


\subsection{Degenerate graphs}
\label{sec:results:deg}

We next present a version of our blow-up lemma for random graphs which
allows for smaller edge probabilities in $G(n,p)$ if we want to embed graphs~$H$ whose
maximum degree is much larger than their degeneracy.
The \emph{degeneracy} $\degen(H)$ of a graph~$H$ is the smallest
integer~$\ell$ such that~$H$ is \emph{$\ell$-degenerate}, that is,
each induced subgraph of~$H$ has
minimum degree at most~$\ell$. Equivalently, there is an order of~$V(H)$ such
that each vertex has at most~$\ell$ neighbours before that vertex in the
order. For example, trees have degeneracy~$1$ and planar graphs have
degeneracy at most~$5$. We remark that (a variant of) the degeneracy
determines the exponent in the probability~$p$, but we nevertheless require
the maximum degree of~$H$ to be bounded by a constant~$\Delta$ in the lemma
below. The constant in the probability~$p$ depends on~$\Delta$.  For this
blow-up lemma we use the same notion of regularity, super-regularity,
reduced graphs, inheritance, buffer sets and image restrictions as for
Lemma~\ref{lem:rg_image}.

Given an order $\tau$ on $V(H)$ and a family $\cJ$ of image restricting
vertices, we define 
\[\pitau(x):=|J_x|+\big|\{y\in N_H(x)\colon\tau(y)<\tau(x)\}\big|\,.\]
In other words, $\pitau(x)$ denotes the number of neighbours of $x$ that
come before $x$, including the restricting vertices~$J_x$.

Quantifying the dependence on~$\tau$ of the probability we can work with is somewhat complex.  Firstly, for a vertex~$x$
we distinguish whether it has neighbours~$y$ succeeding~$x$ in the
order~$\tau$, in which case we will need to maintain one-sided
inheritance properties when embedding~$x$, or even two such neighbours~$y$
and~$z$ which form an edge, in which case we may need to maintain
two-sided inheritance. Secondly, we put stricter requirements
on~$x$ if it has some potential buffer vertices in its neighbourhood (since we need
to maintain certain properties to embed the buffer vertices;
see~\ref{ord:Dx} and~\ref{ord:NtX}).  Thirdly,
we put even stricter requirements on vertices~$x$ which are image
restricted or have preceding neighbours `far away' from~$x$ (see
\ref{ord:halfD}). Since this is a very restrictive condition however, we
allow a very small set~$X^e$ of exceptional vertices which are exempted from
this rule.

\begin{definition}[$(D,p,m)$-bounded order]\label{def:Dpm_bdd_order} 
  Let~$H$ be a graph given with buffer sets $\tcX=\{\tX_i\}_{i\in[r]}$ and
  a restriction pair~$\cI=\{I_x\}_{x\in V(H)}$ and~$\cJ=\{J_x\}_{x\in V(H)}$.
  Let~$\tX=\bigcup_{i\in[r]}\tX_i$.
  Let~$\tau$ be an ordering of $V(H)$ and $X^e\subset V(H)$.
  Then~$\tau$ is a \emph{$(D,p,m)$-bounded order} for~$H$, $\tcX$,
  $\cI$ and $\cJ$
  with \emph{exceptional set} $X^e$ if the following conditions are
  satisfied for each $x\in V(H)$.
  \begin{enumerate}[label=\itmarab{ORD}]
   \item\label{ord:Dx} Define \[
     D_x:=\begin{cases}
       D-2 & \text{if there is $yz\in E(H)$ with $y,z\in N_H(x)$ and $\tau(y),\tau(z)>\tau(x)$}\\
       D-1 & \text{else if there is $y\in N_H(x)$ with $\tau(y)>\tau(x)$}\\
       D & \text{otherwise}\,.
     \end{cases}\]
     We have $\pitau(x)\le D_x$, and if $x\in N(\tX)$ even $\pitau(x)\le D_x-1$. Finally, if $x\in\tX$ we have $\deg(x)\le D$.
    \item\label{ord:halfD} One of the following holds:
      \begin{itemize}
        \item $x\in X^e$,
        \item $\pitau(x)\le \frac12 D$,
        \item $x$ is not image restricted and every neighbour~$y$ of~$x$
          with $\tau(y)<\tau(x)$ satisfies $\tau(x)-\tau(y)\le p^{\pitau(x)}m$.
      \end{itemize}
    \item\label{ord:NtX} If $x\in N(\tX)$ then all but at most $D-1-\max_{z\not\in X^e}\pitau(z)$
      neighbours~$y$ of $x$ with $\tau(y)<\tau(x)$ satisfy
      $\tau(x)-\tau(y)\le p^D m$.
 \end{enumerate}
\end{definition}

In order to obtain the best possible value of $p$, our aim is always to find an order $\tau$ which is $(D,p,m)$-bounded and minimises the value of $D$. To give some intuition about what typically is possible, we refer to some of the results of Section~\ref{sec:appl}.

We can typically obtain $D\le 2\degen(H)+1$, taking $\tau$ to be a
degeneracy order for $H$. This gives us $\pitau(x)\le\degen(x)$, so
that~\ref{ord:halfD} and~\ref{ord:NtX} hold trivially (the former with room
to spare, the latter not). Furthermore, most of~\ref{ord:Dx} is trivially
satisfied (though one needs to observe that if $\degen(H)=1$ then $H$
contains no triangle). The only point which is unclear is the restriction
$\deg(x)\le D$ for $x\in\tX$. In practice (see for example the proofs of
Theorems~\ref{thm:degenuniv} and~\ref{thm:MakerBreaker})
one often can obtain this. The reason is that any graph~$H$ contains many
vertices of degree at most $2\degen(H)$, and typically we can find a
partition of $H$ in which these vertices are well-distributed among the
parts.

In the event that we do not require a spanning embedding, but can afford to leave a small fraction of vertices in each part uncovered, we can obtain the slightly stronger $D\le 2\degen(H)$, as in Theorems~\ref{thm:rpartuniv} and~\ref{thm:MakerBreaker}. The reason for this is that we can `pad' an almost-spanning $H$ by adding isolated vertices to obtain a spanning $H'$ to which we apply Lemma~\ref{lem:degen} (see the proof of Theorem~\ref{thm:rpartuniv} for details). We again use a degeneracy order $\tau$, but this time let $\tX$ be the isolated vertices.

When we have a degeneracy order $\tau$ with the extra (bandwidth-type) property that all edges go between vertices very close together in the order, we can typically choose $D\le \degen(H)+3$. The reason is that in this case~\ref{ord:halfD} and~\ref{ord:NtX} are automatically satisfied, and we only need to worry about~\ref{ord:Dx}. Again, we need to be able to choose potential buffer vertices of degree at most $\degen(H)+3$, but in applications this is usually possible. This applies, for example, in the case that $H$ is an $F$-factor.

So far we did not mention the set $X^e$, or image restrictions. In applications often only very few vertices need to be image restricted, and we can typically put all of them in $X^e$. This is, for example, critical to obtaining a resilience result for bounded-degree trees with $p=C\big(\tfrac{\log n}{n}\big)^{1/3}$ in~\cite{ABET}.

In general, the exceptional set $X^e$ gives us the possibility to
specify a small set of vertices in~$H$ for which the value of $\pitau$ does
not have to be bounded by $D/2$. This is for example useful when a few
vertices have been embedded `by hand' before the use of the blow-up lemma,
creating image restrictions. Since we cannot usually select these vertices
according to the degeneracy order of~$H$,  a few image
restricted vertices will have more neighbours embedded `by hand' than
if~$H$ was embedded in the degeneracy order. These vertices should then be
put in~$X^e$.

Moreover, in~\ref{ord:NtX} for each buffer neighbour~$x$ we allow a few (depending on~$D$ and~$\tau$)
exceptions to the rule that embedded neighbours of~$x$ need to be close
to~$x$ in the order. This is useful because vertices~$y$ of exceptionally high
degree often have to come relatively early in the order to
satisfy~\ref{ord:Dx} and~\ref{ord:halfD}, and hence cannot necessarily be
close to~$x$.

We can now state our blow-up lemma for degenerate graphs. The only
difference to the previous blow-up lemma is that we ask for a $(D,p,\eps
n/r_1)$-bounded order of~$H$, that we allow only vertices with degree at
most~$D$ as potential buffer vertices, and that the exponent in the bound
on~$p$ is determined by~$D$. We remark that for some graphs~$H$, for
example $\ell$-regular graphs, which have degeneracy~$\ell$, this bound on~$p$ is
worse than the bound in Lemma~\ref{lem:rg_image}. However, for trees or planar graphs it is often much better.

\begin{lemma}[Blow-up lemma to embed degenerate graphs in $G(n,p)$]
  \label{lem:degen} \mbox{}\\
  For any $\Delta\ge 2$, $\DeltaRp$, $\Delta_J$, $D$, 
  $\alpha,\zeta, d>0$, $\kappa>1$ there exist
  $\eps,\rho>0$ such that for all $r_1$ there is a $C$ such that for
  \[p\ge C\bigg(\frac{\log n}{n}\bigg)^{1/D}\] 
  the random graph
  $\Gamma=G(n,p)$ a.a.s.\ satisfies the following.
   
  Let $R$ be a graph on $r\le r_1$ vertices and let $R'\subset R$ be a
  spanning subgraph with $\Delta(R')\leq \DeltaRp$.  Let $H$ and $G\subset
  \Gamma$ be graphs with $\kappa$-balanced, size-compatible vertex
  partitions $\cX=\{X_i\}_{i\in[r]}$ and $\cV=\{V_i\}_{i\in[r]}$,
  respectively, which have parts of size at least $m\ge n/(\kappa r_1)$.
  Let $\tcX=\{\tX_i\}_{i\in[r]}$ be a family of subsets of $V(H)$, let $\cI=\{I_x\}_{x\in
    V(H)}$ be a family of image restrictions, and $\cJ=\{J_x\}_{x\in V(H)}$
  be a family of restricting vertices.  
  Let $\tau$ be an order of $V(H)$ and $X^e\subset V(H)$ be a set of size
  $|X^e|\le\eps p^{{\max_{x\in X^e}\pitau(x)}}n/r_1$. 
  Suppose that
  \begin{enumerate}[label=\itmarab{DBUL}]
  \item $\Delta(H)\leq \Delta$, $(H,\cX)$ is an $R$-partition, 
    $\tcX$ is an $(\alpha,R')$-buffer for $(H,\cX)$,
  \item $(G,\cV)$ is an $(\eps,d,p)$-regular $R$-partition, which is 
    $(\eps,d,p)$-super-regular on $R'$, 
    has one-sided inheritance on~$R'$,
    and two-sided inheritance on~$R'$ for $\tcX$,
  \item $\cI$ and $\cJ$ form
    a $(\rho,\zeta,\Delta,\Delta_J)$-restriction pair.
  \item\label{dbulcon:4} $\tau$ is a $(D,p,\eps n/r_1)$-bounded order for~$H$, $\tcX$,
    $\cI$, $\cJ$ with
    exceptional set~$X^e$.
  \end{enumerate}
  Then there is an embedding $\psi\colon V(H)\to V(G)$ such that
  $\psi(x)\in I_x$ for each $x\in H$.
\end{lemma}

Let us briefly indicate how this lemma performs in practice. As we see in for example Theorem~\ref{thm:MakerBreaker}, we can use it to obtain spanning embeddings in some situations with $D=2\degen(H)+1$, thus with $p\ge C\big(\tfrac{\log n}{n}\big)^{1/(2\degen(H)+1)}$. In~\cite{ABET} we obtain the promised version of Theorem~\ref{thm:ABET} for degenerate graphs, using the same value of $D$. However, to obtain this result, we have to pre-embed some vertices of $H$, before applying Lemma~\ref{lem:degen}, and we thus have to make some alterations to the degeneracy order to obtain an order $\tau$. The neighbours of the pre-embedded vertices then threaten to destroy $(D,p,m)$-boundedness, and it is critical that we can put them into~$X^e$. 

As a second example, if the degeneracy order (or something close to it) on $H$ happens to have the property that all edges go only a short distance, then conditions~\ref{ord:NtX} and~\ref{ord:halfD} become trivially true, and we can obtain much stronger results. For example, in~\cite{ABET} we give a resilience result for $F$-factors in $G(n,p)$ with $p\ge C\big(\tfrac{\log n}{n}\big)^{1/(\degen(F)+3)}$, that is, using Lemma~\ref{lem:degen} with $D=\degen(F)+3$. In this application we do not need any exceptions, and can set
$X^e=\emptyset$.

\subsection{Bijumbled graphs}
\label{sec:results:jumbled}

Finally, we provide a blow-up lemma for embedding bounded degree graphs
into subgraphs of sufficiently bijumbled graphs. As indicated earlier, in
bijumbled graphs we use a stronger notion of regularity, which in addition
to the lower bound also requires an upper bound on the edge density of
subpairs.

\begin{definition}[Regularity in bijumbled graphs]
  In bijumbled graphs, we say that $(X,Y)$ is \emph{$(\eps,d,p)$-regular} if
  there is a $d'\ge d$ such that for any $X'\subset X$ with
  $|X'|\ge\eps |X|$ and $Y'\subset Y$ with $|Y'|\ge\eps|Y|$, we have
  $d_p(X',Y')= d'\pm\eps$. When we want to be it clear that we are working
  with this regularity concept we also call such a pair
  \emph{$(\eps,d,p)$-fully-regular}.
\end{definition}

The reason why we use lower-regularity in random graphs is that the
regularity inheritance we use in random graphs (see Section~\ref{subsec:RI})
provides only lower-regular pairs. On the other hand, we use
full-regularity in bijumbled graphs because the regularity inheritance we
prove for bijumbled graphs (again, see Section~\ref{subsec:RI}) requires
fully-regular pairs.  Unfortunately, we do not know, in either case, how to
prove a regularity inheritance statement which works with the `other'
version of regularity.


All other parts of our proofs work for both lower-regularity and
full-regularity. Since we would like to use many of these parts for
random graphs as well as for bijumbled graphs, we let regular pairs mean lower-regular pairs
whenever we work in random graphs, and fully-regular
pairs whenever we work in bijumbled graphs.

In particular, super-regularity, reduced graphs, inheritance, buffer sets
and image restrictions for bijumbled graphs are defined exactly as for random
graphs, once the regularity concept there is replaced with the regularity
concept defined here. Our blow-up lemma for bijumbled graphs then has
analogous requirements and conclusions as Lemma~\ref{lem:rg_image}, with
the only exception that the image restrictions allowed here are much
weaker: if~$H$ has maximum degree~$\Delta$ we can only image restrict about
a $p^\Delta$-fraction of the vertices in any given partition class of~$H$,
rather than a small constant fraction.

\begin{lemma}[Blow-up Lemma for bijumbled graphs]\label{lem:psr_main}
  For all $\Delta\ge 2$, $\DeltaRp$, $\Delta_J$, $\alpha,\zeta, d>0$, $\kappa>1$
  there exist $\eps,\rho>0$ such that for all $r_1$ there is a $c>0$ such
  that if $p>0$ and 
  \[\beta\le cp^{\max(4,(3\Delta+1)/2)}n\]
  any $(p,\beta)$-bijumbled graph~$\Gamma$ on $n$ vertices satisfies the following.
   
  Let $R$ be a graph on $r\le r_1$ vertices and let $R'\subset R$ be a spanning
  subgraph with $\Delta(R')\leq \DeltaRp$.
  Let $H$ and $G\subset \Gamma$ be graphs given with $\kappa$-balanced,
  size-compatible vertex partitions 
  $\cX=\{X_i\}_{i\in[r]}$ and $\cV=\{V_i\}_{i\in[r]}$, respectively, which have parts of size at
  least $m\ge n/(\kappa r_1)$. Let 
  $\tcX=\{\tX_i\}_{i\in[r]}$ be a family of subsets of $V(H)$, let
  $\cI=\{I_x\}_{x\in V(H)}$ be a family of image restrictions, and
  $\cJ=\{J_x\}_{x\in  V(H)}$  be a family of restricting vertices.
  Suppose that
  \begin{enumerate}[label=\itmarab{JBUL}]
  \item $\Delta(H)\leq \Delta$, $(H,\cX)$ is an $R$-partition,
    $\tcX$ is an
    $(\alpha,R')$-buffer for $(H,\cX)$,
  \item $(G,\cV)$ is an $(\eps,d,p)$-regular $R$-partition, which is 
    $(\eps,d,p)$-super-regular on $R'$, 
    and has one-sided inheritance on~$R'$,
    and two-sided inheritance on~$R'$ for $\tcX$,
  \item $\cI$ and $\cJ$ form
    a $(\rho p^\Delta,\zeta,\Delta,\Delta_J)$-restriction pair.
  \end{enumerate}
  Then there is an embedding $\psi\colon V(H)\to V(G)$ such that $\psi(x)\in
  I_x$ for each $x\in H$.
\end{lemma}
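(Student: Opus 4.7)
The plan is to follow the overall blow-up lemma framework shared with Lemmas~\ref{lem:rg_image} and~\ref{lem:degen}, replacing the $G(n,p)$-specific concentration inputs by their bijumbled counterparts. As in the Koml\'os--S\'ark\"ozy--Szemer\'edi approach, the embedding proceeds in two stages. First, a random subfamily $\cB\subset\bigcup_i \tX_i$ of \emph{actual buffer vertices} is selected from the potential buffer family $\tcX$, chosen to be an independent set of $H$ spread evenly among the parts so that each cluster $V_i$ receives roughly $\alpha|X_i|/2$ buffer vertices. The vertices of $V(H)\setminus\cB$ are then embedded one by one via a random greedy algorithm (RGA) in a carefully chosen order that places each buffer vertex's neighbours late; finally the buffer vertices themselves are inserted via a perfect matching.

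For the RGA stage, each unembedded $x\in X_i$ carries a shrinking \emph{candidate set} $C_x\subset V_i$, initialised to $I_x$ and updated to $C_x\cap N_\Gamma(u;V_i)$ each time a neighbour of $x$ is placed at $u$. When $x$'s turn arrives, it is embedded uniformly at random in $C_x$ minus the already-used images. Two invariants must be maintained throughout: at the moment $x$ is embedded, the set $C_x$ has size at least $\zeta(dp)^{\pitau(x)}|V_i|$, and for every edge $xy\in E(H)$ between still-unembedded vertices the pair $(C_x,C_y)$ remains $(\eps',d,p)$-fully-regular for a suitably weakened $\eps'$. Both invariants are enforced by the regularity inheritance lemmas for bijumbled graphs stated in Section~\ref{subsec:reginherit}, which assert that $(\eps,d,p)$-fully-regular pairs survive intersection with $\Gamma$-neighbourhoods of all but few outside vertices, provided $\Gamma$ is sufficiently bijumbled. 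Iterating up to $\Delta$ levels of restriction, combined with a union bound over bad choices of image at each step, is what forces the quantitative bound $\beta\le cp^{\max(4,(3\Delta+1)/2)}n$: each neighbourhood restriction costs a factor of $p$ in candidate-set size and a factor of order $\beta(p^{k+1}n)^{-1/2}$ in the regularity parameter at level $k$, and the exponent $(3\Delta+1)/2$ encodes the worst-case compound loss when one simultaneously maintains two-sided inheritance for pairs incident to a buffer neighbour.

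In the second stage the buffer vertices are embedded to complete the partial embedding. For each cluster $V_i$ we consider the auxiliary bipartite graph $F_i$ between $\cB\cap \tX_i$ and the still-unused vertices of $V_i$, with $x$ joined to $u$ iff $u\in C_x$ (where $C_x$ at this point is determined by the images of the already-embedded neighbours of $x$, all of which lie in clusters $V_j$ with $ij\in R'$). A perfect matching in each $F_i$ is found by Hall's theorem. A violation of the Hall condition would come either from a small set $S\subset\cB\cap\tX_i$ with a surprisingly small common candidate set, which would force a two-sided inheritance failure for some pair $(V_j,V_k)$ with $jk\in R'$ and thus contradict the hypothesis of two-sided inheritance on $R'$ for $\tcX$, or from a large set $S$, which would contradict the bijumbledness of $\Gamma$ directly via the expander mixing lemma. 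The $(\alpha,R')$-buffer property is exactly what ensures that every buffer vertex's second neighbourhood in $H$ lives on edges of $R'$, so that these two failure modes cover all possibilities.

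The principal technical obstacle will be the quantitative bookkeeping in the RGA stage: one must simultaneously preserve the lower bound $\zeta(dp)^{\pitau(x)}|V_i|$ on candidate-set sizes, preserve full-regularity of all pairs of candidate sets, and control exceptional vertex choices at each of up to $\Delta$ levels of neighbourhood restriction. In $G(n,p)$ this bookkeeping is tight because Chernoff-type concentration is sharp, but in the bijumbled setting only the expander mixing lemma is available, so the loss is polynomial in $p^{-1}$ at each level and compounds to the exponent $(3\Delta+1)/2$; the separate lower bound of $4$ arises from the Hall verification in stage two. A secondary difficulty is the handling of image restrictions, which can only be permitted on a $\rho p^\Delta$-fraction of each class rather than a constant fraction: since each image-restricted vertex already enters the RGA with a candidate set of size roughly $\zeta(dp)^{|J_x|}|V_i|$, the subsequent up to $\Delta-|J_x|$ neighbourhood restrictions would otherwise threaten the size invariant required for Hall's condition at the very end.
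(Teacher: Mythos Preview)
Your overall two-stage architecture (RGA for non-buffer vertices, then a matching for buffer vertices) matches the paper, and the use of full-regularity inheritance in bijumbled graphs is indeed the right replacement for the random-graph inheritance. However, the proposal has a genuine gap at the Hall-verification step, and the accounting for the exponent $(3\Delta+1)/2$ is not correct.

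The central difficulty in the bijumbled setting is that the congestion property $\CON$ used to verify Hall's condition in the random-graph proof is simply false for bijumbled graphs (blow-ups of bijumbled graphs stay bijumbled but fail $\CON$ badly). Your suggested replacements do not work. For the small-set case, a Hall failure means the \emph{union} $\bigcup_{x\in S} C(x)$ is small, and this has nothing to do with two-sided inheritance, which controls \emph{regularity} of pairs $(N_\Gamma(v;V_j),N_\Gamma(v;V_k))$, not sizes of candidate-set unions. For the large-set case, the candidate graph between $\Xbuf_i$ and $V_i$ is not a subgraph of $\Gamma$ (an edge $xv$ records that $v$ lies in a $b$-fold $G$-common neighbourhood of $\psi(N_H(x))$), so the expander mixing lemma applied to $\Gamma$ gives no direct control.

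What the paper actually does is introduce the lopsided neighbourhood size property $\LNS(\eps,r_1,\Delta)$, which holds when $\beta\le cp^{(3\Delta+1)/2}n$ and is the sole source of that exponent. Using $\LNS$, the RGA analysis establishes two ``sum conditions'' on the final candidate graph: one bounding, for each buffer vertex $x$, the sum over $y\in\Xbuf_i$ of $|\Ubuf(y)\cap\Cbuf(x)|/|\Cbuf(y)|$ restricted to atypically large intersections; and a dual one bounding, for most pairs $v,v'\in V_i$, the number of $x\in\Xbuf_i$ for which both $v,v'$ are candidates. Hall's condition for small and large sets is then verified not by a density argument but by running an auxiliary \emph{random greedy matching} and using these sum conditions with Lemma~\ref{lem:coupling} to show it succeeds with positive probability. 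This is a genuinely different mechanism from anything in your sketch.

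Finally, the exponent $4$ does not come from the matching stage; it comes from needing $\RI(\eps,(\eps_{a,b}),\eps',d,r_1,\Delta+1)$ (with $\Delta+1$ rather than $\Delta$), which requires $\beta\le cp^{\Delta+2}n$. The bump to $\Delta+1$ is so that Lemmas~\ref{lem:large_nbs} and~\ref{lem:nonclique_buffer} apply even to clique buffers, eliminating the need for reserved cliques. For $\Delta=2$ this gives $p^4$, which dominates $p^{(3\Delta+1)/2}=p^{7/2}$.
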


We remark that we do not believe that the bound on~$\beta$ in this result
is even close to being best possible (this is discussed further in the
concluding remarks, Section~\ref{sec:opt:jumbled}). However, this blow-up lemma
is the first general embedding result which allows the embedding of
spanning structures in bijumbled graphs (see for example
Theorem~\ref{thm:pseudouniv}). Previously such embedding results were known
only for some special subgraphs~$H$, such as Hamilton
cycles~\cite{KriSud}, triangle factors~\cite{KriSudSza}, or powers of
Hamilton cycles~\cite{ABHKP14}.  Moreover, our blow-up lemma allows for
resilience results in pseudorandom graphs. Previously, the only results for
large subgraphs in this direction that we are aware of deal with cycles. Specifically, Sudakov and Vu~\cite{SudVu} found the local resilience of $(n,d,\lambda)$-graphs with respect to Hamiltonicity, Dellamonica, Kohayakawa, Marciniszyn, and Steger~\cite{dellamonica2008} found the local resilience of bijumbled graphs with respect to containing long cycles, and Krivelevich, Lee and Sudakov~\cite{KriLeeSud}, with a stronger bijumbledness requirement, found the local resilience with respect to pancyclicity (containing cycles of all lengths).

\subsection{Inheritance of regularity}
\label{subsec:reginherit}
The final results we would like to highlight are the regularity inheritance lemmas for random graphs we mentioned earlier. These are necessary not only in many applications of our sparse blow-up lemmas, such as~\cite{ABET}, but are also useful in other random graph contexts, for example in~\cite{CTdense,ABKR}

 Both statements rely crucially on the regularity inheritance work of Gerke, Kohayakawa, R\"odl and Steger~\cite{GKRS}, and also use ideas from~\cite{ChvRand}. The first lemma gives an upper bound on the number of vertices which fail to have one-sided lower-regularity inheritance.
 
   \begin{lemma}[One-sided lower-regularity inheritance in $G(n,p)$]
    \label{lem:oneRI}
    For each $\eps',d>0$ there are $\eps_0>0$ and
    $C$ such that for all $0<\eps<\eps_0$ and $0<p<1$, a.a.s.\
    $\Gamma=G(n,p)$ has the following property. 
    Let $G\subset\Gamma$ be a graph and $X,Y$ be disjoint subsets of
    $V(\Gamma)$. 
    If $(X,Y)$ is $(\eps,d,p)$-lower-regular in~$G$ and
    \[|X|\ge C\max\big(p^{-2},p^{-1}\log n\big) \quad\text{and}\quad|Y|\ge Cp^{-1}\log (en/|X|)\,,\] 
    then 
    for at most $Cp^{-1}\log (en/|X|)$ vertices $z\in V(\Gamma)$ the pair
    $\big(N_\Gamma(z;X),Y\big)$ is not $(\eps',d,p)$-lower-regular in $G$.
  \end{lemma}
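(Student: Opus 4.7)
The plan is to characterise bad vertices $z$ (those for which $(N_\Gamma(z;X),Y)$ fails $(\eps',d,p)$-lower-regularity in $G$) in terms of an unusually large neighbourhood in a small ``low-degree'' subset of $X$, and then to control the number of such $z$ by Chernoff concentration combined with a careful union bound.

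First, I would observe that $z$ is bad precisely when there exist $A\subseteq N_\Gamma(z;X)$ with $|A|\ge\eps'|N_\Gamma(z;X)|$ and $B\subseteq Y$ with $|B|\ge\eps'|Y|$ such that $e_G(A,B)<(d-\eps')p|A||B|$. A standard shrinking argument (repeatedly deleting a vertex from $B$ preserves the density deficit, because on average deleting any vertex decreases $|B|$ by $1$ while decreasing $e_G(A,B)$ by at most the average degree, which is $<(d-\eps')p|A|$) lets me assume $|B|=\lceil\eps'|Y|\rceil$. For each such $B$, define
\[
  L_B:=\bigl\{\,v\in X:\deg_G(v;B)<(d-\tfrac{\eps'}{2})p|B|\,\bigr\}.
\]
Applying the $(\eps,d,p)$-lower-regularity of $(X,Y)$ in $G$ with test sets $L_B$ and $B$ forces $|L_B|<\eps|X|$, once $\eps\le\eps'/2$. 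Moreover, since the average of $\deg_G(\cdot;B)$ on any bad witness $A$ is less than $(d-\eps')p|B|$ while vertices outside $L_B$ contribute at least $(d-\eps'/2)p|B|$, an averaging argument shows that $|A\cap L_B|\ge c_1|A|$ where $c_1=c_1(\eps',d)=\tfrac{\eps'/2}{d-\eps'/2}>0$. Combined with $A\subseteq N_\Gamma(z;X)$ this yields $|N_\Gamma(z;L_B)|\ge c_1\eps'|N_\Gamma(z;X)|$.

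Second, a preliminary Chernoff bound in $\Gamma$ gives a.a.s.\ that $|N_\Gamma(z;X)|=(1\pm\eps)p|X|$ for all but at most $Cp^{-1}\log(en/|X|)/2$ vertices $z$, which I can absorb into the exceptional set. For the remaining $z$, badness implies the existence of $L=L_B$ with $|L|\le\eps|X|$ such that $|N_\Gamma(z;L)|\ge\tau:=c_1\eps'(1-\eps)p|X|/2$. For any fixed $L$ of this size, the indicators $\mathbf{1}[|N_\Gamma(z;L)|\ge\tau]$ for $z\in V(\Gamma)$ are independent and, since the mean $p|L|\le p\eps|X|$ is much smaller than $\tau$ once $\eps$ is small compared to $c_1\eps'$, each has probability at most $\exp(-c_2\tau)$ by Chernoff. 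Both hypotheses on $|X|$ feed in here: $|X|\ge Cp^{-2}$ guarantees $\tau\gtrsim Cp^{-1}\ge 1$ so that the Chernoff regime is non-trivial even for very small $p$, while $|X|\ge Cp^{-1}\log n$ makes $\tau\gtrsim C\log n$ super-logarithmic.

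The last and main obstacle is that the family $\{L_B:|B|=\lceil\eps'|Y|\rceil\}$ depends on the adversarial $G$ and can contain up to $\binom{|Y|}{\lceil\eps'|Y|\rceil}$ distinct sets, so merely bounding $|Z(L)|$ for each fixed $L$ is not enough. To handle this I would run the union bound over triples $(X,B,L)$: for fixed $X$ there are at most $\binom{|X|}{\le\eps|X|}\le 2^{H(\eps)|X|}$ sets $L\subseteq X$ of the relevant size and $\binom{|Y|}{\le\eps'|Y|}\le 2^{H(\eps')|Y|}$ sets $B$, and an additional factor $\binom{n}{K}$ to pick out a hypothetical set of $K=Cp^{-1}\log(en/|X|)$ candidate bad $z$'s. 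Using that these candidate $z$'s contribute independent events, the total failure probability for a fixed $(X,Y)$ is at most
\[
  2^{H(\eps)|X|+H(\eps')|Y|}\binom{n}{K}\exp(-c_2\tau K),
\]
and the cost $2^n$ of choosing $(X,Y)$ is absorbed by taking $\eps$ small and $C$ large: the precise balance is what the hypothesis $|Y|\ge Cp^{-1}\log(en/|X|)$ is tailored for, with $\log(en/|X|)$ appearing both in the lower bound on $|Y|$ and in $K$ exactly so that every factor of the form $|X|$, $|Y|$ or $\log n$ is overcome by $c_2\tau K\gtrsim C^2\log(en/|X|)\cdot|X|$. This is the technical heart of the argument and follows the strategy developed by Gerke, Kohayakawa, R\"odl and Steger~\cite{GKRS}, with the refinement from~\cite{ChvRand} needed to obtain the tight dependence on $|X|$.
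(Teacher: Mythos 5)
Your reduction of bad vertices to the low-degree condition on a small set $L_B\subseteq X$ is clean and correct (the shrinking of $B$, the bound $|L_B|<\eps|X|$ from regularity, and the averaging argument giving $|A\cap L_B|\ge c_1|A|$ all work), but the union bound at the end has a gap that I do not believe can be patched within this framework.

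The issue is that different bad vertices $z$ are certified by different witnesses $(B_z,L_{B_z})$, and the event ``there are at least $K$ bad vertices'' is not contained in the union over single pairs $(B,L)$ of the event ``at least $K$ vertices $z$ satisfy $|N_\Gamma(z;L)|\ge\tau$''. To cover the true event you must union over all assignments $z_i\mapsto L_i$, which replaces your factor $2^{H(\eps)|X|}$ by $\bigl(2^{H(\eps)|X|}\bigr)^K$. For the bound to decay you would then need $c_2\tau > H(\eps)|X|\ln 2 + K^{-1}\log\binom{n}{K}$, and since $\tau\approx c_1\eps' p|X|$ this forces $p\gtrsim H(\eps)$. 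But in the lemma the quantifier order is ``there exist $\eps_0,C$ such that for all $\eps<\eps_0$ \emph{and all} $p$'', so $\eps$ cannot depend on $p$, and the inequality fails whenever $p=o(1)$. (The alternative of invoking pigeonhole to force many bad $z$ onto a common witness fails similarly: it only yields $K/\bigl(2^{H(\eps)|X|+H(\eps')|Y|}\bigr)$ vertices on one witness, which can be less than $1$.) The separate $2^{H(\eps')|Y|}$ factor you introduce has the same pathology: $|Y|$ is only bounded below, so $H(\eps')|Y|$ may exceed $\tau K\approx C|X|\log(en/|X|)$ when $|X|$ is small and $|Y|$ is linear in $n$.

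The paper takes a different route precisely to avoid this. Rather than enumerate witnesses inside $X$, it uses Lemma~\ref{lem:shrinkirreg} to pass from $Y$ to a set $Y^*$ of size $\Theta\bigl(p^{-1}\log(en/|X|)\bigr)$ on which irregularity is preserved; the point of shrinking $Y$ (not $X$ or a witness set) is that the number of $\Gamma$-edges between $X'$ and $Y^*$ is then $O(p|X||Y^*|)$ by Lemma~\ref{cl:inh:dist}, so the union bound over subgraphs $G$ restricted to $X'\times Y^*$ costs only $2^{O(p|X||Y^*|)}$, which is commensurate with the probability gain. Crucially, the probability that a given $z$ remains bad after this reduction is then controlled by the Gerke--Kohayakawa--R\"odl--Steger inheritance theorem (Theorem~\ref{thm:OneSideInherit}), not by a Chernoff bound on a sub-neighbourhood: the GKRS theorem asserts that a uniformly random $w$-subset of $X$ inherits lower-regularity with $Y^*$ except with probability $\beta^w$, and $N_\Gamma(z;X')$ is exactly such a random subset. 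Your plan does not invoke GKRS at all, and I do not see how a pure Chernoff-plus-witness-enumeration argument can recover the needed $\beta^{p|X|}$-type failure probability while keeping the witness enumeration affordable.
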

  
  The corresponding two-sided inheritance lemma is the following.
  
  \begin{lemma}[Two-sided lower-regularity inheritance in $G(n,p)$]
    \label{lem:twoRI}
    For each $\eps',d>0$ there are $\eps_0>0$ and
    $C$ such that for all $0<\eps<\eps_0$ and $0<p<1$, a.a.s.\
    $\Gamma=G(n,p)$ has the following property. 
    Let $G\subset\Gamma$ be a graph and $X,Y$ be disjoint subsets of
    $V(\Gamma)$. 
    If $(X,Y)$ is $(\eps,d,p)$-lower-regular in~$G$ and
    \[|Y|\ge|X|\ge C\max\big(p^{-2},p^{-1}\log n\big)\,,\]
    then there are at most $C\max\big(p^{-2},p^{-1}\log (en/|X|)\big)$ vertices $z\in V(\Gamma)$ such that
    $\big(N_\Gamma(z;X),N_\Gamma(z;Y)\big)$ is not $(\eps',d,p)$-lower-regular in $G$.
  \end{lemma}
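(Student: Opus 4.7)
The plan is to reduce Lemma~\ref{lem:twoRI} to a combination of Lemma~\ref{lem:oneRI} and a direct sub-pair inheritance argument in the spirit of Gerke, Kohayakawa, R\"odl, and Steger~\cite{GKRS}. Choose auxiliary parameters $0<\eps_1\ll\eps_2\ll\eps'$ so that $(\eps_2,d,p)$-lower-regularity is preserved, with very high probability, under further sub-pair restriction to $(\eps',d,p)$-lower-regularity when the restricted sides are not too small, and so that Lemma~\ref{lem:oneRI} applied with target $\eps_2$ produces only the claimed number of exceptional vertices.

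In the first step, apply Lemma~\ref{lem:oneRI} to $(X,Y)$ with target regularity parameter $\eps_2$: a.a.s.\ at most $C'p^{-1}\log(en/|X|)$ vertices $z\in V(\Gamma)$ are \emph{first-stage bad}, in the sense that $\bigl(N_\Gamma(z;X),Y\bigr)$ is not $(\eps_2,d,p)$-lower-regular in $G$. Place all such $z$ into the exceptional set. It remains to bound the number of first-stage good $z$ for which the further restriction $\bigl(N_\Gamma(z;X),N_\Gamma(z;Y)\bigr)$ nevertheless fails to be $(\eps',d,p)$-lower-regular in $G$. Such a failure yields subsets $A\subseteq N_\Gamma(z;X)$ and $B\subseteq N_\Gamma(z;Y)$ of relative density at least $\eps'$ with $e_G(A,B)<(d-\eps')p|A||B|$. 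Following~\cite{GKRS,ChvRand}, I reduce to enumerating a suitably compressed family of \emph{witness} pairs $(A',B')\subseteq X\times Y$ of sizes on the order of $p^{-1}$, where $A$ and $B$ essentially coincide with $N_\Gamma(z;A')$ and $N_\Gamma(z;B')$. For each such witness, a Chernoff bound on $e_\Gamma\bigl(N_\Gamma(z;A'),N_\Gamma(z;B')\bigr)$, combined with the containment $G\subseteq\Gamma$ and the $(\eps_2,d,p)$-lower-regularity of $\bigl(N_\Gamma(z;X),Y\bigr)$ from the first stage, limits the number of bad witnessing $z$ to something exponentially small in $p|A'|$. A union bound over the witness family then produces the claimed $\max(p^{-2},p^{-1}\log(en/|X|))$ total.

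The main obstacle is that the final a.a.s.\ statement must be uniform across all choices of the subgraph $G$ and the pair $(X,Y)$; accordingly, every bound used must be stated purely as a property of $\Gamma$ and transferred to $G$ via $G\subseteq\Gamma$ together with the assumed $(\eps,d,p)$-lower-regularity of $(X,Y)$ in $G$. The delicate balance is between the entropy of enumerating witness pairs $(A',B')$ inside $(X,Y)$, the additional entropy of enumerating $(X,Y)$ inside $V(\Gamma)$, and the Chernoff tails: the hypothesis $|X|\ge Cp^{-1}\log n$ supplies enough exponential decay to absorb the union bound on $(X,Y,A',B')$, while the hypothesis $|X|\ge Cp^{-2}$ ensures that the relevant neighborhoods $N_\Gamma(z;A')$ and $N_\Gamma(z;B')$ have size at least a large constant, so that the GKRS sub-pair inheritance actually applies. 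The $p^{-2}$ term in the final bound reflects the regime where $|X|$ and $|Y|$ are near this size threshold and witnesses can no longer be meaningfully enlarged.
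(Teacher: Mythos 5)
Your high-level plan (reduce to Lemma~\ref{lem:oneRI}, then handle the remaining $Y$-side restriction by a sub-pair/witness argument) is a reasonable first thought, but as stated it leaves the central difficulty unaddressed, and the specific mechanisms you describe do not fit together.

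The real obstacle is the entropy of the union bound over the subgraph $G\subset\Gamma$. For a fixed $z$ that is first-stage good, the probability (via GKRS / Corollary~\ref{cor:TwoSideInherit}) that $\bigl(N_\Gamma(z;X),N_\Gamma(z;Y)\bigr)$ is irregular is roughly $\beta^{p|X|}$. But the number of choices of a lower-regular $G$ between $X$ and $Y$ is of order $2^{p|X||Y|}$, so a direct union bound over $(G,X,Y)$ and a bad set $Z$ of size $T$ gives a factor $2^{p|X||Y|}\beta^{p|X|T}$, which is only small when $T\gtrsim |Y|$ --- uselessly large. Your remark that the hypothesis $|X|\ge Cp^{-1}\log n$ "supplies enough exponential decay to absorb the union bound on $(X,Y,A',B')$" does not meet this: the bottleneck is the $2^{p|X||Y|}$ coming from $G$, not the entropy of placing $(X,Y)$. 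The paper's proof resolves this by first shrinking $(X,Y)$ to small sets $X^*,Y^*$ of size $m=C'\max(p^{-2},p^{-1}\log(en/|X|))$ so that the $G$-entropy between them is only $pm^2$, and then --- because even the $n^{O(m)}$ cost of placing $X^*,Y^*,Z'$ is too expensive when $p^{-1}\log(en/|X|)$ is small --- by upgrading the bad object to a triple $(X',Y',Z')$ containing a $\tfrac14$-fraction of all $m$-sized subpairs as bad pairs (obtained from the full strength of Lemma~\ref{lem:shrinkirreg}) followed by a Markov argument over random partitions. None of that machinery appears in your outline, and without it the union bound does not close.

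Two further mismatches: your "compressed witness" $(A',B')\subseteq X\times Y$ of size $\sim p^{-1}$ with $A$, $B$ "essentially coinciding with $N_\Gamma(z;A')$, $N_\Gamma(z;B')$" is not coherent --- $N_\Gamma(z;A')$ is a subset of $A'$ of expected size $p|A'|\sim 1$, far too small to witness an irregularity of the pair $\bigl(N_\Gamma(z;X),N_\Gamma(z;Y)\bigr)$, and the claimed probability bound "exponentially small in $p|A'|$" is then exponentially small in a constant. Also, your first-stage application of Lemma~\ref{lem:oneRI} yields only the $p^{-1}\log(en/|X|)$ part of the final bound; the $p^{-2}$ term (which cannot be removed, by the Huang--Lee--Sudakov construction cited at the end of the section) must come from somewhere in the second stage, and your sketch does not account for it. The paper obtains it because the GKRS lemmas require set sizes $\ge C/p$, forcing $pm\ge C/p$ and hence $m\ge Cp^{-2}$. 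Finally, note the paper does not go through one-sided inheritance at all: it attacks the two-sided problem directly with Corollary~\ref{cor:TwoSideInherit}.
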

  
 These two results are quite similar to~\cite[Proposition~15]{ChvRand}. The
 crucial difference are the bounds on the sizes of~$X$ and~$Y$ as well as
 the bound on the number of vertices~$z$ that do not preserve
 regularity. We remark that the bounds given
 in~\cite[Proposition~15]{ChvRand} are (roughly) equivalent to our bounds
 when~$p$ is as small as possible in our blow-up lemmas, that is
 $p=\Theta\big((\log n/n)^{1/\Delta}\big)$, but for bigger~$p$ our results are stronger.
 
 We would like to stress that these results give sharp bounds, up to the constant $C$, on the number of vertices which may fail to inherit lower-regularity. We prove both theorems, and that they are sharp, in Section~\ref{subsec:RI}.
 
 Finally, we note that corresponding inheritance lemmas (which take as input and give as output fully-regular pairs) for bijumbled graphs were first proved by Conlon, Fox and Zhao~\cite{CFZ}. The improved versions of these lemmas which we state in Section~\ref{subsec:RI} were proved in~\cite{ABSS}.

\section{Proof overview}
\label{sec:proof_overview} 

The following is a high-level overview of the proofs of our blow-up
lemmas. 
In these proofs we merge ideas that were also used in proofs of dense
blow-up lemmas~\cite{BoeKohTarWue,KSS_bl,RR99} and of the sparse embedding lemma
in~\cite{ChvRand} with many new
ingredients.
Of course there are differences between the proofs of
Lemmas~\ref{lem:rg_image}, \ref{lem:degen}, and~\ref{lem:psr_main} but at
the level of this overview we will avoid mentioning most of them. 

In each of our blow-up lemmas we want to embed a graph~$H$, which is given
with a partition~$\cX$ and potential buffer sets~$\tcX$, into a graph
$G\subseteq\Gamma$ with a compatible super-regular partition~$\cV$,
possibly with some image restrictions, and where~$\Gamma$ is a sparse
random or bijumbled graph.  To avoid technical details, we will in this
overview largely ignore the image restrictions; these turn out not to play a large r\^ole in the proofs.  Our embedding strategy
for~$H$ is comprised of different randomised embedding procedures. Before we
can apply them though, we need to prepare the graphs~$H$ and~$G$ by
subpartitioning their partition classes suitably (details of what we need can be found
in Sections~\ref{subsec:partH} and~\ref{subsec:partG}).

We start by subpartitioning the partition classes of~$H$ such that any pair
of vertices in a part of the new partition is at distance at least ten,
which is possible by a trick first used by Alon and F\"uredi~\cite{AlFu}, and in a blow-up lemma setting by R\"odl and Ruci\'nski~\cite{RR99},
that relies on the Hajnal-Szemer\'edi theorem
(Theorem~\ref{thm:HajSze}). Having only distant vertices in a part will
provide sufficient independence of these vertices in our randomised
embedding procedures.  To get a compatible new partition of~$G$, we
subpartition the clusters of~$G$ randomly.

Next, we subpartition each cluster~$V_i$ of $G$ randomly into several parts
\[V_i=\Vmain_i\dcup \Vq_i\dcup \Vc_i\dcup \Vbuf_i\,.\] The first of these
sets is large (of size $(1-3\mu)|V_i|$ for some small $\mu$), while the
remaining three are much smaller (of size $\mu|V_i|$). Because the subpartitionings were performed
randomly, subparts of super-regular pairs (given by~$R'$) maintain
super-regularity. The reason for doing this is that we perform the embedding in stages, and these stages require separate parts.   This idea is also used
in~\cite{BoeKohTarWue}. 

We also partition $X_i$ into a large part $\Xmain_i$ (of size at most
$(1-4\mu)|X_i|$) and a small part $\Xbuf_i$ (of size $4\mu|X_i|$), called
the set of \emph{buffer vertices}. The latter set is required to be a subset of the
potential buffer vertices $\tX_i$ in $X_i$. We also require various extra
properties of $\Xbuf_i$. In the proof of the random graphs
blow-up lemma,
Lemma~\ref{lem:rg_image}, we also include another small subpart~$\Xc_i$ in
this partitioning step. This is due to the fact that in this proof we have
to treat the case that most of the vertices $\tX_i$ are contained in copies
of $K_{\Delta+1}$ specially (in order to obtain the claimed bound on the
probability~$p$). We need the sets~$\Vc_i$ to embed this small subpart.

At this point~$H$ and~$G$ are prepared for the embedding. We now
describe our randomised embedding approach.  Firstly, we make use of a
\emph{random greedy algorithm} (RGA) to embed $\Xmain_i$ into
$\Vmain_i$ (see Section~\ref{sec:rga} for the simplest version used in the proof of Lemma~\ref{lem:rg_image}). An algorithm of this type was also
used by Koml\'os, S\'ark\"ozy and Szemer\'edi~\cite{KSS_bl} to prove the
dense blow-up lemma. The RGA embeds the sets~$\Xmain_i$ vertex by vertex,
in each step avoiding some \emph{bad} vertices of~$G$ and embedding the
current vertex~$x\in\Xmain_i$ into its so-called \emph{candidate set}.  If
$\psi$ is the \emph{partial embedding} of~$H$ into~$G$ that we have
constructed so far, then the candidate set $C(x)\subset V_i$ is the set of
vertices adjacent in $G$ to each $\psi(y)$ such that~$y$ is an embedded
neighbour of~$x$ in~$H$. However, some of the vertices in~$C(x)$ may have been used as images
for other vertices already, so we let $A(x)=C(x)\setminus\im(\psi)$ be
the \emph{available candidate set} for $x$. Obviously, $x$ has to be
embedded into~$A(x)$ in order to obtain an embedding.

In order to succeed with this strategy, we have to maintain certain
properties for the partial embedding $\psi$, which we call \emph{good
  partial embedding} properties (see Section~\ref{sec:gpe}).  We remark
that the properties of a restriction pair ensure that the trivial partial
embedding, which we have at the beginning when no vertices of $H$ are
embedded yet, is a good partial embedding.  This fact means that in the
proofs we usually do not need to distinguish between vertices which are and
are not image restricted (and it also justifies that we ignore image
restrictions in this overview).

Among other properties, such as regularity properties,
in a good partial embedding we require that for each unembedded~$x\in X_i$ the
set~$C(x)$ is large, that is of size $\Omega(d^\ell p^\ell|\Vmain_i|)$
where~$\ell$ is the number of already embedded neighbours of~$x$. In order
to maintain this property we need to avoid certain bad vertices $B(x)$ when
embedding~$x$. More precisely, when we embed~$x$ this leads to a change of
the candidate sets~$C(y)$ of unembedded neighbours~$y$ of~$x$. The set $B(x)$
contains the vertices~$v\in V_i$ that would lead to some $C(y)$ becoming small (or
have some other bad properties that would prevent us from maintaining a
good partial embedding; see Section~\ref{subsec:bad_vertices}). The RGA then
embeds~$x$ \emph{uniformly at random} into $\Vmain_i\cap A(x)\setminus
B(x)$.

That we choose the images randomly helps us in several ways. Most
immediately, we can use it to show that the sets $C(y)$ for unembedded
vertices $y\in\Xmain_i\cup\Xbuf_i$ are `uniformly' distributed over
$\Vmain_i$. In particular, this means that they tend not to be contained
entirely in $\im(\psi)$, which, together with the fact that $B(y)$ is always small, implies that the set $\Vmain_i\cap
A(y)\setminus B(y)$ to which we wish to embed a future $y\in\Xmain_i$ is
usually not small.

Unfortunately though, the RGA will not succeed in embedding every vertex of
$\Xmain_i$, because occasionally we will come across a vertex $x$ such that
$\Vmain_i\cap A(x)\setminus B(x)$ is small. We put such vertices~$x$ in a
\emph{queue} $\Xq_i\subseteq\Xmain_i$. The use of such a queue appears
already in~\cite{KSS_bl}.  The `uniform' distribution of the sets $C(x)$
allows us to show that $\Xq_i$ remains much smaller than the so far
untouched $\Vq_i$. We will show
that this allows us to embed
$\Xq_i$ into $\Vq_i$, maintaining a good partial embedding. We note that in
the proof of the random graphs blow-up lemma, Lemma~\ref{lem:rg_image}, we
perform this embedding after we have embedded all other vertices of
$\Xmain_i$, using a matching strategy similar to that of Kohayakawa, R\"odl,
Schacht and Szemer\'edi~\cite{ChvRand}. In the proofs of the other two
blow-up lemmas, Lemma~\ref{lem:degen} and~\ref{lem:psr_main}, however, we
embed the vertices of $\Xmain_i$ in the given order, again using a
random greedy strategy.
In either case, the underlying idea is to show
that the only reason why we might fail to embed $\Xq_i$ into $\Vq_i$ is
that~$G$ contains a `dense spot', contradicting the fact that our random or
bijumbled $\Gamma$ does not contain such dense spots.

At this point, all vertices of~$\Xmain_i$ are embedded, and it remains to embed
the carefully chosen buffer vertices $\Xbuf_i$. Since any two buffer
vertices are at distance at least $10$ in $H$, in particular all neighbours
of buffer vertices have been embedded, so the candidate set
$C(x)$ of a buffer vertex~$x$ will not change anymore, and it suffices to
find a system of distinct representatives for the available candidate sets $A(x)$, which we achieve
by verifying Hall's condition. Thus, for each set $Y\subset\Xbuf_i$, we
need to show that the union $W$ of the available candidate sets for $y\in
Y$ satisfies $|W|\ge|Y|$. We separate three cases: $Y$ is either a small
fraction of $\Xbuf_i$, or most of $\Xbuf_i$, or somewhere
intermediate. In the first case we use essentially the same argument as
when embedding the queue vertices. In the last case we show that the
`uniform' distribution of the sets $C(x)$ for $x\in\Xbuf_i$ implies
that~$W$ is most of $V_i\setminus\im(\psi)$. 

To handle the remaining case when $Y$ contains most or all of $\Xbuf_i$, we show
that the RGA gives us an extra property: for every vertex $v\in V_i$ there
are many vertices $x\in\Xbuf_i$ such that $v$ is a candidate for $x$.  This is the point in the proof where we use the
super-regularity and inheritance properties that $G$ satisfies on the
subgraph $R'$ of $R$.  We also remark that in
the case of the random graphs blow-up lemma, Lemma~\ref{lem:rg_image}, we
cannot in general establish this property: When most of the vertices
in~$\tX_i$ are contained in copies of $K_{\Delta+1}$ we obtain the claimed
feature only for most vertices instead of all vertices $v\in V_i$. Hence in
this case, to recover the special property, we perform an additional
embedding stage, using the sets~$\Xc_i$ and~$\Vc_i$, to fix these
\emph{buffer defects} (see Section~\ref{sec:fixbuffer}). This additional embedding stage also uses the super-regularity and inheritance properties of $R'$. We do not use these properties elsewhere in the proof.  Finally, using
again that~$\Gamma$ and hence~$G$ does not have `dense spots' we can show
that the above described extra property implies $|W|\ge|Y|$ as desired.

Summarising, our blow-up lemma proofs contain three main embedding procedures:
the random greedy algorithm, the queue embedding, and the embedding of the
buffer vertices. In the proof of Lemma~\ref{lem:rg_image} we also perform
an additional embedding procedure for fixing buffer defects.

\medskip

One important difference between the proofs of Lemmas~\ref{lem:rg_image} and~\ref{lem:psr_main}, and of Lemma~\ref{lem:degen}, is that in the former we choose the order of embedding vertices in the RGA, whereas in the latter an order is given to us. We change it only by moving the vertices $\Xbuf_i$ to the end of the order.

\chapter{Tools, preparation, and setup}
\label{chap:toolsetc}

In this chapter we lay the foundations for the proofs of our blow-up lemmas.
We start in Section~\ref{sec:Tools} by providing various tools we need: a
collection of concentration inequalities, a suitable variant of the
Hajnal-Szemer\'edi theorem on equitable partitions of graphs, and regularity
inheritance lemmas in random and in bijumbled graphs. In
Section~\ref{sec:pseudo} we then establish various deterministic properties
that the random graph or a bijumbled graph enjoys and which we use in our
proofs. In Section~\ref{sec:gensetup} we describe the general setup that is
used in the proofs of our three blow-up lemmas. In
Section~\ref{sec:RGAlemmas} finally we collect a series of lemmas which are
useful for analysing the various random greedy algorithms our proofs use.

\section{Tools}
  \label{sec:Tools}
  
  \subsection{Probabilistic inequalities}
  We need the following forms of Chernoff's inequality and of the hypergeometric
  inequality.
  Recall that if $X$ is hypergeometrically distributed with parameters $N$, $n$ and $m$, then $\Exp X=mn/N$.

  \begin{theorem}[Corollary~2.4, Theorems~2.8 and~2.10 
   from~\cite{JLRbook}]
   \label{thm:chernoff}
  Suppose
  $X$ is a random variable which is either the sum of a collection of independent Bernoulli random variables, or is hypergeometrically distributed.
  Then we have for $\delta\in(0,3/2)$
 \[\Pr\big(X>(1+\delta)\Exp  X\big)<e^{-\delta^2\Exp X/3}\quad\text{and}\quad
 \Pr\big(X<(1-\delta)\Exp  X\big)<e^{-\delta^2\Exp X/3}
 \,.\]
  We also have, for any $t\ge 6\Exp X$, 
  \[\Pr\big(X\ge \Exp X+t)\le e^{-t}\,.\]
  \end{theorem}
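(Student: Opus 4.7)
The plan is to establish all three inequalities using the standard Cram\'er--Chernoff moment generating function method, treating the binomial and hypergeometric cases in turn. Write $\mu=\Exp X$. First, suppose $X=\sum_{i=1}^N X_i$ with $X_i\sim\mathrm{Bernoulli}(p_i)$ independent. For any $s>0$, Markov's inequality applied to $e^{sX}$ together with independence and the elementary bound $\Exp e^{sX_i}=1+p_i(e^s-1)\le\exp\bigl(p_i(e^s-1)\bigr)$ gives
$$\Pr\bigl(X\ge(1+\delta)\mu\bigr)\le\exp\Bigl(\mu(e^s-1)-s(1+\delta)\mu\Bigr).$$
Optimising with $s=\log(1+\delta)$ yields the classical bound $\bigl(e^\delta/(1+\delta)^{1+\delta}\bigr)^\mu$, and a routine Taylor expansion of $(1+\delta)\log(1+\delta)-\delta$ at $\delta=0$ shows this is at most $\exp(-\delta^2\mu/3)$ on the interval $\delta\in(0,3/2)$. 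A symmetric argument, now optimising $\Exp e^{-sX}$, gives the matching lower-tail inequality.

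For the hypergeometric case, I would invoke Hoeffding's comparison theorem: if $X$ is hypergeometric with mean $\mu$ and $B$ is binomial with the same mean, then $\Exp\phi(X)\le\Exp\phi(B)$ for every convex $\phi$. Applying this with $\phi(x)=e^{sx}$ reduces the hypergeometric tail bound to the one just obtained in the binomial case, so both concentration inequalities transfer verbatim.

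Finally, the bound $\Pr(X\ge\mu+t)\le e^{-t}$ for $t\ge 6\mu$ follows from the upper-tail inequality applied with $\delta=t/\mu\ge 6$. Plugging into $\bigl(e^\delta/(1+\delta)^{1+\delta}\bigr)^\mu$, it suffices to verify $(1+\delta)\log(1+\delta)-\delta\ge\delta$, equivalently $\log(1+\delta)\ge 2\delta/(1+\delta)$. The derivative of the difference is $(\delta-1)/(1+\delta)^2$, so the function is increasing for $\delta>1$, and at $\delta=6$ one has $\log 7\approx 1.95$ while $12/7\approx 1.71$, so the inequality holds with room to spare.

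There is no genuine obstacle here: these bounds are textbook material, which is why the theorem is cited directly from~\cite{JLRbook}. The only mild care required is the numerical verification of the constants ($1/3$ in the exponent of the first bound, and $6$ in the threshold for the $e^{-t}$ bound), both of which reduce to elementary calculus on explicit one-variable functions.
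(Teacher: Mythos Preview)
Your argument is correct and is essentially the standard textbook derivation; indeed, the paper does not give its own proof of this theorem but simply cites it from~\cite{JLRbook}, where the argument proceeds exactly along the Cram\'er--Chernoff lines you outline (exponential Markov, optimisation in $s$, and numerical verification of the constants), together with Hoeffding's convexity comparison to transfer the bounds to the hypergeometric distribution.
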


  We will often want to bound above the sum of a sequence of Bernoulli
  random variables $Y_1,\ldots,Y_n$ coming from some process which are
  \emph{not} independent, but which have the following \emph{sequentially
    dependent} structure. Suppose $\hist_0,\dots,\hist_n$ are increasing
  `histories' of the process, that is, information on all the random
  choices made in the process up to given increasing times in the
  process. Suppose that for each $1\le i\le n$, the value of the random variable $Y_i$
  is determined by $\hist_i$, and that we have functions
  $p_i=p_i(\hist_{i-1})$ such that $\Exp[Y_i|\hist_{i-1}]\le p_i$ holds
  \emph{almost surely}, that is, with probability~$1$. If $\sum_{i=1}^np_i$ is almost surely bounded above by
  $x$, then the following lemma claims the same upper tail bound on
  $\sum_{i=1}^nY_i$ holds as we would get from Theorem~\ref{thm:chernoff}
  if the $Y_i$ were independent and the sum of their expectations were $x$,
  and also gives the same lower tail bound as Theorem~\ref{thm:chernoff} under similar conditions.
  
  It is convenient to phrase this lemma in terms of a sequence of
  partitions $\cF_0,\ldots,\cF_n$, each refining the previous, of a
  probability space $\Omega$. For the connection to processes and
  histories, observe that any finite stochastic process is associated with
  the finite probability space of all possible outcomes, with the
  probability measure coming from the process. The possible histories of
  the process up to any given time $t$ naturally give a partition of this
  probability space, and two histories up to an earlier and later time give
  two partitions, the first refined by the second. Thus the following lemma
  indeed applies to sequentially dependent random variables as described
  above.
  
  We note that this lemma could be phrased in terms of a filtration and
  random variables measurable with respect to elements of the filtration,
  which might be more familiar to readers with a background in probability.
  That we do not use this notation here is purely to avoid defining these
  concepts. Furthermore, we remark that the lemma is essentially a
  super/submartingale inequality, proved in the standard way.\footnote{We
    would like to thank Oliver Riordan and Ori Gurel-Gurevich for pointing
    this out to us.} However, we did not find this particular inequality in
  the literature, so give a proof from first principles here.

  \begin{lemma}[Sequential dependence lemma]\label{lem:coupling} 
    Let $\Omega$ be a finite probability space, and $\cF_0,\dots,\cF_n$ be
    partitions of $\Omega$, with $\cF_{i-1}$ refined by $\cF_i$ for each
    $i\in[n]$. For each $i\in[n]$ let $Y_i$ be a Bernoulli random
    variable on $\Omega$ which is constant on each part of $\cF_i$, and let
    $p_i$ be a real-valued random variable on $\Omega$ which is constant on
    each part of $\cF_{i-1}$. Let~$x$ be a real number, $\delta\in(0,3/2)$, and
    $X=Y_1+\dots+Y_n$.
    \begin{enumerate}[label=\abc]
      \item\label{coupling:a}
        If $\sum_{i=1}^n p_i\le x$ holds almost surely, and $\Exp [Y_i|\cF_{i-1}]\le
        p_i$ holds almost surely for all $i\in[n]$, then 
        \[\Pr\big(X>(1+\delta) x \big)<e^{-\delta^2 x/3}\,.\]
      \item\label{coupling:b}
        If $\sum_{i=1}^n p_i\ge x$ holds almost surely, and $\Exp [Y_i|\cF_{i-1}]\ge
        p_i$ holds almost surely for all $i\in[n]$, then
        \[\Pr\big(X<(1-\delta)x \big)<e^{-\delta^2 x/3}\,.\]
    \end{enumerate}
  \end{lemma}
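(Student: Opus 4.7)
The plan is to prove both parts by the classical exponential moment method, adapted to the sequentially dependent setting by constructing an appropriate supermartingale with respect to the increasing sequence of partitions $(\cF_i)_{i=0}^n$. The core object is, for a parameter $\lambda$ to be chosen depending on whether we want an upper or lower tail bound, the random variable
\[
M_i := \exp\bigl(\lambda X_i - (e^\lambda - 1) S_i\bigr), \qquad X_i := \sum_{j=1}^i Y_j, \quad S_i := \sum_{j=1}^i p_j,
\]
normalised so that $M_0 = 1$.

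For part (a), I would take $\lambda > 0$ and first show that $(M_i)$ is a supermartingale. On each part of $\cF_{i-1}$ the values of $M_{i-1}$ and $p_i$ are constants, so the elementary Bernoulli bound $\Exp[e^{\lambda Y_i}\mid\cF_{i-1}] \le 1 + p_i(e^\lambda - 1) \le \exp\bigl(p_i(e^\lambda - 1)\bigr)$, which follows from $\Exp[Y_i\mid\cF_{i-1}] \le p_i$ combined with $1+u\le e^u$, gives $\Exp[M_i \mid \cF_{i-1}] \le M_{i-1}$. Hence $\Exp M_n \le M_0 = 1$. Since $\lambda > 0$ and $S_n \le x$ almost surely, we may replace $S_n$ by $x$ to conclude $\Exp[e^{\lambda X_n}] \le \exp\bigl((e^\lambda-1)x\bigr)$. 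Markov's inequality applied to $e^{\lambda X_n}$, the choice $\lambda = \log(1+\delta)$, and the standard elementary estimate $(1+\delta)\log(1+\delta) - \delta \ge \delta^2/3$ valid for $\delta \in (0, 3/2)$ then yield the required upper tail bound.

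Part (b) follows by the mirror argument with $\lambda < 0$. The point to keep track of is that $e^\lambda - 1 < 0$, so the hypothesis $\Exp[Y_i \mid \cF_{i-1}] \ge p_i$ again gives $\Exp[e^{\lambda Y_i} \mid \cF_{i-1}] \le 1 + p_i(e^\lambda-1) \le \exp\bigl(p_i(e^\lambda - 1)\bigr)$ and the same $(M_i)$ is still a supermartingale. The almost sure bound $S_n \ge x$ now combines with $-(e^\lambda - 1) > 0$ to give $\Exp[e^{\lambda X_n}] \le \exp\bigl((e^\lambda-1)x\bigr)$, and Markov's inequality with $\lambda = \log(1-\delta)$, together with $(1-\delta)\log(1-\delta) + \delta \ge \delta^2/2$, completes the proof.

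The main thing to get right is the sign-tracking in the two cases; everything else is a direct adaptation of the classical Chernoff computation. The key conceptual input is that the $\cF_{i-1}$-measurability of $p_i$ (that is, constancy on parts of $\cF_{i-1}$) lets the conditional expectation computation go through exactly as in the independent case, while the almost sure bound on $\sum_i p_i$ plays the r\^ole of the deterministic sum of expectations in the classical statement.
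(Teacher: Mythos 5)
Your proof is correct, and it takes a genuinely different route from the one in the paper, although both are in the Chernoff/Bernstein family. The paper proves, by induction on $n$ with Jensen's inequality applied to the concave function $t\mapsto\ln(1-t+te^u)$, the binomial-type moment bound
\[
\Exp e^{uX}\le\Bigl(1-\tfrac{x}{n}+\tfrac{x}{n}e^u\Bigr)^n,
\]
and then optimises over $u$. You instead construct the explicit exponential supermartingale $M_i=\exp\bigl(\lambda X_i-(e^\lambda-1)S_i\bigr)$, so that a single appeal to $\Exp M_n\le 1$ and the a.s.\ bound on $S_n$ gives the Poisson-type moment bound $\Exp e^{\lambda X}\le e^{(e^\lambda-1)x}$. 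Your bound is slightly weaker before optimisation, but both yield $\delta x-(1+\delta)\ln(1+\delta)x$ and $-\delta x-(1-\delta)\ln(1-\delta)x$ respectively after choosing the optimal parameter, and the elementary estimates $(1+\delta)\ln(1+\delta)-\delta\ge\delta^2/3$ on $(0,3/2)$ and $(1-\delta)\ln(1-\delta)+\delta\ge\delta^2/2$ on $(0,1)$ close both cases (with part~(b) being vacuous for $\delta\ge 1$, as in the paper). The paper's authors in fact remark in a footnote that the result ``is essentially a super/submartingale inequality, proved in the standard way'' but give a from-first-principles proof avoiding martingale language; your argument is exactly that standard route, made explicit. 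The trade-offs are clear: your approach is shorter, more conceptual, and generalises readily (e.g.\ to bounded rather than Bernoulli increments via a Hoeffding-style MGF bound), whereas the paper's inductive computation is self-contained and retains the sharper binomial bound at the intermediate stage, though it throws this extra precision away when it passes to the final elementary estimate. One small point that you glossed over but that checks out: since $Y_i$ is Bernoulli, $0\le\Exp[Y_i\mid\cF_{i-1}]\le1$, which keeps $1+(e^\lambda-1)p_i$ positive in all the relevant cases, so the elementary inequality $1+u\le e^u$ applies without any sign issue.
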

  \begin{proof}
    We start with~\ref{coupling:a}. We shall first show by induction on $n$
    that if $\sum_{i=1}^n p_i\le x$ holds almost surely and $\Exp [Y_i|\cF_{i-1}]\le
        p_i$ holds almost surely for all $i\in[n]$, then we have the following bound on the
    moment generating function of~$X$ for every $u\ge 0$:
    \begin{equation}\label{coupling:moment}
      \Exp e^{uX}=\Exp\prod_{i=1}^n e^{uY_i}\le \Big(1-\frac{x}{n}+\frac{x}{n}e^{u}\Big)^n\,.
    \end{equation}
    For $n=1$ we have $\Pr(Y_1=1|\cF_0)\le p_1\le x$ almost surely, hence
    $\Pr(Y_1=1)\le x$ almost surely. We conclude that indeed
    \[\Exp e^{uY_1}=1-\Pr(Y_1=1)+e^u\Pr(Y_1=1)\le 1-x+xe^u\,,\]
    because $1-t+te^u$ is non-decreasing in~$t$.

    For $n\ge 2$, we shall use for each $F\in\cF_1$ with $\Pr(F)>0$, 
    the
    induction hypothesis applied to the $n-1$ random variables
    $p_2|F,\dots,p_n|F$ and the $n-1$ random variables
    $Y_2|F,\dots,Y_n|F$. This is possible because,
    since $\Pr(F)>0$, we have $\sum_{i=2}^np_i|F\le x-p_1|F$ almost
    surely and $\Exp[Y_i|F,\cF_{i-1}]\le p_i|F$
    almost surely for each $2\le i\le n$.
    We conclude by induction that
    \begin{equation}
      \label{eq:coupling:ind}
      \begin{split}
      \Exp\prod_{i=1}^ne^{uY_i}
      &=\sum_{F\in\cF_1}\Pr(F)e^{uY_1|F}\Exp\Big[\prod_{i=2}^ne^{uY_i}\Big|F\Big]\\
      &\le\sum_{F\in\cF_1}\Pr(F)e^{uY_1|F}\bigg(1-\frac{x-p_1|F}{n-1}+\frac{x-p_1|F}{n-1}e^u\bigg)^{n-1}\,.
    \end{split}\end{equation}
    Further, because~$Y_1$ is constant on each part~$F$ of~$\cF_1$ we have
    for each $F_0\in\cF_0$ that
    \begin{multline*}
      \sum_{F\in\cF_1,F\subset F_0} \Pr(F) e^{uY_1|F} 
      =\Pr(F_0) \Exp[e^{uY_1}|F_0] \\
      =\Pr(F_0)\big(1-\Pr(Y_1=1|F_0)+e^u\Pr(Y_1=1|F_0)\big)
      \le\Pr(F_0)(1-p_1|F_0+e^up_1|F_0)\,,
    \end{multline*}
    where the inequality uses that $1-t+te^u$ is non-decreasing in~$t$. 
    Because~$p_1$ is constant on each part~$F_0$ of~$\cF_0$ it thus follows from~\eqref{eq:coupling:ind} that
    \begin{multline}\label{eq:moment:est}
        \Exp\prod_{i=1}^ne^{uY_i}
        \le\sum_{F_0\in\cF_0}\sum_{F\in\cF_1,F\subset F_0}
        \Pr(F)e^{uY_1|F}\Big(1-\frac{x-p_1|F_0}{n-1}+\frac{x-p_1|F_0}{n-1}e^u\Big)^{n-1}\\
        \le\sum_{F_0\in\cF_0}\Pr(F_0)\big(1-p_1|F_0+e^up_1|F_0\big)\Big(1-\frac{x-p_1|F_0}{n-1}+\frac{x-p_1|F_0}{n-1}e^u\Big)^{n-1}\,.
    \end{multline}
    Since $f(t)=\ln(1-t+te^u)$ is a concave function in $t$ it follows from
    Jensen's inequality that
    \[f(t)+(n-1)f\Big(\frac{x-t}{n-1}\Big)\le n\cdot
    f\Big(\frac1n\Big(t+(n-1)\frac{x-t}{n-1}\Big)\Big)=n\cdot f\Big(\frac
    xn\Big)\,,\]
    for any $t\in[0,1]$.  Substituting this, with $t=p_1|F_0$ for each
    $F_0\in\cF_0$, into~\eqref{eq:moment:est}
    yields~\eqref{coupling:moment}, as desired.

    We are now in a position to apply Bernstein's method of applying
    Markov's inequality to the moment generating function. Since
    $e^{uz}$ is strictly increasing in~$z$ for each $u>0$,
    we have
    \begin{equation}\begin{split}\label{eq:coupling:upper}
      \Pr\big(X>(1+\delta)x\big)=\Pr\big(e^{uX}>e^{(1+\delta)ux}\big)&\le \big(\Exp e^{uX}\big) e^{-(1+\delta)ux}\\
      &\leByRef{coupling:moment} \big(1-\tfrac{x}{n}+\tfrac{x}{n}e^u\big)^ne^{-(1+\delta)ux}\,,
    \end{split}\end{equation}
    if $u>0$,
    where the first inequality is Markov's inequality. We may assume
    $(1+\delta)x<n$, since otherwise $\Pr\big(X>(1+\delta)x\big)=0$ and the lemma
    statement is trivial. Thus we can choose $u$ such that
    $e^u=\tfrac{(1+\delta)(n-x)}{n-(1+\delta)x}$, plug that into the right
    hand side of~\eqref{eq:coupling:upper} and obtain
    \begin{multline*}
      \Pr\big(X>(1+\delta)x\big)\le
      \left(1+ \tfrac{\delta x}{n-(1+\delta)x} \right)^n \left(\tfrac{(1+\delta)(n-x)}{n-(1+\delta)x}\right)^{-(1+\delta)x}\\
      =\left(1+ \tfrac{\delta x}{n-(1+\delta)x} \right)^{n-(1+\delta)x}
         \left(\left(1+ \tfrac{\delta x}{n-(1+\delta)x} \right)^{-1} \left(\tfrac{(1+\delta)(n-x)}{n-(1+\delta)x}\right)\right)^{-(1+\delta)x}\\
      \le e^{\delta x}(1+\delta)^{-(1+\delta)x}\le e^{-\delta^2 x/3}\,,
    \end{multline*}   
    where the last inequality uses
    $(1+\delta)\ln(1+\delta)-\delta\ge\delta^2/3$. This proves~\ref{coupling:a}.
   
    The proof of~\ref{coupling:b} is similar. By the analogous calculation, we have
    \[\Exp e^{u(n-X)}\le\big(1-\tfrac{n-x}{n}+\tfrac{n-x}{n}e^{u}\big)^n\,,\]
    and again Bernstein's method, this time choosing
    $e^u=\tfrac{n-x+\delta x}{(n-x)(1-\delta)}$, gives the desired result
    for $\delta\in(0,1)$:
    \begin{multline*}
      \Pr\left(X<(1-\delta)x\right)=\Pr\left(n-X>n-x+\delta
        x\right)=\Pr\left(e^{u(n-X)}>e^{u(n-x+\delta x)}\right)\\
      \begin{aligned}
      & \le\Exp[e^{u(n-X)}] e^{-u(n-x+\delta x)}\le\big(1-\tfrac{n-x}{n}+\tfrac{n-x}{n}e^{u}\big)^ne^{-u(n-x+\delta x)}\\
      &=\left(1+\tfrac{n-x}{n}\left(\tfrac{\delta
            n}{(n-x)(1-\delta)}\right)\right)^{n} \left(\tfrac{n-x+\delta
          x}{(n-x)(1-\delta)}\right)^{-(n-x+\delta x)}\\
      & = (1-\delta)^{-n}\left(\tfrac{n-x+\delta x}{(n-x)(1-\delta)}\right)^{-(n-x+\delta x)} 
       =\left(1-\delta\right)^{-(1-\delta)x} \left(\tfrac{n-x+\delta 
          x}{n-x}\right)^{-(n-x+\delta x)} \\
      &\le\left(1-\delta\right)^{-(1-\delta)x} \left(1+\tfrac{\delta
          x}{n-x}\right)^{-(n-x)}
      \le\left(1-\delta\right)^{-(1-\delta)x} e^{-\delta x}
      \le e^{-\delta^2 x/3}\,.
    \end{aligned}
    \end{multline*}   
    Note that in this case we only need to consider $\delta\in(0,1)$ as the
    probability of $X<0$ is zero, so this gives~\ref{coupling:b}.
  \end{proof}
  
 \subsection{Equitable partitions}

 The Hajnal-Szemer\'edi theorem states that any graph~$F$ has a
 $k$-colouring with equitable colour classes for every $k\ge(\Delta(F)+1)$.

  \begin{theorem}[Hajnal-Szemer\'edi~\cite{HajSze70}]\label{thm:HajSze} 
    Given any graph $F$ and $k\ge\Delta(F)+1$, there is an equitable
    partition $V(F)=V_1\dcup\cdots\dcup V_k$ such that each part is
    independent.
  \end{theorem}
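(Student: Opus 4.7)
The plan is to prove the theorem by induction on $|V(F)|$, reducing first to the clean case $k=\Delta(F)+1$ with $|V(F)|=km$ divisible by $k$. If $k>\Delta(F)+1$ then I would pad $F$ with isolated vertices until the vertex count is divisible by $k$, apply the theorem at $k$ colours (where $k\ge\Delta(F)+1$ still holds), and then remove the isolated vertices from their classes; since at most $k-1$ padding vertices were added and they are split across distinct classes, the resulting partition of $V(F)$ is still equitable. Henceforth assume $k=\Delta+1$ and $n:=|V(F)|=km$.

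A greedy argument produces a proper $k$-colouring of $F$, and an easy iterative rebalancing (swapping vertices between classes whose sizes differ by more than one, exploiting that at least one of the $k=\Delta+1$ colours is always ``safe'' for any given vertex) turns this into a \emph{nearly equitable} proper colouring, meaning the class sizes all lie in $\{m-1,m,m+1\}$ with at most one class at $m+1$ and at most one at $m-1$. If the resulting colouring is already equitable we are done; otherwise there is exactly one class $V_k$ of size $m+1$ and one class $V_1$ of size $m-1$, and what remains is the key lemma: any such nearly equitable but non-equitable proper colouring can be converted into an equitable one.

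To prove the key lemma I would build an auxiliary digraph $D$ on the colour classes by placing an arc $V_i\to V_j$ whenever some vertex $v\in V_j$ has no neighbour in $V_i$ (so that $v$ could be legally recoloured with colour $i$). If $D$ contains a directed path from $V_k$ to $V_1$, then I would shift transferable vertices simultaneously along each arc of this path; the net effect is to move one vertex from $V_k$ to $V_1$ while preserving properness of the colouring, and the class sizes are now all equal to $m$, as required.

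The hard case is when no such directed $V_k$-to-$V_1$ path exists in $D$. Let $A$ be the set of classes reachable from $V_k$ in $D$ and $B$ its complement, so $V_k\in A$ and $V_1\in B$; by definition every vertex in every class of $B$ has a neighbour in every class of $A$. Double-counting the edges between $A$ and $B$, combined with the degree bound $\Delta=k-1$ and the sizes of the classes, forces a rigid structure: a large family of near-$K_k$ configurations straddling $A$ and $B$. The main obstacle is then to exploit this rigidity to perform a more elaborate ``T-transform'' swap, recolouring a vertex of $V_k$ into a class of $A$ while compensatingly shifting several other vertices, ultimately producing either an equitable proper colouring directly or a smaller instance to which the inductive hypothesis applies. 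This final structural-surgery step is by far the most delicate part of the argument and is where I would closely follow the Kierstead--Kostochka framework for the Hajnal--Szemer\'edi theorem.
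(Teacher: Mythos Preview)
The paper does not prove this theorem: it is quoted from~\cite{HajSze70} and used as a black box (it feeds into Lemma~\ref{lem:nicepartition} and, via the Alon--F\"uredi trick, into the preprocessing Lemma~\ref{lem:matchreduce}). So there is nothing in the paper to compare your argument against.

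On your sketch itself: the overall shape is the Kierstead--Kostochka proof, which is a correct modern route to Hajnal--Szemer\'edi. A few points are worth tightening. First, your reduction paragraph only justifies passing to $|V(F)|$ divisible by $k$; the further assumption $k=\Delta+1$ is asserted without a reduction. In fact no reduction is needed---every step below only uses $\Delta\le k-1$---so simply drop that assumption rather than pretend to reduce to it. Second, the claim that greedy colouring plus ``iterative rebalancing'' yields a nearly equitable colouring with \emph{exactly one} overfull and \emph{exactly one} underfull class does not follow from the one-vertex moves you describe; in the Kierstead--Kostochka argument this configuration is instead produced by the induction (delete a vertex, apply the inductive hypothesis to $F-v$, reinsert $v$). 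Third, and most importantly, your own description concedes that the decisive step---the structural analysis when no $V_k\to V_1$ path exists in the auxiliary digraph---is not carried out but deferred to ``the Kierstead--Kostochka framework''. That step is the entire content of the theorem; everything before it is routine. As written, the proposal is an accurate outline of where the difficulty lies, but not a proof.
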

  
  For preparing the graph~$H$ for the embedding in the proofs of our
  blow-up lemmas, we require an equitable partitioning result similar to
  the Hajnal-Szemer\'edi theorem (which we shall apply to an auxiliary
  graph~$F$ defined for each part of the given partition of~$H$).  The
  difference is that we want to specify a subset~$X$ of the vertices of~$F$
  and obtain an equitable partition of~$F$ that also equitably
  partitions~$X$ (this subset will be the buffer set). Since for us it is
  not essential to obtain a sharp bound on the number of parts required,
  this result is not difficult to deduce from the Hajnal-Szemer\'edi theorem.

\begin{lemma}\label{lem:nicepartition}
  Given any graph $F$ and a subset $X$ of $V(F)$, there is an equitable
  partition $V(F)=V_1\dcup\dots\dcup V_{8\Delta(F)}$ such that each part is
  independent and the sets $V_i\cap X$ form an equitable partition of $X$.
\end{lemma}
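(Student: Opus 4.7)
Let $\Delta=\Delta(F)$ and $k=8\Delta$. My plan is to apply Theorem~\ref{thm:HajSze} twice---once to $F[X]$ and once to $F[Y]$ where $Y:=V(F)\setminus X$---and then splice the two resulting equitable independent partitions together by a Hall-type matching argument. This automatically makes the intersection with $X$ equitable, and the slack $k=8\Delta$ (compared to the threshold $\Delta+1$ of Hajnal--Szemer\'edi) is what gives room for the matching to exist.

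Assuming for the moment that $|X|,|Y|\ge k$, Theorem~\ref{thm:HajSze} applied to $F[X]$ with $k$ classes (which is valid since $\Delta(F[X])\le\Delta\le k-1$) yields $F$-independent sets $X_1,\dots,X_k$ of sizes in $\{\lfloor|X|/k\rfloor,\lceil|X|/k\rceil\}$, and similarly $F[Y]$ yields $Y_1,\dots,Y_k$. I would then form the bipartite \emph{compatibility} graph $B$ on vertex set $\{X_1,\dots,X_k\}\dcup\{Y_1,\dots,Y_k\}$, putting an edge $X_iY_j$ whenever there is no $F$-edge between $X_i$ and $Y_j$. Any perfect matching $\pi$ in $B$ produces the required partition $V_i:=X_i\cup Y_{\pi(i)}$, each part $F$-independent (each of $X_i$, $Y_{\pi(i)}$ is independent, and there are no crossing edges by definition of $B$). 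Hall's condition for $B$ is easy to verify: a violating $S\subseteq[k]$ would force at least $k-|S|+1$ edges from $\bigcup_{i\in S}X_i$ to $Y$, but this union contains at most $|S|\lceil|X|/k\rceil$ vertices and hence sends at most $\Delta|S|(|X|/k+1)$ edges in $F$; the choice $k=8\Delta$ makes these inequalities incompatible whenever $|X|$ is at least a small constant multiple of $k$.

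The matching $\pi$ produced above already gives equitable $X$-intersections ($|V_i\cap X|=|X_i|$), but a priori the total sizes $|V_i|=|X_i|+|Y_{\pi(i)}|$ only lie in the three-valued set $\{M,M+1,M+2\}$, where $M=\lfloor|X|/k\rfloor+\lfloor|Y|/k\rfloor$. To exclude the simultaneous occurrence of $M$ and $M+2$ (and thus achieve equitability), $\pi$ must pair precisely $\max(0,a+b-k)$ of the ``large'' $X_i$'s with ``large'' $Y_j$'s, where $a,b$ are the numbers of large classes on each side. I would incorporate this as a type constraint: split $B$ into four types (large/small on each side) and verify Hall's condition on the refined four-block bipartite graph, again using the same edge count with the slack from $k=8\Delta$. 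Finally, the degenerate cases $|X|<k$ or $|Y|<k$ are handled by hand: place the few elements of the small side one per class cyclically, then extend to an equitable $F$-independent partition of the remainder via a greedy modification of a Hajnal--Szemer\'edi partition of $F[Y]$ (resp.\ $F[X]$), using the $\ge7\Delta$ colors available to avoid the few placed vertices.

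The step I expect to be most delicate is the combined matching: plain Hall's theorem delivers a matching that makes $|V_i\cap X|$ equitable and keeps $|V_i|$ within $2$ of equitable, but refining this to within $1$ requires the additional type bookkeeping. The comfortable constant $k=8\Delta$ is chosen precisely so that the four-block Hall check still goes through; a tighter constant is presumably possible but unnecessary for our purposes.
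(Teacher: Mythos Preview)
Your Hall verification does not go through: the compatibility requirement---that there be \emph{no} $F$-edge at all between $X_i$ and $Y_j$---is far too restrictive once the parts are non-trivially large, and the slack $k=8\Delta$ does not rescue it. Concretely, take $\Delta=1$ (so $k=8$), let $F$ be the perfect matching $\{x_iy_i:1\le i\le 64\}$, and set $X=\{x_1,\dots,x_{64}\}$. Since $F[X]$ and $F[Y]$ are empty, Hajnal--Szemer\'edi may return $X_j=\{x_i:i\equiv j\pmod 8\}$ and $Y_{j'}=\{y_{8(j'-1)+1},\dots,y_{8j'}\}$; then $x_{8(j'-1)+j}\in X_j$ has its matching edge into $Y_{j'}$ for every pair $(j,j')$, so your graph $B$ has no edges whatsoever. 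Algebraically, the lower bound on forced $F$-edges (even the sharper $|S|(k-|S|+1)$ rather than your $k-|S|+1$) only gives $k-|S|+1\le \Delta\lceil |X|/k\rceil$, which is no contradiction as soon as $|X|/k\ge 2$. Partitioning $X$ and $Y$ independently and then matching blocks cannot work, because a single cross-edge kills compatibility between two blocks and each part sends $\Theta(\Delta|X|/k)$ edges across.

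The paper avoids this by proceeding vertex by vertex rather than block by block. It applies Hajnal--Szemer\'edi only to $F[X]$ (assuming without loss of generality $|X|\le v(F)/2$), then greedily adds the vertices of $Y$ one at a time to the existing parts, maintaining independence; each new vertex has at least $7\Delta$ available parts. Among all such extensions it picks a most equitable one and shows by a two-step shifting argument that it is already equitable: if $V_1$ is smallest, only $\Delta|V_1|\le v(F)/8$ vertices of $Y$ are blocked from moving to $V_1$, so at least $3v(F)/8$ vertices of $Y$ are movable, spread over at least $\tfrac32\Delta$ parts of size $\le |V_1|+1$; hence from any oversized part one can route a vertex through such an intermediate part into $V_1$.
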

\begin{proof}
  We may assume without loss of generality that $|X|\le v(F)/2$, since
  otherwise we could replace $X$ with $V(F)\setminus X$.  We now first use
  the Hajnal-Szemer\'edi theorem to get an equitable partition of $F[X]$
  into $8\Delta(F)$ independent parts.  We proceed by adding the remaining
  vertices of $F$ to these parts, maintaining their independence, to obtain a
  partition $V_1\dcup\dots\dcup V_{8\Delta(F)}$ of $V(F)$.  Among all
  possible such partitions we choose the one which is as equitable as
  possible. Observe that in this process it is always possible to put every
  vertex into some part: for any vertex, there are at most $\Delta(F)$
  parts to which it cannot be added, so at least $7\Delta(F)$ parts exist
  to which it can be added to form an independent set.
 
  We are now done if there exist no pair of parts $V_i,V_j$ whose sizes
  differ by two or more. Suppose $V_1$ is the smallest part.  Observe that
  there are at most $\Delta(F)|V_1|$ vertices in $V(F)\setminus X$ which
  have a neighbour in~$V_1$.  Since $8\Delta(F)|V_1|\le v(F)$ it follows that there are at least $\frac38v(F)$
  vertices in $V(F)\setminus X$ which have no neighbour in~$V_1$.

  Assume for contradiction that there are parts $V_i$ with
  $|V_i|\ge|V_1|+2$. Observe that for each such~$i$ every vertex of
  $V_i\setminus X$ is adjacent to some vertex of $V_1$, or we would be able
  to move a vertex from $V_i\setminus X$ to $V_1$ and obtain a more
  equitable partition.  Hence the at least $\frac38v(F)$ vertices in
  $V(F)\setminus X$ which have no neighbour in~$V_1$ are all contained in
  parts of size at most $|V_1|+1\le 2|V_1|$, and thus lie in at least
  $\tfrac32\Delta(F)$ different parts. Now fix a set $V_i$ with
  $|V_i|\ge |V_1|+2$, and let $v$ be any vertex of $V_i\setminus X$ (which
  exists because we started with an equitable partition of $X$). Since $v$
  has at most $\Delta(F)<\tfrac32\Delta(F)$ neighbours, there is a set
  $V_k$ which contains no vertex adjacent to $v$, and which contains a
  vertex $w$, not in $X$, with no neighbours in $V_1$. We replace $V_i$
  with $V_i\setminus\{v\}$, $V_k$ with $V_k\cup\{v\}\setminus \{w\}$, and
  $V_1$ with $V_1\cup \{w\}$. The result is a more equitable partition, a
  contradiction.
\end{proof} 

  \subsection{Regularity inheritance}\label{subsec:RI}

  In our blow-up lemmas we require certain regularity inheritance
  properties. In this subsection we justify that we can obtain these in
  subgraphs of random or bijumbled graphs. More precisely, we provide
  inheritance lemmas stating that most vertices in a (sparse) regular
  partition satisfy these inheritance properties. This is important for the
  correctness of our embedding procedures proving the blow-up lemmas.

  Recall that, according to our definition, regularity in random graphs and
  regularity in bijumbled graphs are different things: lower-regularity in
  the former and full-regularity in the later. Recall also that the sole
  reason for this difference is that we can only establish regularity
  inheritance for the respective version, that is, that the lemmas provided
  in this subsection require lower-regularity when they concern random graphs
  and full-regularity when they concern bijumbled graphs. Accordingly, in
  this subsection we shall make the two different regularity concepts
  explicit and talk about lower-regularity and full-regularity for clarity.

  For subgraphs~$G$ of bijumbled graphs~$\Gamma$ we can simply rely on the
  following inheritance lemmas from~\cite{ABSS}. These lemmas consider
  three disjoint vertex sets $X,Y,Z$ such that~$(X,Y)$ forms a
  fully-regular pair. The first lemma is a one-sided regularity inheritance
  statement and states that for most vertices $z\in Z$ the pair
  $\big(N_\Gamma(z;X),Y\big)$ inherits full-regularity.

  \begin{lemma}[One-sided inheritance in bijumbled graphs~{\cite[Lemma~3]{ABSS}}]\label{lem:jrione}
    \mbox{} \\
    For each $\eps',d>0$ there are $\eps_0,c_0>0$ such that for all
    $0<\eps\le\eps_0$ and $0<p<1$ the
    following holds.
    Let $G\subset \Gamma$ be graphs and $X,Y,Z$ be disjoint subsets of
    $V(\Gamma)$.  If $(X,Y)$ is $(\eps,d,p)$-fully-regular in~$G$ and $\Gamma$ is
    \[\Big(p,c_0p^{2}(\log_2\tfrac{1}{p})^{-1/2}\sqrt{\max\big(|X||Y|,|X||Z|\big)}\Big)\text{-bijumbled}\]
    then for at most $\eps'|Z|$
    vertices~$z$ of~$Z$ the pair $\big(N_\Gamma(z;X),Y\big)$ is not
    $(\eps',d,p)$-fully-regular in~$G$.
  \end{lemma}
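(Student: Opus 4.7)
The plan is to bound, for each appropriate direction of deviation, the number of vertices $z\in Z$ that can witness failure of $(\eps',d,p)$-fully-regularity of $\bigl(N_\Gamma(z;X),Y\bigr)$ in $G$. A bad $z$ provides sets $A'_z\subseteq N_\Gamma(z;X)$ and $B_z\subseteq Y$ with $|A'_z|\ge\eps'|N_\Gamma(z;X)|$ and $|B_z|\ge\eps'|Y|$ such that $e_G(A'_z,B_z)$ differs from $d'p|A'_z||B_z|$ by at least $\tfrac12\eps' p|A'_z||B_z|$, where $d'$ is the density value coming from full-regularity of $(X,Y)$. My goal is to aggregate these witnesses across many $z$ and obtain a quantity that is controlled, on the one hand, by bijumbledness of $\Gamma$ together with regularity of $(X,Y)$, and on the other hand forced by the deviation hypothesis to be large, yielding a contradiction once the number of bad $z$ exceeds $\eps'|Z|$.

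First I would discard the atypical $z$: applying the bijumbledness of $\Gamma$ to pairs $(X,W)$ with $W\subseteq Z$ shows that only very few $z\in Z$ have $\deg_\Gamma(z;X)$ far from $p|X|$, so I may assume $\deg_\Gamma(z;X)=(1\pm o(1))p|X|$ for all remaining $z$, and hence $|A'_z|=\Omega(\eps' p|X|)$. Next, by writing $A'_z=N_\Gamma(z;X)\cap A_z$ for $A_z=A'_z\cup(X\setminus N_\Gamma(z;X))$, we may replace the $z$-dependent witness in $N_\Gamma(z;X)$ by the intersection with a set $A_z\subseteq X$. A dyadic decomposition on $|A_z|$ and $|B_z|$, together with a choice of deviation sign, lets me pigeonhole a subset $Z^*\subseteq Z$ of at least $|Z'|/O(\log^2(1/\eps'))$ bad $z$ whose witnesses share common size scales and deviation direction.

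The main calculation is then to compare two expressions for $\sum_{z\in Z^*}e_G\bigl(N_\Gamma(z;X)\cap A_z,B_z\bigr)$. The lower bound forced by the deviation is at least $\tfrac12\eps' p\cdot p|A_z||B_z|\cdot|Z^*|$ in absolute value away from the ``expected'' $d' p\cdot p|A_z||B_z|\cdot|Z^*|$. The upper bound comes from rewriting the sum as $\sum_{xy\in E(G[A_z,B_z])}|N_\Gamma(x;Z^*)\cap\{\,z:x\in A_z, y\in B_z\,\}|$ and controlling the codegrees via the $(p,\beta)$-bijumbledness of $\Gamma$ applied to pairs between $X$ and $Z^*$, combined with full-regularity of $(X,Y)$ in $G$ to estimate $e_G(A_z,B_z)=(d'\pm\eps)p|A_z||B_z|$. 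To pass from ``one $A_z$ per $z$'' to a bound in terms of $|X||Y|$ and $|X||Z|$, I would further pigeonhole the sets $A_z$ into classes of nearly identical sets, or alternatively use a direct codegree calculation based on $\sum_{x\in X}\deg_\Gamma(x;Z^*)^2$ bounded by bijumbledness.

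The hard part will be making the error accounting actually match the stated bijumbledness hypothesis $\beta\le c_0 p^2(\log_2(1/p))^{-1/2}\sqrt{\max(|X||Y|,|X||Z|)}$. The $p^2$ factor arises because the deviation is measured against $p^2|A_z||B_z|$ (one $p$ from $|A'_z|\sim p|A_z|$ and one $p$ from the expected density in $G$), so the bijumbledness error $\beta\sqrt{|X|\cdot\max(|Y|,|Z|)}$ must be forced smaller than this. The awkward $(\log_2(1/p))^{-1/2}$ factor is what lets me absorb a union bound over the $O(\log(1/p))$ dyadic scales needed for the sizes of $A_z$ and $B_z$ when these sets are of order $\eps' p|X|$ and $\eps'|Y|$ respectively. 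The cleanest way to implement this is to combine the pigeonholing with a Cauchy-Schwarz step so that each dyadic level contributes only $\log^{1/2}$ to the error, giving the stated exponent.
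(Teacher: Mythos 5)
The paper does not prove Lemma~\ref{lem:jrione}: it is imported verbatim from~\cite[Lemma~3]{ABSS}, as the citation in the lemma header indicates, so there is no in-paper proof against which your proposal can be compared. The most closely related argument in the paper is the proof of its random-graph analogue, Lemma~\ref{lem:oneRI}, but that proof is different in kind: it assumes a counterexample, shrinks $X,Y,Z$ to a small structured ``bad object'' using Lemma~\ref{lem:shrinkirreg} and Theorem~\ref{thm:OneSideInherit} (the GKRS inheritance theorem), and then takes a union bound to show that this structure a.a.s.\ does not appear in $G(n,p)$. That route is closed for bijumbled $\Gamma$, both because it is intrinsically probabilistic and because, as the paper itself notes, Theorem~\ref{thm:OneSideInherit} fails for full-regularity. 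So your instinct to try a deterministic double-counting and Cauchy--Schwarz argument in the spirit of Conlon--Fox--Zhao is the right general direction.

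That said, there are concrete problems with the proposal as written. First, you cannot ``pigeonhole the sets $A_z$ into classes of nearly identical sets'': there are $2^{|X|}$ candidate subsets, and no pigeonhole of polynomial size is available. The workable alternative you mention---a codegree bound on $\sum_{x\in X}\deg_\Gamma(x;Z^*)^2$---is the mechanism such proofs actually use, but you never carry out the step that connects it to the $z$-dependent witnesses $(A_z,B_z)$; one has to apply Cauchy--Schwarz over the $z$-index in a way that discards the $z$-dependence of the witness sets in favour of a universal quantity, and that step is precisely where the argument either closes or fails. Second, your explanation of the $(\log_2\tfrac1p)^{-1/2}$ factor does not hold up: the dyadic scales of $|A'_z|$ and $|B_z|$ that you would union over range, as fractions of $|N_\Gamma(z;X)|$ and $|Y|$ respectively, over intervals of ratio $O(1/\eps')$, so they give $O(\log(1/\eps'))$ scales, not $\Theta(\log(1/p))$. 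Note that $|A'_z|\le|N_\Gamma(z;X)|\approx p|X|$, so these witnesses never reach size $\Theta(|X|)$, which is what would be needed for a $\log(1/p)$-length dyadic scale. The $\log(1/p)$ in the hypothesis must come from a different place---plausibly an iterated inheritance over the roughly $\log_2(1/p)$ density scales intermediate between $|X|$ and $p|X|$---and your outline does not identify it, so as presented the proposal cannot reproduce the stated bijumbledness requirement.
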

  
  The second lemma requires stronger bijumbledness for establishing two-sided
  regularity inheritance.  It states that for most $z\in Z$ the
  pair $\big(N_\Gamma(z;X),N_\Gamma(z;Y)\big)$ inherits full-regularity.

  \begin{lemma}[{Two-sided inheritance in bijumbled graphs~\cite[Lemma~4]{ABSS}}]\label{lem:jritwo}
    \mbox{} \\
    For each $\eps',d>0$ there are $\eps_0,c_0>0$ such that for all
    $0<\eps<\eps_0$ and $0<p<1$ the following holds.
    Let $G\subset \Gamma$ be graphs and $X,Y,Z$ be disjoint subsets of
    $V(\Gamma)$.  If 
    $(X,Y)$ is $(\eps,d,p)$-fully-regular in~$G$ and $\Gamma$ is
    \[\Big(p,c_0p^3\sqrt{\max\big(|X||Y|,|X||Z|,|Y||Z|\big)}\Big)\text{-bijumbled}\]
    then 
    for at most $\eps'|Z|$ vertices~$z$ of~$Z$ the pair
    $\big(N_\Gamma(z;X),N_\Gamma(z;Y)\big)$ is not $(\eps',d,p)$-fully-regular in~$G$.
  \end{lemma}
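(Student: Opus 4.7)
The plan is to mirror the strategy by which the one-sided analogue Lemma~\ref{lem:jrione} is proved, combining two applications of one-sided inheritance with an additional bijumbledness estimate that controls the simultaneous restriction of both sides of the pair to $\Gamma$-neighbourhoods of $z$. First, by the standard bijumbledness-based vertex-degree estimate, for all but at most $\tfrac{\eps'}{10}|Z|$ vertices $z\in Z$ we have $|N_\Gamma(z;X)|=(1\pm\eps'/10)p|X|$ and $|N_\Gamma(z;Y)|=(1\pm\eps'/10)p|Y|$; this uses $\beta\ll p^{3/2}\sqrt{|X||Z|}$ and the analogous bound with $Y$, both of which are comfortably implied by the hypothesis. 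In particular, for such $z$ any witness subset $A\subset N_\Gamma(z;X)$ of relative density $\eps'$ has absolute size $\ge\tfrac{\eps'p}{2}|X|$, and similarly for $B\subset N_\Gamma(z;Y)$.

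Next, I would apply Lemma~\ref{lem:jrione} with the pair $(X,Y)$ and the third set $Z$, and also with the pair swapped, choosing an auxiliary parameter $\eps''\ll\eps'$. Since the hypothesis here with $p^3$ is strictly stronger than what Lemma~\ref{lem:jrione} requires (with $p^2$, possibly $\log$-adjusted), both applications are admissible. This yields that all but $\eps''|Z|$ vertices $z\in Z$ satisfy simultaneously that $(N_\Gamma(z;X),Y)$ and $(X,N_\Gamma(z;Y))$ are $(\eps'',d,p)$-fully-regular in $G$ with a common density parameter $d'\ge d$ close to the density of $(X,Y)$.

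The remaining task — transferring from $(N_\Gamma(z;X),Y)$ to $(N_\Gamma(z;X),N_\Gamma(z;Y))$ — cannot be read off Lemma~\ref{lem:jrione} directly, since the relevant "root" would be the singleton $\{z\}$. Instead, I would argue by contradiction: assume that more than $\eps'|Z|$ "bad" vertices $z$ (each already good in the two senses above) carry witness sets $A_z\subset N_\Gamma(z;X)$, $B_z\subset N_\Gamma(z;Y)$ of the minimum permitted sizes with $|d_p(A_z,B_z)-d'|>\eps'$. After halving and orienting the deviation, one compares
\[
\sum_{z\text{ bad}} e_G(A_z,B_z)\;=\;\sum_{xy\in E(G[X,Y])}\bigl|\{z\in Z:x\in A_z,\,y\in B_z\}\bigr|
\]
with the corresponding expected sum obtained by replacing $e_G(A_z,B_z)$ with $d'p|A_z||B_z|$. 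The count $|\{z:x\in N_\Gamma(z;X),\,y\in N_\Gamma(z;Y)\}|$ equals the codegree $|\comN_\Gamma(x,y;Z)|$, which by two applications of the bijumbledness condition is $(1+o(1))p^2|Z|$ for all but a negligible fraction of pairs $(x,y)$; this is precisely where the $p^3$ factor in the hypothesis is consumed.

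The main obstacle will be handling the $z$-dependence of $(A_z,B_z)$: unlike the one-sided case, here the witness sets vary on both sides. I would overcome this by a rounding step, quantising each $A_z$ and $B_z$ by their intersection pattern with a pre-chosen partition of $X$ and $Y$ into blocks of size roughly $(\eps')^2p|X|$ and $(\eps')^2p|Y|$, respectively. The number of "types" so produced is a constant (depending only on $\eps'$ and $d$), so a pigeonhole yields many bad $z$ with a common witness pair $(A,B)$, and the counting identity above then contradicts the full-regularity of $(X,Y)$ together with the codegree estimate from bijumbledness. The stronger bound $\beta\le c_0p^3\sqrt{\max(|X||Y|,|X||Z|,|Y||Z|)}$ — as compared with the $p^2$-bound of Lemma~\ref{lem:jrione} — reflects exactly this double restriction, since the counting now passes through paths of length two in $\Gamma$ at both endpoints of every $G$-edge being counted.
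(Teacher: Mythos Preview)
First, note that this paper does not actually prove Lemma~\ref{lem:jritwo}: it is quoted from~\cite{ABSS} and used as a black box (see the sentence preceding Lemma~\ref{lem:jrione}). So there is no ``paper's own proof'' to compare against here; I can only assess your sketch on its own merits.

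Your first two steps --- degree control via bijumbledness, and the two applications of Lemma~\ref{lem:jrione} --- are sound and standard. The genuine gap is in your third step, the quantisation argument, and it fails for two reasons.

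\emph{The type count is not a constant.} You propose blocks of size roughly $(\eps')^2p|X|$, which means roughly $((\eps')^2 p)^{-1}$ blocks in $X$, a number that grows as $p\to 0$. The number of ``intersection patterns'' with these blocks is then at least $\binom{((\eps')^2 p)^{-1}}{1/\eps'}$, which depends on $p$ and is certainly not bounded in terms of $\eps'$ and $d$ alone. So the pigeonhole step as stated does not give a positive fraction of the bad $z$ sharing a type.

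\emph{A common type does not give a common witness.} Even setting aside the count, the witness sets $A_z$ live in $N_\Gamma(z;X)$, which is a \emph{different} $p$-fraction of $X$ for each $z$. Having many $z$ of the ``same type'' does not produce a single set $A\subset X$ that serves as a witness for all of them; indeed, if $|A|\gg 1/p$ then essentially no $z$ satisfies $A\subset N_\Gamma(z;X)$. So the reduction to a fixed pair $(A,B)$ and the ensuing contradiction with regularity of $(X,Y)$ does not go through. Your counting identity
\[
\sum_{z\text{ bad}} e_G(A_z,B_z)=\sum_{xy\in E(G[X,Y])}\bigl|\{z:x\in A_z,\,y\in B_z\}\bigr|
\]
is correct, but the inner count is at most the codegree $\deg_\Gamma(x,y;Z)$, not equal to it, and without a way to decouple the $z$-dependence on both sides you cannot extract a contradiction from it.

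The approach in~\cite{ABSS} (building on Conlon--Fox--Zhao~\cite{CFZ}) does not attempt to isolate a common witness. It instead controls, for typical $z$, the deviation of $e_G(A,B)$ from $d'p|A||B|$ \emph{uniformly} over all admissible $A\subset N_\Gamma(z;X)$, $B\subset N_\Gamma(z;Y)$, via a second-moment (Cauchy--Schwarz) computation that counts $C_4$-type configurations in $G$ weighted by $\Gamma$-codegrees. Your instinct that the $p^3$ in the bijumbledness hypothesis is consumed by a double restriction through $\Gamma$ is correct; the mechanism, however, is a moment bound rather than a pigeonhole.
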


\medskip

 The remainder of this subsection is devoted to proving the inheritance lemmas for random graphs, which we stated in Section~\ref{subsec:reginherit}, and showing their optimality. For the reader's convenience, we restate these lemmas here, beginning with the one-sided inheritance lemma.
 
 \begin{oneRI}  For each $0<\eps',d$ there are $\eps_0>0$ and
    $C$ such that for all $0<\eps<\eps_0$ and $0<p<1$, a.a.s.\
    $\Gamma=G(n,p)$ has the following property. 
    Let $G\subset\Gamma$ be a graph and $X,Y$ be disjoint subsets of
    $V(\Gamma)$. 
    If $(X,Y)$ is $(\eps,d,p)$-lower-regular in~$G$ and
    \[|X|\ge C\max\big(p^{-2},p^{-1}\log n\big) \quad\text{and}\quad|Y|\ge Cp^{-1}\log (en/|X|)\,,\] 
    then 
    for at most $Cp^{-1}\log (en/|X|)$ vertices $z\in V(\Gamma)$ the pair
    $\big(N_\Gamma(z;X),Y\big)$ is not $(\eps',d,p)$-lower-regular in $G$.
 \end{oneRI}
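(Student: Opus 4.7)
The plan is to adapt the lower-regularity inheritance strategy of Gerke, Kohayakawa, R\"odl and Steger~\cite{GKRS} (refined in~\cite{ChvRand}), tracking the bookkeeping carefully enough to obtain the sharp count $Cp^{-1}\log(en/|X|)$ on the exceptional set. First I would reduce failure of lower-regularity of $(N_\Gamma(z;X),Y)$ to a clean combinatorial witness. If there exist $N' \subset N_\Gamma(z;X)$ and $Y' \subset Y$ of the required relative sizes with $e_G(N',Y') < (d-\eps')p|N'||Y'|$, a simple averaging argument on the degrees $\deg_G(x;Y')$ for $x \in N'$ forces a fraction at least $\eps'/(2d)$ of $N'$ to lie in the low-degree set
\[
  X^-(Y') := \{x \in X : \deg_G(x;Y') \le (d-\tfrac{\eps'}{2})p|Y'|\}.
\]
The hypothesised lower-regularity of $(X,Y)$ in~$G$ together with $\eps_0 \le \eps'/4$ forces $|X^-(Y')| \le \eps|X|$, and setting aside the few $z$ with $|N_\Gamma(z;X)| < p|X|/2$ (handled by a routine Chernoff bound on $\Gamma$-degrees), every remaining bad~$z$ satisfies $|N_\Gamma(z;X) \cap S| \ge c_1 p|X|$ for some $S=X^-(Y')$ with $|S| \le \eps|X|$, where $c_1 = \eps'^2/(4d)$.

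For the core probabilistic estimate I would expose $\Gamma$ in two rounds: first $\Gamma[X \cup Y]$, which determines $G$ and the entire family $\{X^-(Y')\}_{Y'}$, and then the edges incident to $V(\Gamma)\setminus(X \cup Y)$. Conditionally on the first round, the neighbourhoods $N_\Gamma(z;X)$ for distinct $z$ outside $X \cup Y$ are mutually independent $\Bin(|X|,p)$ random subsets of~$X$. For a fixed candidate witness $S \subset X$ with $|S| \le \eps|X|$, the event $|N_\Gamma(z;X) \cap S| \ge c_1 p|X|$ is a large deviation of $\Bin(|S|,p)$ well above its mean $p|S| \le p\eps|X|$; choosing $\eps_0$ small enough that $c_1 \ge 7\eps$, the ``$t \ge 6\Exp X$'' form of Theorem~\ref{thm:chernoff} bounds the per-$z$ probability by $\exp(-c_2 p|X|)$ for a constant $c_2 = c_2(d,\eps')$. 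Independence across~$z$ and the same Chernoff tail applied to the sum then show that, taking $C$ large enough so that $|X| \ge Cp^{-1}\log n$ makes $n\exp(-c_2 p|X|)$ much smaller than the target, the number of $z$ bad for this particular $S$ exceeds $\tfrac12 Cp^{-1}\log(en/|X|)$ with probability at most $\exp\bigl(-\tfrac14 Cp^{-1}\log(en/|X|)\bigr)$.

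The main obstacle is the final union bound, which has to range not only over the $\le 4^n$ pairs $(X,Y)$ but also over all possible witness sets $S$. The crude estimate $\binom{|X|}{\le\eps|X|} \le 2^{H(\eps)|X|}$ is too large to be absorbed by the Chernoff tail unless $\eps$ is allowed to shrink with~$p$, which the lemma forbids. My intended resolution, following~\cite{GKRS,ChvRand}, is to replace the family of arbitrary witnesses by a polynomially-sized net of witness profiles: one groups the $Y' \subset Y$ by approximate size and quantises the column-sums $\deg_\Gamma(\cdot;Y')$, and then replaces $S = X^-(Y')$ by the analogous set defined from the quantised profile. The hypothesis $|X| \ge Cp^{-2}$ is essential here: it guarantees $p|Y'| \ge \eps'C$, so that a.a.s.\ the column sums $\deg_\Gamma(x;Y')$ concentrate sharply around $p|Y'|$ for all but a negligible set of $x \in X$, making the quantisation step change $S$ only on a set absorbed into the Chernoff slack. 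With the net reduced to polynomial size, the union bound is easily dominated by the $\exp(-\Omega(Cp^{-1}\log(en/|X|)))$ tail. The matching lower bound on the exceptional set, which I defer to Section~\ref{subsec:RI}, follows from an explicit construction with $G = \Gamma$ and well-chosen $X, Y$.
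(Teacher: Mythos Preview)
Your proposal has a genuine gap in the handling of the adversary's graph $G$. The two-round exposure claim that revealing $\Gamma[X\cup Y]$ ``determines $G$ and the entire family $\{X^-(Y')\}_{Y'}$'' is not valid: $G$ is an arbitrary subgraph of $\Gamma$ which the adversary may choose after seeing \emph{all} of $\Gamma$, including the edges from outside vertices $z$ into $X$. Thus the sets $X^-(Y')$, which are defined via $\deg_G(\cdot;Y')$, are not fixed by the first round and in fact can be tailored to the neighbourhoods $N_\Gamma(z;X)$ that you only reveal in the second round. Your net idea does not repair this: quantising the column sums $\deg_\Gamma(\cdot;Y')$ says nothing about $\deg_G(\cdot;Y')$, since the adversary is free to delete edges arbitrarily. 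The net you need would have to range over the possible low-degree sets for \emph{all} subgraphs $G\subset\Gamma$, and there are $2^{e_\Gamma(X,Y)}\approx 2^{p|X||Y|}$ such subgraphs---far too many when $|Y|$ is large.

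The paper's proof confronts exactly this obstacle and resolves it differently: rather than netting witnesses, it \emph{shrinks $Y$} to a set $Y^*$ of size $m=C'p^{-1}\log(en/|X|)$ using Lemma~\ref{lem:shrinkirreg} (which preserves the irregularity of each $(N_\Gamma(z;X'),Y^*)$), so that by the edge-distribution property~\ref{inh:star} the pair $(X',Y^*)$ spans only $O(p|X'|m)$ edges of $\Gamma$. One can then honestly union-bound over the at most $2^{O(p|X'|m)}$ choices of $G\subset\Gamma$ on $(X',Y^*)$, and this is beaten by the GKRS failure probability $\beta^{\Omega(p|X'||Z'|)}$ from Theorem~\ref{thm:OneSideInherit}. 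The shrinking step is the missing idea; without it (or an equivalent device for controlling the union bound over $G$), the argument does not close.
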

 The corresponding two-sided inheritance lemma is:
 \begin{twoRI}
    For each $0<\eps',d$ there are $\eps_0>0$ and
    $C$ such that for all $0<\eps<\eps_0$ and $0<p<1$, a.a.s.\
    $\Gamma=G(n,p)$ has the following property. 
    Let $G\subset\Gamma$ be a graph and $X,Y$ be disjoint subsets of
    $V(\Gamma)$. 
    If $(X,Y)$ is $(\eps,d,p)$-lower-regular in~$G$ and
    \[|Y|\ge|X|\ge C\max\big(p^{-2},p^{-1}\log n\big)\,,\]
    then there are at most $C\max\big(p^{-2},p^{-1}\log (en/|X|)\big)$ vertices $z\in V(\Gamma)$ such that
    $\big(N_\Gamma(z;X),N_\Gamma(z;Y)\big)$ is not $(\eps',d,p)$-lower-regular in $G$.
  \end{twoRI}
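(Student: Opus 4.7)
My plan is to reduce two-sided inheritance to one application of Lemma~\ref{lem:oneRI} followed by a union-bound-style argument that handles the restriction of the second coordinate from $Y$ to $Y_z := N_\Gamma(z;Y)$. Write $X_z := N_\Gamma(z;X)$, fix $\eps'' > 0$ much smaller than $\eps'$, and first apply Lemma~\ref{lem:oneRI} with parameter $\eps''$ to $(X,Y)$---whose hypotheses are met since $|Y|\ge|X|\ge C\max(p^{-2},p^{-1}\log n)$---to obtain a set $Z_1\subset V(\Gamma)$ with $|Z_1|\le C_1 p^{-1}\log(en/|X|)$ such that $(X_z,Y)$ is $(\eps'',d,p)$-lower-regular in $G$ for every $z\notin Z_1$. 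I would also absorb into the final bad set a small collection of vertices with atypical $\Gamma$-degree into $X$ or $Y$; this is controlled intrinsically in $\Gamma$ by Chernoff and a union bound, and the resulting contribution is comfortably absorbed by the $Cp^{-2}$ slack.

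The core step is to bound the set $Z_3$ of $z\notin Z_1$ for which $(X_z,Y_z)$ is not $(\eps',d,p)$-lower-regular in $G$. For such $z$ there is a witness $A\subset X_z$, $B\subset Y_z$ with $|A|\ge\eps'|X_z|$, $|B|\ge\eps'|Y_z|$ and $e_G(A,B)<(d-\eps')p|A||B|$. Defining $Y^{\mathrm{bad}}(A):=\{v\in Y:d_G(A,v)<(d-2\eps'')p|A|\}$, the lower-regularity of $(X_z,Y)$ in $G$ forces $|Y^{\mathrm{bad}}(A)|<\eps''|Y|$, while a short calculation shows that any witness $B$ must satisfy $|B\cap Y^{\mathrm{bad}}(A)|\ge\eta|B|$ for some $\eta=\eta(\eps',\eps'')$ bounded away from zero (else $B\setminus Y^{\mathrm{bad}}(A)$ would carry enough edges to $A$ to contradict the witness condition). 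Consequently $|Y_z\cap Y^{\mathrm{bad}}(A)|\ge\tfrac12\eta\eps'p|Y|$, whereas its conditional expectation given $A\subset X_z$ is only $p|Y^{\mathrm{bad}}(A)|\le\eps''p|Y|$; here I use the crucial independence of the $\Gamma$-edges from $z$ to $X$, from $z$ to $Y$, and of $G$ restricted to $X\cup Y$.

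For each fixed candidate $A\subset X$ of size $k$, the number of $z$ with $A\subset X_z$ and $|Y_z\cap Y^{\mathrm{bad}}(A)|$ this large is controlled by a sharp-form Chernoff bound, and the sum over $A$ must be balanced against the counting factor $\binom{|X|}{k}$. The main obstacle is precisely this balancing: a naive bound loses too much because $\sum_k\binom{|X|}{k}p^k\le e^{p|X|}$ can be large, so one needs the sharpened form of Chernoff, which (using $\eps''\ll\eps'$ to make the relative error large) gives an exponent of order $p|Y|\log(\eps'/\eps'')$. The hypothesis $|X|\ge Cp^{-2}$ is exactly what is needed to ensure this sharpened exponent dominates both $p|X|\log(e/\eps')$ and the $\log n$ union-bound factor in the regime where $p^{-2}$ is the dominant term of the conclusion; in the complementary regime, $|Z_1|$ already beats the target bound. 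Executed carefully, this yields $|Z_3|\le C_3p^{-2}$, and combining with $|Z_1|$ and the atypical-degree contribution gives the claimed bound $C\max(p^{-2},p^{-1}\log(en/|X|))$.
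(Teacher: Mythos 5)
Your proposed reduction to Lemma~\ref{lem:oneRI} and the observation about $Y^{\mathrm{bad}}(A)$ are natural, but there is a genuine gap in the core step: the claimed independence on which you rely does not hold in the form your argument needs. You write that you use ``the crucial independence of the $\Gamma$-edges from $z$ to $X$, from $z$ to $Y$, and of $G$ restricted to $X\cup Y$''. The $\Gamma$-edges in these three collections are indeed mutually independent, but the statement you are proving quantifies over \emph{all} subgraphs $G\subset\Gamma$: the adversary chooses $G$ after $\Gamma$ (including all edges incident to $z$) is revealed. Thus $Y^{\mathrm{bad}}(A)$, which is determined by $G$, is \emph{not} independent of $Y_z=N_\Gamma(z;Y)$; the adversary is free to place the deleted edges of $G$ precisely so that $Y^{\mathrm{bad}}(A)$ covers as much of $Y_z$ as possible for many $z$ simultaneously. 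Conditioning on $G$ and applying Chernoff to $|Y_z\cap Y^{\mathrm{bad}}(A)|$ over the randomness of $Y_z$ is therefore not a licensed step; you would need a union bound over the adversary's possible $Y^{\mathrm{bad}}(A)$ (equivalently over $G$ restricted to $X\cup Y$), and in your framework this union bound costs roughly $\exp(p|X||Y|)$ or $\exp(\eps''|Y|\log(1/\eps''))$, both of which dwarf the available Chernoff exponent $\Theta(p|Y|\log(\eps'/\eps''))$ since $p\to 0$ while $\eps'$, $\eps''$ are constants.

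A second, confirming symptom: as written, if your argument were valid it would actually yield that $Z_3$ is a.a.s.\ \emph{empty}, because the union bound over $A$ together with the exponential Chernoff tail produces a quantity tending to zero, not a bound of size $\Theta(p^{-2})$. But the paper explicitly points out (citing the Huang--Lee--Sudakov construction) that $\Omega(p^{-2})$ vertices can genuinely fail two-sided inheritance, so the conclusion $Z_3=\emptyset$ is false. This mismatch --- you claim to conclude $|Z_3|\le C_3 p^{-2}$, but the calculation you sketch would give $0$ --- is a reliable sign that a key adversarial union bound has been dropped.

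The paper's proof handles the adversarial choice of $G$ by first shrinking (via Lemma~\ref{lem:shrinkirreg}) to sets $X^*,Y^*$ of size $m=\Theta(\max(p^{-2},p^{-1}\log(en/|X|)))$; this makes the union bound over all lower-regular $G\subset\Gamma[X^*,Y^*]$ cost only $2^{O(pm^2)}$, which can be beaten by the counting estimate from Corollary~\ref{cor:TwoSideInherit}. It then needs an additional twist --- the ``bad object'' argument packaging $\tfrac14\binom{km}{m}^2$ choices of $(X^*,Y^*)$ at once, followed by a Markov-inequality step --- because when $p^{-2}\gg p^{-1}\log(en/|X|)$ the naive union bound over $(X^*,Y^*,Z')$ would not close. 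Your outline contains no analogue of either the shrinking step or the enumeration of $G$, so the two approaches diverge precisely at the step where the paper is doing its real work, and yours cannot be completed as stated.
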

  
 Note that the regularity inheritance lemmas in random graphs, Lemmas~\ref{lem:oneRI} and~\ref{lem:twoRI} are stated in a different form to the
 ones for bijumbled graphs above, not requiring any bound on~$p$, but rather
 having~$p$ appear in the estimates for the sizes of~$X$ and~$Y$ as well as
 the number of vertices not preserving regularity. We state them in this form
 because this seems most suitable when we want to use them in
 applications of our blow-up lemma. For bijumbled graphs on the other hand we
 decided not to switch to this form since we would obtain more complicated
 conditions in this case (bounding the products of set sizes such as
 $|X||Y|$, instead of the sets themselves). 
 
  In the proofs of Lemmas~\ref{lem:oneRI} and~\ref{lem:twoRI} we follow the
  approach of~\cite{ChvRand}, and rely on the following result of Gerke,
  Kohayakawa, R\"odl and Steger~\cite{GKRS} stating that the vast majority
  of subpairs $(X',Y)$ of a lower-regular pair $(X,Y)$ in any graph inherit
  lower-regularity.  Unfortunately this result becomes false if we replace
  lower-regularity by full-regularity, and it is precisely this reason why
  we need to work with lower-regularity in random graphs.

  \begin{theorem}[Theorem 3.6 from~\cite{GKRS}]\label{thm:OneSideInherit}
    For any $d$, $\beta$, $\eps'>0$ there exist
    $\eps_0>0$ and~$C$ such that for
    any $0<\eps<\eps_0$ and $0<p<1$, if $(X,Y)$ is an $(\eps,d,p)$-lower-regular
    pair in a graph $G$, then the number of sets $X'\subseteq X$ with $|X'|=w\ge
    C/p$ such that $(X',Y)$ is an $(\eps',d,p)$-lower-regular pair in~$G$ is at least
    $(1-\beta^w)\binom{|X|}{w}$.
  \end{theorem}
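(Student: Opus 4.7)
My plan is to prove the contrapositive: count the $w$-subsets $X'\subseteq X$ for which $(X',Y)$ fails to be $(\eps',d,p)$-lower-regular, and show this count is at most $\beta^w\binom{|X|}{w}$. Such a ``bad'' $X'$ admits a witness pair $(X'',Y'')$ with $X''\subseteq X'$, $|X''|=s\ge\eps' w$, $|Y''|=t\ge\eps'|Y|$, and $e_G(X'',Y'')<(d-\eps')pst$; I will eventually sum over the sizes $s,t$.

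The heart of the argument will be a two-step structural analysis. First, for each fixed $Y''\subseteq Y$ with $|Y''|\ge\eps'|Y|\ge\eps|Y|$, lower-regularity of $(X,Y)$ in $G$ (taking $\eps\le\eps'/4$) forces
\[\mathrm{Bad}(Y''):=\bigl\{x\in X:\deg_G(x;Y'')<(d-\eps'/2)\,p\,|Y''|\bigr\}\]
to satisfy $|\mathrm{Bad}(Y'')|<\eps|X|$, since otherwise substituting $X''=\mathrm{Bad}(Y'')$ into the definition of lower-regularity contradicts the density lower bound. Second, I would show that every bad witness $(X'',Y'')$ satisfies $|X''\cap\mathrm{Bad}(Y'')|\ge\tfrac{\eps'}{2}s$, by splitting $e_G(X'',Y'')$ according to $\mathrm{Bad}(Y'')$-membership and comparing with the lower bound $(d-\eps'/2)p|Y''|\cdot\bigl(s-|X''\cap\mathrm{Bad}(Y'')|\bigr)$ available on the complement.

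From here, for a fixed $Y''$, a hypergeometric tail bound (Theorem~\ref{thm:chernoff}) shows that the number of $s$-sets $X''\subseteq X$ meeting $\mathrm{Bad}(Y'')$ in at least $\tfrac{\eps'}{2}s$ vertices is at most $\binom{|X|}{s}\cdot e^{-cs}$, where $c=c(\eps,\eps')$ tends to infinity as $\eps\to 0$. Each such $X''$ extends to $\binom{|X|-s}{w-s}$ sets $X'\supseteq X''$, so the identity $\binom{|X|}{s}\binom{|X|-s}{w-s}=\binom{|X|}{w}\binom{w}{s}$ rewrites the total count of bad $X'$ as at most $\binom{|X|}{w}\sum_{s,t}\binom{|Y|}{t}\binom{w}{s}e^{-cs}$.

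The hard part will be the factor $\sum_t\binom{|Y|}{t}\le 2^{|Y|}$: when $|Y|$ is much larger than $w$, a naive union bound over $Y''$ is far too weak to compete with the desired $\beta^w$. To overcome this I would canonicalise the witness by taking $Y''(X'')$ to be the $t$ vertices $y\in Y$ minimising $\deg_G(y;X'')$, eliminating the $\binom{|Y|}{t}$ factor entirely at the cost of redoing the previous step with a $Y''$ that now depends on $X''$; this is where the delicate part of the analysis sits, since the $\mathrm{Bad}$-set reasoning has to be adapted to this data-dependent choice and one needs a monotonicity argument to reduce to prescribed witness sizes. Once this is in place, the remaining $s$-sum is of the form $(1+e^{-c\eps'})^w$, and choosing $\eps_0$ small enough to make $c$ very large (then $C$ correspondingly large so that the assumption $w\ge C/p$ is strong enough) brings this below $\beta^w$, yielding the claimed bound.
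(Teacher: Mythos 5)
This theorem is not proved in the paper under review: it is imported verbatim from~\cite{GKRS} (their Theorem~3.6), so there is no in-paper argument to compare yours against, and your proposal has to be judged on its own terms.

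The fixed-$Y''$ portion of your plan is correct but routine: for a single $Y''$ the set $\mathrm{Bad}(Y'')$ is small by lower-regularity of $(X,Y)$, a witness $(X'',Y'')$ forces $X''$ to meet $\mathrm{Bad}(Y'')$ in an $\Omega(\eps')$-fraction of its vertices, and a hypergeometric tail bound controls the count of such $X''$. The entire content of the theorem, however, sits in the step you defer: removing the union over $Y''$. Your canonicalisation (take $Y''$ to be the $t$ vertices of smallest degree into $X''$) is the natural first move, but it makes $Y''$ a function of the random set, and at that point the argument ``$\mathrm{Bad}(Y'')$ is a fixed small set, so a random $X''$ rarely concentrates in it'' is no longer available --- an adversarial, $X''$-dependent choice of $Y''$ is exactly what concentration for a fixed set cannot handle. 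You acknowledge that the $\mathrm{Bad}$-set reasoning ``has to be adapted'' and that ``this is where the delicate part of the analysis sits'', but that delicate part \emph{is} the theorem; in~\cite{GKRS} it is resolved by a genuinely different and much longer argument (roughly, a vertex-by-vertex exposure of the random subset in which one tracks the evolving set of $Y$-vertices of deficient degree, invoking the regularity of $(X,Y)$ at every step to bound how much this set can grow).

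A concrete symptom that the argument as written cannot be completed: nothing in your sketch uses the hypothesis $w\ge C/p$, yet the theorem is false without it. If $w=o(1/p)$ and every $x\in X$ has degree close to $dp|Y|$ (which is consistent with $(\eps,d,p)$-lower-regularity of $(X,Y)$), then for \emph{every} $w$-set $X'$ the set $Y'':=Y\setminus N_G(X')$ has size at least $\bigl(1-2dpw\bigr)|Y|\ge\eps'|Y|$ and satisfies $e_G(X',Y'')=0$, so no $w$-set forms a lower-regular pair with $Y$. Any correct proof must therefore use $pw\ge C$ in an essential way precisely when beating the union over $Y''$; since your outline never does, the missing step is not a technicality but the mathematical core, and the proposal should be regarded as a plan rather than a proof.
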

  
  The following is an analogous statement for subpairs $(X',Y')$ of $(X,Y)$
  and is an immediate consequence of Theorem~\ref{thm:OneSideInherit}.
    
  \begin{corollary}[Corollary 3.8 from~\cite{GKRS}]\label{cor:TwoSideInherit}
    For any $d$, $\beta$, $\eps'>0$ there exist
    $\eps_0>0$ and $C$ such that for
    any $0<\eps<\eps_0$ and $0<p<1$, if $(X,Y)$ is an $(\eps,d,p)$-lower-regular
    pair in a graph $G$, then the number of pairs $X'\subseteq X$ and
    $Y'\subseteq Y$ with $|X'|=w_1\ge C/p$ and $|Y'|=w_2\ge C/p$ such that
    $(X',Y')$ is an $(\eps',d,p)$-lower-regular pair in~$G$ is at least
    $(1-\beta^{\min(w_1,w_2)})\binom{|X|}{w_1}\binom{|Y|}{w_2}$.
  \end{corollary}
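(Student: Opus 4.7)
The plan is to deduce this two-sided statement from the one-sided Theorem~\ref{thm:OneSideInherit} by applying it twice. Let $d$, $\beta$, $\eps'>0$ be given. First I apply Theorem~\ref{thm:OneSideInherit} with parameters $d$, $\beta/2$, $\eps'$ to obtain constants $\eps_0^{(2)}$ and $C^{(2)}$. Then I set $\eps'':=\eps_0^{(2)}$ (or any smaller positive number) and apply Theorem~\ref{thm:OneSideInherit} a second time with parameters $d$, $\beta/2$, $\eps''$ to obtain $\eps_0^{(1)}$ and $C^{(1)}$. Define $\eps_0:=\eps_0^{(1)}$ and $C:=\max\{C^{(1)},C^{(2)}\}$.

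Now let $0<\eps<\eps_0$, $0<p<1$, and let $(X,Y)$ be an $(\eps,d,p)$-lower-regular pair in~$G$. By the first application of Theorem~\ref{thm:OneSideInherit} (to the pair $(X,Y)$, selecting $w_1$-sized subsets of~$X$), all but at most $(\beta/2)^{w_1}\binom{|X|}{w_1}$ subsets $X'\subseteq X$ with $|X'|=w_1\ge C/p$ have the property that $(X',Y)$ is $(\eps'',d,p)$-lower-regular. Call such $X'$ \emph{good}.

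Next, for each good $X'$ the pair $(Y,X')$ is $(\eps'',d,p)$-lower-regular (this notion is symmetric), so I apply Theorem~\ref{thm:OneSideInherit} to $(Y,X')$, picking $w_2$-sized subsets of~$Y$. This yields that all but at most $(\beta/2)^{w_2}\binom{|Y|}{w_2}$ subsets $Y'\subseteq Y$ with $|Y'|=w_2\ge C/p$ give an $(\eps',d,p)$-lower-regular pair $(X',Y')$. Counting the pairs $(X',Y')$ which fail to be $(\eps',d,p)$-lower-regular, we bound their number by
\[
(\beta/2)^{w_1}\binom{|X|}{w_1}\binom{|Y|}{w_2}+\binom{|X|}{w_1}(\beta/2)^{w_2}\binom{|Y|}{w_2}\le \beta^{\min(w_1,w_2)}\binom{|X|}{w_1}\binom{|Y|}{w_2}\,,
\]
where in the first summand we bound the number of $Y'$'s trivially by $\binom{|Y|}{w_2}$, and in the second we sum only over good $X'$. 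This gives the claimed bound.

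The only subtlety is the symmetric use of lower-regularity in the second step and the choice of the intermediate parameter $\eps''$ so that Theorem~\ref{thm:OneSideInherit} is applicable with output parameter $\eps'$; no genuine obstacle arises since Theorem~\ref{thm:OneSideInherit} already provides an exponentially small fraction of bad subsets for every desired target $\beta/2$.
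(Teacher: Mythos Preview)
Your proof is correct and matches the paper's approach: the paper does not give a proof but simply notes that the corollary is an immediate consequence of Theorem~\ref{thm:OneSideInherit}, which is precisely the two-fold application you carry out. The final inequality $(\beta/2)^{w_1}+(\beta/2)^{w_2}\le 2(\beta/2)^{\min(w_1,w_2)}\le\beta^{\min(w_1,w_2)}$ holds since $\min(w_1,w_2)\ge 1$ (and the statement is trivial for $\beta\ge 1$).
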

   
  The proof idea for Lemma~\ref{lem:oneRI} is the
  following. First, we will show that if $(X,Y)$ is a counterexample, that
  is, a lower-regular pair not satisfying the conclusion of the lemma,
  then there are $X'\subset X$ and $Y'\subset Y$ which are only slightly smaller than $X$ and $Y$ respectively, and a set $Z'$ of vertices outside $X'\cup Y'$ whose neighbourhoods in $X'$ all have size about $p|X'|$ but which nevertheless do not form regular pairs with $Y'$. We then find $Y^*\subseteq Y'$ which has the same properties as $Y'$, but which has size $\tfrac12 Cp^{-1}\log(en/|X|)$. We finally argue that this last structure is unlikely to exist in $G(n,p)$, completing the proof. It may not be obvious what we gain by reducing $(X',Y')$ to $(X',Y^*)$; what we gain is that there are not too many edges of $\Gamma$ between $X'$ and $Y^*$, and hence not too many possible lower-regular subgraphs of $\Gamma$. This allows us to take a union bound over all possible choices, which would fail if we attempted it with $(X',Y')$.
  
  The proof of Lemma~\ref{lem:twoRI} is similar but somewhat more complicated, and we will sketch it after proving Lemma~\ref{lem:oneRI}. Before embarking on these proofs, we need three preliminary lemmas.
  
  The first lemma assumes we are given a lower-regular pair together with a
  collection of~$\ell$  subpairs which do not inherit lower-regularity, and asserts that
  we can scale down the sizes of this pair and the subpairs in many ways. For Lemma~\ref{lem:oneRI} we only need one way; for Lemma~\ref{lem:twoRI} we will need the full power of this lemma.
  
  \begin{lemma}\label{lem:shrinkirreg}
    For each $d,\eps_2,\delta >0$ there exist $\eps_1$ and $C$ such that
    for all $0<\eps<\eps_1$ the following holds. 
    Let $(X,Y)$ be an $(\eps,d,p)$-lower-regular pair in a graph~$G$, and suppose
    $|X|\ge m\ge Cp^{-1-i}$ for $i\in\{0,1\}$.
    Further, let $X_j\subset X$ and $Y_j\subset Y$ with $j\in[\ell]$ form~$\ell$
    subpairs of $(X,Y)$ for some $\ell\in[m]$, with $|X_j|\ge\tfrac{1}{100}p^i|X|$ for each $j$, such that $(X_j,Y_j)$ is not $(\delta,d,p)$-lower-regular for any $j\in[\ell]$.

    Then there are at least $\tfrac12\binom{|X|}{m}$ choices of $X^*\subset X$ with $|X^*|=m$ such that $(X^*,Y)$ is $(\eps_2,d,p)$-lower-regular, such that $|X_j\cap X^*|=\big(1\pm\tfrac12\big) m|X_j|/|X|$ for each $j\in[\ell]$, and such that for each $j\in[\ell]$, the pair $(X_j\cap X^*,Y_j)$ is not $(\delta^2/10,d,p)$-lower-regular.
  \end{lemma}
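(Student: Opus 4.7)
The plan is to choose $X^*$ uniformly at random among the $\binom{|X|}{m}$ size-$m$ subsets of $X$ and to argue that each of the three required properties holds with probability at least $5/6$. A union bound then yields at least $\tfrac12\binom{|X|}{m}$ good choices. I would pick $\eps_1$ small and $C$ large enough to drive all error estimates below.

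For the regularity of $(X^*, Y)$, I would invoke Theorem~\ref{thm:OneSideInherit} with $\eps' = \eps_2$ and $\beta = 1/2$: provided $m \geq C/p$, the fraction of size-$m$ subsets $X^* \subseteq X$ for which $(X^*, Y)$ fails to be $(\eps_2, d, p)$-lower-regular is at most $2^{-m} \ll 1/6$. For the size property, each $|X_j \cap X^*|$ is hypergeometric with mean $\mu_j \geq mp^i/100 \geq C/(100p)$; Theorem~\ref{thm:chernoff} gives a deviation of factor $1/2$ with probability $e^{-\Omega(\mu_j)}$, which is exponentially small in~$C$, and a union bound over $\ell \leq m$ terms keeps the total failure probability under $1/6$.

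For the irregularity property, for each $j$ I would fix a witness $(X'_j, Y'_j)$ of non-$\delta$-lower-regularity of $(X_j, Y_j)$, namely $|X'_j| \geq \delta|X_j|$, $|Y'_j| \geq \delta|Y_j|$, and $e_G(X'_j, Y'_j) \leq (d-\delta) p |X'_j| |Y'_j|$. The aim is to show that $(X'_j \cap X^*, Y'_j)$ witnesses $(X_j \cap X^*, Y)$ not being $(\delta^2/10, d, p)$-lower-regular. Hypergeometric concentration yields $|X'_j \cap X^*| \geq (\delta/2) m|X_j|/|X| \geq (\delta^2/10)|X_j \cap X^*|$, and a further Chernoff bound applied to $e_G(X'_j \cap X^*, Y'_j) = \sum_{v \in X'_j \cap X^*}\deg_G(v; Y'_j)$ shows that its normalized $p$-density remains below $d - \delta^2/10$ with exponentially high probability, as long as the expected edge count is large enough to apply Theorem~\ref{thm:chernoff} effectively.

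The main obstacle is the case in which every witness $(X'_j, Y'_j)$ has $|Y'_j| < (\delta^2/10)|Y|$, so that $Y'_j$ alone is too small to serve as a valid $Y$-side witness in $(X_j \cap X^*, Y)$. In this case I would enlarge $Y'_j$ to reach the required size of $(\delta^2/10)|Y|$ by adding vertices of $Y$ chosen to preserve low density with $X'_j$; controlling the edges added in this step uses the $(\eps, d, p)$-lower-regularity of $(X, Y)$ and is the subtlest ingredient. The final union bound over $\ell \leq m$ subpairs is what forces $mp^i \gtrsim \log m$, matching the threshold $m \geq C p^{-1-i}$ in the hypothesis.
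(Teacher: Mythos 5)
Your high-level plan---pick $X^*$ uniformly at random, apply Theorem~\ref{thm:OneSideInherit} to obtain $(\eps_2,d,p)$-lower-regularity of $(X^*,Y)$, use hypergeometric concentration (Theorem~\ref{thm:chernoff}) for the sizes $|X_j\cap X^*|$, and take a union bound over the $\ell\le m$ subpairs---is exactly the paper's approach. Where you differ, and where there is a genuine gap, is in how you establish the irregularity of the $(X_j\cap X^*,\cdot)$ pairs.

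The Chernoff bound you want to apply to $e_G(X'_j\cap X^*,Y'_j)=\sum_{v\in X'_j\cap X^*}\deg_G(v;Y'_j)$ is not covered by Theorem~\ref{thm:chernoff}: viewed as a function of the random $m$-subset $X^*$, this is a sum of \emph{weighted} indicators with weights $\deg_G(v;Y'_j)$ that may be as large as $|Y'_j|$; it is neither a sum of $\{0,1\}$-Bernoullis nor a hypergeometric count. Replacing Chernoff by a bounded-differences (Hoeffding/McDiarmid) bound for sampling without replacement yields concentration only of order $\exp\bigl(-\Theta(p^{2+2i}m)\bigr)$, and for small $p$ this is far too weak to survive the union bound over up to $m$ subpairs (while $mp^{2+2i}\ge Cp^{1+i}$ stays bounded, $\log m$ can be of order $\log n$). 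The paper sidesteps the weighted sum entirely by a preliminary averaging step you are missing: since $d_p(X'_j,Y'_j)\le d-\delta$, one can find $X''_j\subseteq X'_j$ of size $\delta^2|X_j|/3$ all of whose vertices have degree less than $(d-\delta/2)p|Y'_j|$ into $Y'_j$. Then \emph{any} subset of $X''_j$ automatically has $p$-density into $Y'_j$ below $d-\delta/2$, so the only random quantity needing concentration is the cardinality $|X''_j\cap X^*|$, which is genuinely hypergeometric and concentrates exponentially in $p^im$---sufficient for the union bound.

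Your concern in the final paragraph about the $Y$-side witness is a sharp observation about the statement as literally written, but the remedy you propose cannot work and is also not needed. Lower-regularity of $(X,Y)$ gives only a \emph{lower} bound on densities, so when you enlarge $Y'_j$ the new vertices may have arbitrarily high degree into $X'_j\cap X^*$ and the enlarged pair may fail to stay below density $d-\delta^2/10$; nothing in the hypotheses controls this. What actually resolves the tension is that the conclusion should be read as asserting that $(X_j\cap X^*,Y_j)$ is not $(\delta^2/10,d,p)$-lower-regular---with $Y_j$ in place of $Y$. This is what the witness $(X''_j\cap X^*,Y'_j)$ in the paper's proof establishes (note $Y'_j\subseteq Y_j$ with $|Y'_j|=\delta|Y_j|\ge(\delta^2/10)|Y_j|$), and it is also the form in which the lemma is invoked in the proofs of Lemmas~\ref{lem:oneRI} and~\ref{lem:twoRI}, where $Y_j$ is a $\Gamma$-neighbourhood of size roughly $p|Y|\ll|Y|$. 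With that reading no enlargement of $Y'_j$ is required.
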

  \begin{proof}
    Given $d,\eps_2>0$, let $\eps_1$ and $C'$ be returned by
    Theorem~\ref{thm:OneSideInherit} with input $d, \tfrac14,\eps_2$. Given $\delta>0$, 
    let $C\ge C'$ be such that $3(x^2/C)\exp(-\delta_1^2 x/3600)<\tfrac14$ for
    each $x\ge 1$.
    
    Given $0<\eps<\eps_1$, and given an
    $(\eps,d,p)$-lower-regular pair $(X,Y)$ with $ |X|\ge m\ge Cp^{-1-i}$,
    let $X^*\subseteq X$ be chosen uniformly at random from the $m$-sized
    subsets of $X$.  By Theorem~\ref{thm:OneSideInherit}, with probability
    at least $\tfrac34$, the pair $(X^*,Y)$ is $(\eps_2,d,p)$-lower-regular.
    
    Given any pair $(X_j,Y_j)$, since the pair is not $(\delta,d,p)$-regular, there exist subsets $X'_j$ and $Y'_j$ of $X_j$ and $Y_j$ respectively, of sizes $\delta|X_j|$ and $\delta|Y_j|$ respectively, such that $d_p(X'_j,Y'_j)\le d-\delta$. There thus exists a
    subset $X''_j$ of $X'_j$ consisting of $\delta^2|X_j|/3$ vertices
    each of whose degree into $Y'_j$ is smaller than $(d-\delta/2)p|Y'_j|$,
    as otherwise we would have
    \[ e(X'_j\setminus X''_j,Y'_j)\ge (\delta-\delta^2/3)|X_j|(d-\delta/2)p|Y'_j|> (d-\delta)p|X'_j||Y'_j| >e(X'_j,Y'_j),\]
     a contradiction.
     Let $X^*_j=X''_j\cap X^*$, then $d_p(X^*_j,Y'_j)<d-\delta/2$, by
     definition of $X''_j$. The quantity $|X^*_j|$ is hypergeometrically
     distributed with mean at least $\delta^2 p^i m/300$ since
     $|X''_j|=\delta^2|X_j|/3$ and $|X_j|\ge \tfrac1{100} p^i|X|$. By
     Theorem~\ref{thm:chernoff}, the probability that
     $|X^*_j|<\delta^2|X_j|m/(6|X|)$ is at most $\exp(-\delta^2 p^i m/3600)$. Similarly, the quantity $|X_j\cap X^*|$ is hypergeometrically distributed with mean at least $\delta^2 p^i m/300$, so by Theorem~\ref{thm:chernoff} the probability that $|X^*\cap X_j|\neq\big(1\pm\tfrac12\big)|X_j|m/|X|$ is at most $2\exp(-\delta^2 p^i m/3600)$.
    
     By choice of $m$, we have $(p^i m)^2\ge Cp^{-1}p^i m\ge Cm$, so
     setting $x=p^i m\ge 1$, by choice of $C$ we have
     $3(x^2/C)\exp(-\delta^2 x/3600)<\tfrac14$. Thus taking a union bound, we
     conclude that with probability at least
    \[1-\tfrac14-3m\exp(-\delta^2 p^{i} m/3600)>\tfrac12\]
    none of the above bad events occurs. In particular, for at least $\tfrac12\binom{|X|}{m}$ choices of $X^*$, for each $j\in[\ell]$ the pair $(X_j\cap X^*,Y_j)$ is not $(\delta^2/10,d,p)$-regular, giving the conclusion of the lemma.
  \end{proof}
  
  The next easy lemma shows that adding a few vertices to an
  $(\eps,d,p)$-lower-regular pair does not destroy regularity. We note that
  the corresponding result for an $(\eps,d,p)$-fully-regular pair would
  require knowing that $G$ does not contain dense spots.
  
  \begin{lemma}\label{lem:sticking}
  Let $0<\eps<\tfrac{1}{10}$. Let $G$ be a graph and let $U',V'\subset V(G)$ be disjoint sets such that $(U',V')$ is
  $(\eps,d,p)$-lower-regular in $G$.  If $U\supset U'$ with
  $|U|\le\big(1+\tfrac{1}{10}\eps^3\big)|U'|$ and $V\supset V'$ with 
  $|V|\le\big(1+\tfrac{1}{10}\eps^3\big)|V'|$ are disjoint, then $(U,V)$ is
  $(2\eps,d,p)$-lower-regular in $G$.
\end{lemma}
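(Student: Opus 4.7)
The plan is to verify lower-regularity of $(U,V)$ directly from the definition. Fix arbitrary test sets $U''\subseteq U$ with $|U''|\ge 2\eps|U|$ and $V''\subseteq V$ with $|V''|\ge 2\eps|V|$; the goal is to show $d_{G,p}(U'',V'')\ge d-2\eps$. The strategy is to intersect these test sets with $U'$ and $V'$, apply the lower-regularity of $(U',V')$ to the resulting subpair, and then argue that the intersections account for almost all the mass of $U''$ and $V''$.

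First, I would show that $U''\cap U'$ and $V''\cap V'$ are valid witness sets for the lower-regularity of $(U',V')$. Since $|U\setminus U'|\le\tfrac{1}{10}\eps^3|U'|$ and $|U''|\ge 2\eps|U|\ge 2\eps|U'|$, we have
\[
|U''\cap U'|\ge |U''|-\tfrac{1}{10}\eps^3|U'|\ge 2\eps|U'|-\tfrac{1}{10}\eps^3|U'|\ge \eps|U'|,
\]
using $\eps<\tfrac{1}{10}$, and analogously $|V''\cap V'|\ge\eps|V'|$. The $(\eps,d,p)$-lower-regularity of $(U',V')$ then yields
\[
e_G(U''\cap U',V''\cap V')\ge (d-\eps)p\,|U''\cap U'|\,|V''\cap V'|.
\]

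Second, I would compare the intersection sizes to $|U''|$ and $|V''|$. Using $|U'|\le |U|/(2\eps)\cdot(2\eps)\le|U''|/(2\eps)$ (from $|U''|\ge 2\eps|U|\ge 2\eps|U'|$), we get
\[
|U''\cap U'|\ge |U''|-\tfrac{1}{10}\eps^3|U'|\ge |U''|\bigl(1-\tfrac{1}{20}\eps^2\bigr),
\]
and similarly for $V''\cap V'$. Since $e_G(U'',V'')\ge e_G(U''\cap U',V''\cap V')$, multiplying these estimates gives
\[
e_G(U'',V'')\ge (d-\eps)p\,|U''|\,|V''|\bigl(1-\tfrac{1}{20}\eps^2\bigr)^2\ge (d-\eps)(1-\tfrac{1}{10}\eps^2)\,p\,|U''|\,|V''|.
\]
Since $d\le 1$, the factor $(d-\eps)(1-\tfrac{1}{10}\eps^2)\ge d-\eps-\tfrac{1}{10}\eps^2\ge d-2\eps$, so $d_{G,p}(U'',V'')\ge d-2\eps$, as required.

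There is no real obstacle here — it is purely a size-comparison argument, the only delicate point being to check that the choice $\tfrac{1}{10}\eps^3$ in the hypothesis is small enough both to keep $|U''\cap U'|\ge\eps|U'|$ (so that lower-regularity of $(U',V')$ applies) and to ensure the multiplicative loss in passing from $|U''\cap U'|\,|V''\cap V'|$ back to $|U''|\,|V''|$ eats up at most an $\eps$ worth of density. Both happen to be comfortable with the constants given.
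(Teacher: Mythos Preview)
Your proof is correct and follows essentially the same approach as the paper: take test sets, intersect with $U'$ and $V'$, apply the lower-regularity of $(U',V')$ to the intersections, and absorb the tiny loss in passing back to the full test sets. The only cosmetic difference is that you extract the slightly sharper factor $(1-\tfrac{1}{20}\eps^2)^2$ where the paper is content with $(1-\tfrac{1}{10}\eps^2)^2$; both suffice for the final bound $d-2\eps$ (implicitly using $d\le 1$, as is standard in this context).
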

\begin{proof}  
  Let $X$ and $Y$ be arbitrary subsets of $U$ and $V$ with
  $|X|\ge 2\eps|U|,|Y|\ge2\eps|V|$. We need to show that
  $e_G(X,Y)\ge(d- 2\eps)p|X||Y|$. Since $(U',V')$ is
  $(\eps,d,p)$-lower-regular in $G$, and since
  $2\eps-\tfrac{1}{10}\eps^3>\eps$, we have
  \begin{multline*}
   e_G(X\cap U',Y\cap V')\ge(d-\eps)p|X\cap U'||Y\cap V'|\\
   \ge (d-\eps)p\big(|X|-\tfrac{1}{10}\eps^3|U'|\big)\big(|Y|-\tfrac{1}{10}\eps^3|V'|\big)
   \ge (d-\eps)(1-\tfrac{1}{10}\eps^2)^2p|X||Y|\\
   \ge(d-2\eps)p|X||Y|\,,
  \end{multline*}
  as desired.
\end{proof}

Finally, we need a statement about the edge distribution in $G(n,p)$.

\begin{lemma}\label{cl:inh:dist}
     For any $p\ge n^{-1/2}$, a.a.s.\ $\Gamma=G(n,p)$ has the following
    property.
    \begin{enumerate}[label=($\star$)]
     \item\label{inh:star} For any disjoint $W,Z\subset V(\Gamma)$ with $|W|\le|Z|$ and $|W|\ge 48p^{-1}\log (en/|Z|)$, we have
     $e_\Gamma(W,Z)=\big(1\pm\tfrac12\big)p|W||Z|$,
     and for any $Z\subset V(\Gamma)$ we
     have
     $e_\Gamma(Z)\le 4p|Z|^2+2|Z|\log n$.
    \end{enumerate}
   \end{lemma}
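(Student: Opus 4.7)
The plan is to prove both parts of property \ref{inh:star} by a Chernoff plus union bound argument, where the choice of the thresholds $48 p^{-1}\log(en/|Z|)$ and $4p|Z|^2 + 2|Z|\log n$ is dictated precisely by what survives the union bound.

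For the first part, I would fix sizes $w \le z$ and consider all disjoint pairs $(W,Z)$ with $|W|=w$, $|Z|=z$, and $pw \ge 48\log(en/z)$. For any such fixed pair, the variable $e_\Gamma(W,Z)$ is binomial with mean $pwz$, so Theorem~\ref{thm:chernoff} (with $\delta = 1/2$) gives
\[
\Pr\Big[e_\Gamma(W,Z) \ne \big(1 \pm \tfrac12\big) p w z\Big] \le 2 \exp(-pwz/12).
\]
Using $\binom{n}{k} \le (en/k)^k$, the number of choices of $(W,Z)$ is at most $\exp\bigl(w\log(en/w) + z\log(en/z)\bigr)$. The key observation is that $x \mapsto x\log(en/x)$ is increasing on $[1,n]$, so $w \le z$ gives $w\log(en/w)+z\log(en/z) \le 2z\log(en/z)$; and the hypothesis $pw \ge 48\log(en/z)$ yields $pwz/12 \ge 4z\log(en/z)$. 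Thus each term in the union bound is at most $2\exp(-2z\log(en/z)) = 2(z/en)^{2z}$. For each $z$ there are at most $z$ admissible values of $w$, so the total failure probability is bounded by $\sum_{z \ge 1} 2z(z/en)^{2z} = O(n^{-2})$.

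For the second part, for each $z \in [n]$ the number of edges $e_\Gamma(Z)$ inside a fixed set $Z$ of size $z$ is $\mathrm{Bin}\bigl(\binom{z}{2},p\bigr)$ with mean $\mu \le pz^2/2$. I would apply the large-deviation bound $\Pr[X \ge \Exp X + t] \le e^{-t}$ from Theorem~\ref{thm:chernoff} valid for $t \ge 6\Exp X$, setting $t := 4pz^2 + 2z\log n - \mu \ge 3.5 pz^2 + 2z\log n$; since $6\mu \le 3pz^2 \le t$, the hypothesis is satisfied and we get
\[
\Pr\bigl[e_\Gamma(Z) > 4pz^2 + 2z\log n\bigr] \le \exp\bigl(-3.5pz^2 - 2z\log n\bigr) \le n^{-2z}.
\]
Union-bounding over the $\binom{n}{z} \le (en/z)^z$ choices of $Z$ gives $(e/z)^z n^{-z} \le (e/n)^z$ per value of $z$, which summed over $z \ge 1$ is $O(1/n)$.

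Combining the two bounds, a.a.s.\ both properties hold simultaneously. The main (mild) obstacle is matching the constants: one needs the constant $48$ in the lower bound on $|W|$ to be large enough so that $pwz/12$ beats the entropy term $2z\log(en/z)$ with room to spare (so that the remaining factor $(z/en)^{2z}$ is summable, even after multiplying by $z$ choices of $w$), and similarly the constants $4$ and $2$ in the $e_\Gamma(Z)$ bound are calibrated so that the tail $e^{-t}$ dominates $(en/z)^z$. The assumption $p \ge n^{-1/2}$ plays no essential role in the argument beyond ensuring the statement has non-trivial content.
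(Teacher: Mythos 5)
Your proof is correct and takes essentially the same approach as the paper's: Chernoff bound (Theorem~\ref{thm:chernoff}) plus a union bound over set sizes, with the same choice $\delta=\tfrac12$ for the first part and the same $t\ge 6\Exp X$ tail for the second. Your handling of the first sum is slightly more careful than the paper's (which crudely multiplies by $n^2$ and invokes $p\ge n^{-1/2}$ to bound the summand, with a typo in the final exponent), and you correctly observe that the hypothesis $p\ge n^{-1/2}$ is not actually needed once one notices the series over $z$ is dominated by its first term.
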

   \begin{proof}
   Indeed, using Theorem~\ref{thm:chernoff} and
    a union bound over the choices of $W$ and $Z$, 
    we sum over all possible cardinalities of $W$ and $Z$ to obtain that 
    the failure probability of the first assertion is at most
    \begin{align*}
      \sum_{|W|,|Z|} & \binom{n}{|W|}\binom{n}{|Z|}\cdot
      2 \exp\Big(-\frac{p|W||Z|}{12}\Big) \\
      \le & \sum_{|W|,|Z|}2\cdot \big(\tfrac{en}{|Z|}\big)^{2|Z|}\exp\big(-4|Z|\log\tfrac{en}{|Z|}\big)\\
       \le & 2n^2\exp\big(-n^{-1/2}\big)\,,
    \end{align*}
 where  the first inequality uses $|W|,|Z|\ge 48p^{-1}\log (en/|Z|)$, and the second our assumption $p\ge n^{-1/2}$. This tends to zero as $n$ tends to infinity, as desired. Similarly, since $4p|Z|^2$ is greater than seven times the expected number of edges in $Z$, the failure probability of
    the second assertion is at most
    \[\sum_{|Z|}n^{|Z|}\cdot e^{-2|Z|\log n}= \sum_{|Z|} n^{-|Z|}\,,\]
    which tends to zero as $n$ tends to infinity.
    \end{proof}

We now prove our one-sided regularity inheritance lemma, Lemma~\ref{lem:oneRI}. 

\begin{proof}[Proof of Lemma~\ref{lem:oneRI}]
  Given $0<\eps',d$, we set $\delta=\eps'/2$. We obtain $\eps_2$ and $C_1$ from Theorem~\ref{thm:OneSideInherit} for input $d$, $\beta=e^{100}$ and $\delta^2/10$. Now we obtain $\eps_1$ and $C_2$ from Lemma~\ref{lem:shrinkirreg} for input $d$, $\eps_2$ and $\delta$. We set $\eps_0=\tfrac12\eps_1$. We set $C'=1000C_1C_2(\eps')^{-3}$ and $C=1000(\eps')^{-3}C'$. We may suppose that
  $p\ge n^{-1/2}$, since otherwise the lemma holds vacuously as it requires $|X|>n$.   
    
    We now show that, provided $\Gamma$ has the property~\ref{inh:star} of Lemma~\ref{cl:inh:dist}, given a counterexample $(X,Y)$ and subgraph $G$ of $\Gamma$ to the conclusion of Lemma~\ref{lem:oneRI} we can find $X'\subset X$, $Y'\subset Y$ and a set $Z'$ such that $X'$, $Y'$ and $Z'$ are disjoint, such that $(X',Y')$ is $(\eps_1,d,p)$-regular, and such that each $z\in Z'$ has $\deg_\Gamma(z;X')$ between $\tfrac12p|X'|$ and $\tfrac32p|X'|$, yet the pair $\big(N_\Gamma(z;X'),Y'\big)$ is not $(\delta,d,p)$-regular in $G$.
    
    To that end, suppose that $(X,Y)$ and a subgraph $G$ of $\Gamma$ forms a counterexample. 
    Recall that we have $|X|\ge C\max(p^{-2},p^{-1}\log n)$ and $|Y|\ge Cp^{-1}\log(en/|X|)$. 
    Let $Z$ be a set of $C'p^{-1}\log(en/|X|)$ vertices witnessing this counterexample, i.e.\ a set of vertices whose 
    neighbourhoods in $X$ do not form $(\eps',d,p)$-regular pairs with $Y$ in $G$. 
    By~\ref{inh:star}, we have $e_\Gamma(Z)\le 4p|Z|^2+2|Z|\log n$, so in particular for at least half of the vertices $z\in Z$ we have $\deg_\Gamma(z;Z)\le 16p|Z|+8\log n\le 20C'\log n$. 
    Now let $X'=X\setminus Z$, and $Y'=Y\setminus Z$. 
    By choice of $C'$, we have $|X'|\ge \big(1-\tfrac{1}{100}(\eps')^3\big)|X|$, and $|Y'|\ge \big(1-\tfrac{1}{100}(\eps')^3\big)|Y|$. 
    Furthermore, by~\ref{inh:star}, there are at most $100p^{-1}\log(en/|X'|)$ vertices $z\in Z$ such that 
    $\deg_\Gamma(z;X')\neq\big(1\pm\tfrac12\big)p|X'|$. 
    Let $Z'$ consist of $\tfrac14C'p^{-1}\log(en/|X|)$ vertices in $Z$ with at most $20C'\log n$ neighbours in $Z$ and 
    $\big(1\pm\tfrac12\big)p|X'|$ neighbours in $X'$, which we can do by choice of $C'$. 
    We claim that $(X',Y',Z')$ form our desired smaller counterexample. Indeed, by Lemma~\ref{lem:slice}, $(X',Y')$ is $(\eps_1,d,p)$-regular in $G$. 
    Furthermore, by Lemma~\ref{lem:sticking} and choice of $C$, if $\big(N_\Gamma(z;X'),Y')$ were $(\delta,d,p)$-regular then 
    $\big(N_\Gamma(z;X),Y)$ would be $(\eps',d,p)$-regular, in contradiction to the definition of $Z$. Thus $\big(N_\Gamma(z;X'),Y')$ is not $(\delta,d,p)$-regular for any $z\in Z'$.
    
    We now use Lemma~\ref{lem:shrinkirreg} to obtain $Y^*$. We apply Lemma~\ref{lem:shrinkirreg} to the regular pair $(Y',X')$, with input $d$, $\eps_2$ and $\delta$, and with $i=0$ and $m=C'p^{-1}\log(en/|X|)$. We let $\ell=|Z'|$ and the sets $(X_j,Y_j)_{j\in[\ell]}$ be the pairs $(Y',N_\Gamma(z;X'))$. The result is that there exist choices of $Y^*\subset Y'$ with $|Y^*|=m$ such that $(X',Y^*)$ is $(\eps_2,d,p)$-regular in $G$, yet for each $z\in Z'$ the pair $\big(N_\Gamma(z;X'),Y^*\big)$ is not $(\delta^2/10,d,p)$-regular in $G$. We fix one such $Y^*$.
    
    We have now shown that if for some $\Gamma$ we have~\ref{inh:star} but the conclusion of Lemma~\ref{lem:oneRI} fails, then there exist $(X',Y^*,Z')$ and a graph $G$ on $(X',Y^*)$ with the properties above. Our aim now is to show that this \emph{bad object} is unlikely to exist in $G(n,p)$. To that end, fix any $X'\subseteq [n]$ of size at least $\tfrac12C\max(p^{-2},p^{-1}\log n)$. Fix now disjoint sets $Y^*,Z'\subseteq [n]\setminus X'$ of sizes $m$ and $\tfrac14m$ respectively. We reveal the edges of $G(n,p)$ between $X'$ and $Y^*$, and let $G$ be any fixed $(\eps_2,d,p)$-regular subgraph of these edges. Next, we reveal the edges of $\Gamma=G(n,p)$ between $Z'$ and $X'$. We would like to upper bound the probability that we obtain $\deg_\Gamma(z;X')=\big(1\pm\tfrac12\big)p|X'|$ for each $z\in Z'$, and $\big(N_\Gamma(z;X'),Y^*\big)$ is not $(\delta^2/10,d,p)$-regular for any $z\in Z'$. Since the edges from each $z\in Z'$ to $X'$ are independent, it is enough to estimate the probability that $\big(N_\Gamma(z;X'),Y^*\big)$ fails to be $(\delta^2/10,d,p)$-regular in $G$, conditioning on $\deg_\Gamma(z;X')$. By Theorem~\ref{thm:OneSideInherit}, this probability is at most $\beta^{\deg_\Gamma(z;X')}\le\beta^{p|X'|/2}$. We conclude that the probability that a given $X'$, $Y^*$, $Z'$ and $G$ form a bad object is at most $\beta^{p|X'||Z'|/2}$. We now take the union bound over choices of $X'$ (including the choice of $|X'|$), $Y^*$ and $Z'$ (whose sizes are fixed given $|X'|$), and over the choices of $G\subset\Gamma$. Note that because~\ref{inh:star} holds, the number of edges in $\Gamma$ between $X'$ and $Y^*$ is at most $3p|X'||Y^*|/2$, so the number of choices of $G\subset\Gamma$ is at most $2^{3p|X'||Y^*|/2}$. Finally, using the facts that $\tfrac12|X|\le|X'|\le|X|$, that $|Y^*|=m$ and $|Z'|=m/4$, and $|X'|\ge m$, we conclude that the probability that~\ref{inh:star} holds but $G(n,p)$ contains a bad object is at most
    \begin{align*}
     &\sum_{|X'|}\binom{n}{|X'|}\binom{n}{|Y^*|}\binom{n}{|Z'|}2^{3p|X'||Y^*|/2}\beta^{p|X'||Z'|/2}\\
     \le & \sum_{|X'|}\big(\tfrac{en}{|X'|}\big)^{6|X|'}2^{3p|X'|m/2}\beta^{p|X'|m/8}\\
     \le & \sum_{|X'|}\exp\big(12|X'|\log\tfrac{en}{|X'|}+\tfrac32C'|X'|\log\tfrac{en}{|X'|}-12.5 C'|X'|\log\tfrac{en}{|X'|}\big)\\
     \le & \sum_{|X'|}e^{-|X'|}
    \end{align*}
    where the final sum tends to zero as $n$ tends to infinity since $|X'|\ge\tfrac12Cp^{-1}\log n$, which tends to infinity with $n$. Since by Claim~\ref{cl:inh:dist}, $G(n,p)$ has~\ref{inh:star} a.a.s., we conclude that a.a.s.\ $\Gamma=G(n,p)$ satisfies the conclusion of Lemma~\ref{lem:oneRI}.
\end{proof}

Our proof of Lemma~\ref{lem:twoRI} is closely related to the above proof. Again, we begin by assuming that a counterexample $(X,Y)$ and $G$ exists, with $|X|\le|Y|$, and deducing the existence of disjoint sets $X'$, $Y'$ and $Z'$. This time we will obtain $|X'|=|Y'|$ to be only slightly smaller than $|X|$. Again, we will have that each vertex of $Z'$ has roughly the expected size neighbourhood in each of $X'$ and $Y'$ but these neighbourhoods fail to induce regular pairs in $G$. Now, however, we have to work harder to show that this structure is unlikely to exist in $G(n,p)$. In order to make the union bound over choices of $G$ work, we have to apply Lemma~\ref{lem:shrinkirreg} twice to reduce $X'$ to $X^*$ and $Y'$ to $Y^*$, and we have to choose $|X^*|=|Y^*|=m$ of comparable size to $Z'$. Unfortunately now, if $p^{-1}\log(en/|X|)$ is too small the obvious union bound over choices of $X^*$, $Y^*$ and $Z'$ fails. Thus our \emph{bad object} is not one triple $(X^*,Y^*,Z')$ together with $G$, but that $(X',Y',Z')$ contains a collection of $\tfrac14\binom{|X'|}{m}\binom{|Y'|}{m}$ such triples $(X^*,Y^*,Z')$, which we obtain using the full strength of Lemma~\ref{lem:shrinkirreg}. We will show that this bad object is sufficiently unlikely to exist to make the union bound over choices of $X'$, $Y'$ and $Z'$ succeed.

\begin{proof}[Proof of Lemma~\ref{lem:twoRI}]
 Given $0<\eps',d$, we set $\delta_1=\tfrac12\eps'$ and for each $2\le j\le 5$
 we set $\delta_j=\delta_{j-1}^2/10$. We obtain $\eps_5$ from
 Corollary~\ref{cor:TwoSideInherit} for input $d$, $\beta=e^{-100}$ and
 $\delta_5$. For each $j=4,\dots,1$ we obtain $\eps_j$ and $C_{j+1}$ from
 Lemma~\ref{lem:shrinkirreg} for input $d$, $\eps_{j+1}$, $\delta_j$ and
 $i=1$. We set $\eps_0=\tfrac12\eps_1$. Finally, we let
 $C'=1000C_1C_2C_3C_4C_5(\eps')^{-3}$, and $C=1000(\eps')^{-3}C'$. Given $p>0$
 and $|X|$, let
 \[m=C'\max\big(p^{-2},p^{-1}\log(en/|X|)\big)\,.\]
 We will always apply Lemma~\ref{lem:shrinkirreg} and
 Corollary~\ref{cor:TwoSideInherit} with input pairs of sets of sizes at least
 $m$, and since $m\ge C'p^{-2}$, by choice of $C'$ this is indeed large enough
 for these two lemmas. This is the only place where we need $m\ge C'p^{-2}$ in
 the proof; our probabilistic calculations will always need $m\ge C'p^{-1}\log
 (en/|X|)$.
 
 Again, we may suppose that $p\ge n^{-1/2}$ since otherwise the lemma statement holds vacuously. We may also assume $|Y|\ge|X|$. As before, we assume that $\Gamma$ satisfies~\ref{inh:star} but that $(X,Y)$ and the subgraph $G$ of $\Gamma$ is a counterexample to the conclusion of Lemma~\ref{lem:twoRI}. Our aim is to find disjoint sets $X'$, $Y'$ and $Z'$ which witness failure of regularity inheritance. As before, we let $Z$ consist of $C'\max\big(p^{-2},p^{-1}\log(en/|X|)\big)$ vertices $z$ such that $\big(N_\Gamma(z;X);N_\Gamma(z;Y)\big)$ fails to be $(\eps',d,p)$-regular in $G$. We let $X''=X\setminus Z$, and $Y''=Y\setminus Z$. As before, using~\ref{inh:star} at least $|Z|/2$ vertices $z\in Z$ have $\deg_\Gamma(z;Z)\le 20C'\max(p^{-1},\log n)$, and at most $200p^{-1}\log(en/|X''|)$ vertices of $Z$ either fail $\deg_\Gamma(z;X'')=\big(1\pm\tfrac12\big)p|X''|$ or $\deg_\Gamma(z;Y'')=\big(1\pm\tfrac12\big)p|Y''|$. Again, by choice of $C$ we can find a set $Z'$ of $\tfrac14C'\max\big(p^{-2},p^{-1}\log(en/|X|)\big)=\tfrac14m$ vertices $z\in Z$ which have $\deg_\Gamma(z;Z)\le 20C'\max(p^{-1},\log n)$, and $\deg_\Gamma(z;X'')=\big(1\pm\tfrac12\big)p|X''|$, and $\deg_\Gamma(z;Y'')=\big(1\pm\tfrac12\big)p|Y''|$. Again, by Lemma~\ref{lem:slice} the pair $(X'',Y'')$ is $(\eps_1,d,p)$-regular in $G$, and again by Lemma~\ref{lem:sticking} for each $z\in Z'$ the pair $\big(N_\Gamma(z;X''),N_\Gamma(z;Y'')\big)$ is not $(\delta_1,d,p)$-regular in $G$.
 
 We now choose $k$ to be the largest integer such that $km\le|X''|,|Y''|$. By choice of $C$ we have $km\ge|X|/2$. We apply Lemma~\ref{lem:shrinkirreg} twice. First we apply it with input $d$, $\eps_2$, $\delta_1$ and $i=1$ to $(X'',Y'')$ to obtain $X'\subset X''$ with $|X'|=km$. Second, we apply it with input $d$, $\eps_3$, $\delta_2$ and $i=1$ to $(X',Y'')$ to obtain $Y'\subset Y''$ with $|Y'|=km$. By construction, the pair $(X',Y')$ is $(\eps_3,d,p)$-regular in $G$, for each $z\in Z'$ we have $\deg_\Gamma(z;X'),\deg_\Gamma(z;Y')=\big(1\pm\tfrac12\big)km$, and for each $z\in Z'$ the pair $\big(N_\Gamma(z;X'),N_\Gamma(z;Y')\big)$ is not $(\delta_3,d,p)$-regular in $G$.
 
 We now show that this triple $(X',Y',Z')$ contains the claimed bad object. Again, we need to apply Lemma~\ref{lem:shrinkirreg}. This time, we first apply it with input $d$, $\eps_4$, $\delta_3$ and $i=1$ to $(X',Y')$ to obtain $\tfrac12\binom{km}{m}$ choices of $X^*$, each of size $m$. For each such choice of $X^*$, we apply it with input $d$, $\eps_5$, $\delta_4$ and $i=1$ to $(X^*,Y')$ to obtain $\tfrac12\binom{km}{m}$ choices of $Y^*$, each of size $m$. In total, we find $\tfrac14\binom{km}{m}^2$ choices of $(X^*,Y^*)$ which is an $(\eps_5,d,p)$-lower-regular pair in $G$ with parts of size $m$, such that for each $z\in Z'$ we have $\deg_\Gamma(z;X^*),\deg_\Gamma(z;Y^*)=\big(1\pm\tfrac12\big)^2pm$ and $\big(N_\Gamma(z;X^*),N_\Gamma(z;Y^*)\big)$ is not $(\delta_5,d,p)$-lower-regular in $G$. We call such a pair $(X^*,Y^*)$ a \emph{bad pair} with respect to $Z'$, and the triple $(X',Y',Z')$ containing $\tfrac14\binom{km}{m}^2$ bad pairs the \emph{bad object}.
 
 We now show that a.a.s.\ $G(n,p)$ does not contain any such bad object. To that end, with  $k$ given we fix disjoint sets $X',Y',Z'$ of size $|Z'|=m$ and $|X'|=|Y'|=km$. Suppose that for some graph $G$ this triple forms a bad object. Picking a uniform random partition $X'=X^*_1\cup\dots\cup X^*_k$ and $Y'=Y^*_1\cup\dots\cup Y^*_k$, in expectation at least $k/4$ of the pairs $(X^*_j,Y^*_j)$ are bad pairs. In particular, letting $T$ be the number of pairs of partitions of $X'$ and $Y'$ into sets of size $m$, we see that at least $T/8$ pairs of partitions contain $k/8$ or more bad pairs.
 
 On the other hand, we can estimate the probability that there exists a graph $G\subset G(n,p)$ on $(X',Y',Z')$  such that the uniform random partition contains at least $k/8$ bad pairs in $G(n,p)$. We expose the edges of $\Gamma=G(n,p)$ between each $X^*_j$ and $Y^*_j$, and fix any $(\eps_5,d,p)$-lower-regular subgraphs to obtain $G$. Now we expose the edges from $Z'$ to each of $X'$ and $Y'$. 
 When exposing the edges for a $z\in Z'$ we first reveal its degrees. Note that if the degree of $z$  to some $X^*_j$ or $Y^*_j$ is not $(1\pm\tfrac12)^2 pm$, then by definition $(X^*_j,Y^*_j)$ is not a bad pair with respect to~$Z'$.
Conditioned on the event that all degrees are within $(1\pm\tfrac12)^2 pm$
 Corollary~\ref{cor:TwoSideInherit} implies that the probability that for any given $z\in Z'$ and $j\in[k]$ the pair $\big(N_\Gamma(z;X^*_j),N_\Gamma(z;Y^*_j)\big)$ is not $(\delta_5,d,p)$-regular is at most $\beta^{pm/4}$. Since these events are independent for different $z\in Z'$, the probability that any given $(X^*_j,Y^*_j)$ forms a bad pair with respect to $Z'$ is at most $\beta^{pm^2/16}$. If $\Gamma$ has property~\ref{inh:star}, the probability that there exists some graph $G\subset \Gamma$ such that $(X^*_j,Y^*_j)$ forms a bad pair in $G$ with respect to $Z'$ is thus at most $2^{3pm^2/2}\beta^{pm^2/16}$, which by choice of $\beta$ is at most $\beta^{pm^2/32}$.
 
 Thus the probability that for at least $k/8$ of these pairs there is a graph $G$ in which they are bad with respect to $Z'$ is
 \begin{align*}
  &\sum_{\ell=k/8}^k\binom{k}{\ell}\beta^{\ell pm^2/16}\big(1-\beta^{pm^2/32}\big)^{k-\ell}\\
  \le & k\cdot\binom{k}{k/8}\beta^{kpm^2/1000}\le 2^k\beta^{pkm^2/1000}
 \end{align*}
 where the first inequality is by choice of $\beta$. By linearity of expectation, the expected number of pairs of partitions of $X'$ and $Y'$ in $G(n,p)$ for which there is a graph $G$ such that at least $k/8$ pairs are bad with respect to $Z'$ is at most $2^k\beta^{pkm^2/1000}\cdot T$. Applying Markov's inequality, we see that the probability that $(X',Y',Z')$ can be a bad object with respect to any graph $G$ is at most $8\cdot 2^k\beta^{pkm^2/2000}$. Finally, taking the union bound over choices of $k$, $X'$, $Y'$ and $Z'$ we see that the probability of $G(n,p)$ containing a bad object is at most
 \begin{align*}
  &\sum_{k=1}^{n/m}\binom{n}{km}^2\binom{n}{m}\cdot 8\cdot 2^k\beta^{pkm^2/2000}\le 8 \sum_{k=1}^{n/m}\big(\tfrac{n}{km}\big)^{3km} 2^k\beta^{pkm^2/2000}\\
  \le &8\sum_{|X|=\log n}^{n} \exp\big(3|X|\log(en/|X|)+|X|/m+(\log\beta)\cdot C'|X|\log(en/|X|)/2000\big)\\
  \le & \sum_{|X|=\log n}^{n}\exp\big(-|X|\log(en/|X|)\big)
 \end{align*}
 where the third inequality is since $|X|/2\le km\le |X|$ and $|X|\ge\log n$, and the fourth by choice of $\beta$ and $C'$. This tends to zero as $n$ tends to infinity, which together with Lemma~\ref{cl:inh:dist} shows that a.a.s.\ $G(n,p)$ does not contain any bad object. This completes the proof of Lemma~\ref{lem:twoRI}.
\end{proof}

 We conclude this section by sketching constructions which show that the number of vertices failing to inherit lower-regularity in each of Lemmas~\ref{lem:oneRI} and~\ref{lem:twoRI} are sharp for all $n^{-1/2}\le p\le\tfrac12$ and all $C\max(p^{-2},p^{-1}\log n)\le |X|\le n/4$. 
 
 First, for Lemma~\ref{lem:oneRI}, given $C\max(p^{-2},p^{-1}\log n)\le m\le n/100$ we fix a set $Z$ of $\big\lfloor \tfrac12p^{-1}\log (en/m)\big\rfloor$ vertices. If $|Z|=0$, then trivially there is a set $X$ of $m$ vertices with no neighbours in $Z$. If $|Z|\ge 1$, then the probability that a vertex outside $Z$ has no neighbours in $Z$ is
 \[(1-p)^{|Z|}\ge e^{-2p\cdot p^{-1}\log(en/m)/4}=\big(\tfrac{m}{en}\big)^{1/2}\]
 and so the expected number of vertices outside $Z$ with no neighbours in $Z$ is at least $\tfrac{99n}{100}(m/en)^{1/2}\ge 2m$. By Theorem~\ref{thm:chernoff}, since $m$ tends to infinity with $n$, a.a.s.\ there is a set $X$ of $m$ vertices with no neighbours in $Z$. Furthermore, for each $\eps>0$ a.a.s.\ the pair $(X,[n]\setminus X)$ is $(\eps,\tfrac12,p)$-lower-regular, but trivially for each vertex $z\in Z$ the neighbourhood of $z$ in $X$ does not form a regular pair with $[n]\setminus X$. 
 
 For Lemma~\ref{lem:twoRI}, the same construction shows we need $|Z|\ge cp^{-1}\log(en/|X|)$. Finally, for each $\eps>0$ there exists $p_\eps>0$ such that for $0<p<p_\eps$, Huang, Lee and Sudakov~\cite[Proposition~6.3]{HLS} construct a subgraph $G$ of $\Gamma$ with minimum degree $(1-\eps)pn$ in which $\Omega(p^{-2})$ vertices are not in triangles. This construction in particular shows that $\Omega(p^{-2})$ vertices can fail to inherit regularity in Lemma~\ref{lem:twoRI}.
\section{Deterministic properties of the ambient graph}\label{sec:pseudo}

In this section we introduce the deterministic properties which we require
of our ambient graphs~$\Gamma$.
We then also prove that random and bijumbled
graphs have (a subset of) these properties, in Lemma~\ref{lem:det_Gnp} and~Lemma~\ref{lem:det_jumbled},
respectively.
These are exactly the properties which we shall use in our blow-up lemma proofs.


\medskip

Our first deterministic property  asserts that most vertices of $\Gamma$ have close to the
expected degree into any given reasonably large subset of vertices. This
property will be used for both random and bijumbled graphs.

\begin{definition}[Neighbourhood size property
  $\NS(\eps,T,\Delta)$]\label{def:NS}
 Given $\eps>0$ and integers $T$ and $\Delta$, we say that the graph $\Gamma$
  has property $\NS(\eps,T,\Delta)$ if the following is true for some $p$
  such that $\Gamma$ has density $(1\pm \eps)p$.
  For any set $W$ of size at least
  $\eps p^{\Delta-1} v(\Gamma)/T^2$, there are at most
  $\eps p^{\Delta-1 }v(\Gamma)/T^2$ vertices $v$ outside $W$ such that $\deg_\Gamma(v;W)\neq
  (1\pm \eps)p|W|$.
\end{definition}

Our next property concerns one-sided and two-sided regularity inheritance.
Based on the regularity inheritance lemmas established in Section~\ref{subsec:RI}
we shall also establish this property for random as well as bijumbled graphs.
Recall our convention though, that in random graphs regular pairs are
lower-regular pairs and in bijumbled graphs they are fully-regular pairs.

\begin{definition}[Regularity inheritance property
  $\RI(\eps,(\eps_{a,b}),\eps',d,T,\Delta)$]\label{def:RI} \mbox{}\\
  Given $0<\eps<\eps'<d$ and integers $T$, $\Delta$, we say that the graph $\Gamma$
  has property $\RI(\eps,(\eps_{a,b}),\eps',d,T,\Delta)$ if we have $\eps_{a,b}\in[\eps,\eps']$ for each $0\le a,b\le\Delta-1$
  such that the following holds for some $p$ such that $\Gamma$ has density $(1\pm \eps)p$.

  If $0\le a\le \Delta-2$ and $0\le b\le\Delta-1$, if $G\subset\Gamma$ and if
  $X,Y\subset V(\Gamma)$ are disjoint sets with
  \[|X|\ge\eps' p^{\Delta-2} \cdot \frac{v(\Gamma)}{T^2} 
    \qquad\text{and}\qquad
    |Y|\ge\eps' p^{\Delta-1}\cdot \frac{v(\Gamma)}{T^2}
  \]
  and if $(X,Y)$ is $(\eps_{a,b},d,p)$-regular pair in $G$, then
  $\big(N_{\Gamma}(v; X),Y\big)$ is $(\eps_{a+1,b},d,p)$-regular in~$G$ for
  all but at most $\eps p^{\Delta-1} v(\Gamma)/T^2$ vertices
  $v\in V(\Gamma)\setminus(X\cup Y)$. Furthermore, if additionally
  $b\le \Delta-2$ and
  \[|Y|\ge \eps'p^{\Delta-2}\cdot\frac{v(\Gamma)}{T^2}\,,\]
  then the pair $\big(N_{\Gamma}(v;X),N_{\Gamma}(v;Y)\big)$ is
  $(\eps_{a+1,b+1},d,p)$-regular in $G$ for all but at most
  $\eps p^{\Delta-2} v(\Gamma)/T^2$ vertices $v\in V(\Gamma)\setminus(X\cup Y)$.
\end{definition}



The next property concerns the count of certain stars in~$\Gamma$, and we
shall call it congestion property (following~\cite{ChvRand}, where a very
similar property was used). We only establish this property for random
graphs. For bijumbled graphs it does not hold (with any reasonable choice of
parameters).
Given a graph $\Gamma$, a set $U\subset V(\Gamma)$ and a collection
$\mathcal{F}$ of pairwise disjoint $\ell$-sets in $V(\Gamma)$, we define the
\emph{congestion graph} $\AG(\Gamma,U,\mathcal{F})$ to be the bipartite graph with
vertex sets $U$ and $\mathcal{F}$ with $uF$ an
edge of $\AG(\Gamma,U,\mathcal{F})$ if $u\in U$ is a common neighbour in $\Gamma$
of the vertices in $F\in\mathcal{F}$.

\begin{definition}[Congestion property $\CON(\rho,T,\Delta)$]
  \label{def:con} Given $\rho>0$ and integers $T$ and $\Delta$,
  we say that $\Gamma$ has property $\CON(\rho,T,\Delta)$ if the following
  statement is true for some $p$ such that $\Gamma$ has the density $(1\pm\rho)p$.
  For each $1\le \ell\le\Delta$, each $U\subset V(\Gamma)$ and each
  collection $\mathcal{F}$ of pairwise disjoint $\ell$-sets in $V(\Gamma)\setminus U$, if we have
  $|U|\le |\mathcal{F}|\le \rho v(\Gamma)$, then
  \[e\big(\AG(\Gamma,U,\mathcal{F})\big)\le
  7p^\ell|U||\mathcal{F}|+\rho p^\ell\cdot \frac{v(\Gamma)}{T}|\mathcal{F}|\,.\]
\end{definition}



The congestion condition will help us to verify Hall's condition (on some
linearly sized set) in order to
embed many vertices at a time. However, in the proof of our
blow-up lemma for degenerate graphs,
Lemma~\ref{lem:degen}, we cannot use this strategy. Instead we will embed the vertices one by one in the
given order. For this purpose we need to have a `local' version of the
congestion property, which we will want to apply to sets~$U$ in the common
$\Gamma$-neighbourhood of already embedded vertices. We remark that for
such small sets~$U$ the bound on the number of edges in the congestion
graph given by the congestion property becomes trivial. The following local
congestion property is designed to give a useful bound in this case.
Again, we shall only establish this property for random graphs.

\begin{definition}[Local congestion property $\LCON(\eps,T,\Delta)$]
\label{def:lcon} Given $\eps>0$ and integers $T$ and $\Delta$,
  we say that $\Gamma$ has property $\LCON(\eps,T,\Delta)$ if the following
  statement is true for some $p$ such that $\Gamma$ has density $(1\pm\eps)p$.
  For each $i,\ell\ge 1$ with $i+\ell\le\Delta$, each $U\subset V(\Gamma)$
  of size at least $\eps p^i v(\Gamma)/T^2$ and each
  collection~$\mathcal{F}$ of pairwise disjoint $\ell$-sets in $V(\Gamma)\setminus U$ we have
  \[e\big(\AG(\Gamma,U,\mathcal{F})\big)\le
  7p^{\ell}|U|\max\big(\eps|U|,|\mathcal{F}|\big)\,.\]
\end{definition}


Since for bijumbled graphs we cannot use the congestion property or
local congestion property we introduce the following final deterministic
property which we shall use there instead. This property is a strengthening
of the neighbourhood size property, which also takes smaller sets~$W$ 
in which we measure neighbourhoods into consideration,
and further requires the number of
exceptional vertices outside~$W$ to be smaller. We call it `lopsided' to distinguish it from property $\NS(\eps,T,\Delta)$, in which the maximum number of failing vertices and the minimum size of the set into which neighbourhoods are taken are the same.

\begin{definition}[Lopsided neighbourhood size property $\LNS(\eps,T,\Delta)$]\label{def:LNS}
  Given $\eps>0$ and integers $T$ and $\Delta$, we say that $\Gamma$
  has property $\LNS(\eps,T,\Delta)$ if the following is true for some $p$
  such that $\Gamma$ has density $(1\pm \eps)p$.
  For each $0\le j\le \Delta-1$ and any set $W$ of size at least
  $\eps p^{\Delta+j} v(\Gamma)/T^2$, there are at most
  $\eps p^{2\Delta-j-1} v(\Gamma)/T^2$ vertices $v$ outside $W$ such that $\deg_\Gamma(v;W)\neq
  (1\pm \eps)p|W|$.
\end{definition}
  
  \subsection{Random graphs}
  \label{subsec:GnpPseudo}
  In this subsection we will show that $G(n,p)$ has the neighbourhood size
  property, regularity inheritance property, congestion property and local
  congestion property by proving the following lemma

  \begin{lemma}[Deterministic properties of $G(n,p)$]\label{lem:det_Gnp}
    For every $\Delta\ge 2$ and $d,\eps'>0$ there exist $\eps>0$ and
    $\eps_{a,b}>0$ for each $0\le a,b\le\Delta-1$ such that for every $T$
    and $\rho>0$ there exists $C>0$ such that if
    $p\ge C(\log n/n)^{1/\Delta}$ then $G(n,p)$ a.a.s.\ has
    \begin{enumerate}[label=\abc]
      \item\label{det_gnp:randns}
        $\NS(\eps,T,\Delta)$,
      \item\label{det_gnp:randri}
        $\RI(\eps,(\eps_{a,b}),\eps',d,T,\Delta)$,
      \item\label{det_gnp:randcon}
        $\CON(\rho,T,\Delta)$,
      \item\label{det_gnp:randlcon}  $\LCON(\eps,T,\Delta)$.
    \end{enumerate}
  \end{lemma}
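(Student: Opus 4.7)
The plan is to establish each of the four properties independently. Throughout, Chernoff applied to $e(\Gamma)$ shows that $\Gamma=G(n,p)$ has density $(1\pm\eps)p$ a.a.s., so the density clause in each property is immediate; after fixing all constants depending on $\Delta$, $d$, $\eps'$, $T$, and $\rho$, I will take $C$ large enough that $p^\Delta n\ge C^\Delta\log n$ dominates them. For items (a), (c) and (d) the arguments are Chernoff estimates on binomial random variables combined with suitable union bounds; for item (b) the work is done by the regularity inheritance lemmas, Lemmas~\ref{lem:oneRI} and~\ref{lem:twoRI}.

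For (a), $\NS(\eps,T,\Delta)$, I fix disjoint sets $W,Z\subset V(\Gamma)$ with $|W|=w\ge\eps p^{\Delta-1}n/T^2$ and $|Z|=\eps p^{\Delta-1}n/T^2$ and bound the probability that every $z\in Z$ has $\deg_\Gamma(z;W)\notin(1\pm\eps)pw$. For different $z$ these events use disjoint $\Gamma$-edges, so they are independent, and each has probability at most $2e^{-\eps^2 pw/3}$ by Chernoff, giving joint probability at most $(2e^{-\eps^2 pw/3})^{|Z|}$. Union-bounding over $w$ and over $(W,Z)$ with $\binom{n}{w}\binom{n}{|Z|}\le n^{w+|Z|}$, and using $|Z|\cdot\eps^2 pw/3\ge\eps^3 C^\Delta w\log n/(3T^2)$, yields the required a.a.s.\ statement for $C$ large.

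For (b), $\RI$, I define $\eps_{a,b}$ by backwards induction on $a$: set $\eps_{\Delta-1,b}=\eps'$ for all $b$, and for $a=\Delta-2$ down to $0$ and each $b\in\{0,\dots,\Delta-1\}$ let $\eps^{(1)}_{a,b}$ be the $\eps_0$ returned by Lemma~\ref{lem:oneRI} applied with output parameter $\eps_{a+1,b}$ and, when $b\le\Delta-2$, let $\eps^{(2)}_{a,b}$ be the $\eps_0$ returned by Lemma~\ref{lem:twoRI} applied with output $\eps_{a+1,b+1}$. Take $\eps_{a,b}=\min\{\eps^{(1)}_{a,b},\eps^{(2)}_{a,b}\}$ and $\eps=\min_{a,b}\eps_{a,b}$. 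Given $T$ and $\rho$, pick $C$ so that the size hypotheses of both inheritance lemmas follow from $|X|\ge\eps' p^{\Delta-2}n/T^2$ and $|Y|\ge\eps' p^{\Delta-1}n/T^2$ (for two-sided, use $|Y|\ge|X|$ up to swapping), and the exception bounds $Cp^{-1}\log(en/|X|)$ and $C\max(p^{-2},p^{-1}\log(en/|X|))$ returned by those lemmas are at most $\eps p^{\Delta-1}n/T^2$ and $\eps p^{\Delta-2}n/T^2$ respectively; all of these reduce to $p^\Delta n\ge C^\Delta\log n$.

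For (c) and (d), with $U$ and a disjoint family $\mathcal{F}$ of $\ell$-sets in $V(\Gamma)\setminus U$ fixed, the edge indicators of $\AG(\Gamma,U,\mathcal{F})$ depend on pairwise disjoint collections of potential $\Gamma$-edges (distinct $F$'s are disjoint, distinct $u$'s give distinct edges), so they are mutually independent Bernoulli($p^\ell$) variables and the edge count is binomial with mean $p^\ell|U||\mathcal{F}|$. The last Chernoff bound of Theorem~\ref{thm:chernoff} applies: for (c) with $t=6p^\ell|U||\mathcal{F}|+\rho p^\ell n|\mathcal{F}|/T\ge 6\Exp$, giving tail $e^{-t}$; for (d) with target $7p^\ell|U||\mathcal{F}|$ when $|\mathcal{F}|\ge\eps|U|$ and $7\eps p^\ell|U|^2$ otherwise, each exceeding $7\Exp$. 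Union-bounding over $U$ and $\mathcal{F}$ of each size costs at most $\binom{n}{|U|}n^{\ell|\mathcal{F}|}\le\exp((\ell+1)|\mathcal{F}|\log n)$ (using $|U|\le|\mathcal{F}|$ in (c), and $|U|\ge\eps p^i n/T^2$ with $i+\ell\le\Delta$ in (d)), which is absorbed by the exponential decay thanks to $p^\Delta n\ge C^\Delta\log n$. The main obstacle is the nested book-keeping in (b), where the constants $\eps_{a,b}$ must be defined in the right order so that each invocation of Lemmas~\ref{lem:oneRI} and~\ref{lem:twoRI} has a legitimate input parameter; the remaining items are straightforward Chernoff-plus-union-bound arguments driven by the single inequality $p^\Delta n\ge C^\Delta\log n$.
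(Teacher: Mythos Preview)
Your proposal is correct and follows essentially the same approach as the paper: Chernoff plus union bound for (a), (c), (d), and the regularity inheritance Lemmas~\ref{lem:oneRI} and~\ref{lem:twoRI} for (b). The only cosmetic differences are that for (a) the paper bounds densities between pairs of linear-size sets and then reads off the degree conclusion, whereas you bound individual vertex degrees directly, and in (b) the paper additionally arranges the symmetry $\eps_{a,b}=\eps_{b,a}$, which is convenient elsewhere but not required for the $\RI$ property as stated.
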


  The proof of part~\ref{det_gnp:randns} is standard. Part~\ref{det_gnp:randri}
  follows from the regularity inheritance lemmas, Lemmas~\ref{lem:oneRI} and~\ref{lem:twoRI}.
  The proof of part~\ref{det_gnp:randcon} follows arguments
  from~\cite{ChvRand} (but is slightly different), and
  part~\ref{det_gnp:randlcon} is proved similarly. For completeness we give the
  details. 


  \begin{proof}[Proof of Lemma~\ref{lem:det_Gnp}]
    Given $\Delta\ge 1$ and $d,\eps'>0$, we assume without loss of
    generality that
    $(1-\eps')^\Delta>1/2$. We choose $\eps_{a,b}$ for each $0\le a\le\Delta-1$ and $0\le b\le\Delta-1$ as follows. We set
    $\eps_{\Delta-1,\Delta-1}=\eps'/2$ and we define the other $\eps_{a,b}$ inductively.  
    For each $a$ and $b$, we require that $\eps_{a,b}$ is smaller than 
    the $\eps_0$ returned by Lemma~\ref{lem:oneRI} with input
    $\eps_{a+1,b}/2$ and $d$ (provided $a<\Delta-1$), and that returned with
    input $\eps_{a,b+1}/2$ and $d$ (provided $b<\Delta-1$), and also than the $\eps_0$
    returned by Lemma~\ref{lem:twoRI} with input $\eps_{a+1,b+1}/2$ and $d$
    (provided $a<\Delta-1,b<\Delta-1$). Let~$\eps$ be the minimum of the
    $\eps_{a,b}$. Note that we then have $\eps_{a,b}=\eps_{b,a}$ for each $a,b$.
    Given $T$, let $C_1$ be the maximum of all of the constants $C$
    returned by the above applications of Lemmas~\ref{lem:oneRI}
    and~\ref{lem:twoRI}. Further, given~$\rho$ set
    \[C=\frac{100T^2C_1\Delta}{\eps^4\rho}\,.\] 
    Now let $p\ge C(\log n/n)^{1/\Delta}$. Let
    $\Gamma=G(n,p)$. By the Chernoff bound in Theorem~\ref{thm:chernoff}, since $p>1/n$, a.a.s.\
    $\Gamma$ has density $\big(1\pm \min(\eps,\rho)\big)p$.  In the following we condition on this
    occurring.

    \smallskip

    Proof of~\ref{det_gnp:randns}:
    Using the Chernoff bound again, if $X$ and $Y$ are any
    disjoint sets of size at least $6\eps^{-2}p^{-1}\log n$, then the probability
    that $e(X,Y)\neq (1\pm \eps)p|X||Y|$ is at most $2\exp(-\eps^2p|X||Y|/3)$. It follows
    that the probability that there exist such $X$ and $Y$ is at most
    \begin{align*}
      \sum_{|X|,|Y|\ge 6\eps^{-2}p^{-1}\log n}&n^{|X|+|Y|}\cdot
      2 \exp\Big(-\frac{\eps^2p|X||Y|}{3}\Big) \\
      \le & \sum_{|X|,|Y|}2\cdot 2^{2\max(|X|,|Y|)\log
      n}\exp\big(-2\max(|X|,|Y|)\log n\big)\\
       \le & 2n^2\big(2^2\exp(-2)\big)^{\log^2 n}\,,
    \end{align*}
    where the first inequality uses $\min(|X|,|Y|)\ge 6\eps^{-2}p^{-1}\log n$.
    It follows that a.a.s.\ any such pair of sets in~$\Gamma$ has density $(1\pm
    \eps)p$. We condition on this in the following.
    
    Now suppose that $X$ is any set of size at least
    $\eps p^{\Delta-1}n/T^2$ in $V(\Gamma)$, and $Y$ is the set of
    vertices outside $X$ with fewer than $(1-\eps)p|X|$ neighbours in $X$. Since
    the density of $(X,Y)$ is less than $(1-\eps)p$, and since by choice of $C$ we
    have $\eps p^{\Delta-1}n/T^2>6\eps^{-2}p^{-1}\log n$, we conclude
    that $|Y|<6\eps^{-2}p^{-1}\log n<\eps p^{\Delta-1}n/(2T^2)$ (again by choice of
    $C$). The same argument bounds the number of vertices outside $X$ with more
    than $(1+\eps)p|X|$ neighbours in $X$, so $\Gamma$ has $\NS(\eps,T,\Delta)$ as
    desired.

    \smallskip

    Proof of~\ref{det_gnp:randri}: In this proof we shall apply the regularity
    inheritance lemmas,
    Lemmas~\ref{lem:oneRI} and~\ref{lem:twoRI}, less than $3\Delta^2$
    times. Hence we can assume that the less than $3\Delta^2$ properties
    asserted to hold a.a.s.\ for $G(n,p)$ by these applications of the
    lemmas hold simultaneously a.a.s\ for $G(n,p)$. Note that whenever we apply these lemmas, we have $|X|>e$ and hence $\log(en/|X|)\le\log n$. We will not need to use the sharper bound taking into account the size of $X$ given in Lemmas~\ref{lem:oneRI} and~\ref{lem:twoRI}.
    
    Given $0\le a\le\Delta-2$ and $0\le b\le\Delta-1$, suppose that $(X,Y)$ is
    an $(\eps_{a,b},d,p)$-lower-regular pair in a subgraph $G$ of $\Gamma$ with
    $|X|\ge \eps' p^{\Delta-2}n/T^2$
    and $|Y|\ge \eps'p^{\Delta-1}n/T^2$.
    Let $Z$ be the set of vertices outside $X\cup Y$ such that $(N_\Gamma(z; X),Y)$ is
    not $(\eps_{a+1,b},d,p)$-lower-regular in $G$. By choice of $p$, 
    we have $\eps' p^{\Delta-2}n/T^2\ge C_1p^{-2}\log n$ and
    $\eps'p^{\Delta-1}n/T^2\ge C_1p^{-1}\log n$. Therefore we can apply
    Lemma~\ref{lem:oneRI} to conclude that 
    \[|Z|\le C_1p^{-1}\log
    n\le\tfrac{\eps}{2T^2}Cp^{-1}\log n<\eps p^{\Delta-1}n/T^2\,,\]
    as desired.
    
    Similarly, given $0\le a\le\Delta-2$ and $0\le b\le\Delta-2$, suppose that
    $(X,Y)$ is an $(\eps_{a,b},d,p)$-lower-regular pair in a subgraph $G$ of $\Gamma$ with
    $|X|\ge \eps'p^{\Delta-2}n/T^2$ and $|Y|\ge \eps' p^{\Delta-2}n/T^2$.
    Let $Z$ be the set of vertices outside $X\cup Y$ such that $(N_\Gamma(z;X),N_\Gamma(z; Y))$ is not $(\eps_{a+1,b+1},d,p)$-lower-regular in $G$. By
    choice of $p$, we have $\eps' p^{\Delta-2}n/T^2>C_1 p^{-2}\log n$. 
    Therefore  we can apply Lemma~\ref{lem:twoRI} to conclude that 
    \[
    |Z|\le C_1\max(p^{-2},p^{-1}\log n)\le\tfrac{\eps}{2T^2}C\max(p^{-2},p^{-1}\log n)<\eps p^{\Delta-2}n/T^2\,,
    \]
    again as desired.

    \smallskip
   
    Proof of~\ref{det_gnp:randcon}:
    Given $1\le i\le\Delta$, a set $U\subset V(\Gamma)$, and a family~
    $\mathcal{F}$ of pairwise
    disjoint $\ell$-sets in $V(\Gamma)\setminus U$ with
    $|U|\le|\mathcal{F}|\le\rho|V(\Gamma)|$, the graph $\AG(\Gamma,U,\mathcal{F})$
    is a random bipartite graph with edge probability $p^\ell$ and
    with parts $U$ and $\mathcal{F}$. So the expected number 
    of edges of $\AG(\Gamma,U,\mathcal{F})$ is $p^\ell|U||\cF|$. By the
    Chernoff bound in
    Theorem~\ref{thm:chernoff} the probability that
    \[\big|e\big(\AG(\Gamma,U,\mathcal{F})\big)\big|> 7p^\ell|U||\mathcal{F}|+\rho p^\ell
    n|\mathcal{F}|/T\] is at most
    \[\exp\big(-\rho p^\ell n |\mathcal{F}|/T\big)\,.\]
    If $|\mathcal{F}|=m\le\rho n$, then since $|U|\le|\mathcal{F}|$ we have $|U|\le
    m$. Taking a union bound over~$\ell$, over~$m$, over the
    at most $n^{m+1}$ choices for $U$ and its size, and over the at most $n^{\Delta m}$
    choices for $\mathcal{F}$, we see that the probability of failure of
    $\CON(\rho,T,\Delta)$ is at most
    \begin{align*}
      &\sum_{\ell=1}^\Delta\sum_{m=1}^{\rho n}n^{m+1} n^{\Delta
      m}\exp\big(-\rho p^\ell n m/T\big) \le \Delta\sum_{m=1}^{\rho n}\exp\big(2\Delta
      m\log n-\rho p^\Delta n m/T\big)\\
      \le&\Delta\sum_{m=1}^{\rho n}\exp\big(-(\Delta+3)m
      \log n \big)<\Delta n\exp\big(-(\Delta+2)\log n\big)<1/n\,,
    \end{align*}
    where the second inequality uses $p^\Delta n\ge C\log n$ and the choice of
    $C$. 
    We conclude that $G(n,p)$ a.a.s.\ has $\CON(\rho,T,\Delta)$ as desired.

    \smallskip
    Proof of~\ref{det_gnp:randlcon}:
    Given integers $i,\ell\ge 1$ such that $i+\ell\le\Delta$, a set $U\subset V(\Gamma)$ of size at least $\eps p^i n/T^2$, and a family of pairwise
    disjoint $\ell$-sets $\mathcal{F}$ in $V(\Gamma)\setminus U$, the graph $\AG(\Gamma,U,\mathcal{F})$
    is a random bipartite graph with edge probability $p^\ell$ and
    parts~$U$ and~$\mathcal{F}$. The expected number of edges it contains is therefore $p^\ell|U||\mathcal{F}|$.
    
    We separate two cases. If $|\mathcal{F}|\le\eps|U|$, then we have $\Exp e\big(\AG(\Gamma,U,\mathcal{F})\big)\le \eps p^\ell|U|^2$, so by Theorem~\ref{thm:chernoff} the probability of $e\big(\AG(\Gamma,U,\mathcal{F})\big)\ge 7\eps p^\ell|U|^2$ is at most
    \[\exp\big(-6\eps p^\ell|U|^2\big)\le \exp\big(-6\eps^2 p^{i+\ell}n|U|/T^2\big)\le \exp(-3\Delta|U|\log n)\,.\]
    Taking the union bound over the at most $\Delta^2$ choices of $i$ and $\ell$, at most $n^2$ choices of $|U|$ and $|\mathcal{F}|\le|U|$, the at most $n^{|U|}$ choices of $U$ and the at most $n^{\Delta|U|}$ choices of $\mathcal{F}$, we see that the probability that there exists such a choice with $e\big(\AG(\Gamma,U,\mathcal{F})\big)\ge 7\eps p^\ell|U|^2$ is at most
    \[\Delta^2n^2n^{(\Delta+1)|U|}\exp(-3\Delta|U|\log n)\]
    which tends to zero as $n$ tends to infinity.
    
    Next we consider the case $|\mathcal{F}|\ge\eps |U|$. Again by Theorem~\ref{thm:chernoff} the probability of $e\big(\AG(\Gamma,U,\mathcal{F})\big)\ge 7p^\ell|U||\mathcal{F}|$ is at most
    \[\exp\big(-6p^\ell|U||\mathcal{F}|\big)\le \exp\big(-6\eps p^{i+\ell}n|\mathcal{F}|/T^2\big)\le \exp(-3\eps^{-1}\Delta|\mathcal{F}|\log n)\,.\]
    Taking the union bound over the at most $\Delta^2$ choices of $i$ and $\ell$, at most $n^2$ choices of $|U|$ and $|\mathcal{F}|$, the at most $n^{|\mathcal{F}|/\eps}$ choices of $U$ and the at most $n^{\Delta|\mathcal{F}|}$ choices of $\mathcal{F}$, we see that the probability that there exists such a choice with $e\big(\AG(\Gamma,U,\mathcal{F})\big)\ge 7p^\ell|U||\mathcal{F}|$ is at most
    \[\Delta^2n^2n^{(\Delta+1/\eps)|\mathcal{F}|}\exp(-3\eps^{-1}\Delta|\mathcal{F}|\log n)\]
    which tends to zero as $n$ tends to infinity. 
    We conclude that $G(n,p)$ a.a.s.\ has $\LCON(\eps,T,\Delta)$ as desired.
  \end{proof}

\subsection{Bijumbled graphs}

 In this subsection we will verify the neighbourhood size property, the
 regularity inheritance property, and the lopsided neighbourhood size
 property for bijumbled graphs, as stated in the following lemma.

 \begin{lemma}[Deterministic properties of bijumbled graphs]
   \label{lem:det_jumbled}
   For each $\Delta\in\NATS$ and $d,\eps'>0$
   there exist $\eps_{a,b}>0$ for $0\le a,b\le\Delta$ and $\eps>0$
   such that for each $T\in\NATS$ there is $c>0$ such that if
   $p>0$ then any $(p,\beta)$-bijumbled graph on~$n$ vertices has
   \begin{enumerate}[label=\abc]
     \TabPositions{.4\textwidth}
     \item \label{det_jumbled:pseudns} 
       $\NS(\eps,T,\Delta+1)$ \tab
       if $\beta\le c p^{\Delta+1} n$,
     \item \label{det_jumbled:pseudri} 
       $\RI(\eps,(\eps_{a,b}),\eps',d,T,\Delta+1)$ \tab 
       if $\beta\le cp^{\Delta+2}n$,
     \item \label{det_jumbled:pseudlns}
       $\LNS(\eps,T,\Delta)$ \tab
       if $\beta\le c p^{(3\Delta+1)/2}n$.
   \end{enumerate}
 \end{lemma}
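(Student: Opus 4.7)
The plan is to handle the three properties separately. Parts~\ref{det_jumbled:pseudns} and~\ref{det_jumbled:pseudlns} will follow directly from the bijumbledness inequality~\eqref{eq:expander_mixing}, while for part~\ref{det_jumbled:pseudri} we will invoke the bijumbled regularity inheritance lemmas (Lemmas~\ref{lem:jrione} and~\ref{lem:jritwo}).

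For part~\ref{det_jumbled:pseudns}, fix any $W\subseteq V(\Gamma)$ with $|W|\ge\eps p^\Delta n/T^2$ and let $Y^\pm$ be the vertices outside $W$ whose degree into $W$ exceeds $(1+\eps)p|W|$, respectively falls below $(1-\eps)p|W|$. By the definition of $Y^\pm$ the quantity $\bigl|e_\Gamma(Y^\pm,W)-p|Y^\pm||W|\bigr|$ is at least $\eps p|Y^\pm||W|$, while bijumbledness bounds it above by $\beta\sqrt{|Y^\pm||W|}$; combining yields $|Y^\pm||W|\le\beta^2/(\eps p)^2$. Assuming $|Y^\pm|>\eps p^\Delta n/T^2$ then forces $\beta>\eps^2 p^{\Delta+1}n/T^2$, contradicting the hypothesis once $c$ is chosen small in terms of $\eps$ and $T$. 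Part~\ref{det_jumbled:pseudlns} is the same calculation performed for each $0\le j\le\Delta-1$; the crucial observation is that the exponent of $p$ in the product $|W||Y^\pm|$ is $(\Delta+j)+(2\Delta-j-1)=3\Delta-1$, independent of $j$, so the single hypothesis $\beta\le cp^{(3\Delta+1)/2}n$ works uniformly.

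For part~\ref{det_jumbled:pseudri} we will define the parameters $\eps_{a,b}$ by downward induction. Set $\eps_{\Delta,\Delta}=\eps'/2$, and for each other pair $(a,b)$ with $0\le a,b\le\Delta$ pick $\eps_{a,b}$ smaller than the $\eps_0$'s returned by Lemma~\ref{lem:jrione} with input $(d,\eps_{a+1,b})$ (when $a<\Delta$) and with input $(d,\eps_{a,b+1})$ (when $b<\Delta$), and smaller than the $\eps_0$ from Lemma~\ref{lem:jritwo} with input $(d,\eps_{a+1,b+1})$ (when $a,b<\Delta$); let $\eps$ be the minimum of all these and $c_0$ the minimum of the corresponding $c_0$'s. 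To verify one-sided inheritance, suppose towards contradiction that an $(\eps_{a,b},d,p)$-fully-regular pair $(X,Y)$ has more than $\eps p^\Delta n/T^2$ vertices witnessing failure of $(\eps_{a+1,b},d,p)$-inheritance, and pick $Z$ to be any subset of these of size exactly $\eps p^\Delta n/T^2$. Using $|X|\ge\eps'p^{\Delta-1}n/T^2$ we get $\sqrt{|X||Z|}\ge\sqrt{\eps\eps'}\,p^{\Delta-1/2}n/T^2$, and because the function $p\mapsto p^{1/2}(\log_2(1/p))^{1/2}$ is bounded on $(0,1)$ by $1/\sqrt{e}$, the hypothesis $\beta\le cp^{\Delta+2}n$ implies the bijumbledness requirement of Lemma~\ref{lem:jrione} once $c$ is small enough. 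The lemma then forces at most $\eps_{a+1,b}|Z|<|Z|$ vertices of $Z$ to witness failure, a contradiction. Two-sided inheritance is analogous via Lemma~\ref{lem:jritwo}: the stronger size bound $|Y|\ge\eps'p^{\Delta-1}n/T^2$ yields $\sqrt{\max(|X||Y|,|X||Z|,|Y||Z|)}\ge\sqrt{\eps\eps'}\,p^{\Delta-1}n/T^2$, so the requirement $\beta\le c_0 p^3\sqrt{\eps\eps'}p^{\Delta-1}n/T^2$ follows from $\beta\le cp^{\Delta+2}n$ for small $c$.

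The main obstacle will be the book-keeping: checking that the finitely many upper bounds on $c$ produced by the various applications of Lemmas~\ref{lem:jrione} and~\ref{lem:jritwo} (across all pairs $(a,b)$ and both the one- and two-sided conclusions) can be met by a single small constant. The downward induction makes each $\eps_{a,b}$ depend only on $\eps_{a+1,b}$, $\eps_{a,b+1}$ and $\eps_{a+1,b+1}$, all of which are already defined when we reach $(a,b)$, so the definition is well-posed, and the uniform bound on $p^{1/2}(\log_2(1/p))^{1/2}$ keeps the $(\log_2(1/p))^{-1/2}$ factor in Lemma~\ref{lem:jrione} from causing trouble at either extreme of $p\in(0,1)$.
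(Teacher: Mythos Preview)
Your proposal is correct and follows essentially the same route as the paper: parts~\ref{det_jumbled:pseudns} and~\ref{det_jumbled:pseudlns} are derived directly from the bijumbledness inequality (the paper packages this as Proposition~\ref{prop:jumbled}), and part~\ref{det_jumbled:pseudri} is obtained by the same downward induction on $(a,b)$ to define the $\eps_{a,b}$, followed by a contradiction argument invoking Lemmas~\ref{lem:jrione} and~\ref{lem:jritwo}. The paper handles the $(\log_2\tfrac1p)^{-1/2}$ factor via the equivalent inequality $(\log_2\tfrac1p)^{-1}>p$ rather than your bound on $p^{1/2}(\log_2\tfrac1p)^{1/2}$, but these amount to the same thing; one small omission is that you should also note (as the paper does via Proposition~\ref{prop:jumbled}\ref{prop:jumbled:a}) that bijumbledness forces the global density to be $(1\pm\eps)p$, since the definitions of $\NS$, $\RI$ and $\LNS$ all presuppose this.
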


 Let us briefly justify why we cannot use the congestion property in
 bijumbled graphs. Indeed, \emph{blowing up} a bijumbled graph by a factor
 $\Delta$---that is, replacing vertices with independent $\Delta$-sets and
 edges with complete bipartite graphs---degrades the bijumbledness only
 slightly, but in this blow up the congestion condition fails badly: If we
 choose $\ell=\Delta$ and as $\Delta$-tuples the blow ups of vertices, then
 the congestion property fails by a factor $p^{\Delta-1}$.

 In the proof of Lemma~\ref{lem:det_jumbled}, Part~\ref{det_jumbled:pseudri} is a
 consequence of the regularity inheritance lemmas, Lemmas~\ref{lem:jrione}
 and~\ref{lem:jritwo}.  Parts~\ref{det_jumbled:pseudns} and~\ref{det_jumbled:pseudlns} use
 the following easy consequences of the definition of bijumbled graphs.

 \begin{proposition}\label{prop:jumbled}
   Let~$\Gamma$ be a $(p,\beta)$-bijumbled graph on~$n$ vertices and let
   $W\subset V(\Gamma)$.
   \begin{enumerate}[label=\abc]
     \item\label{prop:jumbled:a} If $\eps\ge2\beta\frac1{pn}$, then $e(\Gamma)=(1\pm\eps)p\binom{n}{2}$.
     \item\label{prop:jumbled:b} $\deg_\Gamma(v;W)\neq(1\pm\eps)p|W|$
       for at most $2 \beta^2/(\eps^{2} p^{2}|W|)$ vertices $v\in
       V(\Gamma)\setminus W$.
   \end{enumerate}
 \end{proposition}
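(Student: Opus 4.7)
For part~\ref{prop:jumbled:a}, my plan is to apply the bijumbledness inequality~\eqref{eq:expander_mixing} directly with $X = Y = V(\Gamma)$. Under the natural reading (in which the quantity $e_\Gamma(V(\Gamma),V(\Gamma))$ evaluates to $2e(\Gamma)$) this produces $|2e(\Gamma) - pn^2| \le \beta n$, and hence
\[\bigl|e(\Gamma) - p\tbinom{n}{2}\bigr| \le \tfrac{1}{2}\beta n\]
after accounting for the $\tfrac12 pn$ discrepancy between $pn^2/2$ and $p\binom{n}{2}$. Using the hypothesis $\eps \ge 2\beta/(pn)$, the right-hand side is at most $\eps pn(n-1)/2 = \eps p\binom{n}{2}$ for $n \ge 2$, which is exactly the conclusion.

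For part~\ref{prop:jumbled:b}, I will partition the set of would-be bad vertices according to the direction of their deviation, defining
\[B^+ := \{v \in V(\Gamma) \setminus W : \deg_\Gamma(v; W) > (1+\eps)p|W|\}\]
and
\[B^- := \{v \in V(\Gamma) \setminus W : \deg_\Gamma(v; W) < (1-\eps)p|W|\}.\]
For either choice $B \in \{B^+, B^-\}$, summing degrees over $B$ (which is disjoint from $W$) gives
\[\bigl|e_\Gamma(B, W) - p|W||B|\bigr| \ge \eps p|W||B|.\]
The bijumbledness inequality~\eqref{eq:expander_mixing} applied to the pair $(B, W)$ then yields $\eps p|W||B| \le \beta\sqrt{|W||B|}$, and rearranging (i.e., squaring and dividing by $|W||B|$) produces $|B| \le \beta^2/(\eps^2 p^2|W|)$. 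Adding the bounds on $|B^+|$ and $|B^-|$ gives the desired $2\beta^2/(\eps^2 p^2|W|)$.

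Both parts are routine consequences of a single application of the bijumbledness inequality, so there is no substantial obstacle. The key idea in~\ref{prop:jumbled:b} is the standard trick of aggregating all ``too-large'' (respectively ``too-small'') degree vertices into a single set and applying~\eqref{eq:expander_mixing} once, rather than applying it vertex-by-vertex --- which gains the extra factor of $\sqrt{|B|}$ that is essential to the bound. The only minor subtlety is the normalization issue in~\ref{prop:jumbled:a} noted above, for which the factor $2$ in the hypothesis $\eps \ge 2\beta/(pn)$ is precisely tailored.
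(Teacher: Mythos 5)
Your proof follows the paper's proof almost exactly: for~\ref{prop:jumbled:a} you apply the bijumbledness inequality with $X=Y=V(\Gamma)$, and for~\ref{prop:jumbled:b} you split into the sets of over-full and under-full vertices, apply bijumbledness to each against $W$, and add the two bounds. This is precisely what the paper does.

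One small inaccuracy in your write-up of~\ref{prop:jumbled:a}: the triangle inequality gives $\bigl|e(\Gamma)-p\binom{n}{2}\bigr|\le\tfrac12\beta n+\tfrac12 pn$ rather than $\le\tfrac12\beta n$, so the $\tfrac12pn$ discrepancy is added, not absorbed, and a completely rigorous finish would need to note that $\beta\ge p$ always holds (take $X=Y=\{v\}$ in the bijumbledness inequality to see this), so the extra term is dominated. The paper's own proof is even terser here, simply writing $2e(\Gamma)=pn^2\pm\eps pn^2/2$ and declaring the result to follow, without mentioning the $\binom{n}{2}$ versus $n^2/2$ issue at all; so your treatment is if anything slightly more careful. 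Your part~\ref{prop:jumbled:b} is correct and word-for-word the same argument as the paper's.
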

\begin{proof}
From the definition of jumbledness~\eqref{eq:expander_mixing} we obtain 
\[2e(\Gamma)=e_{\Gamma}(V,V)=pn^2\pm\beta n=pn^2\pm \eps pn^2/2\] 
 and~\ref{prop:jumbled:a} follows.

  For~\ref{prop:jumbled:b} let $U$ be the set of
  $v\in V(\Gamma)\setminus W$ with $\deg_\Gamma(v;W)>(1+\eps)p|W|$. We have
  \[(1+\eps)p|U||W|<e(U,W)\le p|U||W|+\beta\sqrt{|U||W|}\]
  and hence $ |U|<\beta^2/(\eps^{2} p^{2}|W|)$. Similarly, the number of
  $v\in V(\Gamma)\setminus W$ with $\deg_\Gamma(v;W)<(1-\eps)p|W|$
  is also at most $\beta^2/(\eps^{2} p^{2}|W|)$ and~\ref{prop:jumbled:b} follows.
\end{proof}

We can now prove Lemma~\ref{lem:det_jumbled}.

\begin{proof}[Proof of Lemma~\ref{lem:det_jumbled}]
    Given $\Delta\ge 2$ and $d,\eps'>0$, we assume
    $(1-\eps')^\Delta>1/2$. We choose $\eps_{a,b}$ for each $0\le a\le\Delta$ and 
    $0\le b\le\Delta$ as follows. We set
    $\eps_{\Delta,\Delta}=\eps'/2$ and we define the other $\eps_{a,b}$ inductively.  
    For each $a$ and $b$, we require that $\eps_{a,b}$ is smaller than 
    the $\eps_0$ returned by Lemma~\ref{lem:jrione} with input
    $\eps_{a+1,b}/2$ and $d$ (provided $a<\Delta$), and that returned with
    input $\eps_{a,b+1}/2$ and $d$ (provided $b<\Delta$), and the $\eps_0$
    returned by Lemma~\ref{lem:jritwo} with input $\eps_{a+1,b+1}/2$ and $d$
    (provided $a<\Delta,b<\Delta$). Let $\eps$ be the minimum of the
    $\eps_{a,b}$. Note that we then have $\eps_{a,b}=\eps_{b,a}$ for each $a,b$. 
    Let $c'$ be the minimum of all the constants~$c_0$ returned by the applications of  
    Lemmas~\ref{lem:jrione} and~\ref{lem:jritwo}.
    Given $T$, let \[c=\tfrac12\eps^2\eps'c'T^{-2}\,.\]
    Let~$\Gamma$ be a $(p,\beta)$-bijumbled graph on~$n$ vertices with
    $\beta\le c p^{\Delta+1}n$.
    By Proposition~\ref{prop:jumbled}\ref{prop:jumbled:a} the
    graph~$\Gamma$ has density $(1\pm\eps)p$.
 
    \smallskip

    Proof of~\ref{det_jumbled:pseudns}: Let $W\subset V(\Gamma)$ be a set with
    $|W|\ge \eps p^{\Delta}n/T^2$. By
    Proposition~\ref{prop:jumbled}\ref{prop:jumbled:b} the number of
    vertices $v\in V(\Gamma)\setminus W$ such that
    $\deg(v;W)\neq(1\pm\eps)p|W|$ is at most
    \begin{equation*}
      \frac{2\beta^2}{\eps^2p^2\cdot \eps p^{\Delta}nT^{-2}}
      \le\frac{2c^2T^2p^{\Delta}n}{\eps^3}\le\eps p^{\Delta}\frac{n}{T^2}\,,
    \end{equation*}
    and hence $\Gamma$ has $\NS(\eps,T,\Delta+1)$.

  \smallskip

   Proof of~\ref{det_jumbled:pseudlns}: Assume $\beta\le c p^{(3\Delta+1)/2}n$.
   Let $0\le j\le\Delta$ and $W\subset V(\Gamma)$ be a set with
   $|W|\ge \eps p^{\Delta+j}n/T^2$. By 
    Proposition~\ref{prop:jumbled}\ref{prop:jumbled:b} the number of
    vertices $v\in V(\Gamma)\setminus W$ such that
    $\deg(v;W)\neq(1\pm\eps)p|W|$ is at most
    \begin{equation*}
      \frac{2\beta^2}{\eps^2p^2\cdot \eps p^{\Delta+j}nT^{-2}}
      \le\frac{2c^2T^2p^{2\Delta-j-1}n}{\eps^3}\le\eps p^{2\Delta-j-1}\frac{n}{T^2}\,,
    \end{equation*}
    and hence $\Gamma$ has $\LNS(\eps,T,\Delta)$.
  
   \smallskip
 
    Proof of~\ref{det_jumbled:pseudri}:
    We shall prove that $\Gamma$ has
    $\RI(\eps,(\eps_{a,b}),\eps',d,T,\Delta+1)$ if $\beta\le cp^{\Delta+2}n$ by contradiction.
    Given $0\le a\le\Delta-1$ and $0\le b\le\Delta$, suppose that $(X,Y)$ is
    an $(\eps_{a,b},d,p)$-fully-regular pair in $G\subset\Gamma$ with
    $|X|\ge \eps'p^{\Delta-1}n/T^2$ and $|Y|\ge \eps'p^{\Delta}n/T^2$.
    Let~$Z$ be the set of vertices $z\in V(\Gamma)\setminus(X\cup Y)$ such that $(N_\Gamma(z; X), Y)$ is
    not $(\eps_{a+1,b},d,p)$-fully-regular in $G$. Assume for contradiction
    that $|Z|\ge\eps p^{\Delta}n/T^2$.
    Then we have $\max\big(|X||Y|,|X||Z|\big)\ge\eps\eps'p^{2\Delta-1}n^2/T^4$. 
    Therefore, by choice of $c$ and since $(\log_2\tfrac{1}{p})^{-1}>p$, we see that
    \[
    \beta\le
   cp^{\Delta+2}n\le c'\eps\eps' p^{\Delta+2} n/T^2< c'p^2(\log_2
   \tfrac{1}{p})^{-1/2}\sqrt{\max\big(|X||Y|,|X||Z|\big)}\,,
    \]
    and thus, $\Gamma$ is bijumbled enough to apply Lemma~\ref{lem:jrione}, 
    whose statement contradicts the assumption on~$Z$.
    
    Similarly, given $0\le a,b\le\Delta-1$, suppose that $(X,Y)$ is an
    $(\eps_{a,b},d,p)$-fully-regular pair in $G\subset\Gamma$
    with $|X|\ge \eps'p^{\Delta-1}n/T^2$ and $|Y|\ge
    \eps'p^{\Delta-1}n/T^2$.  Let $Z$ be the set of vertices $z\in
    V(\Gamma)\setminus(X\cup Y)$ such that $(N_\Gamma(z;X),N_\Gamma(z; Y))$ is not $(\eps_{a+1,b+1},d,p)$-fully-regular in
    $G$. Again, assuming for contradiction that $|Z|\ge\eps p^{\Delta-1}n/T^2$, we have
    \[\max\big(|X||Y|,|Y||Z|, |X||Z|\big)\ge\eps\eps'p^{2\Delta-2}n^2/T^4,\] 
    so that by choice of $c$, we see that 
    \[
    \beta\le
    cp^{\Delta+2}n\le c'\eps' p^{\Delta+2} n/T^2< 
    c'p^3 \sqrt{\max\big(|X||Y|,|Y||Z|, |X||Z|\big)}\,,
   \]
   hence the conclusion of Lemma~\ref{lem:jritwo} contradicts the
   assumption on~$Z$.
\end{proof}

\section{The general setup}	\label{sec:gensetup}

This section constitutes the first step towards the proofs of our blow-up
lemmas. More or less, one can think of this section as showing that, for each of our blow-up lemmas, it suffices to prove a corresponding blow-up lemma in which several extra conditions are required. This reduction, as mentioned in the proof overview (Section~\ref{sec:proof_overview}) involves refining the partitions $\cXbl$ of $H$ and $\cVbl$ of $G$ given by the user of the blow-up lemma to obtain new partitions $\cX$ of $H$ and $\cV$ of $G$, which in turn entails replacing the reduced graphs $\Rbl$ and $\Rpbl$ with new graphs $R$ and $R'$, and altering various constants mentioned in the blow-up lemmas. It is convenient to keep using the letter choices seen in our blow-up lemmas for these altered partitions, graphs and constants in the proofs to come, so in this section we use the suffix $\mathrm{BL}$ for objects mentioned in one of our blow-up lemmas which we will alter for the proof, following the style of the previous sentence.

For convenience, we shall collect the various conditions on the graphs~$H$
and~$G$ that we assume in our blow-up lemmas and those extra conditions
that we derive in this section in a \emph{general setup} (see
Definition~\ref{def:general_setup}) which we shall use in many forthcoming
lemmas. Moreover, we will define the concept of \emph{good partial
  embedding} (see Definition~\ref{def:GPE}), which is a collection of
invariants we maintain in our proofs when embedding~$H$.

This section is structured as follows.  In
Section~\ref{sec:setup_constants} we collect and describe, mainly as
reference for the reader, the various constants that appear in our blow-up
lemma proofs. Sections~\ref{subsec:partH} and~\ref{subsec:partG} then list
the properties we require of the refined partitions of~$H$ and~$G$,
respectively. In Section~\ref{subsec:obtain} we prove that partitions with
these desired properties can be obtained from partitions supplied to our
blow-up lemmas. In Section~\ref{sec:gpe} we define some further key
concepts used in our embedding procedures, such as candidate sets,
available candidate sets and good partial embeddings, which then allows us
to formally define our general setup.  Finally, in
Section~\ref{subsec:bad_vertices}, we define the notion of bad vertices
which will, as explained in the proof overview, be vertices avoided in our
embedding process, and prove Lemma~\ref{lem:fewbad} which states that most
vertices in candidate sets are not bad.

\subsection{Constants}\label{sec:setup_constants}

In the following chapters, sections and lemmas we will use various constants.
These are listed below, with their meaning. Firstly, we have
the following constants which are chosen by the user of our blow-up
lemmas.

\begin{description}
\item[$\Delta$] is the maximum degree of $H$.
\item[$\Delta_J$] is the maximum number of times a vertex of $\Gamma$ is a
  restricting vertex, that is, appears in a set $J_x$ for $x\in V(H)$.
\item[$D$] is a constant governed by the degeneracy of~$H$, which is only
  used in the degenerate graphs blow-up lemma.
\item[$d$] is the minimum $p$-density of regular pairs.
\end{description}

\smallskip
\noindent
The user of our blow-up lemmas further chooses the constants $\DeltaRpbl$, $\alphabl$, $\zetabl$, $\kappabl$ and $\ronebl$ (the last after being given $\epsbl$ and $\rhobl$). However, our preprocessing in Lemma~\ref{lem:matchreduce} changes the values of these constants (and the reduced graphs $R$ and $R'$), giving the following constants which we use in the rest of the paper. 

\smallskip

\begin{description}
\item[$\DeltaRp$] is the maximum degree of the reduced graph $R'$, which captures
  super-regular pairs.
\item[$\alpha$] is the fraction of a part required to be potential buffer vertices.
\item[$\zeta$] is the minimum relative size of any image restriction set~$I_x$.
\item[$\kappa$] is the balancing factor (greater than~$1$), bounding
  $|V_i|/|V_j|$ for any pair of clusters of~$G$ (and hence parts of~$H$).
\item[$r_1$] is the upper bound on the number of clusters.
\end{description}

\smallskip

\noindent
The blow-up lemmas guarantee the existence of the following constants.  

\smallskip

\begin{description}
\item[$\epsbl$] is the regularity we require in the regular partition
  provided by the user of the blow-up lemma.
\item[$\rhobl$] is the fraction of vertices in each part of the partition of $H$ supplied by the user which may be
  image restricted.
\item[$C$] is large, only appears in the random graphs blow-up lemmas and is the
  constant factor in the bound on the probability~$p$.
\item[$c$] is small, only appears in the bijumbled graphs blow-up lemma and is the
  constant factor in the bound on the bijumbledness error term~$\beta$.
\end{description}

\smallskip

\noindent
Furthermore, the most important additional constants appearing in the
proofs of the blow-up lemmas are the following.

\smallskip

\begin{description}
\item[$\mu$] is the fraction of each cluster of $G$ contained in each of
  the ``small'' sets of the partition of~$G$, thus $|\Vq_i|=|\Vc_i|=|\Vbuf_i|=\mu|V_i|$ for each $i$.
\item[$\vartheta$] comes into the maximum fraction $\rho p^{\vartheta}$ of each part of $H$ which may be image restricted. It is equal to either zero (proving Lemmas~\ref{lem:rg_image} and~\ref{lem:degen}) or $\Delta$ (proving Lemma~\ref{lem:psr_main}).
\item[$\rho$] appears in the maximum fraction $\rho p^{\vartheta}$ of each part of $H$ which may be image restricted. We also use~$\rho$ for a second purpose (which does not conflict with the first one): it is the fraction of
  non-image restricted vertices in~$X_i$ which may enter the queue
  during the RGA without causing the RGA to fail. Hence in total at most $2\rho|X_i|$ vertices of~$X_i$ may enter
  the queue during the RGA.
\item[$\eps'$] is the global worst case regularity appearing in the proof
  of the blow-up lemmas. That is, whenever we use the fact that a pair is
  $(\cdot,d,p)$-regular in $G$, it will be at worst
  $(\eps',d,p)$-regular. 
\item[$\eps_{a,b}$] is the worst case regularity in the proofs between
  underlying restriction sets of adjacent vertices in $H$ with respectively
  $a$ and $b$ previously embedded neighbours (see Section~\ref{sec:gpe} for
  definitions of these terms). 
\item[$\eps$] is the initial regularity after pre-processing, and also controls the deterministic properties we require of $\Gamma$.
\end{description}

\smallskip

\noindent
In order to make our proofs work it is enough to have constants in the size order
\begin{align*}
  0 &< \eps\ll\eps_{a,b} \le\eps'
  \ll\rho \ll\mu
  \ll \alpha,\zeta,d,\Delta^{-1},\DeltaRp^{-1},\Delta_J^{-1},D^{-1},\kappa^{-1} \\
  \text{and}\quad 0&<c,C^{-1} \ll r_1^{-1},\eps 
\end{align*}
where by $x\ll y$ we mean that there is a non-decreasing function
$f\colon(0,1]\to(0,1]$ such that our proof works if $0<x\le f(y)$.  Observe
that the constants on the right hand sides are (effectively) chosen by the user of the
blow-up lemma. We remark
that one can then safely read this paper assuming that, for example, any
function of~$\rho$ appearing in the proofs tending to zero
with~$\rho$ is much smaller than any function of~$\mu$ appearing in the
proofs tending to zero with~$\mu$. However, for the convenience of
the reader wishing to verify the proofs, we specify our constants (more or less)
explicitly in each of the following results. We have made no attempt to optimise the values we give.

At the beginning of the proof of each of our blow-up lemmas we give explicit values for these various constants. In the proofs of the auxiliary lemmas, we begin with `we require' various inequalities to hold. By this we mean that if these inequalities hold then the proof of the auxiliary lemma goes through, and it is easy to check that the required inequalities hold for our explicit values.

\subsection{The partition of \texorpdfstring{$H$}{H}}
\label{subsec:partH}

We shall refine the partition $\cXbl$ of~$H$ given
as input to one of our blow-up lemmas to obtain a partition
$\cX=\{X_i\}_{i\in[r]}$ of $V(H)$ with some additional properties, which we
shall need in our proofs.  We shall also refine the given partition of~$G$
(see Section~\ref{subsec:partG} for the properties we require from this
refined partition of~$G$). In this subsection we will define the properties
that we require from the refined partition of~$H$. To put this definition
into context (and provide some explanations on how it fits to the
refinement of the partition of~$G$ in the subsequent subsection) we first need
some explanations concerning our strategy in the proofs of the blow-up
lemmas.


We shall (in a series of steps, some of which are performed in
this preprocessing step, and some of which are performed on later parts of
the proof) construct subsets $\Xmain_i$, $\Xq_i$, $\Xc_i$ and
$\Xbuf_i$ of~$X_i$.
We define
\begin{equation*}
 \Xmain=\bigcup_{i\in[r]}\Xmain_i \,,\quad
 \Xq=\bigcup_{i\in[r]}\Xq_i \,,\quad
 \Xc=\bigcup_{i\in[r]}\Xc_i \,,\quad
 \Xbuf=\bigcup_{i\in[r]}\Xbuf_i \,.
\end{equation*}
Here, $\Xbuf_i\subset\tX_i$ contains the \emph{buffer vertices}, which will
be chosen from the set of potential buffer vertices (see Section~\ref{subsec:obtain}).
The set~$\Xc$ is only used in the
proof of the random graphs blow-up lemma, Lemma~\ref{lem:rg_image}; in the
other proofs we set $\Xc=\emptyset$. The vertices in~$\Xc$ shall form the
\emph{reserved cliques} which we will use to fix so-called \emph{buffer
  defects} (see Section~\ref{sec:fixbuffer} for more details). We choose this set of vertices at the beginning of the proof of Lemma~\ref{lem:rg_image}. Once we
chose~$\Xbuf$ and~$\Xc$, we set
$\Xmain_i:=X_i\setminus(\Xbuf_i\cup \Xc)$. These sets will have sizes
$|\Xbuf_i|=4\mu|X_i|$ and $|\Xc_i|\le\mu|X_i|$ and hence
$(1-5\mu)|X_i|\le|\Xmain_i|\le(1-4\mu)|X_i|$.  The set $\Xq\subset\Xmain$
will form the \emph{queue} and only gets chosen during the embedding, in
the random greedy algorithm; it will also be of size at most $\mu|X_i|$.
We remark that in the following definition we shall only refer to the
sets~$X_i$ and~$\Xbuf_i$. We only chose to mention the sets~$\Xq_i$ and~$\Xc_i$
here as well because we need them to motivate the refinement of the
partition of~$G$ in the next subsection.

The properties that we require of our refined partition and the buffer
vertices are collected in the following
definition. We remark that the reduced graphs~$R$ and~$R'$ used for this
refined partition are blow-ups of the reduced graphs $\Rbl$ and $\Rpbl$ provided to the
respective blow-up lemma. These properties bound the number of buffer vertices per part, and state that the first and second
neighbourhoods of buffer vertices lie along edges of $R'$.
They also assert that any pair of vertices
within one part~$X_i$, but also any pair of buffer vertices, and any pair
consisting of a buffer vertex and an image restricted vertex, are far apart. Note that this implies that $H[X_i,X_j]$ forms a matching for any
pair of parts~$X_i$ and~$X_j$, and it also implies that buffer vertices may
not be image restricted. We further require that all vertices in
any one~$\Xbuf_i$ have the same degree, and distinguish between clique buffers
and non-clique buffers. The latter will be important for the selection of
reserved cliques in the proof of Lemma~\ref{lem:rg_image} (Section~\ref{sec:fixbuffer}).

\begin{definition}[Good $H$-partition]
  We say that a partition
  $\cX=\{X_i\}_{i\in[r]}$ of $V(H)$ is a \emph{good $H$-partition} for
  reduced graphs $R'\subset R$ on vertex sets $[r]$, 
  with buffer~$\tcX=\{\tX_i\}_{i\in[r]}$, buffer vertices
  $\Xbuf\subset\bigcup_{i\in[r]}\tX_i$ and image restricting vertices
  $\cJ=\{J_x\}_{x\in  V(H)}$,
  if
  the following
  conditions are satisfied for each $i,j,k\in[r]$.
  \begin{enumerate}[label=\itmarab{H}]
    \item\label{H:partition} $(H,\cX)$ is an $R$-partition and $\tcX$ is an
      $(\alpha,R')$-buffer for $(H,\cX)$.
    \item\label{PtH:dist} $\dist_H(x,y)\ge
      10$ and $J_x\cap J_{y}=\emptyset$ for each $x,y\in X_i$ with $x\neq y$.
    \item\label{BUF:sizebuf} $|\Xbuf_i|=4\mu|X_i|$, 
      and $|\{x\in X_i\colon x\in N(\Xbuf)\}|\le 4\kappa\DeltaRp\mu|X_i|$.
    \item\label{BUF:dist} $\dist_H(x,y)\ge 5$ for each $x,y\in\Xbuf$ with
      $x\neq y$. 
    \item\label{BUF:last} $\dist_H(x,y)\ge 3$ for each $x\in\Xbuf$ and each image restricted $y$.
    \item\label{BUF:deg} All vertices of $\Xbuf_i$ have degree $b$ for some $0\le b\le\Delta$.
      We then call $\Xbuf_i$ a \emph{degree-$b$ buffer}.
    \item\label{BUF:cliqueb} Either all or none of the vertices of $\Xbuf_i$ are in copies of
    $K_{\Delta+1}$. We then call $\Xbuf_i$ a \emph{clique buffer} or \emph{non-clique
      buffer}, respectively.
    \item\label{BUF:existcliques} If $\Xbuf_i$ is a clique buffer, then at
      least $\tfrac{1}{2\Delta+4}\alpha|X_i|$ vertices of $\tX_i$ are in copies
      of $K_{\Delta+1}$ which do not contain either vertices of $\Xbuf$ or
      image restricted vertices.
 \end{enumerate}
\end{definition}

Note that in this definition we do need to know about image restricting vertices (since they play a r\^ole in~\ref{PtH:dist}), which are in $\Gamma$, and we do need to know which vertices of $H$ are image restricted (since they play a r\^ole in~\ref{BUF:last}), but we do not need to know anything about $G$.

\subsection{The partition of \texorpdfstring{$G$}{G}}\label{subsec:partG}

We next define the properties we require from our refined partition
of~$G$. Firstly, we shall subdivide each cluster of the partition $\cVbl$ of~$G$, which is given as input to one of our blow-up lemmas, into several new
clusters to obtain a new partition $\cV$ matching the refined partition of~$H$. Further, as also mentioned
in the proof overview, we shall partition each new cluster~$V_i$ into parts
$\Vmain_i$, $\Vq_i$, $\Vc_i$ and $\Vbuf_i$.
Again, we set
\begin{equation*}
 \Vmain=\bigcup_{i\in[r]}\Vmain_i \,,\quad
 \Vq=\bigcup_{i\in[r]}\Vq_i \,,\quad
 \Vc=\bigcup_{i\in[r]}\Vc_i \,,\quad
 \Vbuf=\bigcup_{i\in[r]}\Vbuf_i \,.
\end{equation*}
As previously indicated, each vertex $x\in V(H)$ will be embedded into the cluster $V_i$
such that $x\in X_i$. In the rest of the paper, we often refer to $V_i$ simply by
$V(x)$, to $\Vmain_i$ by $\Vmain(x)$, and so on, to avoid the use of
indices whenever these are not important.

As explained before, most of the vertices from $\Vmain$ will be used to
embed most vertices of~$\Xmain$, while some vertices from $\Vq$ will be
used to embed the remainder, which constitutes the queue~$\Xq$. We also have the set $\Vc$, the \emph{clique
reservoir}, that we will use in the proof of the random graphs blow-up
lemma to embed some of the vertices in~$\Xc$. Some vertices of each~$\Xc_i$ may need
to be used to fix so-called buffer defects, and these could be embedded
anywhere in $V_i$. All other vertices of each~$\Xc_i$ get embedded to~$\Vc_i$.
Finally, the vertices from
$\Vbuf$ together with all remaining vertices of $\Vmain\cup\Vq\cup\Vc$,
that is, vertices that were not
used when embedding $\Xmain\cup\Xq\cup\Xc$,
will be used for embedding the buffer vertices~$\Xbuf$. 

The properties we require of a good $G$-partition are as follows. It must be a regular $R$-partition, and furthermore the one-sided and two-sided inheritance properties of $\cVbl$ on $\Rpbl$ must be transferred to $\cV$ on $R'$. The super-regularity properties of $\cVbl$ on $\Rpbl$ must be transferred to give minimum degree conditions for each of the parts $\Vmain_i$, $\Vq_i$ and $\Vc_i$. Finally, we need to create new image restriction sets $\cI$ from the supplied $\cIbl$ which with the (unchanged) $\cJ$ form a restriction pair.

\begin{definition}[Good $G$-partition]
  Given a good $H$-partition of~$V(H)$ for reduced graphs $R'\subset R$ on
  vertex sets $[r]$, with buffer~$\tcX=\{\tX_i\}_{i\in[r]}$ and buffer vertices
 $\Xbuf\subset\bigcup_{i\in[r]}\tX_i$, with
  image restrictions $\cI=\{I_x\}_{x\in V(H)}$, and restricting vertices
  $\cJ=\{J_x\}_{x\in V(H)}$, we say that a partition
  $\cV=\{V_i\}_{i\in[r]}$ of $V(G)$ with a partition
  $V_i=\Vmain_i\dcup\Vq_i\dcup\Vc_i\dcup \Vbuf_i$ of each cluster
  is a \emph{good $G$-partition} for
  $R'\subset R$, if
  the following conditions are satisfied.
  \begin{enumerate}[label=\itmarab{G}]
  \item\label{G:sizes}
    For each $i$ we have $|\Vmain_i|=(1-3\mu)|V_i|$ and $|\Vq_i|=|\Vc_i|=|\Vbuf_i|=\mu|V_i|$.
  \item\label{G:inh} 
    $(G,\cV)$ is an $(\eps,d,p)$-regular $R$-partition, 
    which
    has one-sided inheritance on~$R'$,
    and two-sided inheritance on~$R'$ for~$\tcX$.
  \item\label{G:deg}
    For each $ij\in R'$ and $v\in V_i$, we have
    \begin{align*}
     \deg_G(v;\Vmain_j)&\ge(1-3\mu)(d-\eps)\max\big(p|V_j|,\deg_\Gamma(v;V_j)/2\big)\,,\\
     \deg_G(v;\Vq_j)&\ge\mu(d-\eps)\max\big(p|V_j|,\deg_\Gamma(v;V_j)/2\big)\,,\\
     \deg_G(v;\Vc_j)&\ge\mu(d-\eps)\max\big(p|V_j|,\deg_\Gamma(v;V_j)/2\big) \,.
    \end{align*}
  \item\label{G:imres}
    For each $x\in X_i$, we have
    \begin{align*}
     |I_x\cap \Vmain_i|&\ge(1-\eps)(1-3\mu)|I_x|\,,\\
     |I_x\cap\Vq_i|\,,\,|I_x\cap \Vc_i|\,,\,|I_x\cap\Vbuf_i|&\ge(1-\eps)\mu|I_x|\,,\\
     \big|\comN(J_x;\Vmain_i)\big|&=\big(1\pm\eps\big)(1-3\mu)p^{|J_x|}|V_i|\,,\text{ and}\\
     \big|\comN(J_x;\Vq_i)\big|\,,\,\big|\comN(J_x;\Vc_i)\big|\,,\,\big|\comN(J_x;\Vbuf_i)\big|&=\big(1\pm\eps\big)\mu p^{|J_x|}|V_i|\,.
    \end{align*}
  \item\label{G:restr} 
    $\cI$ and~$\cJ$ form a $(\rho p^\vartheta,\zeta,\Delta,\Delta_J)$-restriction pair
    for the partitions~$\cX$ and~$\cV$.
  \item\label{PtH:image} For each $i$, the number of image restricted vertices in $X_i$ is
    at most $\rho p^\vartheta|X_i|$.
\end{enumerate}
\end{definition}

Note that the sizes of the sets $\Vmain_i$, $\Vq_i$, $\Vc_i$, $\Vbuf_i$
are \emph{not} the same as the sizes of the sets $\Xmain_i$, $\Xq_i$,
$\Xc_i$ and $\Xbuf_i$.  For example we will have
$|\Xmain_i|=(1-4\mu)|X_i|<(1-3\mu)|V_i|=|\Vmain_i|$, and
$|\Xq_i|,|\Xc_i|\le 2\rho|X_i|$, which is much smaller than $\mu
|V_i|$. Thus, the subsets $\Xmain_i$, $\Xq_i$, $\Xc_i$ of $X_i$ will be
significantly smaller than the corresponding sets in $V_i$, while the set
$\Xbuf_i$ is larger than $\Vbuf_i$.
We also note that we do require two-sided inheritance for $\tcX$ in~\ref{G:inh}, not just for $\Xbuf$. In the proof of Lemma~\ref{lem:rg_image}, when dealing with buffer defects, we will make use of this stronger statement.

\subsection{Obtaining a good \texorpdfstring{$H$}{H}-partition and a good \texorpdfstring{$G$}{G}-partition}\label{subsec:obtain}

The following lemma proves that we can obtain good partitions of~$H$
and~$G$ from the partitions provided to our blow-up lemmas. The proof of
this lemma is straightforward, though not short since several conditions
must be checked. Briefly, the idea is that we will draw an auxiliary graph
$F_i$ on each part $\Xbl_i$ of $\cXbl$ with edges joining pairs of vertices
which are at distance less than $10$ or share an image restricting
vertex. We will apply our modification of the Hajnal-Szemer\'edi theorem
(Lemma~\ref{lem:nicepartition}) to each of these graphs together with the
set $\tX_i$ to obtain a partition $\cX$ and reduced graphs $R$ and $R'$
satisfying the first three conditions of a good $H$-partition, and then
construct the sets $\Xbuf_i$ greedily to obtain the remaining
properties. We will then randomly refine the partition $\cVbl$ to obtain a
matching partition $\cV$, and further randomly split each part $V_i$ of
$\cV$ into $\Vmain_i$, $\Vq_i$, $\Vc_i$ and $\Vbuf_i$. We will use the
concentration inequalities in Theorem~\ref{thm:chernoff} to obtain
concentration for various set sizes in the randomly chosen parts, which in
particular show that with high probability we obtain the desired good
$G$-partition. We stress that although $p$ with a lower bound reminiscent
of our random graph blow-up lemmas makes an appearance in the lemma
statement, this does not mean we are going to assume $\Gamma$ is a random
graph. We will simply need to know that the quantity $p^bn$ is large
compared to $\log n$.

\begin{lemma}[Good partitions lemma]\label{lem:matchreduce}
  For all positive integers $\Delta$, $\DeltaRp$, $\Delta_J$, $r_1$, $\vartheta$ and
  $b$, all $\kappa>2$, all $\alpha,\zeta,d,\eps>0$, and all sufficiently small $\mu,\rho>0$, there exists
  $C$ such that the following holds whenever $p\ge C\big(\tfrac{\log
    n}{n}\big)^{1/b}$ and $n>C$. Let
  $\delta=\tfrac{1}{8}(\Delta+\Delta_J)^{-10}$ and $r\le r_1$.
   
  Let $\Rbl$ be a graph on $\rbl=\delta r$ vertices and let $\Rpbl\subset \Rbl$ be a spanning
  subgraph with $\Delta(\Rpbl)\leq \DeltaRpbl=\delta\DeltaRp$.
  Let $H$ and $G\subset\Gamma$ be graphs with $\tfrac12\kappa$-balanced
  size-compatible vertex partitions 
  $\cXbl=\{\Xbl_i\}_{i\in[\rbl]}$ and $\cVbl=\{\Vbl_i\}_{i\in[\rbl]}$,
  respectively, which have parts of size at least $2n/(\kappa\delta r_1)$.
  Let $\tcXbl=\{\tXbl_i\}_{i\in[\rbl]}$ be a family of subsets of $V(H)$,
  $\cIbl=\{\Ibl_x\}_{x\in V(H)}$ be a family of image restrictions, and
  $\cJ=\{J_x\}_{x\in  V(H)}$  be a family of restricting vertices.
 Suppose that 
  \begin{enumerate}[label=\rom]
  \item $\Delta(H)\le\Delta$, $(H,\cXbl)$ is an $\Rbl$-partition,
    and $\tcXbl=\{\tXbl_i\}_{i\in[\rbl]}$ is a
    $(2\alpha,\Rpbl)$-buffer for $(H,\cX)$,
  \item $(G,\cVbl)$ is a $\big(\tfrac{1}{2}\delta\eps,d,p\big)$-regular $\Rbl$-partition, which is 
    $\big(\tfrac{1}{2}\delta\eps,d,p\big)$-super-regular on $\Rpbl$, 
    has one-sided inheritance on~$R'$, and two-sided inheritance on~$R'$ for~$\tcXbl$,
  \item\label{matchreduce:imgres} $\cIbl$ and $\cJ$ form
    a $\big(\tfrac{1}{2}\delta\rho p^\vartheta,2\zeta,\Delta,\Delta_J\big)$-restriction pair ,
    and $|J_x|\le b$ for each $x\in V(H)$.
 \end{enumerate}
 Then there is a graph $R$ on $r$ vertices and a spanning subgraph
 $R'\subset R$ with $\Delta(R')\le\DeltaRp$, together with
 $\kappa$-balanced size-compatible partitions $\cX=\{X_i\}_{i\in[r]}$
 of~$H$ and $\cV=\{V_i\}_{i\in[r]}$ of~$G$, which have parts of size at
 least $n/(\kappa r_1)$, a family $\tcX=\{\tX_i\}_{i\in[r]}$ of potential
 buffer vertices, a family $\cI=\{I_x\}_{x\in V(H)}$ of image restrictions,
 subsets $\Xbuf_i\subset\tX_i$ for each $i\in[r]$, and a partition
 $V_i=\Vmain_i\dcup\Vq_i\dcup\Vc_i\dcup\Vbuf_i$ for each $i\in[r]$,
 which give a good $H$-partition and a corresponding good $G$-partition.
\end{lemma}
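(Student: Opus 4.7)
The plan is to build the refined partitions in two stages: first construct the partition $\cX$ of $H$ (together with the buffer sets $\Xbuf_i$ and reduced graphs $R'\subset R$) by a deterministic argument, and then randomly refine $\cVbl$ to obtain $\cV$ together with the subpartition of each $V_i$ into $\Vmain_i,\Vq_i,\Vc_i,\Vbuf_i$. For each $i\in[\rbl]$ I would define an auxiliary graph $F_i$ on $\Xbl_i$ by joining any two vertices at $H$-distance at most~$9$, and joining any two vertices $x,y$ with $J_x\cap J_y\neq\emptyset$. Since $\Delta(H)\le\Delta$ each vertex has at most $\Delta^9$ neighbours at distance $<10$, and by Definition~\ref{def:restrict}\ref{itm:restrict:Jx} together with the $\Delta_J$-bound each vertex is joined to at most $\Delta\cdot\Delta_J$ others via a shared $J$-vertex; so $\Delta(F_i)\le(\Delta+\Delta_J)^{10}$. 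Applying Lemma~\ref{lem:nicepartition} to $F_i$ with $\tXbl_i$ as the distinguished set yields an equitable partition of $\Xbl_i$ into $8(\Delta+\Delta_J)^{10}=1/\delta$ independent pieces that also equitably splits $\tXbl_i$. Concatenating these partitions produces a candidate $\cX$ of size $r=\rbl/\delta$ together with reduced graphs $R$ and $R'$ obtained as blow-ups of $\Rbl$ and $\Rpbl$; in particular $\Delta(R')\le\DeltaRp$, $(H,\cX)$ is an $R$-partition, and the $(2\alpha,\Rpbl)$-buffer structure of $\tcXbl$ descends (halving the density, hence giving an $(\alpha,R')$-buffer), verifying~\ref{H:partition}, and the independence and disjoint-$J$ properties yield~\ref{PtH:dist}.

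Next, within each $\tX_i$ I would further split so that all vertices of a single piece share a common degree in $H$, a common clique-status (either all contained in a $K_{\Delta+1}$ of $H$ or none), and none lies close to an image restricted vertex. Since there are only $O(\Delta)$ possible degrees and $2$ clique-statuses, this further refinement can be absorbed into the $1/\delta$ factor above (choosing $\delta$ slightly smaller if needed), and ignoring further pieces just inflates $R$ and $R'$ without raising $\Delta(R')$. I then pick $\Xbuf_i\subset\tX_i$ of size exactly $4\mu|X_i|$ greedily, choosing vertices so that their $5$-neighbourhoods in $H$ are pairwise disjoint and avoid image restricted vertices (possible since $|\tX_i|\ge\alpha|X_i|\gg\mu|X_i|$ and only $\rho p^\vartheta|X_i|$ vertices are image restricted), and further requiring that at most $4\kappa\DeltaRp\mu|X_i|$ vertices of $X_i$ lie in $N(\Xbuf)$, which holds automatically since $|N(\Xbuf_j)\cap X_i|\le\Delta|\Xbuf_j|$ and there are at most $\kappa\DeltaRp$ indices $j$ with $ij\in R'$. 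This gives~\ref{BUF:sizebuf}--\ref{BUF:cliqueb}; property~\ref{BUF:existcliques} is inherited from the corresponding supply in $\tXbl_i$ combined with our greedy avoidance.

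For the $G$-side I would pick uniformly at random a refinement $\cV=\{V_i\}_{i\in[r]}$ of $\cVbl$ that is size-compatible with $\cX$, and then uniformly at random subpartition each $V_i$ as $\Vmain_i\dcup\Vq_i\dcup\Vc_i\dcup\Vbuf_i$ of the prescribed relative sizes from~\ref{G:sizes}. Regularity and inheritance of sub-pairs of an $(\eps/(2\delta),d,p)$-(super-)regular pair follow from the slicing lemma (Lemma~\ref{lem:slice}) together with the fact that $|V_i|\ge\delta|\Vbl_i|/2$, which gives~\ref{G:inh}. The minimum-degree statement~\ref{G:deg} and the image-restriction concentration in~\ref{G:imres} are each of the form ``a hypergeometric or sum of Bernoullis is concentrated near its expectation'', so are handled by Theorem~\ref{thm:chernoff}; the hypothesis $p\ge C(\log n/n)^{1/b}$ and $|V_i|\ge n/(\kappa r_1)$ guarantee that the relevant expectations dominate $\log n$ so that a union bound over the constantly-many events (per vertex, per image-restricted vertex, and per restricting set $J_x$ of size at most $b$) yields the properties simultaneously with positive probability. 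Properties~\ref{G:restr} and~\ref{PtH:image} then pass directly from the corresponding hypothesis on $\cIbl,\cJ$ since $\rho$ and $\zeta$ are only altered by an absorbable factor.

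The main technical obstacle is not any single verification but the bookkeeping required to ensure that the simultaneous deterministic refinement of $H$ (which forces certain subset sizes and the clique-buffer property) is compatible with the $\kappa$-balanced condition on $\cV$: subdividing $\Xbl_i$ into unequal pieces according to degree and clique-status, combined with the subsequent balancing needed for $\cV$ to remain size-compatible, is the delicate step. The standard trick here is to allow each new cluster to lie in a size range $[m,\kappa m]$ rather than requiring strict equality, which absorbs the small imbalances arising from degree/clique-status refinement and from rounding in Lemma~\ref{lem:nicepartition}; checking that $\kappa$ suffices is the only place where the interplay between the various constants needs genuine care.
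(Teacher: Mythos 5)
Your overall approach is the same as the paper's — auxiliary graph, Lemma~\ref{lem:nicepartition}, greedy buffer selection, then a random refinement of $\cVbl$ checked via Chernoff and the slicing lemma — and the $G$-side is handled correctly. However, there is a genuine gap in your construction of the buffer sets.

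You propose to \emph{further refine} the partition $\cX$ so that each new part has a common degree and clique-status, absorbing the extra splitting factor by ``choosing $\delta$ slightly smaller''. This is not allowed: the lemma fixes $\delta=\tfrac18(\Delta+\Delta_J)^{-10}$ and prescribes that the output partition have exactly $r=\rbl/\delta$ parts (with $r\le r_1$), where $r_1$ was the quantity used to set $\eps$ and $C$ in the three blow-up lemmas. Extra refinement would produce more than $r$ parts and break the interface with the rest of the proof. Moreover, without that refinement your greedy selection of $\Xbuf_i$ (picking vertices with pairwise-disjoint $5$-neighbourhoods avoiding image-restricted vertices) does not by itself give~\ref{BUF:deg} and~\ref{BUF:cliqueb}: nothing forces the chosen vertices to share a degree or clique status. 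The paper sidesteps the refinement entirely: it leaves $\cX$ as the plain $\delta^{-1}$-fold blow-up, internally splits $\tX_i$ into $\Delta+2$ classes (degrees $0,\dots,\Delta-1$; degree $\Delta$ not in a $K_{\Delta+1}$; in a $K_{\Delta+1}$), takes a \emph{largest} class $S_i$ of size at least $\tfrac{\alpha}{\Delta+2}|X_i|\gg\mu|X_i|$, and greedily picks $\Xbuf_i\subset S_i$ respecting the distance constraints. This directly yields~\ref{BUF:deg} and~\ref{BUF:cliqueb} (all of $\Xbuf_i$ lies in $S_i$, hence has uniform degree and clique status) and~\ref{BUF:existcliques} (if $S_i$ is the clique class, plenty of clique-class vertices remain unused at the end of the greedy procedure). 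Once you make this change, your ``main technical obstacle'' paragraph about balancing unequal degree/clique-status pieces against $\cV$ becomes moot — the $\kappa$-balance of $\cX$ follows straightforwardly from equitability in Lemma~\ref{lem:nicepartition} and the $\tfrac12\kappa$-balance of $\cXbl$, with no interaction with the buffer selection.
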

\begin{proof}
 Let $\delta=\tfrac18(\Delta+\Delta_J)^{-10}$.
 We set $C=10^6\delta^{-1}\mu^{-1}\eps^{-2}d^{-\Delta}\zeta^{-1}\kappa r_1$.
 We require
 \[\mu<\frac{\alpha}{10000\kappa\DeltaRp^4\Delta^{10}(\Delta+2)}\qquad\text{and}\qquad \rho\le\mu\,.\]

First we refine the vertex partition $\cXbl$  of $H$ to obtain properties~\ref{H:partition} and~\ref{PtH:dist}. 
 For each $i\in[\rbl]$ we define a graph $F_i$ on the vertex set $\Xbl_i$ by putting an edge between $x$ and $x'$ whenever either $J_x\cap J_{x'}\neq\emptyset$ or $x$ and $x'$ are at distance less than $10$ in $H$. Observe that $\Delta(F_i)\le(\Delta+\Delta_J)^{10}$. We now apply Lemma~\ref{lem:nicepartition} to $F_i$ with the set $\tXbl_i$. This returns an equitable partition of the vertices $\Xbl_i$ into independent sets in $F_i$ which also partitions $\tXbl_i$ equitably. We do this for each $i\in[\rbl]$ to obtain a partition $\cX$ of $V(H)$ into $8(\Delta+\Delta_J)^{10}\rbl=\delta^{-1}\rbl=r$ parts and a family $\tcX=\{\tX_i\}_{i\in[r]}$. We let $R$ be the graph obtained from $\Rbl$ by replacing each vertex with an independent set on $8(\Delta+\Delta_J)^{10}=\delta^{-1}$ vertices, and each edge with a complete bipartite graph between the corresponding sets. We obtain $R'$ similarly from $\Rpbl$. Thus $(H,\cX)$ is an $R$-partition, satisfying~\ref{PtH:dist} by construction. Furthermore, we have $\Delta(R')=\delta^{-1}\Delta(\Rpbl)\le\DeltaRp$.
 
 Since each $\Xbl_i$ with $i\in[\rbl]$ has size at least $2n/(\kappa\delta r_1)$ and is equipartitioned into $\delta^{-1}$ parts, we see that, because $n$ is chosen sufficiently large, we have $|X_i|\ge n/(\kappa r_1)$ for each $i\in[r]$. Given $i,j\in[r]$ let $i',j'\in[\rbl]$ be such that $X_i\subset\Xbl_{i'}$ and $X_j\subset\Xbl_{j'}$. Then we have
 \[|X_i|\le\delta|\Xbl_{i'}|+1\le\tfrac{\kappa}{2}\delta|\Xbl_{j'}|+1\le \tfrac{\kappa}{2}(|X_j|+1)+1\le\kappa|X_j|\]
 where the final inequality is since $|X_j|\ge n/(\kappa r_1)$ and $n$ is sufficiently large. Thus $\cX$ is $\kappa$-balanced. 
 
 Given $i\in[r]$ let $i'\in[\rbl]$ be such that $X_i\subset\Xbl_{i'}$. Then we have
 \[|\tX_i|\ge \delta|\tXbl_{i'}|-1\ge 2\alpha\delta|\Xbl_{i'}|-1\ge 2\alpha(|X_i|-1)-1\ge\alpha|X_i|\]
 where again the final inequality is since $n$ is sufficiently large. Thus $\tcX$ forms an $(\alpha,R')$-buffer for $(H,\cX)$.
 
 Before we verify the remaining good $H$-partition properties, it is convenient to check~\ref{PtH:image}. By assumption~\ref{matchreduce:imgres} of the lemma, at most $\tfrac12\delta\rho p^\vartheta|\Xbl_{i'}|$ vertices of any given $\Xbl_{i'}$ are image restricted. Thus for any $i$ such that $X_{i}\subset \Xbl_{i'}$, at most 
\[\tfrac12\delta\rho p^\vartheta|\Xbl_{i'}|\le\tfrac12\rho p^\vartheta\big(|X_{i}|+1\big)\le\rho p^\vartheta|X_{i}|\]
vertices of $X_{i}$ are image restricted, as required for~\ref{PtH:image}.
 
 We proceed with properties~\ref{BUF:sizebuf}--\ref{BUF:existcliques} by choosing buffer vertices $\Xbuf_i$ from the potential buffer vertices $\tX_i$ for each $i\in[r]$ sequentially. For a given $i\in[r]$, we first decide on the degree of the buffer and whether or not it is a clique buffer. We split the vertices of $\tX_i$ into $\Delta+2$ subsets, one for each of the possible degrees in $\{0,\dots,\Delta-1\}$, one for vertices of degree $\Delta$ not in copies of $K_{\Delta+1}$, and one for vertices in copies of $K_{\Delta+1}$. We take a largest one $S_i$ of these sets, and will choose $\Xbuf_i$ within it. We do this greedily, one vertex at a time, always picking a vertex at distance at least five from previously chosen buffer vertices, and at distance at least three from any image restricted vertices. We now justify that it is possible to do this, that is, that we can pick the desired $4\mu|X_i|$ vertices in $S_i$ without running out of vertices in $S_i$. 
 
 Since all neighbours (respectively, second neighbours) in $H$ of vertices in $\tX_i$ are in clusters $V_j$ with $ij\in R'$ (respectively, $V_k$ with $ij,jk\in R'$), by~\ref{PtH:image}, verified above, it follows that the number of image restricted vertices at distance $2$ or less to a vertex of $\tX_i$ is at most $(\DeltaRp^2+\DeltaRp+1)\rho\kappa|X_i|$. Each of these vertices has at most one neighbour or second neighbour in $\tX'_i$, since the distance between any two vertices of $X_i$ is at least ten. Similarly, if $x$ is in some $\tX_k$ and is at distance four or less from some vertex of $\tX_i$, then the path between $x$ and $\tX_i$ follows edges of $R'$, and there is only one vertex of $\tX_i$ which is at distance four or less from $x$. Thus the total number of vertices of $\tX_i$ which are at distance four or less from some vertex of $\Xbuf$ is at most \[(1+\DeltaRp+\DeltaRp^2+\DeltaRp^3+\DeltaRp^4)\cdot 4\mu\kappa|X_i|\,.\]
 Since $|S_i|\ge\tfrac{1}{\Delta+2}|\tX_i|$, it follows that at each step the number of vertices in $S_i\subset \tX_i$ we have to choose from is at least
 \[\tfrac{1}{\Delta+2}\alpha|X_i|-3\DeltaRp^2\rho\kappa|X_i|-20\DeltaRp^4\mu\kappa|X_i|\,,\]
 which is greater than $\tfrac{1}{2\Delta+4}\alpha|X_i|$ by choice of $\mu$ and $\rho$. Thus we can always choose the desired vertices of $\Xbuf_i$, giving~\ref{BUF:dist}--\ref{BUF:cliqueb}. In the event that $\Xbuf_i$ is a clique buffer, we have at the end at least $\tfrac{1}{2\Delta+4}\alpha|X_i|$ vertices of $\tX_i$ remaining which are in copies of $K_{\Delta+1}$ (by definition of $S_i$) that do not contain either vertices in $\Xbuf$ or image restricted vertices, giving~\ref{BUF:existcliques}.
 
  Finally, neighbours of $\Xbuf$ in $X_i$ must be in parts $X_j$ such that $ij\in R'$. Since any two vertices in $X_i$ are at distance at least ten, each such vertex has at most one neighbour in $X_i$ and so at most $\DeltaRp4\mu\kappa|X_i|$ vertices of $X_i$ are in $N(\Xbuf)$, completing~\ref{BUF:sizebuf}. This establishes that we have a good $H$-partition.
  
  We now construct a corresponding good $G$-partition. The construction is simple: for each $\Vbl_i\in\cVbl$ we choose, uniformly at random, an equitable partition into $\delta^{-1}$ parts, which parts we associate (arbitrarily) to subparts of $\Xbl_i$ of corresponding size, obtaining $\cV$. Note that this is possible since $|\Xbl_i|=|\Vbl_i|$. We then, for each $i\in[r]$, choose uniformly at random a partition of $V_i$ into one set of size $(1-3\mu)|V_i|$ and three of size $\mu|V_i|$, which form $\Vmain_i$, $\Vq_i$, $\Vc_i$ and $\Vbuf_i$ respectively. For each $x\in V_i$, we let $I_x=\Ibl_x\cap V_i$.
  
  By construction $\cX$ and $\cV$ are size-compatible, so what remains is to show that $\cV$ is with positive probability a good $G$-partition. In fact, we will show this holds asymptotically almost surely. Recall that we have already established~\ref{PtH:image}, and by construction~\ref{G:sizes} holds, so that the remaining conditions are~\ref{G:inh}--\ref{G:restr}.

  Observe that the sets $V_i$, $\Vmain_i$, $\Vq_i$, $\Vc_i$ and $\Vbuf_i$ are all distributed as the uniform random set of that size within $\Vbl_{i'}$, where $V_i\subset \Vbl_{i'}$, though they are of course not independent. It follows that for any set $U\subset \Vbl_{i'}$, the sizes of each of the sets $V_i\cap U$, $\Vmain_i\cap U$, $\Vq_i\cap U$, $\Vc_i\cap U$ and $\Vbuf_i\cap U$ are hypergeometrically distributed, each with (since $n$ is sufficiently large) expectation at least $\tfrac12\mu\delta|U|$. In particular, if $|U|\ge 10^3\eps^{-2}\log n$, then the probability that any given one of these sizes is not within a $\big(1\pm\tfrac{\eps}{10}\big)$-factor of its expectation is by Theorem~\ref{thm:chernoff} at most
  \[2e^{-\eps^2|U|/300}\le n^{-2}\,.\]
  
  We now give various sets $U$ to which we will apply this observation, and verify that each is sufficiently large.  
  For each $ij\in R'$ and each $v\in V_j$, by construction of $R'$ and since $(G,\cVbl)$ is super-regular on $\Rpbl$, we have
  \begin{align*}
   \deg_\Gamma(v;\Vbl_{i'})&\ge \deg_G(v;\Vbl_{i'})\\
   &\ge (d-\eps)\max\big\{p|\Vbl_{i'}|,\tfrac12\deg_\Gamma(v;\Vbl_{i'})\big\}\ge10^3\eps^{-2}\log n\,,
  \end{align*}
  where the final inequality holds by choice of $p$ and $C$. We can thus take $U$ to be $N_\Gamma(v;\Vbl_{i'})$ or $N_G(v;\Vbl_{i'})$.
  
  For each $x\in X_i$, where $X_i\subset\Xbl_{i'}$, since $\big(\cIbl,\cJ\big)$ form a $\big(\tfrac12\delta\rho,2\zeta,\Delta,\Delta_J\big)$-restriction pair, we have
  \[\big|\comN_\Gamma(J_x;\Vbl_{i'})\big|\ge |\Ibl_x|\ge\zeta(dp)^{|J_x|}|\Vbl_i|\ge 10^3\eps^{-2}\log n\,,\]
  where the final inequality comes from the assumption $|J_x|\le b$ and our choice of $p$ and $C$. We can thus also take $U$ to be $\comN_\Gamma(J_x;\Vbl_{i'})$ or $I_x$.
  
  In total, we have at most $5r$ randomly chosen sets in which we are interested, and at most $4n$ choices of $U$ with which we intersect the random sets. Taking the union bound, we see that a.a.s.\ each of these intersections has size within a $\big(1\pm\tfrac{\eps}{10}\big)$-factor of its expectation. In particular, there exists a partition $\cV$ in which each of these intersections is within a $\big(1\pm\tfrac{\eps}{10}\big)$-factor of its expectation. We fix such a partition, and prove that it is the desired good $G$-partition.
  
 We begin with~\ref{G:inh}. Suppose $ij\in R$. Then $V_i$ and $V_j$ come from two parts $\Vbl_{i'}$ and $\Vbl_{j'}$ of $\cVbl$ which form a $\big(\tfrac{1}{2}\delta\eps,d,p\big)$-regular pair, and we have $|V_i|>\tfrac12\delta|\Vbl_{i'}|$ and $|V_j|>\tfrac1{2}\delta|\Vbl_{j'}|$. It follows from the definition that $(V_i,V_j)$ is $(\eps,d,p)$-regular, so $(G,\cV)$ forms an $(\eps,d,p)$-regular $R$-partition. Given now $ij,jk\in R'$, with $V_j\subset\Vbl_j$ and $V_k\subset\Vbl_k$, and any $v\in V_i$, we have by construction of $\cV$ that $|\big|N_\Gamma(v;V_j)\big|\ge\tfrac12\delta\big|N_\Gamma(v;V_{j'})\big|$ and that $|V_k|\ge\tfrac12\delta|V_{k'}|$. Since the pair $\big(N_\Gamma(v;V_{j'}),V_{k'}\big)$ is $\big(\tfrac12\delta\eps,d,p\big)$-regular in $G$, it follows from the definition that the pair $\big(N_\Gamma(v;V_{j}),V_{k}\big)$ is $(\eps,d,p)$-regular in $G$, so that we have one-sided inheritance on $R'$. A similar argument shows that if also $jk\in R'$ and there is a triangle of $H$ with one vertex in each of $\tX_i$, $X_j$ and $X_k$, then also $\big(N_\Gamma(v;V_{j}),N_\Gamma(v;V_{k})\big)$ is $(\eps,d,p)$-regular in $G$, so that we have two-sided inheritance on $R'$ for $\tcX$.
 
 Both of~\ref{G:deg} and~\ref{G:imres} follow directly from the construction of $\cV$ together with, respectively, the $\big(\tfrac12\delta\eps,d,p\big)$-super-regularity of $(G,\cVbl)$ on $\Rpbl$ and that $(\cIbl,\cJ)$ forms a $\big(\tfrac12\delta\rho p^\vartheta,2\zeta,\Delta,\Delta_J\big)$-restriction pair. The same holds for everything except~\ref{itm:restrict:reg} in the definition of a restriction pair for~\ref{G:restr}, since~\ref{PtH:image} was already established. Given an image restricted $x\in X_i\subset\Xbl_{i'}$ and an edge $xy\in E(H)$ with $Y\in X_j\subset\Xbl_{j'}$, the pair $\big(\comN_\Gamma(J_x;V_{i'}),\comN_\Gamma(J_y;V_{j'})\big)$ is $\big(\tfrac12\delta\eps,d,p\big)$-regular by assumption. Since $\big|\comN_\Gamma(J_x;V_i)\big|\ge\tfrac12\delta\big|\comN_\Gamma(J_x;V_{i'})\big|$, and similarly for $y$, it follows from the definition that $\big(\comN_\Gamma(J_x;V_{i}),\comN_\Gamma(J_y;V_{j})\big)$ is $(\eps,d,p)$-regular, completing the verification of~\ref{G:restr}.
\end{proof}

With this we are ready to define the general setup which is common to
our proofs and encapsulates the definitions of good embeddings and the
constant choices above. This encapsulation is convenient because we will
use this general setup in many of the lemmas to come.

\begin{definition}[General Setup]\label{def:general_setup}
  When we say that \emph{we assume the General Setup} we assert that
  constant choices satisfying the conditions of
  Section~\ref{sec:setup_constants} have been made, that there exist
  $\kappa$-balanced size-compatible partitions $\cX$ and $\cV$ of $V(H)$
  and $V(G)$ respectively whose parts are of size at least
  $\tfrac{n}{\kappa r_1}$, together with reduced graphs $R$ and $R'$ with
  $\Delta(R')\le\DeltaRp$, an image restriction pair $(\cI,\cJ)$, an
  $(\alpha,R')$-buffer $\tcX$, buffer vertices $\Xbuf$, and sets $\Vmain$,
  $\Vq$, $\Vc$ and $\Vbuf$, that the partition of $V(H)$ is a good
  $H$-partition, and that the partition of $G$ is a good $G$-partition.
\end{definition}

\subsection{Underlying restrictions, candidates and good partial embeddings}\label{sec:gpe}

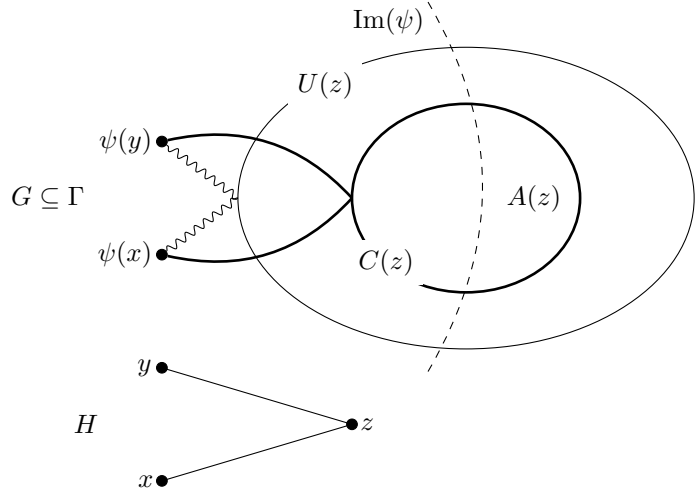
\begin{figure}[t]
   \begin{tikzpicture}
  \node at (-1,0.75) {$H$} ;
  \node at (-1.5,3.75) {$G\subset\Gamma$} ;
  \tikzstyle{every node}=[draw,circle,fill=black,minimum size=4pt,
                            inner sep=0pt]
  \draw (0,0) node (x) [label=left:$x$] {}
  -- ++(2.5,0.75) node (z) [label=right:$z$] {}
  -- ++(-2.5,0.75) node (y) [label=left:$y$] {};
  
  \node at (0,3) (px) [label=left:$\psi(x)$] {} ;
  \node at (0,4.5) (py) [label=left:$\psi(y)$] {} ;
  \tikzstyle{every node}=[draw=none,fill=none]
  \node at (1,3.75) (Gaz) {} ;
  \path[draw] (px) edge [snake it] node {} (Gaz.center)
                       (Gaz.center) edge [snake it] node {} (py) ;
  \draw (4,3.75) ellipse (3cm and 2cm) ;
  \draw[line width=1pt] (4,3.75) ellipse (1.5cm and 1.25cm) ;
  \node at (2.5,3.75) (Gz) {} ;
  \path[draw] (px) edge [line width=1pt,bend right] node {} (Gz.center)
              (Gz.center) edge [line width=1pt,bend right] node {} (py) ;
  \path[draw] (3.5,1.45) edge [bend right,dashed] node {} (3.5,6.35) ;
  \node at (3.7,6.1) (imp) [label=left:$\im(\psi)$] {} ;
  \node[fill=white] at (2.15,5.25) (Uz) {$U(z)$} ;
  \node[fill=white] at (2.95,2.9) (Cz) {$C(z)$} ;
  \node at (5.5,3.75) (Az) [label=left:$A(z)$] {} ;
 \end{tikzpicture}
\caption{Underlying restriction sets, candidate sets, and available
  candidate sets for an example in which $x$ and $y$ are embedded and their common neighbour $z$ is not.
The thick lines are edges of $G$, and the wavy lines of $\Gamma$.}
\label{fig:UCA}
\end{figure}

In this section we provide definitions of a number of additional sets that
will play important roles in our embedding algorithms. We also state which
invariants are maintained throughout the embedding, and encapsulate this in
the definition of good partial embeddings.

Suppose $\psi$ is a \emph{partial embedding} of $H$ into $G$, that is, an injective
graph homomorphism with domain $\dom(\psi)\subseteq V(H)$.
We call a vertex $x\in H$ \emph{embedded} if $x\in\dom(\psi)$, and otherwise
\emph{unembedded}. Of course this is with respect to $\psi$, but this will
always be clear from the context and we will not in future mention it. The set
of embedded neighbours of $x$ is
\[\Pi(x):=\big\{\psi(y):xy\in E(H),y\in\dom(\psi)\big\}\]
and we define \[\pi(x):=\big|\Pi(x)\big|\quad \text{and}\quad
\pi^*(x)=\big|\Pi(x)\big|+\big|J_x\big|.\]
Thus, $\pi^*(x)$ counts the number of already embedded neighbours of $x$ by
the partial embedding $\psi$ plus the `neighbours' from $J_x$ (if any).

Given an unembedded vertex $x\in X_i$, we define the \emph{underlying
restriction set} $U(x)$, the \emph{candidate set} $C(x)$, and the
\emph{available candidate set} $A(x)$ of $x$ by
\begin{align*}
 U(x)&:=V(x)\cap \comN_\Gamma\big(\Pi(x)\cup
 J_x\big)\,,\\
 C(x)&:=I_x\cap \comN_G\big(\Pi(x)\big)\,,\text{ and}\\
 A(x)&:=C(x)\setminus\im(\psi)\,,\text{ respectively.}
\end{align*}
For an illustration see Figure~\ref{fig:UCA}.

Recall that $G$ is a subgraph of $\Gamma$ (being either a random or a
bijumbled graph), which is typically far from being complete. The
underlying set signifies thus the vertices that could in principle be used
for embedding (if $G=\Gamma$), whereas the candidate sets denote the
vertices that are possible \emph{in} $G$, and the available candidate sets
only consider those vertices from the candidate sets which have not been
previously taken as images by the vertices embedded earlier (in
$\im(\psi)$). We also refer to vertices from $C(x)$ as candidate vertices
for $x$.  Further, we define 
\begin{align*}
\Umain(x)&:=U(x)\cap\Vmain(x)\,, &
\Uq(x)&:=U(x)\cap\Vq(x)\,, \\
\Uc(x)&:=U(x)\cap\Vc(x)\,, &
\Ubuf(x)&:=U(x)\cap\Vbuf(x)\,,
\end{align*}
and similarly for the respective subsets of the
candidate set~$C(x)$ and the available candidate set~$A(x)$ of $x$. For an illustration see
Figure~\ref{fig:mainqcbuf}.

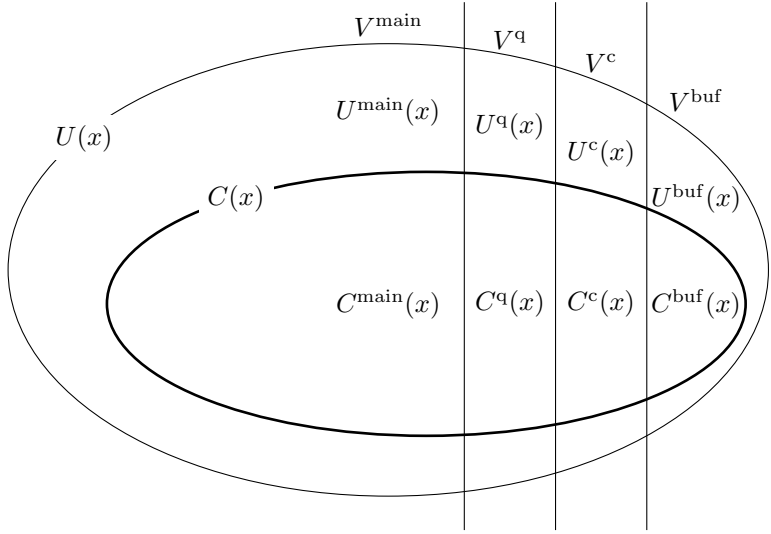
\begin{figure}[t]
 \begin{tikzpicture}
  \draw (0,0.45) ellipse (5cm and 3cm) ;
  \draw[line width=1pt] (0.5,0) ellipse (4.2cm and 1.75cm) ;
  \path[draw] (1,-3) edge node {} (1,4) ;
  \path[draw] (2.2,-3) edge node {} (2.2,4) ;
  \path[draw] (3.4,-3) edge node {} (3.4,4) ;
  \node at (0,3.7) {$\Vmain$} ;
  \node at (1.6,3.5) {$\Vq$} ;
  \node at (2.8,3.2) {$\Vc$};
  \node at (4.05,2.7) {$\Vbuf$} ;
  \node at (0,2.55) {$\Umain(x)$} ;
  \node at (1.6,2.35) {$\Uq(x)$} ;
  \node at (2.8,2) {$\Uc(x)$} ;
  \node at (4.05,1.4) {$\Ubuf(x)$} ;
  \node at (0,0) {$\Cmain(x)$} ;
  \node at (1.6,0) {$\Cq(x)$} ;
  \node at (2.8,0) {$\Cc(x)$} ;
  \node at (4.05,0) {$\Cbuf(x)$} ;
  \node[fill=white] at (-2,1.4) {$C(x)$} ;
  \node[fill=white] at (-4,2.2) {$U(x)$} ;
 \end{tikzpicture}
\caption{Partitioning underlying restriction sets and candidate sets into
  one big and three small parts}
\label{fig:mainqcbuf}
\end{figure}

Further we define the \emph{underlying
restriction graph} between $Y\subseteq X_i$ and
$Z\subseteq V_i$ to be the bipartite graph with parts $Y$ and $Z$ and edges $yz$
whenever $z\in U(y)$. We define similarly the \emph{candidate
graph} between $Y\subseteq X_i$ and
$Z\subseteq V_i$ to be the bipartite graph with parts $Y$ and $Z$ and edges $yz$
whenever $z\in C(y)$. If $x\in V(H)$ and $v\in C(x)$ we say~$v$ is a
\emph{candidate} for $x$.

Now we can introduce the concept of good partial embeddings.
The intention of this definition is to fix properties which easily allow to
prove that a good partial embedding is `locally'
good: If $x$ is unembedded, then almost all of $C(x)$ consists of vertices $v$
such that if $v$ is not in $\im(\psi)$, then $\psi\cup\{x\to v\}$ is a good
partial embedding (this assertion is made formal in Lemma~\ref{lem:fewbad}). 
For an illustration see Figure~\ref{fig:GPE}.

\begin{definition}[Good partial embedding]\label{def:GPE}
  We call $\psi$ a \emph{good partial embedding} if the following conditions hold.
  \begin{enumerate}[label=\itmarab{GPE}]
  \item\label{GPE:rightplace} For each $x\in\dom(\psi)$ we have $\psi(x)\in
    I_x$.
  \item\label{GPE:sizeU} For each unembedded $x\in X_i$ we have 
    \begin{align*}
      \big|U(x)\big|=(p\pm\eps p)^{\pi^*(x)}\big|V(x)\big|\,,\quad
      \big|\Umain(x)\big|&=(1-3\mu)(p\pm\eps p)^{\pi^*(x)}\big|V(x)\big|\,,\quad \text{and}\\
      \quad \big|\Uq(x)\big|\,,\,\big|\Uc(x)\big|\,,\,\big|\Ubuf(x)\big|&=\mu(p\pm\eps p)^{\pi^*(x)}\big|V(x)\big|\,.
    \end{align*}
  \item\label{GPE:sizeC} For each unembedded $x$ we have
    \begin{align*}
      \big|C(x)\big|&\geq(1-\eps')(dp-\eps' p)^{\pi(x)}\big|I_x\big|\,,\\
      \big|\Cmain(x)\big|&\geq(1-\eps')(1-3\mu)(dp-\eps' p)^{\pi(x)}\big|I_x\big|\,,\quad\text{and}\\
      \big|\Cq(x)\big|\,,\,\big|\Cc(x)\big|\,,\,\big|\Cbuf(x)\big|&\geq(1-\eps')\mu(dp-\eps' p)^{\pi(x)}\big|I_x\big|\,.
    \end{align*}
  \item\label{GPE:Ureg} For each $xy\in E(H)$ with $x$ and $y$ unembedded,
    the pair $\big(U(x),U(y)\big)$ is $(\eps_{\pi^*(x),\pi^*(y)},d,p)$-regular in
    $G$.
  \end{enumerate}
\end{definition}

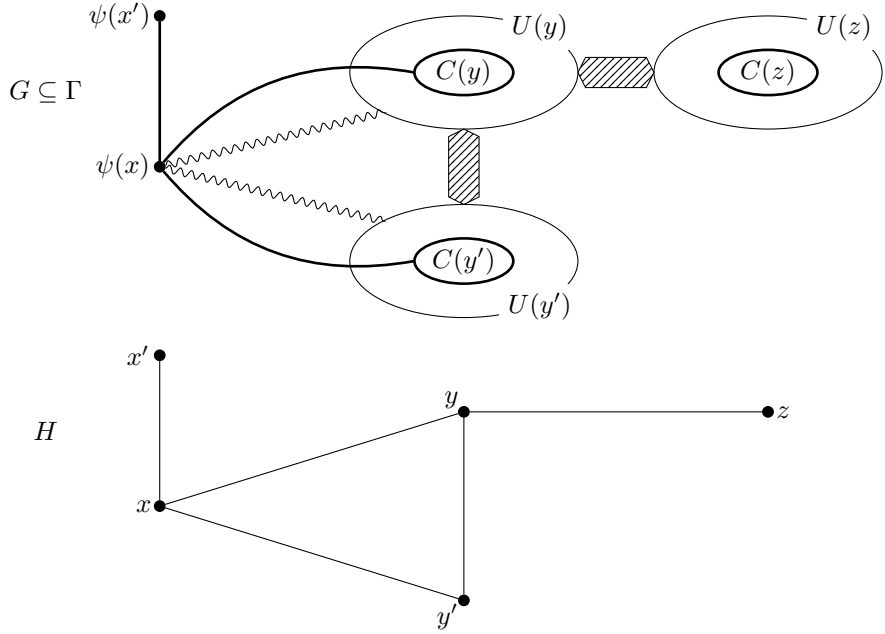
\begin{figure}[t]
  \begin{tikzpicture}
   \tikzstyle{every node}=[draw,circle,fill=black,minimum size=4pt,
                            inner sep=0pt]
   \node at (-3,-0.5) (x) [label=left:$x$] {} ;
   \node at (-3,1.5) (xp) [label=left:$x'$] {} ;
   \node at (1,0.75) (y) [label=above left:$y$] {} ;
   \node at (1,-1.75) (yp) [label=below left:$y'$] {} ;
   \node at (5,0.75) (z) [label=right:$z$] {} ;
   \draw (xp)--(x)--(yp)--(y) ;
   \draw (x)--(y)--(z) ;
   
   \node at (-3,4) (psx) [label=left:$\psi(x)$] {} ;
   \node at (-3,6) (psxp) [label=left:$\psi(x')$] {} ;
   \tikzstyle{every node}=[draw=none,fill=none]
   \draw[line width=1pt] (psx)--(psxp) ;
   
   \draw (1,2.75) ellipse (1.5cm and 0.75cm) ;
   \draw[line width=1pt] (1,2.75) ellipse (0.65cm and 0.3cm) ;
   \path[draw] (psx) edge [snake it] node {} (1-1.418*0.75,2.75+1.418*0.375) ;
   \path[draw] (psx) edge [line width=1pt,bend right] node {} (0.35,2.75) ;
   
   \draw (1,5.25) ellipse (1.5cm and 0.75cm) ;
   \draw[line width=1pt] (1,5.25) ellipse (0.65cm and 0.3cm) ;
   \path[draw] (psx) edge [snake it] node {} (1-1.418*0.75,5.25-1.418*0.375) ;
   \path[draw] (psx) edge [line width=1pt,bend left] node {} (0.35,5.25) ;
   
   \draw (5,5.25) ellipse (1.5cm and 0.75cm) ;
   \draw[line width=1pt] (5,5.25) ellipse (0.65cm and 0.3cm) ;
   
   \filldraw[pattern=north east lines] (2.5,5.25)--(2.6,5.45)--(3.4,5.45)--(3.5,5.25)--(3.4,5.05)--(2.6,5.05)--(2.5,5.25) ;
   \filldraw[pattern=north east lines] (1,3.5)--(1.2,3.6)--(1.2,4.4)--(1,4.5)--(0.8,4.4)--(0.8,3.6)--(1,3.5) ;

   \node at (-4.5,0.5) {$H$} ;
   \node at (-4.5,5) {$G\subset\Gamma$} ;
   \node[fill=white] at (2,2.15) {$U(y')$} ;
   \node[fill=white] at (2,5.85) {$U(y)$} ;
   \node[fill=white] at (6,5.85) {$U(z)$} ;
   \node at (1,2.75) {$C(y')$} ;
   \node at (1,5.25) {$C(y)$} ;
   \node at (5,5.25) {$C(z)$} ;
  \end{tikzpicture}
\caption{Good partial embeddings form embeddings when restricted to the
  mapped vertices $x,x'$ and guarantee sizes and regularity for the
  underlying restriction sets of unmapped vertices $y,y',z$. The thick lines are edges of $G$, the wavy lines of $\Gamma$, and the hatched areas represent regular pairs in $G$.}
\label{fig:GPE}
\end{figure}

It is important to observe that the trivial partial embedding, in which no
vertices are embedded, is not automatically a good partial embedding: image
restricted vertices might destroy any
of~\ref{GPE:sizeU}--\ref{GPE:Ureg}. However, it follows from
straightforward checking of the definitions that if we are provided with a
good $H$-partition and a corresponding good $G$-partition, then the trivial
partial embedding is a good partial embedding.

\begin{fact}
  If we assume the General Setup for~$H$ and~$G$ then the trivial embedding
  is a good partial embedding of~$H$ into~$G$.
\end{fact}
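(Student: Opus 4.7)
The plan is simply to unpack the definitions and verify each of the four conditions \ref{GPE:rightplace}--\ref{GPE:Ureg} for the trivial embedding $\psi$, in which $\dom(\psi)=\emptyset$. Under the trivial embedding one has $\Pi(x)=\emptyset$, hence $\pi(x)=0$ and $\pi^*(x)=|J_x|$ for every $x\in V(H)$, and moreover $C(x)=I_x$, $A(x)=I_x$, and $U(x)=V(x)\cap \comN_\Gamma(J_x)$. The proof will just track where these quantities live relative to the partition of $V(G)$ and then cite the correct clause of a good $G$-partition.

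Condition \ref{GPE:rightplace} holds vacuously, since no vertex is embedded. For \ref{GPE:sizeU}, I would note that $U(x)=\comN_\Gamma(J_x;V(x))$, and that $\Umain(x)$, $\Uq(x)$, $\Uc(x)$, $\Ubuf(x)$ are precisely the intersections of $\comN_\Gamma(J_x;V(x))$ with $\Vmain(x)$, $\Vq(x)$, $\Vc(x)$, $\Vbuf(x)$. The four bounds required in \ref{GPE:sizeU} are then read off directly from the four estimates in clause \ref{G:imres}, noting that $\pi^*(x)=|J_x|$ (and that the interval $(1\pm\eps)p^{|J_x|}$ is contained in the interval $(p\pm\eps p)^{|J_x|}$ used in \ref{GPE:sizeU}).

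For \ref{GPE:sizeC}, since $\pi(x)=0$ the required lower bounds reduce to
\[
  |C(x)|\ge (1-\eps')|I_x|,\ |\Cmain(x)|\ge (1-\eps')(1-3\mu)|I_x|,\ |\Cq(x)|,|\Cc(x)|,|\Cbuf(x)|\ge (1-\eps')\mu|I_x|.
\]
The first is immediate from $C(x)=I_x$, and the remaining three follow from the estimates of $|I_x\cap\Vmain_i|$, $|I_x\cap\Vq_i|$, $|I_x\cap\Vc_i|$, $|I_x\cap\Vbuf_i|$ provided by \ref{G:imres}, using $\eps\le\eps'$.

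The only condition requiring any thought is \ref{GPE:Ureg}. For $xy\in E(H)$ with $x\in X_i$, $y\in X_j$ (hence $ij\in R$ since $(H,\cX)$ is an $R$-partition), I would split into cases by whether $x$ or $y$ is image restricted. If neither is restricted, then $J_x=J_y=\emptyset$, so $U(x)=V_i$ and $U(y)=V_j$, and $(V_i,V_j)$ is $(\eps,d,p)$-regular by \ref{G:inh}; since $\eps\le\eps_{0,0}=\eps_{\pi^*(x),\pi^*(y)}$, we are done. If at least one of $x,y$ is image restricted, then $U(x)=\comN_\Gamma(J_x;V_i)$ and $U(y)=\comN_\Gamma(J_y;V_j)$ (with the convention $\comN_\Gamma(\emptyset;V_k)=V_k$), and the required $(\eps,d,p)$-regularity of this pair in $G$ is exactly clause \ref{itm:restrict:reg} of the definition of a $(\rho p^\vartheta,\zeta,\Delta,\Delta_J)$-restriction pair, which holds by \ref{G:restr}. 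Again $\eps\le\eps_{|J_x|,|J_y|}=\eps_{\pi^*(x),\pi^*(y)}$ gives the stated regularity. This completes the verification, so there is no real obstacle beyond bookkeeping; the main point to be careful about is just to remember that one may have $J_x\neq\emptyset$ even in the trivial embedding, so one cannot simply conclude \ref{GPE:sizeU} and \ref{GPE:Ureg} from the $R$-partition structure alone and must invoke \ref{G:imres} and \ref{G:restr}.
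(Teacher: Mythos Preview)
Your proposal is correct and does precisely what the paper intends: the paper does not give a detailed proof of this fact, merely asserting that it ``follows from straightforward checking of the definitions'', and your verification of \ref{GPE:rightplace}--\ref{GPE:Ureg} from the clauses of a good $G$-partition (in particular \ref{G:inh}, \ref{G:imres}, and \ref{G:restr}) is exactly that check. The case split in \ref{GPE:Ureg} according to whether $x$ or $y$ is image restricted, invoking \ref{itm:restrict:reg} from the restriction-pair definition when necessary, is the right way to handle the only nontrivial point.
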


Finally, in much of the rest of this paper we will want to consider not just one partial embedding, but a sequence $\psi_0,\dots$ of them. We will want to refer to sets $\Amain(x)$, $\Cmain(x)$ and so on, and quantities $\pi(x)$ and so on, with reference to each of these. We will do this by following the convention that a subscript $t$ attached to any of these means it is with reference to $\psi_t$. Thus we write $\Amain_t(x)$
  for the set $\Amain(x)$ with respect to $\psi_t$, and similarly
  $\Cmain_t(x)$, $\Pi_t(x)$, $\pi_t(x)$, $\pi^*_t(x)$, $U_t(x)$, $C_t(x)$,
  $A_t(x)$ and so on. 

\subsection{Bad vertices}\label{subsec:bad_vertices}
As mentioned in the proof overview, we will embed vertices of $H$ into $G$
one at a time, choosing each vertex in a way that allows for the future
continuation of the embedding. This `allowing for future continuation'
comes in two parts, `local' conditions which broadly say that we embed a
vertex of $H$ in a way which allows for the embedding of other vertices
nearby in $H$, and `global' conditions which say that the embedding of many
(distant) vertices of $H$ do not tend to cause problems for any given
vertex. We handle the latter probabilistically, and this constitutes the
main work of this paper.  The purpose of this subsection is to define
precisely what we require locally, by specifying which vertices of $G$ are
bad for $x\in V(H)$, that is, not suitable for embedding $x$ to, and to
show that this is always a small set of vertices (in comparison to the
set $A(x)$). In fact, the definition of badness also depends on a set
$Q\subset V(H)$. In the random greedy algorithm (outlined in the
proof overview, Section~\ref{sec:proof_overview}), $Q$ will be the current
queue, while at other times $Q$ will simply be  $V(H)$.

  \begin{definition}[Bad vertices, badness condition]
   \label{def:bad_vertices}
   Let $\psi$  be a good partial embedding and $Q\subset V(H)$ be a set of
   unembedded vertices. The vertex $v$ is called \emph{bad for $x$ with
     respect to $\psi$ and $Q$} if the extension $\psi\cup\{x\to v\}$ is
   not a good partial embedding or there is an unembedded neighbour $y$ of
   $x$ with $y\not\in Q$ such that
   \begin{equation}\label{eq:bad_vertices}
     \deg_G\big(v;\Amain(y)\big)<(d-\eps')p|\Amain(y)|\,.
   \end{equation}
   We stress that in this equation we take $\Amain(y)$ with respect to the
   embedding $\psi$ (and not the extended embedding).
   When $\psi$ and $Q$ are clear from the context (as they always will be) we let $B(x)$ be the set of vertices in $C(x)$ which are bad for $x$ with respect to $\psi$ and $Q$.
   When we have a sequence of embeddings $\psi_0,\dots$ and a sequence of sets
   $Q_0,\dots$ then we write, again, $B_t(x)$ for the set of bad
   vertices for $x$ with respect to $\psi_t$ and $Q_t$. 
   We will also refer to~\eqref{eq:bad_vertices} as the \emph{badness condition}.
  \end{definition}

  The following lemma provides control over the bad vertices with respect
  to a good partial embedding. We remark that the definition of the
  value~$D$ in this lemma is somewhat complex, because we want to apply
  this lemma in the proofs of all three of our blow-up lemmas, in
  particular the blow-up lemma for degenerate graphs.

  \begin{lemma}[Few bad vertices]
   \label{lem:fewbad}
   We assume the General Setup. Let $\psi$ be a good partial embedding and let $Q\subset V(H)$ be a set of vertices such that for each unembedded $x\in V(H)\setminus Q$ we
   have
   \[|\Amain(x)|\ge\tfrac12\mu(d-\eps')^{\pi^*(x)}p^{\pi^*(x)}|\Vmain(x)|\,.\]
   Given an unembedded $x\in V(H)$, let~$D$ be such that for all unembedded
   $y,z\in V(H)$ we have
   \begin{enumerate}[label=\rom]
    \item\label{itm:fewbad:x} $D\ge\pi^*(x)+1$,
    \item\label{itm:fewbad:xy} If $xy\in E(H)$ then $D\ge\pi^*(y)+1$,
    \item\label{itm:fewbad:xyyz} If $xy,yz\in E(H)$ then $D\ge\pi^*(y)+2,\pi^*(z)+1$,
    \item\label{itm:fewbad:xyyzxz} If $xy,yz,xz\in E(H)$ then $D\ge\pi^*(x)+2,\pi^*(y)+2,\pi^*(z)+2$.
   \end{enumerate}
   Then the following holds.
   \begin{enumerate}[label=\abc]
  \item\label{fewbad:a} If all neighbours of $x$ are embedded then no
     vertex in $A(x)$ is bad for $x$ with respect to
     $\psi$ and $Q$.
   \item\label{fewbad:b} Suppose that $\Gamma$ satisfies $\NS(\eps,r_1,D)$
     and $\RI(\eps,(\eps_{a,b}),\eps',d,r_1,D)$. Then at most
     $20\Delta^2\eps'p^{\pi^*(x)}|V(x)|$ vertices of $C(x)$ are bad for~$x$ with respect to~$\psi$ and~$Q$.
  \end{enumerate}
  \end{lemma}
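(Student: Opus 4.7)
For part~\ref{fewbad:a}, if every neighbour of $x$ in $H$ is already embedded then the second clause of the badness definition (the one involving an unembedded $y\notin Q$ together with inequality~\eqref{eq:bad_vertices}) is vacuous. Moreover, extending $\psi$ by $x\mapsto v$ for any $v\in A(x)$ affects no underlying restriction set $U(\cdot)$, no candidate set $C(\cdot)$, and no pair $\big(U(y),U(z)\big)$ for unembedded $y,z$, because no unembedded vertex of $H$ gains a newly embedded neighbour. Hence~\ref{GPE:sizeU}--\ref{GPE:Ureg} all transfer directly from $\psi$ to $\psi\cup\{x\mapsto v\}$, and~\ref{GPE:rightplace} holds because $v\in C(x)\subseteq I_x$.

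For part~\ref{fewbad:b}, set $\psi_v:=\psi\cup\{x\mapsto v\}$. Condition~\ref{GPE:rightplace} for $x\mapsto v$ is automatic, and all $U(\cdot)$, $C(\cdot)$ and regular pairs associated to unembedded vertices outside $N_H(x)$ are unchanged, so $v\in C(x)$ is bad if and only if at least one of the following occurs: \textbf{(A)} some unembedded $y\in N_H(x)$ violates one of the four size conditions of~\ref{GPE:sizeU} for $\psi_v$; \textbf{(B)} some such $y$ violates one of the four size conditions of~\ref{GPE:sizeC} for $\psi_v$; \textbf{(C)} some edge $yz\in E(H)$ with $y,z$ unembedded and at least one in $N_H(x)$ fails to give an appropriately regular pair $\big(U_{\psi_v}(y),U_{\psi_v}(z)\big)$; or \textbf{(D)} some $y\in N_H(x)\setminus Q$ violates~\eqref{eq:bad_vertices}.

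Failures (A) and (C) are controlled by $\NS(\eps,r_1,D)$ and $\RI(\eps,(\eps_{a,b}),\eps',d,r_1,D)$, respectively, applied to the pair $\big(U(y),U(z)\big)$ which is $(\eps_{\pi^*(y),\pi^*(z)},d,p)$-regular in $G$ by~\ref{GPE:Ureg} for $\psi$. Conditions~\ref{itm:fewbad:x}--\ref{itm:fewbad:xyyzxz} guarantee $\pi^*(y),\pi^*(z)\le D-1$ in general, and $\pi^*(x),\pi^*(y),\pi^*(z)\le D-2$ in the case that $xyz$ is a triangle of $H$ (which is exactly when two-sided inheritance is needed), so~\ref{GPE:sizeU} together with $|V(y)|\ge n/(\kappa r_1)$ makes each $U$-set large enough to trigger these properties with $T=r_1$ and $\Delta=D$. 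Each application rules out at most $\eps p^{D-1}v(\Gamma)/r_1^2$ vertices (for $\NS$ and one-sided $\RI$) or at most $\eps p^{D-2}v(\Gamma)/r_1^2$ vertices (for two-sided $\RI$) from $V(\Gamma)$, and there are at most $4\Delta$ applications for (A) and at most $\Delta^2$ applications for (C). Using $p^{D-1}v(\Gamma)/r_1^2\le\kappa p^{\pi^*(x)}|V(x)|$ (valid since $\pi^*(x)\le D-1$ and $|V(x)|\ge n/(\kappa r_1)$), and analogously $p^{D-2}v(\Gamma)/r_1^2\le\kappa p^{\pi^*(x)}|V(x)|$ in the two-sided case where $\pi^*(x)\le D-2$, together with the hierarchy $\eps\ll\eps'/(\kappa\Delta^2)$, gives at most $5\Delta^2\eps' p^{\pi^*(x)}|V(x)|$ vertices from (A) and (C) combined.

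Failures (B) and (D) are handled by regularity of the pair $\big(U(x),U(y)\big)$ itself. For (B), each of the subsets $C(y),\Cmain(y),\Cq(y),\Cc(y),\Cbuf(y)$ has density at least $\eps_{\pi^*(x),\pi^*(y)}$ inside $U(y)$, which follows from~\ref{GPE:sizeC}, the restriction pair bound $|I_y|\ge\zeta(dp)^{|J_y|}|V(y)|$, and the hierarchy $\eps'\ll\mu,\zeta,d$; regularity of $\big(U(x),U(y)\big)$ then forbids $\deg_G(v;C^\cdot(y))<(d-\eps_{\pi^*(x),\pi^*(y)})p|C^\cdot(y)|$ for all but at most $\eps_{\pi^*(x),\pi^*(y)}|U(x)|\le 2\eps' p^{\pi^*(x)}|V(x)|$ vertices $v\in U(x)$, and the surviving $v$ preserve~\ref{GPE:sizeC}. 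The same argument with the subset $\Amain(y)\subseteq U(y)$, which has density at least $\tfrac12\mu(d-\eps')^{\pi^*(y)}/(1+\eps)^{\pi^*(y)}\gg\eps'$ inside $U(y)$ by the hypothesis of the lemma, handles (D). Summing gives at most $10\Delta\eps' p^{\pi^*(x)}|V(x)|$ bad vertices from (B) and (D) combined, so altogether $|B(x)|\le 20\Delta^2\eps' p^{\pi^*(x)}|V(x)|$. The main obstacle here is not any single step but the careful bookkeeping: matching the exponents $\pi^*(\cdot)$ to the correct value of $\eps_{a,b}$, and checking that in every instance where two-sided inheritance (the weakest $\RI$ bound, of order $p^{D-2}$) is invoked, condition~\ref{itm:fewbad:xyyzxz} really does force $\pi^*(x)\le D-2$, so that the final tally closes.
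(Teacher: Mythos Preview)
Your proof is correct and follows essentially the same approach as the paper: split the ways a vertex can be bad into failures of~\ref{GPE:sizeU},~\ref{GPE:sizeC},~\ref{GPE:Ureg}, and the badness condition, handle the first and third via $\NS$ and $\RI$ (using~\ref{itm:fewbad:x}--\ref{itm:fewbad:xyyzxz} to ensure the relevant $U$-sets are large enough), and handle the second and fourth via the regularity of $\big(U(x),U(y)\big)$ together with lower bounds on $|C^\cdot(y)|/|U(y)|$ and $|\Amain(y)|/|U(y)|$. There are two harmless counting slips---\ref{GPE:sizeU} carries five conditions per neighbour, not four, and the combined tally for (B) and (D) should be $12\Delta\eps' p^{\pi^*(x)}|V(x)|$ rather than $10\Delta$---but since $5\Delta^2+12\Delta\le 20\Delta^2$ for $\Delta\ge 1$, the stated bound still closes.
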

\begin{proof}
 We require
 \[\mu\le\tfrac16\,,\qquad \eps'\le\frac{\mu d^\Delta \zeta}{1000\kappa 4^{\Delta}}\qquad\text{and}\qquad \eps\le\frac{d\eps'}{\kappa}\,.\]
 For~\ref{fewbad:a}, observe that all the conditions for a vertex $v$ to be in $B(x)$ are trivially false.
 
 For part~\ref{fewbad:b}, we need to consider all of the possible reasons
 $v$ could be bad for $x$ with respect to $\psi$ and $Q$.  
 It could be that
 there is some unembedded neighbour $y\in V(H)\setminus Q$ of $x$ such
 that the badness condition~\eqref{eq:bad_vertices} holds.
 The pair $\big(U(x),U(y)\big)$ is $(\eps',d,p)$-regular in~$G$
 by~\ref{GPE:Ureg}.  
 By assumption and because~\ref{GPE:sizeU} holds for~$y$ with respect to~$\psi$
 we have
 \begin{equation*}\begin{split}
   |\Amain(y)|
   &\ge\frac12\mu(d-\eps')^{\pi^*(y)}p^{\pi^*(y)}|\Vmain(y)|
   =\frac12\mu(1-3\mu)(d-\eps')^{\pi^*(y)}p^{\pi^*(y)}|V(y)| \\
   &\ge\frac12\mu(1-3\mu)(d-\eps')^{\pi^*(y)}p^{\pi^*(y)}\frac{|U(y)|}{(1+\eps)^{\pi^*(y)}p^{\pi^*(y)}}
   \ge\eps'|U(y)|\,.
 \end{split}\end{equation*}
 So the badness condition~\eqref{eq:bad_vertices}
 holds for at most $\eps'|U(x)|\le 2\eps'p^{\pi^*(x)}|V(x)|$
 vertices~$v$ of $C(x)$, where we use~\ref{GPE:sizeU} and $(1+\eps)^\Delta\le 2$.
  Since $x$ has at most
 $\Delta$ unembedded neighbours, in total there are at most
 $2\Delta\eps'p^{\pi^*(x)}|V(x)|$ vertices~$v$ of $C(x)$ such
 that the badness condition~\eqref{eq:bad_vertices} holds for some unembedded neighbour of~$x$.

 It could be that $\psi\cup\{x\to v\}$
 is not a good partial embedding. Since $v\in C(x)\subseteq
 I_x$,~\ref{GPE:rightplace} cannot fail. 
 We next consider~\ref{GPE:sizeC}. Since $\psi$ is a good partial 
 embedding, \ref{GPE:sizeC} cannot fail for any vertex~$y$ which is not a
 neighbour of~$x$ (because the candidate sets of these vertices do not change).
 So let~$y$ be any neighbour of~$x$ in~$H$.
 Recall that
 $\big(U(x),U(y)\big)$ is $(\eps',d,p)$-regular in~$G$ by~\ref{GPE:Ureg}.
 We have
 \begin{equation*}\begin{split}
   |C(y)| &\geBy{\ref{GPE:sizeC}} (1-\eps')(dp-\eps'p)^{\pi(y)}|I_{y}|
   \geBy{\ref{G:restr}}(1-\eps')(dp-\eps'p)^{\pi^*(y)}\zeta|V(y)| \\
   &\geBy{\ref{GPE:sizeU}}(1-\eps')(d-\eps')^\Delta(1+\eps')^{-\Delta}\zeta|U(y)|
   \ge\eps'|U(y)|\,.
 \end{split}\end{equation*}
 Since $C(x)\subset U(x)$, there are at most $\eps'|U(x)|\le
 2\eps'p^{\pi^*(x)}|V_i|$ vertices $v\in C(x)$ such that $|C(y)\cap
 N_G(v)|<(d-\eps')p|C(y)|$. Since
 $(d-\eps')p|C(y)|\ge(1-\eps')(dp-\eps'p)^{\pi(y)+1}|I(y)|$, there are
 at most $2\eps'p^{\pi^*(x)}|V(x)|$ vertices $v\in C(x)$ such that the first
 condition in~\ref{GPE:sizeC} fails for~$y$ with respect to
 $\psi\cup\{x\to v\}$. We can analogously argue for
 each of the other four conditions. Since~$x$ has at most~$\Delta$
 unembedded neighbours, we conclude that for all but at most $10\Delta\eps'p^{\pi^*(x)}|V_i|$
 vertices~$v\in C(x)$ we have that~\ref{GPE:sizeC} is satisfied with
 respect to $\psi\cup\{x\to v\}$.

 Now we turn to~\ref{GPE:sizeU}. Again, it suffices to consider
 neighbours~$y$ of~$x$.  As~\ref{GPE:sizeU} holds with respect
 to~$\psi$, we see that $|U(y)|\ge(p-\eps p)^{\pi^*(y)}|V(y)|\ge\eps
 p^{D-1}n/r_1$ since $|V(y)|\ge n/(\kappa r_1)$ and by
 assumption~\ref{itm:fewbad:xy}.  It follows
 that we can use $\NS(\eps,r_1,D)$ and assumption~\ref{itm:fewbad:x} to
 conclude that all but at most $\eps p^{D-1}n/r_1^2\le \eps'
 p^{\pi^*(x)}|V(x)|$ vertices $v\in C(x)$ are such that $|U(y)\cap
 N_\Gamma(v)|=(1\pm\eps)p|U(y)|=(p\pm\eps p)^{\pi^*(y)+1}|V(y)|$, again using~\ref{GPE:sizeU} for~$y$ with respect to~$\psi$.  Again, we
 can argue analogously for the other four conditions, and so in total for
 all but at most $5\Delta\eps' p^{\pi^*(x)}|V(x)|$ vertices $v\in C(x)$ we
 have that~\ref{GPE:sizeU} is satisfied with respect to $\psi\cup\{x\to
 v\}$.

 It remains to consider \ref{GPE:Ureg}. Again, we only need to consider
 edges $yz\in E(H)$ between unembedded vertices such that $y\in N_H(x)$ or
 $z\in N_H(x)$, or both. Consider first the case that only $y\in
 N_H(x)$. The case that only $z\in N_H(x)$ follows
 analogously. Since~\ref{GPE:sizeU} is satisfied with respect to $\psi$ we
 have $|U(y)|\ge(p-\eps p)^{\pi^*(y)}|V(y)|$ and $|U(z)|\ge(p-\eps
 p)^{\pi^*(z)}|V(z)|$, which together with assumption~\ref{itm:fewbad:xyyz}
 implies $|U(y)|\ge(p-\eps p)^{D-2}n/(\kappa
 r_1)\ge\eps'p^{D-2}n/r_1^2$ and
 $|U(z)|\ge\eps'p^{D-1}n/r_1^2$.  Hence, since $\Gamma$ satisfies
 $\RI(\eps,(\eps_{a,b}),\eps',d,r_1,D)$ and the pair $\big(U(y),U(z)\big)$
 is $(\eps_{\pi^*(y),\pi^*(z)},d,p)$-regular in~$G$ because \ref{GPE:Ureg}
 is satisfied with respect to~$\psi$, we get that there are at most
 $\eps p^{D-1}n/r_1^2\le\eps'p^{\pi^*(x)}|V(x)|$ vertices $v\in C(x)$ such that $\big(N_\Gamma(v)\cap
 U(y), U(z)\big)$ is not
 $(\eps_{\pi^*(y)+1,\pi^*(z)},d,p)$-regular in $G$. Analogously, in the second
 case that $y,z\in N_H(x)$ we can use assumption~\ref{itm:fewbad:xyyzxz} to
 conclude that there are at most $\eps p^{D-2}n/r_1^2\le\eps'p^{\pi^*(x)}|V_i|$ vertices $v\in C(x)$
 such that $\big(N_\Gamma(v)\cap U(y), N_\Gamma(v)\cap U(z)\big)$ is not
 $(\eps_{\pi^*(y)+1,\pi^*(z)+1},d,p)$-regular in $G$. So, in total there are at
 most $\Delta^2 \eps'p^{\pi^*(x)}|V(x)|$ vertices $v\in C(x)$ such
 that~\ref{GPE:Ureg} is not satisfied with respect to $\psi\cup\{x\to v\}$.

Summing up, there are at most
\begin{multline*}
  2\Delta\eps'p^{\pi^*(x)}|V(x)|+10\Delta\eps'p^{\pi^*(x)}|V(x)|+5\Delta\eps'
  p^{\pi^*(x)}|V(x)|+\Delta^2 \eps'p^{\pi^*(x)}|V(x)| \\
  \le 20\Delta^2\eps'p^{\pi^*(x)}|V(x)|
\end{multline*}
vertices $v\in C(x)$ such
that~$v$ is bad for~$x$ with respect to~$\psi$ and~$Q$.
\end{proof}

\section{The behaviour of random greedy algorithms}
  \label{sec:RGAlemmas}

  The proofs of our three blow-up lemmas each use, as crucial ingredient, a
  slightly different random greedy algorithm (RGA). In this section we
  collect four different lemmas which will be useful in conjunction with
  these RGAs.

  Roughly speaking, each of our RGAs has the property that we embed
  vertices sequentially, at each time choosing an image of the next vertex
  uniformly at random in a not-too-small subset of its candidate vertices,
  and this, together with the General Setup (see Definition~\ref{def:general_setup}), 
  is all we need to know about
  our RGAs for the purposes of this section. Nevertheless, the reader who
  wishes to see a concrete example of an RGA before reading the following
  lemmas should look at Algorithm~\ref{alg:RGA} in Section~\ref{sec:rga},
  which is the simplest of our RGAs. The other two RGAs are
  Algorithm~\ref{alg:RGA:psr} in Section~\ref{sec:pseudoRGA} and
  Algorithm~\ref{alg:RGA:deg} in Section~\ref{sec:degenRGA}.
	 
  Our first lemma (Lemma~\ref{lem:tau}) constructs the vertex ordering
  which we shall use in the RGAs for the random graphs blow-up lemma
  (Lemma~\ref{lem:rg_image}) and the blow-up lemma for bijumbled graphs
  (Lemma~\ref{lem:psr_main}).  The point of the ordering we use is that we
  need to analyse the way neighbours of (non-clique) buffer vertices are
  embedded during the RGA. Putting them first in the order, with each
  neighbourhood coming as an interval in the order, makes this
  possible. Even then, for Lemma~\ref{lem:rg_image} we need to insist that
  in degree-$\Delta$ buffers the final two neighbours of a buffer vertex
  are not adjacent, thus their embeddings do not affect each other
  much. 

  In the proof of our blow-up lemma for degenerate graphs
  (Lemma~\ref{lem:degen}) we cannot apply this lemma, because we have to
  work with the order we are supplied with.  Ultimately, the fact that we
  cannot expect these nice conditions from this supplied order is the
  reason why we cannot obtain stronger bounds on~$p$ in that lemma.

  \begin{lemma}[A good vertex order for an RGA]
    \label{lem:tau} Let $H$ be a graph with $\Delta(H)\le\Delta$ and $V(H)=\Xbuf\dcup\Xmain\dcup\Xc$
    such that $N(\Xbuf)\subset \Xmain$ and each pair of vertices in $\Xbuf$
    has distance at least ten.
    Then there exists an ordering $\tau$ of $\Xmain$ with the
    following properties.
    \begin{enumerate}[label=\abc]
      \item\label{cond:seg} For all $x\in \Xmain\setminus N(\Xbuf)$ and $y\in N(\Xbuf)$ we
        have $\tau(x)>\tau(y)$.
      \item\label{cond:noedge} For all $x\in \Xbuf$ we can enumerate $N(x)$ as $y_1,\dots,y_b$
        such that $\tau(y_{j+1})=\tau(y_j)+1$ for all
        $j\in[b-1]$. Moreover, if $\deg_H(x)=\Delta$ and $x$ is not in a copy of $K_{\Delta+1}$ then
        $y_{\Delta-1}y_\Delta\not\in E(H)$.
       \item\label{cond:ncl_ord} The neighbours of non-clique buffer vertices come before the neighbours of the 
       clique-buffer vertices.
    \end{enumerate}
  \end{lemma}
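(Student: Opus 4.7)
The plan is to construct $\tau$ essentially by stipulating which vertices come at which positions. First I would partition $\Xbuf$ into its non-clique buffer vertices and its clique buffer vertices, and fix any enumeration $x_1,\dots,x_s$ of $\Xbuf$ in which all non-clique buffer vertices precede all clique buffer vertices. The key observation is that, because any two vertices of $\Xbuf$ are at distance at least $10$ in $H$, the neighbourhoods $N(x_i)$ are pairwise disjoint (any common neighbour would give distance at most $2$). Moreover, by hypothesis, $N(\Xbuf)\subseteq \Xmain$. Thus I can define $\tau$ by placing the vertices of $N(x_1)$ first (in some order to be fixed below), then the vertices of $N(x_2)$, and so on, and finally appending the vertices of $\Xmain\setminus N(\Xbuf)$ in arbitrary order.

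Conditions \abc\ and~\itm{c} then follow immediately: property~\abc\ holds because every vertex of $N(\Xbuf)$ comes before every vertex of $\Xmain\setminus N(\Xbuf)$, and property~\itm{c} holds because we enumerated $\Xbuf$ with the non-clique buffer vertices first, so that their neighbourhoods occupy an initial segment of the block devoted to $N(\Xbuf)$. For the interval part of~\itm{b}, the vertices of $N(x_i)$ occupy consecutive positions of $\tau$ by construction, so any enumeration $y_1,\dots,y_b$ of $N(x_i)$ in that block satisfies $\tau(y_{j+1})=\tau(y_j)+1$.

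The only remaining point, which I expect to be the one requiring the tiniest thought, is the \emph{moreover} clause of~\itm{b}: when $\deg_H(x_i)=\Delta$ and $x_i$ is not contained in a copy of $K_{\Delta+1}$, I must be able to place $N(x_i)$ inside its block in such a way that the final two entries $y_{\Delta-1},y_\Delta$ are non-adjacent. Since $x_i$ is adjacent to every vertex of $N(x_i)$, the set $\{x_i\}\cup N(x_i)$ would induce a $K_{\Delta+1}$ if and only if $H[N(x_i)]$ were complete; by hypothesis the former fails, so $H[N(x_i)]$ is not a clique and contains a non-edge $y_{\Delta-1}y_\Delta$, which I place at the two final positions of the block for $N(x_i)$ and order the remaining $\Delta-2$ neighbours arbitrarily before them. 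No other potential obstruction arises, since the disjointness of the blocks $N(x_i)$ means that reordering within one block does not affect the validity of reorderings within another.
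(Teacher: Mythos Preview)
Your proposal is correct and follows essentially the same approach as the paper's own proof: enumerate $\Xbuf$ with non-clique buffer vertices first, place their neighbourhoods as consecutive blocks, then append $\Xmain\setminus N(\Xbuf)$, and use the observation that a degree-$\Delta$ vertex not in a $K_{\Delta+1}$ must have a non-edge in its neighbourhood to order the last two positions of each block. You even make explicit the disjointness of the sets $N(x_i)$ (a consequence of the distance-ten condition) which the paper uses tacitly.
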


  Note that in this
  lemma we do not need to assume the General Setup, but we
  nevertheless use names for the parts of~$H$ corresponding to those from
  the General Setup.
  
 \begin{proof}
   We separate the vertices in $\Xbuf$ into two classes, those not in
   copies of $K_{\Delta+1}$ and those in copies of $K_{\Delta+1}$. We take
   an enumeration $\Xbuf=\{x_1,\ldots,x_{|\Xbuf|}\}$ with the vertices not
   in copies of $K_{\Delta+1}$ coming first. We now create the ordering
   $\tau$ as follows. We start with the empty ordering. For each $i$ in
   succession such that $x_i\in\Xbuf$ is not in a copy of $K_{\Delta+1}$,
   we append to $\tau$ the vertices $N_H(x_i)$ in some order such that if
   $\deg_H(x_i)=\Delta$ then the last two vertices in $\tau$ of $N_H(x_i)$
   are not adjacent. Note that if $\deg_H(x)=\Delta$ then there is a pair of non-adjacent vertices in
   $N_H(x_i)$ because $x_i$ is not in a copy of $K_{\Delta+1}$. Next, for
   each $i$ in succession such that $x_i\in\Xbuf$ is in a copy of
   $K_{\Delta+1}$ we append to $\tau$ the vertices $N_H(x_i)$ in an
   arbitrary order. Finally, we append to $\tau$ any remaining vertices of
   $\Xmain$ in an arbitrary order.
 \end{proof}	 
	 
 The next lemma states that provided we embed vertices uniformly at random
 into not-too-small sets, the candidate sets of unembedded vertices are
 likely to be distributed uniformly. In this lemma, as will be the case for
 much of the rest of the paper, we have a sequence $\psi_0,\dots$ of good
 partial embeddings, and we want to talk about the various sets and
 quantities defined in Section~\ref{sec:gpe}, such as candidate sets, with
 respect to each of these good partial embeddings. As mentioned in
 Section~\ref{sec:gpe}, we will follow the convention that (for example)
 $C_t(x)$ is the candidate set of $x$ with respect to $\psi_t$. Recall also, that 
 the set $X_i^*\subset X_i$ denotes image restricted
 vertices in~$X_i$ (see Definition~\ref{def:restrict}).

\begin{lemma}[Uniform distribution of candidate sets]\label{lem:rga:welldistr}
  We assume the General Setup. Suppose that for some integer~$T$ we have a
  sequence $\psi_0,\psi_1,\dots,\psi_T$ of good partial embeddings,
  where $\psi_0$ is the trivial partial embedding
  and each $\psi_t$ is obtained from $\psi_{t-1}$ by embedding some
  vertex $x$ in $V(H)\setminus\dom(\psi_{t-1})$ to a uniform random
  vertex from a subset~$S$ of $C_{t-1}(x)$ with 
  $|S|\ge\tfrac{1}{10}\mu\zeta(dp)^{\pi_{t-1}^*(x)}|V(x)|$.
	
  Then the following holds with probability at least
  $1-2^{-n/(\kappa r_1)}r_1$.  For every $i\in[r]$ and every set
  $W\subseteq V_i$ of size at least $\rho |V_i|$, the number of
  vertices $x\in X_i\setminus X^*_i$ (i.e.\ vertices which are not
  image restricted) such that there exists $t=t(x)$ when $x$ is
  unembedded and we have
  \begin{equation}\label{eq:rga:welldistr}
    |C_t(x)\cap W|<\big(dp-\eps'p\big)^{\pi^*_t(x)}|W|
  \end{equation}
  is at most $\rho |X_i|$.
\end{lemma}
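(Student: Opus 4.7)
My plan is to fix $i\in[r]$ and a set $W\subseteq V_i$ with $|W|\ge\rho|V_i|$, bound the failure probability for this single pair $(i,W)$ by $2^{-2|X_i|}$, and then union-bound over the at most $2^{|V_i|}=2^{|X_i|}$ choices of $W$ and $r\le r_1$ choices of $i$, which together with $|X_i|\ge n/(\kappa r_1)$ yields the claimed bound. Call $x\in X_i\setminus X_i^*$ \emph{bad} (for $W$) if \eqref{eq:rga:welldistr} holds for some $t$ when $x$ is unembedded. Since $x$ is not image-restricted we have $\pi^*_t(x)=\pi_t(x)$ and $C_0(x)\cap W=W$, so $x$ starts out good, and $|C_t(x)\cap W|$ changes only when a neighbour of $x$ is embedded. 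Hence if $x$ first becomes bad at step $t$, then the vertex $y_t$ embedded at step $t$ lies in $N_H(x)$ and the choice of image $v_t\in S$ causes $|C_t(x)\cap W|<(dp-\eps'p)|C_{t-1}(x)\cap W|$.

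For each step $t$ and each $x\in X_i\setminus X_i^*$ let $Y_{t,x}$ be the Bernoulli indicator of the event that $y_t\in N_H(x)$, that at time $t-1$ the vertex $x$ is unembedded and good for $W$, and that the embedding at step $t$ yields $|C_t(x)\cap W|<(dp-\eps'p)|C_{t-1}(x)\cap W|$. The number of bad $x$ is then at most $\sum_{t,x}Y_{t,x}$. I claim $\Exp[Y_{t,x}\mid\cF_{t-1}]\le\eps'$ almost surely. Indeed, when $Y_{t,x}$ can nontrivially be $1$, using that $x$ is good at time $t-1$ and $|W|\ge\rho|V_i|$ we have
\[|C_{t-1}(x)\cap W|\ge(dp-\eps'p)^{\pi^*_{t-1}(x)}\rho|V_i|,\]
which by \ref{GPE:sizeU} is a constant fraction (depending only on $d,\rho,\Delta,\eps$) of $|U_{t-1}(x)|$; likewise, by the assumed lower bound on $|S|$ and \ref{GPE:sizeU}, the set $S\subseteq C_{t-1}(y_t)$ is a constant fraction of $|U_{t-1}(y_t)|$. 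Since $(U_{t-1}(x),U_{t-1}(y_t))$ is $(\eps_{a,b},d,p)$-regular in $G$ with $\eps_{a,b}\le\eps'$ by \ref{GPE:Ureg}, and the constants satisfy $\eps_{a,b}\ll\eps'$, a standard averaging argument shows that at most $\eps'|S|$ vertices $v\in S$ satisfy $|N_G(v)\cap C_{t-1}(x)\cap W|<(d-\eps')p|C_{t-1}(x)\cap W|$, so the uniform choice of $v_t$ in $S$ yields the claim. Setting $p_{t,x}=\eps'$ when the deterministic preconditions of $Y_{t,x}$ hold and $p_{t,x}=0$ otherwise, we obtain $\sum_{t,x}p_{t,x}\le\eps'\Delta|X_i|$ almost surely, since each $x\in X_i\setminus X_i^*$ contributes at most $\deg_H(x)\le\Delta$ nonzero terms.

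Finally, applying Lemma~\ref{lem:coupling}(a) in its large-deviation form, which is implicit in the moment-generating-function calculation in its proof and yields $\Pr(X>s)\le(e\mu/s)^s$ for any $s\ge e\mu$, with $\mu=\eps'\Delta|X_i|$ and $s=\rho|X_i|$ gives
\[\Pr\Big(\textstyle\sum_{t,x}Y_{t,x}>\rho|X_i|\Big)\le(e\eps'\Delta/\rho)^{\rho|X_i|}\le 2^{-2|X_i|},\]
the last inequality holding provided $\eps'\le\rho\cdot 2^{-2/\rho}/(e\Delta)$, which follows from the constant hierarchy $\eps'\ll\rho$. Combined with the union bound described above, this completes the proof. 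The main technical obstacle is that the Chernoff-type tail $e^{-\delta^2x/3}$ explicitly stated in Lemma~\ref{lem:coupling}(a) (valid only for $\delta\le 3/2$) is too weak to survive the union bound over the $2^{|V_i|}$ choices of $W$; we are forced into the $\delta\gg 1$ regime, and the resulting sharper tail is precisely what dictates that $\eps'$ must be chosen exponentially small in $1/\rho$.
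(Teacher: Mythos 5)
Your proposal is essentially correct, but it takes a genuinely different route from the paper's proof, and it contains one factual slip about the constant hierarchy that needs to be patched.

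The paper does not use Lemma~\ref{lem:coupling} at all in this proof. Instead it fixes both $W$ and a candidate set $X'\subseteq X_i\setminus X_i^*$ of size exactly $\rho|X_i|$, and for each $x\in X'$ a choice of which neighbour $y_x$ triggers badness. Because vertices of $X_i$ are at pairwise distance at least $10$ in $H$ (property~\ref{PtH:dist}), the triggering vertices $y_x$ are distinct, so the conditional probabilities simply multiply to give $\big(20\Delta\eps'\mu^{-1}\zeta^{-1}d^{-\Delta}\big)^{\rho|X_i|}\le 2^{-4|X_i|}$. It then union-bounds over the $2^{|X_i|}$ choices of $X'$, the $2^{|V_i|}$ choices of $W$, and the $r_1$ choices of $i$. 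Your approach replaces the product-of-conditionals plus union-bound-over-$X'$ with a submartingale concentration bound, which is a legitimate alternative. You correctly identify that the bound $e^{-\delta^2 x/3}$ stated in Lemma~\ref{lem:coupling}\ref{coupling:a} (which requires $\delta<3/2$) would not survive the $2^{|V_i|}$ union bound, and that the multiplicative Chernoff form $\Pr(X>s)\le(ex/s)^s$ has to be re-extracted from the moment-generating-function bound~\eqref{coupling:moment} in the proof. That extraction is sound, but it is not the statement the paper provides, so in a complete write-up you would need to record it as an additional lemma. (You also need a sentence explaining that, since at most one $x\in X_i$ can have a neighbour embedded at any given step $t$ by~\ref{PtH:dist}, the family $\{Y_{t,x}\}$ can be collapsed to a single sequence $Y_t:=\sum_x Y_{t,x}$ of Bernoulli variables before invoking Lemma~\ref{lem:coupling}; as written, the lemma applies to a single ordered sequence.)

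The genuine error is the claim that the constants satisfy $\eps_{a,b}\ll\eps'$ and hence that the conditional expectation is at most $\eps'$. In fact the paper's constant hierarchy stipulates only $\eps\ll\eps_{a,b}\le\eps'$, and in the construction of Lemma~\ref{lem:det_Gnp} one has $\eps_{\Delta-1,\Delta-1}=\eps'/2$, so $\eps_{a,b}$ can be of the same order as $\eps'$. The regularity argument actually gives at most $\eps_{a,b}\,|U_{t-1}(y_t)|\le\eps'\,|U_{t-1}(y_t)|$ bad vertices of $U_{t-1}(y_t)$, and since $|S|$ is only a $\Theta(\mu\zeta d^\Delta)$-fraction of $|U_{t-1}(y_t)|$ (by~\ref{GPE:sizeU} and the assumed lower bound on $|S|$), the correct bound on $\Exp[Y_{t,x}\mid\cF_{t-1}]$ is $\frac{20\eps'}{\mu\zeta d^\Delta}$, not $\eps'$. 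This matches the per-step factor the paper obtains. Your proof still closes because this is only a constant-factor correction and the hierarchy $\eps'\ll\rho\ll\mu,\zeta,d,\Delta^{-1}$ absorbs it: the final constraint becomes $\eps'\le\frac{\mu\zeta d^\Delta\rho}{20e\Delta}2^{-2/\rho}$, which is of the same shape as the paper's $\eps'\le\frac{\mu\zeta d^\Delta}{20\Delta}2^{-4/\rho}$. You should fix the stated bound and drop the unsupported claim $\eps_{a,b}\ll\eps'$.
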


This lemma corresponds to the Main Lemma of~\cite{KSS_bl}, and its proof is
similar in spirit. The proof of this lemma exploits the fact that when a
vertex $y$ is embedded, condition~\eqref{eq:rga:welldistr} might become
true for at most one vertex from $X_i\setminus X^*_i$ (since
by~\ref{PtH:dist} the vertices in $X_i$ have distance at least
$10$). Moreover, by the condition of always embedding into not too small
subsets and by the regularity property~\ref{GPE:Ureg}, the vertex $y$ makes
an unlucky choice with a very small probability, so that many such choices
are extremely unlikely.

\begin{proof}[Proof of Lemma~\ref{lem:rga:welldistr}]
   We require
   \[\eps'<\min\big(4^{-\Delta}d^\Delta\rho,\tfrac{\mu\zeta d^{\Delta}}{20\Delta}2^{-4/\rho}\big)\quad\text{and}\quad 2^{-n/\kappa r_1}r_1<1\,.\]     
   
     Fix a set $W\subseteq V_i$ of size at least $\rho|V_i|$, and a set $X'\subseteq X_i\setminus X^*_i$ of size $\rho|X_i|$. We aim to show that the probability of the following event is at most $2^{-4|X_i|}$. For each $x\in X'$ there is a $t=t(x)$ such that $x$ satisfies~\eqref{eq:rga:welldistr}. The desired result then follows by taking the union bound over all $i$, subsets $W$ of $V_i$ and $X'$ of $X_i$.
     
     If $x$ is any vertex of $X'$, then we have $C_0(x)=V_i$ since $x$ is not image restricted, thus 
     $|C_0(x)\cap W|=|W|$ and hence~\eqref{eq:rga:welldistr} is false for $x$. 
     If there is a $t(x)$ such that $x$ satisfies~\eqref{eq:rga:welldistr}, then we can fix $t$ to be the smallest integer such that~\eqref{eq:rga:welldistr} is true for $t+1$ and $x$. 
 Since the candidate set of $x$ changes
 only when a neighbour of $x$ is embedded, it follows that  
 the vertex $y$ that is embedded to create $\psi_{t+1}$ from $\psi_t$ is a neighbour of $x$ in $H$ and thus $\pi^*_{t+1}(x)=\pi^*_{t}(x)+1$. 
 Moreover, since equation~\eqref{eq:rga:welldistr} becomes true for $x$, 
the vertex~$y$ is embedded to a vertex $w$ such that 
     \begin{equation}\label{eq:welldist:bad}
       \deg_G\big(w;C_{t}(x)\cap W\big)<(d-\eps')p\big|C_{t}(x)\cap W\big|\,,
     \end{equation}
     as otherwise we would still have 
     \[|C_{t+1}(x)\cap W|\ge ((d-\eps')p\big)|C_{t}(x)\cap W|\ge ((d-\eps')p)^{\pi^*_{t+1}(x)}|W|\,.\]
 
 Since $\psi_{t}$ is a good partial embedding, by~\ref{GPE:Ureg} the pair 
  $\big(U_{t}(x),U_{t}(y)\big)$ is an $(\eps',d,p)$-regular pair in $G$, and by~\ref{GPE:sizeU} we have
   $\big|U_{t}(x)\big|=(p\pm\eps'p)^{\pi^*_{t}(x)}|V_i|$, and $\big|U_{t}(y)\big|=(p\pm\eps'p)^{\pi^*_{t}(y)}|V(y)|$. Since~\eqref{eq:rga:welldistr} is false for $x$ at time $t$, 
   we have $\big|C_{t}(x)\cap W\big|\ge \big((d-\eps')p\big)^{\pi^*_{t}(x)}|W|$, and so
     \[  
   \big|C_{t}(x)\cap W\big|\ge \left(\tfrac{d-\eps'}{1+\eps'}\right)^{\pi^*_{t}(x)}\rho |U_{t}(x)|\ge\eps' |U_{t}(x)|\,,
   \]
   by the requirements on the constants at the beginning of the proof.
   We conclude by $(\eps',d,p)$-regularity of $\big(U_{t}(x),U_{t}(y)\big)$ that at most $\eps'\big|U_{t}(y)\big|\le 2\eps'p^{\pi^*_{t}(y)}|V(y)|$ vertices $w$ of $U_{t}(y)$ satisfy~\eqref{eq:welldist:bad}.
   
   Since $\psi_{t+1}$ is created by embedding $y$ uniformly at random into a subset of $C_t(y)\subset U_t(y)$ of size at least $\tfrac{1}{10}\mu\zeta(dp)^{\pi^*_t(y)}|V(y)|$, the probability of embedding $y$ to a vertex $w$ satisfying~\eqref{eq:welldist:bad}, conditioning on any history, is at most
   \begin{equation}\label{eq:bad_probability}
   \frac{2\eps'p^{\pi^*_{t}(y)}|V(y)|}{\tfrac{1}{10}\mu\zeta(dp)^{\pi^*_t(y)}|V(y)|}
  \le 20\eps'\mu^{-1}\zeta^{-1}d^{-\Delta}\,.
   \end{equation}
 Next we argue that the probability that for each $x\in X'$ there is a first time $t=t(x)$ such that $x$ satisfies~\eqref{eq:rga:welldistr} is at most
   \[\big(20\Delta\eps'\mu^{-1}\zeta^{-1}d^{-\Delta}\big)^{\rho|X_i|}\le 2^{-4|X_i|}\,.\] 
 Let us denote this event by $\cE_{X'}$. Observe that $\cE_{X'}$ is split into $\Delta^{|X'|}$ events (since $\Delta(H)\le\Delta$) by specifying for each $x\in X'$ a neighbour $y_x$ of $x$ whose embedding occurs at time $t(x)$. Let $(y_x)_{x\in X'}$ be any such assignment, and $\cE_{X',(y_x)}$ be the corresponding event. We aim to bound $\Pr(\cE_{X',(y_x)})$.
 
 By~\ref{PtH:dist} the vertices of $X_i$ are at distance at least ten in $H$, so the vertices $(y_x)_{x\in X'}$ are distinct. The corresponding conditional probabilities thus multiply, and we have
\[\Pr(\cE_{X',(y_x)})\le\big(20\eps'\mu^{-1}\zeta^{-1}d^{-\Delta}\big)^{|X'|}\]
by~\eqref{eq:bad_probability}. Applying the union bound over the events $\cE_{X',(y_x)}$ we conclude
\[\Pr(\cE_{X'})\le \big(20\Delta\eps'\mu^{-1}\zeta^{-1}d^{-\Delta}\big)^{\rho|X_i|}\le 2^{-4|X_i|}\,.\]
   where the final inequality is by choice of $\eps'$. 
   
   Taking the union bound over the at most $2^{|V_i|}=2^{|X_i|}$ choices of $W$ in $V_i$ and $2^{|X_i|}$ choices of $X'$ in $X_i$, we see that the probability that, for any fixed $i$, there exist subsets $W$ of $V_i$ and $X'$ of $X_i$, of sizes at least $\rho|V_i|$ and $\rho|X_i|$ respectively, such that each vertex $x$ of $X'$ satisfies~\eqref{eq:rga:welldistr} at some time $t$, is at most $2^{2|X_i|}2^{-4|X_i|}=2^{-2|X_i|}$. Now we have $|X_i|\ge n/(\kappa r_1)$, and $n\ge n_0$ where $n_0$ is chosen large enough such that $2^{-n_0/(\kappa r_1)}r_1<1$. Thus taking the union bound over the at most $r_1$ choices of $i$, we conclude that the probability that there exists $i$, and a subset $W$ of $V_i$ such that there are $\rho|X_i|$ vertices $x$ of $X_i\setminus X^*_i$ each of which satisfies~\eqref{eq:rga:welldistr} at some time~$t$, is at most $2^{-n/(\kappa r_1)}r_1$ as desired.
   \end{proof} 
   
   The next lemma, Lemma~\ref{lem:large_nbs}, shows that, again provided we embed vertices uniformly
   into not-too-small sets, we do not tend to cover vertex neighbourhoods
   in $G$ disproportionately fast. Specifically, if $ij\in E(R')$ then,
   by~\ref{G:deg}, each $v\in V_i$ has a large $G$-neighbourhood in
   $V_j$. At some time $T$ when only a small but linear fraction of each
   part of $H$ has been embedded, it is very likely that less than half of
   this $G$-neighbourhood is in the image $\im(\psi_T)$ of the current
   partial embedding. One should think of this as: early on in the
   embedding process, the minimum degree conditions~\ref{G:deg} provided by
   super-regularity are preserved. This fact is trivially satisfied in the dense blow-up 
   lemma~\cite{KSS_bl}, but it is far from trivial in our case.
   
   We will apply this lemma in the proofs of the random graphs blow-up
   lemma (Lemma~\ref{lem:rg_image}) as well as the jumbled graphs blow-up
   lemma (Lemma~\ref{lem:psr_main}). For the proof of the degenerate graphs
   blow-up lemma, on the other hand, we cannot use it; but we will replace
   it by a suitable modification of the ideas in its proof.

   The idea behind Lemma~\ref{lem:large_nbs} is as follows. As discussed in
   the proof overview (Section~\ref{sec:proof_overview}), we will need our
   RGAs to guarantee that each $v\in V_i$ is a candidate for many vertices
   $x\in\Xbuf_i$ in order to complete the embedding. This means we need it
   to be not too unlikely that $N_H(x)$ is embedded to $N_G(v)$ for any
   given $x\in\Xbuf_i$, and as a first step to showing this, it is
   necessary to show that we have not covered $N_G(v)$ with embedded
   vertices before we get around to embedding $N_H(x)$. In the proofs of
   the random graphs blow-up lemma and the bijumbled blow-up lemma, we show
   this by embedding $N(\Xbuf)$ first and applying
   Lemma~\ref{lem:large_nbs}. 
   
   \begin{lemma}[Preservation of super-regularity]\label{lem:large_nbs}
     We assume the General Setup. Suppose that $\Gamma$ has
     $\NS(\eps,r_1,B)$. Suppose that for some $T$ we have a sequence
     $\psi_0,\psi_1,\dots,\psi_T$ of good partial embeddings, where
     $\psi_0$ is the trivial partial embedding and each $\psi_t$ is
     obtained from $\psi_{t-1}$ by embedding one vertex $x_t\in V(H)$ to a
     uniform random vertex from a subset~$S$ of $C_{t-1}(x_t)$ with
     $|S|\ge\tfrac{1}{10}(dp)^{\pi_{t-1}^*(x_t)}|V(x_t)|$. Suppose furthermore
     that for each $t$ the vertex $x_t$ has at most $B-2$ neighbours in
     $\dom(\psi_{t-1})$, and that for each $i\in[r]$ we have
     $\big|\dom(\psi_T)\cap X_i\big|\le 8\mu\kappa\DeltaRp|X_i|$.
    
    Then with probability at least $1-\exp(-\eps p n/r_1)$, for each
    $i\in[r]$, each vertex $v\in V_i$, and each $j$ such that 
 $ij\in E(R')$
 we have $\big|N_{G}(v;\Vmain_j)\setminus \im(\psi_T)\big|\ge \tfrac12\deg_{G}(v;V_j)$.
\end{lemma}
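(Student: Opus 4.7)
The plan is to bound the random variable $Z := |N_G(v;\Vmain_j) \cap \im(\psi_T)|$ separately for each triple $(i,j,v)$ with $v \in V_i$ and $ij \in E(R')$, by applying the sequential Chernoff inequality (Lemma~\ref{lem:coupling}), and then to union bound over the $\le r_1^2 n$ such triples. Fixing such a triple, I set $Y_t := \mathbf{1}[x_t \in X_j \text{ and } \psi_t(x_t) \in N_G(v;\Vmain_j)]$ so that $Z = \sum_t Y_t$, and since $\psi_t(x_t)$ is uniform in $S_t$ we have
\[
\Exp[Y_t \mid \cF_{t-1}] \;\le\; \mathbf{1}[x_t \in X_j]\cdot\frac{|N_G(v;\Vmain_j) \cap S_t|}{|S_t|} \;=:\; p_t.
\]
My aim is to show the almost-sure bound $\sum_t p_t \le \tfrac14 \deg_G(v;V_j)$. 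Lemma~\ref{lem:coupling}\ref{coupling:a} with $\delta=1$ then yields $\Pr\big[Z > \tfrac12 \deg_G(v;V_j)\big] \le \exp\big(-\tfrac{1}{12}\deg_G(v;V_j)\big)$, and since~\ref{G:deg} gives $\deg_G(v;V_j) \ge (1-3\mu)(d-\eps)p|V_j| = \Omega(pn/r_1)$, this is at most $\exp(-\Omega(pn/r_1))$; the union bound then finishes.

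For the bound on $\sum_t p_t$, I first note that $S_t \subseteq \Umain_{t-1}(x_t)$, so $|N_G(v;\Vmain_j) \cap S_t| \le |N_\Gamma(v) \cap \Umain_{t-1}(x_t)|$. By~\ref{GPE:sizeU}, the set $\Umain_{t-1}(x_t)$ has size at least $\tfrac12(1-3\mu)p^{\pi^*_{t-1}(x_t)}|V(x_t)|$, which exceeds the size threshold $\eps p^{B-1}n/r_1^2$ of $\NS(\eps,r_1,B)$; here we use $\pi^*_{t-1}(x_t) \le B-1$, which follows from the assumption $\pi_{t-1}(x_t) \le B-2$ together with $|J_{x_t}|+\deg_H(x_t) \le \Delta$ and a choice $B \ge \Delta+1$. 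Applying $\NS(\eps,r_1,B)$ to $\Umain_{t-1}(x_t)$ yields, for all but at most $\eps p^{B-1}n/r_1^2$ \emph{typical} vertices $v \in V_i$, the per-step estimate $|N_\Gamma(v) \cap \Umain_{t-1}(x_t)| \le (1+\eps)p|\Umain_{t-1}(x_t)|$. Combined with $|S_t| \ge \tfrac{1}{10}(dp)^{\pi^*_{t-1}(x_t)}|V(x_t)|$ and the matching upper bound on $|\Umain_{t-1}(x_t)|$ from~\ref{GPE:sizeU}, this gives $p_t \le 40p/d^\Delta$ for typical $v$. Summing over the at most $8\mu\kappa\DeltaRp|X_j|$ values of $t$ with $x_t \in X_j$, and using $|X_j|=|V_j|$, yields $\sum_t p_t \le 320\mu\kappa\DeltaRp p|V_j|/d^\Delta$; choosing $\mu$ small enough so that $320\mu\kappa\DeltaRp/d^\Delta \le (1-3\mu)(d-\eps)/4$ makes this at most $\tfrac14 \deg_G(v;V_j)$ by~\ref{G:deg}.

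The main obstacle is handling the \emph{atypical} vertices $v \in V_i$ with $\deg_\Gamma(v;V_j) \gg p|V_j|$, for which the single application of $\NS$ to $\Umain_{t-1}(x_t)$ may fail. For such $v$, however,~\ref{G:deg} provides the stronger lower bound $\deg_G(v;V_j) \ge (1-3\mu)(d-\eps)\deg_\Gamma(v;V_j)/2$, which yields the necessary slack. My plan is to iterate $\NS$: starting from $W_0 := N_\Gamma(v) \cap \Vmain(x_t)$, whose size $(1-3\mu)\deg_\Gamma(v;V_j)$ exceeds the $\NS$ threshold because $\deg_\Gamma(v;V_j) \ge \deg_G(v;\Vmain_j) \ge (1-3\mu)(d-\eps)p|V_j|$ by~\ref{G:deg}, and intersecting successively with $N_\Gamma(u)$ for each $u \in \Pi_{t-1}(x_t) \cup J_{x_t}$. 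This iterated application (which uses that each random $u$ is $\NS$-typical for the current intermediate set, an event holding with high probability over the random embedding) yields the sharper bound $|N_\Gamma(v) \cap \Umain_{t-1}(x_t)| \le 2(1-3\mu)\deg_\Gamma(v;V_j)\,p^{\pi^*_{t-1}(x_t)}/|V_j|$. Substituting into the preceding calculation gives a ratio $\sum_t p_t / \deg_G(v;V_j)$ of order $\mu\kappa\DeltaRp/d^{\Delta+1}$, independent of $\deg_\Gamma(v;V_j)$, so the same Chernoff-and-union-bound argument concludes the proof.
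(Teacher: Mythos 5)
Your overall plan — define indicator variables $Y_t$ for $x_t\in X_j$ hitting $N_G(v;\Vmain_j)$, bound their conditional expectations by some $p_t$, show $\sum p_t$ is small almost surely, and apply Lemma~\ref{lem:coupling} — is the right skeleton and matches the paper's structure. But the almost-sure bound on $\sum p_t$ has a genuine gap.

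Your second paragraph gives $p_t\le 40p/d^\Delta$ only for $v$ that are $\NS$-\emph{typical with respect to the particular set} $\Umain_{t-1}(x_t)$. The set of atypical vertices produced by $\NS$ depends on the set $W$ to which it is applied, and here $W=\Umain_{t-1}(x_t)$ is a random set determined by the embedding history. Thus, for a \emph{fixed} $v$, there is no guarantee — and it is genuinely false — that $v$ is typical for every random $\Umain_{t-1}(x_t)$ that arises. You conflate this with the separate issue of $v$ having large $\Gamma$-degree into $V_j$, and your third paragraph only addresses the latter. Even when $\deg_\Gamma(v;V_j)\approx p|V_j|$, the random process can make $|N_\Gamma(v)\cap\Umain_{t-1}(x_t)|$ anomalously large if an earlier neighbour of $x_t$ was embedded to a vertex $u$ with $N_\Gamma(u)$ over-concentrated in $N_\Gamma(v)\cap\Umain_{t'-1}(x_t)$. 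You notice this dependence in your parenthetical ``an event holding with high probability over the random embedding'', but acknowledging that the needed inequality holds with high probability is not the same as the \emph{almost-sure} bound $\sum p_t\le\tfrac14\deg_G(v;V_j)$ that Lemma~\ref{lem:coupling}\ref{coupling:a} requires, and Lemma~\ref{lem:coupling} cannot be invoked with a conditional bound that fails on a positive-probability event. (A further small issue: for $u\in J_{x_t}$ the vertex $u$ is fixed, not randomly chosen, so there is no ``with high probability'' to appeal to for those intersections.)

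The fix is exactly the piece you are missing: fold the low-probability ``odd embedding'' event into the Bernoulli variables themselves. Define $Y_t=1$ also when $x_t$ is a not-yet-spoiled neighbour of some vertex in $X_j\cap\dom(\psi_T)$ and $x_t$ is embedded to a vertex that pushes some $|N_\Gamma(\cdot)\cap\Umain_{\cdot}(\cdot)\cap N_G(v)|$ above its expected $(1\pm\eps)p$-scaling. The conditional probability of this odd event is bounded \emph{deterministically} by $\NS$ (since the atypical vertices number at most $\eps p^{B-1}n/r_1^2$, and $x_t$ is embedded uniformly into a set of size $\ge\tfrac{1}{10}(dp)^{\pi^*_{t-1}(x_t)}|V(x_t)|$ with $\pi^*_{t-1}(x_t)\le B-2$), and it is comparable to the per-step target bound. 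Then $\sum_t Y_t$ is an upper bound, not an exact count, for $|N_G(v;\Vmain_j)\cap\im(\psi_T)|$ — each vertex of $X_j$ embedded to $N_G(v;\Vmain_j)$ is charged either to its own $Y_t$ or to the first odd embedding among its predecessors-in-$H$, and by~\ref{PtH:dist} this charging is injective. With this modification the almost-sure hypothesis of Lemma~\ref{lem:coupling} does hold (with an extra factor $\Delta+1$ in the count of relevant time steps, since neighbours of $X_j\cap\dom(\psi_T)$ now also contribute), and the rest of your argument goes through.
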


In the proof of the bijumbled blow-up lemma we set $B=\Delta+1$ and will show
that this is enough for Lemma~\ref{lem:large_nbs} to handle all of
$N(\Xbuf)$. In the proof of the random graphs blow-up lemma, by contrast, we set
$B=\Delta$, which turns out to be good enough to handle buffers of degree
up to and including $\Delta$ which are not clique buffers. This is one of
the reasons why we have to handle clique buffers differently.

The proof of Lemma~\ref{lem:large_nbs} involves estimating the probability
of embedding $x_t$ to $N_G(v)$, conditioned on $\psi_{t-1}$. We show that
either this probability is small, or that a previously embedded neighbour
of $x_t$ was embedded oddly, which is guaranteed to be a low-probability
event. In either case, for $x_t$ to be embedded to $N_G(v)$ an unlikely
event must occur. For $N_G(v)$ to be substantially filled up, many of these
events, which are sequentially dependent, have to occur. The sequential
dependence lemma, Lemma~\ref{lem:coupling}, then shows this is unlikely
enough to take a union bound over all choices of $v$.

\begin{proof}
 We require
 \[\mu<\tfrac{d^{\Delta}}{1320\kappa\DeltaRp (\Delta+1)}\,,\quad\eps<\tfrac{\mu d}{10\kappa}\quad\text{and}\quad r_1 n\exp(-\eps p n/r_1)<1\,.\]

Fix $ij\in E(R')$ and a vertex $v\in V_i$. We first estimate the conditional probability that $x_t\in X_j\cap\dom(\psi_T)$ is embedded to $N_G(v)\cap\Vmain (x_t)$, given the history. Because $x_t$ is embedded uniformly at random to a subset of $C_{t-1}(x_t)$ of size at least $\tfrac{1}{10}(dp)^{\pi^*_{t-1}(x_t)}|V_j|$, this probability is at most
\begin{equation}\label{eq:large_nbs:prob}
 \frac{\big|\Umain_{t-1}(x_t)\cap N_G(v)\big|}{\tfrac{1}{10}(dp)^{\pi^*_{t-1}(x_t)}|V_j|}\,,
\end{equation}
which we will show to be at most $20d^{-\Delta}\tfrac{\deg_G(v;V_j)}{|V_j|}$.

This clearly holds when $\big|\Umain_{t-1}(x_t)\cap N_G(v)\big|\le (p+\eps p)^{\pi^*_{t-1}(x_t)}\deg_G(v;V_j)$ 
and we therefore assume the opposite.
In this case the estimate on the conditional
probability~\eqref{eq:large_nbs:prob} that $x_t$ occupies a vertex from
$N_G(v)$ could be as great as $1$, but we will show that this is an unlikely event. Specifically, for this  to occur
there must be a first neighbour $y_t$ of $x_t$ in $H$ which is
\emph{embedded oddly} at time $t'<t$, that is, is such that $\big|\Umain_{t'-1}(x_t)\cap N_G(v)\big|\le (p+\eps p)^{\pi^*_{t'-1}(x_t)}\deg_G(v;V_j)$ but (with $\pi^*_{t'}(x_t)=\pi^*_{t'-1}(x_t)+1$)
\begin{equation}\label{eq:large_candidate_NG}
\deg_\Gamma\big(\psi_{t'}(y_t);\Umain_{t'-1}(x_t)\cap N_G(v)\big)>(p+\eps p)^{\pi^*_{t'-1}(x_t)+1}\deg_G(v;V_j)\,.
\end{equation}
Let $W$ be a superset of $\Umain_{t'-1}(x_t)\cap N_G(v)$ of size $(p+\eps p)^{\pi^*_{t'-1}(x_t)}\deg_G(v;V_j)>\mu(d-\eps)p^{B-1}|V_j|$, where the inequality uses the fact that $\pi^*_{t'-1}(x_t)\le B-2$ and~\ref{G:deg}.
By property $\NS(\eps,r_1,B)$, which $\Gamma$ satisfies, we see that there are at most $\eps p^{B-1} n/r_1^2$ vertices $Z$ of $\Gamma$ which have this many neighbours in $W$ and thus satisfy~\eqref{eq:large_candidate_NG}. 
 Now $\pi^*_{t'-1}(y)\le B-2$ by the conditions of the lemma. We conclude that the probability of embedding $y$ to a vertex of $Z$, conditioning on the history, is at most
\begin{equation}\label{eq:occupy_prob_two}
\tfrac{\eps p^{B-1}n/r_1^2}{\tfrac{1}{10}(dp)^{B-2}|V(y)|}\le 10\eps d^{-\Delta}\kappa p<10d^{-\Delta+1}p\leq 20d^{-\Delta}\tfrac{\deg_G(v;V_j)}{|V_j|}\,,
\end{equation}
where we use $|V(y)|\ge n/(\kappa r_1)$ in the first and the choice of $\eps$ in the second and~\ref{G:deg} in the third inequality.

We define a sequence of Bernoulli random variables $Y_1,\ldots,Y_T$ as follows. Given the embedding $\psi_{t-1}$, if $x_t$ is a neighbour of a vertex in $X_j\cap\dom(\psi_T)$ none of whose previous neighbours were oddly embedded, and $x_t$ is oddly embedded, we set $Y_t=1$. If $x_t$ is in $X_j$, none of its previous neighbours were oddly embedded, and $x_t$ is embedded to $N_G(v;V_j)$, we set $Y_t=1$. Otherwise we set $Y_t=0$. By assumption, the total number of $Y_t$ which are not deterministically zero (that is, are in $X_j\cap\dom(\psi_T)$ or are neighbours of such a vertex) is at most $8\mu\kappa\DeltaRp(\Delta+1)|X_i|$. Observe furthermore that for all of the $Y_t$ which are not deterministically zero, we have just shown in~\eqref{eq:occupy_prob_two} that $Y_t$ is one with probability at most $20d^{-\Delta}\tfrac{\deg_G(v;\Vmain_j)}{|V_j|}$ conditioning on the history up to, but not including, embedding $x_t$. This history determines $Y_{t-1}$, so that the $Y_1,\dots,Y_T$ are sequentially dependent. It follows that we can apply Lemma~\ref{lem:coupling}, with 
\[x=8\mu\kappa\DeltaRp(\Delta+1)|X_j|\cdot20d^{-\Delta}\tfrac{\deg_G(v;V_j)}{|V_j|}=160\mu\kappa d^{-\Delta}\DeltaRp(\Delta+1)\deg_G(v;V_j)\]
and $\delta=1$, to show that
\[
 \Pr\big(Y_1+\dots+Y_T\ge 2x\big)\le 2\exp\big(-\tfrac{x}{3}\big)
 <\exp\big(-\tfrac{2\eps p n}{r_1}\big)\,,
\]
where the final inequality is by choice of $\eps$ and since $\deg_G(v;V_j)\ge (1-3\mu)(d-\eps)p|V_j|$ by~\ref{G:deg}. By choice of $\mu$, we conclude that with probability at most $\exp(-2\eps p n/r_1)$ we have $Y_1+\dots+Y_T\ge\tfrac14\deg_G(v;V_j)$. Now observe that for any vertex to be embedded to $N_G(v;\Vmain_j)$ one of these variables $Y_1,\ldots,Y_T$ must be one, and since vertices of $X_j$ are at distance at least ten in $H$ (by~\ref{PtH:dist}), no $Y_t$ can be responsible for two different vertices of $X_j$ being embedded to $N_G(v;\Vmain_j)$. Thus $Y_1+\dots+Y_T$ is an upper bound for $\im(\psi_T)\cap N_G(v;\Vmain_j)$. Taking a union bound over the choices of $j$ and of $v$, and using~\ref{G:deg}, we see that with probability at least $1-r_1 n\exp(-2\eps p n/r_1)>1-\exp(-\eps p n/r_1)$ the statement of the lemma holds.
 \end{proof}
 
 Our final lemma in this section complements the above lemma, showing that
 provided $v\in V_i$ does not have $N_G(v)$ more than half covered by
 $\im(\psi)$ early in the embedding, it is reasonably likely that $N_H(x)$
 is embedded to $N_G(v)$ for any given $x\in\Xbuf_i$. As with the previous
 lemma, it contains a parameter $B$ which will be either set to $\Delta$
 (in the proof of the random graphs blow-up lemma) or to $\Delta+1$ (in the proof
 of the bijumbled graphs blow-up lemma), and again in the former case the consequence
 is that we cannot use it to deal with clique buffers (that is, when
 $N_H(x)$ is a copy of $K_\Delta$). For the degenerate graphs blow-up lemma
 we again need to replace this lemma by suitable similar ideas.
 
\begin{lemma}[Buffer neighbour embedding]\label{lem:nonclique_buffer}
  We assume the General Setup. Let
  $B\in\{\Delta,\Delta+1\}$ and assume that $\Gamma$ has $\NS(\eps,r_1,B)$
  and $\RI(\eps,(\eps_{a,b}),\eps',d,r_1,B)$.  Given $v\in V_i$ and
  $x\in\Xbuf_i$, 
  let $y_1,\ldots,y_b$ be the neighbours
  of $x$, and suppose that if $b=B$ then $y_{b-1}y_b$ is not an edge of
  $H$.
  Let $\psi_0$ be a good partial embedding in which
  no vertex at distance two or less from $x$ is embedded, and let
  $Q_0\subset\Xmain$. Suppose further that
  $\big|N_G(v;\Vmain_j)\setminus\im(\psi_0)\big|\ge\tfrac12\deg_G(v;V_j)$
  for each $j$ such that $ij\in R'$. 
 
  Let $\psi_1,\ldots,\psi_b$ be good partial embeddings and
  $Q_1,\ldots,Q_b$ subsets of $\Xmain$, such
  that for each $t=1,\dots,b$ the embedding $\psi_t$ is obtained
  from $\psi_{t-1}$ by embedding $y_t$ uniformly at random into its
  available candidates minus the vertices which are bad for~$y_t$, that is, into
  $\Amain_{t-1}(y_t)\setminus B_{t-1}(y_t)$, and such that
  we have
 \[Q_t=Q_{t-1}\cup\big\{z\in \Xmain\setminus\dom(\psi_t)\colon \big|\Amain_t(z)\big|<\tfrac12
        \mu(d-\eps')^{\pi^*_{t}(z)}p^{\pi^*_{t}(z)}|\Vmain(z)|\big\}\,.\] 
  Then with probability at least $\big(d^{\Delta}p/10\big)^b$, we have $\psi_b\big(N_H(x)\big)\subset N_G(v)$.
\end{lemma}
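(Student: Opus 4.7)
The plan is to show that, conditional on any valid history $\psi_{t-1}$, the probability of embedding $y_t$ into $N_G(v)$ at step $t$ is $\Omega(dp)$, and then multiply these $b$ conditional probabilities and absorb constants to obtain $(d^\Delta p/10)^b$.

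First I would exploit the buffer structure. Since $x\in\tX_i$ is a potential buffer vertex and $y_t\in N_H(x)$, the $(\alpha,R')$-buffer property forces $ij_t\in R'$, where $V_{j_t}$ is the cluster containing $y_t$; when additionally $y_sy_t\in E(H)$ for some $s<t$, the second-neighbourhood condition of the buffer forces $j_sj_t\in R'$. By~\ref{PtH:dist}, the clusters $V_{j_1},\dots,V_{j_b}$ are pairwise distinct and each is distinct from $V_i$, hence $\im(\psi_t)\cap V_{j_t}=\im(\psi_0)\cap V_{j_t}$ throughout the process, and by the hypothesis on $v$ at most half of $N_G(v;\Vmain_{j_t})$ is ever consumed. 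By~\ref{BUF:last}, $y_t$ is not image-restricted, so $I_{y_t}=V_{j_t}$ and $\Cmain_{t-1}(y_t)=\Vmain_{j_t}\cap\comN_G(\Pi_{t-1}(y_t))$.

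At step $t$, the conditional probability is the ratio $|N_G(v)\cap(\Amain_{t-1}(y_t)\setminus B_{t-1}(y_t))| / |\Amain_{t-1}(y_t)\setminus B_{t-1}(y_t)|$. For the denominator I would invoke~\ref{GPE:sizeU} to get the upper bound $(1+\eps)(1-3\mu)p^{\pi^*_{t-1}(y_t)}|V_{j_t}|$. For the numerator the central quantity is $|\Cmain_{t-1}(y_t)\cap N_G(v)|$, which I would bound below by $\Omega((dp)^{\pi^*_{t-1}(y_t)+1})|V_{j_t}|$. The intuition is that $v$ functions as a virtual embedded neighbour of $y_t$: inserting the pretended assignment $x\mapsto v$ into $\psi_{t-1}$ yields exactly the size asserted by~\ref{GPE:sizeC} with $\pi(y_t)$ incremented by one. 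Rigorously, I would obtain this by an iterative intersection argument starting from $N_G(v;\Vmain_{j_t})$, which has size $\ge(1-3\mu)(d-\eps)p|V_{j_t}|$ by~\ref{G:deg}: each subsequent intersection with $N_G(\psi_{t-1}(y_s))$ for $y_s\in\Pi_{t-1}(y_t)$ loses only a $(dp)$-factor, justified by the regularity of $(V_{j_s},V_{j_t})$ combined with the one-sided inheritance of $R'$ through the path $j_sij_t$, and, when two such previous embeddings must be intersected simultaneously, by the two-sided inheritance on $R'$ for $\tcX$ applied to the triangle $xy_sy_t$ in $H$.

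It remains to subtract the used-up and bad vertices from the numerator. The used-up contribution is at most $|\im(\psi_0)\cap V_{j_t}\cap N_G(v)\cap \Cmain_{t-1}(y_t)|$; by a proportional-shrinking argument inside $N_G(v)$ using the regularity above, this is at most half of $|\Cmain_{t-1}(y_t)\cap N_G(v)|$. The bad-vertex count is the main obstacle: Lemma~\ref{lem:fewbad}\ref{fewbad:b} only gives the absolute bound $|B_{t-1}(y_t)|\le 20\Delta^2\eps'p^{\pi^*_{t-1}(y_t)}|V_{j_t}|$, which is not small relative to the numerator since $\eps'$ is a constant while $p$ may be tiny. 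I would repeat the proof of Lemma~\ref{lem:fewbad} with ground set restricted to $N_\Gamma(v;V_{j_t})$: the properties $\NS(\eps,r_1,B)$ and $\RI(\eps,(\eps_{a,b}),\eps',d,r_1,B)$, applied to the already-restricted universe $N_\Gamma(v;\cdot)$, yield a relative bound $|B_{t-1}(y_t)\cap N_G(v)|\le O(\eps')|\Cmain_{t-1}(y_t)\cap N_G(v)|$. This is precisely where the parameters $B\in\{\Delta,\Delta+1\}$ and the nonedge hypothesis $y_{b-1}y_b\notin E(H)$ when $b=B$ enter: they ensure that the required inheritance never needs to be iterated beyond depth $B$, exactly matching the inheritance depth guaranteed by the hypotheses. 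With these three bounds assembled, the conditional probability at step $t$ is $\Omega(dp)\ge d^\Delta p/10$, and the product over $t\in[b]$ delivers the desired $(d^\Delta p/10)^b$.
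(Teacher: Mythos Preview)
Your overall plan—bound the conditional probability at each step by $\Omega(dp)$ and multiply—is the paper's approach, and you correctly identify that the crux is controlling $|B_{t-1}(y_t)\cap N_G(v)|$ \emph{relative} to the target set, which requires rerunning the Lemma~\ref{lem:fewbad} argument inside $N_\Gamma(v;\cdot)$.

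There is, however, a genuine gap in the inductive scaffolding. Your iterative intersection argument for $|\Cmain_{t-1}(y_t)\cap N_G(v)|$ asserts that each intersection with $N_G(\psi(y_s))$ loses only a $(dp)$-factor. But $\psi(y_s)$ was selected at step $s$ only to avoid $B_{s-1}(y_s)$, and that set says nothing about degrees into subsets of $N_G(v)$; so nothing in the hypotheses forces $\psi(y_s)$ to have a $(d-\eps')p$-proportion of neighbours in the running intersection inside $N_G(v)$. The same objection breaks your proportional-shrinking argument: $\im(\psi_0)\cap N_G(v;\Vmain_{j_t})$ is fixed before the random steps and could in principle sit entirely inside $\Cmain_{t-1}(y_t)\cap N_G(v)$; regularity alone does not spread it proportionally after intersecting with neighbourhoods of vertices you did not choose with that set in mind. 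And your restricted Lemma~\ref{lem:fewbad} needs regularity of pairs such as $\big(U_{t-1}(y_t)\cap N_\Gamma(v),\,U_{t-1}(z)\big)$, which is not part of the good partial embedding and again has to be maintained inductively.

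The paper fixes all three issues at once by explicitly strengthening the invariant. It defines a \emph{buffer-friendly partial embedding} (with respect to $v$) which, in addition to GPE, records for each unembedded $y_k\in N_H(x)$ lower bounds on $\hat A_t(y_k):=\Amain_t(y_k)\cap N_G(v)$ and two restricted regularity statements, namely that $\big(\hat U_t(y_k),\hat U_t(y_\ell)\big)$ and $\big(\hat U_t(y_k),U_t(z)\big)$ are $(\eps_{\cdot,\cdot},d,p)$-regular. These hold at $t=0$ by the hypothesis on $v$ and by one- and two-sided inheritance on $R'$. At step $t$, besides $B_{t-1}(y_t)$ one also removes an \emph{unfriendly} set $P_{t-1}\subset\hat A_{t-1}(y_t)$ of vertices whose choice would destroy buffer-friendliness; both $|B_{t-1}(y_t)\cap\hat A_{t-1}(y_t)|$ and $|P_{t-1}|$ are then bounded by $O(\Delta^2 d^{-\Delta}\eps')|\hat A_{t-1}(y_t)|$ using the inductively maintained restricted regularity. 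The $\NS$ and $\RI$ properties with parameter $B$ enter exactly here, and the crucial index-counting $\pi^*_{t-1}(y_t)\le B-2$ (and $\le B-3$ when two-sided inheritance is needed) is where the nonedge hypothesis $y_{b-1}y_b\notin E(H)$ for $b=B$ is used. The event whose probability you lower-bound at each step should therefore be ``$\psi_t$ remains buffer-friendly'', not merely ``$y_t$ lands in $N_G(v)$''; the former implies the latter and is what admits the clean inductive estimate $\ge d^\Delta p/10$.
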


The proof of this lemma is quite long, but much of it is `bookkeeping'.
Briefly, the idea is as
follows. In $\psi_0$, no neighbour of any $y_j$ is embedded and hence
each $y_j$ has candidate set $C_0(y_j)=V(y_j)$. We know that
$N_G\big(v;V(y_1)\big)$ is not covered by $\im(\psi_0)$, so if we
choose a uniform random vertex of $A_0(y_1)$ the probability of
choosing a member of $N_G(v)$ is at least $dp/4$. However, the random
greedy algorithm does not choose a random vertex of $A_0(y_1)$, but
rather of $A_0(y_1)\setminus B_0(y_1)$. Thus we have to show that
$B_0(y_1)$ does not cover too much of $N_G(v)$, which we can do using
properties~\ref{G:inh} and~\ref{G:deg} of the partition of $G$,
and $\NS(\eps,r_1,B)$ and $\RI(\eps,(\eps_{a,b}),\eps',d,r_1,B)$ which
we assume of $\Gamma$, in much the same way as the proof of
Lemma~\ref{lem:fewbad}. In addition we have to show that some extra
properties are preserved which allow us to analyse the embedding of
$y_2,\dots,y_b$, which we can show is likely in a similar way. We will
refer to a good partial embedding with these extra properties as a
buffer-friendly partial embedding. We conclude that the probability of
embedding $y_j$ to $N_G(v)$ and maintaining a buffer-friendly partial embedding
is at least $d^{\Delta}p/10$, conditioning on the history, for each
$j$. The conditional probabilities multiply, giving the desired
result.
 
\begin{proof}
 We require
 \[\mu\le\tfrac16\,,\qquad \eps<(\eps')^2,\qquad\eps'\le \frac{\mu d^\Delta \zeta}{1000\kappa\Delta^2}\qquad\text{and}\qquad (d-\eps')^{10\Delta}>\tfrac12 d^{10\Delta}\,.\]

  For each $0\le t\le b$ and each $y_j$, $j\in[b]$, we define
  $\hatA_t(y_j):=\Amain_t(y_j)\cap N_G(v)$ and 
  for any vertex $x'\in X$ we set $\hatU_t(x'):=U_t(x')\cap N_\Gamma(v)$. 
  
Observe that since $y_t$ is embedded to a vertex of $C_{t-1}(y_t)\setminus
B_{t-1}(y_t)$ for each $t=1,\dots,b$ we automatically maintain the property
that $\psi_t$ is a good partial embedding for each $t$ (see
Definition~\ref{def:bad_vertices} of bad vertices). We now formulate five
additional conditions on $\psi_t$ for $t=0,\dots,b$, which allow us to show
a lower bound on the desired probability inductively. If $\psi_t$ satisfies
these conditions we say that it is a \emph{buffer-friendly partial embedding}.
\begin{enumerate}[label=\itmarab{BPE}]
    \item\label{C:past} We have that $\{\psi_t(y_1),\dots, \psi_t(y_t)\}
      \subset N_G(v)$,    
    \item\label{C:largeU} $|\hatU_{t}(y_k)|= (p\pm\eps' p)^{\pi_{t}^*(y_k)} \deg_{\Gamma}(v;V(y_k))$ for $k=t+1,\ldots,b$,
    \item\label{C:largeA} $|\hatA_{t}(y_k)|\ge \tfrac12(dp-\eps'p)^{\pi_{t}^*(y_k)} \deg_G(v;V(y_k))$ for $k=t+1,\ldots,b$,
    \item\label{C:twoside} for unembedded $y_k$ and $y_\ell$ with 
     $y_k y_\ell\in E(H)$ we have $(\hatU_{t}(y_k),\hatU_{t}(y_\ell))$ is 
     $(\eps_{\pi_{t}^*(y_k),\pi_{t}^*(y_\ell)},d,p)$-regular in $G$,
     \item\label{C:oneside} for unembedded $y_k$ and $z$ with 
	     $y_k z\in E(H)$ the pair  $(\hatU_{t}(y_k),U_{t}(z))$ is 
     $(\eps_{\pi_{t}^*(y_k),\pi_{t}^*(z)},d,p)$-regular in $G$.     
\end{enumerate}

Observe that these conditions are all satisfied for $\psi_0$, i.e.\ $\psi_0$ is a buffer-friendly partial embedding. Indeed,~\ref{C:past} is vacuously satisfied. \ref{C:largeU} holds by 
definition of $\hatU$ and since no neighbours of any $y_k$ are embedded. \ref{C:largeA} holds by the definition of $\hatA$, the assumption on
$\deg_G\big(v;V(x_k)\setminus\im(\psi_0)\big)$ of the lemma, and since no neighbours of any $y_k$ are embedded. If $y_k$ and $y_\ell$ are adjacent in $H$ then $xy_ky_\ell$ is a triangle in $H$ with $x\in\Xbuf_i\subset\tX_i$, so by~\ref{G:inh} we have~\ref{C:twoside}. Finally, again since $x\in\Xbuf_i\subset\tX_i$, by~\ref{G:inh} we have~\ref{C:oneside}.

For each $t=0,\dots,b-1$ we let $P_t$ be the set of \emph{unfriendly} vertices
$u$ in $\hatA_t(y_{t+1})$, that is, the vertices that are such that if $\psi_{t+1}(y_{t+1})=u$ then $\psi_{t+1}$ is not a buffer-friendly partial embedding.

We now show that, for any $t=1,\dots,b$, given a buffer-friendly partial embedding $\psi_{t-1}$, the probability that $\psi_t$ is a buffer-friendly partial embedding is at least 
$d^\Delta p/10$. This clearly yields, by multiplying the conditional probabilities,
 the lower bound of $(d^\Delta p/10)^b$ claimed 
in the statement of the lemma.
For the analysis it will suffice to show that only a tiny proportion of vertices 
from $\hatA_{t-1}(y_t)$ are in $B_{t-1}(y_t)$ or $P_{t-1}$. 

 So assume that $\psi_{t-1}$ is a buffer-friendly partial embedding.
 We first give some lower bounds on set sizes. Since $\psi_{t-1}$ is a good partial embedding
 we have for an unembedded vertex $z$ that
\begin{equation}\label{eq:buffer:largeUC}
 |\Uq_{t-1}(z)|\ge\mu(p-\eps p)^{\pi_{t-1}^*(z)}\big|V(z)\big|\ge \eps'p^{\pi^*_{t-1}(z)}n/r_1
\end{equation}
  where the first inequality is by~\ref{GPE:sizeU} 
  and the second because $|V(z)|\ge n/(\kappa r_1)$ and by choice of $\eps'$. The same lower bound also holds for $U_{t-1}(z), C_{t-1}(z)$ et cetera since these sets are all by~\ref{GPE:sizeU} and~\ref{GPE:sizeC} at least as large.
Since in addition $\psi_{t-1}$ is a buffer-friendly partial embedding, we
have for any $y_k$ with $k\ge t$ that
\begin{equation}\label{eq:buffer:largehat}
 |\hatU_{t-1}(y_k)|\ge|\hatA_{t-1}(y_k)|\ge\tfrac12(dp-\eps'p)^{\pi^*_{t-1}(y_k)}\deg_G\big(v;V(y_k)\big)\ge\eps'p^{\pi^*_{t-1}(y_k)+1}n/r_1
\end{equation}
  where the second inequality is by~\ref{C:largeA} and the third is by~\ref{G:deg} and choice of $\eps'$. 
  We also have for $k\ge t$, using~\ref{C:largeU},~\ref{C:largeA},~\ref{GPE:sizeU},~\ref{G:deg} and the choice of $\eps$ and $\eps'$ that
\begin{equation}\label{eq:buffer:relhat}
 |\hatA_{t-1}(y_k)|\ge \max\big(\tfrac14d^{\Delta}p|U_{t-1}(y_k)|\,,\,\tfrac14d^{\Delta}|\hatU_{t-1}(y_k)|\big)\ge\eps'|\hatU_{t-1}(y_k)|\,.
\end{equation}

We are next going to estimate the number of bad vertices in $\hatA_{t-1}(y_t)$.

\begin{claim}\label{cl:buffer:bad}
  $|\hatA_{t-1}(y_t)\cap B_{t-1}(y_t)| \le 30\Delta^2 d^{-\Delta}\eps' |\hatA_{t-1}(y_t)|$.
\end{claim}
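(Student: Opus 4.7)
The plan is to adapt the proof of Lemma~\ref{lem:fewbad}\ref{fewbad:b} to bound the number of bad vertices of $y_t$ lying in $\hatU_{t-1}(y_t)$ directly, rather than invoking that lemma as a black box. The reason the black-box route fails is that its bound of $O(\eps' p^{\pi^*(y_t)}|V(y_t)|)$, compared with the lower bound $|\hatA_{t-1}(y_t)| \ge \tfrac14 d^{\Delta+1}p^{\pi^*(y_t)+1}|V(y_t)|$ obtained from~\ref{C:largeA} and~\ref{G:deg}, would leave a spurious factor of $1/p$. The key observation is that property~\ref{C:oneside} provides $(\eps_{\pi^*(y_t),\pi^*(y')},d,p)$-regularity of the pair $(\hatU_{t-1}(y_t), U_{t-1}(y'))$ for every unembedded neighbour $y'$ of $y_t$; plugging this in as a substitute for the $(U_{t-1}(y_t), U_{t-1}(y'))$-regularity used in the proof of Lemma~\ref{lem:fewbad} gives bounds of the form $O(\eps')|\hatU_{t-1}(y_t)|$. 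The $1/p$ then disappears because, by~\eqref{eq:buffer:relhat}, $|\hatA_{t-1}(y_t)| \ge \tfrac14 d^\Delta |\hatU_{t-1}(y_t)|$.

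Concretely, I would enumerate the possible reasons $u \in \hatU_{t-1}(y_t)$ is bad for $y_t$ with respect to $\psi_{t-1}$ and $Q_{t-1}$: either $\psi_{t-1}\cup\{y_t\to u\}$ violates one of \ref{GPE:sizeC}, \ref{GPE:sizeU}, or \ref{GPE:Ureg} for some unembedded neighbour or incident edge (condition~\ref{GPE:rightplace} being automatic since $u \in C_{t-1}(y_t) \subset I_{y_t}$), or the badness condition~\eqref{eq:bad_vertices} holds for some $y' \in N_H(y_t)\setminus Q_{t-1}$. Failures of~\ref{GPE:sizeC} and the badness condition reduce to requiring $\deg_G(u;W) \ge (d-\eps')p|W|$ for sets $W \subset U_{t-1}(y')$ of size at least $\eps'|U_{t-1}(y')|$ (the computation being identical to the one in Lemma~\ref{lem:fewbad}), so regularity of $(\hatU_{t-1}(y_t), U_{t-1}(y'))$ disqualifies at most $\eps'|\hatU_{t-1}(y_t)|$ vertices $u$ per condition. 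Summing over at most $\Delta$ neighbours and the five subsets ($C$ together with its $\Vmain, \Vq, \Vc, \Vbuf$ restrictions) yields a contribution of $O(\Delta\eps')|\hatU_{t-1}(y_t)|$.

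For violations of~\ref{GPE:sizeU} and~\ref{GPE:Ureg} I would invoke $\NS(\eps,r_1,B)$ and $\RI(\eps,(\eps_{a,b}),\eps',d,r_1,B)$ exactly as in Lemma~\ref{lem:fewbad}, bounding the exceptional set globally in $\Gamma$ by $O(\eps p^{B-1}n/r_1)$ or $O(\eps p^{B-2}n/r_1)$ (the latter when two-sided inheritance is invoked for a triangle $y_t y' y''$); these are at most $O(\kappa\eps)|\hatU_{t-1}(y_t)|$, since $|\hatU_{t-1}(y_t)| \ge \tfrac14 p^{\pi^*(y_t)+1}n/(\kappa r_1)$ by~\ref{GPE:sizeU}. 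The delicate point, which I expect to be the main obstacle, is verifying the index hypotheses $\pi^*(y')+1$, $\pi^*(y')+2$, and $\pi^*(y_t)+3 \le B$ of $\NS$ and $\RI$. For this I would split each unembedded neighbour of $y_t$ into one of three classes---the buffer vertex $x$ itself, a later $y_k$ with $k > t$, or a generic vertex---using that $J_z=\emptyset$ for every $z$ within distance two of $x$ by~\ref{BUF:last}, so that $\pi^*(z)$ equals the number of embedded neighbours of $z$. The only configuration that threatens to force $B+1$ embedded neighbours of $y_t$ is a triangle $y_t x y_k$ arising when $t=b=B$, and this is precisely the configuration excluded by the hypothesis ``if $b=B$ then $y_{b-1}y_b \notin E(H)$''; in every other case the three classes yield $\pi^*(y')\le \Delta-1$ (generic) or $\pi^*(y')\le t-1 \le b-1$ (for $x$ or $y_k$), both of which are compatible with $B$.

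Summing all contributions and using $\eps \ll \eps'/\kappa$ gives at most $O(\Delta^2\eps')|\hatU_{t-1}(y_t)|$ bad vertices in $\hatU_{t-1}(y_t)$. Applying~\eqref{eq:buffer:relhat} to write $|\hatU_{t-1}(y_t)| \le 4d^{-\Delta}|\hatA_{t-1}(y_t)|$ and using $\hatA_{t-1}(y_t)\subset \hatU_{t-1}(y_t)$ then yields the claimed bound $|\hatA_{t-1}(y_t)\cap B_{t-1}(y_t)| \le 30\Delta^2 d^{-\Delta}\eps'|\hatA_{t-1}(y_t)|$.
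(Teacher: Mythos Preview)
Your proposal is correct and follows essentially the same route as the paper: replace the $(U_{t-1}(y_t),U_{t-1}(z))$-regularity from \ref{GPE:Ureg} by the $(\hatU_{t-1}(y_t),U_{t-1}(z))$-regularity supplied by~\ref{C:oneside}, use $\NS$ and $\RI$ for the remaining conditions, and convert all bounds to multiples of $|\hatA_{t-1}(y_t)|$ via~\eqref{eq:buffer:relhat}. One small correction to your case analysis: the hypothesis $y_{b-1}y_b\notin E(H)$ is actually invoked twice in the paper, once to get $\pi^*_{t-1}(y_t)\le B-2$ for the $\NS$ step (this is your $t=b=B$ case), and once more delicately at $t=b-1$ in the two-sided $\RI$ step to rule out $z'=y_b$ when $z=x$, thereby securing $\pi^*_{t-1}(y_t)\le B-3$.
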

\begin{claimproof}
  We need to estimate the number of vertices $w$ from $\hatA_{t-1}(y_t)$ 
 which are bad with respect to $\psi_{t-1}$ and $Q_{t-1}$. For that we will consider 
 all of the possible reasons.
   
 First assume that  there is some unembedded 
   neighbour $z\in V(H)\setminus Q_{t-1}$ of $y_t$ such 
 that the badness condition
 \begin{equation}\label{eq:bad_vts_ncbl}
  \deg_G\big(w;\Amain(z)\big)<(d-\eps')p|\Amain(z)|
 \end{equation}
 holds for $w$ and $z$. The pair 
 $\big(\hatU_{t-1}(y_t),U_{t-1}(z)\big)$ is $\big(\eps_{\pi_{t-1}^*(y_t),\pi_{t-1}^*(z)},d,p\big)$-regular 
in $G$ by~\ref{C:oneside}. 
Because $z\not\in Q_{t-1}$ we have that 
\begin{align*}
|\Amain_{t-1}(z)|&\ge \tfrac{1}{2}\mu(d-\eps')^{\pi^*_{t-1}(z)}p^{\pi^*_{t-1}(z)}|\Vmain(z)|\\
&\ge \tfrac{1}{2}\mu(1-3\mu)(d-\eps')^{\pi^*_{t-1}(z)}p^{\pi^*_{t-1}(z)}|V(z)|\ge \eps'|U_{t-1}(z)|\,,
\end{align*}
where the first inequality is by  definition of $Q_{t-1}$ in the statement of the lemma, 
 the second inequality is by~\ref{GPE:sizeU} and the last by choice of $\eps'$ and by~\ref{GPE:sizeU}.
Therefore the badness condition~\eqref{eq:bad_vts_ncbl} holds 
for at most  
\[
\eps_{\pi_{t-1}^*(y_t),\pi_{t-1}^*(z)} |\hatU_{t-1}(y_t)|\leByRef{eq:buffer:relhat} 4d^{-\Delta}\eps' |\hatA_{t-1}(y_t)|
\]
vertices 
 $w$ of $\hatA_{t-1}(y_t)$. 
 Since $y_t$ may have at most $\Delta$ unembedded neighbours, 
 in total there are at most $4\Delta d^{-\Delta}\eps' |\hatA_{t-1}(y_t)|$ vertices $w$ of $\hatA_{t-1}(y_t)$ such 
 that the badness condition~\eqref{eq:bad_vts_ncbl} holds for some unembedded neighbour of $y_t$.
 
Next we need to estimate the number of vertices $w$ from $\hatA_{t-1}(y_t)$ such that 
$\psi_{t-1}\cup\{y_t\to w\}$ is not a good partial embedding (i.e.\ does not satisfy properties~\ref{GPE:rightplace}--\ref{GPE:Ureg}). First observe that since $w\in \hatA_{t-1}(y_t)\subseteq C_{t-1}(y_t)\subseteq I_{y_t}$, 
\ref{GPE:rightplace} cannot fail. 

Next we turn to~\ref{GPE:sizeU}. It is  sufficient 
to consider unembedded neighbours $z$ of $y_t$, so $\pi^*_{t-1}(z)\le\Delta-1$. As the bound in~\eqref{eq:buffer:largeUC} also holds for $U_{t-1}(z)$ we have $|U_{t-1}(z)|\ge\eps' p^{\Delta-1}n/r_1$.
Therefore, by the neighbourhood size property $\NS(\eps,r_1,B)$, all but at most $\eps p^{B-1}n/r_1^2$ vertices $w$ from $\hatA_{t-1}(y_t)$ 
are such that $|U_{t-1}(z)\cap N_{\Gamma}(w)|=(1\pm\eps)p|U_{t-1}(z)|=(p\pm\eps p)^{\tau^*_{t-1}(z)+1}|V(z)|$, referring to \ref{GPE:sizeU} for $z$ with respect to $\psi_{t-1}$. 
Further we have $\pi^*_{t-1}(y_t)\le B-2$ since $y_t$ has at most $y_1,\dots,y_{B-2}$ as embedded neighbours (if $b=B$ then by assumption $y_by_{b-1}$ is not an edge of $H$), so since $\eps<(\eps')^2$ we have
 \[\eps p^{B-1}n/r_1^2\leByRef{eq:buffer:largehat} \eps'|\hatA_{t-1}(y_t)|\,.\]
 We  can argue analogously for the other four conditions of~\ref{GPE:sizeU} obtaining 
  in total  that for
 all but at most $5\Delta\eps' |\hatA_{t-1}(y_t)|$ vertices $w$ from $\hatA_{t-1}(y_t)$, 
 \ref{GPE:sizeU} is satisfied with respect to $\psi_{t-1}\cup\{y_t\to w\}$.

Now we consider~\ref{GPE:sizeC}. Let $z$ be any unembedded neighbour of $y_t$. Because $z$ is at distance at most $2$ from $x\in\Xbuf$, by~\ref{BUF:last} $z$ is not image restricted, so $I_z=V(z)$. Since 
 \ref{GPE:sizeU} and~\ref{GPE:sizeC} hold for $\psi_{t-1}$ we have:
 \begin{equation*}
|C_{t-1}(z)|\ge (1-\eps')(dp-\eps'p)^{\pi^*_{t-1}(z)}|V(z)|\ge\eps'|U_{t-1}(z)|.
 \end{equation*}
By~\ref{C:oneside},  $(\hatU_{t-1}(y_t),U_{t-1}(z))$ is  
$(\eps_{\pi_{t-1}^*(y_t),\pi_{t-1}^*(z)},d,p)$-regular in $G$. 
So, there are at most
\[\eps_{\pi_{t-1}^*(y_t),\pi_{t-1}^*(z)}|\hatU_{t-1}(y_t)|\leByRef{eq:buffer:relhat}4d^{-\Delta}\eps'|\hatA_{t-1}(y_t)|\]
vertices $w$ in 
$\hatU_{t-1}(y_t)$ such that $|C_{t-1}(z)\cap N_G(w)|< (d-\eps')p|C_{t-1}(z)|$.
 Similarly we argue for each of the other four conditions. 
 Thus, since $y_t$ has at most $\Delta$ unembedded neighbours, in total for all but at most 
 $20\Delta d^{-\Delta}\eps' |\hatA_{t-1}(y_t)|$ vertices $w\in \hatA_{t-1}(y_t)$ we have that~\ref{GPE:sizeC} 
 is satisfied with respect to $\psi_{t-1}\cup\{y_t\to w\}$. 

 It remains to consider \ref{GPE:Ureg}. Again, we only need to consider
 edges $zz'\in E(H)$ between unembedded vertices such that $z\in N_H(y_t)$. Consider first the case that $z'\not\in
 N_H(y_t)$. In this case we have $\pi^*(z)\le \Delta-2$, $\pi^*(z')\le \Delta-1$ and $\pi^*_{t-1}(y_t)\le\Delta-2$ (since both $x$ and $z$ are unembedded neighbours of $y_t$). Since $\Gamma$ satisfies
 property $\RI(\eps,(\eps_{a,b}),\eps',d,r_1,B)$, using~\eqref{eq:buffer:largeUC} and the fact that by~\ref{GPE:Ureg} the pair $\big(U_{t-1}(z),U_{t-1}(z')\big)$
 is $(\eps_{\pi_{t-1}^*(z),\pi_{t-1}^*(z')},d,p)$-regular in~$G$, we see that there are at most
 \[
 \eps p^{\Delta-1}n/r_1^2\leByRef{eq:buffer:largehat}  \eps'|\hatA_{t-1}(y_t)| 
 \] 
 vertices $w\in \hatA_{t-1}(y_t)$ such that the pair 
 $\big(N_\Gamma(w;U_{t-1}(z)), U_{t-1}(z')\big)$ is not\\
 $(\eps_{\pi_{t-1}^*(z)+1,\pi_{t-1}^*(z')},d,p)$-regular in $G$. 
 
 In the case that 
  $z,z'\in N_H(y_t)$ we have $\pi_{t-1}^*(z')\le\Delta-2$. We also have $\pi^*_{t-1}(y_t)\le B-3$. This requires a little explanation. Observe that $y_t$ has unembedded neighbours $z$ and $z'$, so $\pi^*_{t-1}(y_t)\le\Delta-2$, and if $B=\Delta+1$ then we are done. If $B=\Delta$, then observe that the embedded neighbours of $y_t$ are contained in $\{y_1,\ldots,y_{t-1}\}$. It follows that if $t\le\Delta-2$, we have $\pi^*_{t-1}(y_t)\le\Delta-3$ as desired. If $t=\Delta$, then $x$, $z$ and $z'$ are distinct (since the only unembedded neighbour of $x$ is $y_t$) and we again have $\pi^*_{t-1}(y_t)\le\Delta-3$. It remains to consider the case $t=\Delta-1$. Again we are done if $x$, $z$ and $z'$ are distinct. If, however, (without loss of generality) we have $x=z$, then $z'$ is an unembedded neighbour of $x$ which is not $y_t$; in other words, we have $z'=y_\Delta$. But we assumed that $y_{\Delta-1}y_\Delta$ is not an edge of $H$, contradicting the assumption $y_tz'\in H$. We have thus justified $\pi^*_{t-1}(y_t)\le B-3$, so by $\RI(\eps,(\eps_{a,b}),\eps',d,r_1,B)$ we conclude that there are at most 
 \[
 \eps p^{B-2}n/r_1^2\leByRef{eq:buffer:largehat}  \eps'|\hatA_{t-1}(y_t)| 
 \] 
 vertices $w\in \hatA_{t-1}(y_t)$
 such that the pair $\big(N_\Gamma(w; U_{t-1}(z)), N_\Gamma(w; U_{t-1}(z'))\big)$ is not
 $(\eps_{\pi_{t-1}^*(z)+1,\pi_{t-1}^*(z')+1},d,p)$-regular. So, in total there are at
 most $\Delta^2 \eps' |\hatA_{t-1}(y_t)|$ vertices $w\in \hatA_{t-1}(y_t)$ such
 that~\ref{GPE:Ureg} is not satisfied with respect to
 $\psi_{t-1}\cup\{y_t\to w\}$.

 Summing up, we conclude that
 \begin{equation*}\begin{split}
   |\hatA_{t-1}(y_t)\cap B_{t-1}(y_t)|
   &\le
   4\Delta d^{-\Delta}\eps' |\hatA_{t-1}(y_t)|+5\Delta\eps' |\hatA_{t-1}(y_t)|+20\Delta d^{-\Delta}\eps' |\hatA_{t-1}(y_t)|
   +\Delta^2 \eps' |\hatA_{t-1}(y_t)| \\
   &\le 30\Delta^2 d^{-\Delta}\eps' |\hatA_{t-1}(y_t)|
 \end{split}\end{equation*}
 as required.
\end{claimproof}
 
Next we will estimate the number of unfriendly vertices in $\hatA_{t-1}(y_t)$.

\begin{claim}\label{cl:buffer:P}
  $|P_{t-1}|\le 10\Delta^2d^{-\Delta}\eps' |\hatA_{t-1}(y_t)|$.
\end{claim}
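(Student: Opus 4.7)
The approach will mirror that of Claim~\ref{cl:buffer:bad}: bound separately the number of $u\in\hatA_{t-1}(y_t)$ whose selection as $\psi_t(y_t)$ would cause each of the five buffer-friendly conditions~\ref{C:past}--\ref{C:oneside} to fail. Condition~\ref{C:past} is automatic since $u\in N_G(v)$ and $\psi_{t-1}(y_j)\in N_G(v)$ for $j<t$ by buffer-friendliness of $\psi_{t-1}$. For each of the remaining conditions, only unembedded neighbours $y_k\in\{y_{t+1},\ldots,y_b\}$ of $y_t$ (and, in the~\ref{C:oneside} case, their further unembedded neighbours $z$) can cause a failure, giving at most $\Delta$ contributions to~\ref{C:largeU} and~\ref{C:largeA} and at most $\Delta^2$ contributions to~\ref{C:twoside} and~\ref{C:oneside}. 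The available tools are the same as in Claim~\ref{cl:buffer:bad}: the neighbourhood-size property $\NS(\eps,r_1,B)$, the regularity-inheritance property $\RI(\eps,(\eps_{a,b}),\eps',d,r_1,B)$, and the regularity of the pairs $\big(\hatU_{t-1}(y_k),\hatU_{t-1}(y_\ell)\big)$ and $\big(\hatU_{t-1}(y_k),U_{t-1}(z)\big)$ given by the buffer-friendliness of $\psi_{t-1}$ via~\ref{C:twoside} and~\ref{C:oneside}.

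For~\ref{C:largeU}, the $\pi^*$-bookkeeping used at the end of Claim~\ref{cl:buffer:bad} (which in the critical case $b=B=\Delta$ invokes $y_{b-1}y_b\notin E(H)$) yields $\pi^*_{t-1}(y_k)\le B-2$ for each unembedded neighbour $y_k$ of $y_t$; hence by~\eqref{eq:buffer:largehat} we have $|\hatU_{t-1}(y_k)|\ge\eps p^{B-1}n/r_1$, so $\NS(\eps,r_1,B)$ produces at most $\eps p^{B-1}n/r_1^2\le\eps'|\hatA_{t-1}(y_t)|$ vertices $u$ with $|N_\Gamma(u)\cap\hatU_{t-1}(y_k)|\neq(1\pm\eps)p|\hatU_{t-1}(y_k)|$, and combining with~\ref{C:largeU} for $\psi_{t-1}$ and the choice $\eps<(\eps')^2$ gives the required estimate for $|\hatU_t(y_k)|$. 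For~\ref{C:largeA}, the pair $\big(\hatU_{t-1}(y_t),\hatU_{t-1}(y_k)\big)$ is $(\eps_{\pi^*_{t-1}(y_t),\pi^*_{t-1}(y_k)},d,p)$-regular by~\ref{C:twoside}, and by~\eqref{eq:buffer:relhat} the set $\hatA_{t-1}(y_k)$ occupies at least an $\eps'$-fraction of $\hatU_{t-1}(y_k)$; regularity then bounds by at most $4d^{-\Delta}\eps'|\hatA_{t-1}(y_t)|$ the vertices $u$ with $\deg_G\big(u;\hatA_{t-1}(y_k)\big)<(d-\eps')p|\hatA_{t-1}(y_k)|$, and for all other $u$ the required lower bound on $|\hatA_t(y_k)|$ follows (the single vertex $u$ removed from $\im(\psi_t)\setminus\im(\psi_{t-1})$ is negligible).

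Finally, for~\ref{C:twoside} and~\ref{C:oneside} we apply $\RI(\eps,(\eps_{a,b}),\eps',d,r_1,B)$ to the pairs $\big(\hatU_{t-1}(y_k),\hatU_{t-1}(y_\ell)\big)$ and $\big(\hatU_{t-1}(y_k),U_{t-1}(z)\big)$ respectively, whose sizes are bounded below via~\eqref{eq:buffer:largehat} and~\eqref{eq:buffer:largeUC}; the $\pi^*$ bounds needed to invoke $\RI$ with parameter $B$ are exactly those established at the end of Claim~\ref{cl:buffer:bad}, yielding at most $\eps'|\hatA_{t-1}(y_t)|$ unfriendly $u$ per pair. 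Summing over the at most $\Delta$ contributions from~\ref{C:largeU} and~\ref{C:largeA} and the at most $\Delta^2$ contributions from~\ref{C:twoside} and~\ref{C:oneside}, each of size $O(d^{-\Delta}\eps')|\hatA_{t-1}(y_t)|$, yields the claimed bound $|P_{t-1}|\le10\Delta^2 d^{-\Delta}\eps'|\hatA_{t-1}(y_t)|$. The main obstacle is precisely the $\pi^*$-bookkeeping: one must verify in the two-sided inheritance case with both $y_k,y_\ell\in N_H(y_t)$ that $\pi^*_{t-1}(y_t)\le B-3$, which is the delicate case already analysed in Claim~\ref{cl:buffer:bad} using the non-edge $y_{b-1}y_b$ when $b=B=\Delta$.
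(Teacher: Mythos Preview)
Your proposal is correct and follows essentially the same approach as the paper: bound separately the vertices failing each of~\ref{C:past}--\ref{C:oneside}, using $\NS$ for~\ref{C:largeU}, the regularity from~\ref{C:twoside} for~\ref{C:largeA}, and $\RI$ for~\ref{C:twoside} and~\ref{C:oneside}, with the case-splitting on which of $y_k,z$ are adjacent to $y_t$. One small caveat: the $\pi^*$ bounds are not \emph{exactly} those from the end of Claim~\ref{cl:buffer:bad}, because here the sets $\hatU_{t-1}(y_k)$ carry an extra factor of $p$ (from intersecting with $N_\Gamma(v)$), so for instance in the one-sided case with $y_ty_k\in E(H)$ you need $\pi^*_{t-1}(y_k)\le B-3$ rather than $B-2$; this still follows from $t<B-1$ via the $y_{b-1}y_b\notin E(H)$ argument you flag, but it is worth noting that the bookkeeping is shifted by one throughout relative to Claim~\ref{cl:buffer:bad}.
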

\begin{claimproof}
  Observe that by definition, since $\psi_{t-1}$ is a buffer-friendly partial embedding, for each $w\in\hatA_{t-1}(y_t)$ the embedding $\psi_{t-1}\cup\{y_t\to w\}$ has~\ref{C:past}. For $t=b$ the remaining properties~\ref{C:largeU}--\ref{C:oneside} are trivial, so we from now on assume $t\le b-1$.

For~\ref{C:largeU}, let $k\in\{t+1,\ldots,b\}$ be such that $y_ty_k\in E(H)$. Observe that $\pi^*_{t-1}(y_t)\le t-1\le b-2\le B-2$. 
Therefore,  by~$\NS(\eps,r_1,B)$, by~\eqref{eq:buffer:largehat} and~\eqref{eq:buffer:relhat}, for all but at most $\eps p^{B-1}n/r_1^2\le  \eps'|\hatA_{t-1}(y_t)|$ vertices $w$ we have 
     $|N_\Gamma(w; \hatU_{t-1}(y_k))|= (1\pm\eps')p|\hatU_{t-1}(y_k)|$. 

  To estimate the number of vertices $w$ that do not preserve~\ref{C:largeA}, we only need to consider those 
  $y_k$ with $k\ge t+1$ such that $y_ty_k\in E(H)$. We 
  use that $\big(\hatU_{t-1}(y_t),\hatU_{t-1}(y_k)\big)$ is $(\eps',d,p)$-regular in $G$ by~\ref{C:twoside}. 
Further, by~\eqref{eq:buffer:relhat} we have $|\hatA_{t-1}(y_t)|\ge \eps'|\hatU_{t-1}(y_t)|$, so there are at most
\[\eps'|\hatU_{t-1}(y_t)|\leByRef{eq:buffer:relhat} 4d^{-\Delta}\eps'|\hatA_{t-1}(y_t)|\]
 vertices $w$ in 
$\hatA_{t-1}(y_t)$ such that $\deg_G(w;\hatA_{t-1}(y_k))<(d-\eps')p|\hatA_{t-1}(y_k)|$. 

  Next we consider~\ref{C:oneside}. Let $k\in\{t+1,\dots,b\}$, 
  and $z\in V(H)$ be such that $y_kz\in E(H)$. By~\ref{C:oneside} 
  the pair $\big(\hatU_{t-1}(y_k),U_{t-1}(z)\big)$ is $(\eps_{\pi^*_{t-1}(y_k),\pi^*_{t-1}(z)},d,p)$-regular in $G$. 
  There are  three cases to consider: $y_ty_k\in E(H)$, $y_tz\in E(H)$, and both. 
  
  In the first case (i.e.\ $y_ty_k\in E(H)$, $y_tz\not\in E(H)$), we have $t<B-1$ because $y_{B-1}y_B$ is not an edge of $H$, 
  so we conclude $\pi^*_{t-1}(y_t),\pi^*_{t-1}(y_k)\le t-1\le B-3$, and $\pi^*_{t-1}(z)\le\Delta-1$. 
  By~\eqref{eq:buffer:largeUC} and~\eqref{eq:buffer:largehat},
  and~$\RI(\eps,(\eps_{a,b}),\eps',d,r_1,B)$, we see that for all vertices $w$ of $\hatA_{t-1}(y_t)$ but at most 
  \[\eps p^{B-1}n/r_1^2\leByRef{eq:buffer:largehat} \eps' |\hatA_{t-1}(y_t)|\,,\]
  the pair $\big(N_\Gamma(w;\hatU_{t-1}(y_k)),U_{t-1}(z)\big)$ is $(\eps_{\pi^*_{t-1}(y_k)+1,\pi^*_{t-1}(z)},d,p)$-regular in $G$. 
  
  In the second case (i.e.\ $y_ty_k\not\in E(H)$, $y_tz\in E(H)$), we have 
  $\pi^*_{t-1}(y_t),\pi^*_{t-1}(y_k)\le t-1\le B-2$, and $\pi^*_{t-1}(z)\le t-1\le\Delta-2$ and similarly, by~$\RI(\eps,(\eps_{a,b}),\eps',d,r_1,B)$,  there are not more than $ \eps' |\hatA_{t-1}(y_t)|$ vertices $w$ of $\hatA_{t-1}(y_t)$ for which the pair
  $\big(\hatU_{t-1}(y_k),N_\Gamma(w; U_{t-1}(z))\big)$ is not $(\eps_{\pi^*_{t-1}(y_k),\pi^*_{t-1}(z)+1},d,p)$-regular in $G$. 
  
  In the final case (i.e.\ $y_ty_k\in E(H)$, $y_tz\in E(H)$), again we have $t<B-1$, hence $\pi^*_{t-1}(y_t),\pi^*_{t-1}(y_k)\le B-3$, and $\pi^*_{t-1}(z)\le\Delta-2$, and again all but at most $\eps' |\hatA_{t-1}(y_t)|$ vertices $w$ of $\hatA_{t-1}(y_t)$ are such that $\big(N_\Gamma(w; \hatU_{t-1}(y_k)),N_\Gamma(w; U_{t-1}(z))\big)$ is $(\eps_{\pi^*_{t-1}(y_k)+1,\pi^*_{t-1}(z)+1},d,p)$-regular in $G$.
  
  In total, we see that for all but at most $\Delta^2\eps'|\hatA_{t-1}(y_t)|$ vertices $w$ of $\hatA_{t-1}(y_t)$, the partial embedding $\psi_{t-1}\cup\{y_t\to w\}$ has~\ref{C:oneside}.

Finally, we handle~\ref{C:twoside}. Let $k,\ell\in\{t+1,\dots,b\}$ be such that 
$y_ky_\ell\in E(H)$. We have
\[\pi^*_{t-1}(y_t),\pi_{t-1}^*(y_k),\pi_{t-1}^*(y_\ell)\le t-1\le B-3\,.\]
By~\ref{C:twoside}, the pair $\big(\hatU_{t-1}(y_k),\hatU_{t-1}(y_\ell)\big)$ is 
     $(\eps_{\pi_{t-1}^*(y_k),\pi_{t-1}^*(y_\ell)},d,p)$-regular in $G$. Without loss of generality we may assume $y_ty_k\in E(H)$, and again there are two cases to consider depending on whether $y_ty_\ell\in E(H)$ or not. As before, using~\eqref{eq:buffer:largehat}, by $\RI(\eps,(\eps_{a,b}),\eps',d,r_1,B)$, at most
     \[\eps p^{B-2} n/r_1^2\leByRef{eq:buffer:largehat} \eps'|\hatA_{t-1}(y_t)|\]
     vertices $w\in\hatA_{t-1}(y_t)$ are such that 
     $\big(N_\Gamma(w;\hatU_{t-1}(y_k)),N_\Gamma(w;\hatU_{t-1}(y_\ell))\big)$ is not 
     $(\eps_{\pi_{t-1}^*(y_k)+1,\pi_{t-1}^*(y_\ell)+1},d,p)$-regular in $G$. The other case follows similarly, and we conclude that for all but at most $\Delta^2 \eps'|\hatA_{t-1}(y_t)|$ vertices $w$ of $\hatA_{t-1}(y_t)$, the partial embedding~$\psi_{t-1}\cup\{y_t\to w\}$ has~\ref{C:twoside}.

In total we get $|P_{t-1}|\le\Delta\eps'|\hatA_{t-1}(y_t)|+4\Delta
d^{-\Delta}\eps'|\hatA_{t-1}(y_t)|+\Delta^2\eps'|\hatA_{t-1}(y_t)|+\Delta^2\eps'|\hatA_{t-1}(y_t)|$
as claimed.
\end{claimproof}

We conclude from Claim~\ref{cl:buffer:bad} and Claim~\ref{cl:buffer:P} that
the number of vertices $w\in \hatA_{t-1}(y_t)$ such
that~$w$ is bad for~$y_t$ with respect to~$\psi_{t-1}$ and~$Q_{t-1}$ or
$w\in P_{t-1}$ is at most $40\Delta^2 d^{-\Delta} \eps' |\hatA_{t-1}(y_t)|$.

Now we can estimate the probability that $\psi_t$ is a buffer-friendly partial embedding, conditioning on the history and on $\psi_{t-1}$ being a buffer-friendly partial embedding. This event occurs if $y_t$ is embedded to a member of $\hatA_{t-1}(y_t)\setminus\big(B_{t-1}(y_t)\cup P_{t-1}\big)$. The number of such vertices is at least
\[\big|\hatA_{t-1}(y_t)\big|-40\Delta^2 d^{-\Delta} \eps' |\hatA_{t-1}(y_t)|\ge\tfrac12\big|\hatA_{t-1}(y_t)\big|\geByRef{eq:buffer:largehat}\tfrac14(dp-\eps'p)^{\pi^*_{t-1}(y_t)+1}|V(y_t)|\,,\]
while $y_t$ is embedded into a set of size at most $U_{t-1}(y_t)$, which by~\ref{GPE:sizeU} has size at most $(p+\eps p)^{\pi^*_{t-1}(y_t)}|V(y_t)|$. We see that the desired conditional probability is at least
\[\frac{\tfrac14(dp-\eps'p)^{\pi^*_{t-1}(y_t)+1}|V(y_t)|}{(p+\eps p)^{\pi^*_{t-1}(y_t)}|V(y_t)|}\ge \frac{d^\Delta p}{10}\,.\]
The statement of the lemma follows since the conditional probabilities multiply.\end{proof}

\chapter{Proof of the blow-up lemma for random graphs}
\label{chap:random}

In this chapter we provide the proof of Lemma~\ref{lem:rg_image}. This
proof relies on four main lemmas: The RGA Lemma (Lemma~\ref{lem:rga}), the
queue embedding lemma (Lemma~\ref{lem:queue}), the buffer defect lemma
(Lemma~\ref{lem:deletebad}, and the buffer embedding lemma
(Lemma~\ref{lem:completematch}). These lemmas and the actual proof of
Lemma~\ref{lem:rg_image} are provided in Section~\ref{sec:BUL_Gnp}. The RGA
Lemma is then proved in Section~\ref{sec:rga}, the queue embedding lemma in
Section~\ref{sec:queue}, and the buffer defect lemma in
Section~\ref{sec:fixbuffer}. The proof of Lemma~\ref{lem:completematch} is
short and thus given already in Section~\ref{sec:BUL_Gnp}.

\section{Main lemmas and the proof of the blow-up lemma}
\label{sec:BUL_Gnp}

We divide the proof of Lemma~\ref{lem:rg_image} into four lemmas, which
correspond to our four different embedding stages described in the proof
overview in Section~\ref{sec:proof_overview}. At the end of this section we
show how these four lemmas together with 
Lemma~\ref{lem:matchreduce} (good partitions lemma) imply
Lemma~\ref{lem:rg_image}.

The first of our lemmas encapsulates the outcome after applying the
random greedy algorithm (RGA) which tries to embed~$\Xmain$
into~$\Vmain$. How this RGA operates is explained in the proof of
this lemma in Section~\ref{sec:rga}.  The lemma claims the existence of a
good partial embedding with certain deterministic properties which we
require and which the RGA with high probability produces. Recall that the
RGA constructs the queues~$\Xq_i$.

\begin{lemma}[RGA lemma]\label{lem:rga}
  We assume the General Setup. Suppose that $\Gamma$ has
  $\NS(\eps,r_1,\Delta)$ and
  $\RI(\eps,(\eps_{a,b}),\eps',d,r_1,\Delta)$. Then there is a good
  partial embedding $\psiRGA$ of~$H$ into~$G$ with the following
  properties. For each~$i$, let $\Xq_i:=\Xmain_i\setminus\dom(\psiRGA)$. 
  Then the
  following hold for each~$i$. Let $b$ be such that $\Xbuf_i$ is a degree-$b$
  buffer.
  \begin{enumerate}[label=\itmarab{RGA}]
    \item\label{rga:emb:nobufqueue} All neighbours of all buffer vertices
    are embedded by $\psiRGA$.
    \item\label{rga:emb:place} Every vertex in $X_i\cap\dom(\psiRGA)$ is
    embedded to $\Vmain_i$ by $\psiRGA$.
    \item\label{rga:emb:smallqueue} We have $|\Xq_i|\le2\rho|X_i|$.
    \item\label{rga:emb:main} For every set $W\subset V_i$ of size at least
    $\rho|V_i|$, there are at most $\rho|X_i|$
    vertices in $\Xbuf_i$ with fewer than $(dp)^b|W|/2$ candidates in $W$.
    \item\label{rga:emb:nobad} If $\Xbuf_i$ is not a clique buffer, then every vertex in $V_i$ is a
    candidate for at least $\mu\big(d^\Delta p/100\big)^b|X_i|$ vertices of $\Xbuf_i$.
  \end{enumerate}
\end{lemma}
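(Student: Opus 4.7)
The plan is to design a random greedy algorithm (RGA) that processes $\Xmain$ in a carefully chosen order and to analyse it using the tools assembled in Chapter~\ref{chap:toolsetc}. First I will apply Lemma~\ref{lem:tau} to obtain an ordering $\tau$ of $\Xmain$ in which $N(\Xbuf)$ precedes everything else, the neighbours $y_1,\dots,y_b$ of each buffer vertex $x\in\Xbuf$ appear consecutively in $\tau$ (with $y_{\Delta-1}y_\Delta\notin E(H)$ whenever $\deg_H(x)=\Delta$ and $x$ is not in a copy of $K_{\Delta+1}$), and neighbourhoods of non-clique buffers precede those of clique buffers. The RGA then processes vertices in this order: at step $t$ with current vertex $x_t$, compute the bad set $B_{t-1}(x_t)$ with respect to $\psi_{t-1}$ and $Q=\emptyset$, and if $|\Amain_{t-1}(x_t)\setminus B_{t-1}(x_t)|\ge\tfrac14\mu(d-\eps')^{\pi^*_{t-1}(x_t)}p^{\pi^*_{t-1}(x_t)}|\Vmain(x_t)|$ then embed $x_t$ uniformly at random into that set; otherwise leave $x_t$ unembedded. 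At the end set $\psiRGA:=\psi_T$ and $\Xq_i:=\Xmain_i\setminus\dom(\psiRGA)$. By construction every embedded vertex lies in $\Vmain$, giving~\ref{rga:emb:place}, and since we only embed into $\Amain\setminus B$, the output is automatically a good partial embedding (by Definition~\ref{def:bad_vertices}). Lemma~\ref{lem:fewbad} applies with $D=\Delta$, since whenever we embed $x_t$ a triangle in $H$ with two other unembedded vertices forces each of its $\pi^*$-values to be at most $\Delta-2$, so $|B_{t-1}(x_t)|\le 20\Delta^2\eps' p^{\pi^*_{t-1}(x_t)}|V(x_t)|$ is negligible next to the queue threshold.

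For~\ref{rga:emb:smallqueue} the main tool is Lemma~\ref{lem:rga:welldistr} applied with $W_t:=\Vmain_i\setminus\im(\psi_t)$, which has size at least $\mu|V_i|\ge\rho|V_i|$ throughout because $|\Vmain_i|=(1-3\mu)|V_i|$ by~\ref{G:sizes} while at most $(1-4\mu)|V_i|$ vertices of $V_i$ ever enter the image. The lemma then gives a.a.s.\ at most $\rho|X_i|$ non-image-restricted vertices of $X_i$ that ever violate $|C_t(x)\cap W_t|\ge(dp-\eps'p)^{\pi^*_t(x)}|W_t|$; a short calculation using~\ref{GPE:sizeC} shows that whenever this inequality holds the queue threshold is automatically met, so combined with the at most $\rho p^\vartheta|X_i|$ image-restricted vertices guaranteed by~\ref{PtH:image} we obtain $|\Xq_i|\le 2\rho|X_i|$. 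Property~\ref{rga:emb:nobufqueue} uses that during the first phase (embedding $N(\Xbuf)$) at most $4\kappa\DeltaRp\mu|X_i|$ vertices per part are embedded by~\ref{BUF:sizebuf}, so $|W_t|\ge(1-100\mu)|V_i|$ there; the gap between the queue threshold and the candidate-size bound from Lemma~\ref{lem:rga:welldistr} is then large enough that no buffer neighbour can be queued (if necessary, running the RGA in a ``protected'' mode during this phase that bypasses the queue test, while verifying from the large $|W_t|$ that $\Amain\setminus B$ remains nonempty). Property~\ref{rga:emb:main} is an immediate consequence of Lemma~\ref{lem:rga:welldistr} applied at $t=T$ with the given $W$, noting that $\pi^*_T(x)=b$ for every $x\in\Xbuf_i$.

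The hard part will be~\ref{rga:emb:nobad}, which I plan to handle in three steps. Step~1: apply Lemma~\ref{lem:large_nbs} at the moment $T'$ when all non-clique buffer neighbourhoods have been processed, with $B=\Delta$; condition~\ref{cond:noedge} of Lemma~\ref{lem:tau} guarantees that each such neighbour has at most $\Delta-2$ previously embedded neighbours and $|\dom(\psi_{T'})\cap X_i|\le 4\kappa\DeltaRp\mu|X_i|\le 8\mu\kappa\DeltaRp|X_i|$, so the conclusion gives $|N_G(v;\Vmain_j)\setminus\im(\psi_{T'})|\ge\tfrac12\deg_G(v;V_j)$ for every $v\in V_i$ and every $j$ with $ij\in R'$, and by monotonicity of the image this holds at every earlier time as well. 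Step~2: for each non-clique buffer $x\in\Xbuf_i$ with consecutive neighbour block $y_1,\dots,y_b$ in $\tau$, apply Lemma~\ref{lem:nonclique_buffer}. Its distance-two hypothesis is met because~\ref{BUF:dist} forces $\dist_H(x,x')\ge 5$ for distinct $x,x'\in\Xbuf$, so no vertex within distance two of $x$ has been embedded before $y_1$. Conditioning on the history just before $y_1$, the lemma gives that the probability of $\psi(N_H(x))\subset N_G(v)$ is at least $(d^\Delta p/10)^b$. Step~3: define $Y_x=1$ if either $\psi(N_H(x))\subset N_G(v)$ or some $y_t$ was queued (in the latter case $v$ is automatically a candidate for $x$, since candidacy only constrains embedded neighbours). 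Enumerated over $x\in\Xbuf_i$ in the order their neighbourhoods occur in $\tau$, the $Y_x$ form a sequentially dependent sequence with conditional expectation at least $(d^\Delta p/10)^b$, so Lemma~\ref{lem:coupling}\ref{coupling:b} yields that except with probability $\exp(-\Omega(p^b n/r_1))$ at least $2\mu(d^\Delta p/10)^b|X_i|\ge\mu(d^\Delta p/100)^b|X_i|$ of the $Y_x$ equal~$1$. A union bound over the at most $r_1 n$ pairs $(i,v)$ closes the argument because $p^b n/r_1\gg\log n$ by choice of~$C$.

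The main obstacle is the coupling in step~3: the events $\{Y_x=1\}$ are correlated through the shared embedding of $\Xmain$, and care is needed to align the abstract ``always-embed'' setup of Lemma~\ref{lem:nonclique_buffer} with the actual RGA (which may queue vertices) so that the conditional lower bound on $Y_x$ survives in a form Lemma~\ref{lem:coupling} can exploit. A secondary subtlety lies in~\ref{rga:emb:nobufqueue}, where \emph{no} buffer neighbour may enter the queue rather than merely the bulk bound from Lemma~\ref{lem:rga:welldistr}; but the very sparse phase-1 image makes this manageable.
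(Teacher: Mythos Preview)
Your overall architecture matches the paper's: apply Lemma~\ref{lem:tau}, run an RGA, and derive~\ref{rga:emb:smallqueue} and~\ref{rga:emb:main} from Lemma~\ref{lem:rga:welldistr}, then obtain~\ref{rga:emb:nobad} via Lemmas~\ref{lem:large_nbs} and~\ref{lem:nonclique_buffer} together with Lemma~\ref{lem:coupling}. The problem is your choice to take $Q=\emptyset$ in the definition of the bad set. Recall that Lemma~\ref{lem:fewbad} requires as a hypothesis that every unembedded $z\notin Q$ satisfies $|\Amain(z)|\ge\tfrac12\mu(d-\eps')^{\pi^*(z)}p^{\pi^*(z)}|\Vmain(z)|$. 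With $Q=\emptyset$ this must hold for \emph{all} unembedded vertices, and it fails the moment a single vertex is skipped: if some earlier neighbour $y$ of $x_t$ has been left unembedded with tiny $\Amain(y)$, then the badness clause $\deg_G(v;\Amain(y))<(d-\eps')p|\Amain(y)|$ can hold for essentially all $v$, so $|B_{t-1}(x_t)|$ is no longer bounded by $20\Delta^2\eps'p^{\pi^*}|V(x_t)|$. In other words, your algorithm can cascade: one skipped vertex forces its later neighbours to be skipped too, and the whole analysis of~\ref{rga:emb:smallqueue} collapses. The paper avoids this by maintaining a dynamic queue $Q_t$ (updated after every embedding with exactly those $z$ whose $\Amain$ has dropped below the threshold) and defining $B_t(x)$ \emph{with respect to $Q_t$}; then invariant~\ref{inv:sizeA} is automatic and Lemma~\ref{lem:fewbad} applies at every step.

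This same queue mechanism is what makes~\ref{rga:emb:nobufqueue} a clean \emph{deterministic} fact rather than the vague ``protected mode'' you sketch. Because bad vertices are taken with respect to $Q_t$, embedding $y_{j-1}$ outside $B(y_{j-1})$ guarantees $|\Amain(y_j)|\ge(d-\eps')p|\Amain_{\text{old}}(y_j)|$ for each non-queue neighbour $y_j$, and since the $y_1,\dots,y_b$ are consecutive in $\tau$ and start with $\Amain(y_j)=\Vmain(y_j)\setminus\im(\psi)$ of size at least $\tfrac23|V(y_j)|$, induction gives $|\Amain_t(y_j)|\ge\tfrac23(dp-\eps'p)^{\pi^*_t(y_j)}|V(y_j)|$ throughout; no buffer neighbour ever meets the queue criterion. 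This argument also yields the stronger lower bound $\tfrac{1}{10}(dp)^{\pi^*}|V(y)|$ (without the $\mu\zeta$) on the set into which each buffer neighbour is embedded, which is exactly the hypothesis Lemma~\ref{lem:large_nbs} needs. Finally, Lemma~\ref{lem:nonclique_buffer} is stated for precisely this RGA-with-dynamic-$Q_t$ setup, so once you adopt it the coupling in your Step~3 lines up: in the paper one sets $Y_x=1$ if either $\psi(N_H(x))\subset N_G(v)$ or $\ev_{\sublem{lem:large_nbs}}$ has already failed before embedding $y_1$ (not ``some $y_t$ was queued'', which never happens and in any case does not make $v$ a candidate), and then the conditional lower bound $(d^\Delta p/10)^b$ from Lemma~\ref{lem:nonclique_buffer} holds on the nose.
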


We shall next extend the good partial embedding provided by the RGA lemma
and embed the queue vertices into the sets $\Vq_i$. That this is possible
is stated by the following lemma. As mentioned in the proof overview, the
proof of this lemma relies on~$\Gamma$ not having a dense spot, which the
congestion condition $\CON(\rho,r_1,\Delta)$ guarantees.

\begin{lemma}[Queue embedding lemma]\label{lem:queue}
  We assume the General Setup. Suppose that $\Gamma$ has $\NS(\eps,r_1,\Delta)$, $\RI(\eps,(\eps_{a,b}),\eps',d,r_1,\Delta)$ and $\CON(\rho,r_1,\Delta)$.
  Let $\psi$ be a good partial embedding whose image is disjoint from the sets
  $\Vq_i$, and suppose that for each $i$ we have a set $\Xq_i\subset X_i$ of
  size at most $2\rho|X_i|$. Then there is a good partial embedding $\psiq$
  extending $\psi$ such that
  \[\dom(\psiq)\setminus\dom(\psi)=\Xq\text{ and } 
  \im(\psiq)\setminus\im(\psi)\subset\Vq\,.\]
\end{lemma}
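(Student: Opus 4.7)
The plan is to process indices $i \in [r]$ sequentially, and at each stage extend the current partial embedding $\psi'$ (initially $\psi$) by a perfect matching from $\Xq_i$ into $\Vq_i$ obtained via Hall's theorem. The bipartite graph $B_i$ to which Hall is applied has edges $(x,v) \in \Xq_i \times \Vq_i$ with $v \in C(x)\setminus B(x)$, computed with respect to $\psi'$ and $Q := V(H)\setminus\dom(\psi')$; with this choice of $Q$ the second clause of the badness definition is vacuous, so the $|\Amain|$-precondition of Lemma~\ref{lem:fewbad} holds trivially. By~\ref{PtH:dist}, any two vertices in $X_i$ have $H$-distance at least $10$, hence are non-adjacent and share no $H$-neighbour, so realizing the matched pairs one at a time preserves good-partial-embedding status without interference within the same part. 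Since previously matched queue vertices land in $\Vq_j$ for $j<i$ and $\im(\psi)\cap \Vq=\emptyset$ by hypothesis, we have $\im(\psi')\cap\Vq_i=\emptyset$ at the start of stage~$i$, so $A(x)\cap\Vq_i=C(x)\cap\Vq_i$ throughout.

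The heart of the proof is verifying Hall's condition for $B_i$. Suppose for contradiction that some $Y\subseteq\Xq_i$ satisfies $W:=N_{B_i}(Y)\subseteq\Vq_i$ with $|W|<|Y|$. For each $x\in Y$, every vertex of $C(x)\cap\Vq_i\setminus W$ is bad, so combining Lemma~\ref{lem:fewbad}\ref{fewbad:b} (applicable via $\NS$ and $\RI$),~\ref{GPE:sizeC}, and the bound $|I_x|\ge\zeta(dp)^{|J_x|}|V_i|$ from~\ref{G:restr} yields
\[
  |C(x)\cap W|\ \ge\ \tfrac{1}{8}\mu\zeta(dp)^{\pi^*(x)}|V_i|.
\]
Since $C(x)\subseteq\comN_\Gamma(\Pi(x)\cup J_x)$ and the star-feet $\Pi(x)\cup J_x$ are pairwise disjoint by~\ref{PtH:dist} (which also gives $J_x\cap J_{x'}=\emptyset$), this is a lower bound on the contribution of $x$ to the edge count of the congestion graph $\AG\bigl(\Gamma,W,\{\Pi(x)\cup J_x:x\in Y\}\bigr)$. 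Stratifying by $\ell=\pi^*(x)$ and invoking $\CON(\rho,r_1,\Delta)$ on each stratum, then using $|V_i|\ge n/(\kappa r_1)$, gives after rearrangement
\[
  |W|\ \ge\ \tfrac{1}{112}\cdot\tfrac{\mu\zeta d^\Delta}{\kappa}\cdot\tfrac{n}{r_1}\ >\ 2\rho|X_i|\ \ge\ |Y|,
\]
where the middle inequality uses $\rho\ll\mu\zeta d^\Delta/\kappa^2$. This contradicts $|W|<|Y|$, so Hall's condition holds and the desired matching exists.

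The main technical obstacle I anticipate is the side-conditions for applying $\CON(\rho,r_1,\Delta)$: one needs $|W|\le|\mathcal{F}_\ell|$ per stratum $\ell$ (we only have $|W|<|Y|=\sum_\ell|\mathcal{F}_\ell|$ globally) and $|\mathcal{F}_\ell|\le\rho n$. The latter is automatic from $|Y|\le 2\rho|X_i|\le 2\rho\kappa n/r_1\le\rho n$ once $\rho\le 1/(2\kappa)$ is ensured in the constant hierarchy of Section~\ref{sec:setup_constants}. For the former, when some stratum has $|\mathcal{F}_\ell|<|W|$, one partitions $W$ into $\lceil|W|/|\mathcal{F}_\ell|\rceil$ pieces each of size at most $|\mathcal{F}_\ell|$ and sums the $\CON$-bounds piecewise; the extra lower-order term this produces is absorbed by the constants in the displayed inequality above.
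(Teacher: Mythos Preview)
Your approach is essentially the same as the paper's: process the parts $i\in[r]$ sequentially, build the bipartite graph on $\Xq_i\times\Vq_i$ using candidate-minus-bad vertices (with $Q$ chosen to trivialise the $\Amain$-precondition of Lemma~\ref{lem:fewbad}), and verify Hall's condition by deriving a contradiction from $\CON$. The only structural difference is in how you equalise set sizes for $\CON$: the paper selects a single largest stratum $X_b\subset X$ (by pigeonhole some $b\in\{0,\dots,\Delta\}$ has $\ge|W|/(\Delta+1)$ vertices), trims it to exactly $|W|/(\Delta+1)$, and then picks a subset $W'\subset W$ of the same size by an expectation argument, so that $\CON$ applies directly with $|W'|=|\mathcal F|$. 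Your partition-of-$W$ trick also works, but only once you commit to a single stratum $\ell_0$ with $|\mathcal F_{\ell_0}|\ge|Y|/(\Delta+1)>|W|/(\Delta+1)$, so that the number of pieces is at most $\Delta+1$; as written, ``invoking $\CON$ on each stratum'' is not enough, since tiny strata yield a huge number of pieces and the extra term $k_\ell\rho p^\ell n/r_1$ swamps the lower bound.

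There is one genuine slip in your final displayed inequality. After the $\CON$ step you correctly arrive at $|W|\ge c\,\mu\zeta d^\Delta|V_i|$ for some absolute $c$, but you then invoke $|V_i|\ge n/(\kappa r_1)$ to get $|W|\ge c'\,n/r_1$ and compare this to $2\rho|X_i|$. That comparison fails: you have no upper bound on $|X_i|$ in terms of $n/r_1$ (only $|X_i|\le\kappa n/r$, and $r$ may be much smaller than $r_1$), and in the constant hierarchy $\rho$ is fixed before $r_1$. The fix is simply not to pass through $n/r_1$: keep $|V_i|$ and use $|X_i|=|V_i|$ directly, obtaining $|W|\ge c\,\mu\zeta d^\Delta|V_i|>2\rho|V_i|=2\rho|X_i|\ge|Y|$, which only needs $\rho\ll\mu\zeta d^\Delta$ independent of $r_1$. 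This is exactly how the paper's contradiction closes (both sides of its final inequality are proportional to $|X_b||V_i|$, so only constants are compared).
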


Note that in the RGA lemma, Lemma~\ref{lem:rga}, we do not have that all vertices~$v$ in $V_i$ are candidates for many
vertices of $\Xbuf_i$ if $\Xbuf_i$ is a clique buffer (and in fact our RGA
may fail to produce this property). In this case we say that~$v$ has
a \emph{buffer defect} for~$\Xbuf_i$.
It follows from~\ref{rga:emb:main}, however, that only at most $\rho|V_i|$
vertices of $V_i$ can have a buffer defect for~$\Xbuf_i$.
Our next lemma, the buffer defect lemma, deals with fixing these buffer defects.

In order to apply this lemma we need to alter the embedding strategy
described so far slightly.
Indeed, before we begin the embedding, we will reserve some copies of
$K_{\Delta+1}$ in~$H$, each of which has a vertex in $\tX_i$. We call these the
\emph{reserved cliques} and denote the family of reserved cliques for~$\tX_i$
by $\cK_i$. We put the vertices
of all reserved cliques into the set $\Xc$, which is disjoint from $\Xmain$ and
$\Xbuf$. Then we apply the RGA lemma and the queue embedding lemma to
obtain a good partial embedding of all of~$H$ but the reserved
cliques~$\Xc$ and the buffer vertices~$\Xbuf$.
Next, we use the buffer defect lemma to embed the reserved
cliques (and, for technical reasons, also
some vertices of~$\Xbuf_i$) on vertices with buffer defects, to obtain a good partial embedding $\psigood$ in which only
the vertices of $\Xbuf$ remain unembedded and without any buffer defects
left.
Note that in the following lemma $\Abuf(x)=A(x)\cap\Vbuf(x)=C(x)\cap\Vbuf(x)\setminus\im(\psi)$.

\begin{lemma}[Buffer defect lemma]\label{lem:deletebad}
  We assume the General Setup. Suppose that~$\Gamma$ has
  $\NS(\eps,r_1,\Delta)$, $\RI(\eps,(\eps_{a,b}),\eps',d,r_1,\Delta)$ and
  $\CON(\rho,r_1,\Delta)$. Let~$\psi$ be a good partial embedding whose
  image is disjoint from the
  sets~$\Vc_i$ and whose domain contains $N(\Xbuf)$. Suppose that for each~$i$ we are given a family~$\cK_i$ of
  reserved cliques $K_{\Delta+1}$, not embedded by~$\psi$, such that for each $K\in\cK_i$ and $x,y\in K$ we have $x\in X_j$ and
  $y\in X_k$ with $jk\in R'$, and that either 
  \begin{enumerate}[label=\abc] 
  \item  $\Xbuf_i$ is a degree-$b$ non-clique buffer with $\cK_i=\emptyset$,
    and all vertices in $V_i$ are candidate for at least $\mu\big(d^\Delta p/100\big)^b|X_i|$
    vertices in $\Xbuf_i$, or
  \item $\Xbuf_i$ is a clique buffer with $|\cK_i|=2\rho|X_i|$, and 
    all but at most $\rho|V_i|$ vertices of
    $V_i$ are candidate for at least $\mu(dp)^\Delta|X_i|/2$ vertices in
    $\Xbuf_i$.
  \end{enumerate} 
  Let $\Xc_i$ be the set of vertices 
  of~$X_i$ contained in some reserved clique from $\cK_i$ and assume that
  $|\Xc_i|\le 2\kappa(\DeltaRp+1)\rho|X_i|$.
  
  Then there is a good partial
  embedding $\psigood$ extending $\psi$ such that for each~$i$ the
  following hold. Let $b$ be such that $\Xbuf_i$ is a degree-$b$
  buffer.
 \begin{enumerate}[label=\itmarab{BD}]
    \item\label{FIN:place} $\Xc_i\subseteq\dom(\psigood)$.
    \item\label{FIN:bigA} For each $x\in\Xbuf_i\setminus\dom(\psigood)$ we have
    $|\Abuf(x)|\ge \mu(dp)^b|V_i|/4$.
    \item\label{FIN:ManyCand} Each $v\in V_i\setminus\im(\psigood)$ is a candidate
    for at least $\mu\big(d^\Delta p/100\big)^b|X_i|$ vertices in $\Xbuf_i\setminus\dom(\psigood)$.
  \end{enumerate}
\end{lemma}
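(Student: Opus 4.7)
Case~(a) is immediate: since $\cK_i=\emptyset$ for non-clique buffers, we have $\Xc_i=\emptyset$, and I propose to take $\psigood=\psi$. Then~\ref{FIN:place} is vacuous, \ref{FIN:ManyCand} is exactly the hypothesis of case~(a), and \ref{FIN:bigA} follows from~\ref{GPE:sizeC} applied with $\pi(x)=b$ (all neighbours of $x\in\Xbuf$ are embedded, since $N(\Xbuf)\subseteq\dom(\psi)$), together with the facts that buffer vertices are not image restricted (by~\ref{BUF:last}) and that $\im(\psi)\cap\Vbuf=\emptyset$ in the way $\psi$ is constructed in the main blow-up proof. The real work lies in case~(b).

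For each clique-buffer index~$i$ let $D_i\subseteq V_i$ denote the defective set, $|D_i|\le\rho|V_i|$. My plan is to cover each $v\in D_i$ by embedding a distinct reserved clique $K\in\cK_i$ with its $X_i$-vertex $x_0$ mapped to~$v$ and the remaining $\Delta$ vertices mapped into the $\Vc$-clusters of the other parts spanned by~$K$. Since reserved cliques are entirely unembedded and contain no image-restricted vertex (by~\ref{BUF:existcliques}), we have $\Pi(x_0)=\emptyset$ and $C(x_0)=V_i$, so~$v$ is automatically a candidate. The remaining $\Delta$ vertices $x_1,\dots,x_\Delta$ of the clique are embedded greedily into their $\Cc$-sets: at step~$j$, Lemma~\ref{lem:fewbad}(b) with $D=\Delta$ applies for $j\le\Delta-1$---the triangle condition only demands $D\ge\pi^*(x_j)+2=j+2\le\Delta$---and bounds the bad vertices by $20\Delta^2\eps' p^j|V_{i_j}|$, a constant-factor fraction of $|\Cc(x_j)|\ge(1-\eps')\mu(dp-\eps'p)^j|V_{i_j}|$ coming from~\ref{GPE:sizeC}; for $j=\Delta$ all neighbours of $x_\Delta$ are embedded and Lemma~\ref{lem:fewbad}(a) forbids bad candidates directly. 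Hence each individual clique extension succeeds and the resulting $\psigood$ remains a good partial embedding.

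The two subtle points are (i)~assigning a distinct clique $K_v$ to each $v\in D_i$ with $v\notin B(x_{K_v})$, and (ii)~verifying~\ref{FIN:bigA} after all the extensions. For~(i) I plan to run Hall's theorem on the bipartite graph between $D_i$ and $\cK_i$ with $v\sim K$ iff $v\notin B(x_K)$: Lemma~\ref{lem:fewbad}(b) gives $|B(x_K)|\le20\Delta^2\eps'|V_i|$, and together with $|\cK_i|=2\rho|X_i|\ge 2|D_i|$ and $\eps'\ll\rho$ a double-counting argument---showing that the set $B^*$ of vertices lying in more than $\rho|X_i|$ of the sets $B(x_K)$ has size $|B^*|\le40\Delta^2\eps'|V_i|$---verifies Hall's condition in every case. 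A minor wrinkle is that $B(x_K)$ shifts slightly as we embed cliques, but because only $O(\rho n)$ new vertices are added in total this shift can be absorbed into the constants. Point~(ii) is the main obstacle: one needs $|D_i\cap\Cbuf(x)|\le\mu(dp)^b|V_i|/4$ for every unembedded $x\in\Xbuf_i$, since $|\Abuf(x)|\ge|\Cbuf(x)|-|D_i\cap\Cbuf(x)|$ (no other new vertices land in $\Vbuf_i$). I plan to deduce this bound from the distributional control on $C(x)$ inherited from the RGA, invoking Lemma~\ref{lem:rga:welldistr} (or a matching upper-bound analogue) with $W$ chosen inside $D_i$ to conclude that essentially every buffer vertex~$x$ has $|C(x)\cap D_i|\approx\rho(dp)^b|V_i|$, much smaller than the required $\mu(dp)^b|V_i|/4$, with a small exceptional set handled during the partition refinement. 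Condition~\ref{FIN:ManyCand} is then essentially free: defective vertices no longer lie in $V_i\setminus\im(\psigood)$, non-defective ones inherit the stronger hypothesis bound $\mu(dp)^\Delta|X_i|/2\gg\mu(d^\Delta p/100)^\Delta|X_i|$, and clique-extension vertices land in $\Vc$-clusters and never touch any neighbour of $\Xbuf_i$.
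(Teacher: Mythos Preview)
Your plan has two genuine gaps, both in case~(b).

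\textbf{The main gap is point~(ii), your route to~\ref{FIN:bigA}.} You need $|D_i\cap\Cbuf(x)|\le\tfrac{\mu}{4}(dp)^b|V_i|$ for \emph{every} unembedded $x\in\Xbuf_i$, and you propose to get this from ``a matching upper-bound analogue'' of Lemma~\ref{lem:rga:welldistr}. No such analogue is proved in the paper, and even if one held it would only control all but some $O(\rho|X_i|)$ exceptional buffer vertices~$x$; these cannot be ``handled during the partition refinement''---we are long past that stage, and the candidate sets are already fixed. The paper's solution is precisely to identify such exceptional buffer vertices and embed them. Lemma~\ref{lem:identifybad} runs an iterative process that simultaneously grows a set $P_i\subset V_i$ of poor vertices and a set $D_i\subset\Xbuf_i$ (not a subset of $V_i$!) of buffer vertices whose candidates concentrate in~$P_i$, and uses $\CON(\rho,r_1,\Delta)$ to show both sets stay below $2\rho|X_i|$. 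One then matches $D_i$ into $P_i\cap\Vbuf_i$ via Hall and $\CON$; afterwards every \emph{remaining} buffer vertex has, by construction, many candidates in $\Vbuf_i\setminus P_i$, a set the clique embedding never touches.

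\textbf{A second gap is in your greedy clique extension.} You bound $|\Cc_t(x_j)|$ via~\ref{GPE:sizeC} and $|B_t(x_j)|$ via Lemma~\ref{lem:fewbad}, but you never control $|\im(\psi_t)\cap\Cc_t(x_j)|$. Since $|\Cc_t(x_j)|\approx\mu(dp)^j|V_{i_j}|$ can be far smaller than the $O(\rho|V_{i_j}|)$ vertices already embedded into $\Vc_{i_j}$, your $\Ac_t(x_j)$ could well be empty. The paper handles this with a dedicated clique-finding lemma (Lemma~\ref{lem:findclique}) requiring only that the anchor vertex~$v$ still has $\Omega(\mu dp|V_j|)$ $G$-neighbours in each $\Vc_j\setminus\im(\psi)$, together with Algorithm~\ref{alg:delbad}, which always processes the currently most endangered vertex (minimising a parameter $\mindeg_\psi(v)$). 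A short argument using $\NS(\eps,r_1,\Delta)$ then shows that no vertex ever falls below the required degree threshold.
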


It remains to embed the buffer vertices, which is performed with the help
of the following lemma. 

\begin{lemma}[Buffer embedding lemma]\label{lem:completematch}
  We assume the General Setup. Suppose that $\Gamma$ has
  $\CON(\rho,r_1,\Delta)$. Suppose that $\psi$ is a good partial embedding,
  in which all the vertices $N(\Xbuf)$ are embedded, and that for each~$i$
  we have a subset $X'_i$ of $\Xbuf_i$, and a subset $V'_i$ of
  $V_i\setminus\im(\psi)$ with $|X'_i|=|V'_i|$. Suppose $\Xbuf_i$ is a
  degree-$b$ buffer, and for some $\delta>0$ and $\rho\le \frac{\delta\mu d^b}{100\kappa}$ we have
 \begin{enumerate}[label=\itmarab{BUF}]
  \item\label{cpm:deg} for each $x\in X'_i$ we have
    $|C(x)\cap V'_i|\ge \mu(dp)^b|V_i|/4$,
  \item\label{cpm:pseud} for each $W\subset V'_i$ with $|W|\ge\rho|V_i|$, there are at most $\rho|X_i|$ vertices in $X'_i$ which do not have a candidate in $W$, and
  \item\label{cpm:cands} each $v\in V'_i$ is a candidate
    for at least $\delta p^b|X_i|$ vertices in $X'_i$.
 \end{enumerate}
 Then there is a good partial embedding $\psi'$ extending $\psi$ such that
 $\dom(\psi')=\dom(\psi)\cup \bigcup_i X'_i$ and $\im(\psi')=\im(\psi)\cup\bigcup_{i} V'_i$.
\end{lemma}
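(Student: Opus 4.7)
The plan is to reduce the extension of $\psi$ to finding, for each cluster $V_i$ independently, a perfect matching between $X'_i$ and $V'_i$ in the bipartite graph of candidate pairs $\{(x,v):v\in C(x)\cap V'_i\}$. This decoupling works because every buffer vertex $y\in\Xbuf$ already has all its $H$-neighbours embedded by $\psi$, so $C(y)$ is fixed and, since $y$ is not image-restricted by~\ref{BUF:last}, satisfies $C(y)\subseteq V(y)$; moreover, buffer vertices lie at pairwise $H$-distance at least $5$ by~\ref{BUF:dist}, so the matchings produced in different clusters compose into an injective extension $\psi'$ and never affect the data $\Pi(z)$, $U(z)$, $C(z)$ of any vertex $z\notin\dom(\psi')$. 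Once such matchings exist, $\psi'$ is therefore automatically a good partial embedding: \ref{GPE:rightplace} is clear since $V'_i\subseteq V_i=I_x$ for buffer $x$, and \ref{GPE:sizeU}, \ref{GPE:sizeC}, \ref{GPE:Ureg} are inherited unchanged from $\psi$.

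To verify Hall's condition $|N(Y)\cap V'_i|\ge|Y|$ for every $Y\subseteq X'_i$, set $m:=|X'_i|=|V'_i|$ and let $b$ be such that $\Xbuf_i$ is a degree-$b$ buffer. The case $b=0$ is trivial, so assume $b\ge 1$. I split into three regimes along the thresholds $\rho|V_i|$ and $m-\rho|V_i|$. The intermediate regime $\rho|V_i|<|Y|\le m-\rho|V_i|$ follows directly from~\ref{cpm:pseud}: if Hall failed, the set $W:=V'_i\setminus N(Y)$ would have $|W|\ge\rho|V_i|$ and every vertex of $Y$ would lack a candidate in $W$, contradicting the bound $\rho|X_i|=\rho|V_i|$ from~\ref{cpm:pseud}; hence $|W|<\rho|V_i|$ and $|N(Y)\cap V'_i|>m-\rho|V_i|\ge|Y|$.

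The two extreme regimes are handled via the congestion property $\CON(\rho,r_1,\Delta)$, applied to the congestion graph $\AG(\Gamma,U,\cF)$ in which $\cF$ is a pairwise disjoint family of $b$-sets of the form $\Pi(\cdot)$ (disjointness coming from~\ref{BUF:dist}). In the small regime $|Y|\le\rho|V_i|$ I take $\cF=\{\Pi(y):y\in Y\}$ and $U:=V'_i\cap N(Y)$: a double count using~\ref{cpm:deg} gives $e(\AG)\ge|Y|\cdot\mu(dp)^b|V_i|/4$, and a Hall failure would allow $\CON$ (applicable since $|U|<|Y|=|\cF|\le\rho n$) to deliver $e(\AG)\le p^b|Y|(7|Y|+\rho n/r_1)$; combining these with $|V_i|\ge n/(\kappa r_1)$ and $\rho\le\delta\mu d^b/(100\kappa)$ forces $|Y|$ to be a fixed constant fraction of $\mu d^b|V_i|$, contradicting $|Y|\le\rho|V_i|$. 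In the large regime $|Y|>m-\rho|V_i|$, so $s:=m-|Y|<\rho|V_i|$, I instead take $\cF'=\{\Pi(x):x\in X'_i\setminus Y\}$ and an $s$-subset $U\subseteq V'_i\setminus N(Y)$, which exists because Hall failure forces $|V'_i\setminus N(Y)|>s$; each $v\in U$ has at least $\delta p^b|X_i|$ candidates by~\ref{cpm:cands}, none in $Y$, so $e(\AG)\ge s\cdot\delta p^b|X_i|$, and $\CON$ together with $|X_i|\ge n/(\kappa r_1)$ now forces $s$ to be a fixed constant fraction of $\delta|X_i|$, again contradicting $s<\rho|V_i|$.

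The main technical obstacle is the small regime: a direct double count only yields $|N(Y)\cap V'_i|\ge\mu(dp)^b|V_i|/4$, which suffices for truly tiny $Y$ but fails once $|Y|$ exceeds this threshold. The congestion property rescues the argument by exploiting the pseudorandomness of $\Gamma$ to prevent many candidate sets from concentrating in a sublinear image set; the large regime mirrors this argument with the roles of rows and columns swapped, and the numerical constraint $\rho\le\delta\mu d^b/(100\kappa)$ is used precisely to close the chain of inequalities in both extremes.
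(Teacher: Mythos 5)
Your proof is correct and follows essentially the same approach as the paper: reduce to per-cluster Hall's-condition verifications, split $|Y|$ into small/intermediate/large regimes against the thresholds $\rho|X_i|$ and $|X'_i|-\rho|X_i|$, handle the middle case via~\ref{cpm:pseud}, and handle the two extremes by double-counting the candidate graph (equivalently the congestion graph $\AG(\Gamma,\cdot,\cF)$ with $\cF$ the disjoint $b$-sets $\Pi(\cdot)$) against $\CON(\rho,r_1,\Delta)$ using~\ref{cpm:deg} and~\ref{cpm:cands} respectively. The only cosmetic deviations are that you feed $\CON$ the pair $(U,\cF)$ with $U=N(Y)\cap V'_i$ and all of $Y$ rather than first truncating $Y$ to a subset of size $|U|$ as the paper does, and that you explicitly dispose of the degenerate case $b=0$ which the paper leaves implicit.
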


The proof of this lemma mainly consists of a straightforward check of
Hall's condition and we provide it right away.
We remark that we will use this lemma again in the proof of the degenerate
graphs blow-up lemma, which
is why we do not give an explicit constant in place of~$\delta$ in~\ref{cpm:cands}.

\begin{proof}[Proof of Lemma~\ref{lem:completematch}]
  Since $\Xbuf$ is independent in H (and the sets $V'_i$ are disjoint in~$G$), we can embed the vertices of~$X'_i$
  for each~$i$ in succession, without affecting the vertices in any
  other~$X'_{i}$. So we fix~$i$ and will show that~$X'_i$ can be embedded into~$V'_i$.

  Let $Y\subseteq X'_i$ be non-empty, and let $W$ be the set of vertices in
  $V'_i$ which are a candidate for some member of $Y$. We wish to verify
  Hall's condition, i.e.\ show that $|W|\ge|Y|$. We separate three cases.
 
  First, suppose $0<|Y|\le \rho|X_i|$. If $|W|<|Y|$, then we can take a
  subset $Y'$ of $Y$ of size $|W|$. Recall that the candidate graph
  between~$Y'$ and~$W$ is the bipartite graph with edges $yw$ for $y\in Y'$
  and $w\in W$ with $w\in C(y)$, and that this is a subgraph of the
  underlying restriction graph between~$Y'$ and~$W$ with edges $yw$ with
  $w\in U(y)$. Observe, moreover, that this underlying restriction graph is
  isomorphic to the congestion graph $\AG(\Gamma,W,\cF)$ with
  $\cF=\big\{\psi\big(N_H(y)\big)\colon y\in Y'\big\}$, which has edges
  $wF$ with $w\in\comN_\Gamma(F)$. Now, since by~\ref{cpm:deg} each vertex
  in $Y'$ has at least $\mu(dp)^b|V_i|/4$ candidates in $V'_i$, which must
  lie in $W$, the number of edges in the candidate graph between $Y'$ and
  $W$ is at least
 \begin{align*}\tfrac{1}{4}\mu(dp)^b|V_i||Y'|&=
 \tfrac{1}{8}\mu(dp)^b|V_i||Y'|+\tfrac{1}{8}\mu(dp)^b|V_i||Y'|\\
 &> \frac{\mu d^b}{8\rho}p^b|Y'||W| +\frac{\mu
 d^b}{8 \kappa r_1} p^b n|Y'|\\
 &> 7p^b|Y'||W|+\rho p^b n|Y'|/r_1\,,
 \end{align*}
 where the second line comes from substituting $|V_i|=|X_i|>|Y'|/\rho=|W|/\rho$
 and using $|V_i|\ge n/(\kappa r_1)$, and
 the third follows from the bound on $\rho$. 
 Hence $e(\AG(\Gamma,W,\cF))>7p^b|Y'||W|+\rho p^b n|Y'|/r_1$, in
 contradiction to $\CON(\rho,r_1,\Delta)$. We conclude that $|W|\ge |Y|$ in this
 case.
 
 Second, if $\rho|X_i|<|Y|\le|X'_i|-\rho|X_i|=|V_i'|-\rho|V_i|$ and $|W|<|Y|$, then
 $|V_i'\setminus W|>\rho|V_i|$, so by~\ref{cpm:pseud} there are at most
 $\rho|X_i|$ vertices of $X'_i$ which do not have candidates in $V'_i\setminus W$. In particular there is a vertex of $Y$ with
 candidates in $V'_i\setminus W$, in contradiction to the definition of $W$. We
 conclude that $|W|\ge|Y|$ in this case as well.
 
 Finally, suppose $|Y|>|X'_i|-\rho|X_i|=|V_i'|-\rho|V_i|$. The vertices $V_i'\setminus W$
 are candidates only for vertices in $X'_i\setminus Y$, and each vertex in
 $V_i'\setminus W$ is a candidate for at least $\delta p^b|X_i|$ vertices in
 $X_i'\setminus Y$ by~\ref{cpm:cands}. If $|W|<|Y|$, then $|V_i'\setminus
 W|>|X'_i\setminus Y|$, and we can take a set $\tilde W\subset V_i'\setminus W$ of size $|X'_i\setminus Y|$. 
 Now the number of edges in the candidate graph
 between $\tilde W$ and $X'_i\setminus Y$ is at least
 \begin{align*}\delta p^b|X_i||\tilde W|
 &>\tfrac12\delta p^b\big(\tfrac{1}{\rho}|X'_i\setminus Y||\tilde W|+\tfrac{n}{\kappa r_1}|X'_i\setminus Y|\big)\\
 &>7p^b|X'_i\setminus Y||\tilde W|+\rho p^b n|X'_i\setminus Y|/r_1
 \end{align*}
 by essentially the same calculation as in the first case, using
 $|X_i|=|V_i|\geq n/(\kappa r_1)$.
 Taking $\cF'=\big\{\psi\big(N_H(x)\big)\colon x\in X'_i\setminus Y\big\}$,
 the number of edges in the corresponding congestion graph
 $\AG(\Gamma,\tilde W,\cF')$ is also at least this quantity, in contradiction to
 $\CON(\rho,r_1,\Delta)$.
 
 This completes the verification of Hall's condition, so there is a partial
 embedding $\psi'$ extending $\psi$ with $\dom(\psi')=\dom(\psi)\cup X'_i$
 in which the vertices $X'_i$ are embedded to $V'_i$. Since $\Xbuf$ is independent and all the vertices $N(\Xbuf)$ are
 embedded in $\psi$, it is trivially the case that $\psi'$ is a good partial embedding. 
\end{proof}

We are now ready to prove Lemma~\ref{lem:rg_image}. Summarising, the proof will
go as follows. We will apply Lemma~\ref{lem:matchreduce} to find partitions
of $G$ and $H$, and graphs $R$ and $R'$, satisfying the General Setup. We
choose, for each $i$ such that $\Xbuf_i$ is a clique buffer a set of
reserved cliques with vertices~$\Xc$. We can then apply the RGA lemma, the
queue embedding lemma, the buffer defect lemma, and the buffer embedding
lemma, in this order.

\begin{proof}[Proof of Lemma~\ref{lem:rg_image}]
 First we choose constants as follows. Given $\Delta$, $\DeltaRpbl$,
 $\Delta_J$ integers, $\alphabl$, $\zetabl$ and $d>0$, and $\kappabl>1$, we
 set $\vartheta=0$, $\DeltaRp=8(\Delta+\Delta_J)^{10}\DeltaRpbl$,
 $\alpha=\tfrac12\alphabl$, $\zeta=\tfrac12\zetabl$ and $\kappa=2\kappabl$.
 We now choose $\mu$, $\rho$ and $\eps'>0$ satisfying the conditions in
 Lemmas~\ref{lem:matchreduce}, \ref{lem:fewbad}, \ref{lem:rga},
 \ref{lem:queue}, \ref{lem:deletebad} and~\ref{lem:completematch}. For convenience we provide here sufficient choices:
\[
\mu<\frac{d^\Delta}{1320\kappa\DeltaRp},\text{  and  } \rho\le \frac{\mu^2d^{\Delta^2+1}}{250^{\Delta+1}\kappa\DeltaRp}, 
\text{  and  } \eps'\le \frac{\mu\zeta\rho d^\Delta}{32^{\Delta+2}2^{4/\rho}\kappa\Delta^2\DeltaRp}\,.
\]
 Now for input $\Delta$, $d$ and $\eps'$, Lemma~\ref{lem:det_Gnp} returns constants $\eps_{a,b}$ and $\eps>0$. Here we additionally require that $\eps<(\eps')^2$. 
 We let $\epsbl=\tfrac{1}{16}(\Delta+\Delta_J)^{-10}\eps$ and $\rhobl=\tfrac{1}{16}(\Delta+\Delta_J)^{-10}\rho$. 
 Now Lemma~\ref{lem:rg_image} returns $\epsbl$ and $\rhobl$. Given $\ronebl$ we let $r_1=8(\Delta+\Delta_J)^{10}\ronebl$. We choose $C$ sufficiently large for Lemma~\ref{lem:det_Gnp} with input $\Delta$, $d$, $\eps'$, $T=r_1$ and $\rho$.
  
 Given $p\ge C\big(\tfrac{\log n}{n}\big)^{1/\Delta}$, Lemma~\ref{lem:det_Gnp}
 states that a.a.s.\ $\Gamma=G(n,p)$ has properties
 \[\NS(\eps,r_1,\Delta)\,, \RI(\eps,(\eps_{a,b}),\eps',d,r_1,\Delta) \text{ and } \CON(\rho,r_1,\Delta)\,,\]
 respectively. From now on we will assume $\Gamma$ is an $n$-vertex graph which satisfies these three properties.
 
 Given a graph $\Rbl$ on $\rbl\le\ronebl$ vertices, and a spanning subgraph
 $\Rpbl$ with $\Delta(\Rpbl)\le\DeltaRpbl$, and graphs $H$ and
 $G\subset\Gamma$ with vertex partitions $\cXbl$ and $\cVbl$, families of
 image restrictions $\cIbl$ and of image restricting vertices $\cJ$, and a
 family of potential buffer vertices $\tcXbl$, suppose that the conditions
 of Lemma~\ref{lem:rg_image} are satisfied. Then
 Lemma~\ref{lem:matchreduce} gives (with~$\Delta$ in place of~$b$) a graph
 $R$ on $r\le r_1$ vertices, a spanning subgraph $R'$ with
 $\Delta(R')\le\DeltaRp$, and $\kappa$-balanced size-compatible partitions
 $\cX$ and $\cV$ of $H$ and $G$ respectively, each part having size at
 least $n/(\kappa r_1)$, together with a family $\tcX$ of potential buffer
 vertices and $\cI$ of image restrictions, subsets $\Xbuf_i$ of $\tX_i$ for
 each $i\in[r]$, and partitions
 $V_i=\Vmain_i\dcup\Vq_i\dcup\Vc_i\dcup\Vbuf_i$ for each $i\in[r]$ which
 satisfy the General Setup.
 
 Now for each $i\in[r]$ such that $\Xbuf_i$ is a clique buffer, in
 succession, we create a collection of reserved cliques~$\cK_i$ and a set
 $\Xc$ as follows. We choose $2\rho|X_i|$ vertices from $\tX_i$ which are
 in copies of $K_{\Delta+1}$ that do not contain image restricted vertices,
 vertices of $\Xbuf$, or vertices previously added to $\Xc$. We add these
 copies of $K_{\Delta+1}$ to $\cK_i$, and their vertices to $\Xc$. Observe
 that this is possible since~\ref{BUF:existcliques} guarantees that each
 $\tX_i$ contains at least $\tfrac{1}{2\Delta+4}\alpha|X_i|$ vertices in
 copies of $K_{\Delta+1}$ whose vertices are neither image restricted nor
 in $\Xbuf$ and, moreover, by~\ref{H:partition} the edges of these cliques lie
 along $R'$, so the number of these cliques which are chosen for some
 $\cK_j$ with $j<i$ is at most
 $\DeltaRp2\rho\kappa|X_i|<\tfrac{1}{4\Delta+8}\alpha|X_i|$ by choice of
 $\rho$. We conclude the following properties for these reserved cliques.
 \begin{enumerate}[label=\itmarab{RSC}]
  \item\label{BUF:resclique} The sets $\{\cK_i\}_{i\in[r]}$ are pairwise disjoint, and if $\Xbuf_i$
    is a clique buffer, then $\cK_i$ contains $2\rho|X_i|$ cliques, each with
one vertex in $X_i$, otherwise it
    is empty.
    \item\label{BUF:Rp} For each $i\in[r]$, $K\in\cK_i$ and $x,y\in K$ we have $x\in X_j$ and
    $y\in X_k$ where $jk\in R'$.
    \item \label{BUF:clique} For each $i$,  the set $\Xc_i$ of vertices 
    $x\in X_i$ contained in some reserved clique satisfies $|\Xc_i|\le 2\kappa(\DeltaRp+1)\rho|X_i|$.
 \end{enumerate}

\noindent Notice that~\ref{BUF:Rp} holds by~\ref{H:partition}, since the
first and second neighbours of vertices from $\tX_i$ go along the edges of
$R'$ by the definition of an $(\alpha,R')$-buffer. 

 We let $\Xmain=V(H)\setminus(\Xbuf\cup\Xc)$. We now begin the embedding of~$H$ into~$G$. 
 By Lemma~\ref{lem:rga}, there is a good partial embedding $\psiRGA$ with
 properties~\ref{rga:emb:nobufqueue}--\ref{rga:emb:nobad}. Letting
 $\Xq=\Xmain\setminus\dom(\psiRGA)$, by~\ref{rga:emb:smallqueue} we have
 $|\Xq_i|\le 2\rho|X_i|$ for each $i$, and by~\ref{rga:emb:place} we see
 that $\im(\psiRGA)$ is disjoint from each set $\Vq_i$, so the conditions of
 Lemma~\ref{lem:queue} are met. Feeding $\psiRGA$ into
 Lemma~\ref{lem:queue} we obtain a good partial embedding $\psiq$ extending
 $\psiRGA$ whose domain is $\Xmain$ and whose image is contained in
 $\Vmain\cup\Vq$. By~\ref{rga:emb:nobufqueue} all neighbours of all buffer
 vertices are in $\dom(\psiRGA)$, and therefore the candidate sets of all
 buffer vertices are the same with respect to $\psiRGA$ as with respect to
 $\psiq$. In particular $\psiq$ satisfies~\ref{rga:emb:main}
 and~\ref{rga:emb:nobad}.
 
 We now verify the conditions for Lemma~\ref{lem:deletebad}. Since $\psiq$
 extends $\psiRGA$ and the vertices $\im(\psiq)\setminus\im(\psiRGA)$ are
 embedded in the sets $\Vq_i$ it follows by~\ref{rga:emb:place} that
 $\im(\psiq)$ is disjoint from the sets $\bigcup_i\Vc_i$ and $\bigcup_i
 \Vbuf_i$. By~\ref{rga:emb:nobad}, if $\Xbuf_i$ is a degree-$b$ buffer for some $b$ but 
 is not a clique buffer, then all vertices in $V_i$ are candidate for at least
 $\mu(d^\Delta p/100)^b|X_i|$ vertices in $\Xbuf_i$ and $\cK_i$ is empty by \ref{BUF:resclique}.
 
 If on the other hand $\Xbuf_i$ is a clique buffer, then $|\cK_i|=2\rho|X_i|$. Now let
 $W$ be the set of vertices in $V_i$ which are candidates for fewer than
 $\mu(dp)^\Delta|X_i|/2$ vertices of $\Xbuf_i$. We will show that $|W|<\rho
 |V_i|$.
 If $|W|\ge\rho|V_i|$, then by~\ref{rga:emb:main} there are at most
 $\rho|X_i|$ vertices in $\Xbuf_i$ with fewer than $(dp)^\Delta |W|/2$ candidates in $W$. It follows that the average over $v\in W$ of the number of
 vertices of $\Xbuf_i$ for which $v$ is a candidate, is at least
 \[\frac{1}{|W|}\big(|\Xbuf_i|-\rho|X_i|\big)\frac{(dp)^\Delta
 |W|}{2}>\frac12\mu(dp)^\Delta|X_i|,\]
 where we used the facts $|\Xbuf_i|=4\mu|X_i|$ and $\rho<\mu$ in the inequality.
 Since at least one $v\in W$ attains at least the average, we have a
 contradiction to the definition of~$W$. 
 
 The
 conditions of Lemma~\ref{lem:deletebad} are thus satisfied, and by applying it
 we obtain a good partial embedding $\psigood$. Since $\psigood$ extends
 $\psiq$ and all sets $\Xc_i$ are contained in $\dom(\psigood)$
 by~\ref{FIN:place}, we conclude that the only vertices remaining
 unembedded are in $\Xbuf$.
 
 Finally, for $i\in[r]$, let $X'_i=\Xbuf_i\setminus\dom(\psigood)$ and let
 $\Xbuf_i$ be a degree-$b$ buffer for some $b$. Let
 $V'_i=V_i\setminus\im(\psigood)$. Because $|X_i|=|V_i|$ we have
 $|X'_i|=|V'_i|$. Because $\psigood$ satisfies~\ref{FIN:bigA}, we
 have~\ref{cpm:deg}. Because $\psigood$ satisfies~\ref{rga:emb:main}, in
 particular we have~\ref{cpm:pseud}. Finally, because $\psigood$
 satisfies~\ref{FIN:ManyCand} we have~\ref{cpm:cands} with
 $\delta=\mu(d^\Delta/100)^b$. Thus by Lemma~\ref{lem:completematch} there
 is an embedding $\psi'$ extending $\psigood$ which embeds~$H$ into~$G$.
\end{proof}

\section{Proof of the random graphs RGA lemma}
  \label{sec:rga}

  In this section we describe the random greedy algorithm (RGA) and prove
  that it produces a partial embedding which satisfies the assertions of
  the RGA lemma, Lemma~\ref{lem:rga}, with high probability.  This is Algorithm~\ref{alg:RGA}.  It embeds vertices~$x$ of~$H$
  sequentially, following an order $\tau$ given by Lemma~\ref{lem:tau} (a good vertex order for an RGA). In
  doing so, it builds up a sequence $\psi_0,\dots$ of good partial
  embeddings, and a \emph{queue} of vertices which it will not embed; we
  let $Q_t$ be the queue at time $t$ (i.e.\ corresponding to $\psi_t$).
  Recall that by $\Amain_t(x)$ we mean the set $\Amain(x)$ with reference
  to the partial embedding $\psi_t$. We let $B_t(x)$ denote the set of bad
  vertices (defined in Section~\ref{subsec:bad_vertices}) with respect
  to~$\psi_t$ and~$Q_t$. As mentioned in the proof overview
  (Section~\ref{sec:proof_overview}), to create $\psi_{t+1}$ from $\psi_t$
  we embed some vertex~$x$ uniformly at random into the set
  $\Amain_t(x)\setminus B_t(x)$, the set of available candidate vertices in
  $\Vmain$ minus the bad vertices; and we add~$y$ to the queue if
  the set $\Amain_t(y)\setminus B_t(y)$ gets small.
    
  Note that exactly $t$ vertices are embedded in $\psi_t$, though these
  vertices are not necessarily the first $t$ vertices of $\tau$ because
  vertices in $Q_t$ are skipped. The queue set~$Q_{t_\RGend}$ at the time
  $t_\RGend$ when the RGA terminates will then form the queue~$\Xq$.

  \begin{algorithm}[t]
    \caption{Random greedy algorithm for random graphs}\label{alg:RGA}
    \SetKwInOut{Input}{Input}
    \Input{$G\subseteq \Gamma$ and $H$ with partitions satisfying the General Setup; an ordering $\tau$ of $\Xmain$}
    $t:=0$ \; 
    $\psi_0:=\emptyset$ \;
    $Q_0:=\{x\in V(H):|I_x|<\tfrac12\mu(d-\eps)^{|J_x|}p^{|J_x|}|\Vmain(x)|\}$ \; 
    \Repeat{ $\dom(\psi_t)\cup Q_t=\Xmain$ }{
      let $x\in \Xmain\setminus (\dom(\psi_t)\cup Q_t)$ be the next vertex in
      the order $\tau$ \; 
      choose $v\in \Amain_t(x)\setminus B_t(x)$ uniformly at random \;
      $\psi_{t+1}:=\psi_t\cup\{x\to v\}$ \;
      $Q_{t+1}:=Q_t$ \;
      \ForAll {$y\in \Xmain\setminus \dom(\psi_{t+1})$}{
        \If{$(\big|\Amain_{t+1}(y)\big|<\tfrac12
        \mu(d-\eps')^{\pi^*_{t+1}(y)}p^{\pi^*_{t+1}(y)}|\Vmain(y)|)$ }
        {$Q_{t+1}:=Q_{t+1}\cup\{y\}$ \; } } 	
      $t:=t+1$ \;
    }
    $t_\RGend:=t \;$
  \end{algorithm}
   
  The proof that Algorithm~\ref{alg:RGA} a.a.s.\ produces a good partial
  embedding with the properties required in Lemma~\ref{lem:rga} is now
  quite straightforward: most of the work is to check that the conditions
  of the various lemmas on RGAs given in Section~\ref{sec:RGAlemmas} are
  met. The critical point is to show that certain invariants (see
  Claim~\ref{cl:rga:inv} below) are maintained.

\begin{proof}[Proof of Lemma~\ref{lem:rga}]
We require
\[\rho< \mu\le\frac{1}{100\kappa\DeltaRp}\,,\quad \eps'\le \frac{\mu\zeta d^\Delta\rho2^{-4/\rho}}{1000\kappa\Delta^24^\Delta\DeltaRp}\,,\quad\eps\le\frac{\eps'}{\kappa d}\]
and
\[ p\ge 10r_1\eps^{-1} \big(\tfrac{\log n}{n}\big)^{1/\Delta}\,,\quad n>\kappa r_1^2\,.\]

First, we apply Lemma~\ref{lem:tau} to obtain an ordering $\tau$ of the
vertices of $H$ with properties~\ref{cond:seg}--\ref{cond:ncl_ord} of this lemma.
With this ordering $\tau$ we run the random greedy algorithm, Algorithm~\ref{alg:RGA}, and in the following we show 
that a.a.s.\ 
it produces a good partial embedding $\psiRGA=\psi_{t_\RGend}$ satisfying
properties \ref{rga:emb:nobufqueue}--\ref{rga:emb:nobad} of Lemma~\ref{lem:rga}.

  We first show that some invariants are maintained (deterministically)
  throughout the algorithm. Recall that $\pi^*_t(x):=\pi_t(x)+|J_x|$, where
  $\pi_t(x)$ denotes the number of the embedded neighbours of $x$ at time
  $t$. 

  \begin{claim}\label{cl:rga:inv} The following hold at each time $t$ in Algorithm~\ref{alg:RGA}.
   \begin{enumerate}[label=\itmarab{INV}]
    \item \label{inv:gpe} $\psi_t$ is a good partial embedding.
    \item \label{inv:sizeA} For each $x\in\Xmain\setminus\dom(\psi_t)$, either $x\in Q_t$ or we have
    \[|\Amain_t(x)|\ge\tfrac12\mu(d-\eps')^{\pi^*_t(x)}p^{\pi^*_t(x)}|\Vmain(x)|\,.\]
    \item\label{inv:random} When we embed $x$ to create $\psi_{t+1}$, we do so
    uniformly at random into a set of size at least $\frac{1}{10}\mu\zeta
    (dp)^{\pi^*_{t}(x)} |V(x)|$.
   \end{enumerate}
  \end{claim}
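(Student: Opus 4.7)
The plan is to establish \ref{inv:gpe}--\ref{inv:random} simultaneously by induction on $t$, with the base case $t=0$ essentially the same as the inductive step. At each stage, the key is to use the preceding invariants to verify the next one. I will describe the inductive step in detail; the base case reduces to it once we observe that $\psi_0$ is a good partial embedding by the fact following Definition~\ref{def:GPE}, and that $Q_0 = \emptyset$ together with the size bound in~\ref{inv:sizeA} at $t=0$ follow from $|I_x| \ge \zeta(dp)^{|J_x|}|V(x)|$ (from Definition~\ref{def:restrict}\ref{itm:restrict:sizeIx}) combined with the constant hierarchy $\mu \ll \zeta$ and with~\ref{G:imres}, which together yield the required lower bound on $|\Amain_0(x)| \ge (1-\eps)(1-3\mu)|I_x|$.

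For the inductive step, suppose \ref{inv:gpe}--\ref{inv:random} hold at time $t$ and the algorithm is about to embed a vertex $x$. Invariant~\ref{inv:random} at time $t$ certifies that $\Amain_t(x) \setminus B_t(x)$ is non-empty, so the random choice is well-defined. For the chosen $v \notin B_t(x)$, Definition~\ref{def:bad_vertices} ensures directly that $\psi_{t+1} := \psi_t \cup \{x \to v\}$ is a good partial embedding, yielding~\ref{inv:gpe} at time $t+1$. For~\ref{inv:sizeA} at time $t+1$: when $y \in \Xmain \setminus \dom(\psi_{t+1})$ is not a neighbour of $x$, the set $\Amain(y)$ shrinks by at most the single vertex $v$ and $\pi^*(y)$ is unchanged, so the bound passes from time $t$; when $y$ is a neighbour of $x$, the algorithm's explicit check either preserves the bound or adjoins $y$ to $Q_{t+1}$, so the invariant is immediate by construction.

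The heart of the induction is~\ref{inv:random} at time $t+1$. For the next vertex $z$ to be embedded (necessarily $z \notin Q_{t+1}$),~\ref{inv:sizeA} provides $|\Amain_{t+1}(z)| \ge \tfrac12 \mu (d-\eps')^{\pi^*_{t+1}(z)} p^{\pi^*_{t+1}(z)} |\Vmain(z)|$, so it remains to upper-bound $|B_{t+1}(z)|$. I plan to invoke Lemma~\ref{lem:fewbad}\ref{fewbad:b} with $D = \Delta$: its size hypothesis on $|\Amain(y)|$ for $y \notin Q_{t+1}$ is exactly~\ref{inv:sizeA} at time $t+1$, and its four structural conditions reduce to $\pi^*(u) + 1 \le \Delta$ or $\pi^*(u) + 2 \le \Delta$, both of which hold because in each case the vertex $u$ in question has at least one (respectively two) unembedded neighbours in $H$, combined with $\deg_H(u) + |J_u| \le \Delta$ from Definition~\ref{def:restrict}\ref{itm:restrict:Jx}; the degenerate case in which $z$ has no unembedded neighbours is handled separately by Lemma~\ref{lem:fewbad}\ref{fewbad:a}, which says $B_{t+1}(z) = \emptyset$. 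The lemma yields $|B_{t+1}(z)| \le 20 \Delta^2 \eps' p^{\pi^*_{t+1}(z)} |V(z)|$, and~\ref{inv:random} follows by subtracting this from the lower bound on $|\Amain_{t+1}(z)|$ and using $\eps' \ll \mu \zeta d^\Delta \kappa^{-1}$. The main subtlety is the tight interlocking of the three invariants, which forces a joint induction:~\ref{inv:random} at time $t+1$ requires both~\ref{inv:gpe} and~\ref{inv:sizeA} at time $t+1$ in order to invoke Lemma~\ref{lem:fewbad}, while~\ref{inv:gpe} at time $t+1$ requires~\ref{inv:random} at time $t$ so that the RGA step can succeed at all.
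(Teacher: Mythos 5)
Your argument follows essentially the same route as the paper's: \ref{inv:gpe} is immediate from the definition of $B_t(x)$, \ref{inv:sizeA} is enforced by the queue-update loop, and \ref{inv:random} follows by combining \ref{inv:sizeA} with the bound on $|B_t(x)|$ from Lemma~\ref{lem:fewbad} (using part~\ref{fewbad:a} when $x$ has no unembedded neighbours, part~\ref{fewbad:b} with $D=\Delta$ otherwise). You also correctly note the circular dependence of the three invariants, which is why they are proved jointly, and you are right that the base case needs the hierarchy $\mu\ll\zeta$ together with Definition~\ref{def:restrict}\ref{itm:restrict:sizeIx} and~\ref{G:imres}.

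One local slip worth fixing: in your verification of \ref{inv:sizeA}, the assertion that for non-neighbours $y$ of $x$ ``the bound passes from time $t$'' because $\Amain(y)$ shrinks by at most one vertex is not sound — if $|\Amain_t(y)|$ sits exactly at the threshold, removing a single vertex does drop it below. You have also implicitly assumed the queue check applies only to neighbours of $x$. In fact the \textbf{forall} loop in Algorithm~\ref{alg:RGA} ranges over \emph{every} $y\in\Xmain\setminus\dom(\psi_{t+1})$, so \ref{inv:sizeA} holds at time $t+1$ for every unembedded $y$ simply because the algorithm adds to $Q_{t+1}$ precisely those $y$ for which the bound fails; no case split on neighbours versus non-neighbours is needed, and the ``passes from time $t$'' reasoning should be discarded.
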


  We remark that the bound given in~\ref{inv:random} is not sharp, but we
  give this relaxed bound in order to re-use the same invariants in the
  proofs of our other two blow-up lemmas.

  \begin{claimproof}[Proof of Claim~\ref{cl:rga:inv}]
    The invariant~\ref{inv:gpe} is maintained by definition of $A_t(x)$ and
    $B_t(x)$. Observe that at the end of each iteration of the repeat loop
    of Algorithm~\ref{alg:RGA} the
    queue is updated by adding precisely any vertices which would
    fail~\ref{inv:sizeA}, so that this invariant also holds.
    Finally, by choice of $\eps'$ and by~\ref{inv:sizeA}, if the vertex $x$
    is embedded to create $\psi_{t+1}$, then we have
    $|\Amain_{t}(x)|\ge\tfrac14\mu(dp)^{\pi^*_{t}(x)}|V(x)|$. Now, if all
    neighbours of $x$ are embedded under $\psi_{t}$ then by Lemma~\ref{lem:fewbad}\ref{fewbad:a}
    we have~\ref{inv:random}, while otherwise we apply
    Lemma~\ref{lem:fewbad}\ref{fewbad:b} with $D=\Delta$, for which the
    required assumptions on $\Gamma$ hold, to bound $|B_{t}(x)|$. We conclude that \ref{inv:random} holds.
  \end{claimproof}

  We now begin to verify the conclusions of Lemma~\ref{lem:rga}. Property \ref{rga:emb:place}, stating that all vertices in $X_i\cap
\dom(\psiRGA)$ are embedded to $\Vmain_i$, holds by our RGA.

\begin{claim}\label{cl:RGA_queue}
Property~\ref{rga:emb:nobufqueue} holds. Moreover, 
let $x$ be a buffer vertex and~$y$ its neighbour whose embedding creates $\psi_{t+1}$, then 
\begin{equation}\label{eq:rga:Amain}
\big|\Amain_t(y)\big|\ge\tfrac23(dp-\eps'p)^{\pi^*_t}|V(y)|\,.
\end{equation}
\end{claim}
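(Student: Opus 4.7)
The plan is to prove the bound~\eqref{eq:rga:Amain}; property~\ref{rga:emb:nobufqueue} then follows immediately, since for the small choice of $\mu$ the queueing threshold $\tfrac12\mu(d-\eps')^{\pi^*_t(y)}p^{\pi^*_t(y)}|\Vmain(y)|$ is comfortably below the right-hand side of~\eqref{eq:rga:Amain}. Consequently no buffer neighbour is ever queued.

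First, I would exploit property~\ref{cond:seg} of the ordering~$\tau$ supplied by Lemma~\ref{lem:tau}: every vertex of $N(\Xbuf)$ precedes every vertex of $\Xmain\setminus N(\Xbuf)$. Hence whenever a buffer neighbour $y\in X_i\cap N(\Xbuf)$ is still unembedded, the vertices of $X_i$ embedded so far all lie in $X_i\cap N(\Xbuf)$, so $|\im(\psi_t)\cap V_i|\le|X_i\cap N(\Xbuf)|\le 4\kappa\DeltaRp\mu|V_i|$ by~\ref{BUF:sizebuf}. Moreover,~\ref{BUF:last} ensures that $y$ is itself not image restricted, whence $I_y=V_i$, $J_y=\emptyset$, and $\pi^*_t(y)=\pi_t(y)$.

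Next I would invoke Lemma~\ref{lem:rga:welldistr}, whose hypothesis that each embedding step is into a sufficiently large set is precisely invariant~\ref{inv:random} of Claim~\ref{cl:rga:inv}. The lemma yields, a.a.s., that simultaneously for every $W\subseteq V_i$ with $|W|\ge\rho|V_i|$, at most $\rho|X_i|$ non-image-restricted vertices $y'\in X_i$ satisfy $|C_{t'}(y')\cap W|<(dp-\eps'p)^{\pi^*_{t'}(y')}|W|$ at some unembedded time~$t'$. Applied with the random choice $W=V_i\setminus\im(\psi_t)$, which by the preceding paragraph has size at least $(1-4\kappa\DeltaRp\mu)|V_i|\ge\rho|V_i|$ for the small choices $\rho<\mu\le 1/(100\kappa\DeltaRp)$, this would give
\[
|\Amain_t(y)|=|C_t(y)\cap W|\ge(1-4\kappa\DeltaRp\mu)(dp-\eps'p)^{\pi^*_t(y)}|V_i|\ge\tfrac23(dp-\eps'p)^{\pi^*_t(y)}|V_i|,
\]
which is~\eqref{eq:rga:Amain}, provided $y$ itself does not lie in the bad set associated with this specific $W$.

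The main obstacle is that the relevant set $W$ is random (depending on the RGA's choices), while Lemma~\ref{lem:rga:welldistr} controls the bad set only separately for each fixed $W$. I would overcome this by exploiting the super-exponentially small failure probability $r_1\cdot 2^{-n/(\kappa r_1)}$ of Lemma~\ref{lem:rga:welldistr} and taking a union bound over the possible realizations of $\im(\psi_t)\cap V_i$, i.e.\ over subsets of $V_i$ of size at most $4\kappa\DeltaRp\mu|V_i|$. The smallness of $\mu$ makes this count affordable against the failure probability. Combined with the structural fact that $|N(\Xbuf)\cap X_i|\le 4\kappa\DeltaRp\mu|X_i|$ and that these vertices are processed first, a careful accounting shows that no $y\in N(\Xbuf)$ can fall into the bad set for its associated $W$, yielding~\eqref{eq:rga:Amain} for each such~$y$ and completing the proof.
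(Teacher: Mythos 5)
The proposal takes a genuinely different route from the paper, and it contains a fatal gap. The paper's proof of Claim~\ref{cl:RGA_queue} is \emph{deterministic} and does not touch Lemma~\ref{lem:rga:welldistr} at all. It exploits the structure of the order $\tau$ from Lemma~\ref{lem:tau}: by~\ref{cond:seg} the initial segment of $\tau$ is $N(\Xbuf)$, which by~\ref{BUF:sizebuf} meets each $X_i$ in at most $4\kappa\DeltaRp\mu|X_i|$ vertices; by~\ref{BUF:dist} and~\ref{BUF:last} no $y_j\in N_H(x)$ is image restricted and no neighbour of any $y_j$ has yet been embedded at the time $t_1$ that precedes $y_1$, so $\Amain_{t_1}(y_j)=\Vmain(y_j)\setminus\im(\psi_{t_1})$ has size at least $\tfrac23|V(y_j)|$. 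Then, because $y_1,\dots,y_b$ are consecutive in $\tau$ (by~\ref{cond:noedge}) and lie in distinct clusters (by~\ref{PtH:dist}), each intermediate step only loses the multiplicative factor $(d-\eps')p$ in $|\Amain(y_j)|$, precisely because the RGA embeds to a vertex avoiding the badness condition~\eqref{eq:bad_vertices}; an easy induction gives~\eqref{eq:rga:Amain} and hence that no $y_j$ ever enters the queue, for \emph{every} run of the algorithm.

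Your probabilistic derivation cannot be repaired to give this. Lemma~\ref{lem:rga:welldistr} only bounds the \emph{number} of exceptional $y'\in X_i\setminus X_i^*$ (by $\rho|X_i|$, simultaneously for all $W$); it says nothing about \emph{which} vertices are exceptional. There is no reason your particular buffer-neighbour $y$ avoids that exceptional set, and your proposed fix — a union bound over realisations of $\im(\psi_t)\cap V_i$ — attacks the wrong problem: Lemma~\ref{lem:rga:welldistr} already holds for all $W$ at once, so extra union bounding over $W$ gains nothing. The remark that ``a careful accounting shows that no $y\in N(\Xbuf)$ can fall into the bad set'' is exactly the missing content, and to supply it you would need the deterministic ingredients the paper actually uses (consecutivity of $N_H(x)$ in $\tau$, distinct clusters, badness avoidance), at which point the appeal to Lemma~\ref{lem:rga:welldistr} becomes superfluous. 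A secondary problem: even if the gap were filled, your argument would only give~\eqref{eq:rga:Amain} a.a.s., whereas the proof of Lemma~\ref{lem:rga} consumes~\eqref{eq:rga:Amain} as a deterministic fact in order to check the ``embedded into a set of size at least $\tfrac1{10}(dp)^{\pi^*}|V|$ at every step'' hypothesis of Lemma~\ref{lem:large_nbs}, which must hold almost surely, not merely a.a.s. (There is also a minor slip: with $W=V_i\setminus\im(\psi_t)$ you get $C_t(y)\setminus\im(\psi_t)$, not $\Amain_t(y)$; you would want $W=\Vmain_i\setminus\im(\psi_t)$.)
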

\begin{claimproof}
We require $\mu\le 1/(100\kappa\DeltaRp)$.
To show~\ref{rga:emb:nobufqueue}, i.e., that all
neighbours of buffer vertices are in $\dom(\psiRGA)$, it is sufficient to
prove that none of the neighbours of buffer vertices are in $Q_{t_\RGend}$.
However, this is clear. Indeed, suppose that $y_1,\dots, y_{b}$ with $b\leq
\Delta$ are the neighbours of a buffer vertex $x$ appearing in this order
in $\tau$.  Suppose the embedding of~$y_1$ creates $\psi_{t_1+1}$. By~\ref{BUF:dist}, buffer vertices are at distance at least five in $H$, hence,
the neighbours of the vertices have distance at least three, and by~\ref{BUF:last} they are not image restricted. Therefore,
at time $t_1$ the available candidate set of each $y_j$ is
$\Amain_{t_1}(y_j)=\Vmain(y_j)\setminus \im(\psi_{t_1})$. The size of 
$\Amain_{t_1}(y_j)$  is by~\ref{BUF:sizebuf},~\ref{G:sizes} and choice of $\mu$ at least 
\[
|\Vmain(y_j)|-4\kappa\DeltaRp\mu|V(y_j)|=(1-3\mu-4\kappa\DeltaRp\mu)|V(y_j)|\ge \tfrac{2}{3}|V(y_j)|\,.
\]
It follows that $y_j$ is not added to $Q_t$ for any $t\le t_1$. Now the
vertices $y_1,\ldots,y_b$ are embedded consecutively by
Lemma~\ref{lem:tau}\ref{cond:noedge}, and since they are neighbours of $x$,
by~\ref{PtH:dist}, into distinct clusters of $G$.
  By Definition~\ref{def:bad_vertices} (of bad vertices with respect to $\psi$ and $Q$), 
  for each time $t'$ with $t_1\le t'\le t$, where $y_j$ is embedded to create $\psi_{t+1}$, we have
\[|\Amain_{t'}(y_j)|\ge \tfrac23(dp-\eps'p)^{\pi^*_{t'}(y_j)}|V(y_j)|\,,\]
which gives~\eqref{eq:rga:Amain} and that $y_j$ never enters $Q_t$, as desired.
\end{claimproof}

We next verify~\ref{rga:emb:smallqueue} and~\ref{rga:emb:main}. To this end
observe that because Algorithm~\ref{alg:RGA} preserves~\ref{inv:random},
the conditions of Lemma~\ref{lem:rga:welldistr} are met. Thus with
probability at least $1-2^{-n/(\kappa r_1)}$, the following event
$\ev_{\sublem{lem:rga:welldistr}}$ holds. For every $i\in[r]$ and $W\subset
V_i$ with $|W|\ge\rho|V_i|$, the number of vertices $x\in X_i\setminus
X^*_i$ such that for some time $t$ (at which $x$ is unembedded) we have
$\big|C_t(x)\cap W\big|<(dp-\eps'p)^{\pi^*_t(x)}|W|$ is at most
$\rho|X_i|$.

Suppose now that $\ev_{\sublem{lem:rga:welldistr}}$ holds. We first show
that then~\ref{rga:emb:smallqueue} holds. Set
$W:=\Vmain_i\setminus\im(\psi_{t_\RGend})$. By~\ref{BUF:sizebuf}
and~\ref{G:sizes}, we have $|\Xmain_i|\le (1-4\mu)|X_i|$ and
$|\Vmain_i|=(1-3\mu)|V_i|$. We conclude $|W|\ge\mu|V_i|$. Suppose that
$x\in Q_{t_\RGend}$. Then there is a first time $t$ at which $x\in Q_t$.
Since we have $\Amain_t(x)\supseteq C_t(x)\cap W$, by the construction of
$Q_t$ in Algorithm~\ref{alg:RGA}, we have
 \[|C_t(x)\cap W|<\tfrac12\mu(d-\eps')^{\pi^*_t(x)}p^{\pi^*_t(x)}|\Vmain(x)|<\big((d-\eps')p\big)^{\pi^*_t(x)}|W|\] 
 so that $x$ satisfies condition~\eqref{eq:rga:welldistr} of
 Lemma~\ref{lem:rga:welldistr}. Since $|W|>\rho|V_i|$, and because
 $\ev_{\sublem{lem:rga:welldistr}}$ holds, the number of $x\in \Xmain_i\cap
 Q_{t_\RGend}$ which are in $X_i\setminus X^*_i$ is at most $\rho|X_i|$.
 By~\ref{PtH:image}, we have $|X^*_i|\le\rho|X_i|$, so $|\Xmain_i\cap
 Q_{t_\RGend}|\le\rho|X_i|+|X^*_i|\le2\rho|X_i|$. By definition of
 Algorithm~\ref{alg:RGA}, the vertices $\Xmain_i\cap Q_{t_\RGend}$ are
 precisely the vertices of $\Xmain_i$ not in $\dom(\psi_{t_\RGend})$,
 giving~\ref{rga:emb:smallqueue} as desired.
 
 Property~\ref{rga:emb:main} follows from
 $\ev_{\sublem{lem:rga:welldistr}}$ by observing that $\Xbuf_i\subset X_i$
 and that when $\Xbuf_i$ is a degree-$b$ buffer we have
 $(dp-\eps'p)^{\pi^*_{t_\RGend}(x)}\ge (dp)^b/2$ by choice of~$\eps'$.

To complete the proof of Lemma~\ref{lem:rga} we now verify that a.a.s.\
after Algorithm~\ref{alg:RGA} finishes, also~\ref{rga:emb:nobad} is
satisfied. By~\ref{cond:seg} and~\ref{cond:ncl_ord}  of
Lemma~\ref{lem:tau}, the set of vertices in $N(\Xbuf)$ which are not
neighbours of clique buffer vertices is an initial segment of $T$ vertices
of $\tau$. We first show that the embedding of these~$T$ vertices cannot
fill up vertex neighbourhoods in~$G$. For this purpose we want to apply
Lemma~\ref{lem:large_nbs} to the
sequence of good partial embeddings produced by Algorithm~\ref{alg:RGA} on
this initial segment. Observe that by Claim~\ref{cl:RGA_queue}, all
vertices in this initial segment satisfy~\eqref{eq:rga:Amain} and none
enters the queue. By~\eqref{eq:rga:Amain}, Lemma~\ref{lem:fewbad}, and the
choice of $\eps'$, each such $y$, embedded to create $\psi_{t+1}$, is
embedded into a subset of $C_{t}(y)$ of size at least
\[\tfrac{2}{3}(dp-\eps'p)^{\pi^*_{t}}|V(y)|-20\Delta^2\eps'p^{\pi^*_t(y)}|V(y)|>\tfrac1{10}(dp)^{\pi^*_{t}(y)}|V(y)|\,.\]
Furthermore, for each such $y$ we have $\pi^*_{t}(y)\le \Delta-2$ by
Lemma~\ref{lem:tau}\ref{cond:noedge}. Finally, by~\ref{BUF:sizebuf} we have
$\big|\dom(\psi_T)\cap X_i\big|\le 4\mu\kappa\DeltaRp|X_i|$ for each
$i\in[r]$. This justifies that we can apply Lemma~\ref{lem:large_nbs} with
$B=\Delta$, and the result is that with probability at least $1-\exp(-\eps
p n/r_1)$, the following event $\ev_{\sublem{lem:large_nbs}}$ holds. For
each $v\in V_i$ and $j$ such that $ij\in E(R')$, we have
\begin{equation}\label{eq:rga:initial}
\big|N_G(v;\Vmain_j)\setminus\im(\psi_T)\big|\ge\tfrac12\deg_G(v;V_j)\,.
\end{equation}

We assume from now on that $\Xbuf_i$ is a non-clique degree-$b$ buffer, and
we fix a vertex $v\in V_i$. We would like to estimate the probability
that~\ref{rga:emb:nobad} fails for $v$. To that end, first fix
$x\in\Xbuf_i$, and let $y_1,\ldots,y_b$ be an enumeration of $N_H(x)$ in
the order $\tau$. We now justify that if $\ev_{\sublem{lem:large_nbs}}$
holds, then we are in a position to apply Lemma~\ref{lem:nonclique_buffer}
with $B=\Delta$. Recall that no vertices of $N(\Xbuf)$ enter the queue, by
Claim~\ref{cl:RGA_queue}, so that $\psi_{\tau(y_1)}$ is the good partial
embedding created by the embedding of $y_1$ (and so on). By~\ref{BUF:dist},
and since $\dom(\psi_{\tau(y_1)-1})\subset N(\Xbuf)$, no vertices at
distance two or less from $x$ in $H$ are embedded in $\psi_{\tau(y_1)-1}$.
Note that Lemma~\ref{lem:tau}\ref{cond:noedge} states that if $b=\Delta$
then $y_{b-1}y_b$ is not an edge of $H$. Algorithm~\ref{alg:RGA} thus
creates a sequence $\psi_{\tau(y_1)-1},\dots,\psi_{\tau(y_b)}$ of good
partial embeddings, and a sequence of queue sets
$Q_{\tau(y_1)-1},\dots,Q_{\tau(y_b)}$, matching the requirements of
Lemma~\ref{lem:nonclique_buffer}. Thus, by
Lemma~\ref{lem:nonclique_buffer}, the probability that $N_H(x)$ is embedded
to $N_G(v)$, conditioning on $\psi_{\tau(y_1)-1}$ and that
$\big|N_G(v;\Vmain_j)\setminus\im(\psi_{\tau(y_1)-1})\big|\ge\tfrac12\deg_G(v;V_j)$
for each $j$ such that $ij\in R'$ (which is justified by~\eqref{eq:rga:initial}), is at least $(d^\Delta p/10)^b$.

Now we use this to estimate the cumulative effect of all $x\in\Xbuf_i$. Let
$x_1,\dots,x_m$ be an enumeration of $\Xbuf_i$ according to the order on
$N(\Xbuf)$ given by $\tau$. Let $Y_1,\ldots,Y_m$ be Bernoulli random
variables with $Y_j=1$ if either $N_H(x_j)$ is embedded by
$\psi_{t_\RGend}$ to $N_G(v)$ or we witness a failure of
$\ev_{\sublem{lem:large_nbs}}$ before the first neighbour of $x_j$ is
embedded. Then we have just shown that $Y_j$ has probability at least
$(d^\Delta p/10)^b$ of being one, conditioned on the history up to, but not
including, the embedding of the first vertex of $N_H(x_j)$. This history
determines $Y_{j-1}$, so we can apply Lemma~\ref{lem:coupling}, with
$x=4\mu|X_i|(d^\Delta p/10)^b$ and $\delta=\tfrac12$, to conclude that
$Y_1+\dots+Y_m\ge 2\mu|X_i|(d^\Delta p/10)^b$ with probability at least
$1-\exp(-\mu|X_i|(d^\Delta p/10)^b/3)$. Taking the union bound over all
$v\in V(G)$ we see that with probability at least $1-n\exp(-\eps p^\Delta
n/r_1)>1-1/n$ (where the inequality is by choice of $p$ and since
$b\le\Delta$), either we witness a failure of
$\ev_{\sublem{lem:large_nbs}}$, or~\ref{rga:emb:nobad} holds.

Putting together the three probability bounds, we conclude that a.a.s.\ the events $\ev_{\sublem{lem:rga:welldistr}}$, $\ev_{\sublem{lem:large_nbs}}$ and~\ref{rga:emb:nobad} all hold, proving Lemma~\ref{lem:rga}.
\end{proof}
  
\section{Queue embedding}
\label{sec:queue}
In this section we prove Lemma~\ref{lem:queue}. The idea is as follows. For each $i\in[r]$ in succession, we embed $\Xq_i$ into $\Vq_i$, maintaining a good partial embedding. The way we do this is as follows. We need to embed each $x\in\Xq_i$ into $\Cq(x)\setminus B(x)$, where $B(x)$ is the set of bad vertices for $x$ with respect to the current good partial embedding and $Q=V(H)$. (This choice of $Q$ is 
made only so that we are able to apply Lemma~\ref{lem:fewbad} without having to verify the at this time pointless badness condition~\eqref{eq:bad_vertices}.) 
We therefore try to find a system of distinct representatives $v_x\in
\Cq(x)\setminus B(x)$ for each $x\in \Xq_i$, which we do by verifying
Hall's condition. In turn, we verify Hall's condition by showing that its
failure implies the existence of a dense spot in $G$, violating
$\CON(\rho,r_1,\Delta)$, which $\Gamma$ satisfies. At this point one might
be concerned that embedding $x'\in\Xq_i$ to $v_{x'}$ could change $B(x)$.
Observe, however, that since $x$ and $x'$ are at distance at least ten in $H$
by~\ref{PtH:dist} this does not occur.

\begin{proof}[Proof of Lemma~\ref{lem:queue}]
 We require
 \[\mu\le\frac16\,,\qquad\rho\le\frac{\mu\zeta d^\Delta}{200\kappa\Delta}\,, \qquad \eps'\le \frac{\mu\zeta d^\Delta}{1000\kappa4^\Delta\Delta^2}\qquad\text{and}\qquad\eps\le\frac{\eps'}{\kappa d}\,.\] 
 Let $\psi_0$ be a good partial embedding whose image is disjoint from $\Vq$, and suppose that for each $i\in[r]$ the set $\Xq_i$ has size at most $2\rho|X_i|$. We now define a sequence of good partial embeddings $\psi_1,\ldots,\psi_r$ with $\dom(\psi_r)=\dom(\psi_0)\cup\bigcup_{t=1}^r\Xq_t$ and $\psi_t(x)\in \Vq_t$ for each $x\in\Xq_t$ and $1\le t\le r$. We let $C_t(x)$ be the candidate set of $x$ with respect to $\psi_t$, and so on, and let $B_t(x)$ be the set of bad vertices for $x$ with respect to $\psi_t$ and $Q=V(H)$.
 
 Suppose that for some $1\le t\le r$ we have constructed $\psi_{t-1}$ as above. We let $(v_x)_{x\in\Xq_t}$ be a system of distinct representatives for the sets $\big(\Cq_{t-1}(x)\setminus B_{t-1}(x)\big)_{x\in\Xq_t}$, and we set $\psi_t=\psi_{t-1}\cup\{x\to v_x\colon x\in\Xq_t\}$. We need to prove that this system of distinct representatives exists, and that the resulting $\psi_t$ is a good partial embedding.
 
 To see that the system of distinct representatives exists, we verify Hall's condition. Let $X\subset\Xq_i$ be non-empty, and let $W=\bigcup_{x\in X} \Cq_{t-1}(x)\setminus B_{t-1}(x)$. Then we need to show $|W|\ge|X|$. Assume for a contradiction that $|W|<|X|$ holds. By averaging, there is $b\in\{0,\dots,\Delta\}$ such that we find a subset $X_b$ of (not necessarily all) vertices $x$ in $X$ with $\pi_{t-1}^*(x)=b$ of size exactly $\tfrac{1}{\Delta+1}|W|$. Now each $x\in X_b$ has $\Cq_{t-1}(x)\ge (1-\eps')\mu(dp-\eps'p)^{b-|J_x|}|I_x|$ by~\ref{GPE:sizeC}, and $|I_x|\ge\zeta(dp-\eps'p)^{|J_x|}|V_i|$ by~\ref{G:restr}. We conclude, by Lemma~\ref{lem:fewbad}, that
 \[\big|C_{t-1}(x)\setminus B_{t-1}(x)\big|\ge \tfrac12\mu\zeta(dp)^b|V_i|-20\Delta^2\eps'p^b|V_i|\ge\tfrac14\mu\zeta(dp)^b|V_i|\]
 by choice of $\eps'$. In particular, we see that for each $x\in X_b$ we have $\big|U_{t-1}(x)\cap W\big|\ge\tfrac14\mu\zeta(dp)^b|V_i|$. We therefore have
 \[
 \sum_{x\in X_b}\big|U_{t-1}(x)\cap W\big|\ge|X_b|\cdot \tfrac14\mu\zeta(dp)^b|V_i|\,.
 \]
Since we would like to resort to the congestion property $\CON(\rho,r_1,\Delta)$ we eventually need to pass to a subset $W'$ of $W$ of size  $|X_b|$. Picking uniformly at random a subset $W'$ of $W$ of size $|X_b|=\tfrac{|W|}{\Delta+1}$, we 
 have
 \[
  \Exp_{W'}\sum_{x\in X_b}\big|U_{t-1}(x)\cap W'\big|\ge\tfrac{|X_b|}{\Delta+1}\cdot \tfrac14\mu\zeta(dp)^b|V_i|\,,
 \]
 and so in particular there is a subset $W'$ of $W$ of size $|X_b|$ such that
 \begin{equation}\label{eq:queue:sumU}
   \sum_{x\in X_b}\big|U_{t-1}(x)\cap W'\big|\ge
   \tfrac{|X_b|}{\Delta+1}\cdot \tfrac14\mu\zeta(dp)^b|V_i|\,.
\end{equation}
 
 Recall that $\Pi_{t-1}(x)$ is the set of embedded neighbours of~$x$ in $\psi_{t-1}$.
 We now apply $\CON(\rho,r_1,\Delta)$ with
 $\mathcal{F}=\big\{\psi_{t-1}\big(\Pi_{t-1}(x)\cup J_x\big)\colon x\in
 X_b\big\}$
 and the set $W'$. Note that
 $|W'|=|X_b|=|\mathcal{F}|\le2\rho|X_i|/(\Delta+1)\le\rho|V(\Gamma)|$
 because the sets $\psi_{t-1}\big(\Pi_{t-1}(x)\cup J_x\big)$ are disjoint, so that we
 can do this. We conclude that the congestion graph
 $\AG(\Gamma,W',\mathcal{F})$ satisfies
 \begin{equation}\label{eq:queue:AG}
   e\big(\AG(\Gamma,W',\mathcal{F})\big)\le7p^b|W'||\mathcal{F}|+\rho p^b
   n|\mathcal{F}|/r_1\,.
 \end{equation}
 But the edges of this congestion graph are precisely the pairs $F_xu$ 
 such that $F_x=\psi_{t-1}\big(\Pi_{t-1}(x)\cup J_x\big)$
 for $x\in X_b$ and $u\in U_{t-1}(x)\cap W'$, so
 combining~\eqref{eq:queue:sumU} and~\eqref{eq:queue:AG} we have 
 \[\tfrac{1}{4(\Delta+1)}\mu\zeta (dp)^b|X_b||V_i|\le 14\rho p^b|X_b||V_i|+\rho\kappa p^b |X_b||V_i|\,,\]
 where we use $|X_b|=|W'|=|\mathcal{F}|\le 2\rho|V_i|$ and $|V_i|\ge n/(\kappa r_1)$. This is a contradiction by choice of $\rho$. We conclude that the desired system of distinct representatives exists.
 
 Now we show that $\psi_t$ is a good partial embedding. Since the $v_x$ are
 distinct, $\psi_t$ is injective. Since $\Cq_{t-1}(x)\subset I_x$ for each
 $x$, we have~\ref{GPE:rightplace} for each $x\in\Xq_i$. If~\ref{GPE:sizeC}
 or~\ref{GPE:sizeU} were to fail for some $y$, then $y\in N_H(x)$ for some
 $x\in\Xq_i$. Since by~\ref{PtH:dist}, vertices of $\Xq_i$ are at distance
 at least $10$ in $H$, this $x$ is unique. But by definition of
 $B_{t-1}(x)$ the vertex $v_x$ is not bad for $x$ with respect to
 $\psi_{t-1}$, i.e.\ this case does not occur. Finally, if~\ref{GPE:Ureg}
 fails for some $yz\in E(H)$, then again at least one of $y$ and $z$
 (possibly both) is a neighbour of some $x\in\Xq_i$. Again
 by~\ref{PtH:dist} this $x$ is unique, and again by definition of
 $B_{t-1}(x)$ the vertex~$v_x$ is not bad for $x$ with respect to $\psi_{t-1}$, so this case too does not occur. Thus $\psi_t$ is a good partial embedding as desired.
 
 By induction on $t$, the final $\psiq:=\psi_r$ is a good partial embedding satisfying the conclusion of Lemma~\ref{lem:queue}.
\end{proof}
  
  \section{Fixing buffer defects}
  \label{sec:fixbuffer}

  To prove Lemma~\ref{lem:deletebad} we need to embed the reserved
  cliques. The basic idea is as follows. If $\Xbuf_i$ is a clique buffer,
  and a vertex $v$ of $V_i$ is a candidate for too few vertices in
  $\Xbuf_i$ and not in the image of $\psi$ (we call such a vertex poor),
  then we will embed a reserved clique $K\in\cK_i$ on~$v$ and some further vertices
  of $\Vc$. Observe that, since~$K$ is a $(\Delta+1)$-clique, none of the
  vertices of~$K$ have embedded neighbours, hence~$v$ is in the available
  candidate set of the vertex of~$K$ in~$X_i$.

  The only difficulty now is that some of the poor vertices~$v$ may
  lie in $\Vbuf_i$, and we risk destroying the hard-earned property that
  every vertex in~$\Xbuf_i$ has many candidates. In order to deal with this,
  we will need to embed some vertices from~$\Xbuf_i$ as well, at which point
  further vertices of~$V_i$ may become poor and require embedding, and so
  on.

  Before we prove Lemma~\ref{lem:deletebad} we state two auxiliary lemmas. The
  first justifies that the process just described terminates without eating up too many
  vertices, that is, that we can find for each $i$ small subsets $P_i$ of $V_i$ and
  $D_i$ of $\Xbuf_i$ such that every vertex $x\in\Xbuf_i\setminus D_i$ has many candidates in $\Vbuf_i\setminus P_i$, and every vertex of
  $V_i\setminus P_i$ is a candidate for many vertices of $\Xbuf_i\setminus D_i$.
  \begin{lemma}\label{lem:identifybad}
    We assume the General Setup. Suppose that $\Gamma$ has
    $\CON(\rho,r_1,\Delta)$ and let~$i$ be fixed.  Suppose $\psi$ is a good
    partial embedding whose domain contains $N(\Xbuf)$, and that~$\Xbuf_i$
    is a clique buffer. Suppose furthermore that all but at most $\rho
    |V_i|$ vertices of $V_i$ are candidates for at least
    $\mu(dp)^\Delta|X_i|/2$ vertices of $\Xbuf_i$. Then there are subsets
    $P_i$ of $V_i$ and $D_i$ of $\Xbuf_i$ with the following properties.
    \begin{enumerate}[label=\itmarab{PD}]
      \item\label{identifybad:IB:small} We have $|P_i|<2\rho|V_i|$ and
       $|D_i|<2\rho|X_i|$.
      \item\label{identifybad:IB:goodX} Each vertex of $\Xbuf_i\setminus D_i$
       has at least $\mu(dp)^\Delta|V_i|/4$ candidates in $\Vbuf_i\setminus
       P_i$.
      \item\label{identifybad:IB:goodV} Each vertex of $V_i\setminus P_i$ is a
       candidate for at least $\mu(d p/100)^{\Delta}|X_i|$ of the vertices in
       $\Xbuf_i\setminus D_i$.
      \item\label{identifybad:IB:goodD} Each vertex in $D_i$ has at least
       $\mu(dp)^\Delta|V_i|/4$ candidates in $P_i\cap\Vbuf_i$.
    \end{enumerate}
  \end{lemma}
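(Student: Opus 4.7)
The plan is to construct $P_i$ and $D_i$ by an iterative refinement. I would start with $P_i^{(0)}$ being the set of \emph{poor} vertices of $V_i$, namely those which are candidates for fewer than $\mu(dp)^\Delta|X_i|/2$ vertices of $\Xbuf_i$; by hypothesis $|P_i^{(0)}|\le\rho|V_i|$. Set $D_i^{(0)}=\emptyset$. At each step~$t$, first add to $D_i^{(t)}$ every $x\in\Xbuf_i\setminus D_i^{(t-1)}$ with at least $\mu(dp)^\Delta|V_i|/4$ candidates in $P_i^{(t-1)}\cap\Vbuf_i$, and then add to $P_i^{(t)}$ every $v\in V_i\setminus P_i^{(t-1)}$ that is a candidate for fewer than $\mu(dp/100)^\Delta|X_i|$ vertices of $\Xbuf_i\setminus D_i^{(t)}$. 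Since any nontrivial iteration strictly enlarges one of the two sets, the process terminates, and I take $D_i, P_i$ to be its final values.

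Properties~\ref{identifybad:IB:goodV} and~\ref{identifybad:IB:goodD} will then be immediate from the termination condition and the monotonicity $P_i^{(t-1)}\subseteq P_i$. For~\ref{identifybad:IB:goodX}, I would first use~\ref{GPE:sizeC} together with the fact (from~\ref{BUF:last}) that buffer vertices are not image restricted, so $|I_x|=|V_i|$, to obtain $|C(x)\cap\Vbuf_i|\ge\mu(dp)^\Delta|V_i|/2$ for every $x\in\Xbuf_i$. Since $x\notin D_i$ at termination means $|C(x)\cap P_i\cap\Vbuf_i|<\mu(dp)^\Delta|V_i|/4$, subtraction gives $|C(x)\cap(\Vbuf_i\setminus P_i)|\ge\mu(dp)^\Delta|V_i|/4$, which is~\ref{identifybad:IB:goodX}.

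The hard part will be verifying the size bound~\ref{identifybad:IB:small} via $\CON(\rho,r_1,\Delta)$. I would suppose for contradiction that at some iteration either $|D_i|$ first reaches $2\rho|X_i|$ or $|P_i|$ first reaches $2\rho|V_i|$. In the first case, $|P_i\cap\Vbuf_i|\le|P_i|<2\rho|V_i|=|D_i|$, and each $x\in D_i$ has at least $\mu(dp)^\Delta|V_i|/4$ candidates in $P_i\cap\Vbuf_i$. The family $\mathcal{F}=\{\psi(N_H(x)):x\in D_i\}$ consists of pairwise disjoint $\Delta$-sets by~\ref{BUF:dist}, so the candidate graph between $D_i$ and $P_i\cap\Vbuf_i$ is dominated by the congestion graph $\AG(\Gamma,P_i\cap\Vbuf_i,\mathcal{F})$. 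Applying $\CON(\rho,r_1,\Delta)$ and dividing through by $p^\Delta|D_i|$ should yield an inequality of the form $\mu d^\Delta|V_i|/4\le 14\rho|V_i|+\rho\kappa|V_i|$, which fails for $\rho$ sufficiently small. In the second case, $P_i'=P_i\setminus P_i^{(0)}$ satisfies $|P_i'|\ge\rho|V_i|$, and every $v\in P_i'$ is non-poor yet had fewer than $\mu(dp/100)^\Delta|X_i|$ candidates in $\Xbuf_i\setminus D_i^{(s)}$ at its time of addition, hence has at least $\mu(dp)^\Delta|X_i|/3$ candidates in $D_i$. Since $|P_i'|$ may exceed $|D_i|$, I would first pass by averaging to a subset of $P_i'$ of size $|D_i|$ retaining at least the average edge count to $D_i$, and then apply $\CON(\rho,r_1,\Delta)$ analogously to obtain the same contradiction. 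The main obstacle is precisely this coupling between the two growth processes: each bound depends on simultaneously controlling the other set, so one must arrange the argument so that the congestion contradiction goes through regardless of which budget is exceeded first and of the relative sizes of $P_i$ and $D_i$.
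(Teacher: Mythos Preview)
Your proposal is correct and follows essentially the same iterative-refinement strategy as the paper. The only cosmetic differences are: (i) you add $x$ to $D_i$ when it has \emph{many} candidates in $P_i\cap\Vbuf_i$, whereas the paper adds $x$ when it has \emph{few} candidates in $\Vbuf_i\setminus P_i$ --- these are equivalent up to the subtraction argument you give using~\ref{GPE:sizeC}; (ii) in the $\CON$ applications the paper equalises set sizes by \emph{padding} the smaller side to a superset of size exactly $2\rho|V_i|$, rather than restricting to a subset as you do. One small point to tidy in your Case~2: the ``averaging'' is unnecessary since every $v\in P_i'$ individually has at least $\mu(dp)^\Delta|X_i|/3$ candidates in $D_i$, and you should also note that $|P_i'|\le|D_i|$ is possible (then just take $U=P_i'$); with either handling the $\CON$ contradiction goes through since both sets have size $\Theta(\rho|V_i|)$.
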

  
  The conclusion~\ref{identifybad:IB:goodD} will be used later to show that we can embed $D_i$ into $P_i\cap\Vbuf_i$.  
  
  \begin{proof}[Proof of Lemma~\ref{lem:identifybad}]
    We require
    \[\rho<\frac{\mu d^\Delta}{250\kappa}\,.\]
    We start with $D_i=\emptyset$ and $P_i$ being the set of vertices which
    fail~\ref{identifybad:IB:goodV}.  We sequentially add vertices to $D_i$
    and $P_i$ which fail~\ref{identifybad:IB:goodX}
    and~\ref{identifybad:IB:goodV} respectively (note that the property of
    failing either condition is monotone), until either there are no
    failing vertices to add to either set or one of the two sets reaches
    $2\rho|V_i|$ vertices. Observe that any vertex $x$ of $D_i$ has
    by~\ref{GPE:sizeC} at least $(1-\eps')\mu(d-\eps')^\Delta p^\Delta
    |V_i|$ candidates in $\Vbuf_i$. Since $x$ must have
    violated~\ref{identifybad:IB:goodX} at some point, by choice of $\eps'$
    it therefore has at least $\mu(dp)^\Delta|V_i|/4$ candidates in
    $P_i\cap\Vbuf_i$, establishing~\ref{identifybad:IB:goodD}.

    In the process of building $D_i$ and $P_i$ we call a vertex $v\in V_i\setminus P_i$ 
    \emph{poor} if $v$ is a candidate for less than $\mu(d p/100)^{\Delta}|X_i|$ vertices of $\Xbuf_i\setminus D_i$. 
    If $P_i$ reaches $2\rho|V_i|$ vertices first, let $P=P_i$
    and let $D$ a superset of $D_i$, both of size exactly $2\rho|V_i|$. Since, 
    by the assumption of the lemma,  at most
    $\rho|V_i|$ vertices in $V_i$ are poor with respect to $\Xbuf_i$, at 
    least $\rho|V_i|$ of the vertices in $P$ were added to $P_i$ because
    they became poor, that is, they are candidate for at least
    $\mu(dp)^\Delta|X_i|/2$ vertices of $\Xbuf_i$, but for less than
    $\mu(dp/100)^\Delta|X_i|$ of the vertices $\Xbuf_i\setminus D_i$. It
    follows that each is candidate for at least $\mu(dp)^\Delta|X_i|/4$
    vertices of $D_i$, and so there are at least
    $\rho|V_i|\mu(dp)^\Delta|X_i|/4$ edges in the candidate graph between $P$
    and $D$. Now
    \begin{align*}
      \tfrac{1}{4}\rho\mu(dp)^\Delta|X_i||V_i|&=
      \tfrac{1}{8}\rho\mu(dp)^\Delta|X_i||V_i|+\tfrac{1}{8}\rho\mu(dp)^\Delta|X_i||V_i|\\
      &> \frac{\mu d^\Delta}{32\rho}p^\Delta|P||D| +\frac{\mu
      d^\Delta}{16 \kappa r_1} p^\Delta n|D|\\
      &> 7p^\Delta|P||D|+\rho p^\Delta n|D|/r_1\,,
    \end{align*}
    where the first inequality comes from the sizes of $P$ and $D$, and the
    second from the choice of $\rho$ sufficiently small. The last line is in
    contradiction to the congestion condition $\CON(\rho,r_1,\Delta)$, since the candidate graph 
    between $P$ and $D$ is a subgraph of $\AG(\Gamma,P,\cF_D)$ with
    $\cF_D:=\{\psi(N_H(x))\colon x\in D\}$.
    
    If on the other hand~$D_i$ reaches
    $2\rho|V_i|$ vertices first, we define similarly $P$ a superset of $P_i$ and $D=D_i$, both of size $2\rho|V_i|$.  Each
    vertex of $D$ has at least $\mu(dp)^\Delta|V_i|/4$ candidates in $P$
    by~\ref{identifybad:IB:goodD}, and thus the candidate graph between $P$ and
    $D$ has at least $2\rho|X_i|\mu(dp)^\Delta|V_i|/4$ edges, which is larger
    than in the previous case and so also gives a contradiction to
    $\CON(\rho,r_1,\Delta)$.

    We conclude that the process terminates for lack of failing vertices, which establishes~\ref{identifybad:IB:small}.
  \end{proof}

  The second lemma simply gives a condition on a vertex $v\in V_i$ under which
  we can find a clique $K_{\Delta+1}$ containing $v$ in $G$ whose remaining
  vertices lie in $\Vc\setminus\im(\psi)$.

  \begin{lemma}\label{lem:findclique}
    We assume the General Setup. Suppose that $\Gamma$ has $\NS(\eps,r_1,\Delta)$,
    $\RI(\eps,(\eps_{a,b}),\eps',d,r_1,\Delta)$ and $\CON(\rho,r_1,\Delta)$. Let
    $i,j_1,\ldots,j_\Delta$ form a copy of $K_{\Delta+1}$ in $R'$ and $v$ be a
    vertex of $V_i$. Suppose that $\psi$ is any good partial embedding
    which embeds no vertex on~$v$, and is such that $v$ has at least
    \begin{equation}\label{eq:findclique:deg}
     \tfrac{\mu d}{8}\max\big(p|V_{j_k}|,\deg_\Gamma(v;V_{j_k})\big)
    \end{equation}
    neighbours in
    $\Vc_{j_k}\setminus\im(\psi)$ in $G$ for each $1\le k\le\Delta$.
    Suppose that there is a copy of $K_{\Delta+1}$ in $H$ using a vertex of
    $\tilde{X}_i$ and a vertex of each of $X_{j_1},\ldots,X_{j_\Delta}$. Then there are vertices $v_{k}\in\Vc_{j_k}\setminus\im(\psi)$ for $1\le k\le\Delta$, 
    such that $v,v_1,\ldots,v_\Delta$ form a copy of $K_{\Delta+1}$ in $G$.
  \end{lemma}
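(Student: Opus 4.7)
The plan is to construct $v_1, \ldots, v_\Delta$ greedily, using regularity inheritance to ensure that at each step a suitable vertex exists. For $s \in \{0, 1, \ldots, \Delta\}$ and $k \in \{s+1, \ldots, \Delta\}$ set
\[
W_s^{(k)} := \comN_\Gamma\bigl(\{v, v_1, \ldots, v_s\}; V_{j_k}\bigr),
\qquad
S_s^{(k)} := \comN_G\bigl(\{v, v_1, \ldots, v_s\}; \Vc_{j_k}\bigr)\setminus\im(\psi).
\]
I will choose $v_1, \ldots, v_\Delta$ maintaining three invariants: that $\{v, v_1, \ldots, v_s\}$ forms a clique in $G$ with $v_t \in \Vc_{j_t}\setminus\im(\psi)$; that $|S_s^{(k)}| \ge \eta_s p^s \max\bigl(p|V_{j_k}|, \deg_\Gamma(v; V_{j_k})\bigr)$ for each $k > s$, where $\eta_s := \tfrac{\mu d}{8}(d/2)^s$; and that for every $s < k < \ell$ the pair $(W_s^{(k)}, W_s^{(\ell)})$ is $(\eps_s', d, p)$-regular in $G$ for a suitable slowly-increasing sequence $\eps_s'$ of regularity parameters.

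The base case $s = 0$ follows from the hypotheses: the size bound is exactly~\eqref{eq:findclique:deg}, using $N_G\subseteq N_\Gamma$, and the regularity invariant holds because the assumed copy of $K_{\Delta+1}$ in $H$ provides a triangle on $\tX_i, X_{j_k}, X_{j_\ell}$, so the General Setup's two-sided inheritance on $R'$ for $\tcX$ yields that $\bigl(N_\Gamma(v;V_{j_k}),N_\Gamma(v;V_{j_\ell})\bigr)$ is $(\eps, d, p)$-regular in~$G$. For the inductive step, given the invariants at some step $s \le \Delta-1$, I seek $v_{s+1} \in S_s^{(s+1)}$ that is \emph{typical} in three respects: (T1)~$|N_\Gamma(v_{s+1}; W_s^{(k)})| = (1\pm\eps)p|W_s^{(k)}|$ for each $k > s+1$; (T2)~for each $s+1 < k < \ell$ the pair $\bigl(N_\Gamma(v_{s+1}; W_s^{(k)}), N_\Gamma(v_{s+1}; W_s^{(\ell)})\bigr)$ is regular with parameter $\eps_{s+1}'$ in~$G$; and (T3)~$v_{s+1}$ has at least $(d-\eps_s')p|S_s^{(k)}|$ $G$-neighbours in $S_s^{(k)}$ for each $k > s+1$. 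These ensure the invariants propagate: (a) by construction, (b) by (T3) combined with $\eta_{s+1} = (d/2)\eta_s$, and (c) by (T2).

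The atypical vertices for (T1) and (T2) are controlled by $\NS(\eps, r_1, \Delta)$ and $\RI(\eps,(\eps_{a,b}),\eps',d,r_1,\Delta)$ respectively, each ruling out at most $O(\eps p^{\Delta-2} n/r_1^2)$ vertices of $V_{j_{s+1}}$ per constraint; iterating (T1) gives $|W_s^{(k)}| = (1\pm\eps)^s p^s \deg_\Gamma(v;V_{j_k})$, which exceeds the size thresholds in $\NS$ and $\RI$ because $\deg_\Gamma(v;V_{j_k}) \ge \tfrac{\mu d}{8}p|V_{j_k}|$ by~\eqref{eq:findclique:deg}. For (T3) I apply the slicing lemma, Lemma~\ref{lem:slice}, to the $(\eps_s', d, p)$-regular pair $(W_s^{(s+1)}, W_s^{(k)})$: since $|S_s^{(k)}|/|W_s^{(k)}| \ge \eta_s/(1+\eps)^s$ is a constant independent of $p$, the pair $(W_s^{(s+1)}, S_s^{(k)})$ is $O(\eps_s'/\eta_s)$-regular, so at most an $O(\eps_s'/\eta_s)$-fraction of $W_s^{(s+1)}$ is atypical. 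Since $|S_s^{(s+1)}| \ge \eta_s p^{s+1}n/(\kappa r_1)$ and $|S_s^{(s+1)}|/|W_s^{(s+1)}| \ge \eta_s/(1+\eps)^s$, the total atypical count is strictly less than $|S_s^{(s+1)}|$ provided $\eps$ and $\eps'$ are small enough in terms of $\mu, d, \Delta, \kappa$, and $C$ is large enough that $p^\Delta n$ exceeds the RI/NS bounds; a valid $v_{s+1}$ therefore exists.

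The main obstacle is bookkeeping the balance between the geometric shrinkage of $|S_s^{(k)}|$ by a factor of roughly $p$ per step, the compounded loss in the regularity parameter through slicing and two-sided inheritance, and the upper bounds on atypical vertices: the tightest constraint arises at $s = \Delta - 3$, where $\binom{2}{2}$ applications of (T2) yield a bad count $O(\eps p^{\Delta-2} n/r_1^2)$ that must still fit inside $|S_{\Delta-3}^{(\Delta-2)}| \ge \eta_{\Delta-3} p^{\Delta-2} n/(\kappa r_1)$, forcing the choice $\eps \ll \eta_{\Delta-3}/(\kappa \Delta^2)$, together with the final step $s=\Delta-1$, where one only needs $|S_{\Delta-1}^{(\Delta)}|\ge 1$, amounting to $p^\Delta n \gg 1$, guaranteed by $p \ge C(\log n/n)^{1/\Delta}$.
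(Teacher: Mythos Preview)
Your proposal is correct and follows essentially the same approach as the paper's proof. Both arguments greedily choose $v_1,\ldots,v_\Delta$ while maintaining three invariants: control on $|W_s^{(k)}|$ via $\NS$, a lower bound on $|S_s^{(k)}|$ via regularity, and regularity of pairs $(W_s^{(k)},W_s^{(\ell)})$ via $\RI$; the paper uses the explicit parameters $\eps_{s,s}$ from the $\RI$ property where you write $\eps_s'$, and bounds (T3) directly from the definition of regularity rather than via slicing (which gives a cleaner constant), but these are cosmetic differences.
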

  \begin{proof}
    We require
    \[\eps<\eps'<\frac{(d/32)^\Delta\mu}{2\kappa\Delta^2}\,.\]
    We choose the vertices $v_1,\ldots,v_\Delta$ in succession. When we choose
    $v_s$, if $s<\Delta$ we require that for each $s+1\le k\le\Delta$ we have
    \begin{equation}\label{eq:degree_reg_Kc}
    |\comN_\Gamma(v,v_1,\ldots,v_s)\cap V_{j_k}|= (p\pm\eps p)^s
    \deg_\Gamma(v;V_{j_k})\text{ and }
    \end{equation}
    \begin{equation}\label{eq:degree_two_Kc}
    |\comN_G(v,v_1,\ldots,v_s)\cap \left(\Vc_{j_k}\setminus \im(\psi)\right)|\ge
    \big(\tfrac{dp}{4}\big)^{s}\tfrac{\mu
    d}{8}\max\big(p|V_{j_k}|,\deg_\Gamma(v;V_{j_k})\big)\,.
    \end{equation}
     If $s<\Delta-1$ we further require that for
    each $s+1\le k<k'\le\Delta$ the pair 
    \begin{equation}\label{eq:RI_Kc}
    \big(\comN_\Gamma(v,v_1,\ldots,v_s)\cap
    V_{j_k},\comN_\Gamma(v,v_1,\ldots,v_s)\cap V_{j_{k'}}\big)
    \end{equation}
     is $(\eps_{s,s},d,p)$-regular.
    
    Note that these conditions are satisfied, with $s=0$, before we have chosen
    any vertices. Indeed, the first is a tautology, the second is the assumption on $v$
    in the lemma statement, and the third is a statement that $V_i$ has
    two-sided inheritance with respect to $V_{j_k}$ and $V_{j_{k'}}$, which
    holds since we assumed there is a triangle of $X$ using one vertex of each
    of $\tX_i$, $X_{j_k}$ and $X_{j_{k'}}$ and
    by~\ref{H:partition} and~\ref{G:inh}.    
    
    Suppose we have chosen vertices $v_1$,\ldots, $v_{s-1}$ so far 
    and that we have $s\le \Delta-1$. 
    Because $\Gamma$ has $\NS(\eps,r_1,\Delta)$, at
    step $s$ the number of vertices $w$ failing condition~\eqref{eq:degree_reg_Kc} with $v_s=w$, is at most 
    \[\Delta\eps
    p^{\Delta-1}|V(\Gamma)|/r_1^2<\big(\tfrac{dp}{4}\big)^{s-1}\tfrac{\mu
    d}{32}\max\big(p|V_{j_k}|,\deg_\Gamma(v;V_{j_k})\big)\,.\] By
    $(\eps_{s-1,s-1},d,p)$-regularity of the pair
    \[\big(\comN_\Gamma(v,v_1,\ldots,v_{s-1})\cap
    V_{j_k},\comN_\Gamma(v,v_1,\ldots,v_{s-1})\cap V_{j_{k'}}\big)\,,\] 
    and using the upper bound from~\eqref{eq:degree_reg_Kc} on 
    $|\comN_\Gamma(v,v_1,\ldots,v_{s-1})\cap V_{j_k}|$, at most
    \[\Delta\eps'(p+\eps
    p)^{s-1}\deg_\Gamma(v;V_{j_k})<\big(\tfrac{dp}{4}\big)^{s-1}\tfrac{\mu
    d}{32}\max\big(p|V_{j_k}|,\deg_\Gamma(v;V_{j_k})\big)\] vertices 
    $w\in \Vc_{j_s}\setminus \im(\psi)$ fail condition~\eqref{eq:degree_two_Kc} with $v_s=w$.
    Finally, if $s\le\Delta-2$ and since $\Gamma$ has $\RI(\eps,(\eps_{a,b}),\eps',d,r_1,\Delta)$, at most
    \[\Delta^2\eps
    p^{\Delta-2}|V(\Gamma)|/r_1^2<\big(\tfrac{dp}{4}\big)^{s-1}\tfrac{\mu
    d}{32}\max\big(p|V_{j_k}|,\deg_\Gamma(v;V_{j_k})\big)\]
    vertices fail condition~\eqref{eq:RI_Kc}. Thus there are at least
    \[\big(\tfrac{dp}{4}\big)^{s-1}\tfrac{\mu
    d}{32}\max\big(p|V_{j_k}|,\deg_\Gamma(v;V_{j_k})\big)>0\] vertices 
    from $\comN_G(v,v_1,\ldots,v_{s-1})\cap \left(\Vc_{j_s}\setminus \im(\psi)\right)$ which
    satisfy all three conditions, and we can choose $v_s$ for each $1\le s\le\Delta$ as desired. 
    
    Furthermore observe that for $s=\Delta$ it suffices to choose 
    an arbitrary vertex from $\comN_G(v,v_1,\ldots,v_{\Delta-1})\cap \left(\Vc_{j_\Delta}\setminus \im(\psi)\right)$. 
    The lemma follows.
  \end{proof}

  We can now prove Lemma~\ref{lem:deletebad}. We construct the desired
  embedding in two steps. First, we embed each $D_i$ into $P_i\cap\Vbuf_i$,
  which by verifying Hall's condition. Let~$P'_i$ be those vertices
  of~$P_i$ not covered in this process. It then remains to cover all~$P'_i$
  using the reserved cliques, and embed any left-over reserved cliques
  at the end of this process. We do this sequentially by applying
  Lemma~\ref{lem:findclique} to find a destination for a reserved clique,
  either letting~$v$ be the next poor vertex or, if all of these are used, 
  letting~$v$ be some vertex of $\Vc$. Note that in this step, we
  need to maintain the degree condition~\eqref{eq:findclique:deg} of
  Lemma~\ref{lem:findclique}. We have this condition initially
  by~\ref{G:deg}, and we will see that to maintain it, it suffices to
  choose at each step a most dangerous vertex $v$, that is, one
  minimising the parameter $\mindeg$ we define in the proof.

  \begin{proof}[Proof of Lemma~\ref{lem:deletebad}]
    We require
    \[\rho<\min\Big(\frac{\mu
    d^\Delta}{250\kappa},\frac{\mu d}{64\kappa(\DeltaRp+1)}\Big) \quad\text{ and  }
    \eps<\eps'<\frac{d^\Delta}{32^\Delta\kappa\Delta\DeltaRp}\,,\]
    and assume that $\Gamma$ has $\NS(\eps,r_1,\Delta)$, $\RI(\eps,(\eps_{a,b}),\eps',d,r_1,\Delta)$ and
    $\CON(\rho,r_1,\Delta)$. 
   
    For each $i$ such that $\Xbuf_i$ is a clique buffer, we let $P_i$ and $D_i$
    be as given by Lemma~\ref{lem:identifybad}.
    If $\Xbuf_i$ is not a clique buffer, then we let $P_i=D_i=\emptyset$.

    Now for each $i$ in succession, we find a matching in the candidate graph
    between $D_i$ and $P_i\cap\Vbuf_i$. To do this we simply verify Hall's
    condition, using the congestion condition and~\ref{identifybad:IB:goodD}.
    Specifically, suppose we have a non-empty $W\subseteq D_i$, and $Z\subseteq
    P_i$ is the set of vertices in $P_i$ which are candidate for some $w\in W$.
    We need to verify that $|W|\le|Z|$. If this were false, then we could let
    $Z'$ be a superset of $Z$ of size $|W|$, and the number of edges in the
    candidate graph between $W$ and $Z'$ is by~\ref{identifybad:IB:goodD} at
    least $\mu(dp)^\Delta|V_i||W|/4$. Notice that $|D_i|$, $|P_i|<2\rho|V_i|$ 
    by~\ref{identifybad:IB:small}. Hence we have 
    \begin{align*}
      \tfrac{1}{4}\mu(dp)^\Delta|V_i||W|&=
      \tfrac{1}{8}\mu(dp)^\Delta|V_i||W|+\tfrac{1}{8}\mu(dp)^\Delta|V_i||W|\\
      &> \frac{\mu d^\Delta}{16\rho}p^\Delta|Z'||W| +\frac{\mu
      d^\Delta}{8 \kappa r_1} p^\Delta n|W|\\
      &> 7p^\Delta|Z'||W|+\rho p^\Delta n|W|/r_1\,,
    \end{align*}
    which is in contradiction to $\CON(\rho,r_1,\Delta)$, applied with $Z'$
    and the family $\cF=\{\psi(N_H(w)) \colon w\in W\}$ to the congestion
    graph $\AG(\Gamma,Z',\cF)$. We conclude that Hall's condition holds and
    hence the desired matching exists. We let $\psi'$ be the embedding
    obtained from~$\psi$ by embedding each vertex of each~$D_i$ to its
    matching partner in $P_i\cap\Vbuf_i$, and let $P'_i\subset P_i$ be the
    set of those vertices in~$P_i$ not embedded by~$\psi'$.  Observe that
    $\psi'$ is a good partial embedding since all neighbours of the buffer
    vertices are already embedded under $\psi$, and
    thus~\ref{GPE:rightplace}--\ref{GPE:Ureg} trivially hold for $\psi'$.

    As second step, we provide an algorithm, Algorithm~\ref{alg:delbad}, to embed the reserved cliques
    in~$\cK_i$ for each $i\in[r_1]$.  Given a vertex $v\in V_i$ and a
    partial embedding $\psi$ we define the parameter
    \[\mindeg_{\psi}(v):=
    \min_{j:ij\in R'}\frac{\deg_G\big(v;\Vc_j\setminus\im(\psi)\big)}
                                  {\max\big(p|V_j|,\deg_\Gamma(v;V_j)\big)}\,,\]
    which is used in Algorithm~\ref{alg:delbad}. 
        Observe that  
    we have initially $\mindeg_{\psi}(v)\ge \mu (d-\eps)/2$ for 
    all $v\in V_i$ with $\deg_{R'}(i)>0$,  which follows from~\ref{G:deg}.
    We claim that the algorithm runs correctly and the finally returned
    $\psigood$ is the desired good partial embedding. It remains to justify this
    claim. 

    \begin{algorithm}[t]
      \caption{Removing poor vertices}\label{alg:delbad}
      $t:=0$\;
      $\psi_0:=\psi'$\;
      \While{$\bigcup_i \cK_i\neq\emptyset$}{
        $I:=\{i\colon\text{there is an unembedded clique in }\cK_i\}$\;
        \eIf{$\bigcup_i P'_i\setminus\im(\psi_t)\neq\emptyset$}{
          choose $v\in \bigcup_i P'_i\setminus\im(\psi_t)$ minimising
          $\mindeg_{\psi_t}(v)$ \;
        }{
          choose $v\in \bigcup_{i\in I} \Vc_i\setminus\im(\psi_t)$ minimising $\mindeg_{\psi_t}(v)$ \;
        }
        set $i$ such that $v\in V_i$ \;
        choose an unembedded clique $K_H$ in $\cK_i$ \;
        choose a clique $K_G$ containing $v$ and one vertex
        from each $\Vc_j\setminus\im(\psi_t)$ such that $X_j$ intersects $K_H$ and $j\neq i$\;
        set $\psi_{t+1}:=\psi_t\cup\{K_H\to K_G\}$\; 
        $t:=t+1$\;
      }
     return $\psigood:=\psi_t$ \;
    \end{algorithm}
   
    Roughly speaking 
    each of the vertices from $P'_i$ can serve us as an image for each of the 
    $2\rho|X_i|$ cliques from $\cK_i$. 
    During the first  phase of the algorithm, that is, while the
    if-condition is true, we embed reserved cliques in these vertices,
    always choosing a vertex~$v$ minimising $\mindeg_\psi(v)$, which will guarantee
    successful completion. 
    Note that $|P'_i|<2\rho|V_i|$ holds by~\ref{identifybad:IB:small} and $|\cK_i|=2\rho|X_i|$, so 
    if the algorithm doesn't fail then all the vertices from 
    $\bigcup_i P'_i$ will be used. In the second phase, when the
    if-condition becomes false, we have to take 
    care of the remaining unembedded cliques in $\bigcup_i\cK_i$.
    Since the
    algorithm embeds only onto vertices $\bigcup P'_i\cup \Vc_i$, if it
    completes successfully, then the desired
    properties~\ref{FIN:bigA} and~\ref{FIN:ManyCand} are
    guaranteed for $\psigood$
    by~\ref{identifybad:IB:goodX}  and~\ref{identifybad:IB:goodV}.

    Therefore, it remains to show that the algorithm succeeds, that is,
    that there is no 
    failure while choosing  a clique $K_G$ in some iteration. 
    By Lemma~\ref{lem:findclique}, this is
    guaranteed if we can show that at each time $t$ the vertex $v\in V_i$ chosen
    has at least 
    \begin{equation}\label{eq:delbad:fail}
      \tfrac{\mu
        d}{8}\max\big(p|V_{j}|,\deg_\Gamma(v;V_{j})\big)
    \end{equation}
    neighbours in each set $\Vc_j\setminus \im(\psi_t)$ such that $ij\in
    R'$.
    
    By the assumption of the lemma, for each $i$,  the set $\Xc_i$ of vertices 
    $x\in X_i$ contained in some reserved clique from $\bigcup_{j}\cK_j$ satisfies 
    $|\Xc_i|\le 2\kappa(\DeltaRp+1)\rho|X_i|$. 
    It follows that for
    any $t$ and $j$ we have 
    \begin{equation}\label{eq:delbad:psi}
      |\im(\psi_t)\cap\Vc_j|\le 2\kappa(\DeltaRp+1)\rho|X_j|\,.
    \end{equation}

    \begin{claim}\label{cl:delbad:mindeg}
      For any $t$ at most $\DeltaRp\eps p^{\Delta-1}|V(\Gamma)|/r_1^2$
      vertices $v\in V_i$ satisfy \[\mindeg_{\psi_t}(v)<3\mu d/16\,.\]
    \end{claim}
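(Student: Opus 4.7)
The plan is to fix $v \in V_i$ with $\mindeg_{\psi_t}(v) < 3\mu d/16$ and reduce to the statement that, for at least one $j$ with $ij \in R'$, the vertex $v$ has an anomalously large $\Gamma$-degree into the small set $W_j := \Vc_j \cap \im(\psi_t)$; the neighbourhood size property $\NS(\eps, r_1, \Delta)$ that we assume of $\Gamma$ then limits how many such $v$ can exist.

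First I would use the super-regularity condition~\ref{G:deg}, which gives $\deg_G(v; \Vc_j) \ge \tfrac{1}{2}\mu(d-\eps)\max(p|V_j|, \deg_\Gamma(v; V_j))$ initially. If the $\mindeg$ condition fails at coordinate $j$, then subtracting the bound $\deg_G(v; \Vc_j \setminus \im(\psi_t)) < \tfrac{3\mu d}{16}\max(p|V_j|,\deg_\Gamma(v;V_j))$ shows that $v$ must have lost a definite fraction of its $G$-neighbourhood in $\Vc_j$ to $\im(\psi_t)$; in particular $\deg_\Gamma(v; W_j) \ge \deg_G(v; W_j) \ge \tfrac{\mu d}{8}\,p|V_j|$ (using $\eps \ll d$). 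On the other hand, \eqref{eq:delbad:psi} gives $|W_j| \le 2\kappa(\DeltaRp+1)\rho|V_j|$, which the choice $\rho < \mu d / (64\kappa(\DeltaRp+1))$ makes too small to support so much $\Gamma$-degree at a typical vertex.

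I would then split into two cases according to whether $|W_j| \ge \eps p^{\Delta-1}n/r_1^2$ or not. In the former case, $\NS(\eps, r_1, \Delta)$ directly controls, by $\eps p^{\Delta-1}n/r_1^2$, the number of $v$ with $\deg_\Gamma(v; W_j)$ outside $(1\pm\eps)p|W_j|$; all other vertices have $\deg_\Gamma(v; W_j) \le (1+\eps)\cdot 2\kappa(\DeltaRp+1)\rho\,p|V_j| < \tfrac{\mu d}{8}p|V_j|$ by the constant choice above, contradicting the lower bound from the first step. In the latter case the trivial bound $\deg_\Gamma(v; W_j) \le |W_j| < \eps p^{\Delta-1}n/r_1^2$, combined with $|V_j| \ge n/(\kappa r_1)$, $p^{\Delta-2} \le 1$ and $\eps$ small enough, again yields $\deg_\Gamma(v; W_j) < \tfrac{\mu d}{8}p|V_j|$, giving the same contradiction. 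Summing the $\NS$ failure set over the at most $\DeltaRp$ neighbours $j$ of $i$ in $R'$ produces the claimed upper bound $\DeltaRp\eps p^{\Delta-1}n/r_1^2$.

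I do not anticipate any real obstacle: the argument is purely deterministic, relying only on~\ref{G:deg}, the size bound~\eqref{eq:delbad:psi} on $\im(\psi_t) \cap \Vc_j$, and the property $\NS(\eps, r_1, \Delta)$ of $\Gamma$, with all needed inequalities between constants already satisfied by the choices fixed at the start of the proof of Lemma~\ref{lem:deletebad}. The only point requiring a little care is verifying that in the trivial-$|W_j|$ regime the threshold $\eps p^{\Delta-1}n/r_1^2$ is indeed small enough relative to $\tfrac{\mu d}{8}p|V_j|$ for every admissible $p \ge C(\log n/n)^{1/\Delta}$, which is where the factor $p^{\Delta-2} \le 1$ and $\eps$ being small compared to $\mu d /\kappa$ come in.
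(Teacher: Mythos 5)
Your proposal is correct and takes essentially the same approach as the paper: lower-bound $\deg_G(v;\Vc_j)$ via~\ref{G:deg}, observe that a failure of the $\mindeg$ bound at coordinate $j$ forces $v$ to have large $\Gamma$-degree into the small set $\im(\psi_t)\cap\Vc_j$ whose size is controlled by~\eqref{eq:delbad:psi}, bound the number of such $v$ by $\NS(\eps,r_1,\Delta)$, and take a union bound over the at most $\DeltaRp$ relevant $j$. The only cosmetic difference is that you spell out the case distinction on whether $|\im(\psi_t)\cap\Vc_j|$ is above or below the $\NS$ threshold, which the paper compresses into ``a direct consequence.''
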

    \begin{claimproof}
      Each vertex $v$ of $V_i$ has at least
      \[\mu (d-\eps)\max\big(p|V_j|,\deg_\Gamma(v;V_j)/2\big)>\tfrac{\mu
        d}{4}\max\big(p|V_j|,\deg_\Gamma(v;V_j)\big)\] neighbours in
      $\Vc_j$ by~\ref{G:deg}. 
      Moreover, it is a direct consequence of the
      neighbourhood size property $\NS(\eps,r_1,\Delta)$ of $\Gamma$ and~\eqref{eq:delbad:psi}
      that the number of vertices in
      $\Gamma$ which have more than $4\kappa(\DeltaRp+1)\rho p|X_j|$
      neighbours in $\im(\psi_t)\cap\Vc_j$ is at most $\eps
      p^{\Delta-1}|V(\Gamma)|/r_1^2$.
      Since $4\kappa(\DeltaRp+1)\rho p|X_j|<\mu
      dp|V_j|/16$, it follows that for any $t$ at most
      $\DeltaRp\eps p^{\Delta-1}|V(\Gamma)|/r_1^2$ vertices in $V_i$ have
      fewer than $\tfrac{3\mu
        d}{16}\max\big(p|V_j|,\deg_\Gamma(v;V_j)\big)$ neighbours in any
      set $\Vc_j\setminus\im(\psi_t)$ such that $ij\in R'$, that is, they have
      $\mindeg_{\psi_t}(v)<3\mu d/16$.
    \end{claimproof}

    In the remainder we show that if Algorithm~\ref{alg:delbad} chooses a vertex~$v$
    such that~\eqref{eq:delbad:fail} fails, then we obtain a contradiction to
    Claim~\ref{cl:delbad:mindeg}.
    Indeed, in each iteration at most one vertex is embedded in any given $\Vc_j$. So if
    there is a time~$t$ at which a vertex $v\in V_i$ is chosen with fewer 
    neighbours
    than the bound in~\eqref{eq:delbad:fail}
    in some set $\Vc_j\setminus\im(\psi_t)$ such that $ij\in R'$, then at each
    of the preceding $\mu dp|V_j|/16$ times $t'$ the vertex~$v$ had
    $\mindeg_{\psi_{t'}}(v)<3\mu d/16$. Hence each of the at least $\mu d p
    n/(16\kappa r_1)$ vertices $v'$ chosen at these times has
    \[\mindeg_{\psi_t}(v')\le\mindeg_{\psi_{t'}}(v')<3\mu d/16\,.\] 
    Since
    there are at most $r_1$ clusters $V_i$ in $G$, and since \[\mu d p
      n/(16\kappa r_1)>r_1\DeltaRp\eps p^{\Delta-1}|V(\Gamma)|/r_1^2\,,\] this 
    contradicts Claim~\ref{cl:delbad:mindeg}.
 \end{proof}
  
\chapter{Proof of the blow-up lemma for bijumbled graphs}
\label{chap:bij}

\section{The RGA lemma and the proof of the blow-up lemma}
\label{sec:jumbled}
In this section we prove Lemma~\ref{lem:psr_main}, conditional on the random greedy lemma (Lemma~\ref{lem:rga:psr}) which we prove in Section~\ref{sec:pseudoRGA}.

The proof is quite similar to that of Lemma~\ref{lem:rg_image}, and indeed we make use of the same General Setup. The difference is that we can no longer use property $\CON(\rho,r_1,\Delta)$ as
 a bijumbled graph $\Gamma$ need not satisfy it (see the discussion in Section~\ref{sec:pseudo}). Instead, we `replace' it with the lopsided neighbourhood size property $\LNS(\eps,r_1,\Delta)$.

In the next section we will use a somewhat different random greedy strategy than Algorithm~\ref{alg:RGA} to prove the following lemma. However, we will re-use the auxiliary lemmas proved in Section~\ref{sec:RGAlemmas} in its proof (recall that these lemmas are not dependent on Algorithm~\ref{alg:RGA}).

\begin{lemma}[bijumbled RGA lemma]
\label{lem:rga:psr}
  We assume the General Setup. Suppose further that $\Gamma$ has properties $\NS(\eps,r_1,\Delta+1)$, $\RI(\eps,(\eps_{a,b}),\eps',d,r_1,\Delta+1)$ and $\LNS(\eps,r_1,\Delta)$, and that at most $\rho p^\Delta|X_i|$ vertices in each $X_i$ are image restricted. Then there is a good partial embedding $\psiRGA$ such that the
  following hold for each~$i$. Let $b$ be such that $\Xbuf_i$ is a degree-$b$
  buffer.
  \begin{enumerate}[label=\itmarab{PRGA}]
    \item\label{rga:psr:place} Every vertex in
    $\Xmain$ is embedded to
    $\Vmain\cup \Vq$ by $\psiRGA$, and no vertex in $\Xbuf$ is embedded.
    \item\label{rga:psr:nobad} Every vertex in $V_i$ is a
    candidate for at least $\mu(d^\Delta p/100)^b|X_i|$ vertices of $\Xbuf_i$.
    \item\label{rga:psr:main} For every set $W\subset V_i$ of size at least
    $\rho|V_i|$, there are at most $\rho|X_i|$
    vertices in $\Xbuf_i$ with fewer than $\tfrac12(dp)^b|W|$ candidates in $W$.
    \item\label{rga:psr:Xsum} For all $x\in \Xbuf_i$, letting
    \[\qquad Y:=\big\{y\in\Xbuf_i:\big|\Ubuf(y)\cap \Cbuf(x)\big|>(p+\eps p)^{b}|\Cbuf(x)|\big\}\,,\]
    we have
    \begin{equation*}
     \sum_{y\in Y} \frac{\big|\Ubuf(y)\cap \Cbuf(x)\big|}{\big|\Cbuf(y)\big|}
     \le\frac{|\Cbuf(x)|}{20}\,.
    \end{equation*} 
    \item\label{rga:psr:Vsum} 
    For each $i$, for all but at most $\eps' p^\Delta |V_i|$ vertices $v\in V_i$, there are at most $\eps'p^\Delta|V_i|$ vertices $v'\in V_i$ such that
    \[\big|\{x\in\Xbuf_i:v,v'\in C(x)\}\big|>24\mu\Delta^2\big(20\mu^{-1}\zeta^{-1}d^{-\Delta}\big)^b p^{2b}|X_i|\,.\]
  \end{enumerate}
\end{lemma}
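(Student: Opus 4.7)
My plan is to run a random greedy algorithm very much in the spirit of Algorithm~\ref{alg:RGA}, but adapted to the bijumbled setting in two respects: first, since no congestion property is available to let us ``save up'' a queue and resolve it later, we merge the queue into the main embedding phase by allowing each vertex $x$ of $\Xmain$ (in the order $\tau$ produced by Lemma~\ref{lem:tau}) to be embedded uniformly at random into $\big(\Amain(x)\cup\Aq(x)\big)\setminus B(x)$; second, buffer vertices are never touched by the RGA and will be embedded later by the buffer embedding lemma. The General Setup gives us a good partial embedding $\psi_0$ to start from, and we will establish, exactly as in Claim~\ref{cl:rga:inv}, the invariants that (\ref{inv:gpe}) $\psi_t$ is always a good partial embedding, (\ref{inv:sizeA}) the available sets stay large enough for Lemma~\ref{lem:fewbad} to apply with $D=\Delta+1$ (using $\RI(\eps,(\eps_{a,b}),\eps',d,r_1,\Delta+1)$), and (\ref{inv:random}) each step embeds uniformly at random into a set of size at least $\tfrac1{10}\mu\zeta(dp)^{\pi^*(x)}|V(x)|$. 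The condition that at most $\rho p^\Delta|X_i|$ vertices per cluster are image restricted is used to guarantee that the initial $Q_0$ is empty (i.e.\ all trivial-embedding candidate sets are large).

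Properties \ref{rga:psr:place} and \ref{rga:psr:main} then follow directly: the former by construction, the latter by applying Lemma~\ref{lem:rga:welldistr} and using $\LNS(\eps,r_1,\Delta)$ in place of $\NS$ to handle the smaller $W$ that arises when $b=\Delta$. For \ref{rga:psr:nobad}, by Lemma~\ref{lem:tau}\ref{cond:seg} all vertices of $N(\Xbuf)$ form an initial segment of $\tau$, and Lemma~\ref{lem:large_nbs} applied with $B=\Delta+1$ (which we may now do because $\Gamma$ satisfies $\NS(\eps,r_1,\Delta+1)$) shows that a.a.s.\ after embedding this initial segment we retain $\big|N_G(v;\Vmain_j)\setminus\im(\psi_T)\big|\ge\tfrac12\deg_G(v;V_j)$ for every $v$ and every $ij\in R'$. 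Then Lemma~\ref{lem:nonclique_buffer}, again with $B=\Delta+1$, estimates the probability that $\psi\big(N_H(x)\big)\subset N_G(v)$ for each pair $v\in V_i$, $x\in\Xbuf_i$; note that the hypothesis ``if $b=B$ then $y_{b-1}y_b\notin E(H)$'' is now vacuous because $b\le\Delta<B$, which is the single critical advantage over the random-graph proof and is exactly what allows us to handle clique buffers uniformly. A sequential-dependence application of Lemma~\ref{lem:coupling}, followed by a union bound over $v\in V(\Gamma)$, then yields \ref{rga:psr:nobad} a.a.s.

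The main obstacle, and the genuinely new ingredient, is establishing the double-counting conditions \ref{rga:psr:Xsum} and \ref{rga:psr:Vsum}; these play the role that the congestion property $\CON$ played in the random-graph proof, and they must hold deterministically in terms of the final good partial embedding $\psiRGA$. For \ref{rga:psr:Xsum}, fix $x\in\Xbuf_i$; since all neighbours of each buffer vertex $y\in\Xbuf_i$ are already embedded by $\psiRGA$, both $\Ubuf(y)$ and $\Cbuf(y)$ are determined, and the contribution of a single $y$ to the sum equals the ratio of two sizes controlled by \ref{GPE:sizeU} and \ref{GPE:sizeC}. I will split the summation according to the value of $b'=\deg_H(y)$ and apply $\LNS(\eps,r_1,\Delta)$ to the set $\Cbuf(x)$ (which has size at least $\zeta\mu(dp)^b|V_i|\ge\eps p^{\Delta+b}|V_i|/r_1^2$): this bounds the number of vertices $u\in V_i$ whose $\Gamma$-neighbourhood into $\Cbuf(x)$ is abnormally large, and hence bounds (via a chain through the embedded neighbours of $y$) the number of $y$'s contributing to the sum. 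The key point is that $\LNS$ gives a bound of $\eps p^{2\Delta-j-1}|V_i|/r_1^2$ on the number of ``bad'' vertices when measuring into sets of size $\eps p^{\Delta+j}|V_i|/r_1^2$, which is precisely the lopsided trade-off needed to absorb the $p^{-b}$ loss in $1/|\Cbuf(y)|$. Property \ref{rga:psr:Vsum} is handled dually: for a pair $(v,v')\in V_i^2$, the buffer vertices $x\in\Xbuf_i$ with $v,v'\in C(x)$ are those whose embedded neighbours all lie in $\comN_\Gamma(\{v,v'\})$, and a double counting of these triples, together with the randomness of the embedding (analysed via Lemmas~\ref{lem:rga:welldistr} and \ref{lem:coupling}), gives the desired threshold. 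The main technical difficulty here will be bookkeeping the dependence of these counts on the neighbourhood structure of $x$ within $H$ and pushing through the $\LNS$ bounds with the correct exponents of $p$; in particular, it is crucial that we have $\NS$ and $\RI$ with parameter $\Delta+1$ rather than $\Delta$, since otherwise the estimates for the case of $b=\Delta$ buffers (clique or not) would be off by a factor of $p$.
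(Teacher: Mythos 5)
Your high-level architecture is right in several places: running an RGA variant that embeds queued vertices into $\Vq$ rather than skipping them, re-establishing the invariants of Claim~\ref{cl:rga:inv} with $D=\Delta+1$, invoking Lemma~\ref{lem:rga:welldistr} for \ref{rga:psr:main}, and deriving \ref{rga:psr:nobad} via Lemmas~\ref{lem:large_nbs} and~\ref{lem:nonclique_buffer} with $B=\Delta+1$ --- your observation that the hypothesis ``$y_{b-1}y_b\notin E(H)$ when $b=B$'' becomes vacuous is exactly the point that lets clique buffers be treated uniformly. But there are two substantial gaps.

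First, the modified RGA. You propose to embed every $x\in\Xmain$ uniformly into $\big(\Amain(x)\cup\Aq(x)\big)\setminus B(x)$. This destroys the quarantine that makes the queue mechanism work. In the paper's algorithm, non-queued vertices go exclusively to $\Vmain$, so $\Vq_i$ is touched only by the at most $2\rho|X_i|$ queued vertices, and is essentially empty when a queued vertex needs it. With your scheme, a non-queued $x$ lands in $\Vq_i$ with probability roughly $\mu/(1-2\mu)$, so on the order of $\mu|X_i|$ non-queued vertices land there --- comparable to $|\Vq_i|=\mu|V_i|$ itself. You cannot then argue that $\Aq_t(z)\setminus B_t(z)$ remains large for a later queued $z$, and indeed the algorithm can still halt without an image. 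The paper's split into $\Amain$ (for non-queue) and $\Aq$ (for queue) is not cosmetic; it is what permits the size estimate $\big|\Aq_{\tau(x)-1}(x)\setminus B_{\tau(x)-1}(x)\big|\gtrsim\mu\zeta(dp)^{\pitau(x)}|V_i|$ in Claim~\ref{cl:rga:nofill}. Relatedly, your stated use of the hypothesis ``at most $\rho p^\Delta|X_i|$ vertices image restricted'' (to make $Q_0$ empty) is not where it is needed; it is needed so that the at most $\rho p^\Delta|X_i|$ vertices $y$ with $J_y\neq\emptyset$, which are excluded from the sum in Claim~\ref{cl:psr:lnssum}, contribute less than $\tfrac12|\Cq_t(x)|$ in the covering estimate of Claim~\ref{cl:rga:nofill}.

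Second, and more seriously, your plan for \ref{rga:psr:Xsum} is a deterministic double-count using $\LNS$ applied to $\Cbuf(x)$, splitting by $\deg_H(y)$. The split is vacuous since, by~\ref{BUF:deg}, every $y\in\Xbuf_i$ has degree exactly $b$. More importantly, $\LNS$ only bounds the number of \emph{vertices} with abnormal $\Gamma$-degree into a fixed set; it does not directly bound the number of $b$-\emph{tuples} $\psi\big(N_H(y)\big)$ whose common $\Gamma$-neighbourhood into $\Cbuf(x)$ is abnormal. A single vertex of abnormal degree can participate in arbitrarily many such tuples, so the ``chain through the embedded neighbours'' you sketch does not deterministically control the sum. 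The paper instead sets up a probabilistic argument (Claim~\ref{cl:psr:lnssum}): it defines $y$ as ``failing at step $s$'' when its $s$th embedded neighbour goes to an abnormal-degree vertex, uses $\LNS$ together with invariant~\ref{psr:inv:random} to bound the conditional probability of failing at step $s$ by $O(\rho p^{\pitau(y)-s+1})$, and then concentrates via Lemma~\ref{lem:coupling}. This same probabilistic sum condition --- in its $\Vq$ form --- is also exactly what feeds Claim~\ref{cl:rga:nofill} to show the algorithm does not halt with failure, a step you omit. Your sketch for \ref{rga:psr:Vsum} at least acknowledges randomness, but glosses over the case analysis (zero, one, or at least two ``badly'' embedded neighbours of a buffer vertex $x$) that is needed to win the factor $p^{2b}$; simply combining Lemma~\ref{lem:rga:welldistr} with Lemma~\ref{lem:coupling} does not produce that exponent.
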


Comparing this lemma to Lemma~\ref{lem:rga} (RGA lemma), one notices that we demand more of the $\NS$ and $\RI$ pseudorandomness properties. The reason for doing this is that we can then make use of Lemmas~\ref{lem:large_nbs} and~\ref{lem:nonclique_buffer} for all buffer vertices, whether or not they are in copies of $K_{\Delta+1}$, and thus there is no exception for clique buffers in~\ref{rga:psr:nobad}. This simplifies our proof substantially, as we no longer need to fix buffer defects. We will explain later why this does not harm our eventual bijumbledness requirement.

Property~\ref{rga:psr:main} corresponds to~\ref{rga:emb:main}. Further, properties~\ref{rga:psr:Xsum} and~\ref{rga:psr:Vsum}
form the `no dense spots' property we require in order to
complete the embedding. In Lemma~\ref{lem:rga} no such property appears, but in random graphs the $\CON$ property guarantees that no dense spots exist. Somewhat
informally,~\ref{rga:psr:Xsum} asserts that typically candidate sets
intersect as if they are random sets, while
property~\ref{rga:psr:Vsum} states the same about intersections of the
neighbourhoods of vertices from $V(G)$ in the candidate graph.

 Observe that, in comparison to Lemma~\ref{lem:rga}, here we ask for an additional pseudorandomness property
$\LNS(\eps,r_1,\Delta)$, which we require to establish~\ref{rga:psr:Xsum} and~\ref{rga:psr:Vsum} and in order to embed all
vertices.

\subsection{Outline of the proof of Lemma~\ref{lem:psr_main}}
The proof of Lemma~\ref{lem:psr_main} now looks quite similar to the proof of Lemma~\ref{lem:rg_image} (the blow-up lemma for random graphs). Again, we use Lemma~\ref{lem:matchreduce} to obtain the General Setup. However, this time we do not choose any reserved cliques, whether or not we have buffer vertices in cliques. We note that although we could state a version of Lemma~\ref{lem:rga:psr} which asked only for $\NS(\eps,r_1,\Delta)$ and $\RI(\eps,(\eps_{a,b}),\eps',d,r_1,\Delta)$ rather than with $\Delta+1$, and which made exceptions for clique buffers, this would not improve our eventual bijumbledness requirement on $\Gamma$, except for $\Delta=2$, since for $\Delta\ge 3$ the requirement is determined by $\LNS(\eps,r_1,\Delta)$. We apply Lemma~\ref{lem:rga:psr} to obtain a good partial embedding whose domain is $\Xmain$ with the additional properties stated there. We then have only to embed $\Xbuf$. Again, we do this one part at a time, by verifying Hall's condition, and again we separate the verification into small, large and medium-sized subsets of $\Xbuf_i$. Again, the `medium-sized' case is dealt with quickly using~\ref{rga:psr:main}.

However, now we approach the `small' and `large' cases differently, since we no longer have access to the $\CON$ property. In the `small' case (see Claim~\ref{cl:psr:small}), our approach is the following. Given $X\subset\Xbuf_i$, we construct an auxiliary embedding of $X$ into $\Vbuf_i$ such that each $x$ is embedded into $C(x)$. Although this auxiliary embedding is \emph{not} part of the embedding we will finally give, its existence verifies Hall's condition for $X$. The way we will construct the embedding is simply to embed $X$ vertex by vertex, at each step choosing for $x$ uniformly at random a so far unused vertex of $\Cbuf(x)$. Since $\psiRGA$ has~\ref{GPE:sizeC} we know $\Cbuf(x)$ is always reasonably large, and we will show that with high probability it is never more than half covered. Let us briefly explain how this works. We assume that no candidate sets are more than half covered before reaching $x$. When we embedded some vertex $y$, the probability of embedding it to $C(x)$ was at most
\begin{equation}\label{eq:probxsketch}
 \frac{\big|\Ubuf(y)\cap \Cbuf(x)\big|}{\tfrac12|\Cbuf(y)|}
\end{equation}
since (by assumption) we embed $y$ into a set of size at least $\tfrac12|\Cbuf(y)|$, of which certainly at most $\big|\Ubuf(y)\cap \Cbuf(x)\big|$ vertices are in $\Cbuf(x)$. But~\ref{rga:psr:Xsum} gives us an upper bound for the sum of these probabilities over all $y\in\Xbuf_i$ such that~\eqref{eq:probxsketch} is `large', and it is easy to account for the $y\in\Xbuf_i$ such that~\eqref{eq:probxsketch} is not large. We can thus apply Lemma~\ref{lem:coupling} to conclude that it is unlikely that $\Cbuf(x)$ was more than half covered, and by the union bound over all $x\in\Xbuf_i$ we conclude that with high probability we never fail.

For the `large' case (see Claim~\ref{cl:psr:large}) we use a similar argument, but using~\ref{rga:psr:nobad} and~\ref{rga:psr:Vsum} instead of~\ref{GPE:sizeC} and~\ref{rga:psr:Xsum}.

\subsection{Proof of Lemma~\ref{lem:psr_main}}
Now we give the full details of the proof outlined above.
\begin{proof}[Proof of Lemma~\ref{lem:psr_main}]
 First we choose constants as follows. Given $\Delta$, $\DeltaRpbl$, $\Delta_J$ integers, $\alphabl$, $\zetabl$ and $d>0$, and $\kappabl>1$, we set $\vartheta=\Delta$, $\DeltaRp=8(\Delta+\Delta_J)^{10}\DeltaRpbl$, $\alpha=\tfrac12\alphabl$, $\zeta=\tfrac12\zetabl$ and $\kappa=2\kappabl$. We now choose
 \begin{align*}
  \mu&= \frac{\alpha d^\Delta}{20000\kappa \DeltaRp^4\Delta^{10}(\Delta+2)},
  &\rho&= \tfrac{\mu^2\zeta^2}{2000\Delta^2}\big(\tfrac{\mu \zeta d^{4\Delta+2}}{10^{6\Delta}}\big)^\Delta\,,\\
   &\text{and}\quad&\eps'&=\tfrac{\mu\zeta d^{\Delta^2}\rho}{10^{6\Delta}\Delta^3\kappa4^\Delta}2^{-4/\rho}\,.
 \end{align*}
 Now for input $\Delta$, $d$ and $\eps'$, Lemma~\ref{lem:det_jumbled} returns constants $\eps_{a,b}$ and $\eps_{\sublem{lem:det_jumbled}}>0$.
 We set
 \[\eps=  \min\big(\tfrac{2^{10\Delta}d(\eps')^2}{\kappa\DeltaRp},\eps_{\sublem{lem:det_jumbled}}\big)\,.\]
   We let $\epsbl=\tfrac{1}{16}(\Delta+\Delta_J)^{-10}\eps$ and $\rhobl=\tfrac{1}{16}(\Delta+\Delta_J)^{-10}\rho$.
 
 Now Lemma~\ref{lem:psr_main} returns $\epsbl$ and $\rhobl$. Given $\ronebl$ we let $r_1=8(\Delta+\Delta_J)^{10}\ronebl$. We choose $c$ sufficiently small for Lemma~\ref{lem:det_jumbled} with input $\Delta$, $d$, $\eps'$ and $T=r_1$.
 
  We require
 \[p\ge \tfrac{10^6(\Delta+\Delta_J)^{10}\kappa r_1}{\mu\zeta\rho d^\Delta \eps^{-2}} \big(\tfrac{\log n}{n}\big)^{1/(2\Delta)}\,,\]
 where the condition on the constant in front of the $\big(\tfrac{\log n}{n}\big)^{1/(2\Delta)}$-term comes from Lemma~\ref{lem:matchreduce}, while the term $\big(\tfrac{\log n}{n}\big)^{1/(2\Delta)}$ is required for Lemma~\ref{lem:rga:psr}. 
 We remark, however, that this lower bound is not a restriction since a somewhat stronger lower bound  on $p$ of the form $\Omega(n^{-1/2(t-1)})$ for any $(p,cp^tn)$-bijumbled graph follows by considering the inequality~\eqref{eq:expander_mixing} which defines bijumbledness for, say, sets $X=\{v\}$ and $Y=V(\Gamma)\setminus(\{v\}\cup N_\Gamma(v))$.
 
 Let $\Gamma$ be an $n$-vertex, $(p,cp^{\max(4,3\Delta/2+1/2)}n)$-bijumbled graph. Then Lemma~\ref{lem:det_jumbled} states that $\Gamma$ has the three properties $\NS(\eps,r_1,\Delta+1)$, $\RI(\eps,(\eps_{a,b}),\eps',d,r_1,\Delta+1)$ and $\LNS(\eps,r_1,\Delta)$. From now on we will simply assume $\Gamma$ is an $n$-vertex graph which satisfies these three properties. 
 
 Given a graph $\Rbl$ on $\rbl\le\ronebl$ vertices, a spanning subgraph $\Rpbl$ with $\Delta(\Rpbl)\le\DeltaRpbl$, and graphs $H$ and $G\subset\Gamma$ with vertex partitions $\cXbl$ and $\cVbl$, a family of potential buffer vertices $\tcXbl$, and a $(\rhobl p^\Delta,\zetabl,\Delta,\Delta_J)$-restriction pair $\cIbl$, $\cJ$, suppose that the conditions of Lemma~\ref{lem:psr_main} are satisfied. Then the good partitions lemma, Lemma~\ref{lem:matchreduce}, (with $b_{\sublem{lem:matchreduce}}=\Delta$) gives a graph $R$ on $r\le r_1$ vertices, a spanning subgraph $R'$ with $\Delta(R')\le\DeltaRp$, and $\kappa$-balanced size-compatible partitions $\cX$ and $\cV$ of $H$ and $G$ respectively, each part having size at least $n/(\kappa r_1)$, together with a family $\tcX$ of potential buffer vertices and $\cI$ of image restrictions, subsets $\Xbuf_i$ of $\tX_i$ for each $i\in[r]$, and partitions $V_i=\Vmain_i\dcup\Vq_i\dcup\Vc_i\dcup\Vbuf_i$ for each $i\in[r]$ which satisfy the General Setup. 
 
 We let $\Xmain=V(H)\setminus\Xbuf$. We now begin the embedding of $H$ into $G$. By Lemma~\ref{lem:rga:psr}, there is a good partial embedding $\psiRGA$ of $\Xmain$ into
 $\Vmain\cup\Vq$ with properties~\ref{rga:psr:place}--\ref{rga:psr:Vsum} as stated in that lemma.
 
 Our aim now is to complete the embedding of $H$ into $G$ by finding for each
 $i$ a matching in the available candidate graph between $V'_i:=V_i\setminus\im(\psiRGA)$ and $\Xbuf_i$. Let us assume that $\Xbuf_i$ is a degree-$b$ buffer. To show there is a matching, we will verify Hall's condition, so let $Y$ be a non-empty subset of $\Xbuf_i$. Let $U$ be the set of vertices in $V'_i$ which are candidate for at least one vertex of $Y$, then our aim is to show $|U|\ge|Y|$. We split this into three cases, the harder two of which are done in the following two claims.

\begin{claim}\label{cl:psr:small} If $0<|Y|\le\rho|X_i|$, then we have $|U|\ge|Y|$.
\end{claim}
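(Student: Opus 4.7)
The plan is to produce, with positive probability, an injection $\phi\colon Y \to V'_i$ with $\phi(y) \in \Cbuf(y)$ for every $y \in Y$; since then $\phi(Y) \subseteq U$, this immediately gives $|U| \ge |Y|$. Enumerate $Y = \{y_1, \ldots, y_m\}$ with $m = |Y| \le \rho|X_i|$ and sample sequentially: at step $t$, if $|\Cbuf(y_t) \setminus \{\phi(y_1), \ldots, \phi(y_{t-1})\}| \ge \tfrac12|\Cbuf(y_t)|$ then pick $\phi(y_t)$ uniformly from this set; otherwise the process \emph{halts}, and for analytical convenience we continue by sampling $\phi(y_t)$ uniformly from $\Cbuf(y_t)$ itself. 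For each $z \in \Xbuf_i$ set $N_t^{(z)} := |\{s \le t : \phi(y_s) \in \Cbuf(z)\}|$. A halt at step $t$ forces $N_{t-1}^{(y_t)} > \tfrac12|\Cbuf(y_t)|$, so the halt event lies inside the union over $z \in \Xbuf_i$ of the bad events $\cB_z := \{N_m^{(z)} > \tfrac12|\Cbuf(z)|\}$.

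At every step $t$, the conditional probability that $\phi(y_t) \in \Cbuf(z)$ given the history $\cF_{t-1}$ is at most
\[
p_t^{(z)} := \frac{2\,|\Cbuf(z) \cap \Ubuf(y_t)|}{|\Cbuf(y_t)|},
\]
since the sampling is uniform either on $\Cbuf(y_t)$ or on a subset of it of size at least $\tfrac12|\Cbuf(y_t)|$. To bound $\sum_t p_t^{(z)}$, split the summation by whether $|\Ubuf(y_t) \cap \Cbuf(z)|$ exceeds $(p + \eps p)^b|\Cbuf(z)|$ or not. Property~\ref{rga:psr:Xsum} applied with $x = z$ bounds the contribution of the ``large'' indices by $2 \cdot |\Cbuf(z)|/20 = |\Cbuf(z)|/10$. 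For the ``small'' indices, combine $|\Cbuf(y_t)| \ge \tfrac12 \mu (dp)^b |V_i|$ (from~\ref{GPE:sizeC}, noting by~\ref{BUF:last} that buffer vertices are not image-restricted, so $|I_{y_t}| = |V_i|$ and $|J_{y_t}| = 0$) with $|Y| \le \rho|X_i| = \rho|V_i|$ to bound the contribution by $\tfrac{8\rho}{\mu d^b}|\Cbuf(z)|$, which is again at most $|\Cbuf(z)|/10$ by the choice of $\rho$. Thus $\sum_t p_t^{(z)} \le |\Cbuf(z)|/5$.

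Applying Lemma~\ref{lem:coupling}\ref{coupling:a} with $\delta = 7/5 \in (0, 3/2)$ now yields
\[
\Pr(\cB_z) \le \Pr\bigl(N_m^{(z)} > \tfrac{12}{25}|\Cbuf(z)|\bigr) < \exp\bigl(-\tfrac{49}{375}|\Cbuf(z)|\bigr).
\]
Since $|\Cbuf(z)| \ge \tfrac12 \mu (dp)^b |V_i|$ with $b \le \Delta$, and the lower bound on $p$ implicit in the bijumbledness hypothesis of Lemma~\ref{lem:psr_main} forces $p^\Delta|V_i|$ to dominate $\log n$, this probability is far smaller than $1/|X_i|$. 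A union bound over $z \in \Xbuf_i$ gives $\Pr\bigl(\bigcup_z \cB_z\bigr) < 1$, so with positive probability no halt occurs and the procedure returns the desired injection, whence $|U| \ge |Y|$.

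The principal technical point is the interplay between the halting rule and the sequential-dependence analysis: continuing the process uniformly on $\Cbuf(y_t)$ after a halt preserves the conditional-expectation bound $\Exp[\mathbbm{1}[\phi(y_t)\in\Cbuf(z)] \mid \cF_{t-1}] \le p_t^{(z)}$ required by Lemma~\ref{lem:coupling} (in fact it only decreases the post-halt expectation), and the halt itself is absorbed into the bad events we control. The hypothesis $|Y| \le \rho|X_i|$ enters only in bounding the ``small'' contribution; the medium and large cases of Hall's condition will instead rely on~\ref{rga:psr:main}, and on~\ref{rga:psr:nobad} together with~\ref{rga:psr:Vsum}, respectively.
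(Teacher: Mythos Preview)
Your proposal is correct and follows essentially the same approach as the paper: a sequential random greedy assignment of $Y$ into $\Vbuf_i$, with~\ref{rga:psr:Xsum} used to bound the sum of conditional hitting probabilities and Lemma~\ref{lem:coupling} to turn this into an exponential tail. The only differences are cosmetic---your ``halt and continue'' device in place of the paper's indicator $Z^{(j)}_\ell$ that is zeroed when the half-covering condition fails, the choice $\delta=7/5$ instead of $\delta=1$, and a union bound over $z\in\Xbuf_i$ rather than over $j\in[|Y|]$---none of which affects the argument.
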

\begin{claimproof} 
 By~\ref{BUF:last} no $y\in Y$ is image restricted, so because $\psiRGA$ is a good partial embedding, by~\ref{GPE:sizeC} for each $y\in Y$ we have 
 \[|\Cbuf(y)|\ge(1-\eps')\mu(d-\eps')^bp^b|V_i|\ge\tfrac12\mu (dp)^b|V_i|\,,\]
  where the second inequality is by choice of $\eps'$. Let $y_1,\ldots,y_{|Y|}$ be an enumeration of $Y$.
 Now for each $j=1,\ldots,|Y|$ we choose $v_j$ uniformly at random from $\Cbuf(y_j)\setminus \{v_1,\ldots,v_{j-1}\}$ if this is possible; if not, we say $v_j$ does not exist. Our aim is to show that this does not occur, in other words that we obtain $|Y|$ distinct vertices of $|U|$, verifying the claim.
 
 Let $\hist_0$ be the empty history, and for each $1\le j\le |Y|$, let $\hist_j$ be the history of this process up to and including the choice of $v_j$. We claim that a.a.s.\
\begin{equation}\label{eq:psr:smallY} 
  \big|\Cbuf(y_j)\setminus\{v_1,\ldots,v_{j-1}\}\big|\ge\tfrac12\big|\Cbuf(y_j)\big|
\end{equation} 
  holds for each $j$. For a given $j$ we define random variables $Z^{(j)}_\ell$, $\ell\in[j-1]$, as follows.
  We set $Z^{(j)}_\ell=1$ if $v_\ell\in \Cbuf(y_j)$ and $|\Cbuf(y_\ell)\setminus\{v_1,\ldots, v_{\ell-1}\}|\ge \tfrac{1}{2}|\Cbuf(y_\ell)|$ hold, and
  $Z^{(j)}_\ell=0$ otherwise. For any history $\hist_{\ell-1}$ of this process we can bound the
   conditional expectation of $Z^{(j)}_\ell$ by
  \begin{multline}\label{eq:psr:expZ}
    \Exp(Z^{(j)}_\ell|\hist_{\ell-1})=\Pr(Z^{(j)}_\ell=1|\hist_{\ell-1})\\
    \le \frac{\big|\Cbuf(y_\ell)\cap \Cbuf(y_j)\setminus\{v_1,\ldots,v_{\ell-1}\}\big|}{\big|\Cbuf(y_\ell)\setminus\{v_1,\ldots,v_{\ell-1}\}\big|}\le\frac{\big|\Ubuf(y_\ell)\cap\Cbuf(y_j)\big|}{\tfrac12\big|\Cbuf(y_\ell)\big|}\,.
  \end{multline}
  
Observe that $\sum_{\ell=1}^{j-1}Z^{(j)}_\ell$ is exactly the number of those  vertices $v_k$ among $v_1$, \ldots, $v_{\ell-1}$ which lie in $\Cbuf(y_j)$ and were chosen from a set of size at least  $\tfrac{1}{2}|\Cbuf(y_\ell)|$. 
To obtain an upper bound on the expectation of 
$\sum_{\ell=1}^{j-1}Z^{(j)}_\ell$, we sum up~\eqref{eq:psr:expZ} for all $\ell\in[j-1]$. We split this sum according to whether $\big|\Ubuf(y_\ell)\cap \Cbuf(y_j)\big|\le(p+\eps p)^b|\Cbuf(y_j)|$ or not.
 In the former case, each summand is by~\ref{GPE:sizeU} and~\ref{GPE:sizeC} at most
 \[\frac{2(p+\eps p)^b\mu (p+\eps p)^b|V_i|}{(1-\eps')\mu(dp-\eps'p)^b|V_i|}\le 4d^{-\Delta}p^b\,,\]
 where the inequality is by choice of $\eps$ and $\eps'$, while~\ref{rga:psr:Xsum} bounds the sum over the remaining terms. We get
 \begin{align*}
 \sum_{\ell=1}^{j-1}\Exp\big[Z^{(j)}_\ell\big|\hist_{\ell-1}\big]&\le\sum_{\ell=1}^{j-1}\frac{\big|\Ubuf(y_\ell)\cap\Cbuf(y_j)\big|}{\tfrac12\big|\Cbuf(y_\ell)\big|}\\
 &\le 4(j-1)d^{-\Delta} p^b+\frac{|\Cbuf(y_j)|}{10}\le \frac{|\Cbuf(y_j)|}{5}
 \end{align*}
 where the last inequality uses $j\le|Y|\le\rho|X_i|$,~\ref{GPE:sizeC} and the choice of $\rho$ and $\eps$. Using the sequential dependence lemma, Lemma~\ref{lem:coupling}, with $\delta=1$, we conclude that the probability of the event $\sum_{\ell=1}^{j-1}Z^{(j)}_\ell>\tfrac25|\Cbuf(y_j)|$ is at most $e^{-|\Cbuf(y_j)|/15}\le e^{-\mu d^b p^b|V_i|/30}$.
 
 Observe that the event$\sum_{\ell=1}^{j-1}Z^{(j)}_\ell>\tfrac25|\Cbuf(y_j)|$ contains the event that~\eqref{eq:psr:smallY} fails for $j$, given that it did not fail for smaller $j$. Thus, taking a union bound over the $|Y|\le n$ values of $j$, we see that the probability that~\eqref{eq:psr:smallY} fails at any stage is at most
 \[n\cdot  e^{-\mu d^b p^b|V_i|/30}\,,\]
 which is smaller than one for all sufficiently large $n$ by choice of $p$.
 
 We conclude that there is a positive probability of choosing $|Y|$ distinct vertices in $U$, so $|U|\ge|Y|$ as desired.
\end{claimproof}
 
As previously mentioned, although the proof of Claim~\ref{cl:psr:small} constructs an auxiliary embedding, this embedding is not part of the final embedding of $H$ into $G$, and exists only to verify the claim. The same goes for the following claim.
 
\begin{claim}\label{cl:psr:large} If $|Y|>|\Xbuf_i|-\rho|X_i|$, then $|U|\ge|Y|$.
\end{claim}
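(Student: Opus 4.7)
The plan is to argue by contradiction: suppose $|U|<|Y|$ and set $V:=V'_i\setminus U$, so that $|V|\ge|\Xbuf_i\setminus Y|+1\ge 1$, using that $|V'_i|=|\Xbuf_i|$. Every $v\in V$ will have $S_v:=\{x\in\Xbuf_i:v\in C(x)\}\subseteq\Xbuf_i\setminus Y$, since $v\notin U$ means $v$ is candidate for no $y\in Y$. Consequently the neighbourhood $N(V)$ of $V$ in the candidate graph between $V'_i$ and $\Xbuf_i$ lies inside $\Xbuf_i\setminus Y$, giving the a priori bound $|N(V)|\le|\Xbuf_i\setminus Y|\le|V|-1$. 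The aim will be to derive the contradictory lower bound $|N(V)|\ge|V|$.

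If $|V|\ge\rho|V_i|$, then I apply~\ref{rga:psr:main} with $W:=V$ to conclude that all but at most $\rho|X_i|$ vertices of $\Xbuf_i$ have at least $\tfrac12(dp)^b|V|\ge 1$ candidates in $V$; combined with $|Y|>|\Xbuf_i|-\rho|X_i|$, this forces some $y\in Y$ to have a candidate in $V$, contradicting $V\cap N(Y)=\emptyset$ at once.

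The interesting case will be $|V|<\rho|V_i|$, where the plan is to mirror the random matching of Claim~\ref{cl:psr:small} with the roles of $\Xbuf_i$ and $V'_i$ swapped, using~\ref{rga:psr:nobad} for the lower bound on candidate-set sizes and~\ref{rga:psr:Vsum} for collision control. The main obstacle is that, unlike~\ref{rga:psr:Xsum}, property~\ref{rga:psr:Vsum} admits an exceptional set of up to $\eps' p^{\Delta}|V_i|$ vertices of $V_i$ for which no useful codegree bound is available. My plan to circumvent this is to write $V=V_0\dcup V_1$, with $V_1$ the exceptional vertices of $V$, and to exploit the fact that each $v\in V$ satisfies $|S_v|\ge Lp^{b}|X_i|$ where $L:=\mu(d^{\Delta}/100)^{b}$ (by~\ref{rga:psr:nobad}), which by the choices of constants at the start of the proof of Lemma~\ref{lem:psr_main} is much larger than $|V_1|\le\eps'p^{\Delta}|V_i|$, to greedily reserve a distinct $x_v\in S_v$ for each $v\in V_1$; this produces a set $R\subseteq\Xbuf_i\setminus Y$ with $|R|=|V_1|$.

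It will then remain to match $V_0$ injectively into $\Xbuf_i\setminus(Y\cup R)$. Following the scheme of Claim~\ref{cl:psr:small}, I enumerate $V_0=\{v_1,\ldots,v_{|V_0|}\}$ and at each step $j$ pick $x_j$ uniformly at random from $S_{v_j}\setminus R\setminus\{x_1,\ldots,x_{j-1}\}$, maintaining the invariant that this set has size at least $|S_{v_j}|/4$. For each non-exceptional $v_j$ and each $v_\ell\in V_0$ (necessarily non-exceptional), the collision probability conditioned on the history is at most $4|S_{v_\ell}\cap S_{v_j}|/|S_{v_\ell}|\le 4Kp^{b}/L$ for all but at most $\eps'p^{\Delta}|V_i|$ bad indices $\ell$, where $K:=24\mu\Delta^{2}(20\mu^{-1}\zeta^{-1}d^{-\Delta})^{b}$ is the codegree threshold from~\ref{rga:psr:Vsum}. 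The choices of $\rho$ and $\eps'$ imply $\rho\le L^{2}/(32K)$ and $\eps'\le L/16$, which together will make the sum of collision probabilities at step $j$ at most $|S_{v_j}|/8$; Lemma~\ref{lem:coupling} then yields per-step failure probability at most $\exp(-|S_{v_j}|/24)$, and a union bound over the $|V_0|\le|V'_i|$ steps (using $|S_{v_j}|\ge Lp^{b}|X_i|\gg\log n$ for large $n$) shows the process succeeds with positive probability. Combining the matching of $V_0$ with the reservation $R$ then yields $|V_0|+|R|=|V|$ distinct vertices in $N(V)$, contradicting $|N(V)|\le|V|-1$. The hardest technical step will be verifying the constant chain $L^{2}\gg\rho K$ and $L\gg\eps'$ uniformly in $n$ given the hypothesis $p\ge C(\log n/n)^{1/(2\Delta)}$, which follows from the aggressive choices of $\rho$ and $\eps'$ made at the outset.
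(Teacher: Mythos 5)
Your proposal is correct and follows essentially the same route as the paper: both reduce the problem to showing $|V'_i\setminus U|\le|\Xbuf_i\setminus Y|$, run a random greedy assignment of candidates to the vertices of $V'_i\setminus U$, bound the collision sums using~\ref{rga:psr:nobad} and the codegree control from~\ref{rga:psr:Vsum}, and close with Lemma~\ref{lem:coupling}. The only deviations are cosmetic: you pre-reserve distinct representatives for the $\le\eps'p^\Delta|V_i|$ exceptional vertices of~\ref{rga:psr:Vsum}, whereas the paper simply places those vertices first in the random order and notes that their indices are too small to exhaust their candidate sets; and your separate case $|V|\ge\rho|V_i|$ via~\ref{rga:psr:main}, while valid, is unnecessary, since the random matching already yields a contradiction once $\rho|X_i|>|\Xbuf_i\setminus Y|$ steps succeed.
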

\begin{claimproof}
The vertices $V'_i\setminus U$ are candidates only for vertices of $\Xbuf_i\setminus Y$, and since $|V'_i|=|\Xbuf_i|$, what we need to show is that $|V'_i\setminus U|\le |\Xbuf_i\setminus Y|$ (as explained in detail in the proof of Lemma~\ref{lem:completematch}).
 
   Let $v\in V_i'$ be a vertex and let $C(v)$ denote those vertices $x$ from $\Xbuf_i$ 
  for which $v$ is a candidate, i.e.\ $C(v)$ denotes the neighbours of $v$ in the candidate graph between 
  $V'_i\setminus U$ and  $\Xbuf_i\setminus Y$. 
 By~\ref{rga:psr:nobad}, for each vertex $v$ of $V'_i\setminus U$ we have  
 $|C(v)|\ge\mu(d^\Delta p/100)^b|X_i|$. By~\ref{rga:psr:Vsum}, for all but at most $\eps'p^\Delta|V_i|$ vertices $v$ of $V'_i\setminus U$, we have
 \begin{equation}\label{eq:psr:rightVsum}
  \Big|\Big\{v'\in V_i\colon \big|\{x\in\Xbuf_i:v,v'\in C(x)\}\big|> 24\mu\Delta^2\big(\tfrac{20}{\mu\zeta d^{\Delta}}\big)^b p^{2b} |X_i|\Big\}\Big|\le\eps'p^\Delta|V_i|\,.
 \end{equation}
 We choose an ordering $v_1,\ldots,v_{|V'_i\setminus U|}$ of $V'_i\setminus U$ which puts the vertices $v$ failing~\eqref{eq:psr:rightVsum} first. We choose, for each $j=1,\ldots,|V'_i\setminus U|$, a vertex $x_j$ uniformly at random from the vertices of $\Xbuf_i\setminus\{x_1,\ldots,x_{j-1}\}$ for which $v_j$ is a candidate if this is possible; if not, we say $x_j$ does not exist. As in the previous claim, our aim is to show that a.a.s.\ all $x_j$ exist, which implies the claim.
 
 Let $\hist'_0$ be the empty history, and for each $1\le j\le |V'_i\setminus U|$, let $\hist'_j$ be the history of this process up to and including choosing $x_j$. We claim that, with positive probability, at each step $j$ we choose from a set of size at least $\tfrac{1}{2}|C(v_j)|\ge \tfrac12\mu(d^\Delta p/100)^b|X_i|$. This in particular implies that each $x_j$ exists.
 
   We introduce random variables $\tZ^{(j)}_i$ for $j\in[|V_i'\setminus U|]$ and $\ell\in[j-1]$ as follows. 
     We set $\tZ^{(j)}_\ell=1$ if $x_\ell\in C(v_j)$ and 
     $|C(v_\ell)\setminus\{x_1,\ldots, x_{\ell-1}\}|\ge \tfrac{1}{2}|C(v_\ell)|$ hold, and we set 
  $\tZ^{(j)}_\ell=0$ otherwise. Observe that $\sum_{\ell=1}^{j-1}\tZ^{(j)}_\ell$ is exactly the number of vertices $x_k$ among $x_1$, \ldots, $x_{j-1}$ which lie in 
$C(v_j)$ and which were chosen from a set of size at least $\tfrac{1}{2}|C(v_k)|$.
Thus, to prove the claim it suffices to show that with positive probability, for every $j$ we have 
\begin{equation}\label{eq:psr:numZ}
 \sum_{\ell=1}^{j-1}\tZ^{(j)}_\ell< \tfrac12\mu(d^\Delta p/100)^b|X_i|\,.
\end{equation}
   If $v_j$ is a vertex failing~\eqref{eq:psr:rightVsum}, then 
  $j\le\eps'p^\Delta|V_i|<\tfrac{1}{4}\mu(d^\Delta p/100)^b|X_i|$, 
  where the second inequality is by choice of $\eps'$. 
  Thus, even if all vertices $x_1,\ldots,x_{j-1}$ happen to be 
  candidates for  $v_j$, we still have~\eqref{eq:psr:numZ}. Next we assume that $v_j$
  satisfies~\eqref{eq:psr:rightVsum}. For each of the at most 
  $\eps'p^\Delta|X_i|$ vertices $v_\ell$ such that 
   \[\big|\{x\in\Xbuf_i:v_j,v_\ell\in C(x)\}\big|> 24\mu\Delta^2\big(\tfrac{20}{\mu\zeta d^{\Delta}}\big)^b p^{2b} |X_i|\,,\]
  the conditional expectation $\Exp(\tZ^{(j)}_\ell|\hist'_{\ell-1})$   can be bounded from above by $1$, 
  while for the remaining vertices, it is at most
 \[
 \frac{\big|\{x\in\Xbuf_i:v_j,v_\ell\in C(x)\}\big|}{\tfrac{1}{2}|C(v_\ell)|}\le\frac{24\mu\Delta^2\big(20\mu^{-1}\zeta^{-1}d^{-\Delta}\big)^b p^{2b}}{\tfrac12\mu(d^\Delta p/100)^b}\le 48\Delta^2\big(\tfrac{2000}{\mu\zeta d^{2\Delta}}\big)^\Delta p^b\,.
 \]
 This yields
 \[
 \sum_{\ell=1}^{j-1}\Exp\big[\tZ^{(j)}_\ell\big|\hist'_{\ell-1}\big]\le \eps'p^\Delta|X_i|+|V'_i\setminus U|\cdot 48\Delta^2\big(\tfrac{2000}{\mu\zeta d^{2\Delta}}\big)^\Delta p^b\le \tfrac{1}{4}\mu(d^\Delta p/100)^b|X_i|\,,
 \]
 where the last inequality is because $|V'_i\setminus U|\le \rho|X_i|$ and by choice of $\rho$ and $\eps'$. As before, we apply Lemma~\ref{lem:coupling} for each $j\in[|V'_i\setminus U|]$ with $\delta=1$ and $(\tZ^{(j)})_{\ell\in[j-1]}$. We thus may bound  the probability that $v_j$ is candidate for more than $\tfrac{1}{2}|C(v_i)|\ge\tfrac12\mu(d^\Delta p/100)^b|X_i|$ of the vertices $x_1,\ldots,x_{j-1}$ by at most $e^{-\mu(d^\Delta p/100)^b|X_i|/12}$. Taking a union bound over the at most $\rho|X_i|$ choices of $j$ we see that with probability at least $1-\rho|X_i|\cdot e^{-\mu(d^\Delta p/100)^b|X_i|/12}>0$ (where the inequality is by choice of $p$), at each step we choose from a set of size at least $\tfrac12\mu(d^\Delta p/100)^b|X_i|$ as claimed. In particular, we succeed in choosing $|V'_i\setminus U|$ distinct vertices of $\Xbuf_i\setminus Y$, so as desired we have $|U|\ge|Y|$.
\end{claimproof}
 
 The final case is to show that $\rho|X_i|<|Y|\le|\Xbuf_i|-\rho|X_i|=|V'_i|-\rho|V_i|$ and $|U|<|Y|$ is a contradiction. In this case, we have $|V_i'\setminus U|>\rho|V_i|$. By~\ref{rga:psr:main} there are at most
 $\rho|X_i|$ vertices of $\Xbuf_i$ with fewer than $\tfrac12(dp)^b|V_i'\setminus U|$
 candidates in $V_i'\setminus U$, so in particular there is a vertex of $Y$ with
 candidates in $V_i'\setminus U$, in contradiction to the definition of $U$.
  
 This completes the verification of Hall's condition, so we can extend
 $\psiRGA$ to an embedding $\psi$ of $H$ into $G$ as desired, completing the
 proof of Lemma~\ref{lem:psr_main}.
\end{proof}

\section{Proof of the bijumbled graphs RGA lemma}\label{sec:pseudoRGA}
The proof of Lemma~\ref{lem:rga:psr} is very similar to the proof of Lemma~\ref{lem:rga} (RGA lemma for random graphs), and indeed we can reuse the auxiliary lemmas of Section~\ref{sec:RGAlemmas}. We again use a random greedy algorithm, Algorithm~\ref{alg:RGA:psr}, which we show produces the desired embedding with high probability. The difference from Algorithm~\ref{alg:RGA} is that when at time $t$ we reach a vertex $x$ in the order $\tau$ (given again by Lemma~\ref{lem:tau}) which is in the queue $Q_{t}$, we do not skip it, but instead we embed it into $\Vq$. Thus, we always have $t=\tau(x)-1$.
 The embedding is done uniformly at random into the set $\Aq_{t}(x)\setminus B_{t}(x)$. We remind the reader that $B_t(x)$ 
is the set of bad vertices with respect to $\psi_t$ and $Q_t$, that is, the vertices $v$ such that the extension $\psi_{t}\cup\{x\to v\}$ is not a good partial embedding or there is an unembedded neighbour $y$ of $x$ not in $Q_{t}$ such that $\deg_G\big(v;\Amain_t(y)\big)<(d-\eps')p|\Amain_t(y)|$ holds (see Definition~\ref{def:bad_vertices}). 
 
 \begin{algorithm}[t]
    \caption{Random greedy algorithm for bijumbled graphs}\label{alg:RGA:psr}
    \SetKwInOut{Input}{Input}
    \Input{$G\subseteq \Gamma$ and $H$ with partitions satisfying the General Setup; an ordering $\tau$ on $\Xmain$}
    $t:=0$ \; 
    $\psi_0:=\emptyset$ \;
    $Q_0:=\{x\in V(H):|I_x|<\tfrac12\mu(d-\eps)^{|J_x|}p^{|J_x|}|\Vmain(x)|\}$ \; 
    \Repeat{ $\dom(\psi_t)=\Xmain$ }{
      let $x\in \Xmain\setminus \dom(\psi_t)$ be the next vertex in
      the order $\tau$ \;
      \If{$x\in Q_t$ \emph{and} $|\Aq_t(x)\setminus B_t(x)|<\tfrac{1}{10}\mu\zeta(dp)^{\pi^*_t(x)}|V(x)|$}{halt with failure \;}
      choose $v$ uniformly at random in $\begin{cases}
	\Amain_t(x)\setminus B_t(x) \quad & \text{if $x\not\in Q_t$} \\
	\Aq_t(x)\setminus B_t(x) & \text{if $x\in Q_t$} 
      \end{cases}$ \;
      $\psi_{t+1}:=\psi_t\cup\{x\to v\}$ \;
      $Q_{t+1}:=Q_t$ \;
      \ForAll {$y\in \Xmain\setminus \dom(\psi_{t+1})$}{
        \If{$(\big|\Amain_{t+1}(y)\big|<\tfrac12
        \mu(d-\eps')^{\pi^*_{t+1}(y)}p^{\pi^*_{t+1}(y)}|\Vmain(y)|)$ }
        {$Q_{t+1}:=Q_{t+1}\cup\{y\}$ \; } } 
      $t:=t+1$\;
    }
    $t_\RGend:=t$\;
 \end{algorithm}
 Observe that there is another important difference from Algorithm~\ref{alg:RGA}. Algorithm~\ref{alg:RGA:psr} can fail, and we will have to prove that with high probability it does not. This means proving that sets $\Aq_t(x)$ do not get small, or equivalently that sets $\Cq_t(x)$ are never substantially covered by $\im(\psi_t)$. However, there is also an important similarity: provided the algorithm has not yet halted with failure, it maintains the same invariants as Algorithm~\ref{alg:RGA} (see Claim~\ref{cl:psr:inv}).
 
 Observe that (in contrast to Algorithm~\ref{alg:RGA}) if Algorithm~\ref{alg:RGA:psr} has not yet failed at time $t$, then all of the first $t$ vertices in the order $\tau$ have been embedded; the vertices are not reordered. Thus $\pi^*_{\tau(x)-1}(x)$ is always equal to $\pitau(x)$, where 
 \[\pitau(x):=\big|\{y\in N_H(x):\tau(y)<\tau(x)\}\big|+|J_x|\,.\]
 Much of the work of proving Lemma~\ref{lem:rga:psr} is contained in the auxiliary lemmas in Section~\ref{sec:RGAlemmas} which we use again here. What remains is to justify that Algorithm~\ref{alg:RGA:psr} with high probability does not halt with failure and does give a good partial embedding with properties~\ref{rga:psr:Xsum} and~\ref{rga:psr:Vsum}.
 
 \begin{proof}[Proof of Lemma~\ref{lem:rga:psr}]
 We require
 \begin{align*}
  \mu&\le\tfrac{d^\Delta}{1320\kappa\DeltaRp}\,,
  &\rho&\le \tfrac{\mu^2\zeta^2 d^{2\Delta}}{2000}\,,\\
   \eps'&\le\tfrac{\mu\zeta d^\Delta\rho}{1000\kappa\Delta^2 \DeltaRp 4^\Delta}2^{-4/\rho}\,,
  &\eps&\le \min\big((\eps')^2,2^{-\Delta}\eps',\tfrac{\rho\mu\zeta d^\Delta}{20\kappa\Delta},\tfrac{\eps'}{\kappa\DeltaRp}\big)\,,\\
  p&\ge \tfrac{1000\Delta^4\kappa r_1^2}{\mu\zeta\rho d^\Delta\eps} \big(\tfrac{\log n}{n}\big)^{1/(2\Delta)}\qquad\text{and}
  &2&^{-\eps p n/(\kappa r_1)}r_1n<1\,.
 \end{align*} 
Recall that we assumed that $\Gamma$ possesses the three properties $\NS(\eps,r_1,\Delta+1)$, $\RI(\eps,(\eps_{a,b}),\eps',d,r_1,\Delta+1)$ and $\LNS(\eps,r_1,\Delta)$. Let $\tau$ be an order on $V(H)$ given by Lemma~\ref{lem:tau}. We run Algorithm~\ref{alg:RGA:psr}, which maintains the same invariants as Algorithm~\ref{alg:RGA}, as shown in the following claim.
 
\begin{claim}\label{cl:psr:inv}
 The following hold at each time $t$ in the running of Algorithm~\ref{alg:RGA:psr}.
 \begin{enumerate}[label=\itmarab{INV}]
   \item\label{psr:inv:gpe} $\psi_t$ is a good partial embedding,
   \item\label{psr:inv:sizeA} Either $|\Amain_t(x)|\ge\frac12\mu(d-\eps')^{\pi^*_t(x)}p^{\pi^*_t(x)}|\Vmain(x)|$
   or $x\in Q_t$.
   \item\label{psr:inv:random} When we embed $x$ to create $\psi_{t+1}$, we do so
   uniformly at random into a set of size at least $\frac{1}{10}\mu\zeta(dp)^{\pi^*_t(x)} |V(x)|$.
 \end{enumerate}
\end{claim}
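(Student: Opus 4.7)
The plan is to prove Claim~\ref{cl:psr:inv} by straightforward induction on $t$, very much in the spirit of the short proof of the analogous Claim~\ref{cl:rga:inv} in the previous chapter. The base case $t=0$ is immediate: $\psi_0$ is empty (hence trivially a good partial embedding, since the General Setup guarantees this), and $Q_0$ is defined precisely as the set of vertices for which~\ref{psr:inv:sizeA} could initially fail, so~\ref{psr:inv:sizeA} holds at $t=0$, while~\ref{psr:inv:random} is vacuous at $t=0$.

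For the inductive step, invariant~\ref{psr:inv:gpe} is immediate from Definition~\ref{def:bad_vertices} of bad vertices: in either branch of the \emph{choose} step we embed $x$ into a vertex $v$ not in $B_t(x)$, and by construction such an extension preserves the good-partial-embedding properties~\ref{GPE:rightplace}--\ref{GPE:Ureg}. Invariant~\ref{psr:inv:sizeA} is maintained because the final \textbf{forall} loop of each iteration of the repeat loop of Algorithm~\ref{alg:RGA:psr} explicitly inserts into $Q_{t+1}$ every unembedded vertex whose $\Amain$-size has dropped below the threshold; since no vertex is ever removed from the queue, the invariant persists.

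The only substantive point is invariant~\ref{psr:inv:random}, and I would split the verification according to whether the vertex $x$ being embedded lies in $Q_t$. If $x\in Q_t$, the invariant is immediate because the if-test immediately preceding the \emph{choose} step ensures $|\Aq_t(x)\setminus B_t(x)|\ge\tfrac{1}{10}\mu\zeta(dp)^{\pi^*_t(x)}|V(x)|$, as otherwise the algorithm would have halted with failure (and the claim is only asserted for runs of the algorithm). If $x\notin Q_t$, I would first combine~\ref{psr:inv:sizeA} with~\ref{G:sizes} and the choice of $\eps'$ to deduce $|\Amain_t(x)|\ge\tfrac14\mu(dp)^{\pi^*_t(x)}|V(x)|$. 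To bound $|B_t(x)\cap\Amain_t(x)|$, I would apply Lemma~\ref{lem:fewbad}: when all neighbours of $x$ are embedded, part~\ref{fewbad:a} gives an empty bad set; otherwise part~\ref{fewbad:b} yields at most $20\Delta^2\eps' p^{\pi^*_t(x)}|V(x)|$ bad vertices, which by the choice of $\eps'$ is far smaller than $|\Amain_t(x)|$.

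The only conceptual difference from the proof of Claim~\ref{cl:rga:inv} is the value of $D$ used when invoking Lemma~\ref{lem:fewbad}\ref{fewbad:b}: here one must take $D=\Delta+1$, because Algorithm~\ref{alg:RGA:psr} (unlike Algorithm~\ref{alg:RGA}) does not skip queued vertices but embeds them in the order $\tau$, so in the triangle case condition~\itm{iii} of Lemma~\ref{lem:fewbad} forces us to accommodate $\pi^*(y)+2$ up to $\Delta+1$ for an unembedded neighbour $y$ of $x$. This is precisely why the hypotheses of Lemma~\ref{lem:rga:psr} strengthen $\NS$ and $\RI$ to index $\Delta+1$. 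I do not anticipate any serious obstacle; the claim is a short verification that essentially amounts to unwinding the definitions and invoking Lemma~\ref{lem:fewbad} with the correct value of $D$.
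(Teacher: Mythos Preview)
Your proof is correct and follows the same approach as the paper's (very brief) proof: the paper simply notes that~\ref{psr:inv:gpe} and~\ref{psr:inv:sizeA} hold by definition of the algorithm, and that~\ref{psr:inv:random} follows from Lemma~\ref{lem:fewbad} together with the inequality $\tfrac12\mu(d-\eps')^bp^b-20\Delta^2\eps'p^b>\tfrac{1}{10}\mu\zeta d^bp^b$. Your case split on whether $x\in Q_t$ makes explicit what the paper leaves implicit.

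One small correction to your commentary: the reason you give for taking $D=\Delta+1$ is not right. Whether queued vertices are skipped or embedded does not affect the verification of conditions~\itm{\it i\,}--\itm{\it iv\,} of Lemma~\ref{lem:fewbad}, because at the moment we embed $x$ the vertex $x$ itself is still unembedded, so any unembedded neighbour $y$ of $x$ appearing in condition~\itm{\it iii\,} has both $x$ and $z$ unembedded among its neighbours (when $z\neq x$), hence $\pi^*(y)\le\Delta-2$ and $\pi^*(y)+2\le\Delta$ exactly as in the proof of Claim~\ref{cl:rga:inv}. Thus $D=\Delta$ would suffice for \emph{this} claim; the hypotheses of Lemma~\ref{lem:rga:psr} are strengthened to $\Delta+1$ for other reasons (namely to handle clique buffers via Lemmas~\ref{lem:large_nbs} and~\ref{lem:nonclique_buffer} with $B=\Delta+1$). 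Using $D=\Delta+1$ is of course perfectly valid since the stronger $\NS$ and $\RI$ properties are assumed.
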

\begin{claimproof} 
 Algorithm~\ref{alg:RGA:psr} maintains~\ref{psr:inv:gpe} and \ref{psr:inv:sizeA} by definition. When we embed $x$ to create $\psi_{t+1}$, either $x\in Q_t$ or not. In the former case, because Algorithm~\ref{alg:RGA:psr} has not failed we have~\ref{psr:inv:random}. In the latter case, in particular $x$ was not added to $Q_t$ at time $t-1$. Using Lemma~\ref{lem:fewbad} to bound the size of $B_t(x)$, and since $\tfrac12\mu(d-\eps')^bp^b-20\Delta^2\eps'p^b>\tfrac{1}{10}\mu\zeta d^bp^b$ by choice of $\eps'$ for each $0\le b\le\Delta$, also in this case Algorithm~\ref{alg:RGA:psr} maintains~\ref{psr:inv:random}.
\end{claimproof}

 As in the proof of Lemma~\ref{lem:rga}, the conditions for Lemma~\ref{lem:rga:welldistr} are met, so we conclude that with probability at least $1-r_12^{-n/(\kappa r_1)}$, at each time $t$ when Algorithm~\ref{alg:RGA:psr} is running and for each $i\in[r]$ we have $|Q_t\cap X_i|\le\rho|X_i|+|X^*_i|\le2\rho|X_i|$, where the final inequality is by~\ref{PtH:image}. In order to show that Algorithm~\ref{alg:RGA:psr} runs successfully, we need to show that the `halt with failure' line is a.a.s.\ never reached, i.e.\ that if $x\in Q_{\tau(x)-1}$ then $\big|\Aq_{\tau(x)-1}(x)\setminus B_{\tau(x)-1}(x)|$ is at least $\tfrac{1}{10}\mu\zeta(dp)^{\pi^*_{\tau(x)-1}(x)}|V_i|$. Since we know by Lemma~\ref{lem:fewbad} that $B_{\tau(x)-1}(x)$ is small and by~\ref{GPE:sizeC} that $\Cq_{\tau(x)-1}(x)$ is large, 
what we want is to show that $\im(\psi_{t-1})$ covers only a small fraction of $\Cq_{\tau(x)-1}(x)$. We will show this in Claim~\ref{cl:rga:nofill}. Before this, we argue in Claim~\ref{cl:psr:lnssum} that in the embedding no `dense spots' are created, which we need for Claim~\ref{cl:rga:nofill}.
 
 We now explain how the following Claim~\ref{cl:psr:lnssum} helps us to establish Claim~\ref{cl:rga:nofill}. Observe that we embed the vertices $Q_t\cap X_i$ into $\Vq_i$ in a random procedure which is very similar to the embedding strategy we used in the proof of Claim~\ref{cl:psr:small} (verification of Hall's condition for small sets of buffer vertices). However, here a complication is that we do not know `in advance' what $\Cq_{\tau(x)-1}(x)$ will be when embedding earlier vertices to $\Vq_i$. But we do know that it will be contained in the $\Gamma$-neighbourhood in $V_i$ of some collection of $\pitau(x)\le\Delta$ vertices, and that it will be large. Thus, it suffices to prove the following. For each $i$, each $1\le\ell\le\Delta$ and $v_1,\dots,v_\ell$ such that $\comN_\Gamma(v_1,\ldots,v_\ell;V_i)$ is large, $\im(\psi_t)$ never covers much of $\comN_\Gamma(v_1,\ldots,v_\ell;V_i)$. We can prove this statement by using the same analysis as in the proof of Lemma~\ref{lem:rg_image}, given a `sum condition' (in~\ref{lnssum:itm:a} below) similar to~\ref{rga:psr:Xsum}.  We also show that if Algorithm~\ref{alg:RGA:psr} does not halt with failure then it is likely to have~\ref{rga:psr:Xsum} (which is~\ref{lnssum:itm:b} below), since the arguments are similar.
  
 \begin{claim}\label{cl:psr:lnssum} Suppose that for each $i\in[r]$ and each time $t$ we have $|Q_t\cap X_i|\le2\rho|X_i|$. Then a.a.s.\ the following statements hold.
 \begin{enumerate}[label=\abc] 
 \item\label{lnssum:itm:a} For any $t$, any $1\le \ell \le\Delta$, any $i\in[r]$ and any vertices $v_1,\ldots,v_\ell$ such that
  \[\deg_\Gamma(v_1,\ldots,v_\ell;V_i)\ge\eps'p^\ell|V_i|\]
  we have
  \begin{equation}\label{eq:randmatch:sumVqLNS}
   \sum_{\substack{y\in\Xmain_i\cap Q_t\colon\\ \tau(y)\le t,J_y=\emptyset}} \frac{\big|U_{\tau(y)-1}(y)\cap \comN_\Gamma(v_1,\ldots,v_\ell;V_i)\big|}{\big|\Aq_{\tau(y)-1}(y)\setminus B_{\tau(y)-1}(y)\big|}\le\frac{\mu\zeta d^\Delta \deg_\Gamma(v_1,\ldots,v_\ell;V_i)}{20}\,.
  \end{equation}
  
 \item\label{lnssum:itm:b} For any $t$, for any $1\le \ell \le \Delta$, any $i\in[r]$ and any vertices $v_1,\ldots,v_\ell$ such that
  \[\deg_G(v_1,\ldots,v_\ell;\Vbuf_i)\ge(dp-\eps'p)^\ell|\Vbuf_i|\,,\]
  we define $X'_i$ to be the set
  \[\big\{y\in\Xbuf_i: \big|\Ubuf_t(y)\cap \comN_G(v_1,\ldots,v_\ell;\Vbuf_i)\big|> (p+\eps p)^{\pi^*_t(y)}\deg_G(v_1,\ldots,v_\ell;\Vbuf_i)\big\}\]
  and we have
  \begin{equation}\label{eq:randmatch:sumVbufLNS}
   \sum_{y\in X'_i} \frac{\big|\Ubuf_t(y)\cap \comN_G(v_1,\ldots,v_\ell;\Vbuf_i)\big|}{\big|\Cbuf_t(y)\big|}
   \le\frac{\deg_G(v_1,\ldots,v_\ell;\Vbuf_i)}{20}\,.
  \end{equation} 
 \end{enumerate}
 \end{claim}

 The idea of the proof of~\eqref{eq:randmatch:sumVqLNS} is as follows. The denominators on the left hand side of~\eqref{eq:randmatch:sumVqLNS} are by~\ref{psr:inv:random} never much smaller than they `should' be, so the main task is to show that the numerators do not tend to be too large. To show this, we consider the evolution of $U_t(y)\cap \comN_\Gamma(v_1,\dots,v_\ell;V_i)$ for some fixed $y$ as $t$ increases. Since $J_y=\emptyset$, at first this set has the size we expect, namely it is all of $\comN_\Gamma(v_1,\dots,v_\ell;V_i)$. Each time a neighbour of $y$ is embedded, we expect that the set size shrinks by a factor roughly $p$. If this is the case for each neighbour, the size at $t=\tau(y)-1$ is roughly a $p^{\pitau(y)}$-factor times its original size, which turns out to be a good enough bound for~\eqref{eq:randmatch:sumVqLNS}. If not, there is some first time when we embed a neighbour of $y$, say the $s$th neighbour, `badly', that is, the set size does not shrink by a factor roughly $p$. We say we fail at step $s$. At worst, it could be that the set size does not thereafter change, so that it stays roughly a $p^{s-1}$-factor times its original size. In this case we `lose' a $p^{\pitau(y)-s+1}$ factor. But the $\LNS$ property tells us that the probability of failing at step $s$ is less than $p^{\pitau(y)-s+1}$. Heuristically, this gets us back the lost factor; to make this rigorous, we apply Lemma~\ref{lem:coupling}.
 
 \begin{claimproof}[Proof of Claim~\ref{cl:psr:lnssum}]
  We require
  \[\rho\le \frac{\mu^2\zeta^2d^{2\Delta}}{2000}\,,\quad\eps\le\min\Big(2^{-\Delta}\eps',\frac{\rho\mu\zeta d^\Delta}{20\kappa\Delta}\Big)\quad\text{and}\quad p\ge\frac{20\Delta^4\kappa r^2_1}{\rho}\big(\tfrac{\log n}{n}\big)^{1/\Delta}\,.\]
 
  We start with~\eqref{eq:randmatch:sumVqLNS}. Since the sum in~\eqref{eq:randmatch:sumVqLNS} is monotonically increasing in $t$, and since the upper bound claimed in~\eqref{eq:randmatch:sumVqLNS} does not depend on $t$, it suffices to show that the bound holds when Algorithm~\ref{alg:RGA:psr} terminates (with or without failure).
  
  Given $1\le \ell\le \Delta $, let $v_1,\ldots,v_\ell$ be vertices of $\Gamma$ such that the set  $U:=\comN_\Gamma(v_1,\ldots,v_\ell;V_i)$ has size at least  $\eps'p^\ell|V_i|$. By~\ref{psr:inv:random} we have the lower bound
  \[\big|\Aq_{\tau(y)-1}(y)\setminus B_{\tau(y)-1}(y)\big|\ge \tfrac{1}{10}\mu\zeta(dp)^{\pitau(y)}|V_i|\]
  for the denominator in each summand of~\eqref{eq:randmatch:sumVqLNS}, and the difficulty is to upper bound the numerator. Consider the running of Algorithm~\ref{alg:RGA:psr}. At any time $t\le\tau(y)-1$, the set $\Uq_t(y)\cap U$ is the $\Gamma$-neighbourhood in $U$ of the $\pi^*_t(y)$ embedded vertices from $N_H(y)$, and in a truly random set we would thus expect to find that
  \begin{equation}\label{eq:lnssum:Uq}
   |U_t(y)\cap U|\le (p+\eps p)^{\pi^*_t(y)}|U|\,.
  \end{equation}
  If this inequality remains true up to $t=\tau(y)-1$, then since $|J_y|=0$ by assumption, we have $\pi^*_{\tau(y)-1}(y)=\pitau(y)$, and~\eqref{eq:lnssum:Uq} gives an upper bound good enough for the 
  summand $\frac{\big|U_{\tau(y)-1}(y)\cap \comN_\Gamma(v_1,\ldots,v_\ell;V_i)\big|}{\big|\Aq_{\tau(y)-1}(y)\setminus B_{\tau(y)-1}(y)\big|}$. 
  However, it is likely that some vertices will not satisfy~\eqref{eq:lnssum:Uq}, and we have to estimate their contribution to~\eqref{eq:randmatch:sumVqLNS}.
  
  We say that a vertex $y\in\Xmain_i$ \emph{fails at step $s$} if the vertex $z$ is the $s$th vertex of $N_H(y)$ in $\tau$ and $y$ satisfies~\eqref{eq:lnssum:Uq} for each $t\le\tau(z)-1$ but fails~\eqref{eq:lnssum:Uq} at $t=\tau(z)$. Observe that each vertex $y$ in the sum~\eqref{eq:randmatch:sumVqLNS} satisfies~\eqref{eq:lnssum:Uq} before any neighbour of $y$ is embedded because $J_y=\emptyset$.  Thus, if $y$ does not fail at any step $s$ with $1\le s\le\Delta$ then it satisfies~\eqref{eq:lnssum:Uq} at time $t=\tau(y)-1$, but if for some $1\le s\le\Delta$ it fails at step $s$, then $|U_{\tau(y)-1}(y)\cap U|\le(p+\eps p)^{s-1}|U|$. 
  
  Suppose now that $y$ fails at step $s$. Then the reason is that the vertex $z$, which is the $s$th neighbour of $y$ in $\tau$, is embedded to a vertex of $\Gamma$ with more neighbours in $U_{\tau(z)-1}(y)\cap U$ than allowed by~\eqref{eq:lnssum:Uq}. Let $W$ be a superset of $U_{\tau(z)-1}(y)\cap U$ of size $(p+\eps p)^{s-1}|U|$. By choice of $\eps$ we have $|W|\ge\eps p^{\ell+s-1}|V_i|$. Because $y$ fails at step $s$ we see that $z$ is embedded to a vertex $v$ of $\Gamma$ with $\deg_\Gamma(v;W)>(p+\eps p)^s|U|=(p+\eps p)|W|$. By $\LNS(\eps,r_1,\Delta)$ we know that the number of such vertices $v$ in $\Gamma$ is at most 
  $\eps p^{2\Delta-s}|V(\Gamma)|/r_1^2$ (since $\ell\le \Delta$). Therefore, by~\ref{psr:inv:random}, the probability of embedding $z$ (which has at least one unembedded neighbour and hence at most $\Delta-1$ embedded neighbours) to such a vertex $v$, conditioning on the history up to but not including embedding $z$, is at most
  \[\frac{\eps p^{2\Delta-s}|V(\Gamma)|/r_1^2}{\mu\zeta(dp)^{\Delta-1}|V_i|/10}\le \frac{\rho}{2\Delta} p^{\Delta-s+1}\le\frac{\rho}{2\Delta} p^{\pitau(y)-s+1}\,,\]
  where the first inequality is by choice of $\eps$ and the second since $\pitau(y)\le\Delta$.
  
  We can restate this as: the probability that $y$ fails at step $s=\pitau(y)-j+1$, conditioning on the history up to but not including the embedding of the vertex at step $s$, is at most $\tfrac{\rho}{2\Delta}p^j$ for each $1\le j\le \pitau(y)$. It follows that the expected number of vertices $y\in\Xmain_i$ which fail at step $\pitau(y)-j+1$ is at most $\tfrac{\rho}{2\Delta}p^j|X_i|$, and by Lemma~\ref{lem:coupling}, applied with $\delta=1$, the probability that more than $\tfrac{\rho}{\Delta}p^j|X_i|$ vertices $y\in\Xmain_i$ fail at step $\pitau(y)-j+1$ is at most $\exp\big(-\tfrac{\rho}{6\Delta}p^{j}|X_i|\big)$. Let $\cE$ be the event that, for each $1\le j\le\Delta$, at most $\tfrac{\rho}{\Delta}p^j|X_i|$ vertices $y$ of $\Xmain_i$ fail at step $\pitau(y)-j+1$. The probability that $\cE$ holds is thus at least
  \begin{equation}\label{eq:lnssum:Vqprob}
   1-\Delta\exp\big(-\tfrac{\rho}{6\Delta}p^{\Delta}|X_i|\big)\,.
  \end{equation}
  
Suppose $\cE$ occurs. Since a vertex $y$ failing at step $j$ satisfies 
\[|U_{\tau(y)-1}(y)\cap U|\le (p+\eps p)^{\pitau(y)-j}|U| \]
we have
\begin{multline}\label{eq:LNS:bigY}
   \sum_{\substack{y\in\Xmain_i: J_y=\emptyset,\\ |U_{\tau(y)-1}(y)\cap U|>(p+\eps p)^{\pitau(y)}|U|}} \frac{\big|U_{\tau(y)-1}(y)\cap U\big|}{\big|\Aq_{\tau(y)-1}(y)\setminus B_{\tau(y)-1}(y)\big|}\\
   \le\sum_{\substack{y\in\Xmain_i:\\ 
   y\text{ fails at step }s=\pitau(y)-j+1}} \frac{(p+\eps p)^{\pitau(y)-j}|U|}{\tfrac{1}{10}\mu\zeta(dp)^{\pitau(y)}|V_i|}
   \le  \sum_{j=1}^{\Delta} \frac{(p+\eps p)^{\pitau(y)-j}|U|\cdot \tfrac{\rho}{\Delta}p^j|X_i|}{\tfrac{1}{10}\mu\zeta(dp)^{\pitau(y)}|V_i|}\\
   \le \frac{20\rho|U|}{\mu\zeta d^{\Delta}}\le\frac{\mu\zeta d^\Delta|U|}{40}\,,
\end{multline}
  where the last two inequalities are by choice of $\eps$ and $\rho$ respectively.
  On the other hand, because $|\Xmain_i\cap Q_t|\le 2\rho|X_i|$, we have
  \begin{equation*}\begin{split}
   \sum_{\substack{y\in\Xmain_i\cap Q_t:\\ |U_{\tau(y)-1}(y)\cap U|\le(p+\eps p)^{\pitau(y)}|U|}} \frac{\big|U_{\tau(y)-1}(y)\cap U\big|}{\big|\Aq_{\tau(y)-1}(y)\setminus B_{\tau(y)-1}(y)\big|}\le\\
    \sum_{y\in\Xmain_i\cap Q_t} 
    \frac{(p+\eps p)^{\pitau(y)}|U|}{\tfrac{1}{10}\mu\zeta(dp)^{\pitau(y)}|V_i|}
   &\le  \frac{(1+\eps)^{\Delta}|U|\cdot 2\rho|X_i|}{\tfrac{1}{10}\mu\zeta d^{\Delta}|V_i|}\\
   &\le \frac{40\rho|U|}{\mu\zeta d^{\Delta}}\le\frac{\mu\zeta d^\Delta|U|}{40}\,,
  \end{split}\end{equation*}
  and putting these together we conclude~\eqref{eq:randmatch:sumVqLNS}.   
  
  It remains to bound the probability that for some vertices $v_1$, \ldots,
  $v_\ell$ and for some $i$ for which 
  \[\deg_\Gamma(v_1,\ldots,v_\ell;V_i)\ge \eps'p^\ell|V_i|\,,\]
  the inequality~\eqref{eq:randmatch:sumVqLNS} 
  is violated, in other words that $\cE$ fails for $v_1$, \ldots, $v_\ell$ and $i$. 
  There are  $r\le r_1$ choices of $i$ and at most $n+n^2+\dots+n^{\Delta}\le \Delta n^{\Delta}$ choices of $v_1,\ldots,v_\ell$, so by the union bound and~\eqref{eq:lnssum:Vqprob} the probability of such  a bad event occurring is at most
  \[r_1\Delta n^{\Delta}\cdot 2\Delta\exp\big(-\tfrac{\rho}{6\Delta\kappa r_1}p^{\Delta}n\big)\]
  which tends to zero as $n$ tends to infinity by choice of $p$.
  
  We now come to~\eqref{eq:randmatch:sumVbufLNS}. This inequality looks very much like~\eqref{eq:LNS:bigY} above, and its proof is almost identical to the proof of~\eqref{eq:LNS:bigY}. The differences are that we consider $G$-neighbourhoods in $\Vbuf_i$ not $\Gamma$-neighbourhoods in $V_i$ and that we have a lower bound on $\Cbuf_t(x)$ from~\ref{GPE:sizeC} rather than on $|\Aq_t(x)\setminus B_t(x)|$ from~\ref{psr:inv:random}. Since this lower bound is larger, the same constant choices work. We omit the details.
 \end{claimproof}
 
 We can now use Lemma~\ref{lem:coupling} to complete the proof that the sets $\Cq_t(x)$ do not get covered by $\im(\psi_t)$, much as in Claim~\ref{cl:psr:small} (verification of Hall's condition for small sets of buffer vertices).
 
 \begin{claim}\label{cl:rga:nofill} A.a.s.\ for each $x\in\Xmain$, at each time $t\le\tau(x)-1$ and before the termination of Algorithm~\ref{alg:RGA:psr}, we have $\big|\Cq_t(x)\cap\im(\psi_t)\big|<\tfrac12\big|\Cq_t(x)\big|$.
 \end{claim}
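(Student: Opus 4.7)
The plan is to combine Claim~\ref{cl:psr:lnssum} with the sequential dependence lemma (Lemma~\ref{lem:coupling}) to bound the number of queue vertices embedded into $U_t(x)$. Throughout, I condition on the a.a.s.\ events from Claim~\ref{cl:psr:lnssum} and from the earlier application of Lemma~\ref{lem:rga:welldistr} giving $|Q_t \cap X_i| \le 2\rho|X_i|$ for every $i$ and $t$.

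Fix $x \in \Xmain$ with $x \in X_i$ and a time $t \le \tau(x)-1$. Since $\Cq_t(x) \subseteq U_t(x) \cap \Vq_i$ with $U_t(x) = \comN_\Gamma(\Pi_t(x) \cup J_x; V_i)$, it suffices to bound the count of queue vertices $y \in X_i$ satisfying $\tau(y) \le t$ and $\psi(y) \in U_t(x)$. For each such $y$, by~\ref{psr:inv:random} the probability of $\psi(y) \in U_t(x)$, conditional on the history at time $\tau(y)-1$, is at most
\[
 p_y \;=\; \frac{|U_{\tau(y)-1}(y) \cap U_t(x)|}{|\Aq_{\tau(y)-1}(y) \setminus B_{\tau(y)-1}(y)|}\,,
\]
using $\Aq_{\tau(y)-1}(y) \subseteq U_{\tau(y)-1}(y)$.

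The set $\Pi_t(x) \cup J_x$ has size $\pi^*_t(x) \le \Delta$, and by~\ref{GPE:sizeU} and our constant choices $|U_t(x)| \ge \eps' p^{\pi^*_t(x)}|V_i|$, so~\eqref{eq:randmatch:sumVqLNS} applied with $v_1,\dots,v_\ell = \Pi_t(x) \cup J_x$ gives $\sum_{y:J_y=\emptyset} p_y \le \tfrac{1}{20}\mu\zeta d^\Delta|U_t(x)|$. The at most $\rho p^\Delta|X_i|$ image-restricted vertices of $X_i$ (those with $J_y \ne \emptyset$) contribute at most $\rho p^\Delta|X_i|$ using the trivial bound $p_y \le 1$. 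Applying Lemma~\ref{lem:coupling}\ref{coupling:a} with $\delta=1$ bounds the actual count by $\tfrac{1}{10}\mu\zeta d^\Delta|U_t(x)| + 2\rho p^\Delta|X_i|$ except with probability $\exp(-\Omega(p^\Delta n/r_1))$. Combining the upper bound $|U_t(x)| \le 2p^{\pi^*_t(x)}|V_i|$ from~\ref{GPE:sizeU}, the lower bound $|\Cq_t(x)| \ge \tfrac12\mu\zeta(dp)^{\pi^*_t(x)}|V_i|$ derived from~\ref{GPE:sizeC} and~\ref{G:restr}, the inequality $|X_i| \le \kappa|V_i|$, the fact $d \le 1$, and the choice $\rho \le \mu^2\zeta^2 d^{2\Delta}/2000$ then yields $|\Cq_t(x) \cap \im(\psi_t)| < \tfrac12|\Cq_t(x)|$.

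The main technical obstacle is that $U_t(x)$ depends on the random embedding, so $\mathbf{1}[\psi(y) \in U_t(x)]$ need not be measurable at time $\tau(y)-1$ and Lemma~\ref{lem:coupling} does not apply directly. I would resolve this by fixing in advance $\ell \in [\Delta]$ and vertices $w_1,\dots,w_\ell \in V(\Gamma)$, letting $U = \comN_\Gamma(w_1,\dots,w_\ell;V_i)$, running the argument above with this deterministic $U$ in place of $U_t(x)$, and taking a union bound over the at most $n^\Delta$ such choices to cover every possible realization of $\Pi_t(x) \cup J_x$. A secondary technicality is that~\eqref{eq:randmatch:sumVqLNS} only holds a.a.s., whereas Lemma~\ref{lem:coupling} formally requires an a.s.\ bound on $\sum p_y$; this is dealt with by conditioning on the event of Claim~\ref{cl:psr:lnssum}. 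Since $\Cq_t(x)$ and $U_t(x)$ are constant between consecutive embeddings of neighbors of $x$, it suffices to verify the bound at the at most $\Delta+1$ critical times per $x$, so the overall union bound is of polynomial size in $n$ and is easily absorbed by the failure probability, which is stretched-exponential in $n$ since $p^\Delta n \ge \sqrt{n\log n}$ by our choice of $p$.
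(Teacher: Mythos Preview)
Your proposal is correct and follows essentially the same route as the paper's proof. Both arguments use Claim~\ref{cl:psr:lnssum} to bound the sum of conditional probabilities, apply Lemma~\ref{lem:coupling} with $\delta=1$, handle the at most $\rho p^\Delta|X_i|$ image-restricted vertices by the trivial bound, and then compare with the lower bound on $|\Cq_t(x)|$ coming from~\ref{GPE:sizeC} and~\ref{G:restr}. You are somewhat more explicit than the paper about the measurability issue---fixing the vertices $w_1,\dots,w_\ell$ before invoking Lemma~\ref{lem:coupling} and taking a union bound over the at most $n^\Delta$ choices---whereas the paper simply writes $U_t(x)=\comN_\Gamma(v_1,\dots,v_\ell;V_i)$ and takes a union bound over $x$ and $t$; your version is the rigorous way to phrase what the paper is doing, and the difference is cosmetic.
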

 \begin{claimproof}
  We require
  \[\eps\le \eps'\le 2^{-2\Delta}, \qquad \rho\le \tfrac{\mu \zeta d^\Delta}{30}\qquad\text{and}\qquad p\ge \tfrac{1000r_1}{\mu\zeta d^\Delta}\big(\tfrac{\log n}{n}\big)^{1/\Delta} \,.\] 
 
  Suppose that the conclusions of Claim~\ref{cl:psr:lnssum} hold (in particular~\eqref{eq:randmatch:sumVqLNS} holds). Given $i\in[r]$ and $x\in\Xmain_i$, if $t\le\tau(x)-1$ and Algorithm~\ref{alg:RGA:psr} has not terminated before time $t$, then by~\ref{GPE:sizeC} and~\ref{G:restr} we have $\big|\Cq_t(x)\big|\ge (1-\eps')\mu\zeta(dp-\eps'p)^{\pi^*_t(x)}|V_i|$. Since $\Cq_t(x)\subset U_t(x)= \comN_\Gamma(v_1,\ldots,v_\ell;V_i)$ for some vertices $v_1,\ldots,v_\ell$ with $\ell=\pi^*_t(x)\le\Delta$, we conclude by~\eqref{eq:randmatch:sumVqLNS} that
  \[\sum_{\substack{y\in\Xmain_i\cap Q_t\colon\\ \tau(y)\le t,J_y=\emptyset}} \frac{\big|\Uq_{\tau(y)-1}(y)\cap \Cq_t(x)\big|}{\big|\Aq_{\tau(y)-1}(y)\setminus B_{\tau(y)-1}(y)\big|}\le\frac{\mu\zeta d^\Delta\big|U_t(x)\big|}{20}\,.\]
  Observe that the summand in the above inequality is an upper bound for the probability that $y$ is embedded to $\Cq_t(x)$ for $y\in Q_t$, conditioning on the history up to but not including the embedding of $y$. Since the probability that $y$ is embedded to $\Cq_t(x)$ is zero if $y$ is not in $Q_t$, we are in a position to apply Lemma~\ref{lem:coupling}. This lemma, with $\delta=1$, tells us that the probability that more than $\tfrac1{10}\mu\zeta d^\Delta\big|U_t(x)\big|$ vertices $y$ of $\Xmain_i$ with $J_y=\emptyset$ are embedded to $\Cq_t(x)$ is at most $\exp\big(-\tfrac{1}{60}\mu\zeta d^\Delta|U_t(x)|\big)$. If this bad event does not occur, then by~\ref{GPE:sizeU},~\ref{GPE:sizeC} and~\ref{G:restr}, and because the number of vertices in $\Xmain_i$ with $J_y\neq\emptyset$ is by~\ref{PtH:image} at most $\rho p^\Delta|X_i|$, we have the desired statement:
  \[
  \big|\Cq_t(x)\cap\im(\psi_t)\big|< \tfrac{1}{10}\mu\zeta d^\Delta\big|U_t(x)\big|+\rho p^\Delta|X_i|\le\tfrac{1}{2}
  (1-\eps')\mu\zeta(dp-\eps'p)^{\pi^*_t(x)}|V_i|.
  \]
  
  The probability that the conclusions of Claim~\ref{cl:psr:lnssum} fail to hold, or that any of the above bad events occur, is at most $o(1)+r_1 n^2\cdot\exp\big(-\tfrac{1}{60r_1}\mu\zeta d^\Delta(p-\eps'p)^\Delta n\big)$, which tends to zero as $n$ tends to infinity by choice of $p$, completing the proof.
 \end{claimproof}
 
   If the conclusions of Lemma~\ref{lem:rga:welldistr}, Claim~\ref{cl:psr:lnssum} and 
   Claim~\ref{cl:rga:nofill} hold (which we think of as being good events), 
   then by~\ref{GPE:sizeC} and Lemma~\ref{lem:fewbad} we have
   \[\big|\Aq_{\tau(x)-1}(x)\setminus B_{\tau(x)-1}(x)\big|\ge\tfrac12\mu\zeta(dp-\eps'p)^{\pitau(x)}|V_i|-20\Delta^2\eps'p^{\pitau(x)}|V_i|\]
 and the right hand side is by choice of $\eps'$ at least $\tfrac{1}{10}\mu\zeta(dp)^{\pitau(x)}|V_i|$. In other words, the `halt with failure' line of Algorithm~\ref{alg:RGA:psr} is never reached. Since each of the three good events a.a.s.\ occurs, the algorithm a.a.s.\ completes successfully. Now~\ref{rga:psr:place} is guaranteed by successful completion of Algorithm~\ref{alg:RGA:psr}, and~\ref{rga:psr:main} is as~\ref{rga:emb:main} in the proof of Lemma~\ref{lem:rga} implied by the good event of Lemma~\ref{lem:rga:welldistr}, while~\ref{rga:psr:Xsum} is given by the good event of Claim~\ref{cl:psr:lnssum}, specifically taking~\eqref{eq:randmatch:sumVbufLNS} with $t=t_\RGend$ and vertices $\psi_t\big(N_H(x)\big)$ for each $x\in\Xbuf_i$.
  
 Next we establish that~\ref{rga:psr:nobad} a.a.s.\ holds. We apply Lemma~\ref{lem:large_nbs} with $B=\Delta+1$ and $T=\tau(z)$ where $z$ is the last vertex in $N(\Xbuf)$. Since $N(\Xbuf)$ forms the initial segment of $\tau$ by Lemma~\ref{lem:tau}\ref{cond:seg}, we see that all vertices $y$ embedded up to time $T$ have $\pitau(y)\le\Delta-1\le B-2$, and by identical logic as in Claim~\ref{cl:RGA_queue} none of these vertices enter the queue. Therefore, by~\ref{GPE:sizeC}, Lemma~\ref{lem:fewbad} and choice of $\eps'$, each $y$ from $N(\Xbuf)$  is embedded uniformly at random to a set of size at least $\tfrac{1}{10}(dp)^{\pitau(y)}|V(y)|$. Finally, by~\ref{BUF:sizebuf} we have $|N(\Xbuf)\cap X_i|\le4\kappa\DeltaRp\mu|X_i|$ for each $i\in[r]$. This justifies that the conditions of Lemma~\ref{lem:large_nbs} are met, so we conclude that a.a.s.\ the following event $\ev_{\sublem{lem:large_nbs}}$ occurs. For each $v\in V_i$ and $j$ such that $ij\in E(R')$ we have $\big|N_G(v;\Vmain_j)\setminus\im(\psi_T)\big|\ge\tfrac12\deg_G(v;\Vmain_j)$.
 
We are now in a position to apply Lemma~\ref{lem:nonclique_buffer}, again with $B=\Delta+1$. Observe that this time, if $\ev_{\sublem{lem:large_nbs}}$ occurs, it applies to any $x\in\Xbuf_i$ and $v\in V_i$, whether or not $x$ is in a clique buffer. The deduction that a.a.s.\ the embedding $\psi_{t_\RGend}$ has~\ref{rga:psr:nobad} follows exactly as in the proof of Lemma~\ref{lem:rga} (RGA lemma), and we do not repeat it.
 
 It remains to establish~\ref{rga:psr:Vsum}. For this we require the following claim, which shows that if vertices $v$ and $v'$ are common candidates for too many $x\in\Xbuf_i$ for~\ref{rga:psr:Vsum}, then the reason is that they have an exceptionally large  common $\Gamma$-neighbourhood in some cluster.
 
 \begin{claim}\label{cl:psr:Vsum} Asymptotically almost surely at the termination of Algorithm~\ref{alg:RGA:psr} the following holds. For each $i$ and pair of vertices $v,v'\in V_i$ such that $\deg_\Gamma(v,v';V_j)\le(p+\eps p)^2|V_j|$ whenever $ij\in R'$, if $b$ is such that $\Xbuf_i$ is a degree-$b$ buffer, then we have
  \begin{equation}\label{psr:Vsum:deseq}
   \big|\{x\in\Xbuf_i:v,v'\in C(x)\}\big|\le 24\mu\Delta^2\big(20\mu^{-1}\zeta^{-1}d^{-\Delta}\big)^b p^{2b}|X_i|\,.
  \end{equation}
 \end{claim}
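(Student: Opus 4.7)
The plan is to set up, for each buffer vertex $x_j \in \Xbuf_i$, the indicator variable $Y_j := \mathbf{1}\{v, v' \in C(x_j)\}$, and to bound $|S| = \sum_j Y_j$ by an application of the sequential dependence lemma, Lemma~\ref{lem:coupling}. By Lemma~\ref{lem:tau}\ref{cond:noedge}, the neighbours $y_1^j, \ldots, y_b^j$ of $x_j$ are embedded during a consecutive interval $I_j$ of the order $\tau$, and by~\ref{BUF:dist} the intervals $I_j$ are pairwise disjoint across $j$. Enumerating $\Xbuf_i = \{x_1, \ldots, x_m\}$ so that $I_1, I_2, \ldots$ appear in increasing order of $\tau$ gives the $Y_j$ a legitimate sequentially-dependent structure. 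All of what follows is conditioned on the a.a.s.\ event that Algorithm~\ref{alg:RGA:psr} terminates successfully, so the invariants~\ref{psr:inv:gpe}--\ref{psr:inv:random} of Claim~\ref{cl:psr:inv} hold throughout.

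The core task is to show, almost surely, that $\Pr[Y_j = 1 \mid \hist_{\tau(y_1^j) - 1}] \le (C p^2)^b$ with $C = O(\mu^{-1}\zeta^{-1}d^{-\Delta})$. A preliminary observation, obtained by combining~\ref{BUF:dist} with Lemma~\ref{lem:tau}\ref{cond:seg}---any pre-embedded neighbour of $y_k^j$ in $\tau$ must itself lie in $N(\Xbuf)$, and any such neighbour other than some $y_{k'}^j$ would place a second buffer vertex within $H$-distance $3$ of $x_j$---is that $\pitau(y_k^j) = |\{k' < k : y_{k'}^j y_k^j \in E(H)\}| \le k-1$. Using~\ref{psr:inv:random}, I bound each conditional step-probability
\[
\Pr\bigl[\psi(y_k^j) \in N_\Gamma(v, v'; V(y_k^j)) \,\bigm|\, \hist,\, Y_j^1 = \cdots = Y_j^{k-1} = 1\bigr] \le \frac{\bigl|U_{\tau(y_k^j)-1}(y_k^j) \cap N_\Gamma(v, v'; V(y_k^j))\bigr|}{\tfrac{1}{10}\mu\zeta (dp)^{\pitau(y_k^j)}|V(y_k^j)|}\,.
\]
In the \emph{typical} scenario, where the running intersection shrinks by a factor $\le (1+\eps)p$ at each previously-embedded neighbour of $y_k^j$, starting from the hypothesis $|N_\Gamma(v, v'; V(y_k^j))| \le (p+\eps p)^2|V(y_k^j)|$, the numerator is at most $(1+\eps)^{\pitau(y_k^j) + 2} p^{\pitau(y_k^j) + 2}|V(y_k^j)|$; this yields an $O(p^2)$ factor per step and the desired $(Cp^2)^b$ bound in total.

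The main obstacle is controlling the \emph{atypical} scenarios, in which at some step the running intersection $U \cap N_\Gamma(v, v'; V(y_k^j))$ fails to shrink by a factor $\approx p$. I handle this with the ``fails at step $s$'' framework from the proof of~\eqref{eq:randmatch:sumVqLNS} in Claim~\ref{cl:psr:lnssum}: declare $y_k^j$ to fail at step $s$ if the running intersection shrinks typically through step $s-1$ but not at step $s$. By $\LNS(\eps, r_1, \Delta)$ applied at the LNS index naturally associated with the current intersection size, the number of $\Gamma$-vertices whose embedding would cause such a failure is controlled; dividing by the sampling set size in~\ref{psr:inv:random} shows that the per-step failure probability decays as a power of $p$ sufficient to balance the $p$-loss in $|W|$ one would otherwise incur after a failure. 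A second, inner application of Lemma~\ref{lem:coupling} to the failure indicators---exactly as in the derivation of~\eqref{eq:LNS:bigY}---together with a union bound over the $O(\Delta^2)$ failure patterns, shows that the aggregate atypical contribution is absorbed into $C$ provided $\rho$ and $\eps$ are small enough (they are, by our constant choices). Finally, Lemma~\ref{lem:coupling}\ref{coupling:a} applied to $(Y_j)$ with $\delta = 1$ and mean bound $x = 4\mu(Cp^2)^b|X_i|$ yields $\Pr[|S| > 2x] \le \exp(-x/3)$, which by the lower bound on $p$ is much smaller than $n^{-3}$; a union bound over the $\binom{n}{2}$ pairs $(v, v')$ and the $r \le r_1$ choices of $i$ completes the proof, the factor $24\Delta^2$ in~\eqref{psr:Vsum:deseq} comfortably absorbing the constants from Lemma~\ref{lem:coupling}.
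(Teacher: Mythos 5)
Your outer scaffolding is exactly the paper's: indicators $Y_j$ for each $x_j\in\Xbuf_i$, the sequentially-dependent structure coming from the disjoint consecutive intervals $I_j$ (by Lemma~\ref{lem:tau}\ref{cond:noedge} and~\ref{BUF:dist}), Lemma~\ref{lem:coupling}\ref{coupling:a} with $\delta=1$ applied to $(Y_j)$, and a union bound over the $O(n^2)$ choices of $v,v'$ and $i$. The preliminary observation $\pitau(y_k^j)\le k-1$ is also correct, for the reason you give.

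The problems are all inside the per-$j$ bound. The statement you need---that $\Pr[Y_j=1\mid\hist_{\tau(y_1^j)-1}]\le(Cp^2)^b$ holds \emph{almost surely}, i.e.\ for every history---is a deterministic conditional-probability bound, and an ``inner application of Lemma~\ref{lem:coupling}'' is not the right tool to establish it. Lemma~\ref{lem:coupling} gives a high-probability tail bound on a sum of sequentially-dependent indicators; it does not convert per-step LNS controls into a bound on $\Pr[Y_j=1\mid\cdot]$ that holds for all conditioning events. What the paper actually does is multiply conditional step probabilities directly and sum over failure patterns, with no concentration lemma involved at this inner level. Your invocation of ``exactly as in the derivation of~\eqref{eq:LNS:bigY}'' is misplaced: there, the coupling lemma aggregates failures over all $y\in\Xmain_i$, a global count; here you need a per-$x_j$ conditional bound valid pointwise.

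The second, related gap is that you never address what happens after \emph{two} atypical steps. Your phrasing---``per-step failure probability decays as a power of $p$ sufficient to balance the $p$-loss in $|W|$ one would otherwise incur after a failure''---describes the single-failure bookkeeping correctly, but after two failures the intersection $U_{\tau(y_k^j)-1}(y_k^j)\cap\comN_\Gamma(v,v')$ may stop shrinking at all, so subsequent steps can have conditional probability~$1$ and there is nothing left to ``balance''. The paper's three-case decomposition (no atypical steps, exactly one, two or more) makes this explicit: in the last case one must instead observe that the probability of two atypical embeddings having occurred at steps $j<k$ is already $\lesssim p^{2\Delta-j}\cdot p^{2\Delta-k}\cdot(\text{products from the clean steps before each})$, giving exponent $4\Delta-3$, and it is the arithmetic fact $4\Delta-3\ge 2b$ (using $\Delta\ge 2$, $b\le\Delta$) that saves the day. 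This inequality does no ``balancing''---the remaining $b-k$ steps contribute nothing---and it is the one ingredient that cannot be waved through as ``absorbed into $C$''. Without it, the per-$j$ bound fails and the whole outer argument collapses. (A minor point: your mention of $\rho$ among the constant requirements for this step is spurious; the paper's proof of this claim only constrains $\eps$ and $p$.)
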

 
To prove this we show that for each $x\in\Xbuf_i$, however, previous vertices are embedded, it is not too likely that $N_H(x)$ is embedded to $\comN_\Gamma(v,v')$, and apply Lemma~\ref{lem:coupling} and the union bound to deduce~\eqref{psr:Vsum:deseq} for all $i$, $v$, and $v'$. In turn, to prove the desired upper bound on the probability of embedding $N_H(x)$ to $\comN_\Gamma(v,v')$, we analyse the embedding of the vertices $N_H(x)=\{y_1,\dots,y_b\}$ one by one. We would expect that in each case roughly a $p^2$-fraction of $U(y_i)$ is contained in $\comN_\Gamma(v,v')$, and if this is the case at each step we obtain the desired upper bound. If not, the reason is that a previously embedded vertex---which must be one of the $y_j$ since we have not yet embedded any other neighbouring vertices---was `badly embedded'. Using the $\LNS$ property, we show this is an unlikely event and again obtain the desired upper bound. In fact, this is a slight oversimplification: we have to separate the cases that exactly one vertex is embedded badly (which implies we still have only at worst a probability $p$ of embedding future vertices of $N_H(x)$ to $\comN_\Gamma(v,v')$) or that more than one vertex is embedded badly, in which case we might have probability $1$ of embedding future vertices of $N_H(x)$ to $\comN_\Gamma(v,v')$, but this is counterbalanced by the unlikelihood of embedding two vertices badly.
 
 \begin{claimproof}[Proof of Claim~\ref{cl:psr:Vsum}]
  We require
  \[\eps\le\min\big( 2^{-\Delta},\tfrac{\mu\zeta d^\Delta}{10\Delta\kappa}\big)\quad\text{and}\quad p\ge 20\mu^{-1}\kappa r_1\big(\tfrac{\log n}{n}\big)^{1/(2\Delta)}\,.\]
  Let $v,v'\in V_i$ be such that $\deg_\Gamma(v,v';V_j)\le (p+\eps p)^2|V_j|$ for each $j$ such that $ij\in R'$. Suppose that $\Xbuf_i$ is a degree-$b$ buffer and $b\in[\Delta]$.
  
   Let $T=\tau(x)-|N_H(x)|-1$ be the time immediately before the first vertex of $N_H(x)$ is embedded. Since any two vertices of $\Xbuf_i$ are at distance at least ten in $H$ by~\ref{PtH:dist}, it is enough to show
   \begin{enumerate}[label=($\ast$)]
    \item\label{sillythingthatisntthere} for any one vertex $x\in\Xbuf_i$, by considering the embedding of $N_H(x)$, that the probability of $v,v'\in C(x)$, conditioning on the history up to time $T$, is at most $3\Delta^2\big(20\mu^{-1}\zeta^{-1}d^{-\Delta}\big)^bp^{2b}$.
   \end{enumerate}If we show this, then, since the vertices of $N_H(x)$ are embedded consecutively, by Lemma~\ref{lem:coupling}, applied with $\delta=1$ and using $|\Xbuf_i|=4\mu|X_i|$, we see that~\eqref{psr:Vsum:deseq} holds with probability at least $1-\exp\big(-4\mu\Delta^2\big(20\mu^{-1}\zeta^{-1}d^{-\Delta}\big)^bp^{2b}|X_i|\big)$, and taking a union bound over the at most $n^2$ choices of $v,v'$ we conclude that by choice of $p$ the conclusion of the claim a.a.s.\ holds as desired.
  
 We show that the probability of $v,v'\in C(x)$ is at most $3\Delta^2\big(20\mu^{-1}\zeta^{-1}d^{-\Delta}\big)^bp^{2b}$, conditioning on any $\psi_T$. Observe that $v,v'\in C(x)$ can occur only if $N_H(x)$ is embedded into $\comN_{\Gamma}(v,v')$. If $y\in N_H(x)\cap X_j$, then the probability that $y$ is embedded into $\comN_\Gamma(v,v')$ is, using~\ref{psr:inv:random}, at most
  \[\frac{\big|U_{\tau(y)-1}(y)\cap \comN_\Gamma(v,v')\big|}{\tfrac{1}{10}\mu\zeta(dp)^{\pitau(y)}|V_j|}\,.\]
  We would expect that the numerator is at most $(p+\eps p)^{2+\pitau(y)}|V(y)|$ for all  $y\in N_H(x)$. Of course, this does not always happen, since it may be the case that a neighbour $z$ of $y$ in $H$ with $\tau(z)<\tau(y)$ was embedded \emph{badly}, that is, to a vertex with more than 
  \begin{equation}\label{eq:badly_embed}
  (p+\eps p)\left|U_{\tau(z)-1}(y)\cap \comN_\Gamma(v,v')\right|
  \end{equation}
  neighbours in $U_{\tau(z)-1}(y)\cap \comN_\Gamma(v,v')$. 
  Since $\tau$ satisfies the condition of Lemma~\ref{lem:tau}\ref{cond:seg}, any such $z$ is in $N(\Xbuf)$. Since $x$ is at distance at most $2$ from $z$ in $H$, and at distance at least $5$ from any other vertex of $\Xbuf$ by~\ref{BUF:dist}, we see that $z\in N_H(x)$. Thus, if exactly $s$ many neighbours $z$ of $y$ with $\tau(z)<\tau(y)$ are embedded badly then 
  we see that with~\eqref{eq:badly_embed} the following inequality 
  \begin{equation}\label{eq:s_bad_vcs}
  \big|U_{\tau(y)-1}(y)\cap \comN_\Gamma(v,v')\big|\le (p+\eps p)^{\pitau(y)-s}|\comN_\Gamma(v,v')|\le (p+\eps p)^{\pitau(y)+2-s}|V(y)|
  \end{equation}
  holds.
   We now show how this implies~\ref{sillythingthatisntthere}. We separate three cases.
  
  First, no vertices in $N_H(x)$ are embedded badly. Then the probability that all $b$ vertices in $N_H(x)$ are embedded into $\comN_\Gamma(v,v')$, conditioning on $\psi_T$, is at most $\big(20\mu^{-1}\zeta^{-1}d^{-\Delta}p^2\big)^b$ by choice of $\eps$.
  
  Second, exactly one vertex in $N_H(x)$ is embedded badly. We let the vertices of $N_H(x)$ be $y_1,\ldots,y_b$ in order of $\tau$. Let us suppose that $y_\ell$ is the vertex embedded badly. Then the probability that the first $\ell-1$ vertices of $N_H(x)$ are embedded into $\comN_\Gamma(v,v')$, conditioning on $\psi_T$, is at most $\big(20\mu^{-1}\zeta^{-1}d^{-\Delta}p^2\big)^{\ell-1}$ by choice of $\eps$ (since these are not  badly embedded and conditional probabilities multiply). 
  
  We now estimate the probability of $y_\ell$ being embedded badly, conditioning on $\psi_T$ and on the embeddings of the previous $\ell-1$ vertices not being bad. Observe that, since there has been no previous bad embedding,  we have for each $\ell'>\ell$
  \[
  \big|U_{\tau(y_\ell)-1}(y_{\ell'})\cap \comN_\Gamma(v,v')\big|\le(p+\eps p)^{\pi_{\tau(y_\ell)-1}(y_{\ell'})+2}|V(y_{\ell'})|\,,
  \]
  and since $\pi_{\tau(y_\ell)-1}(y_{\ell'})\le\ell-1\le\Delta-2$, the right hand side is, by choice of $\eps$, at least $\eps p^{\Delta}n/r_1$. By $\LNS(\eps,r_1,\Delta)$ the number of vertices in $\Gamma$ with more than 
  $(p+\eps p)^{\pi_{\tau(y_\ell)-1}(y_{\ell'})+3}|V(y_{\ell'})|$ neighbours into $U_{\tau(y_\ell)-1}(y_{\ell'})\cap \comN_\Gamma(v,v')$
   is at most $\eps p^{2\Delta-1}n/r_1$. Since $\pitau(y_\ell)\le\ell-1$, by~\ref{psr:inv:random} the probability that $y_\ell$ is embedded to such a bad vertex is at most
  \[
  \frac{\Delta\eps p^{2\Delta-1}n/r_1}{\mu\zeta(dp)^{\ell-1}|V(y_\ell)|/10}\le p^{2\Delta-\ell}
  \]
  where the inequality is by choice of $\eps$. 
  
  Now, the probability that the last $b-\ell$ vertices are embedded into $\comN_\Gamma(v,v')$, conditioning on the previous embeddings, is at most $\big(20\mu^{-1}\zeta^{-1}d^{-\Delta}p\big)^{b-\ell}$ by choice of $\eps$. Indeed, 
  for each vertex $\ell'>\ell$ we have by~\eqref{eq:s_bad_vcs} that 
  \[
  \big|U_{\tau(y_{\ell'})-1}(y_{\ell'})\cap \comN_\Gamma(v,v')\big|\le (p+\eps p)^{\pitau(y_{\ell'})+1}|V(y_{\ell'})|
  \]
 since we condition on  exactly one neighbour of $y_{\ell'}$ being embedded badly. This gives a conditional probability of at most $20\mu^{-1}\zeta^{-1}d^{-\Delta}p$ that $v_{\ell'}$ is embedded to $\comN_\Gamma(v,v')$.   The conditional probabilities multiply, and taking the union bound over the choices of $\ell$, we see that the probability of this case occurring and all vertices of $N_H(x)$ being embedded to $\comN_\Gamma(v,v')$, conditioning on $\psi_T$, is at most 
 \begin{multline*}
\Delta\cdot\big(20\mu^{-1}\zeta^{-1}d^{-\Delta}p^2\big)^{\ell-1}\cdot p^{2\Delta-\ell}\cdot 
\big(20\mu^{-1}\zeta^{-1}d^{-\Delta}p\big)^{b-\ell}
\le\\ \Delta\big(20\mu^{-1}\zeta^{-1}d^{-\Delta}\big)^{b-1}p^{2\Delta+b-2}\le \Delta^2\big(20\mu^{-1}\zeta^{-1}d^{-\Delta}\big)^bp^{2b},
 \end{multline*}
where the last inequality follows since $\Delta\ge 2$ and $b\le\Delta$. 
  
  In the third case, at least two vertices in $N_H(x)$ are embedded badly. Suppose that the first two badly embedded vertices are the $j$th and $k$th vertices. Then the same logic as above, in particular the inequality~\eqref{eq:s_bad_vcs},  tells us that the probability that this case occurs and all vertices of $N_H(x)$ are embedded to $\comN_\Gamma(v,v')$ is at most
 \begin{multline*}
 \Delta^2\cdot \big(20\mu^{-1}\zeta^{-1}d^{-\Delta}p^2\big)^{j-1}\cdot 
 p^{2\Delta-j}\cdot \big(20\mu^{-1}\zeta^{-1}d^{-\Delta}p\big)^{k-j-1}\cdot p^{2\Delta-k}\le
 \\  
   \Delta^2\big(20\mu^{-1}\zeta^{-1}d^{-\Delta}\big)^{b-2}p^{4\Delta-3}.
  \end{multline*} 
  
  Putting these three cases together, and using the fact that $4\Delta-3\ge 2b$ since $\Delta\ge 2$ and $b\le\Delta$, we conclude that the probability that $v,v'\in C(x)$ is at most $3\Delta^2\big(20\mu^{-1}\zeta^{-1}d^{-\Delta}\big)^bp^{2b}$ as desired.
 \end{claimproof}
 
 Suppose now that the good event of Claim~\ref{cl:psr:Vsum} holds. For each $i\in[r]$, there may be exceptional vertices $v\in V_i$ which have more than $(p+\eps p)|V_j|$ neighbours in $V_j$ for some $j$ such that $ij\in R'$, but by $\NS(\eps,r_1,\Delta+1)$ there are at most $\DeltaRp\eps p^\Delta n/r_1\le\eps'p^\Delta|V_i|$ such vertices. If $v$ is not exceptional---that is, it has at most $(p+\eps)|V_j|$ neighbours in each $V_j$ with $ij\in R'$---then again there may be vertices $v'\in V_i$ which are exceptional for $v$, i.e.\ vertices $v'$ with more than $(p+\eps p)^2|V_j|$ common neighbours with $v$ in $V_j$ for some $j$ such that $ij\in R'$. But again by $\NS(\eps,r_1,\Delta+1)$ there are at most $\DeltaRp\eps p^\Delta n/r_1\le\eps'p^\Delta|V_i|$ such. Because the good event of Claim~\ref{cl:psr:Vsum} holds, for non-exceptional pairs of vertices $v,v'$ we have the bound given in~\ref{rga:psr:Vsum}. Because the good event of Claim~\ref{cl:psr:Vsum} holds a.a.s., property~\ref{rga:psr:Vsum} is a.a.s.\ obtained. This completes the proof of Lemma~\ref{lem:rga:psr}.
\end{proof}

 We note that it is Claim~\ref{cl:psr:lnssum} in this proof which is most responsible for our bijumbledness requirement on $\Gamma$ in the blow-up lemma for bijumbled graphs, Lemma~\ref{lem:psr_main}. Although the proof of this claim may seem quite wasteful---we assume that if one neighbour is embedded badly then potentially all future neighbours can be embedded badly without any further penalty, which seems unreasonable---we were not able to make it work with any weaker condition than $\LNS(\eps,r_1,\Delta)$ for general graphs $H$. Our analysis is in some sense tight for vertices $y$ which fail at step $\pitau(y)$, when the final neighbour $z$ of $y$ coming before $y$ in $\tau$ is embedded. What still seems unreasonable is that all the vertices in $\Gamma$ which have exceptionally high degree to $U_{\tau(z)-1}(y)\cap U$ turn out to be in $A_{\tau(z)-1}(z)$, which is the worst case we are effectively using in our proof. We expect that a more careful analysis, possibly involving some modification to Algorithm~\ref{alg:RGA:psr}, would allow one to improve on this and thus improve on the bijumbledness requirement of Lemma~\ref{lem:psr_main}.

\chapter{Improved bounds for degenerate graphs}
\label{chap:degen}
\section{The RGA lemma and the proof of the blow-up lemma}\label{sec:degen}
In this section we prove Lemma~\ref{lem:degen}. We have already seen most of the ideas in the proofs of Lemmas~\ref{lem:rg_image} and~\ref{lem:psr_main}. We use the same General Setup, and we continue to obtain it using Lemma~\ref{lem:matchreduce}. However, rather than defining an order $\tau$ putting the buffer vertices $\Xbuf$ at the end and their neighbours at the front, as we did in the proofs of Lemmas~\ref{lem:rga} and~\ref{lem:rga:psr}, we are supplied with an order $\tau$ which we will modify only in that we move $\Xbuf$ to the end of the order. In particular, the neighbours of buffer vertices need not appear early in $\tau$. The reason for this is that moving vertices in the order could result in substantially increasing $\pitau(x)$ for some vertices $x$. Moving the vertices of $\Xbuf$ to the end of the order can destroy boundedness of the order, but it turns out that the only vertices which cause boundedness to fail are those in $\Xbuf$, and in the proof the potentially failing conditions are not needed for these vertices. The following definition, which extends $(D,p,m)$-boundedness, accommodates this.
 \begin{definition}[$(D,p,m)$-bounded order]\label{def:Dpm_bdd_orderp} 
  Let~$H$ be a graph given with buffer sets $\tcX=\{\tX_i\}_{i\in[r]}$ and
  a restriction pair~$\cI=\{I_x\}_{x\in V(H)}$ and~$\cJ=\{J_x\}_{x\in V(H)}$.
  Let~$\tX=\bigcup_{i\in[r]}\tX_i$.
  Let~$\tau$ be an ordering of $V(H)$ and $X^e,\Xbuf\subset V(H)$.
  Then~$\tau$ is a \emph{$(D,p,m)$-bounded order} for~$H$, $\tcX$,
  $\cI$ and $\cJ$
  with \emph{exceptional set} $X^e$ and \emph{ignored set} $\Xbuf$ if the following conditions are
  satisfied for each $x\in V(H)$.
  \begin{enumerate}[label=\itmarabp{ORD}]
   \item\label{ordp:Dx} Define \[
     D_x:=\begin{cases}
       D-2 & \text{if there is $yz\in E(H)$ with $y,z\in N_H(x)$ and $\tau(y),\tau(z)>\tau(x)$}\\
       D-1 & \text{else if there is $y\in N_H(x)$ with $\tau(y)>\tau(x)$}\\
       D & \text{otherwise}\,.
     \end{cases}\]
     We have $\pitau(x)\le D_x$, and if $x\in N(\tX)\setminus\Xbuf$ even $\pitau(x)\le D_x-1$. Finally, if $x\in\tX$ we have $\deg(x)\le D$.
    \item\label{ordp:halfD} One of the following holds:
      \begin{itemize}
        \item $x\in X^e\cup \Xbuf$,
        \item $\pitau(x)\le \frac12 D$,
        \item $x$ is not image restricted and every neighbour~$y$ of~$x$
          with $\tau(y)<\tau(x)$ satisfies $\tau(x)-\tau(y)\le p^{\pitau(x)}m$.
      \end{itemize}
    \item\label{ordp:NtX} If $x\in N(\tX)\setminus \Xbuf$ then all but at most $D-1-\max_{z\not\in X^e}\pitau(z)$
      neighbours~$y$ of $x$ with $\tau(y)<\tau(x)$ satisfy
      $\tau(x)-\tau(y)\le p^D m$.
 \end{enumerate}
\end{definition}

 Observe that the only difference between Definitions~\ref{def:Dpm_bdd_order} and~\ref{def:Dpm_bdd_orderp} is the introduction of the ignored set $\Xbuf$, and conditions~\ref{ordp:halfD} and~\ref{ordp:NtX}, and part of~\ref{ordp:Dx}, are changed so that they are trivially satisfied by vertices of the ignored set $\Xbuf$. If the ignored set is empty, then Definition~\ref{def:Dpm_bdd_order} is recovered.

 We will use a modification of Algorithm~\ref{alg:RGA:psr} to perform this embedding. The modification consists of handling the exceptional vertices $X^e$ differently, which allows us to deal with a few vertices $x$ with $\pitau(x)$ significantly larger than normal; such vertices appear in applications. We can show that this algorithm succeeds in embedding all the vertices of $\Xmain$, in the order $\tau$, and that it returns a good partial embedding with the properties detailed in the lemma below with positive probability. Since we will have $\Xmain=V(H)\setminus \Xbuf$, we can complete the proof using Lemma~\ref{lem:completematch} to embed the buffer vertices in much the same way as in the proof of Lemma~\ref{lem:rg_image}.

Note that property $\LCON$, which appears in the following lemma, plays the same r\^ole as $\LNS$ in the proof of Lemma~\ref{lem:rga:psr}. It is easier to work with, and allows for linearly many image restrictions (but it is not in general true in bijumbled graphs).

\begin{lemma}[degenerate RGA lemma]
\label{lem:degenrga}
  We assume the General Setup. Suppose that an exceptional set $X^e$ with $|X^e|\le\tfrac12\eps p^{\max_{x\in X^e}\pitau(x)}n/r_1$ is given. Suppose that $D\ge 1$, and $\tau$ is a $\big(D,p,\tfrac12\eps n/r_1\big)$-bounded order for $H$, $\tcX$, $\cI$ and $\cJ$ with the exceptional set $X^e$ and ignored set $\Xbuf$. Suppose furthermore that the vertices $\Xbuf$ form the final segment of $\tau$. Suppose that $\Gamma$ has  properties $\NS(\eps,r_1,D)$, $\RI(\eps,(\eps_{a,b}),\eps',d,r_1,D)$ and $\LCON(\eps,r_1,D)$. Then there is a good partial embedding $\psiRGA$ of $H$ into $G$ with the following
  properties for each~$i\in[r]$. Let $b$ be such that $\Xbuf_i$ is a degree-$b$
  buffer.
  \begin{enumerate}[label=\itmarab{DRGA}]
    \item\label{degenrga:place} Every vertex in
    $\Xmain_i$ is embedded to
    $\Vmain_i\cup \Vq_i\cup \Vc_i$ by $\psiRGA$, and no vertex in $\Xbuf_i$ is embedded.
    \item\label{degenrga:main} For every set $W\subset V_i$ of size at least
    $\rho|V_i|$, there are at most $\rho|X_i|$
    vertices in $\Xbuf_i$ with fewer than $(dp)^b|W|/2$ candidates in $W$.
    \item\label{degenrga:nobad} Every vertex in $V_i$ is a
    candidate for at least \[4^{-10\Delta^3} 2^{-1000D^2\mu^{-1}\zeta^{-1}d^{-D}} d^{2D^2} \Delta^{-3}\mu p^b|X_i|\]
    vertices of $\Xbuf_i$.
  \end{enumerate}
\end{lemma}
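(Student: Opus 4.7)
My plan is to prove Lemma~\ref{lem:degenrga} by adapting the approach used for Lemma~\ref{lem:rga:psr}, but with the order $\tau$ given to us (modified only by moving $\Xbuf$ to the end, which only increases $\pitau$ on $\Xbuf$ and hence preserves $(D,p,\tfrac12\eps n/r_1)$-boundedness on $\Xmain$). I would run a random greedy algorithm essentially identical to Algorithm~\ref{alg:RGA:psr}: process vertices of $\Xmain$ in the order $\tau$, and when a vertex $x$ arrives, embed it uniformly at random into $\Amain_t(x)\setminus B_t(x)$ if this set is large enough, or into $\Aq_t(x)\setminus B_t(x)$ if $x$ entered the queue. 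The only substantial modification is that we allow vertices in $X^e$ to be embedded into the clique-reservoir slot $\Vc$ if needed, treating them as a tiny extra collection we can absorb using $|X^e|\le\tfrac12\eps p^{\max\pitau(x)}n/r_1$. Invariants~\ref{psr:inv:gpe}--\ref{psr:inv:random} of Claim~\ref{cl:psr:inv} are maintained exactly as before, using Lemma~\ref{lem:fewbad} with the value $D$ supplied by $(D,p,\tfrac12\eps n/r_1)$-boundedness (conditions~\ref{ord:Dx} and~\ref{itm:fewbad:x}--\ref{itm:fewbad:xyyzxz} match by construction), which is also why we require $\NS(\eps,r_1,D)$ and $\RI(\eps,(\eps_{a,b}),\eps',d,r_1,D)$.

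The next step is to bound the queue: Lemma~\ref{lem:rga:welldistr} applies verbatim and gives that for each $i$ the number of $x\in X_i$ ever entering $Q_t$ is at most $2\rho|X_i|$. Then I have to verify that the algorithm never halts with failure, i.e.\ that whenever $x$ is queued, the set $\Cq_{\tau(x)-1}(x)$ is not more than half covered by $\im(\psi_{\tau(x)-1})$. This is where $\LCON(\eps,r_1,D)$ replaces the $\LNS$-based Claim~\ref{cl:psr:lnssum}: instead of analysing how fast $\Uq_t(y)\cap U$ shrinks, I argue directly. Fix $x$ and $W:=U_{\tau(x)-1}(x)\subset V_i$, which has size $\ge\tfrac12 p^{\pitau(x)}|V_i|$. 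The vertices $y\in\Xmain_i$ previously embedded that can land in $W$ are exactly the $y$ whose underlying set $\psi_{\tau(y)-1}(\Pi(y)\cup J_y)$ lies in $W$, and I count these using the congestion graph from $\LCON$. Conditions~\ref{ord:halfD} and~\ref{ord:NtX} ensure that almost all such $y$ satisfy $\pitau(y)\le D/2$, so the relevant $\ell$-sets have $\ell\le D/2$ and Lemma~\ref{lem:coupling} together with $\LCON(\eps,r_1,D)$ gives the required tail bound, with the exceptional vertices (those in $X^e$ or image-restricted with far-away neighbours) absorbed in the error term because $|X^e|$ is tiny and only $\rho p^\vartheta |X_i|$ vertices are image restricted.

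The main obstacle is establishing~\ref{degenrga:nobad}, since unlike in the proofs of Lemmas~\ref{lem:rga} and~\ref{lem:rga:psr}, we cannot rearrange $\tau$ so that $N(\Xbuf)$ comes first, and consequently cannot apply Lemmas~\ref{lem:large_nbs} and~\ref{lem:nonclique_buffer} directly. Here I would exploit condition~\ref{ord:NtX}: for each $x\in N(\Xbuf)$, all but a bounded number of preceding neighbours of $x$ lie within a window of $p^D m$ positions behind $x$ in $\tau$. Fixing a buffer vertex $w\in\Xbuf_i$ with neighbours $y_1,\dots,y_b$ and a target $v\in V_i$, I would analyse the short time intervals around $\tau(y_1),\dots,\tau(y_b)$ and run local variants of the arguments of Lemmas~\ref{lem:large_nbs} and~\ref{lem:nonclique_buffer}. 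Specifically, for each $y_j$ I need to show that only a small fraction of $N_G(v;V(y_j))$ is consumed by the time $y_j$ is embedded; the bound on the window length from~\ref{ord:NtX} together with the low-probability argument of Lemma~\ref{lem:large_nbs} (driven by $\NS(\eps,r_1,D)$) gives this, provided we first pay for the exceptional pre-window neighbours, which is where condition~\ref{ord:NtX} with the allowance of $D-1-\max_{z\not\in X^e}\pitau(z)$ far-away neighbours is essential. Once this is in hand, a local buffer-friendly analysis like Lemma~\ref{lem:nonclique_buffer} using $\RI(\eps,(\eps_{a,b}),\eps',d,r_1,D)$ gives a conditional probability of at least $(d^\Delta p/C)^b$ per buffer vertex, and one final application of Lemma~\ref{lem:coupling} summed over $w\in\Xbuf_i$, with a union bound over $v\in V(G)$, yields~\ref{degenrga:nobad} with the claimed explicit constant. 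Property~\ref{degenrga:main} then falls out of Lemma~\ref{lem:rga:welldistr} exactly as~\ref{rga:emb:main} did in the proof of Lemma~\ref{lem:rga}.
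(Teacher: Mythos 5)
Your overall blueprint---run a modified Algorithm~\ref{alg:RGA:psr} on the supplied order $\tau$ with $\Xbuf$ moved to the end, absorb $X^e$ into $\Vc$, use $\LCON$ in place of $\LNS$ to control the queue, and apply Lemmas~\ref{lem:fewbad} and~\ref{lem:rga:welldistr} with the $(D,p,m)$-boundedness replacing the $\Delta$-based conditions---matches the paper's strategy. Small inaccuracies aside (it is~\ref{ord:halfD} alone, not~\ref{ord:halfD} and~\ref{ord:NtX}, that forces queued vertices to have $\pitau\le D/2$; and the analogue of Lemma~\ref{lem:large_nbs} in the degenerate setting is driven by $\LCON$, not $\NS$), the parts of your argument up to and including~\ref{degenrga:main} look sound.

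Your plan for~\ref{degenrga:nobad}, however, has two genuine gaps, and the paper explicitly flags both of them as the hard part of this lemma. First, a single half-coverage bound of the form used in Lemma~\ref{lem:large_nbs} does not suffice here, because $N(\Xbuf)$ is not an initial segment of $\tau$: a neighbour $y_j$ of a buffer vertex can appear anywhere in the order, so by the time $y_j$ is reached the common $G$-neighbourhood $\comN_G(v,\Pi(y_j);\Vmain)$ of $v$ and the already-embedded far-away neighbours of $y_j$ (which is what one actually needs to be nonempty, not just $N_G(v)$) could have been attacked by a constant fraction of all of $\Xmain$. The paper's Claim~\ref{cl:degen:nofill} handles this with an \emph{iterated} halving argument: it partitions $\Xmain_i$ into $O(1/\eta)$ intervals and shows each interval covers at most half of whatever remains, leaving a $2^{-O(1/\eta)}$ fraction---bounded away from zero---still available. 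Your ``small fraction is consumed'' claim is the single-interval version and would fail; the $2^{-1000 D^2\mu^{-1}\zeta^{-1}d^{-D}}$ factor in the target bound is precisely the footprint of the iterated argument. Second, your proposed ``one final application of Lemma~\ref{lem:coupling} summed over $w\in\Xbuf_i$'' does not go through. Since the sets $N_H(w)$ for $w\in\Xbuf_i$ are now interleaved in $\tau$ rather than appearing as disjoint consecutive segments, the indicator that $N_H(w)$ is embedded to $N_G(v)$ is not a sequentially dependent variable in the sense required by Lemma~\ref{lem:coupling}: conditioning on the history up to the previous buffer vertex's completion does not isolate the $b$ conditional probabilities you want to multiply, because some of $N_H(w)$ may already be embedded (possibly badly) at that point. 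The paper resolves this by defining ``surviving at step $j$'' and ``dangerous at step $j$'', applying Lemma~\ref{lem:coupling} once per step (roughly $3\Delta^3$ applications), and using the dangerousness trick to restore an a priori lower bound on the sum of conditional expectations at each step even though the good event from the previous step is only very likely, not certain. Without this machinery, your final union bound has no valid Chernoff-type input to feed it.
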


We will prove this lemma in Section~\ref{sec:degenRGA}. Assuming Lemma~\ref{lem:degenrga}, we are in a position to prove Lemma~\ref{lem:degen}. This amounts to a verification that the conditions of Lemma~\ref{lem:degen} suffice to apply Lemmas~\ref{lem:det_Gnp},~\ref{lem:matchreduce},~\ref{lem:completematch} and~\ref{lem:degenrga}.

\begin{proof}[Proof of Lemma~\ref{lem:degen}]
 First we choose constants as follows. Given $\Delta$, $\DeltaRpbl$, $\Delta_J$ and $D$ integers, $\alphabl$, $\zetabl$ and $d>0$, and $\kappabl>1$, we set $\vartheta=0$, $\DeltaRp=8(\Delta+\Delta_J)^{10}\DeltaRpbl$, $\alpha=\tfrac12\alphabl$, $\zeta=\tfrac12\zetabl$ and $\kappa=2\kappabl$. We now choose
 \begin{multline*}
  \mu=\tfrac{\alpha}{20000\kappa\DeltaRp^4\Delta^{10}(\Delta+2)}\,,\quad\rho= \tfrac{\mu^2\zeta^2d^{3D^2}}{10000\kappa(\Delta+1)}4^{-2000\Delta^3\mu^{-1}\zeta^{-1}d^{-D}}\\
  \text{and}\quad\eps'= \tfrac{\mu\zeta d^{\Delta+1}}{1000\Delta^4\kappa4^\Delta}2^{-280(D+1)\mu^{-1}\zeta^{-1}d^{-D}}\,.\\
 \end{multline*}

  Now for input $D$ and $d,\eps'$, Lemma~\ref{lem:det_Gnp} returns constants $(\eps_{a,b})$ and $\eps_{\sublem{det_gnp:randri}}>0$. We set
  \[\eps=\min\big(\eps_{\sublem{det_gnp:randri}},\tfrac{d\eps'}{\kappa}\big)\,.\] 
  
   We let $\epsbl=\tfrac{1}{16}(\Delta+\Delta_J)^{-10}\eps$ and $\rhobl=\tfrac{1}{16}(\Delta+\Delta_J)^{-10}\rho$.
  Now Lemma~\ref{lem:degen} returns $\epsbl$ and $\rhobl$. Given $\ronebl$ we let $r_1=8(\Delta+\Delta_J)^{10}\ronebl$. We choose $C$ sufficiently large for Lemma~\ref{lem:det_Gnp} with input $D$, $d$, $\eps'$, $r_1$ and $\rho$, and such that
 \[C\ge\tfrac{10^8\cdot 4^{10\Delta^3}(\Delta+\Delta_J)^4\kappa r_1^2}{\eps^2\mu\zeta d^{\Delta+1}}2^{280(D+1)\mu^{-1}\zeta^{-1}d^{-D}}\,.\]
 
 Given $p\ge C\big(\tfrac{\log n}{n}\big)^{1/D}$, Lemma~\ref{lem:det_Gnp} states that a.a.s.\ $\Gamma=G(n,p)$ has the properties $\NS\big(\eps,r_1,D\big)$, $\RI\big(\eps,(\eps_{a,b}),\eps',d,r_1,D\big)$, $\CON(\rho,r_1,D)$ and $\LCON\big(\eps,r_1,D\big)$ respectively. From now on we will assume $\Gamma$ is an $n$-vertex graph which satisfies these four properties.
 
 Given a graph $\Rbl$ on $\rbl\le\ronebl$ vertices, and a spanning subgraph $\Rpbl$ with $\Delta(\Rpbl)\le\DeltaRpbl$, and graphs $H$ and $G\subset\Gamma$ with vertex partitions $\cXbl$ and $\cVbl$, families of image restrictions $\cIbl$ and of image restricting vertices $\cJ$, a family of potential buffer vertices $\tcXbl$, and an exceptional set $X^e$, suppose that the conditions of Lemma~\ref{lem:degen} are satisfied. Then Lemma~\ref{lem:matchreduce} gives a graph $R$ on $r\le r_1$ vertices, a spanning subgraph $R'$ with $\Delta(R')\le\DeltaRp$, and $\kappa$-balanced size-compatible partitions $\cX$ and $\cV$ of $H$ and $G$ respectively, each part having size at least $n/(\kappa r_1)$, together with a family $\tcX$ of potential buffer vertices and $\cI$ of image restrictions, subsets $\Xbuf_i$ of $\tX_i$ for each $i\in[r]$, and partitions $V_i=\Vmain_i\dcup\Vq_i\dcup\Vc_i\dcup\Vbuf_i$ for each $i\in[r]$ which satisfy the General Setup.
 
 We now modify the provided $(D,p,\epsbl n/\ronebl)$-bounded order $\taubl$ on $V(H)$ by moving the set $\Xbuf$ to the end of the order, obtaining a new order $\tau$. We claim $\tau$ is $(D,p,\epsbl n/\ronebl)$-bounded for $H$, $\tcXbl$, $\cIbl$ and $\cJ$ with exceptional set $X^e$ and ignored set $\Xbuf$. To see this, observe that if $x\not\in\Xbuf$ then $\pitau(x)\le\pi^{\tau^{BL}}(x)$. It is easy to check from this that vertices not in $\Xbuf$ satisfy all three conditions. If $x\in\Xbuf$ then $\pitau(x)=\deg(x)\le D$ by~\ref{ord:Dx}, since $\Xbuf\subset\tX$, and since $\Xbuf$ is an independent set by~\ref{BUF:dist} it follows that vertices of $\Xbuf$ satisfy~\ref{ordp:Dx}. Finally vertices in $\Xbuf$ trivially satisfy~\ref{ordp:halfD} and~\ref{ordp:NtX}.
 
 Since $\epsbl n/\ronebl=\tfrac12\eps n/r_1$, the order $\tau$ is also  $\big(D,p,\tfrac12\eps n/r_1\big)$-bounded for $H$, $\tcX$, $\cI$ and $\cJ$ with exceptional set $X^e$ and ignored set $\Xbuf$. To see this observe that the set $\cI$ and the specific partition $\tcX$ of $\tX$ do not play a r\^ole in Definition~\ref{def:Dpm_bdd_orderp} (the former comes together with the set $\cJ$, which does play a r\^ole, and the latter is important only in that it defines $\tX$).
 
 \smallskip
  
 We let $\Xmain=V(H)\setminus\Xbuf$. We now begin the embedding of $H$ into $G$. 
 By Lemma~\ref{lem:degenrga}, there is a good partial embedding $\psiRGA$ with the properties stated in that lemma. 
 By~\ref{GPE:sizeC}, condition~\ref{cpm:deg} of Lemma~\ref{lem:completematch} is satisfied, while 
  condition~\ref{cpm:pseud} holds by~\ref{degenrga:main} and~\ref{cpm:cands} follows from~\ref{degenrga:nobad}  with
 \[\delta=4^{-10\Delta^3} 2^{-1000D^2\mu^{-1}\zeta^{-1}d^{-D}} d^{2D^2} \Delta^{-3}\mu\,.\] 
Therefore, we can find  an embedding $\psi'$ extending $\psiRGA$ which embeds $\Xbuf_i$ to $V_i\setminus\im(\psiRGA)$. Repeating this for each $i\in[r]$, which we may do since $\Xbuf$ is an independent set in $H$, we obtain the desired embedding of $H$ into $G$.
\end{proof}

\section{Proof of the degenerate graph RGA lemma}
\label{sec:degenRGA}
 We prove Lemma~\ref{lem:degenrga} by analysing Algorithm~\ref{alg:RGA:deg}. The analysis is quite similar to what we saw before in the proofs of Lemmas~\ref{lem:rga} and~\ref{lem:rga:psr} (indeed, the main difference is that we are more careful to bound powers of $p$ using $D$ rather than just $\Delta$), so we will be brief and highlight the differences. The only difference between Algorithm~\ref{alg:RGA:psr} and Algorithm~\ref{alg:RGA:deg} is that vertices of $X^e$ are embedded into $\Vc$ rather than $\Vmain$ or $\Vq$.

  \begin{algorithm}[t]
    \caption{Random greedy algorithm for degenerate graphs}\label{alg:RGA:deg}
    \SetKwInOut{Input}{Input}
    \Input{$G\subseteq \Gamma$ and $H$ with partitions satisfying the General Setup; an ordering $\tau$ on $\Xmain$}
    $t:=0$ \; 
    $\psi_0:=\emptyset$ \; 
    $Q_0:=\{x\in V(H):|I_x|<\tfrac12\mu(d-\eps)^{|J_x|}p^{|J_x|}|\Vmain(x)|\}$ \; 
    \Repeat{ $\dom(\psi_t)=\Xmain$ }{
      let $x\in \Xmain\setminus \dom(\psi_t)$ be the next vertex in
      the order $\tau$ \;
      \If{$x\in Q_t\setminus X^e$ \emph{and} $|\Aq_t(x)\setminus B_t(x)|<\tfrac{1}{10}\mu\zeta(dp)^{\pi^*_t(x)}|V(x)|$}{halt with failure \;}   
      \If{$x\in Q_t\cap X^e$ \emph{and} $|\Ac_t(x)\setminus B_t(x)|<\tfrac{1}{10}\mu\zeta(dp)^{\pi^*_t(x)}|V(x)|$}{halt with failure \;}
      choose $v$ uniformly at random in $\begin{cases}
	\Amain_t(x)\setminus B_t(x) \quad & \text{if $x\not\in (Q_t\cup X^e)\,,$} \\
	\Aq_t(x)\setminus B_t(x) & \text{if $x\in Q_t\setminus X^e\,,$} \\
	\Ac_t(x)\setminus B_t(x) & \text{if $x\in X^e\,;$} 
      \end{cases}$ \\
      $\psi_{t+1}:=\psi_t\cup\{x\to v\}$ \;
      $Q_{t+1}:=Q_t$ \;
      \ForAll {$y\in \Xmain\setminus \dom(\psi_{t+1})$}{
        \If{$(\big|\Amain_{t+1}(y)\big|<\tfrac12
        \mu(d-\eps')^{\pi^*_{t+1}(y)}p^{\pi^*_{t+1}(y)}|\Vmain(y)|)$ }
        {$Q_{t+1}:=Q_{t+1}\cup\{y\}$ \; } } 
      $t:=t+1$\;
    }
    $t_\RGend:=t$\;
 \end{algorithm}

 We will see that~\ref{ordp:Dx} (see Definition~\ref{def:Dpm_bdd_orderp}
 of $(D,p,m)$-bounded order) is what we need to make
 Lemma~\ref{lem:fewbad} work with properties $\NS(\eps,r_1,D)$ and
 $\RI(\eps,(\eps_{a,b}),\eps',d,r_1,D)$, so allowing us to prove that
 $B_t(x)$ is always much smaller than $A_t(x)$. As in the proof of
 Lemma~\ref{lem:rga:psr}, our first task is to show that the algorithm
 a.a.s.\ completes successfully. Again by
 Lemma~\ref{lem:rga:welldistr} we can show that the queue remains
 small. We can also show, using~\ref{ordp:halfD}, that all vertices
 $x\in X_i\setminus X^e$ which enter the queue have
 $\pitau(x)\le D/2$. We embed the vertices of $X^e\cap X_i$ which
 enter the queue greedily into $\Vc_i$, and there are so few such
 vertices that this is guaranteed to succeed.  Property
 $\LCON(\eps,r_1,D)$ turns out to be what we need to verify that the
 queue embedding of the remaining vertices a.a.s.\ is successful,
 using the same strategy as in the proof of Lemma~\ref{lem:rga:psr}.
 
 At this stage we have~\ref{degenrga:place} simply because the
 algorithm completes, while \ref{degenrga:main} follows from
 Lemma~\ref{lem:rga:welldistr}. It remains to
 prove~\ref{degenrga:nobad}, which is where we need to
 use~\ref{ordp:NtX}. Here we deviate from the strategy we saw
 previously. We can no longer assume that neighbours of buffer
 vertices appear early in $\tau$, and thus we have to prove that for
 any $v\in V(G)$, even towards the end of the embedding, it is still
 reasonably likely that neighbours of buffer vertices are embedded to
 $N_G(v)$. We will see (Claim~\ref{cl:degen:nofill} below) that
 properties $\NS(\eps,r_1,D)$, $\RI(\eps,(\eps_{a,b}),\eps',d,r_1,D)$
 and $\LCON(\eps,r_1,D)$ allow us to show that a.a.s.\ given any
 $\ell\le D-\max_{x\in\Xmain\setminus X^e}\pitau(x)$ vertices
 $v_1,\ldots,v_\ell$ of $G$ such that
 $\comN_G(v_1,\ldots,v_\ell;\Vmain_i)$ is not small, the set
 $\comN_G(v_1,\ldots,v_\ell;\Vmain_i)$ is never completely filled by
 $\im(\psi_t)$. The idea of the proof remains similar to that of
 Lemma~\ref{lem:large_nbs}. We show, as there, that when the first
 small fraction of the vertices in the order $\tau$ are embedded, only
 at most half of $\comN_G(v_1,\ldots,v_\ell;\Vmain_i)$ is covered. But
 then we repeat this, showing that the next small fraction of $\tau$
 covers only at most half of what remains, and so on, so that when all
 vertices are embedded what remains is an exponentially small, but
 bounded away from zero, fraction of the original
 $\comN_G(v_1,\ldots,v_\ell;\Vmain_i)$. Once we have shown this,
 completing the proof that $N_H(x)$ is not too unlikely to be embedded
 to $N_G(v)$ for any $x\in\Xbuf$ and $v\in V(x)$ can be done along
 similar lines to the proof of Lemma~\ref{lem:nonclique_buffer}.
 
 \begin{proof}[Proof of Lemma~\ref{lem:degenrga}]
 We require
 \begin{align*}
  \mu\le\tfrac{1}{8}\,,\,\, \rho\le \frac{\mu^2\zeta^2d^{2D}}{10000(\Delta+1)}\,,\,\, \eps'\le \frac{\mu\zeta d^{\Delta+1}}{1000\Delta^2\kappa4^\Delta}2^{-280(D+1)\mu^{-1}\zeta^{-1}d^{-D}}\,,\,\, \eps\le\tfrac{d\eps'}{\kappa}\\
  \text{and}\quad p\ge \tfrac{10000\cdot 4^{10\Delta^3}\Delta^3\kappa r_1^2}{\eps\mu\zeta d^{\Delta+1}}2^{280(D+1)\mu^{-1}\zeta^{-1}d^{-D}}\big(\tfrac{\log n}{n}\big)^{1/D}\,.
 \end{align*}
 
 We run Algorithm~\ref{alg:RGA:deg}, using the order $\tau$ supplied to Lemma~\ref{lem:degenrga}. We claim that we can apply Lemma~\ref{lem:fewbad} to bound $B_{\tau(x)-1}(x)$. To see this, we verify properties~\ref{itm:fewbad:x}--\ref{itm:fewbad:xyyzxz} of Lemma~\ref{lem:fewbad}. We consider three cases.
 
 If $x$ has no neighbours after $\tau(x)$ in $\tau$, then Lemma~\ref{lem:fewbad} states that $B_{\tau(x)-1}(x)=\emptyset$ without requiring any property of $\Gamma$.
 
 If $x$ has a neighbour $y$ with $\tau(x)<\tau(y)$, then~\ref{ordp:Dx} states that $\pitau(x)\le D-1$, so in particular $\pi^*_{\tau(x)-1}(x)\le D-1$, satisfying~\ref{itm:fewbad:x}. Furthermore,~\ref{ordp:Dx} states that $\pitau(y)\le D$, so $\pi^*_{\tau(x)-1}(y)\le D-1$ since $x$ is not embedded, satisfying~\ref{itm:fewbad:xy}. Now suppose that $xy,yz\in E(H)$ for some unembedded $y$ and $z$. There are two cases to consider. First, if $\tau(y)<\tau(z)$ then~\ref{ordp:Dx} states that $\pitau(y)\le D-1$ and $\pitau(z)\le D$. Since $x$ and $y$ are unembedded at time ${\tau(x)-1}$, we thus have $\pi^*_{\tau(x)-1}(y)\le D-2$ and $\pi^*_{\tau(x)-1}(z)\le D-1$, as required by~\ref{itm:fewbad:xyyz}. Second, if $\tau(y)>\tau(z)$, then $\pi^*_{\tau(x)-1}(z)\le\pitau(z)\le D-1$ by~\ref{ordp:Dx}, while since $\pitau(y)\le D$ and both the vertices $x$ and $z$ are unembedded, we have $\pi^*_{\tau(x)-1}(y)\le D-2$, again as required by~\ref{itm:fewbad:xyyz}.
 
 Lastly, if there are unembedded vertices $y$ and $z$ such that $xy,yz,xz$ are all edges of $H$, then we claim $\pi_{\tau(x)-1}^*(x),\pi_{\tau(x)-1}^*(y),\pi_{\tau(x)-1}^*(z)\le D-2$. To see this, suppose $\tau(y)<\tau(z)$. By~\ref{ordp:Dx} we have $\pitau(x)\le D-2$, $\pitau(y)\le D-1$ and $\pitau(z)\le D$. Since $x$ and $y$ are unembedded, we have $\pi^*_{\tau(x)-1}(y)\le D-1-1=D-2$, and $\pi^*_{\tau(x)-1}(z)\le D-2$. Thus condition~\ref{itm:fewbad:xyyzxz} is satisfied.
 
 We conclude that we can apply Lemma~\ref{lem:fewbad}, and hence obtain $\big|B_{\tau(x)-1}(x)\big|\le 20\Delta^2\eps' p^{\pitau(x)}|V(x)|$ for each $x\in\Xmain$. 
 Next, we show that certain invariants are maintained during the running of Algorithm~\ref{alg:RGA:deg}. These are identical to the invariants seen in the previous two RGA lemmas, but are repeated here for the reader's convenience.
 
 \begin{claim}\label{cl:degen:inv}
 The following hold at each time $t$ in the running of Algorithm~\ref{alg:RGA:deg}.
 \begin{enumerate}[label=\itmarab{INV}]
   \item\label{degen:inv:gpe} $\psi_t$ is a good partial embedding,
   \item\label{degen:inv:sizeA} Either $|\Amain_t(x)|\ge\frac12\mu(d-\eps')^{\pi^*_t(x)}p^{\pi^*_t(x)}|\Vmain(x)|$
   or $x\in Q_t$.
   \item\label{degen:inv:random} When we embed $x$ to create $\psi_{t+1}$, we do so
   uniformly at random into a set of size at least $\frac{1}{10}\mu\zeta(dp)^{\pi^*_t(x)} |V(x)|$.
 \end{enumerate}
\end{claim}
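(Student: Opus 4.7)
The plan is to mirror the proofs of Claim~\ref{cl:rga:inv} and Claim~\ref{cl:psr:inv}, with the sole novelty being that degree bounds must be controlled by the parameter $D$ rather than by $\Delta$, and for that we will lean entirely on the $(D,p,m)$-boundedness hypothesis~\ref{ord:Dx}, which the paragraph immediately preceding the claim has already converted into the numerical hypotheses~\ref{itm:fewbad:x}--\ref{itm:fewbad:xyyzxz} of Lemma~\ref{lem:fewbad} with constant $D$. That verification gives, for every $x \in \Xmain$ unembedded at the moment we attempt to embed it, the bound $|B_{\tau(x)-1}(x)| \le 20\Delta^2 \eps' p^{\pi^*_{\tau(x)-1}(x)} |V(x)|$.

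For~\ref{degen:inv:gpe} I will argue by induction on $t$. The base case is the trivial embedding, which, because the General Setup is assumed and the initial $Q_0$ removes vertices violating~\ref{GPE:sizeC} from image-restriction issues, is a good partial embedding. For the inductive step, the algorithm only embeds $x$ into $A_t(x)\setminus B_t(x)$ (in whichever of the three branches applies); by Definition~\ref{def:bad_vertices} of bad vertices, any vertex in $A_t(x)\setminus B_t(x)$ extends $\psi_t$ to a good partial embedding, giving~\ref{degen:inv:gpe}. Invariant~\ref{degen:inv:sizeA} is immediate by inspection of the algorithm: the \textbf{ForAll} loop at the end of each iteration explicitly transfers into $Q_{t+1}$ every unembedded vertex whose $\Amain$ has dropped below the threshold, so the invariant is preserved from $t$ to $t+1$.

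The substantive part is~\ref{degen:inv:random}. I will split into the three branches of the choice rule inside Algorithm~\ref{alg:RGA:deg}. If $x \notin Q_t \cup X^e$, then by~\ref{degen:inv:sizeA} we have $|\Amain_t(x)| \ge \tfrac12 \mu (d-\eps')^{\pi^*_t(x)} p^{\pi^*_t(x)} |\Vmain(x)|$, and by Lemma~\ref{lem:fewbad}\ref{fewbad:b} applied with constant $D$ (whose conditions~\ref{itm:fewbad:x}--\ref{itm:fewbad:xyyzxz} have already been checked using~\ref{ord:Dx}, and whose pseudorandomness hypotheses $\NS(\eps,r_1,D)$ and $\RI(\eps,(\eps_{a,b}),\eps',d,r_1,D)$ are assumed in Lemma~\ref{lem:degenrga}) we have $|B_t(x)| \le 20 \Delta^2 \eps' p^{\pi^*_t(x)} |V(x)|$. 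Our choice of $\eps'$ makes
\[
\tfrac12\mu(1-3\mu)(d-\eps')^{\pi^*_t(x)} - 20\Delta^2\eps' \;\ge\; \tfrac{1}{10}\mu\zeta d^{\pi^*_t(x)},
\]
so $|\Amain_t(x) \setminus B_t(x)| \ge \tfrac{1}{10}\mu\zeta (dp)^{\pi^*_t(x)}|V(x)|$ as required. If $x \in Q_t \setminus X^e$ (respectively $x \in X^e$) then the algorithm has already tested the lower bound $|\Aq_t(x) \setminus B_t(x)| \ge \tfrac{1}{10}\mu\zeta(dp)^{\pi^*_t(x)}|V(x)|$ (respectively $|\Ac_t(x) \setminus B_t(x)| \ge \tfrac{1}{10}\mu\zeta(dp)^{\pi^*_t(x)}|V(x)|$) as a prerequisite for not halting; since we are considering only embedding steps that actually occur, the bound holds at the moment $x$ is embedded, giving~\ref{degen:inv:random} in these two branches as well.

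There is no substantive obstacle: the only care needed is keeping track of whether $\pi^*$ is bounded by $D$ or by a smaller quantity, and that is already packaged by the preceding verification that Lemma~\ref{lem:fewbad} applies with the constant $D$. The question of whether the halt-with-failure checks are in fact ever triggered is not part of the present claim — it is addressed later in the proof of Lemma~\ref{lem:degenrga} via Lemma~\ref{lem:rga:welldistr} and the local congestion analysis — so Claim~\ref{cl:degen:inv} can be proved in a handful of lines exactly along the lines above.
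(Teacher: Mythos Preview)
Your argument follows the paper's approach and is essentially correct, but there is one case in your treatment of~\ref{degen:inv:random} that slips through. You claim that for $x\in X^e$ the algorithm has tested $|\Ac_t(x)\setminus B_t(x)|\ge\tfrac{1}{10}\mu\zeta(dp)^{\pi^*_t(x)}|V(x)|$ as a prerequisite for not halting; but in Algorithm~\ref{alg:RGA:deg} the halt check on $\Ac$ applies only when $x\in Q_t\cap X^e$, not to all of $X^e$. So when $x\in X^e\setminus Q_t$ the algorithm embeds into $\Ac_t(x)\setminus B_t(x)$ with no halt guard, and neither your first branch (which lower-bounds $\Amain$, not $\Ac$) nor your third branch (which appeals to an absent halt check) covers it.

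The repair is short: for $x\in X^e\setminus Q_t$, combine~\ref{GPE:sizeC} and~\ref{G:restr} to get $|\Cc_t(x)|\ge(1-\eps')\mu\zeta(dp-\eps'p)^{\pi^*_t(x)}|V(x)|$; since only vertices of $X^e$ are ever embedded to $\Vc$, and since $x\in X^e$ forces $\pi^*_t(x)\le\pitau(x)\le\max_{y\in X^e}\pitau(y)$, we have $|\im(\psi_t)\cap\Vc(x)|\le|X^e|\le\tfrac12\eps p^{\pi^*_t(x)}n/r_1$, which by choice of $\eps$ is tiny compared to $|\Cc_t(x)|$. Subtracting this and the Lemma~\ref{lem:fewbad} bound on $B_t(x)$ gives the claimed lower bound. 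The paper's own terse proof glosses over this case as well, but it deploys exactly this argument a few lines later when disposing of the halt check for $x\in Q_t\cap X^e$.
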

\begin{claimproof} 
 Algorithm~\ref{alg:RGA:deg} maintains~\ref{degen:inv:gpe} and
 \ref{degen:inv:sizeA} by definition. Since
 \[\tfrac12\mu(d-\eps')^{\pitau(x)}p^{\pitau(x)}-20\Delta^2\eps'p^{\pitau(x)}>\tfrac{1}{10}\mu\zeta
 d^{\pitau(x)}p^{\pitau(x)}\]
 by choice of $\eps'$ for each $x\in\Xmain$, by Lemma~\ref{lem:fewbad} it also maintains~\ref{degen:inv:random}.
\end{claimproof}
  
 As in the previous proofs, the conditions of Lemma~\ref{lem:rga:welldistr} are met, so we conclude that with probability at least $1-2^{-n/(\kappa r_1)}r_1$, for each $i\in[r]$ and time $t$ when Algorithm~\ref{alg:RGA:deg} is running, we have $\big|Q_t\cap X_i\big|<\rho|X_i|+|X^*_i|\le 2\rho|X_i|$ by~\ref{PtH:image}.
 
 We now want to show that Algorithm~\ref{alg:RGA:deg} a.a.s.\ does not halt with failure. Since we know that $\big|B_{\tau(x)-1}(x)\big|<20\Delta^2\eps' p^{\pitau(x)}|V(x)|$, by~\ref{GPE:sizeC} and~\ref{G:restr} it is enough to show that a.a.s.\ we have
 \begin{align}
  \label{eq:qnotsmall}\big|\Aq_{\tau(x)-1}(x)\big|&\ge\tfrac12\big|\Cq_{\tau(x)-1}(x)\big|\quad\text{ for }\quad x\in\Xmain\cap Q_{\tau(x)-1}\setminus X^e\,,\quad\text{ and}\\
  \nonumber\big|\Ac_{\tau(x)-1}(x)\big|&\ge\tfrac12\big|\Cc_{\tau(x)-1}(x)\big|\quad\text{ for }\quad x\in \Xmain\cap Q_{\tau(x)-1}\cap X^e\,.
 \end{align}
 The latter of these is easy, since we embed only vertices of $X^e$ into $\Vc$, and $|X^e|\le \tfrac12\eps p^{\max_{x\in X^e}\pitau(x)}n/r_1$, which by~\ref{GPE:sizeC} and choice of $\eps$ is smaller than $\tfrac12\big|\Cc_{\tau(x)-1}(x)\big|$. In the next three claims we show~\eqref{eq:qnotsmall} holds.
 
 First, we show that we need only concern ourselves with $x$ such that $\pitau(x)\le D/2$.
 
 \begin{claim}\label{cl:queuedeg}
  For each $x\in\Xmain\setminus X^e$, if $\pitau(x)>D/2$ then there is no time $t$ such that $x\in Q_t$.
 \end{claim}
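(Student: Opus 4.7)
The plan is to show that if $x\in\Xmain\setminus X^e$ with $\pitau(x)>D/2$, then the queue-entry condition $|\Amain_t(x)|<\tfrac12\mu(d-\eps')^{\pi^*_t(x)}p^{\pi^*_t(x)}|\Vmain(x)|$ fails at every time $t\le\tau(x)-1$. First I would invoke~\ref{ord:halfD}: since $x\notin X^e$ and $\pitau(x)>D/2$, the third alternative must hold, so $x$ is not image restricted (in particular $J_x=\emptyset$, $I_x=V_i$, and $\pi^*_t(x)=\pi_t(x)$) and every predecessor $y$ of $x$ in $\tau$ satisfies $\tau(x)-\tau(y)\le p^{\pitau(x)}\cdot\tfrac12\eps n/r_1=:\delta$. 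Let $y_1$ be the earliest predecessor of $x$ in $\tau$, and $t_*=\tau(y_1)\ge \tau(x)-\delta$; note that at any time $t<t_*$ we have $\Pi_t(x)=\emptyset$, hence $\Cmain_t(x)=\Vmain_i$ and $\pi^*_t(x)=0$.

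Next I would split into two cases. In \emph{Case A}, $t<t_*$, I would directly bound $|\im(\psi_t)\cap\Vmain_i|$ using the fact that Algorithm~\ref{alg:RGA:deg} only embeds vertices of $\Xmain=V(H)\setminus\Xbuf$, and that queue vertices go to $\Vq_i$ and vertices of $Q_t\cap X^e$ to $\Vc_i$ rather than $\Vmain_i$. Together with~\ref{BUF:sizebuf} giving $|\Xbuf_i|=4\mu|X_i|$ and size-compatibility $|X_i|=|V_i|$, this yields
$|\im(\psi_t)\cap\Vmain_i|\le|\dom(\psi_t)\cap X_i|\le(1-4\mu)|V_i|<(1-\mu)|\Vmain_i|$,
so $|\Amain_t(x)|=|\Vmain_i\setminus\im(\psi_t)|\ge\mu|\Vmain_i|>\tfrac12\mu|\Vmain_i|$. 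Thus the queue condition with $\pi^*_t(x)=0$ cannot trigger in Case A.

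In \emph{Case B}, $t_*\le t\le\tau(x)-1$, the set $\Cmain_t(x)$ is considerably smaller than $\Vmain_i$, so a crude size bound on $|\im(\psi_t)\cap\Cmain_t(x)|$ no longer suffices. Instead I would apply Lemma~\ref{lem:rga:welldistr} to the set $W=\Vmain_i\setminus\im(\psi_{t_*-1})$, which by the Case~A bound at time $t_*-1$ has size at least $\mu|\Vmain_i|\ge\rho|V_i|$. Conditioning on the good event of Lemma~\ref{lem:rga:welldistr}, for the (non-image-restricted) vertex $x$ we get $|C_{t}(x)\cap W|\ge(dp-\eps'p)^{\pi_t(x)}|W|$. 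Since $\im(\psi_t)\setminus\im(\psi_{t_*-1})$ has size at most $t-t_*+1\le\delta$, we have
\[
|\Amain_t(x)|\ge|C_t(x)\cap W|-\delta\ge(dp-\eps'p)^{\pi_t(x)}\mu|\Vmain_i|-\tfrac12\eps\kappa p^{\pitau(x)}|V_i|,
\]
using $|V_i|\ge n/(\kappa r_1)$ and $p^{\pitau(x)}\le p^{\pi_t(x)}$. By the choice $\eps\ll\mu(d-\eps')^D/\kappa$, the second term is at most $\tfrac12\mu(d-\eps')^{\pi_t(x)}p^{\pi_t(x)}|\Vmain_i|$ of the first, yielding $|\Amain_t(x)|\ge\tfrac12\mu(d-\eps')^{\pi_t(x)}p^{\pi_t(x)}|\Vmain_i|$, contradicting $x\in Q_t$.

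The main obstacle is the step in Case~B that invokes Lemma~\ref{lem:rga:welldistr}: for it to give the needed lower bound on $|C_t(x)\cap W|$, the vertex $x$ must not belong to the (possibly $\rho|X_i|$-sized) exceptional set associated with this particular $W=\Vmain_i\setminus\im(\psi_{t_*-1})$. I expect this to be dealt with by choosing $W$ carefully (perhaps by taking $W=\Vmain_i\setminus\im(\psi_t)$ itself and noting that failure of the queue bound for $x$ at time $t$ directly places $x$ in the exceptional set, and then arguing structurally from $\pitau(x)>D/2$ that $x$ cannot be among the $\le\rho|X_i|$ exceptional vertices), or by a more refined sequential-dependence argument exploiting that only $\delta$ embeddings occur in the window $[t_*,t]$ so that the relevant randomness is very limited.
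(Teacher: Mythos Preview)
Your Case~A is fine and matches the paper. The genuine gap is in Case~B: Lemma~\ref{lem:rga:welldistr} only controls all but $\rho|X_i|$ vertices of $X_i$ for each $W$, and there is no mechanism by which the hypothesis $\pitau(x)>D/2$ singles out your particular $x$ as not exceptional. Neither of your suggested fixes works. The set $W$ you need is determined by the algorithm's history, not something you can choose to avoid $x$ being exceptional; and the ``limited randomness in the window'' idea does not help, because the exceptional set in Lemma~\ref{lem:rga:welldistr} is created by embeddings of the \emph{neighbours} of $x$, all of which lie in the window and all of which you must analyse.

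The point you are missing is that Case~B admits a completely \emph{deterministic} argument, via the badness condition of Definition~\ref{def:bad_vertices}. Suppose inductively that $x\notin Q_t$ at time $t$ in the window and that $|\Amain_t(x)|\ge(dp-\eps'p)^{\pi^*_t(x)}\mu|V_i|-s$, where $s$ is the number of steps into the window. If the vertex $y$ embedded at time $t$ is not a neighbour of $x$, then $\Amain_{t+1}(x)$ loses at most one vertex. If $y$ \emph{is} a neighbour of $x$, then since $x$ is an unembedded neighbour of $y$ with $x\notin Q_t$, the badness condition~\eqref{eq:bad_vertices} forces the algorithm to embed $y$ to a vertex $v$ with $\deg_G\big(v;\Amain_t(x)\big)\ge(d-\eps')p|\Amain_t(x)|$, so $|\Amain_{t+1}(x)|\ge(d-\eps')p|\Amain_t(x)|$. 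Either way the inductive inequality survives with $s\mapsto s+1$. Since the window has length at most $\eps'p^{\pitau(x)}|X_i|$, the subtracted $s$ is negligible compared to $(dp-\eps'p)^{\pi^*_t(x)}\mu|V_i|$ by choice of $\eps'$, so $x$ never satisfies the queue-entry condition. No probabilistic lemma is needed.
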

 \begin{claimproof}
  We require $\eps'\le \tfrac14d^D\mu$ and $\eps\le\kappa^{-1}\eps'$.
 
  Suppose $\pitau(x)>D/2$ and $x\not\in X^e$. Let $i$ be such that $x\in X_i$. By~\ref{ordp:halfD}, $x$ is not image restricted, and all $\pitau(x)$ neighbours of $x$ which precede $x$ in $\tau$ occur in the 
   $\tfrac12\eps p^{\pitau(x)} n/r_1\le \eps'p^{\pitau(x)}|X_i|$ places of $\tau$ before $x$. Because $x$ is not image restricted, at time $t\le\tau(x)-\eps'p^{\pitau(x)}|X_i|$ we have $\Cmain_{t}(x)=\Vmain_i$ for some $i$, and hence $\Amain_{t}(x)=\Vmain_i\setminus\im(\psi_{t})$. Because $|\Xmain_i|=|\Vmain_i|-\mu|V_i|$, it follows that $|\Amain_{t}(x)|\ge \mu|V_i|$, and in particular $x$ does not enter $Q_{t}$ at time $t$.
  
  Now suppose $t=\tau(x)-\eps'p^{\pitau(x)}|X_i|+s$ for some $0\le s\le \eps'p^{\pitau(x)}|X_i|-1$. We claim that 
  \[|\Amain_t(x)|\ge (dp-\eps'p)^{\pi^*_{t}(x)}\mu|V_i|-s\,.\]
  Indeed, for $s=0$ we have $\pi^*_t(x)=0$ and the statement $|\Amain_t(x)\ge\mu|V_i|$ was established above. Now suppose the statement holds for some $s\ge 0$. At time $t=\tau(x)-\eps'p^{\pitau(x)}|X_i|+s$ we embed a vertex $y$ to obtain $\psi_{t+1}$. If $y$ is neither a neighbour of $x$ nor in $X_i$, then $\Amain_t(x)=\Amain_{t+1}(x)$ and the statement holds for $s+1$. If $y$ is a neighbour of $x$, then we embed $y$ to a vertex of $A_t(y)\setminus B_t(y)$, and in particular to a vertex $v$ such that $\deg_G\big(v;\Amain_t(x)\big)\ge(d-\eps')p|\Amain_t(x)|$. We conclude that
  \[|\Amain_{t+1}(x)|\ge (d-\eps')p\big((dp-\eps'p)^{\pi^*_{t}(x)}\mu|V_i|-s\big)> (dp-\eps'p)^{\pi^*_{t+1}(x)}\mu|V_i|-s-1\,.\]
  as desired. Finally, if $y\in X_i$ then we have $\Amain_{t+1}(x)=\Amain_t(x)\setminus\{y\}$, and again 
  \[|\Amain_{t+1}(x)|\ge (dp-\eps'p)^{\pi^*_{t+1}(x)}\mu|V_i|-s-1\,.\]
  Thus for each $\tau(x)-\eps'p^{\pitau(x)}|X_i|\le t\le\tau(x)-1$ we have
  \[|\Amain_{t}(x)|\ge (dp-\eps'p)^{\pi^*_{t}(x)}\mu|V_i|-\eps'p^{\pitau(x)}|X_i|\ge\tfrac12(dp-\eps'p)^{\pi^*_{t}(x)}\mu|V_i|\,,\]
  where the final inequality is by choice of $\eps'$. It follows that $x$ does not satisfy the condition to enter $Q_t$. Since we embed $x$ at time $\tau(x)-1$, we conclude that as desired there is no time $t$ such that $x\in Q_t$.
 \end{claimproof}

 As in the proof of Lemma~\ref{lem:rga:psr} the next step is to establish a `sum condition'. In fact, we establish almost the same inequality as~\eqref{eq:randmatch:sumVqLNS} in Claim~\ref{cl:psr:lnssum} (the right hand side is identical, while the left hand side sum runs over all vertices outside $X^e$, including those $y$ with $J_y\neq\emptyset$), though we use a different analysis to do so.
 
 \begin{claim}\label{cl:lconsum}
  Suppose that $\Gamma$ has $\LCON(\eps,r_1,D)$. Then the following statement holds.
  
  For any $t$, any $1\le \ell \le D/2$, any $i\in[r]$ and any vertices $v_1,\ldots,v_\ell$ such that
   \[\deg_\Gamma(v_1,\ldots,v_\ell;V_i)\ge\eps'p^\ell|V_i|\]
   we have
   \begin{equation}\label{eq:sumVqLCON}
    \sum_{\substack{y\in\Xmain_i\cap Q_t\setminus X^e\colon\\ \tau(y)\le t}}\hspace{-1em} \frac{\big|U_{\tau(y)-1}(y)\cap \comN_\Gamma(v_1,\ldots,v_\ell;V_i)\big|}{\big|\Aq_{\tau(y)-1}(y)\setminus B_{\tau(y)-1}(y)\big|}\le\frac{\mu\zeta d^\Delta \deg_\Gamma(v_1,\ldots,v_\ell;V_i)}{20}\,.
   \end{equation}  
 \end{claim}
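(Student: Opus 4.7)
The plan is to apply the local congestion property $\LCON(\eps,r_1,D)$ directly to a carefully chosen family of sets, which replaces the probabilistic $\LNS$-based analysis used in Claim~\ref{cl:psr:lnssum}. For each $y$ appearing in the sum, let
\[S_y := \psi_{\tau(y)-1}\big(\Pi_{\tau(y)-1}(y)\big)\cup J_y\,,\]
so that $|S_y|=\pitau(y)$ and $U_{\tau(y)-1}(y)=\comN_\Gamma(S_y;V_i)$. Setting $U:=\comN_\Gamma(v_1,\dots,v_\ell;V_i)$, the numerator of the $y$-th summand equals $|\comN_\Gamma(S_y;U)|$, i.e.\ the degree of $S_y$ in a congestion graph $\AG(\Gamma,U,\mathcal{F})$. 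The denominator is lower bounded by $\tfrac{1}{10}\mu\zeta(dp)^{\pitau(y)}|V_i|$ via~\ref{degen:inv:random}.

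The crucial structural observation is that the sets $\{S_y\}_y$ are pairwise disjoint, and each is disjoint from $U\subseteq V_i$. This uses~\ref{PtH:dist} (which gives $\dist_H(y,y')\ge 10$ for distinct $y,y'\in X_i$, hence $N_H(y)\cap N_H(y')=\emptyset$, and $J_y\cap J_{y'}=\emptyset$), the injectivity of $\psi$, the $R$-partition property (embedded neighbours of any $y\in X_i$ lie in clusters distinct from $V_i$), and the fact that image-restricting vertices lie outside $V(G)\supseteq V_i$. Next I partition the sum according to $s=\pitau(y)$. By Claim~\ref{cl:queuedeg} every $y$ appearing in the sum (which by hypothesis satisfies $y\not\in X^e$) has $\pitau(y)\le D/2$; one also checks $\pitau(y)\ge 1$, since a vertex with no image-restricting vertex and no preceding neighbour can never enter the queue. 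Thus $\mathcal{F}_s:=\{S_y:\pitau(y)=s\}$ is a family of pairwise disjoint $s$-sets in $V(\Gamma)\setminus U$ for each $1\le s\le\lfloor D/2\rfloor$, and by the event of Lemma~\ref{lem:rga:welldistr} we have $|\mathcal{F}_s|\le |Q_t\cap X_i|\le 2\rho|X_i|$.

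Since $\ell+s\le D$, and $|U|\ge\eps'p^\ell|V_i|\ge\eps p^\ell n/r_1^2$ by the constant choices, $\LCON(\eps,r_1,D)$ applies with parameters $i_{\LCON}=\ell$ and $\ell_{\LCON}=s$, yielding
\[\sum_{y:\pitau(y)=s}|\comN_\Gamma(S_y;U)|=e\big(\AG(\Gamma,U,\mathcal{F}_s)\big)\le 7p^s|U|\max(\eps|U|,|\mathcal{F}_s|)\le 14\rho\, p^s|U||V_i|\,,\]
using $\eps\le 2\rho$ and $|\mathcal{F}_s|\le 2\rho|V_i|$. Dividing by the denominator bound shows that each level $s$ contributes at most $\tfrac{140\rho|U|}{\mu\zeta d^s}\le\tfrac{140\rho|U|}{\mu\zeta d^D}$ to the total. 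Summing over the at most $D+1$ levels and using the chosen $\rho\le\tfrac{\mu^2\zeta^2d^{2D}}{10000(\Delta+1)}$ gives an upper bound of at most $\tfrac{\mu\zeta d^\Delta|U|}{20}$, as required.

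The main obstacle is establishing the pairwise disjointness of the $S_y$; once this is in hand, $\LCON$ bundles together, in a single deterministic inequality, the work that in Claim~\ref{cl:psr:lnssum} required a delicate step-by-step probabilistic analysis using $\LNS$ and Lemma~\ref{lem:coupling}. The remaining labour is essentially bookkeeping to verify that the parameters satisfy the hypotheses of $\LCON$, which is where both the restriction $\ell\le D/2$ in the statement and the bound $\pitau(y)\le D/2$ from Claim~\ref{cl:queuedeg}, guaranteed by the exclusion of $X^e$, are indispensable.
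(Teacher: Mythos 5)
Your proof is correct and takes essentially the same route as the paper: both identify the families $\mathcal{F}_s=\{\Pi_{\tau(y)-1}(y)\cup J_y:\pitau(y)=s\}$ as pairwise disjoint $s$-sets disjoint from $U$, lower-bound the denominator via~\ref{degen:inv:random}, use Claim~\ref{cl:queuedeg} to get $\pitau(y)\le D/2$, and apply $\LCON(\eps,r_1,D)$ at each level before summing. Minor note: $\psi_{\tau(y)-1}\big(\Pi_{\tau(y)-1}(y)\big)$ is a typographical slip for $\Pi_{\tau(y)-1}(y)$ (which is already the image set), and the constant accounting at the end differs superficially from the paper's but is within the same slack.
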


 The proof of this amounts to checking that the images under our embedding of the $N_H(y)$ for $y\in\Xmain_i\cap Q_t\setminus X^e$ form a family of sets to which we can apply the $\LCON$ property. This property gives us the desired inequality.
 
 \begin{claimproof}
  We require $\rho\le \tfrac{\mu^2\zeta^2 d^{D+\Delta}}{10000(\Delta+1)}$, $\eps'\le \mu\zeta(d/4)^D$ and $\eps\le\tfrac{\eps'}{\kappa}$. Since the left hand side of~\eqref{eq:sumVqLCON} is increasing in $t$, we may assume $t$ is the time at which Algorithm~\ref{alg:RGA:deg} ends.
 
  Given $1\le\ell\le D/2$ and $i\in[r]$, let $v_1,\ldots,v_\ell$ be vertices such that $U:=\comN_\Gamma(v_1,\ldots,v_\ell;V_i)$ has size at least $\eps'p^\ell|V_i|$. By choice of $\eps$ and $\eps'$, we have $|U|\ge\eps p^\ell n/r_1$. It follows that we can apply $\LCON(\eps,r_1,D)$ to bound the number of edges in $\AG(\Gamma,U,\mathcal{F})$ for any $1\le j\le D/2$ and family $\cF$ of pairwise disjoint $j$-sets in $V(\Gamma)\setminus U$. We set
  \[\mathcal{F}_j:=\big\{\Pi_{\tau(x)-1}(x)\cup J_x:x\in \Xmain_i\cap Q_t\setminus X^e, \pitau(x)=j,\tau(x)\le t\big\}\,.\]
  Observe that $\mathcal{F}_j$ is indeed a family of $j$-sets by definition of $\pitau(x)$, that these $j$-sets are pairwise disjoint because no vertex of $H$ has more than one neighbour in $X_i$ by~\ref{PtH:dist}, and element $F$ of $\cF_j$ intersects  $U\subset V_i$ because these sets $F$ are images  under $\psi$ of neighbours of vertices in $X_i$. Because $\eps|U|,|\Xmain_i\cap Q_t|\le 2\rho|X_i|$, by choice of $\eps$ and by $\LCON(\eps,r_1,D)$ we have
  \[e\big(\AG(\Gamma,U,\mathcal{F}_j)\big)\le 7p^j |U|(2\rho |X_i|)=14\rho p^j |U| |X_i|\,.\]
  By definition of $U_t(x)$ (see Section~\ref{sec:gpe}) we have the following equality
  \[e\big(\AG(\Gamma,U,\mathcal{F}_j)\big)=\sum_{x\in\Xmain_i\cap Q_t\setminus X^e:\pitau(x)=j,\tau(x)\le t} |U_{\tau(x)-1}(x)\cap U|\]
  and hence
  \[\sum_{x\in\Xmain_i\cap Q_t\setminus X^e:\pitau(x)=j,\tau(x)\le t}|U_{\tau(x)-1}(x)\cap U|\le 14\rho p^j|U||X_i|\]
  for each $1\le j\le D/2$. Observe that the $j=0$ case of the same inequality is trivially true (indeed with $14$ replaced by $2$, though we will not use this): each vertex of $\Xmain_i\cap Q_t$ contributes at most $|U|$ to the sum. Now by~\ref{degen:inv:random} we have
  \[
  \big|\Aq_{\tau(x)}(x)\setminus B_{\tau(x)}(x)\big|\ge\tfrac{1}{10}\mu\zeta(dp)^{\pitau(x)}|V_i|\]
  for each $x\in\Xmain_i\cap Q_t\setminus X^e$ with $\tau(x)\le t$. Finally, since any vertex $x$ in $\Xmain_i\cap Q_t\setminus X^e$ satisfies 
  $\pitau(x)\le D/2$ by Claim~\ref{cl:queuedeg}, we conclude that
  \begin{align*}
  \sum_{\substack{x\in\Xmain_i\cap Q_t\setminus X^e:\\ \tau(x)\le t}}\frac{|\Uq_{\tau(x)}(x)\cap U|}{|\Aq_{\tau(x)}(x)\setminus B_{\tau(x)}(x)\big|}&
  \le \sum_{j=0}^{D/2}\sum_{\substack{x\in\Xmain_i\cap Q_t\setminus X^e:\\ 
  \pitau(x)=j,\tau(x)\le t}}\frac{|\Uq_{\tau(x)}(x)\cap U|}{\tfrac{1}{10}\mu\zeta(dp)^{j}|V_i|}
  \\ \le \sum_{j=0}^{D/2}\frac{14\rho p^j|U||X_i|}{\tfrac{1}{10}\mu\zeta(dp)^{j}|V_i|}
  &\le\frac{14(1+D/2)\rho|U||X_i|}{\tfrac{1}{10}\mu\zeta d^{D/2}|V_i|}\le \tfrac{\mu\zeta d^\Delta}{20}|U|,
  \end{align*}
  where the final inequality is by choice of $\rho$, as desired.
 \end{claimproof}
 
 We can now complete the proof that candidate sets in $\Vq$ are not substantially covered by $\im(\psi_t)$ for any $t$.
 \begin{claim}\label{cl:rga:nofilldeg} A.a.s.\ for each $x\in\Xmain\setminus X^e$, at each time $t\le\tau(x)-1$ and before the termination of Algorithm~\ref{alg:RGA:deg}, we have $\big|\Cq_t(x)\cap\im(\psi_t)\big|<\tfrac12\big|\Cq_t(x)\big|$.
 \end{claim}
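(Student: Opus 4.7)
The plan is to adapt the proof of Claim~\ref{cl:rga:nofill}, using Claim~\ref{cl:lconsum} in place of Claim~\ref{cl:psr:lnssum}. Assume the event of Claim~\ref{cl:lconsum} holds, and fix $i\in[r]$, $x\in\Xmain_i\setminus X^e$, and $t\le\tau(x)-1$ before Algorithm~\ref{alg:RGA:deg} terminates. By~\ref{GPE:sizeC} and~\ref{G:restr},
\[|\Cq_t(x)|\ge(1-\eps')\mu\zeta(dp-\eps'p)^{\pi^*_t(x)}|V_i|,\]
and I will bound $|\Cq_t(x)\cap\im(\psi_t)|$ in two regimes distinguished by~\ref{ord:halfD}.

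In the main regime $\pi^*_t(x)\le D/2$, we have $\Cq_t(x)\subset U_t(x)=\comN_\Gamma(v_1,\ldots,v_{\pi^*_t(x)};V_i)$ with $|U_t(x)|\ge\eps' p^{\pi^*_t(x)}|V_i|$ by~\ref{GPE:sizeU}, so Claim~\ref{cl:lconsum} applies with $U=U_t(x)$. Because only vertices of $\Xmain_i\cap Q\setminus X^e$ are embedded to $\Vq_i$ and by~\ref{degen:inv:random} such a $y$ is embedded uniformly at random to a set of size at least $\tfrac1{10}\mu\zeta(dp)^{\pi^*_{\tau(y)-1}(y)}|V_i|$, the summand of~\eqref{eq:sumVqLCON} upper-bounds the conditional probability that $\psi(y)\in\Cq_t(x)$ given the history just before embedding $y$. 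Lemma~\ref{lem:coupling} with $\delta=1$ then shows that at most $\tfrac1{10}\mu\zeta d^\Delta|U_t(x)|$ queue vertices are embedded to $U_t(x)$ with probability at least $1-\exp\bigl(-\tfrac1{60}\mu\zeta d^\Delta|U_t(x)|\bigr)$. Combined with the upper bound on $|U_t(x)|$ from~\ref{GPE:sizeU}, this gives $|\Cq_t(x)\cap\im(\psi_t)|<\tfrac12|\Cq_t(x)|$ for our choice of constants.

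The remaining regime $\pi^*_t(x)>D/2$ forces, by~\ref{ord:halfD}, that $J_x=\emptyset$ and all previously embedded neighbours of $x$ lie within the final $p^{\pitau(x)}m$ positions of $\tau$, where $m=\tfrac12\eps n/r_1$. Letting $s$ be one less than the time at which the first neighbour of $x$ is embedded, so that $\Cq_s(x)=\Vq_i$ and $t-s\le p^{\pitau(x)}m$, I decompose
\[|\Cq_t(x)\cap\im(\psi_t)|\le|\Cq_t(x)\cap\im(\psi_s)|+(t-s).\]
The second term is at most $\eps\kappa p^{\pitau(x)}|V_i|\le\tfrac14|\Cq_t(x)|$ by choice of $\eps$. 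The first term is bounded by invoking the main-regime analysis with a $\lfloor D/2\rfloor$-sized subset of the embedded neighbours of $x$ to control the count of queue vertices embedded to a superset $\tilde U\supset\Cq_t(x)$, together with Lemma~\ref{lem:rga:welldistr} applied to $\Vq_i$ to show that the $\le 2\rho|X_i|$ pre-window queue embeddings do not cluster in the now-shrunk $\Cq_t(x)$. The claim follows after a union bound over $i\in[r]$, $t\le n$, and the $O(n^D)$ choices of $v_1,\ldots,v_{\pi^*_t(x)}$, with the failure probability $\exp(-\Omega(p^D n/r_1))$ tending to zero by the lower bound on $p$. The main obstacle is this second regime, because a direct application of Claim~\ref{cl:lconsum} with $\ell=\pi^*_t(x)$ is not allowed (the claim requires $\ell\le D/2$); property~\ref{ord:halfD} is tailored precisely to plug this gap, with its threshold $D/2$ and window length $p^{\pitau(x)}m$ matching, respectively, the limit of Claim~\ref{cl:lconsum} and the lower bound on $|\Cq_t(x)|$ from~\ref{GPE:sizeC}.
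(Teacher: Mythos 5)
Your main-regime argument ($\pi^*_t(x)\le D/2$) is correct and is exactly what the paper intends by its one-line instruction to repeat the proof of Claim~\ref{cl:rga:nofill} with~\eqref{eq:randmatch:sumVqLNS} replaced by~\eqref{eq:sumVqLCON}. However, the second regime $\pi^*_t(x)>D/2$ that you labour over is not required, and you have misattributed the r\^ole of~\ref{ord:halfD}. Claim~\ref{cl:queuedeg} (which \emph{is} what~\ref{ord:halfD} is designed to enable) shows that any $x\in\Xmain\setminus X^e$ with $\pitau(x)>D/2$ never enters $Q_t$; and the conclusion of Claim~\ref{cl:rga:nofilldeg} is only invoked, in the rest of the proof of Lemma~\ref{lem:degenrga}, to verify that $|\Aq_{\tau(x)-1}(x)|\ge\tfrac12|\Cq_{\tau(x)-1}(x)|$ for $x\in\Xmain\cap Q_{\tau(x)-1}\setminus X^e$ --- and such $x$ satisfy $\pitau(x)\le D/2$ by Claim~\ref{cl:queuedeg}, so $\pi^*_t(x)\le D/2$ for every $t\le\tau(x)-1$. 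The paper's ``verbatim'' adaptation should thus be read as restricted to this range, which is the only range that matters.

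Moreover, your sketch of the second regime does not go through as written. The pre-window queue images $\im(\psi_s)\cap\Vq_i$ are exposed \emph{before} any neighbour of $x$ is embedded and hence before $\Cq_t(x)$ is determined; the conditional-probability reading of the summands in~\eqref{eq:sumVqLCON} that drives the sequential-dependence argument therefore cannot be run backwards to control them, and Lemma~\ref{lem:rga:welldistr} (whose hypothesis starts from the trivial embedding $\psi_0$ and whose conclusion controls the distribution of \emph{candidate sets} over sets $W$ of size at least $\rho|V_i|$) does not give the anti-clustering you appeal to. Passing through a superset $\tilde U\supset\Cq_t(x)$ of size roughly $p^{\lfloor D/2\rfloor}|V_i|$ only controls occupancy of $\tilde U$, which overshoots $|\Cq_t(x)|\approx\mu\zeta(dp)^{\pitau(x)}|V_i|$ by a factor of order $p^{\lfloor D/2\rfloor-\pitau(x)}$. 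Fortunately, by the observation above, none of this needs to be repaired.
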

 \begin{claimproof}
 The proof of Claim~\ref{cl:rga:nofill}, if we  replace $\Xmain$ with $\Xmain\setminus X^e$, and replace~\eqref{eq:randmatch:sumVqLNS} with~\eqref{eq:sumVqLCON}, without handling image restricted vertices specially (since  condition~\eqref{eq:sumVqLCON} deals with them), gives this claim verbatim. 
 \end{claimproof}
 
 By the same logic as in the proof of Lemma~\ref{lem:rga:psr}, if the good event of Claim~\ref{cl:rga:nofilldeg} holds, it follows that candidate sets in $\Vq$ are not significantly covered by $\im(\psi_t)$, i.e.\ the inequality~\eqref{eq:qnotsmall} holds. This implies that Algorithm~\ref{alg:RGA:deg} a.a.s.\ completes successfully. Again, the successful running implies~\ref{degenrga:place}, and the good event of Lemma~\ref{lem:rga:welldistr} holding implies~\ref{degenrga:main}.
 
 We would like to emphasise at this point that we have established all
 the desired conclusions of Lemma~\ref{lem:degenrga}
 except~\ref{degenrga:nobad}, and we have so far not used
 condition~\ref{ordp:NtX}, and not used the second part of~\ref{ordp:Dx}
 (which states $\pitau(x)\le D_x-1$ for $x\in N(\tX)\setminus\Xbuf$ ). It follows
 that one can establish a version of Lemma~\ref{lem:degenrga}
 omitting~\ref{degenrga:nobad} given an order $\tau$ which need only
 satisfy the first part of~\ref{ordp:Dx} (which states
 $\pitau(x)\le D_x$ for all $x\in V(H)$ ) and~\ref{ordp:halfD}. We will
 return to this point in Section~\ref{sec:concl:degen}.
 
 We now turn to proving~\ref{degenrga:nobad} in the next two claims. First, in Claim~\ref{cl:degen:nofill}, we show that $G$-common neighbourhoods of at most $D-\max_{x\in\Xmain\setminus X^e}\pitau(x)$ 
 vertices which are large do not get completely filled at any time in the running of Algorithm~\ref{alg:RGA:deg}. Note that the upper bound on $\ell$ in Claim~\ref{cl:degen:nofill} comes from~\ref{ordp:NtX}. What the good event of Claim~\ref{cl:degen:nofill}, together with~\ref{ordp:NtX}, says is the following. If $v=v_1\in V_i$ is any vertex (which we eventually want to show is candidate for many vertices of $\Xbuf_i$) and $v_2,\dots,v_\ell$ are the embedded neighbours of some $x\in \Xbuf_i$, then the $G$-common neighbourhood of $v_1,\dots,v_\ell$ is not covered by $\im(\psi_t)$ provided $t\le\tau(x)-\tfrac12\eps p^Dn/r_1$. We will see that neighbours embedded after this time can be dealt with easily. Claim~\ref{cl:degen:nofill} has two parts,~\ref{degen:nofill:a} dealing with candidate sets in $\Vmain$ and~\ref{degen:nofill:b} dealing with candidate sets in $\Vq$. The reason for this is that neighbours of a buffer vertex $x$ may end up in the queue, and we need to show that even in this case $v$ is reasonably likely to be a candidate for $x$.
  This claim replaces Lemma~\ref{lem:large_nbs}, which we used in the proofs of the previous two RGA lemmas.
 
 \begin{claim}\label{cl:degen:nofill}
  Suppose that $\Gamma$ has $\LCON(\eps,r_1,D)$. Then a.a.s.\ the following holds at each time $t$. Given any $i\in[r]$, any $1\le \ell\le D-\max_{x\in\Xmain\setminus X^e}\pitau(x)$, and any vertices $v_1,\ldots,v_{\ell}$ of $G$, the following hold.
  \begin{enumerate}[label=\abc]
   \item\label{degen:nofill:a} If $\deg_G(v_1,\ldots,v_\ell;\Vmain_i)\ge (dp/2)^\ell|\Vmain_i|$, then
    \begin{equation}\label{eq:degenrga:bufnofill}\begin{split}
     \big|\comN_G(v_1,\ldots,v_\ell;\Vmain_i)&\setminus\im(\psi_t)\big|\ge\\
     (1-4\mu)&2^{-280(D+1)\mu^{-1}\zeta^{-1} d^{-D}}\deg_G(v_1,\ldots,v_\ell;\Vmain_i)\,.
    \end{split}\end{equation}
   \item\label{degen:nofill:b} If $\deg_G(v_1,\ldots,v_\ell;\Vq_i)\ge (dp/2)^\ell|\Vq_i|$, then
    \begin{equation}\label{eq:degenrga:queuenofill}
     \big|\comN_G(v_1,\ldots,v_\ell;\Vq_i)\setminus\im(\psi_t)\big|\ge\tfrac12\deg_G(v_1,\ldots,v_\ell;\Vq_i)\,.
    \end{equation}
   \end{enumerate}
 \end{claim}

 The proof of~\eqref{eq:degenrga:bufnofill}, as discussed at the beginning of this section, roughly amounts to repeating the argument used to prove Lemma~\ref{lem:large_nbs} several times. We show that, for some small constant $\eta$, each successive interval consisting of an $\eta$-fraction of the vertices $\Xmain_i$ is likely to cover less than half of whatever of $\comN_G(v_1,\dots,v_\ell;\Vmain_i)$ was uncovered before embedding that interval. The argument for each given interval is similar to that seen in Lemma~\ref{lem:large_nbs}, though here we take a short-cut by using property $\LCON$ rather than $\NS$, which simplifies the calculations. 
 
 \begin{claimproof}
  Let $h=\max_{x\in\Xmain\setminus X^e}\pitau(x)$. 
  We require
  \begin{align*}
   \mu&\le\tfrac18\,,\quad &\rho&\le\frac{\mu^2\zeta d^D}{1000(D+1)}\,,\\
   \eps&<\tfrac12 2^{-280(D+1)\mu^{-1}\zeta^{-1} d^{-D}}\big(\tfrac{d}{2}\big)^{D+1}\quad\text{and}\quad &p&\ge \tfrac{1000\Delta^2}{\eps\mu\zeta d^D}\big(\tfrac{\log n}{n}\big)^{1/D}\,.
  \end{align*}
  We first prove inequality~\eqref{eq:degenrga:bufnofill}. Set $\eta=\frac{\mu\zeta d^D}{280(D+1)}$.  
  Given $i\in[r]$ and $1\le\ell\le D-h$, suppose that $v_1,\ldots,v_\ell\in V(\Gamma)$ are such that $\comN_G(v_1,\ldots,v_\ell;\Vmain_i)\ge(dp/2)^\ell|\Vmain_i|$. We split the vertices of $\Xmain_i\setminus X^e$ into intervals $\Int_1,\ldots,\Int_{1/\eta}$ of equal size, with the first being the first $\eta |\Xmain_i\setminus X^e|$ vertices in the order $\tau$, and so on.
  
  We now aim to show the following.
  \begin{enumerate}[label=(\ddag)]
   \item\label{itm:degen:dag} For any fixed set $U\subseteq V_i$ of size at least $\eps p^\ell |V_i|$ and any $1\le j\le 1/\eta$, conditioning on the embedding up to the last vertex of $\Int_{j-1}$, with probability at least $1-n^{-D-1}$ there are at most $\tfrac12|U|$ vertices of $\Int_j$ are embedded to $U$.
  \end{enumerate}
  To that end, for each $x\in\Int_j$ let $\hist_{x,j}$ denote the history up to but not including the embedding of $x$. By definition of $h$, all vertices $x\in\Int_j$ have $\pitau(x)\le h$, so we split the vertices $x$ of $\Int_j$ up into $h+1$ classes according to $\pitau(x)$. We apply $\LCON(\eps,r_1,D)$ with the set $U$ and the family $\cF_s$ of $s$-sets given by the embedded images of $N_H(x)$ for $x\in\Int_j$ with $\pitau(x)=s$. Since $\eps|U|,|\cF_s|\le\eta|\Xmain_i|$, we  obtain the inequality
  \[\sum_{x\in\Int_j:\pitau(x)=s}\big|U_{\tau(x)-1}(x)\cap U\big|
   \le 7p^s |U| \eta |\Xmain_i|\,.\]
  Now each $x\in\Int_j$ with $\pitau(x)=s$ is by~\ref{degen:inv:random} embedded uniformly at random into a set of size at least $\tfrac{1}{10}\mu\zeta(dp)^s|V_i|$. We thus have
  \[\sum_{x\in\Int_j}\Pr\big(x\text{ is embedded to }U\big|\hist_{x,j}\big)
   \le (h+1)\frac{70\eta |U|}{\mu\zeta d^{h}}\le(D+1)\frac{70\eta |U|}{\mu\zeta d^{D}}\]
  and by Lemma~\ref{lem:coupling} with $\delta=1$ we see that the probability that more than
  \[140(D+1)\eta\mu^{-1}\zeta^{-1}d^{-D}|U|\le\tfrac12|U|\]
  of the vertices in $\Int_j$ are embedded to $U$ is at most
  \[\exp\left(-(D+1)\tfrac{70\eta |U|}{3\mu\zeta d^D}\right)\le n^{-D-1}\,,\]
  where the inequality is because $|U|\ge \eps p^{D-h}|V_i|$ and by choice of $p$. This completes the proof of~\ref{itm:degen:dag}.
  
  Let $U_0=\comN_G(v_1,\ldots,v_\ell;\Vmain_i)$, and $U_j=U_{j-1}\setminus\psi_{t_{j+1}}(\Int_j)$ for each $j\ge 1$. In the event that $|U_j|\ge\tfrac12|U_{j-1}|$ for each $j$, we have
  \[|U_j|\ge 2^{-1/\eta}(d/2)^\ell p^\ell |\Vmain_i|\ge\eps p^\ell |V_i|\]
  for each $j$, where the final inequality is by choice of $\eps$. In other words, each $U_{j-1}$ is large enough to play the r\^ole of $U$ in~\ref{itm:degen:dag}, so taking a union bound over the $1/\eta$ choices of $j$, we see that with probability at least $1-\eta^{-1}n^{-D-1}$ we have
  \[\big|\comN_G(v_1,\ldots,v_\ell;\Vmain_i)\setminus\im(\psi_{t_j})\big|\ge(1-4\mu)2^{1-j}\deg_G(v_1,\ldots,v_\ell;\Vmain_i)\]
  for each $1\le j\le 1/\eta+1$. Taking a union bound over the at most $r_1$ choices of $i\in[r]$ and $n+n^2+\dots+n^{D}$ choices of $v_1,\ldots,v_\ell$ we see that the desired event of the lemma holds with probability at least $1-r_1D\eta^{-1}n^{-1}$, which tends to one as $n$ tends to infinity. This completes the proof of~\eqref{eq:degenrga:bufnofill}.
  
  The proof of~\eqref{eq:degenrga:queuenofill} is simpler. Observe that $|\Xq_i|\le2\rho|X_i|<\eta|\Xmain_i|$, and $\deg_G(v_1,\ldots,v_\ell;\Vq_i)\ge(dp/2)^\ell|\Vq_i|\ge\eps p^\ell |V_i|$ so that the analysis above, with only one `interval' $\Xq_i\setminus X^e$, gives the desired result.
 \end{claimproof}
 
 Finally, we are in a position to prove~\ref{degenrga:nobad}, which we do in the following claim. The proof is quite similar to that of Lemma~\ref{lem:nonclique_buffer}, which amounts to showing that any vertex $v\in V_i$ is likely to be a candidate vertex for `reasonably' many of the buffer vertices $\Xbuf_i$. More precisely, our strategy is to fix a vertex $v\in V_i$ and show that the inequality~\eqref{eq:rga:bufdeg} below, which encapsulates these reasonable bounds, holds with sufficiently high probability to apply the union bound over all $v\in V(G)$. 
 To do this, we introduce the concept of an $(x\to v)$-buffer-friendly partial embedding for $x\in\Xbuf_i$. As in the proof of Lemma~\ref{lem:nonclique_buffer}, this is a good partial embedding which has a few extra properties that let us show that it is not too unlikely that $N_H(x)$ is embedded to $N_G(v)$. As there, the proof that it is not too unlikely that at each step Algorithm~\ref{alg:RGA:deg} maintains an $(x\to v)$-buffer-friendly partial embedding is mainly `bookkeeping' and long but not very hard. However, there is an important difference. It is no longer useful to simply multiply the conditional probabilities of maintaining an $(x\to v)$-buffer-friendly partial embedding in order to estimate the conditional probability that $N_H(x)$ is embedded to $N_G(v)$. This is because the vertices $N_H(x)$ are no longer embedded as a segment of $\tau$, so that Lemma~\ref{lem:coupling} is not applicable directly to these products of conditional probabilities. Nevertheless we show below that we can still apply Lemma~\ref{lem:coupling}, several times and with some extra care, to obtain the desired bounds.
 
 \begin{claim}\label{cl:degen:buffer}
  The following holds a.a.s.\ for each $i\in[r]$. Let $b$ be such that $\Xbuf_i$ is a degree-$b$ buffer. Then for each $v\in V_i$ we have 
   \begin{equation}\label{eq:rga:bufdeg}
    \big|\{x\in\Xbuf_i:v\in C_{t_\RGend}(x)\}\big|\ge 4^{-10\Delta^3}2^{-1000D^2\mu^{-1}\zeta^{-1}d^{-D}} d^{2D^2} \Delta^{-3}\mu p^b|X_i|\,.
   \end{equation}
 \end{claim}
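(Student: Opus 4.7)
Fix $v\in V_i$, and for each $x\in\Xbuf_i$ enumerate $N_H(x)=\{y_1^x,\ldots,y_b^x\}$ in $\tau$-order. The approach is to adapt the proof of Lemma~\ref{lem:nonclique_buffer}: we define an $(x\to v)$-\emph{buffer partial embedding} as a good partial embedding $\psi_t$ in which every already-embedded member of $N_H(x)$ is mapped into $N_G(v)$, and in which the hatted sets $\hat U_t(y_k^x):=U_t(y_k^x)\cap N_\Gamma(v)$ and $\hat A_t(y_k^x):=A_t(y_k^x)\cap N_G(v)$ of every not-yet-embedded $y_k^x$ satisfy size and regularity conditions analogous to~\ref{C:past}--\ref{C:oneside} (with the General Setup together with two-sided inheritance on $R'$ for $\tcX$ supplying the initial conditions).

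The new difficulty is that the $y_j^x$ need not be consecutive in $\tau$: between embedding $y_{j-1}^x$ and $y_j^x$ many unrelated vertices are embedded, and these could in principle damage the hatted sets of $y_j^x,\ldots,y_b^x$. We control this with two ingredients. First, Claim~\ref{cl:degen:nofill}, whose good event we assume holds, guarantees that throughout the running of Algorithm~\ref{alg:RGA:deg} no large $G$-common neighbourhood (in $\Vmain_i$ or $\Vq_i$) of up to $D-\max_{z\in\Xmain\setminus X^e}\pitau(z)$ vertices is ever more than a bounded fraction covered by the image of $\psi_t$; in particular $\hat U$ and $\hat A$ cannot collapse merely because of unrelated embeddings. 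Second, for each $y=y_j^x\in N(\Xbuf)$, condition~\ref{ord:NtX} says that all but at most $D-1-\max_{z\notin X^e}\pitau(z)$ of the previously-embedded neighbours of $y$ lie within $p^D\cdot\tfrac12\eps n/r_1$ steps of $\tau(y)$ in the order; the few ``far'' exceptions are absorbed into the vertex count provided by Claim~\ref{cl:degen:nofill}, while the ``close'' ones are handled by the hatted-set bookkeeping of Claims~\ref{cl:buffer:bad} and~\ref{cl:buffer:P}, now applied to the pseudorandomness bounds coming from $D$ rather than $\Delta+1$.

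Using these two ingredients, we prove by induction on $j=0,1,\ldots,b$ that the set $\Xbuf_i^{(j)}(v)\subseteq\Xbuf_i$ of those $x$ for which an $(x\to v)$-buffer partial embedding has been preserved through the embedding of the first $j$ neighbours of $x$ in $\tau$ satisfies $|\Xbuf_i^{(j)}(v)|\ge \alpha_j|\Xbuf_i|$ with probability $1-\exp(-c p^b n)$ for a constant $c>0$ and a decreasing constant sequence $(\alpha_j)$ with $\alpha_0=1$ and $\alpha_j\ge\tfrac12(d^\Delta p/10)\alpha_{j-1}$. For the inductive step we enumerate the $x\in\Xbuf_i^{(j-1)}(v)$ in the order $\tau(y_j^x)$; the Bernoulli indicator that the $(x\to v)$-buffer partial embedding survives the embedding of $y_j^x$ is then sequentially dependent on the past (because membership in $\Xbuf_i^{(j-1)}(v)$ is determined by time $\tau(y_{j-1}^x)<\tau(y_j^x)$), and a computation essentially identical to that in the proof of Lemma~\ref{lem:nonclique_buffer}—now drawing on the two ingredients above—bounds the relevant conditional expectation from below by $d^\Delta p/10$. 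Applying Lemma~\ref{lem:coupling}\ref{coupling:b} with $\delta=1/2$ then gives the inductive step, and a final union bound over $v\in V(G)$ yields~\eqref{eq:rga:bufdeg}.

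The main obstacle will be this lower bound on the conditional success probability. Unlike in Lemma~\ref{lem:nonclique_buffer}, where the only previously-embedded neighbours of $y_j^x$ were $y_1^x,\ldots,y_{j-1}^x$ and hence all in $N_G(v)$ by induction, here $y_j^x$ may have many other previously-embedded neighbours scattered through $\tau$, and we must re-verify that both the sizes of the hatted sets and the regularity conditions of the $(x\to v)$-buffer partial embedding survive. This is exactly the place where~\ref{ord:NtX} together with the slack built into $D$ in~\ref{ord:Dx} is used, and checking that the various error terms accumulated across the $b$ inductive steps, the intermediate failures bounded by Claim~\ref{cl:degen:nofill}, and the bad-vertex count are all dominated by the final target $4^{-10\Delta^3}2^{-1000D^2\mu^{-1}\zeta^{-1}d^{-D}}d^{2D^2}\Delta^{-3}\mu p^b|X_i|$ will be the most delicate part of the argument.
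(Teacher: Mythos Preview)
Your overall architecture is right—defining an $(x\to v)$-buffer partial embedding, invoking Claim~\ref{cl:degen:nofill} together with~\ref{ord:NtX}, and iterating Lemma~\ref{lem:coupling}—but the induction as you set it up has two structural gaps.

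First, Lemma~\ref{lem:coupling}\ref{coupling:b} requires the lower bound $\sum_i p_i\ge x$ to hold \emph{almost surely}. In your step~$j$ you effectively sum the conditional survival probabilities over the \emph{random} set $\Xbuf_i^{(j-1)}(v)$, whose size is only at least $\alpha_{j-1}|\Xbuf_i|$ with high probability (from the previous step), not almost surely. The paper repairs this with a ``dangerous'' trick: redefine the indicator $A_{x,j}$ to equal~$1$ not only when $x$ survives step~$j$ but also whenever, by the time the $j$th relevant vertex for $x$ is embedded, one can already see that level~$j-1$ missed its target $s_{j-1}$. Then the a.s.\ lower bound on $\sum_x\Exp[A_{x,j}\mid\hist_{x,j}]$ is recovered automatically (if level~$j-1$ failed, all indicators are~$1$), and one checks separately that a.a.s.\ no level is ever actually dangerous.

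Second, the buffer partial embedding is not preserved for free between the embeddings of $y_{j-1}^x$ and $y_j^x$: every vertex $z$ with $\dist_H(x,z)\in\{2,3\}$ embedded in that interval can destroy the BPE regularity and size conditions (such~$z$ are neighbours of some $y_k^x$ and so change $U(y_k^x)$), and each such event only succeeds with probability about~$\tfrac12$. Your recurrence $\alpha_j\ge\tfrac12(d^\Delta p/10)\alpha_{j-1}$ leaves no room for this; the factor $4^{-10\Delta^3}$ in~\eqref{eq:rga:bufdeg} comes precisely from these up to $\Delta^2+\Delta^3$ additional survival events. The paper therefore tracks survival through \emph{all} of $M_x=\{z:1\le\dist_H(x,z)\le 3\}$, i.e.\ up to $3\Delta^3$ steps rather than $b$. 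But then the pattern of which positions in $M_x$ are neighbours of $x$ (costing a factor $\sim\beta d^{D-1}4^{-D}p$) versus distance-$2$/$3$ vertices (costing a constant~$\tfrac12$) varies across~$x$, so the per-step survival probability is not uniform and Lemma~\ref{lem:coupling} cannot be applied directly to all of~$\Xbuf_i$. The paper resolves this by first pigeonholing to a large subset $Y\subseteq\Xbuf_i$ on which this $0$--$1$ pattern (encoded by a vector~$\mathbf c$ of length $|\mathbf c|\le 3\Delta^3$) is identical, at the cost of a further $3\Delta^3 2^{3\Delta^3}$ factor in the final bound.
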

 \begin{claimproof}
  We require
  \begin{align*}
   &\mu\le\tfrac18\,,\quad &\eps'&<\tfrac{\mu\zeta d^\Delta}{1000\Delta^2\kappa4^\Delta}\cdot 2^{-280(D+1)\mu^{-1}\zeta^{-1}d^{-D}}\,,\qquad\eps\le\eps'\,,\\
  &\text{and}\quad &p&>10000\cdot 4^{10\Delta^3}\Delta^3\kappa r_1\mu^{-1}d^{-\Delta-1}\cdot 2^{280(D+1)\mu^{-1}\zeta^{-1}d^{-D}}\big(\tfrac{\log n}{n}\big)^{1/D}\,.
  \end{align*}
 
  Let $\beta=(1-4\mu)2^{-280(D+1)\mu^{-1}\zeta^{-1}d^{-D}}$. It is convenient to consider only vertices of $\Xbuf_i$ which are far from $X^e$; since $X^e$ is very small, doing so does not exclude many vertices of $\Xbuf_i$.
  
  Given $x\in\Xbuf_i$ which is at distance greater than three from any vertex of $X^e$, and $v\in V_i$, we say that a good partial embedding $\psi$ is an \emph{$(x\to v)$-buffer-friendly partial embedding} if the following hold.
  \begin{enumerate}[label=\itmarab{BPE}]
   \item\label{BPE:inNG} For each $y\in\dom(\psi)\cap N_H(x)$ we have $\psi(y)\in N_G(v)$.
   \item\label{BPE:sizeU} For each unembedded $y\in N_H(x)$ we have
    \begin{align*}
     |U(y)\cap N_\Gamma(v)|&=(p\pm\eps'p)^{\pi^*(y)}\deg_\Gamma\big(v;V(y)\big)\,,\\
     |\Umain(y)\cap N_\Gamma(v)|&=(p\pm\eps'p)^{\pi^*(y)}\deg_\Gamma\big(v;\Vmain(y)\big)\quad\text{and}\\
     |\Uq(y)\cap N_\Gamma(v)|&=(p\pm\eps'p)^{\pi^*(y)}\deg_\Gamma\big(v;\Vq(y)\big)
    \end{align*}
    \item\label{BPE:sizeC} For each unembedded $y\in N_H(x)$ we have
    \begin{align*}
     |\Cq(y)\cap N_G(v)|&\ge(dp-\eps'p)^{\pi^*(y)}\deg_G\big(v;\Vq(y)\big)\quad\text{and}\\
     |\Cmain(y)\cap N_G(v)|&\ge (dp-\eps'p)^{\pi^*(y)}\deg_G\big(v;\Vmain(y)\big)
    \end{align*}
    \item\label{BPE:tworeg} For each unembedded $y,z\in N_H(x)$ with $yz\in E(H)$, the pair $\big(U(y)\cap N_\Gamma(v),U(z)\cap N_\Gamma(v)\big)$ is $(\eps_{\pi^*(y),\pi^*(z)},d,p)$-regular in $G$.
   \item\label{BPE:onereg} For each unembedded $y\in N_H(x)$ and $z\in N_H(y)$,
     we have that the pair $\big(U(y)\cap N_\Gamma(v),U(z)\big)$ is $(\eps_{\pi^*(y),\pi^*(z)},d,p)$-regular in $G$.
  \end{enumerate}
  
  Much as in the proof of Lemma~\ref{lem:nonclique_buffer}, the empty partial embedding $\psi_0$ is an $(x\to v)$-buffer-friendly partial embedding. Indeed,~\ref{BPE:inNG}--\ref{BPE:sizeC} are trivially true since by~\ref{BUF:last} neighbours of buffer vertices are not image restricted. For~\ref{BPE:tworeg} and~\ref{BPE:onereg}, since buffer vertices are by~\ref{BUF:last} at distance at least three from image restricted vertices the sets $U(y)$ and $U(z)$ are equal to $V(y)$ and $V(z)$ respectively, and the required regularity is thus given by~\ref{G:inh}, since $\Xbuf_i\subset\tX_i$.
  
  Our aim now is to give lower bounds on the probability that, given that at some time $t$ we currently have an $(x\to v)$-buffer-friendly partial embedding, Algorithm~\ref{alg:RGA:deg} embeds the next vertex in a way that maintains an $(x\to v)$-buffer-friendly partial embedding. To that end, given an $(x\to v)$-buffer-friendly partial embedding $\psi$ and an unembedded vertex $y$, we let $P(y)$ be the set of \emph{poor vertices for $y$}, namely those vertices $u\in C(y)$ such that either $\psi\cup\{y\to u\}$ is not an $(x\to v)$-buffer-friendly partial embedding, or such that there is an unembedded $z\in N_H(y)\cap N_H(x)$ such that either of the following conditions hold:
  \begin{equation}\label{eq:BPE_Py}
  \begin{split}
      \deg_G\big(u;\Amain(z)\cap N_G(v)\big)\le&(d-\eps')p\big|\Amain(z)\cap N_G(v)\big| \quad\text{or}\\
      \deg_G\big(u;\Aq(z)\cap N_G(v)\big)\le&(d-\eps')p\big|\Aq(z)\cap N_G(v)\big|\,. 
    \end{split}  
  \end{equation}
   Observe that if $y$ is at distance four or greater from $x$ in $H$, then $P(y)$ is always empty. This has two important consequences. First, we are about to talk about the probability of no vertices being embedded to poor vertices: this is really a condition on at most $3\Delta^3$ vertex embeddings. Second, it means that if $x,x'\in\Xbuf_i$, by~\ref{PtH:dist} the distance between $x$ and $x'$ is at least ten, so that any given vertex embedding affects at most one of having an $(x\to v)$-buffer-friendly partial embedding and having an $(x'\to v)$-buffer-friendly partial embedding.
  
  Before we try to estimate the probability of maintaining an $(x\to v)$-buffer-friendly partial embedding, we strengthen the conclusion of Claim~\ref{cl:degen:nofill} to cover the `last few vertices' before embedding $y\in N_H(x)$.
  \begin{fact}\label{fact:nofill}
   For a given $x\in\Xbuf_i$ at distance greater than three from $X^e$ and a fixed $v\in V(x)$, provided that up to time $t$ no vertex has been embedded to a poor vertex (with respect to an $(x\to v)$-buffer-friendly partial embedding), for each $y\in N_H(x)$ with $\tau(y)>t$, the following a.a.s.\ hold.
   \begin{equation}\label{eq:degen:sizeAs}
   \begin{split}
    \big|\Amain_t(y)\cap N_G(v)\big|&\ge\tfrac\beta2(dp/2)^{\pi^*_t(y)}\deg_G\big(v;\Vmain(y)\big)\quad\text{and}\\
    \big|\Aq_t(y)\cap N_G(v)\big|&\ge\tfrac14(dp/2)^{\pi^*_t(y)}\deg_G\big(v;\Vq(y)\big)\,.
   \end{split}
   \end{equation}
  \end{fact}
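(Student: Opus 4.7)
The proof splits the analysis at time $t_0 := \tau(y) - p^Dm$, using condition~\ref{ord:NtX}. Set $h := \max_{z\in\Xmain\setminus X^e}\pitau(z)$. Since $x$ is at distance greater than three from $X^e$, every neighbour of $y$ (and $y$ itself) lies in $\Xmain\setminus X^e$, so $\pitau(z)\le h$ for every $z\in N_H(y)\cup\{y\}$. Because $y\in N(\Xbuf)\subseteq N(\tX)$, property~\ref{ord:NtX} guarantees that all but at most $D-1-h$ neighbours of $y$ that precede $y$ in $\tau$ are embedded within the last $p^Dm$ positions, so $\pi_{t_0}(y)\le D-1-h$. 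Also, $J_y=\emptyset$ by~\ref{BUF:last}, so $\pi^*_t(y)=\pi_t(y)$ throughout, and $y$ is not image restricted, so $I_y=V(y)$.

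For times $t\le t_0$, since $\psi_t$ is an $(x\to v)$-buffer partial embedding (by the no-poor-embedding hypothesis), \ref{BPE:sizeC} gives
\[
  |\Cmain_t(y)\cap N_G(v)|\ge (dp-\eps'p)^{\pi^*_t(y)}\deg_G(v;\Vmain(y))\,,
\]
and this set equals $\comN_G(\psi_t(\Pi_t(y))\cup\{v\};\Vmain(y))$, a $G$-common neighbourhood in $\Vmain(y)$ of $\pi_t(y)+1\le D-h$ vertices. Combining this lower bound with $\deg_G(v;\Vmain(y))\ge\tfrac{dp}{2}|\Vmain(y)|$ from~\ref{G:deg} verifies the hypothesis $\ge(dp/2)^{\pi_t(y)+1}|\Vmain(y)|$ of Claim~\ref{cl:degen:nofill}~\eqref{eq:degenrga:bufnofill}. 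Applying the claim, the uncovered fraction is at least $\beta$, so
\[
  |\Amain_t(y)\cap N_G(v)|\ge\beta(dp-\eps'p)^{\pi^*_t(y)}\deg_G(v;\Vmain(y))\ge\tfrac{\beta}{2}(dp/2)^{\pi^*_t(y)}\deg_G(v;\Vmain(y))\,.
\]
The $\Aq$ bound follows identically from~\eqref{eq:degenrga:queuenofill}, with the factor $\tfrac12$ replacing $\beta$ and yielding $\tfrac14$ in the conclusion.

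For times $t>t_0$, track the evolution of $a_s:=|\Amain_s(y)\cap N_G(v)|$ through the window $(t_0,t]$. The crucial observation is that whenever any neighbour $z$ of $y$ is embedded at time $s\le t<\tau(y)$, since $y\in N_H(z)\cap N_H(x)$ is unembedded, $y$ serves as the witness vertex in the definition of $P(z)$; the no-poor-embedding hypothesis therefore forces
\[
  \deg_G\bigl(\psi_s(z);\Amain_{s-1}(y)\cap N_G(v)\bigr)>(d-\eps')p\,|\Amain_{s-1}(y)\cap N_G(v)|\,,
\]
and the analogue for $\Aq$. Thus $a_s\ge(d-\eps')p\cdot a_{s-1}-1$ at neighbour-embedding steps and $a_s\ge a_{s-1}-1$ otherwise. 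A standard recursion on $a_s/((d-\eps')p)^{r_s}$, where $r_s$ is the number of neighbour embeddings in $(t_0,s]$, gives $a_t\ge((d-\eps')p)^r a_{t_0}-p^Dm$ with $r=\pi_t(y)-\pi_{t_0}(y)$. Combining with the Case~1 bound on $a_{t_0}$ and the identity $((d-\eps')p)^r(dp-\eps'p)^{\pi^*_{t_0}(y)}=(dp-\eps'p)^{\pi^*_t(y)}$ yields $a_t\ge\beta(dp-\eps'p)^{\pi^*_t(y)}\deg_G(v;\Vmain(y))-p^Dm$, and for $\eps$ small enough (using $\deg_G(v;\Vmain(y))\ge\tfrac{dp}{2}|\Vmain(y)|\gg p^Dm/\beta$) the $p^Dm$ loss is absorbed into a factor $\tfrac12$, giving the required $\tfrac{\beta}{2}(dp/2)^{\pi^*_t(y)}\deg_G(v;\Vmain(y))$. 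The $\Aq$ case is identical.

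The main subtlety, and the only place any genuine thought is required, is recognising that the poor-vertex mechanism still controls $|\Amain(y)\cap N_G(v)|$ at every neighbour-embedding step in the window, even when the neighbour $z$ being embedded does not itself lie in $N_H(x)$: the witness role in the definition of $P(z)$ is played by $y$ itself, which always satisfies $y\in N_H(z)\cap N_H(x)$. Everything else is routine, and the \emph{a.a.s.}\ qualifier in the statement comes entirely from conditioning on the \emph{a.a.s.}\ event provided by Claim~\ref{cl:degen:nofill}.
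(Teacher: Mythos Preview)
Your proof is correct and follows essentially the same approach as the paper: split at $t_0=\tau(y)-\tfrac12\eps p^D n/r_1$, use~\ref{ord:NtX} together with Claim~\ref{cl:degen:nofill} for $t\le t_0$, and then track the decay through the final window using the non-poor condition~\eqref{eq:BPE_Py} (with $y$ itself as the witness in $N_H(z)\cap N_H(x)$) at neighbour-embedding steps. The only cosmetic difference is that the paper maintains the invariant $a_s\ge\beta(dp/2)^{\pi^*_s(y)}\deg_G\big(v;\Vmain(y)\big)-\ell$ directly rather than your normalised recursion on $a_s/((d-\eps')p)^{r_s}$; note also that the $-1$ you include at neighbour-embedding steps is unnecessary, since such a neighbour is embedded outside $V(y)$.
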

  
  We will see that Claim~\ref{cl:degen:nofill}, together with~\ref{ordp:NtX}, show that this fact holds provided $t\le\tau(y)-\tfrac12\eps p^D n/r_1$. The number of vertices embedded in the remaining $\tfrac12\eps p^D n/r_1$ steps is too small to significantly fill up either of these sets, so we need only show that embedding neighbours of $y$ does not adversely affect~\eqref{eq:degen:sizeAs}. This is guaranteed by condition~\eqref{eq:BPE_Py} in the definition of poor vertices.
  
  \begin{claimproof}
   Let $h=\max_{z\in\Xmain\setminus X^e}\pitau(z)$.
   Since $\Xbuf_i\subseteq \tX$ and $y\in N_H(x)$ we infer by~\ref{ordp:NtX} that all 
   but at most $s$ neighbours $z$ of $y$ 
   satisfy $\tau(y)-\tau(z)\le p^D \cdot\tfrac12\eps n/r_1$ for some $s\le D-1-h$ 
   (recall that $\tau$ is a $(D,p,\tfrac12\eps n/r_1)$-bounded order).
   Therefore, if $t\le \tau(y)-\tfrac12\eps p^{D} n/r_1$ then  $\pi^*_t(y)\le s\le  D-1-h$, 
   so that $\Pi^*_t(y)\cup\{v\}$ is by~\ref{BPE:sizeC},~\ref{G:deg} and~\ref{G:sizes} a set of $s+1\le D-h$ vertices in $G$ with at least $(dp/2)^{s+1}|\Vq(y)|$ common neighbours in $\Vq(y)$ and at least $(dp/2)^{s+1}|\Vmain(y)|$ common neighbours in $\Vmain(y)$.
    
   Therefore, by Claim~\ref{cl:degen:nofill}, we have
  \begin{align*}
   \big|\Amain_t(y)\cap N_G(v)\big|&\ge\beta(dp/2)^{\pi^*_t(y)}\deg_G\big(v;\Vmain(y)\big)\quad\text{and}\\
   \big|\Aq_t(y)\cap N_G(v)\big|&\ge\tfrac12(dp/2)^{\pi^*_t(y)}\deg_G\big(v;\Vq(y)\big)\,,
  \end{align*}
  which gives~\eqref{eq:degen:sizeAs} for this range of $t$. Now suppose that 
  \[
  t=\tau(y)-\eps p^{D} \cdot\tfrac12 n/r_1+\ell
  \]
   for some $0\le \ell<\tfrac12\eps p^{D} n/r_1$. We claim that
  \begin{equation}\label{eq:nosmall}\begin{split}
   \big|\Amain_t(y)\cap N_G(v)\big|&\ge\beta(dp/2)^{\pi^*_t(y)}\deg_G\big(v;\Vmain(y)\big)-\ell\quad\text{and}\\
   \big|\Aq_t(y)\cap N_G(v)\big|&\ge\tfrac12(dp/2)^{\pi^*_t(y)}\deg_G\big(v;\Vq(y)\big)-\ell\,.
  \end{split}\end{equation}
  Indeed, we have just established that~\eqref{eq:nosmall} holds for $\ell=0$. Given $\ell\ge 1$, suppose that~\eqref{eq:nosmall} holds for $\ell-1$, and consider the vertex $z$ embedded at time $\tau(z)=\tau(y)-\eps p^{D} \cdot\tfrac12 n/r_1+\ell$.
  
   If $z$ is not a neighbour of $y$, then its embedding decreases the sizes of the sets on the left hand side of~\eqref{eq:nosmall} by at most one, so the inequality continues to hold. If $z$ is a neighbour of $y$, then by~\eqref{eq:BPE_Py} in the definition of $P(z)$ and since, by the assumption of Fact~\ref{fact:nofill} no vertex has been embedded to a poor vertex, we have
  \begin{align*}
   \big|\Aq_{\tau(z)}(y)\cap N_G(v)\big|&\ge(d-\eps')p\big|\Aq_{\tau(z)-1}(y)\cap N_G(v)\big|\\
   &\ge\tfrac12(dp/2)^{\pi^*_{\tau(z)}(y)}\deg_G\big(v;\Vq(y)\big)-(d-\eps')p\ell
  \end{align*}
  where the second inequality is by choice of $\eps'$ and~\eqref{eq:nosmall}. A similar inequality holds for $\big|\Amain_{\tau(z)}(y)\cap N_G(v)\big|$. Thus again~\eqref{eq:nosmall} continues to hold. We conclude that~\eqref{eq:nosmall} holds for all $0\le \ell<\tfrac12\eps p^{D} n/r_1$.
  
  Now since $x\in\Xbuf$, we have $\tau(x)>\tau(y)$, so that by~\ref{ordp:Dx} we have $\pi^*_t(y)\le\pitau(y)\le D-1$. By~\ref{G:deg} and choice of $\eps$, we conclude that~\eqref{eq:degen:sizeAs} holds for all $t<\tau(y)$ as desired.
  \end{claimproof}
  
  We now continue with the proof of Claim~\ref{cl:degen:buffer} by estimating the probability of not choosing a poor vertex at a given step in Algorithm~\ref{alg:RGA:deg}. Recall that $B(y)$ is the set of bad vertices for a given good partial embedding and unembedded $y$, as defined in Definition~\ref{def:bad_vertices}.
  
  \begin{fact}\label{cl:degen:probs}
   Suppose that $\psi$ is a good partial embedding generated together with a queue $Q$ by Algorithm~\ref{alg:RGA:deg} which is also an $(x\to v)$-buffer-friendly partial embedding, where $x$ is at distance greater than three from any vertex of $X^e$, and~\eqref{eq:degen:sizeAs} holds for $\psi$ and all unembedded $y$. Given an unembedded $y$, suppose that $u$ is a vertex chosen uniformly at random in $\Amain(y)\setminus B(y)$, or in $\Aq(y)\setminus B(y)$, or in $\Ac(y)\setminus B(y)$, according to whether $y\not\in Q\cup X^e$, or $y\in Q\setminus X^e$, or $y\in X^e$ respectively.
   Then the following hold.
   \begin{enumerate}[label=\abc]
    \item\label{fact:nofill:a} If $\dist_H(x,y)>3$ then $\Pr\big(u\not\in P(y)\big)=1$.
    \item\label{fact:nofill:b} If $\dist_H(x,y)\in\{2,3\}$ then $\Pr\big(u\not\in P(y)\big)\ge\tfrac12$.
    \item\label{fact:nofill:c} If $\dist_H(x,y)=1$ then $\Pr\big(u\not\in P(y)\big)\ge \beta d^{D-1} 4^{-D}p$.
   \end{enumerate}
  \end{fact}

  The proof of this fact is quite similar to the analysis in Lemma~\ref{lem:nonclique_buffer}. We remark that the use we make of $\psi$ and $Q$ being generated by Algorithm~\ref{alg:RGA:deg} is that the invariants of Claim~\ref{cl:degen:inv} hold; we do not in the following proof perform any analysis of the probabilistic process generating $\psi$ and $Q$. 
  
  \begin{claimproof} 
   If $\dist_H(x,y)>3$ then $P(y)=\emptyset$, so~\ref{fact:nofill:a} is trivial. From now on we assume $\dist_H(x,y)\le 3$.
   
   By~\ref{degen:inv:random} we embed $y$ uniformly at random into a set whose size is at least $\tfrac{1}{10}\mu\zeta(dp)^{\pitau(y)}|V(y)|$. Thus in order to show~\ref{fact:nofill:b} we simply need to establish that $P(y)\cup B(y)$ is small compared to this set; this can be established using $\NS(\eps,r_1,D)$, $\RI(\eps,(\eps_{a,b}),\eps',d,r_1,D)$ and the buffer and good partial embedding properties, as in the proof of Lemma~\ref{lem:nonclique_buffer}. We need to use the fact, given by~\ref{ordp:Dx}, that $\pitau(z)\le D_z-1$ for each $z\in N(\tX)\setminus\Xbuf$, in other words that $\pitau(z)\le D-2$ for all $z\in N_H(x)$ and $\pitau(z)\le D-3$ if there is a triangle $zww'$ in $H$ with $\tau(w),\tau(w')>\tau(z)$. In order to establish that few vertices fail~\eqref{eq:BPE_Py}, we need to know that $\Amain(z)\cap N_G(v)$ and $\Aq(z)\cap N_G(v)$ are not small in comparison to $U(z)\cap N_\Gamma(v)$. This follows from~\eqref{eq:degen:sizeAs} since $z\in N_H(x)$. Since the proof is otherwise as in the proof of Lemma~\ref{lem:nonclique_buffer}, we omit further details.
   
   Finally we come to~\ref{fact:nofill:c}. We embed $y$ uniformly at random into a subset of $U(y)$, which by~\ref{GPE:sizeU} has size at most $2p^{\pitau(y)}|V(y)|$. If $xy\in H$, we need to show that it is not too unlikely that $y$ is embedded to $N_G(v)$. By~\eqref{eq:degen:sizeAs},~\ref{G:deg} and choice of $\beta$, we see that
   \[\big|\Amain(y)\cap N_G(v)\big|,\big|\Aq(y)\cap N_G(v)\big|\ge\tfrac{\beta}{4}(dp/2)^{\pitau(y)}\deg_G(v;V(y))\,.\]
   It is thus enough to show that at most half of these vertices are in $P(y)\cup B(y)$, which again we can do using $\NS(\eps,r_1,D)$, $\RI(\eps,(\eps_{a,b}),\eps',d,r_1,D)$ and the buffer and good partial embedding properties, as in the proof of Lemma~\ref{lem:nonclique_buffer}. Again, we use the fact $\pitau(z)\le D_z-1$ for $z\in N_H(x)$ in this verification. We omit the details.
  \end{claimproof}
  
  We now continue with the proof of Claim~\ref{cl:degen:buffer}. We fix $i$ and $v\in V_i$. Our goal is to show that with very high probability a reasonable fraction of the $x\in\Xbuf_i$ have $N_H(x)$ embedded to $N_G(v)$, since this immediately implies the claim. We have established in Fact~\ref{cl:degen:probs} that for any given vertex $x$ of $\Xbuf_i$ far from $X^e$, the probability that $N_H(x)$ is embedded to $N_G(v)$ is not too small. Fact~\ref{cl:degen:probs} states that this probability is more or less given by the embeddings of vertices at distance at most three from $x$, and distinct vertices of $\Xbuf_i$ are at distance at least ten by~\ref{PtH:dist}, so that no vertex is within distance three of two distinct $x,x'\in\Xbuf_i$. In the proof of Lemma~\ref{lem:nonclique_buffer} at this point we simply apply Lemma~\ref{lem:coupling} to complete the proof. However, here we cannot do this, because the sets $N_H(x)$ interleave each other in the order $\tau$. We have to be a bit more careful, as follows.
  
  
  For each $x\in\Xbuf_i$, let $M_x$ be the set of vertices of $H$ at distance one, two or three from $x$. We say $x$ \emph{survives at step $j$} if after the embedding of the $j$th vertex of $M_x$ in the order $\tau$, we still have an $(x\to v)$-buffer-friendly partial embedding. We will use Lemma~\ref{lem:coupling} to show that a reasonable fraction of $x\in\Xbuf_i$ survive at step $1$. We would like then to use Lemma~\ref{lem:coupling} again to show that (because a reasonable fraction of vertices survive at step $1$) a reasonable fraction of vertices survive at step $2$, and so on.
  
  In order to carry this out, it is convenient not in fact to look at all $x\in\Xbuf_i$, but only at a subset $Y$ in which the probability of surviving at each step $j$ (as given by Fact~\ref{cl:degen:probs}) does not depend on the particular vertex in $x\in Y$ but only on $j$. We now construct such a set $Y$. For any $x\in\Xbuf_i$ at distance greater than three from $X^e$, there are at most $\Delta^3+\Delta^2+\Delta<3\Delta^3$ vertices of $H$ at distance at most $3$ from $x$ in $H$. We can therefore associate to each $x\in\Xbuf_i$ at distance greater than three from $X^e$ a $0$--$1$ vector with at most $3\Delta^3$ non-zero entries, taking the value $0$ at place~$j$ if the $j$th vertex $y$ of $M_x$ (in the order $\tau$) is not a neighbour of $x$, and $1$ if $y$ is a neighbour of $x$. There are at most $3\Delta^3 2^{3\Delta^3}$ choices for this vector; we fix a most common choice $\mathbf{c}=(c_i)$, and let $Y$ be the subset of vertices of $\Xbuf_i$ at distance at least three from $X^e$ associated to this most common choice. By construction, by~\ref{BUF:sizebuf}, by choice of $\eps$ and since $|X^e|\le\tfrac12\eps p^D n/r_1$ and $|X_i|\ge n/\kappa r_1$, we have
  \begin{equation}\label{eq:degen:sizeY}
   |Y|\ge\tfrac16 \Delta^{-3} 2^{-3\Delta^3}|\Xbuf_i|\,.
  \end{equation}
  
  For each $x\in Y$ and $1\le j\le|\mathbf{c}|$ (where $|\mathbf{c}|$ is the number of 1-entries of $\mathbf{c}$), let $\hist_{x,j}$ be the history of Algorithm~\ref{alg:RGA:deg} up to the point immediately before embedding the $j$th vertex of $M_x$. For each $x\in Y$, we create a collection of Bernoulli random variables $A_{x,1}$ for $x\in Y$, which are set equal to one if either $x$ survives at step $1$ or~\eqref{eq:degen:sizeAs} has failed before embedding the first element of $M_x$, and zero otherwise. We identify in each $M_x$ the first (in $\tau$) element $w_{x,1}$, and sort the $A_{x,1}$ according to the order induced by $\tau$ on the $w_{x,1}$. In this order, the $A_{x,1}$ are sequentially dependent. We define
  \[s_1=\begin{cases}\tfrac12\beta d^{D+1}4^{-D}p|Y| & \text{if}\quad c_1=1\\ \tfrac14|Y| &\text{if}\quad c_1=0\end{cases}\,.\]
  Now for each $A_{x,1}$, either Fact~\ref{cl:degen:probs} gives us a lower bound on $\Exp[A_{x,1}|\hist_{x,1}]$, or~\eqref{eq:degen:sizeAs} has failed before we embed the first vertex of $M_x$, in which case $\Exp[A_{x,1}|\hist_{x,1}]=1$. We conclude, by definition of $s_1$, that
  \[\sum_{x\in Y}\Exp[A_{x,1}|\hist_{x,1}]\ge 2s_1\,.\]
  By Lemma~\ref{lem:coupling}, with $\delta=1/4$, with probability at least $1-e^{-s_1/24}$ we have $\sum_{x\in Y}A_{x,1}\ge s_1+1$. Observe that if~\eqref{eq:degen:sizeAs} does not fail for any $y$ or $t$ (which is a.a.s.\ the case by Fact~\ref{fact:nofill}) then $\cA_1$ holding says that at least $s_1+1$ vertices survive at step $1$.
  
  There remains a small technical difficulty in continuing. We would like to count vertices surviving at step $2$, and use Lemma~\ref{lem:coupling} and a lower bound on the sum of conditional expectations given by the product of $s_1$ and the probability bound from Fact~\ref{cl:degen:probs} to show that the number of vertices surviving at step $2$ is likely to be large. But it is possible that less than $s_1$ vertices survive at step $1$, so that this bound on the sum of conditional expectations does not hold almost surely (it only holds a.a.s.).
  
   To get around this problem, we use the following trick. Given $x$, immediately after embedding the first vertex of $M_x$ in Algorithm~\ref{alg:RGA:deg}, let $g$ be the number of vertices $x'$ in $Y$ such that $A_{x,1}$ is certainly equal to one, i.e.\ such that the first vertex of $M_{x'}$ was embedded to a vertex which is neither poor nor bad, so $x'$ survives at step $1$. Let $h$ be the number of vertices $x'$ in $Y$ such that the first vertex of $M_{x'}$ has not yet been embedded. If $g+h\le s_1$, we say $x$ is \emph{dangerous at step $1$} if the following holds. In other words, we say $x$ is dangerous at step $1$ if after embedding the first vertex of $M_x$ we already know that $\cA_1$ does not occur.
  
  We now define Bernoulli random variables $A_{x,2}$ for $x\in Y$, set equal to one if either $x$ survives at step $2$, or $x$ is dangerous at step $1$, or~\eqref{eq:degen:sizeAs} has failed for some $y$ prior to embedding the second vertex of $M_x$, and zero otherwise. The point of this definition is that it gives us an \emph{a priori} lower bound on the sum of conditional expectations of the $A_{x,2}$, as we require to apply Lemma~\ref{lem:coupling}, but nevertheless with very high probability the sum of the $A_{x,2}$ does simply count the number of vertices in $Y$ which survive at step $2$, because with very high probability~\eqref{eq:degen:sizeAs} does not fail and $\cA_1$ does occur, so no vertex is dangerous at step $1$. Again, this collection of random variables has a natural order given by the order $\tau$ on the second vertices of the $M_x$. Observe that we know whether $x$ is dangerous at step $1$ before we embed the second vertex of $M_x$. Thus in this order the random variables are sequentially dependent. This justifies that we can apply Lemma~\ref{lem:coupling} to estimate the sum of the $A_{x,2}$.
  
  We now say what exactly this application of Lemma~\ref{lem:coupling} gives us, and explain how to analyse steps $3,\dots,|\mathbf{c}|$ in the same way. We need to give integers $s_j$ for $2\le j\le |\mathbf{c}|$, which are our desired lower bounds on the number of vertices in $Y$ surviving at step $j$, to define the events $\cA_j$ and the concept of \emph{dangerous at step $j$} for $j\ge 2$, and define the random variables $A_{x,j}$ for $3\le j\le\mathbf{|c|}$.
  
  For each $2\le j\le|\mathbf{c}|$, we set
  \[
   s_j=\begin{cases}\tfrac12\beta d^{D+1}4^{-D}ps_{j-1} & \text{if}\quad c_j=1\\ \tfrac14s_{j-1} &\text{if}\quad c_j=0\end{cases}\,.
  \]
  We let the event $\sum_{x\in Y}A_{x,j}\ge s_j+1$ be $\cA_j$. We say $x\in Y$ is dangerous at step $j$ if immediately after embedding the $j$th vertex of $M_x$, the number of $x'\in Y$ such that $A_{x',j}$ is certainly equal to one, plus the number of $x'\in Y$ such that the $j$th vertex of $M_{x'}$ has not yet been embedded, is at most $s_j$. Again, this means that after embedding the $j$th vertex of $M_x$, we already know $\cA_j$ does not occur. Finally, for each $j\ge 3$ we define the Bernoulli random variables $A_{x,j}$ for $x\in Y$, set equal to one if $x$ survives at step $j$, or is dangerous at step $j-1$, or~\eqref{eq:degen:sizeAs} has failed before embedding the $j$th vertex of $M_x$, and zero otherwise.
  
  Before we continue, we observe that
  \begin{equation}\label{eq:degen:lowsc}
   s_1\ge s_2\ge\dots\ge s_{|\mathbf{c}|}\ge \big(\tfrac14\big)^{\Delta^2+\Delta^3}\big(\tfrac12\beta d^{D+1}4^{-D}p\big)^b|Y|\,,
  \end{equation}
  where $b$ is the degree of the buffer $\Xbuf_i$, i.e.\ $\deg_H(x)=b$ for each $x\in\Xbuf_i$; this number exists by~\ref{BUF:deg}. The reason is simply that there are at most $\Delta^2+\Delta^3$ vertices of $H$ at distance two or three in $H$ from any given $x\in Y$, and $b$ neighbours of $x$. In particular, by choice of $p$, by our lower bound on $|Y|$ and since $n$ is sufficiently large, we have $3s_j/2>s_j+1$ for each $1\le j\le|\mathbf{c}|$.  
  
  We now show that with very high probability $\cA_j$ occurs for each $1\le j\le|\mathbf{c}|$. Recall that by Fact~\ref{cl:degen:probs}, we have the lower bound $\sum_{x\in Y}\Exp[A_{x,1}|\hist_{x,1}]\ge 2s_1$. By Lemma~\ref{lem:coupling} with $\delta=1/4$, and the observation $3s_1/2>s_1+1$, the probability that $\cA_1$ fails is thus at most $e^{-s_1/24}$.
  
  Now for $j\ge 2$, for each $x\in Y$, either $x$ survived at step $j-1$ and~\eqref{eq:degen:sizeAs} has not failed before embedding the $j$th vertex of $M_x$ (in which case the expectation of $A_{x,j}$ conditioned on $\hist_{x,j}$ is at least the quantity given in Fact~\ref{cl:degen:probs}), or $x$ was dangerous at step $j-1$, or~\eqref{eq:degen:sizeAs} failed before embedding the $j$th vertex of $M_x$ (in which case we have $\Exp[A_{x,h}|\hist_{x,j}]=1$), or none of these occur (in which case we have $\Exp[A_{x,h}|\hist_{x,j}]\ge 0$). Furthermore, by definition of the $A_{x,j}$ and of the event $\cA_{j-1}$, we know (a priori, before beginning the embedding) that one of the first three cases occurs for at least $s_{j-1}$ of the vertices $Y$, so that we have $\sum_{x\in Y}\Exp[A_{x,j}|\hist_{x,j}]\ge 2s_j$ by definition of $s_j$. Applying Lemma~\ref{lem:coupling}, with $\delta=1/4$, and since $3s_j/2>s_j+1$, we conclude that the probability that $\cA_j$ fails is at most $e^{-s_j/24}$.
  
  By definition, if~\eqref{eq:degen:sizeAs} never fails and each event $\cA_1,\dots,\cA_{|\mathbf{c}|}$ occurs, then no vertex is dangerous at any step. Thus the number of vertices of $Y$ surviving at step $|\mathbf{c}|$ is at least $s_{|\mathbf{c}|}$. By definition, if $x$ survives at step $|\mathbf{c}|$ then $N_H(x)$ is embedded to $N_G(v)$, so that $v$ is a candidate for $x$. Since $s_{|\mathbf{c}|}$ is by~\eqref{eq:degen:lowsc},~\eqref{eq:degen:sizeY} and~\ref{BUF:sizebuf} at least
  \[ \big(\tfrac14\big)^{\Delta^2+\Delta^3}\big(\tfrac12\beta d^{D+1}4^{-D}p\big)^b\cdot \tfrac16 \Delta^{-3} 2^{-3\Delta^3}\cdot 4\mu|X_i|\]
  we conclude that~\eqref{eq:rga:bufdeg} holds for $v$, as desired.  
    The probability that any given one of the events $\cA_1,\dots,\cA_{|\mathbf{c}|}$ fails is at most $e^{-s_{|\mathbf{c}|}/24}<n^{-D-1}$ by~\eqref{eq:degen:lowsc},~\eqref{eq:degen:sizeY},~\ref{BUF:sizebuf}, since $|X_i|\ge n/\kappa r_1$ and by choice of $p$. Taking the union bound over the at most $3\Delta^3$ events $\cA_j$, and the at most $n$ choices of $v$, and since a.a.s.~\eqref{eq:degen:sizeAs} never fails by Fact~\ref{fact:nofill}, we conclude that a.a.s.~\eqref{eq:rga:bufdeg} holds for all $v\in V_i$ for each $i\in [r]$, completing the proof of Claim~\ref{cl:degen:buffer}.
   \end{claimproof}
  
  The good event of Claim~\ref{cl:degen:buffer} holding gives~\ref{degenrga:nobad}. Since a.a.s.\ the good events of each of the above claims and lemmas hold, this completes the proof of Lemma~\ref{lem:degenrga}. 
\end{proof}

\chapter{Proofs of applications}\label{chap:appproofs}
 In this chapter we prove the various theorems listed in Section~\ref{sec:appl}. We indicate when stronger results are proved (subsequently) elsewhere. To keep this section brief we often only sketch  proofs.
 
 \section{Universal graphs}
We begin by showing  universality of $G(n,p)$ for (almost) spanning graphs with bounded degree and bounded degeneracy. 

  \begin{proof}[Proof of Theorem~\ref{thm:degenuniv}]
    We first show~\ref{degenuniv:a}.
    Observe that any $d$-degenerate $n$-vertex graph has at most $dn$ edges
    and so contains at least $\tfrac{n}{2d+1}$ vertices of degree at most
    $2d$. We will apply Lemma~\ref{lem:degen} with input $\Delta$, with
    $\DeltaRp=8\Delta$, $D=2d+1$, $\alpha=\tfrac{\gamma}{10d}$, $\zeta=1$,
    density $\tfrac12$, and $\kappa=2$. Lemma~\ref{lem:degen} returns
    $\eps>0$, and for input $r_1=8\Delta$ also $C$. Choose $\eps^*\ll\eps$
    and suppose $C$ is also large enough for
    Lemma~\ref{lem:det_Gnp}\ref{det_gnp:randns} with input $\eps^*$,
    $8\Delta$ and $2$. Let $p\ge C\big(\tfrac{\log
      n}{n}\big)^{1/(2d+1)}$. Fix an equipartition $V_1,\ldots,V_{8\Delta}$
    of $[n]$. Now $G(n,p)$ a.a.s.\ satisfies the good event of
    Lemma~\ref{lem:degen}. By Lemma~\ref{lem:det_Gnp}\ref{det_gnp:randns}
    it a.a.s.\ has property $\NS(\eps^*,8\Delta,D)$. Finally, using
    Theorem~\ref{thm:chernoff} and the union bound it is easy to check that
    a.a.s.\ for each $i$ and $v\in V_i$, the vertex $v$ has
    $(1\pm\eps)p|V_j|$ neighbours in $V_j$ for each $j\neq i$. Fix a graph
    $\Gamma$ with each of these properties. Property
    $\NS(\eps^*,8\Delta,D)$ implies that any pair of disjoint subsets of
    $V(\Gamma)$, each of size at least $\tfrac{p n}{16\Delta}$, is
    $(\eps,\tfrac12,p)$-regular, so letting $R=R'=K_{8\Delta}$ we have an
    $(\eps,\tfrac12,p)$-regular $R$-partition which has super-regularity
    and one- and two-sided inheritance on $R'$.
   
    Given $H$ on $n$ vertices with degeneracy $d$ and maximum degree
    $\Delta$, let $X_1,\ldots,X_{8\Delta}$ be an equipartition of $V(H)$
    into independent sets, with each $X_i$ containing at least
    $\tfrac{1}{8\Delta(2d+1)}|X_i|$ vertices of degree at most $2d$. This
    equipartition exists by Lemma~\ref{lem:nicepartition}. We designate the
    vertices of degree at most $2d$ as potential buffer vertices. We do not
    image restrict any vertices. Then (perhaps after reordering) the $X_i$
    form an $R$-partition of $V(H)$ which is size-compatible with the
    $V_i$. We let $\tau$ be a degeneracy order on $V(H)$. We move all
    buffer vertices to the end of the ordering $\tau$. Observe that this
    slightly changed ordering satisfies
    conditions~\ref{ordp:Dx}--\ref{ordp:NtX} and thus is
    $(D,p,\eps^*n/r_1)$-bounded.  Then the conditions of
    Lemma~\ref{lem:degen} are satisfied, so $H\subseteq\Gamma$ as desired.
   
   To obtain \ref{degenuniv:b}, we work identically except that we add to each part of $V(H)$ isolated vertices to obtain a size-compatible $R$-partition, and we designate these isolated vertices as potential buffer vertices instead of the low-degree vertices in $H$. Then Lemma~\ref{lem:degen}, applied with $D=2d$, $p=C(\frac{\log n}{n})^{1/D}$ and all other parameters staying as before, gives the desired conclusion.
  \end{proof}

It is quite easy to use Lemma~\ref{lem:psr_main} to show that sufficiently pseudorandom graphs are universal for bounded-degree graphs.

  \begin{proof}[Sketch proof of Theorem~\ref{thm:pseudouniv}]
   Given $G$, we take a random equipartition of $V(G)$ into $\Delta+1$ clusters $V_1,\ldots,V_{\Delta+1}$. For each $v\in V(G)$ and $i$, the quantity $\deg(v;V_i)$ is hypergeometrically distributed with mean at least $\tfrac14p|V_i|$, so by Theorem~\ref{thm:chernoff} and the union bound we see that with positive probability we have $\deg(v;V_i)\ge\tfrac18p|V_i|$ for each $v$ and $i$. We fix such a partition, and let $R=R'=K_{\Delta+1}$. Using 
   the Hajnal-Szemer\'edi theorem (Theorem~\ref{thm:HajSze}) we can for any $n$-vertex graph $H$ with $\Delta(H)\le\Delta$ find a size-compatible $R$-partition of $V(H)$, and applying Lemma~\ref{lem:psr_main} with all vertices designated as potential buffer vertices, with $d=\tfrac{1}{16}$ and with no vertices image restricted, we see that $H\subseteq G$ as desired.
  \end{proof}
  
 \section{Partition universality}
 In this section we prove Theorems~\ref{thm:rpartuniv},~\ref{thm:rpartunivdegen} and~\ref{thm:folkmantype}. We need Ramsey's Theorem and Tur\'an's Theorem.  
The Ramsey number $r_k(K_t)$ is the smallest integer $n$ such that no matter how one colours the edges of $K_n$ with $k$ colours, there is a monochromatic copy of $K_t$. Ramsey~\cite{Ram30} proved these numbers exist, while the following quantitative statement is due to Erd\H{o}s and Szekeres~\cite{ErdSze35}.
\begin{theorem}[Erd\H{o}s and Szekeres~\cite{ErdSze35}]
For any $k$ and $t$ we have $r_k(K_t)\le k^{kt}$.
\end{theorem}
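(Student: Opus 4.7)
The proof is a standard pigeonhole/greedy argument, and I would present it as follows.

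Set $N = k^{kt}$ and let the edges of $K_N$ be coloured with $k$ colours. My plan is to build a sequence of distinct vertices $v_1, v_2, \ldots, v_m$ and colours $c_1, c_2, \ldots, c_m$ together with a nested decreasing sequence of vertex sets $V = S_0 \supseteq S_1 \supseteq \cdots \supseteq S_m$ such that, for each $i \in [m]$, we have $v_i \in S_{i-1}$, all of $S_i$ lies in $S_{i-1} \setminus \{v_i\}$, and every edge from $v_i$ to any vertex of $S_i$ has colour $c_i$. Given $S_{i-1}$ of size at least two, I pick any $v_i \in S_{i-1}$ and apply the pigeonhole principle to the $k$-colouring of the edges from $v_i$ into $S_{i-1}\setminus\{v_i\}$: some colour class has size at least $(|S_{i-1}|-1)/k$, and I take $S_i$ to be this class and $c_i$ to be the colour.

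Iterating this bound gives $|S_m| \ge (N - m\cdot k^{m-1} - \cdots)/k^m$; more conveniently, one checks by induction that $|S_m| \ge N/k^m - 2$. Thus provided $N \ge 3k^m$, the construction can be continued to step $m+1$. Taking $m = k(t-1)+1$, the inequality $N = k^{kt} \ge 3k^m$ holds (since $kt \ge m+1$ and $k \ge 2$, the only case $k=1$ being trivial), so the sequence of length $m$ can be constructed.

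Now among the $k(t-1)+1$ colours $c_1,\ldots,c_m$, a final application of pigeonhole yields some colour that appears at least $t$ times, say at indices $i_1 < i_2 < \cdots < i_t$. By construction, for every $j < j'$ the vertex $v_{i_{j'}}$ lies in $S_{i_j}$, so the edge $v_{i_j} v_{i_{j'}}$ has colour $c_{i_j}$, which is the common colour. Hence $\{v_{i_1}, \ldots, v_{i_t}\}$ is a monochromatic $K_t$, proving $r_k(K_t) \le N = k^{kt}$.

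There is essentially no obstacle here; the only point requiring minor care is the arithmetic verification that $|S_m|$ remains positive throughout, but the slack in the bound $N = k^{kt}$ (which is much larger than $3k^{k(t-1)+1}$ for $k,t \ge 2$) makes this immediate.
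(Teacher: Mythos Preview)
The paper does not actually prove this theorem; it is merely quoted as a classical result attributed to Erd\H{o}s and Szekeres, so there is no ``paper's own proof'' to compare against. Your argument is the standard greedy/pigeonhole proof and is correct in substance.

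One small arithmetic slip: after establishing that $|S_m|\ge N/k^m - 2$ and hence that $N\ge 3k^m$ suffices to continue to step $m+1$, you set $m=k(t-1)+1$ and claim $N=k^{kt}\ge 3k^m$. For $k=2$ this inequality reads $2^{2t}\ge 3\cdot 2^{2t-1}$, which is false. What you actually need in order to construct the sequence of length $m$ is only $|S_{m-1}|\ge 1$, i.e.\ $N\ge 3k^{m-1}=3k^{k(t-1)}$; since $N/k^{k(t-1)}=k^k\ge 4$ for $k\ge 2$, this does hold. So the off-by-one is harmless. (Note also that the stated bound is literally false for $k=1$, $t\ge 2$, since then $k^{kt}=1<t=r_1(K_t)$; but this is an artefact of the paper's phrasing, not your argument, and is irrelevant to the paper's use of the result.)
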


Tur\'an, generalising a result of Mantel~\cite{Man07}, proved the following.
\begin{theorem}[Tur\'an~\cite{Tur41}]
For any $r\ge 3$, any $n$-vertex $K_r$-free graph has at most $\frac{(r-2)n^2}{2(r-1)}+o(n^2)$ edges.
\end{theorem}

Finally we state the version of the sparse regularity lemma for many colours that we are going to apply. 
We also say that a graph $G$ with density $p$ is \emph{$(\eta,D)$-upper-uniform} if for all disjoint sets $U$ and $W$ of cardinality at least 
$\eta v(G)$ we have $e_G(U,W)\le D p|U||W|$.
\begin{lemma}[Sparse regularity lemma, coloured version~\cite{Kohayakawa97Szemeredi}]\label{lem:sparse_colour_RL}
For any real $D,\eps>0$, integers $k$ and $t_0$, there exist $\eta>0$ and $T$ such that any graph $G$ of edge density $p$ and on at least $t$ vertices, which is $(\eta,D)$-upper-uniform and  whose edges are coloured with $k$ colours admits a  partition of $V(G)$ into 
$V_1$, \ldots, $V_t$ with the following properties.
\begin{enumerate}[label=\abc]
\item $t_0\le t \le T$.
\item $\big| |V_i|-|V_j|\big|\le 1$ for all $i$, $j$.
\item all but at  most $\eps t^2$ pairs $(V_i,V_j)$ are $(\eps,d,p)$-fully-regular in each of the $k$ colours for some possibly different $d$.
\end{enumerate}
\end{lemma}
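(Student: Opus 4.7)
The plan is to adapt Szemer\'edi's original proof of the regularity lemma to the sparse, multicolour setting, following Kohayakawa~\cite{Kohayakawa97Szemeredi}. I would work with the $p$-normalized index of a partition $\mathcal{P}=\{V_1,\ldots,V_m\}$, defined by
\[
\mathrm{ind}_p(\mathcal{P}) = \frac{1}{n^2}\sum_{i<j}|V_i||V_j|\sum_{c=1}^k d_{G_c,p}(V_i,V_j)^2,
\]
where $G_c$ denotes the subgraph formed by the colour-$c$ edges and $d_{G_c,p}(X,Y)=e_{G_c}(X,Y)/(p|X||Y|)$ is the $p$-density. The $(\eta,D)$-upper-uniformity hypothesis guarantees that $d_{G_c,p}(V_i,V_j)\le D$ for each colour $c$ whenever $|V_i|,|V_j|\ge \eta n$, so this joint index is bounded above by $kD^2$.

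The core step is the classical energy-increment argument in this sparse guise. If more than $\eps t^2$ pairs $(V_i,V_j)$ fail to be $(\eps,d,p)$-regular in some colour for any choice of $d$, I would produce a common refinement $\mathcal{P}'$ of $\mathcal{P}$ whose index exceeds $\mathrm{ind}_p(\mathcal{P})$ by at least some fixed $\delta=\delta(\eps,k)>0$. For each such irregular pair and witnessing colour, the standard defect Cauchy--Schwarz inequality, applied to the $p$-densities, shows that refining $V_i$ according to the witnessing subset $X'\subseteq V_i$ and $V_j$ according to $Y'\subseteq V_j$ raises the contribution of that pair to the index by at least $\eps^4$. Taking a common refinement across all irregular pairs and all colours then delivers the required increment, and the upper bound $kD^2$ forces the iteration to terminate in a constant number of rounds, giving a bounded $T$.

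The main obstacle is that without the $(\eta,D)$-upper-uniformity assumption the $p$-densities of small subpairs could blow up, and the energy would be unbounded, derailing the iteration; the hypothesis exists precisely to tame this. A secondary technical issue is that refinements can create parts below the $\eta n$ threshold where upper-uniformity fails to give any control. I would handle this by maintaining an exceptional pool of vertices in such small parts and choosing the initial partition fine enough (of order $t_0/\eta$) so that this pool stays negligible throughout the bounded number of refinement rounds. Finally, to obtain the equitable condition in (b), I would apply a uniform random equipartition of each part of the final $\mathcal{P}$ into pieces of a common size at the end; this costs a tiny amount of regularity but can be absorbed into the $\eps t^2$ budget in (c), while the initial choice of at least $t_0$ parts ensures $t\ge t_0$ in (a).
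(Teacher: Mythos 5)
The paper does not prove this lemma; it is stated as a citation to Kohayakawa's sparse regularity lemma~\cite{Kohayakawa97Szemeredi} and used as a black box in the proof of Theorem~\ref{thm:rpartuniv}. There is therefore no in-paper proof to compare against, but your sketch can be assessed on its own terms.

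Your outline is the standard energy-increment argument adapted to the sparse setting, which is indeed how Kohayakawa's lemma (and its multicolour extension) is proved, so the approach is correct in spirit. The $p$-normalized index, the $kD^2$ upper bound coming from upper-uniformity, and the defect Cauchy--Schwarz increment are all the right ingredients. Two points deserve a bit more care than your sketch gives them. First, the increment per irregular pair is not a flat $\eps^4$; the standard computation yields an increment of order $\eps^4$ \emph{times the relative weight} $|V_i||V_j|/n^2$ of that pair, and you need the hypothesis that at least $\eps t^2$ pairs are irregular (in some colour) to conclude that the \emph{total} index increase is bounded below by a positive constant depending only on $\eps$ and $k$. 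Second, the statement of Lemma~\ref{lem:sparse_colour_RL} is an \emph{equitable} partition with no exceptional class, whereas the natural output of the refinement iteration is a Szemer\'edi partition whose parts can be unbalanced and which, after each round, potentially accumulates small leftover pieces. The fix you propose (a final re-equipartition, absorbing the damage into the $\eps t^2$ budget) is legitimate, but you should make explicit that the number of parts is only bounded, not fixed, after the iteration, that the final re-equipartition is done into a common piece size that keeps the total part count between $t_0$ and some $T$ depending only on the input parameters, and that only an $O(\eps)$-fraction of pairs can lose regularity in this last step. None of these gaps is fatal; they are the usual bookkeeping in the equitable form of the regularity lemma, but a complete proof would need to spell them out.
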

The assumption on $(\eta,D)$-upper uniformity is easily seen to be satisfied for any moderate $\eta$ and $D>1$ by our random graph 
$G(n,p)$ and by the bijumbled graphs that we will be considering. The partition asserted by 
Lemma~\ref{lem:sparse_colour_RL} is called $\eps$-regular. We define a coloured multigraph $R(\delta)$ on $[t]$ (associating each $i\in[t]$ with the class $V_i$) as follows. We put an edge $ij$ into $E({R})$ in colour $c$ if the pair $(V_i,V_j)$ is $(\eps,\delta,p)$-regular in colour $c$ in $G$.

  We now prove that $G(n,p)$ is a.a.s.\ $r$-partition universal for $\cH(n,d,\Delta)$ provided $p\ge C\big(\tfrac{\log n}{n}\big)^{1/(2d)}$, which improves on the result of Kohayakawa, R\"odl, Schacht and Szemer\'edi~\cite{ChvRand} for graphs with $d\le\Delta/2$. A proof of the result of~\cite{ChvRand} can be obtained along very similar lines.
  \begin{proof}[Proof of Theorem~\ref{thm:rpartuniv}]
   We apply Lemma~\ref{lem:degen} with input $\Delta$, with $\DeltaRp=0$, $D=2d$, $\alpha=\tfrac{1}{2}$, $\zeta=1$, density $\tfrac1{2d}$, and $\kappa=1$. Lemma~\ref{lem:degen} returns $\eps>0$, which we suppose is small enough for the application of Tur\'an's Theorem below. We let $r_1$ be large enough for the $k$-coloured sparse regularity lemma with input $\eps$ and also for the applications of Tur\'an's and Ramsey's Theorem below, and obtain $C\ge r_1$ from Lemma~\ref{lem:degen}. Given $p$, we generate $\Gamma=G(Cn,p)$, suppose it satisfies the good event of Lemma~\ref{lem:degen} and take any $k$-colouring of its edges. We apply the sparse regularity lemma for coloured graphs, Lemma~\ref{lem:sparse_colour_RL}, to this coloured graph. We obtain a coloured reduced graph with at most an $\eps$-fraction of pairs not forming edges, in which we find a $4^{\Delta+1}$-vertex clique  by Tur\'an's Theorem, and in that a monochromatic $(\Delta+1)$-vertex clique by Ramsey's Theorem. Thus we have $\Delta+1$ equal-sized clusters $V_1,\ldots,V_{\Delta+1}$, each of size at least $n$, such that there is a colour $c$ in which each pair of clusters is $(\eps,\tfrac{1}{2r},p)$-regular. Let $G$ be the subgraph of $\Gamma[V_1,\dots,V_{\Delta+1}]$ containing all edges of colour $c$, let $R=K_{\Delta+1}$ and $R'$ be the empty graph.
   
   Given $H\in\cH(n,\Delta)$, by Theorem~\ref{thm:HajSze} we can find an equipartition of $V(H)$ into $\Delta+1$ independent sets, each of size at most $\lceil n/(\Delta+1)\rceil<n/2$. We `pad' each set by adding at least $n/2$ isolated vertices to obtain a size-compatible $R$-partition $X_1,\ldots,X_{\Delta+1}$, and designate the isolated vertices as potential buffer vertices which come last in the degeneracy order $\tau$ of $H$. Then the result follows by Lemma~\ref{lem:degen}, with $\tau$ being a $(D,p,\eps n/r)$-bounded order on $V(H)$.
  \end{proof}
  
  \begin{proof}[Sketch proof of Theorem~\ref{thm:rpartunivdegen}]
   The same approach as for Theorem~\ref{thm:rpartuniv} works, replacing Lemma~\ref{lem:degen} with Lemma~\ref{lem:psr_main}. It is easy to check that the upper-uniformity condition of Lemma~\ref{lem:sparse_colour_RL} is satisfied for $\Gamma$ with the bijumbledness condition of the theorem.
  \end{proof}
  
  \begin{proof}[Sketch proof of Theorem~\ref{thm:folkmantype}]
   We follow the same approach as in the proof of Theorem~\ref{thm:rpartuniv}, with the exceptions that we use $p=C\big(\tfrac{\log n}{n}\big)^{1/\Delta}$, that we use Lemma~\ref{lem:rg_image} instead of Lemma~\ref{lem:degen}, and that after generating $\Gamma=G(n,p)$ which satisfies the good event of Lemma~\ref{lem:rg_image}, we form $G$ by deleting a minimum number of edges to remove all copies of $K_{2\Delta,2\Delta}$. The expected number of copies of $K_{2\Delta,2\Delta}$ in $G(n,p)$ is $O\big(p^{4\Delta^2}n^{4\Delta}\big)=o(pn^2)$, so by Markov's inequality we see that a.a.s.\ we delete $o(pn^2)$ edges of $\Gamma$ to form $G$. This $G$ is the claimed $r$-partition universal graph for $\cH(n,\Delta)$. The remainder of the proof of Theorem~\ref{thm:rpartuniv} proves the partition universality, since the number of edges deleted is too small to destroy regularity of any pair of clusters.
  \end{proof}

  \section{Maker-Breaker games}
   The proof of Theorem~\ref{thm:MakerBreaker} uses the result of Ferber, Krivelevich and Naves~\cite{FKN}, which informally says that if $p\ll b^{-1}$ is not too small, there is a (randomised) Maker strategy for the $1:b$ game on $K_n$ which gives Maker a subgraph of $G(n,p)$ with minimum degree very close to $pn$. More formally, they prove the following.
   \begin{theorem}\label{thm:FKNmaker} For any $\frac{\log n}{n}\ll p\ll b^{-1}$, there is a randomised Maker strategy which, for any fixed strategy of Breaker in the $1:b$ game on $K_n$, generates a graph~$\Gamma$ from the distribution $G(n,p)$, and a spanning subgraph $G$ of edges obtained by Maker which a.a.s.\ has minimum degree $(1-o(1))pn$.
   \end{theorem}
   This reduces the proof to showing a (far from optimal in terms of edge deletions) local resilience result for the graph classes we consider.
   
   \begin{proof}[Sketch proof of Theorem~\ref{thm:MakerBreaker}]
    We begin by sketching the proof of~\ref{thm:MakerBreaker:b}, i.e.\ that Maker obtains $\cH'(n,\Delta)$-universality. We take an equipartition of $[n]$ into $\Delta+1$ clusters $V_1$, \ldots, $V_{\Delta+1}$. We let $\eps>0$ be sufficiently small for Lemma~\ref{lem:rg_image}, with no vertices image restricted, and we set $\eps^*\ll\eps$. Suppose $C$ is large enough for Lemma~\ref{lem:rg_image}, and $p=C\big(\frac{\log n}{n}\big)^{1/\Delta}$. Let $c>0$ be sufficiently small, and suppose $b\le c\big(\tfrac{n}{\log n}\big)^{1/\Delta}$. Then a.a.s.\ in the $1:b$ game on $K_n$, Maker obtains a subgraph $G$ of $\Gamma=G(n,p)$ with $\delta(G)\ge(1-\eps^*)pn$ by Theorem~\ref{thm:FKNmaker}. It is easy to check, using Chernoff's inequality, that each vertex has $(1\pm2\eps^*)pn/(\Delta+1)$ neighbours in both $\Gamma$ and $G$ in each set $V_i$.
    
    Since the total number of edges of $\Gamma$ which are not in $G$ leaving any vertex is at most $4\eps^*pn$, the total number of such edges between any two vertex sets $X$ and $Y$ is at most $4\eps^*pn|X|$. An easy application of the Chernoff bound shows that $(X,V_i)$ is $\big(\eps^*,\tfrac34,p\big)$-regular in $\Gamma$ for each $V_i$ and disjoint vertex set $X$ with $|X|\ge pn/(4\Delta)$, and since $\eps^*\ll\eps$ we conclude that any such $(X,V_i)$ is also $\big(\eps,\tfrac12,p\big)$-regular in $G$.
    
    Letting $R=R'=K_{\Delta+1}$, we see that $G$ is $\big(\eps,\tfrac12,p\big)$-super-regular on $R'$ and has one-sided inheritance on $R'$. Given any graph $H\in\cH'(n,\Delta)$ we use Theorem~\ref{thm:HajSze} to find an equipartition of $V(H)$ into $\Delta+1$ independent sets $X_1,\ldots,X_{\Delta+1}$. Then the conditions of Lemma~\ref{lem:rg_image} are satisfied, so we have $H\subseteq G$ as desired.
    
    For~\ref{thm:MakerBreaker:a}, i.e.\ the almost-spanning $\cH(n,\Delta)$-universality game, we repeat the same argument, replacing $[n]$ with $[(1+\delta)n]$ and `padding' each equipartition class of $H$ with independent vertices to be size-compatible with $\cV$. We take these independent vertices to be the potential buffer vertices, so that two-sided inheritance is not needed.
    
    For~\ref{thm:MakerBreaker:c} and~\ref{thm:MakerBreaker:d}, the degeneracy statements, we replace Lemma~\ref{lem:rg_image} with Lemma~\ref{lem:degen}, taking respectively $D=2d$ for the almost-spanning universality and $D=2d+1$ for the spanning universality. Again, similar to Theorem~\ref{thm:rpartuniv}, it is easy to verify that the degeneracy order of $H\in\cH(n,d,\Delta)$ is     
 an appropriately bounded order in the sense of Definition~\ref{def:Dpm_bdd_order}. 
 
    In both cases, since we have shown that Maker has a randomised strategy which wins with positive probability against any strategy of Breaker, it follows that Breaker does not have a winning strategy. Since finite draw-free games are determined, we conclude that Maker does have a winning strategy.
   \end{proof}
   
   Note that, by an analysis similar to the proof of Theorem~\ref{thm:rpartuniv}, one can show that Maker also succeeds with this strategy in making a graph which is $r$-partition universal for $\cH(cn,\Delta)$ (respectively, for $\cH(cn,d,\Delta)$) for some small $c>0$, matching the density of the best known constructions.
  
  \section{Resilience for low-bandwidth graphs}\label{sec:resil}
   For this section we need the minimum degree form of the sparse regularity lemma, which we quote from~\cite{BKT}, the paper in which the bipartite case of Theorem~\ref{thm:sparsebandwidth} is proved. To state it we need to define two concepts. First, an \emph{$\eps$-equipartition} of a vertex set $V$ is a partition $V=V_0\dcup\dots\dcup V_r$ such that $|V_0|\le\eps|V|$ and $|V_1|=\dots=|V_r|$. Second, if $G$ is a graph with vertex set $V$, then the \emph{$(\eps,d,p)$-lower-regular reduced graph of $G$}, with respect to a given $\eps$-equipartition $V=V_0\dcup\dots\dcup V_r$, is the graph on $[r]$ with edges $ij$ corresponding to $(\eps,d,p)$-lower-regular pairs $(V_i,V_j)$ in $G$.
   
   \begin{lemma}[\cite{BKT}, Lemma~4.4]\label{lem:mindegSRL}
    For all $\vartheta\in[0,1]$, $\eps>0$ and every integer $r_0$, there exists $r_1\ge 1$ such that for all $d\in[0,1]$ the following holds a.a.s.\ for $\Gamma=G(n,p)$ if $\log^4 n/(pn)=o(1)$. Let $G=(V,E)$ be a spanning subgraph of $\Gamma$ with $\deg_G(v)\ge\vartheta\deg_\Gamma(v)$ for all $v\in V$. Then there is an $\eps$-equipartition of $G$ with $(\eps,d,p)$-lower-regular reduced graph $R$ of minimum degree $\delta(R)\ge(\vartheta-d-\eps)v(R)$, and $r_0\le v(R)\le r_1$.
   \end{lemma}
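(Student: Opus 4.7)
The plan is to reduce the assertion to the standard (one-colour) sparse regularity lemma applied to $G$ and then to exploit the deterministic pseudorandomness of $\Gamma=G(n,p)$ to transfer the minimum-degree hypothesis of $G$ into a minimum-degree statement on the reduced graph. First I would choose auxiliary constants $\eps',d'>0$ with $\eps'\ll\eps,d'\ll d$, apply the sparse regularity lemma to $G$ (viewed as a $(2,o(1))$-upper-uniform graph, which holds a.a.s.\ for $G(n,p)$ with $\log^4 n/(pn)=o(1)$) to obtain an $\eps'$-equipartition $V=V_0\dcup V_1\dcup\dots\dcup V_r$ with $r_0\le r\le r_1'$ in which all but at most $\eps' r^2$ pairs $(V_i,V_j)$ are $(\eps',p)$-lower-regular in $G$. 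One then defines $R$ on $[r]$ by joining $ij$ exactly when $(V_i,V_j)$ is $(\eps,d,p)$-lower-regular in $G$, equivalently when the pair is $(\eps',p)$-lower-regular and has $p$-density at least $d$; note that trivially $\eps'$-lower-regular and the density condition together imply $(\eps,d,p)$-lower-regular for $\eps'$ sufficiently small.

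Next I would use standard Chernoff bounds and a union bound on $\Gamma=G(n,p)$ to record the deterministic event $\cE$ that (a) $\deg_\Gamma(v)=(1\pm\eps')pn$ for every $v$, (b) $e_\Gamma(X,Y)\le (1+\eps')p|X||Y|$ for every pair of disjoint sets $X,Y$ with $|X|,|Y|\ge\eps' n/r_1$, and (c) the number of vertices $v$ with $\deg_\Gamma(v;V_j)\ne(1\pm\eps')p|V_j|$ is at most $\eps'n/r_1$ for each $j$. All three hold a.a.s.\ under the hypothesis $\log^4 n/(pn)=o(1)$. Call a vertex $v\in V_i$ \emph{bad} if it fails (c) for some $j$, and otherwise \emph{good}; the number of bad vertices is at most $\eps n/2$ after summing over $j$. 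For each good $v\in V_i$ and each $j\in[r]$ one then splits the contribution $\deg_G(v;V_j)$ into three cases: non-regular $j$ (of which there are $\le\eps' r$ incident to $i$ outside an exceptional set of $i$'s by averaging), low-density $j$ (contributing at most $dp|V_j|$), and remaining $j$ in $N_R(i)$ (contributing at most $(1+\eps')p|V_j|$ by (c)). Summing, and using $\deg_G(v)\ge\vartheta\deg_\Gamma(v)\ge\vartheta(1-\eps')pn$, forces $|N_R(i)|\ge(\vartheta-d-\eps)r$ for every cluster $V_i$ outside an exceptional set of at most $\eps' r$ clusters.

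To finish, I would absorb the exceptional clusters together with all bad vertices into the exceptional part $V_0$; this enlarges $V_0$ by at most $\eps n/2+\eps' r\cdot n/r=o(n)$, which is comfortably within the $\eps n$ budget allowed for an $\eps$-equipartition after a small rebalancing of the surviving $V_i$ (one can equalise by moving at most a constant number of vertices per cluster into $V_0$, which affects regularity and density only by a further $o(1)$). The surviving reduced graph has minimum degree at least $(\vartheta-d-\eps)v(R)$ as required, and $r_0\le v(R)\le r_1$ for a suitable $r_1$ depending on the constants from the sparse regularity lemma. The main obstacle is purely bookkeeping: one has to choose the hierarchy $\eps'\ll\eps,d'\ll d$ so that all the $o(1)$-type losses (from Chernoff slack, from non-regular pairs, from the switch between $(\eps',p)$-regularity and $(\eps,d,p)$-lower-regularity, and from the rebalancing after moving vertices to $V_0$) are simultaneously absorbed into the single slack $\eps$ appearing on the right-hand side. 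No step requires a new idea beyond those already standard in the sparse regularity literature.
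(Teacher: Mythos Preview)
The paper does not prove this lemma at all; it is quoted verbatim from~\cite{BKT} (their Lemma~4.4), so there is no ``paper's own proof'' to compare against. Your sketch follows the standard route for deriving a minimum-degree sparse regularity lemma from the plain sparse regularity lemma, and this is indeed how it is done in~\cite{BKT}.

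That said, there is one genuine slip in your argument. You write that for a good vertex $v\in V_i$ and a low-density index $j$ (i.e.\ $(V_i,V_j)$ is $(\eps',p)$-regular but has $p$-density below $d$), the contribution $\deg_G(v;V_j)$ is at most $dp|V_j|$. This does not follow: the density condition bounds the \emph{average} degree from $V_i$ into $V_j$, not the degree of any particular vertex, and lower-regularity gives you no upper bound on individual degrees. The fix is to run the counting argument at the cluster level rather than vertex by vertex: sum $\deg_G(v)$ over $v\in V_i$, subtract the edges inside $V_i$ and to $V_0$ (both small by your property~(b)), and then split $e_G(V_i,V_j)$ over $j$ into the three cases. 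For irregular $j$ and for $j\in N_R(i)$ you bound $e_G(V_i,V_j)\le e_\Gamma(V_i,V_j)\le(1+\eps')p|V_i||V_j|$ via~(b); for low-density $j$ you use $e_G(V_i,V_j)<dp|V_i||V_j|$ by definition. Dividing through by $p|V_i|\cdot n/r$ gives $|N_R(i)|\ge(\vartheta-d-\eps)r$ for non-exceptional $i$. Once you make this change, your property~(c) and the whole ``good vertex'' apparatus become unnecessary; only~(a) and~(b) are needed. (Minor remark: your constant $d'$ is introduced but never used.)
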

   
   Using this, we can sketch the proof of Theorem~\ref{thm:sparsebandwidth}. The strategy consists of modifying the argument in~\cite{BST}, the paper in which the Bandwidth Theorem was proved. We will not state formally the lemmas from that paper which we require; the reader not familiar with that argument will wish to read the following sketch in conjunction with Section~2 of~\cite{BST}, in which the lemmas are formally stated and their use outlined. The changes to their strategy we make are as follows. Their `Lemma for $G$' is replaced with Lemma~\ref{lem:mindegSRL} (since we do not need most of the properties of the `Lemma for $G$' in this setting), and Theorem~\ref{thm:BST} finds a `backbone graph' in the resulting $(\eps,d,p)$-reduced graph $R$. We can use their `Lemma for $H$' as written, and in this setting it gives a partition of $V(H)$ which is directly suitable to apply the blow-up lemma, Lemma~\ref{lem:rg_image}, yielding the desired embedding of $H$ into $G$.
   
   \begin{proof}[Sketch proof of Theorem~\ref{thm:sparsebandwidth}] 
    Given $\gamma>0$ and $\Delta$, we choose $d\ll\gamma$ and $r_0\gg\Delta$. We apply Lemma~\ref{lem:rg_image} with input $\Delta$, $\DeltaRp=\Delta_J=0$, $\alpha=\gamma/2$, $\zeta=1$, $d$ and $\kappa=2$. Lemma~\ref{lem:rg_image} returns $\rho,\eps>0$, of which we are only interested in $\eps$. We assume, without loss of generality, that $\eps\ll d$. We let $r_1$ be returned by Lemma~\ref{lem:mindegSRL} for input $\vartheta=\tfrac{\chi(H)-1}{\chi(H)}+\gamma$, $\eps$ and $r_0$. Finally, we choose $\beta\ll r_1^{-1}$, and let $C$ be returned by Lemma~\ref{lem:rg_image} for input $r_1$.
    
    Now, given $p\ge C\big(\tfrac{\log n}{n}\big)^{1/\Delta}$, we generate $\Gamma=G(n,p)$, and assume it satisfies the conditions of Lemmas~\ref{lem:rg_image} and~\ref{lem:mindegSRL} for the parameters given above. Let a graph $H$ on $(1-\gamma)n$ vertices with $\Delta(H)\le\Delta$ and $\bw(H)\le\beta n$ be given. Let $r=\chi(H)\le\Delta+1$. Let a spanning subgraph $G$ of $\Gamma$ with minimum degree $\vartheta p n=\big(\tfrac{r-1}{r}+\gamma\big)pn$ be given.
    
    We apply Lemma~\ref{lem:mindegSRL} to $G$, obtaining an $(\eps,d,p)$-reduced graph $R$, with $r_0\le v(R)\le r_1$, and minimum degree at least $\big(\tfrac{r-1}{r}+\tfrac{\gamma}{2}\big)v(R)$. By Theorem~\ref{thm:BST}, we can find in $R$ a spanning \emph{backbone graph}: This consists of a collection of vertex-disjoint copies of $K_r$, which come in a linear order, such that between one copy of $K_r$ and the next there is a copy of $K_{r,r}$ with a perfect matching removed. This graph is $r$-colourable and has maximum degree $3(r-1)\le 3\Delta$ and bandwidth at most $2r\le 2\Delta$, so that Theorem~\ref{thm:BST} is indeed applicable provided $r_0$ is sufficiently large compared to $\Delta$.
    
     It is quite easy to find a homomorphism from $H$ to the backbone graph. We simply divide $V(H)$ up into intervals in the bandwidth order, and map successive intervals of $V(H)$ to successive copies of $K_r$ in the backbone graph, choosing vertices of each copy of $K_r$ according to a fixed $r$-colouring of $H$. The r\^ole of the bandwidth restriction here is to ensure that edges of $H$ either lie within one interval, or go from one interval to the next, so that we only need edges in $R$ from one copy of $K_r$ in the backbone graph to the next in order to obtain a homomorphism. The point of fixing an $r$-colouring of $H$ is that the $i$th vertex in one copy of $K_r$ and that in the next are not adjacent in the backbone graph (since a perfect matching was removed from the $K_{r,r}$ between them) and we need to ensure that no edge of $H$ will be assigned to have one endpoint in each.
     
     Unfortunately, this is not quite enough: the colour classes of $H$ could be quite unbalanced, so that the homomorphism we have just described maps many more vertices of $H$ to some vertices of $R$ than others. In order to repair this, we need to `rebalance', which requires that each copy of $K_r$ in the backbone graph extends to a copy of $K_{r+1}$ using some other vertex of $R$ (which may be anywhere in the backbone graph). The Lemma for $H$ of~\cite{BST} now states that given a backbone graph whose $r$-cliques extend to copies of $K_{r+1}$, and $H$, there is a homomorphism $f$ from $V(H)$ to $V(R)$ in which each vertex of $R$ is the image of approximately the same number of vertices of $H$. Since $\eps\ll\gamma$, since $v(H)\le(1-\gamma)n$ and by choice of $\beta$ sufficiently small, the `approximately' in this statement in particular guarantees $\big|f^{-1}(i)\big|\le \big(1-\tfrac{\gamma}{2}\big)|V_i|$ for each $i\in V(R)$.
   
   It remains only to verify that the conditions of Lemma~\ref{lem:rg_image} can be met in order to find an embedding of $H$ into $G$. The idea is simple: we  `pad' $H$ by adding $n-v(H)$ isolated vertices, and give a partition $\cX=(X_i)_{i\in V(R)}$ of $V(H)$ in which $X_i$ consists of $f^{-1}(i)$ together with isolated vertices such that $|V_i|=|X_i|$. We designate the isolated vertices in each $X_i$ as the potential buffer vertices $\tX_i$. It then follows that the empty graph $R'$ on $V(R)$ with no edges, together with these potential buffer vertices, give us a $(\gamma/2,R')$-buffer for $(H,\cX)$. By construction, the partition $\cX$ is an $R$-partition of $H$. By definition, $\cV$ is an $(\eps,d,p)$-regular $R$-partition of $G$, and the super-regularity and inheritance properties required of $R'$ are satisfied vacuously. Finally, we do not image restrict any vertices of $H$, so that the restriction pair properties are satisfied vacuously. Thus Lemma~\ref{lem:rg_image} gives us the desired embedding of $H$ into $G$.
   \end{proof}
   
   We stress that the main difficulty in the proof of Theorem~\ref{thm:BST} is to obtain a spanning embedding; an almost-spanning embedding is much easier. It is similarly, and for similar reasons, much harder to prove Theorem~\ref{thm:ABET} than Theorem~\ref{thm:sparsebandwidth}. It is also worth noting that in the proof given in~\cite{BST}, there is a substantial amount of routine technical work to do in between obtaining size-compatible partitions of $H$ and $G$ and using the blow-up lemma to get an embedding of $H$ into $G$, which is encapsulated in the so-called partial embedding lemma. This work is necessary because the blow-up lemma of~\cite{KSS_bl} cannot be applied to the entire reduced graph. Our Lemma~\ref{lem:rg_image} can be applied to the entire reduced graph, and thus replaces both the partial embedding lemma and the blow-up lemma of~\cite{KSS_bl}.
   
  \section{Robustness of the Bandwidth Theorem}\label{sec:robust}
   As with the proof of Theorem~\ref{thm:sparsebandwidth}, the proof of Theorem~\ref{thm:robustbandwidth} amounts to modifying the proof of Theorem~\ref{thm:BST}. However, this time we need to rely on rather more of the machinery built up in~\cite{BST}. Again, the reader not familiar with the argument there will wish to read this sketch in conjunction with Section~2 of~\cite{BST}. 
  
   \begin{proof}[Sketch proof of Theorem~\ref{thm:robustbandwidth}]
    We choose constants as in~\cite[Proof of Theorem~2]{BST}, with the exception that we obtain $\eps'$ from Lemma~\ref{lem:rg_image} and $\eps$ from Theorem~\ref{thm:OneSideInherit} and Corollary~\ref{cor:TwoSideInherit} for input $\eps'$ rather than from the blow-up lemma of~\cite{KSS_bl} and the partial embedding lemma of~\cite{BST}. We then follow the proof given there up to the point at which in~\cite{BST} the first vertices of $H$ are embedded using the partial embedding lemma. Let us recap what this amounts to. We are given graphs $G$ and $H$ satisfying the conditions of Theorem~\ref{thm:robustbandwidth}. We apply the Lemma for $G$ of~\cite{BST}, which first returns a partition of $V(G)$ into parts $V'_{i,j}$, where $1\le i\le k$ and $1\le j\le\chi(H)$, with the following properties. Note that the Lemma for $G$ does not explicitly return this partition, but it is convenient for the explanation to mention its existence; it also does not explicitly give the upper bound on sizes of the parts, but this follows from the proof. First, $k\le K_0=K_0(\gamma,\chi(H))$ is bounded in terms of $\gamma$ and $\chi(H)$. Second, the $(\eps,d,1)$-reduced graph $R$ of this partition, whose vertex set is $[k]\times[\chi(H)]$ matching the indices of the partition, has minimum degree at least $\big(\tfrac{\chi(H)-1}{\chi(H)}+\tfrac{\gamma}{2}\big)v(R)$. Third, $R$ contains a spanning backbone graph: that is, if $|i-i'|\le 1$ and $j\neq j'$ then $(i,j)(i',j')\in E(R)$. Fourth, the parts $V'_{i,j}$ and $V'_{i,j'}$ differ in size by at most one, and each part has size between $(1-\eps)n/k\chi(H)$ and $2n/k\chi(H)$.
    
    Now we apply the Lemma for $H$ of~\cite{BST}, with the reduced graph $R$ and the integer partition of $n$ given by the $|V'_{i,j}|$. This gives us a homomorphism $f$ from $H$ to $R$, and a set of \emph{special vertices} $Z\subset V(H)$, with the following properties, which depend on a quantity $\xi$ satisfying $\beta\ll\xi\ll\eps,K_0^{-1}$. First, $|Z|\le k\chi(H)\xi n$. Second, for each $(i,j)\in V(R)$ we have $|f^{-1}(i,j)|=|V_{i,j}|\pm\xi n$. Third, if $uv\in H$ has neither endpoint in $Z$, then the first coordinates of $f(u)$ and $f(v)$ are equal, in other words $u$ and $v$ are mapped to vertices of the same clique in $R$. We set $X_{i,j}=f^{-1}(i,j)$ for each $(i,j)\in R$. By construction, the resulting partition $\cX$ is an $R$-partition of $H$. 
    
    Next, we return to the Lemma for $G$, which guarantees, given the sizes of the parts $|X_{i,j}|$ satisfying the above properties, a partition $\cV$ of $V(G)$ with parts $V_{i,j}$ for $(i,j)\in V(R)$ which has the following properties. First, $|V_{i,j}|=|X_{i,j}|$ for each $(i,j)\in V(R)$. Second, $R$ is an $(\eps,d,1)$-reduced graph for $G$ with respect to $\cV$. Third, $G$ is super-regular on the graph $R'$ whose edges are $(i,j)(i,j')$ for $i\in[k]$ and $1\le j<j'\le\chi(H)$, in other words on the $K_{\chi(H)}$-factor in the backbone graph.
    
    This is the point at which, in~\cite{BST}, the embedding of $H$ into $G$ begins. It is worth remarking that, because Lemma~\ref{lem:rg_image} applies to the entire reduced graph, we could complete their proof by simply verifying the conditions of Lemma~\ref{lem:rg_image} for $p=1$ (much as we do below), rather than needing the technical work of the partial embedding lemma.
    
    Recall that we wish to show that $H$ is a subgraph of $G_p$, where $p\ge C\big(\tfrac{\log n}{n}\big)^{1/\Delta}$ for some suitably large $C$. Observe that $E(G_p)$ is distributed as $E(G)\cap E(G(n,p))$. We now generate $\Gamma=G(n,p)$. Asymptotically almost surely, the good event of the blow-up lemma for random graphs, Lemma~\ref{lem:rg_image} occurs for $\Gamma$. We need to verify that $H$ and $G_p=G\cap\Gamma$ a.a.s.\ satisfy the conditions of Lemma~\ref{lem:rg_image}. We begin with $H$. Recall that $\cX$ is an $R$-partition of $H$. By choice of $\xi$, it is $4$-balanced. Also by choice of $\xi$, less than half of the vertices of any given $X_{i,j}$ are at distance two or less from $Z$. We let $\tX_{i,j}$ be the vertices of $X_{i,j}$ at distance three or more from $Z$. This gives us a $\big(\tfrac12,R'\big)$-buffer for $(H,\cX)$.
    
    We now need to show that a.a.s.\ $\cV$ is an $(\eps',d,p)$-regular $R$-partition of $G\cap\Gamma$, and that a.a.s.\ it is $(\eps',d,p)$-super-regular and has one- and two-sided inheritance on $R'$. The first of these is an easy consequence of Theorem~\ref{thm:chernoff} (Chernoff's inequality) and the fact that $\cV$ is an $(\eps,d,1)$-regular $R$-partition of $G$. Indeed, we can simply take the union bound over the at most $2^{2n}$ choices of pairs of subsets of $V(G)$ which we need to have density at least $(d-\eps')p$. Since $G$ has super-regularity on $R'$, again using Theorem~\ref{thm:chernoff} and taking the union bound over the choices of vertices in $V(G)$ and $V(R')$, we see that a.a.s.\ $\Gamma\cap G$ has super-regularity on $R'$. Next we show that a.a.s.\ if $v\in V_i$ and $ij,jk\in R'$, so $\big(N_{\Gamma\cap G}(v;V_j),V_k\big)$ is $(\eps',d,p)$-regular in $G$. Indeed, by Theorem~\ref{thm:OneSideInherit}, with $\beta=\tfrac14$, the probability that this fails is at most $2^{-pn}$, so that we can take a union bound over all choices of $i,j,k$ and $v\in V_i$. Similarly, using Corollary~\ref{cor:TwoSideInherit}, if also $ik\in R'$, a.a.s.\ $\big(N_{\Gamma\cap G}(v;V_j),N_{\Gamma\cap G}(v;V_k)\big)$ is $(\eps',d,p)$-regular in $G$.
    
    We do not image restrict any vertices of $H$, so the restriction pair condition of Lemma~\ref{lem:rg_image} is vacuously satisfied. Thus, applying Lemma~\ref{lem:rg_image}, we see $H\subset G\cap\Gamma$ as desired.
   \end{proof}
   
\chapter{Concluding remarks}
\label{chap:concl}

In this chapter we start by formulating a consequence of our random graphs
blow-up lemma for dense graphs for later reference.
We then discuss when our main results might be improved and when they are sharp (Section~\ref{sec:opt}). We explain how to obtain (randomised) algorithmic versions of our blow-up lemmas (Section~\ref{sec:alg}). We give a version of our random graphs blow-up lemma for directed graphs (Section~\ref{sec:dir}) and sketch how one might allow for coloured graph settings, and mention hypergraphs (Section~\ref{sec:hyp}). Finally, we give some open problems (Section~\ref{sec:open}).

\section{The dense case}

One of the limitations of the dense blow-up lemma of~\cite{KSS_bl} is that
it can only be applied to small parts of the reduced graph of a partition
given by the regularity lemma (because the regularity $\eps$
that the blow-up lemma requires depends on the number of clusters to which
it is applied). Hence, when using this lemma to embed spanning graphs one
usually applies the blow-up lemma several times to different small parts of
the reduced graph after setting up suitable connections. In order to appropriately
combine the connections with these blow-up lemma applications one uses
image restrictions.

Our blow-up lemmas can be applied
to the whole reduced graph. Since this can simplify blow-up lemma
applications also in the dense case, we state a dense version here that
is a direct consequence of Lemma~\ref{lem:rg_image}. For easier reference
we first repeat the relevant definitions needed for this lemma, which are simply
the $p=1$ case of the definitions introduced earlier.
We remark that in particular our definition of restriction pairs simplifies to
image restrictions (in the usual sense) in this case.

Let~$G$, $H$, $R$, $R'$ be graphs with $R'\subset R$. Let
$\cX=\{X_i\}_{i\in[r]}$ be a partition of $V(H)$ and
$\cV=\{V_i\}_{i\in[r]}$ be a partition of $V(G)$, let
$\tcX=\{\tX_i\}_{i\in[r]}$ be a family of subsets of $V(H)$, and let
$\cI=\{I_x\}_{x\in V(H)}$ be a collection of subsets of $V(G)$.
\begin{itemize}
\item We say that the partition~$\cX$ is \emph{$\kappa$-balanced} if there exists
$m\in\mathbb N$ such that we have $m\le|X_i|\le\kappa m$ for all $i,j\in[r]$.
The partitions~$\cX$ and~$\cV$ are \emph{size-compatible} if $|X_i|=|V_i|$ for all $i\in[r]$.
\item
We say $(H,\cX)$ is an \emph{$R$-partition} if each part of~$X$ is nonempty, and whenever
there are edges of~$H$ between~$X_i$ and~$X_j$ the pair~$ij$ is an edge of~$R$.
\item
The family $\tcX=\{\tX_i\}_{i\in[r]}$  is an
\emph{$(\alpha,R')$-buffer} for $(H,\cX)$ if for each $i\in[r]$ we have
$\tX_i\subset X_i$ and
 $|\tX_i|\ge\alpha|X_i|$, and for each $x\in\tX_i$ and $xy,yz\in E(H)$ with
 $x\in X_j$ and $z\in X_k$ we have $ij,jk\in E(R')$.
\item
$(G,\cV)$ is an \emph{$(\eps,d)$-regular $R$-partition} if for each
$ij\in E(R)$ the pair $(V_i,V_j)$ is \emph{$(\eps,d)$-regular}, that is,
$d(V'_i,V'_j)\ge d-\eps$ for all $V'_i\subset V_i$ with $|V'_i|\ge\eps|V_i|$ and
$V'_j\subset V_j$ with $|V'_j|\ge\eps|V_j|$.
\item
$(G,\cV)$ is \emph{$(\eps,d)$-super-regular on~$R'$} if for each
$ij\in E(R')$ the pair $(V_i,V_j)$ is \emph{$(\eps,d)$-super-regular}, that is, it
is $(\eps,d)$-regular and for every $u\in V_i$ we have $\deg_G(u;V_j)\ge(d-\eps)|V_j|$
and for every $u\in V_j$ we have $\deg_G(u;V_i)\ge(d-\eps)|V_i|$.
\item
We say that~$\cI$ is a family of
\emph{$(\rho,\zeta)$-image restrictions} if the
  following properties hold for each $i\in[r]$ and $x\in X_i$.
  \begin{enumerate}[label=\abc]
    \item The set $X_i^*\subset X_i$ of  \emph{image restricted}
      vertices in~$X_i$, that is, vertices such that $I_x\neq V_i$, has size $|X_i^*|\leq\rho|X_i|.$ 
    \item $|I_x|\ge\zeta|V_i|$.
  \end{enumerate}
\end{itemize}

\begin{lemma}[Dense blow-up lemma for the whole reduced graph]
  For all $\Delta,\DeltaRp,\kappa\ge 1$ and $\alpha,\zeta, d>0$
  there exists $\eps,\rho>0$ such that for all $r_1$ there is an $n_0$ such that
  for all $n\ge n_0$ the following holds.
  Let $R$ be a graph on $r\le r_1$ vertices and let $R'\subset R$ be a spanning
  subgraph with $\Delta(R')\leq \DeltaRp$.
  Let~$H$ and~$G$ be graphs with $\kappa$-balanced
  size-compatible vertex partitions
  $\cX=\{X_i\}_{i\in[r]}$ and $\cV=\{V_i\}_{i\in[r]}$, respectively, which have
  parts of size at least $m\ge n/(\kappa r_1)$.
  Let $\tcX=\{\tX_i\}_{i\in[r]}$ be a family of subsets of $V(H)$
  and let $\cI=\{I_x\}_{x\in V(H)}$ be a collection of subsets of $V(G)$.
  Suppose that
  \begin{enumerate}[label=\itmarab{B}]
  \item $\Delta(H)\leq \Delta$, $(H,\cX)$ is an $R$-partition, 
    and $\tcX$ is an $(\alpha,R')$-buffer for $(H,\cX)$,
  \item $(G,\cV)$ is an $(\eps,d)$-regular $R$-partition, which
    is $(\eps,d)$-super-regular on~$R'$,
  \item $\cI$ is a family of $(\rho,\zeta)$-image restrictions.
  \end{enumerate}
  Then there is an embedding $\psi\colon V(H)\to V(G)$ such that $\psi(x)\in
  I_x$ for all $x\in V(H)$.
\end{lemma}

This lemma simplifies even further if image restrictions are not necessary, which
is often the case in applications since this lemma can be applied to the reduced
graph as a whole.

Note that the same lemma is proved as the $p=1$ case of each of Lemmas~\ref{lem:degen} ($D$ can be taken equal to $2\Delta$ in this case, so that~\ref{dbulcon:4} becomes trivial) and~\ref{lem:psr_main}. This is useful to note in that the algorithms used for the embedding are slightly different, and one might be easier to analyse than another for future extensions.

 \section{Optimality of our main results}\label{sec:opt}
  In this section we discuss when our three blow-up lemmas can be improved. We expect that the statements of each lemma remain true for significantly smaller probabilities and weaker bijumbledness requirements in general, but we will observe that in some cases we cannot improve much, or to do so would require some additional conditions on the graphs into which we embed.
 
  \subsection{The blow-up lemma for random graphs}\label{sec:opt:random}
   In the case $\Delta=2$ and $H$ contains a triangle, our result is optimal in terms of $p$ up to the $\log$ factor, since it is well known that when $p=o(n^{-1/2})$ one can delete all triangles from $G(n,p)$ by removing only $o(pn^2)$ edges---so any `blow-up-type' statement will be false.
   
\smallskip   
   
   When $\Delta=3$, the statement of Lemma~\ref{lem:rg_image} is optimal up to the $\log$ factor. As we show in the next paragraph, in the event that $H$ is a spanning $K_4$-factor, for small $c>0$ and $p=cn^{-1/3}$, it is typically possible to find a subgraph $G$ of $G(4n,p)$ with the following properties. There is a partition $V(G)=V_1\cup\dots\cup V_4$ with each set of size $n$. Letting $R=R'=K_4$, the partition is an $(\eps,\tfrac12,p)$-regular $R$-partition, which is super-regular, with one- and two-sided inheritance on $R'$. However, there is a vertex of $G$ which is in no $K_4$, and thus there is no $K_4$-factor covering $G$.
   
   To construct $G$, we let $V_1,\ldots,V_4$ be an equipartition of $[4n]$, fix a vertex $v\in V_1$, reveal $G(4n,p)$, remove all edges within each part, and then remove from $N_G(v)$ all triangles by deleting a minimum number of edges. It is not hard to check using the Chernoff bound, Theorem~\ref{thm:chernoff}, that a.a.s.\ before this final step, we have an $(\eps,\tfrac34,p)$-regular $R$-partition with super-regularity and one- and two-sided inheritance on $R'$. It is similarly easy to check that a.a.s.\ before the final step the degree of each vertex in $V_i$ to any other $V_j$ is close to $pn$, and any pair of vertices has about $p^2 n$ common neighbours in other parts. Thus the expected number of triangles in $N_G(v)$ is $p^3(pn)^3=c^6n$, and the actual number is somewhat concentrated, so that a.a.s.\ in the final step we delete at most $2c^6n$ edges. This is far too small to destroy one-sided inheritance, and, for $c>0$ sufficiently small, too small to destroy two-sided inheritance. It remains to show that we have not removed too many edges from any one vertex. But the expected number of edges removed from $u\in N_G(v)$ is at most $p(p^2 n)^2=c^3p^2n$; the actual number is by Theorem~\ref{thm:chernoff} exponentially concentrated, and hence a.a.s.\ we remove $o(pn)$ edges from each $u$, so do not destroy super-regularity.
   
   Perhaps inserting an extra condition into the statement, such as insisting that all vertex neighbourhoods contain $\Omega(p^6n^3)$ triangles, would allow one to prove a blow-up lemma which allows for embedding a $K_4$-factor down to the natural limit $p=n^{-2/5}$. However, it would not always be possible to obtain such a condition. In~\cite{ABKNP} it is shown that Breaker wins the $K_4$-factor game on $K_n$ with a bias $13n^{1/3}$ (Lemma~\ref{lem:rg_image} is used to show that Maker wins when the bias is $c\big(\tfrac{n}{\log n}\big)^{1/3}$ for some small $c>0$). It follows that any `extra condition' is one which Maker cannot guarantee to obtain with bias $13n^{1/3}$.
   
\smallskip   
   
   Finally, for $\Delta\ge 4$ one might hope that the statement of our blow-up lemma remains true down to $p=Cn^{-2/(\Delta+3)}$, this being the point at which one can generalise the above construction (and therefore the point at which the statement provably does not guarantee a $K_{\Delta+1}$-factor). Perhaps, optimistically, one might hope that there are some natural extra conditions which even allow $p\ge Cn^{-2/(\Delta+2)}$, this being the point at which one can remove all copies of $K_{\Delta+1}$ by deleting a tiny fraction of the edges of $G(n,p)$.   
   
   However, we believe that improving upon our result is likely to be very challenging. Even in the (much simpler and well-studied) setting of trying to prove $\cH(n,\Delta)$-universality of $G(n,p)$, until very recently no improvement had been made over what one can obtain from Lemma~\ref{lem:rg_image}. Very recently Ferber and Nenadov~\cite{FNuniv} were able to obtain a small improvement. In the almost-spanning setting, Conlon, Ferber, Nenadov and {\v{S}}kori{\'c}~\cite{CFNS15} could improve more on what follows from Lemma~\ref{lem:rg_image}, showing that the random graph $G((1+\gamma)n,p)$ is a.a.s.\ $\cH(n,\Delta)$-universal when 
  $p=\omega(n^{-1/(\Delta-1)}\log^5 n)$ for $\Delta\ge3$. But the improvement in the exponent one would desire is of order $\Delta^{-1}$ not $\Delta^{-2}$.
   
  \subsection{The blow-up lemma for bijumbled graphs}\label{sec:opt:jumbled}
   We do not believe the $(p,cp^{\max(4,(3\Delta+1)/2})$-bijumbledness requirement in Lemma~\ref{lem:psr_main} is optimal. Most of the proof would work with $(p,cp^{\Delta+2}n)$-bijumbledness, but we were not able to find a way to avoid the $\LNS$ property which requires the stronger condition. Nevertheless we conjecture that $(p,cp^{\Delta+1}n)$-bijumbledness suffices (we expect that the extra factor of $p$ can be gained by using reserved cliques as in the proof of Lemma~\ref{lem:rg_image}). It is still not clear that this would be optimal. For $\Delta=2$ we need $(p,cp^2n)$-bijumbledness, since Alon~\cite{AlonConstr} constructed a $(p,Cp^2n)$-bijumbled graph without triangles. It is a believable conjecture (see for example Conlon, Fox and Zhao~\cite{CFZ}) that there are $(p,Cp^{\Delta}n)$-bijumbled graphs without $K_{\Delta+1}$ for every $\Delta$, in which case the same requirement would be necessary for a blow-up lemma. It is possible, however, (conjectured in~\cite{CFZ}, but the contrary is conjectured by Kohayakawa, R\"odl, Schacht and Skokan~\cite{KRSS}) that copies of $K_{\Delta+1}$ cannot be guaranteed in regular subgraphs of $(p,\beta)$-bijumbled graphs at this point, but instead that $\beta\le cp^{\Delta+1}n$ is required (it is proved in~\cite{CFZ} that at this point one can guarantee copies of $K_{\Delta+1}$).
   
   Note that Lemma~\ref{lem:psr_main} does not permit linearly many image restrictions. The reason for this is that our method for queue embedding requires that underlying restriction sets tend not to cluster, which is encapsulated in~\eqref{eq:randmatch:sumVqLNS} of Claim~\ref{cl:psr:lnssum}. But we can only establish the inequality~\eqref{eq:randmatch:sumVqLNS} by excluding image restricted vertices (and in fact the inequality would not necessarily hold if linearly many image restricted vertices were included). We can only image restrict about a $p^\Delta$-fraction of vertices in each cluster, as these are too few to cause problems with the queue embedding. In many applications this is not a problem (see for example~\cite{ABET}), but it could well cause a problem for some applications. We believe that it is possible to modify the proof strategy substantially in order to have some control over linearly many vertices of each $X_i$. The modification we would make is the following. We would permit the user of the blow-up lemma to specify $\rho|X_i|$ `pre-embedded' vertices in each part $X_i$ which are to be embedded first (before even the neighbours of buffer vertices). The user is then permitted to embed these vertices sequentially, subject to four conditions. First, the result must be a good partial embedding $\psi$. Second, each vertex must be embedded to a uniform random vertex from a set of size at least $\tfrac{1}{10}\mu\zeta(dp)^{\Delta-1}n/(\kappa r_1)$. Third, for each $jk\in R'$ and $v\in V_j$, at most $\tfrac{1}{100}\deg_G(v;V_k)$ neighbours of $v$ in $V_k$ may be in $\im(\psi)$. Fourth, the total number of vertices in $X_i$ with pre-embedded neighbours is at most $\rho|X_i|$. We would then follow the proof strategy of Lemma~\ref{lem:psr_main} to embed the remainder of $H$ into $G$, treating the neighbours of pre-embedded vertices as image restricted. We remark that this does not automatically resolve the problem with Claim~\ref{cl:psr:lnssum}, since the partition of $V(G)$ considered at this point is \emph{not} the same as the partition the user of the blow-up lemma supplies. But it is easy to modify Lemma~\ref{lem:matchreduce} (the good partitions lemma, which generates the partition considered by Claim~\ref{cl:psr:lnssum}) to show that all the sets under consideration are with high probability evenly distributed by the random equipartition, and then the proof of Claim~\ref{cl:psr:lnssum} does go through. Checking the full details of this approach, and for that matter using the resulting blow-up lemma, seems likely to be non-trivial, but it could potentially allow for stronger theorems.
   
  \subsection{The blow-up lemma to embed degenerate graphs}\label{sec:concl:degen}
   The definition of $(D,p,m)$-boundedness is rather complicated, but it allows us to prove as flexible a statement as we could, taking into account the structure of $H$ in order to work with lower probabilities. Observe that a $d$-degenerate $n$-vertex graph with bounded maximum degree, which Lemma~\ref{lem:degen} can handle with $p\approx n^{-1/(2d+1)}$, can contain almost $dn$ edges. This is comparable to a $2d$-regular graph for which Lemma~\ref{lem:rg_image} would require $p\approx n^{-1/2d}$. So even without saying anything about the structure of $H$ beyond its degeneracy, the result is almost as powerful as Lemma~\ref{lem:rg_image}.
   
    In the event that we only need an almost-spanning embedding, we can take the potential buffer vertices in each part to be isolated, and hence $D=2d$. We can then embed $d$-degenerate graphs with $p\approx n^{-1/(2d)}$, matching the performance of Lemma~\ref{lem:rg_image}. Finally, if $H$ is an $F$-factor we can take $D=d+3$ or better (depending on the structure of $F$), in which case the performance of Lemma~\ref{lem:degen} substantially improves, working with $p\approx n^{-1/(d+3)}$.
   
    In fact, we can improve
    Lemma~\ref{lem:degen} for $F$-factors. As stated in the proof of
    Lemma~\ref{lem:degenrga} (the degenerate RGA lemma) we require~\ref{ord:NtX} and
    the condition $\pitau(x)\le D_x-1$ for vertices $x$ adjacent to potential buffer vertices
    within~\ref{ord:Dx} only in order to prove~\ref{degenrga:nobad}. When
    embedding an $F$-factor (provided $R'$ is suitable, for example
    $R'=K_{|F|}$), we do not really need~\ref{degenrga:nobad},
    as~\ref{degenrga:main} shows that only a few vertices in each
    cluster of $G$ can fail~\ref{degenrga:nobad}, and we can use an
    argument similar to Lemma~\ref{lem:deletebad} to deal with
    them. This allows us to reduce the required $D$ by one, compared to the requirement of Lemma~\ref{lem:degen}.
   
   In particular, for each $s\le t$ one can prove a blow-up lemma which embeds a $K_{s,t}$-factor when $p\ge C\big(\tfrac{\log n}{n}\big)^{1/s}$. This is almost optimal in terms of $p$, since the $2$-density of $K_{s,t}$ is $\frac{st-1}{s+t-2}$, which approaches $s$ as $t$ becomes large; when $p$ is below $n^{-(s+t-2)/(st-1)}$, one can delete a very small fraction of the edges of $G(n,p)$ to destroy all copies of $K_{s,t}$.
   
 \section{Algorithmic embedding}\label{sec:alg}
  The proofs of our blow-up lemmas can be changed slightly to give polynomial-time randomised algorithms which with high probability construct the embeddings we prove exist. It is quite tedious to check the details, but we provide a sketch of this for the interested reader.
  
  The main change which needs to be made concerns the certification of sparse-regular pairs. The RI property (see Section~\ref{sec:pseudo}), which we require in all of our blow-up lemmas, guarantees that typical vertex neighbourhoods inherit regularity, but for an algorithm we need to be able to identify the set of vertices whose neighbourhoods inherit regularity in polynomial time.
  
   Alon, Duke, Lefmann, R\"odl and Yuster~\cite{ADLRY} showed that, in dense graphs, determining if a given bipartite graph is $\eps$-fully-regular is co-NP-complete, but that there is a polynomial-time algorithm which either certifies $\eps$-full-regularity, or returns a witness to the failure of $\eps'$-full-regularity, for some $\eps'$ which may be much smaller than $\eps$ but does not depend on the number of vertices in the regular pair. For sparse graphs, a corresponding polynomial-time certification algorithm was given by Alon, Coja-Oghlan, H\`an, Kang, R\"odl and Schacht~\cite{ACOHKRS} which either certifies $(\eps,d,p)$-full-regularity or returns a witness to the failure of $(\eps',d,p)$-full-regularity.
  
  Unfortunately, we do not know of any such algorithm in the literature for lower-regular pairs, so we now sketch a certification algorithm, using the results of~\cite{ACOHKRS}, for lower-regularity which works in subgraphs of random graphs. Given as input $\eps$, $d$, $p$ and a bipartite graph, which must be bounded (the definition is in~\cite{ACOHKRS}, but the reader does not need to know it), we apply the algorithmic sparse regularity lemma of~\cite{ACOHKRS} with regularity parameter $\tfrac1{100}\eps^3$. This returns a partition of each side of the bipartite graph into approximately equal numbers of parts, which approximately equipartition each side, even if the bipartite graph itself is very unbalanced (we may want, for example, to know whether a bipartite graph with parts of size $pn$ and $p^2n$ respectively is lower-regular). We choose $0<\eps'<\tfrac{1}{100}\eps^3$ such that any given part of the partition contains at least an $\eps'$-fraction of its side. Now if any pair of parts in this partition has density less than $(d-\eps')p$, it is a witness to a failure of $(\eps',d,p)$-lower-regularity. If not, we claim the bipartite graph is $(\eps,d,p)$-lower-regular; the choice of regularity parameter in the use of sparse regularity ensures that there are too few irregular pairs to seriously affect densities between large sets. We note that the requirement of boundedness is needed for the algorithmic sparse regularity lemma, and that this boundedness holds a.a.s.\ in subgraphs of typical random graphs (and is implied by the $\NS$ property which we require in any case). In contrast to the certification algorithm for sparse full-regularity, the dependency of $\eps'$ on $\eps$ here is very poor: there is a tower-type relationship, which appears iterated in the constant dependencies of the algorithmic versions of Lemmas~\ref{lem:rg_image} and~\ref{lem:degen}.
  
  For either random or bijumbled ambient graphs $\Gamma$, given a certification algorithm and inheritance lemmas, one can prove a variant of the $\RI$ property in which not only do typical vertex neighbourhoods inherit (either version of) regularity, but they do so certifiably. We follow the proofs more or less as in Section~\ref{sec:pseudo}, except that at each step, where we need certifiable $(\eps,d,p)$-regularity for some $\eps$, we obtain $\eps'$ from the certification algorithm and then let $\eps''$ be returned by our inheritance lemmas for input $\eps'$. Now a typical vertex neighbourhood is $(\eps',d,p)$-regular, so that the certification algorithm will certify it to be $(\eps,d,p)$-regular as desired.
 
  We now sketch how one can use this to obtain algorithmic versions of our blow-up lemmas.
  
  It is necessary to check that the algorithm implicit in the proof of the
  lemma obtaining good partitions of~$H$ and~$G$ (Lemma~\ref{lem:matchreduce}) is a randomised polynomial time algorithm. This follows since the Hajnal-Szemer\'edi theorem (Theorem~\ref{thm:HajSze}) has an algorithmic version~\cite{AlgHajSze}, and since the proof of Lemma~\ref{lem:nicepartition} can then easily be made constructive. Furthermore, a failure of the randomised partitioning to produce a good $G$-partition can be detected in polynomial time.
  
  It is further necessary to check that each of our RGA algorithms (Algorithms~\ref{alg:RGA},~\ref{alg:RGA:psr} and~\ref{alg:RGA:deg}) can be carried out in polynomial time. This amounts to checking that the various sets that appear in these algorithms can be constructed in polynomial time. For most of these sets, this is obviously possible. However, in order to construct the bad sets $B_t(x)$ we need the certifiable regularity inheritance discussed above.
  
  In proving Lemma~\ref{lem:rg_image} we give (implicitly) an algorithm for embedding queue vertices. Again, to run this algorithm we need to construct the bad sets $B(x)$ and this requires certifiable regularity inheritance. We also use the fact that the bipartite matching problem can be solved in polynomial time (see Kuhn~\cite{KuhnMatch}). For fixing buffer defects in polynomial time, Lemma~\ref{lem:identifybad} gives sets $P_i$ and $D_i$, and the proof constructs these sets in polynomial time, while our algorithm for fixing buffer defects, Algorithm~\ref{alg:delbad}, which uses the $P_i$ and $D_i$,  is trivially polynomial time.
  
  Finally, each of our blow-up lemmas is completed by embedding the buffer vertices. This amounts to a bipartite matching problem, and can be solved in polynomial time.
  
  It seems reasonable to believe that it is possible to derandomise our RGA
  algorithms and good partitions algorithm, which would yield polynomial time algorithms for constructing each of the claimed embeddings. Certainly Koml\'os, S\'ark\"ozy and Szemer\'edi~\cite{KSS_blalg} were able to derandomise their original (RGA-based) proof of the dense blow-up lemma. However, we did not attempt to check whether their methods suffice in our case.

 \section{Directed graphs}\label{sec:dir}
  Although our blow-up lemmas as written apply to undirected graphs, we can also apply them to subdigraphs of random directed graphs (or bijumbled directed graphs). The random directed graph $\overrightarrow{G}(n,p)$ is obtained by choosing, for each ordered pair of vertices in a vertex set of size $n$, to put an arc independently with probability $p$. We give for illustration the directed statement corresponding to Lemma~\ref{lem:rg_image}. In order to state this, we define the \emph{undirection} of a digraph $\overrightarrow{G}$ to be the graph $G$ with $uv\in E(G)$ if and only if either $\overrightarrow{uv}$ or $\overrightarrow{vu}$ is an arc of $\overrightarrow{G}$. The terms which we defined for undirected graphs (such as $R$-partition, buffer, and so on) are taken as applying to the undirections of the digraphs considered. The exception is that when we talk about an $\overrightarrow{R}$-partition of $\overrightarrow{G}$ or $\overrightarrow{H}$ we require, in addition to the conditions for the undirections of digraphs, that all arcs of $\overrightarrow{G}$ or $\overrightarrow{H}$ go in the direction specified by the arcs of $\overrightarrow{R}$. In the following lemma we work with $\overrightarrow{G}(n,q)$ but use $p=2q-q^2$ as the parameter for regularity. The reason for this is that the undirection of $\overrightarrow{G}(n,q)$ is $G(n,p)$.
  
  \begin{lemma}[Blow-up lemma for random directed graphs]\label{lem:diblowup}
	  For all $\Delta$, $\DeltaRp$, $\Delta_J$, $\alpha,\zeta, d>0$, $\kappa>1$
	  there exist $\eps,\rho>0$ such that for all $r_1$ there is a $C$ such that for
	  $q>C(\log n/n)^{1/\Delta}$ the following holds. Let $p=2q-q^2$. The random directed graph $\overrightarrow{\Gamma}=\overrightarrow{G}(n,q)$ asymptotically
	  almost surely satisfies the following.
	   
	  Let $\overrightarrow{R}$ be a digraph on $r\le r_1$ vertices without cycles of length $2$ and let $R'$ be a subgraph of the undirection of $\overrightarrow{R}$ with $\Delta(R')\leq \DeltaRp$.
	  Let $\overrightarrow{H}$ and $\overrightarrow{G}\subset \overrightarrow{\Gamma}$ be digraphs given with $\kappa$-balanced,
	  size-compatible vertex partitions 
	  $\cX=\{X_i\}_{i\in[r]}$ and $\cV=\{V_i\}_{i\in[r]}$ with parts of size at
	  least $m\ge n/(\kappa r_1)$. 
	  Let $\cI=\{I_x\}_{x\in V(\overrightarrow{H})}$ be a family of image restrictions, and
	  $\cJ=\{J_x\}_{x\in  V(\overrightarrow{H})}$  be a family of restricting vertices.
	  Suppose that
	  \begin{enumerate}[label=\itmarab{BUL}]
	  \item The undirection of $\overrightarrow{H}$ has maximum degree at most $\Delta$, $(\overrightarrow{H},\cX)$ is an $\overrightarrow{R}$-partition,
	    and $\tcX=\{\tX_i\}_{i\in[r]}$ is an
	    $(\alpha,R')$-buffer for $\overrightarrow{H}$,
	\item $(\overrightarrow{G},\cV)$ is an $(\eps,d,p)$-regular $\overrightarrow{R}$-partition, which
    is $(\eps,d,p)$-super-regular on~$R'$, 
    has one-sided inheritance on~$R'$,
    and two-sided inheritance on~$R'$ for $\tcX$, 
	  \item $\cI$ and $\cJ$ form
	    a $(\rho,\zeta,\Delta,\Delta_J)$-restriction pair.
	  \end{enumerate}
	  Then there is an embedding $\psi\colon V(\overrightarrow{H})\to V(\overrightarrow{G})$ such that $\psi(x)\in
	  I_x$ for each $x\in \overrightarrow{H}$.
	\end{lemma}
	
	This lemma is a corollary of Lemma~\ref{lem:rg_image}. We simply work with the undirections of the given digraphs, which satisfy the conditions of Lemma~\ref{lem:rg_image}, and observe that an embedding $\psi$ of the undirection of $\overrightarrow{H}$ into the undirection of $\overrightarrow{G}$ such that $\psi(x)\in V_i$ for each $x\in X_i$ is by definition of an $\overrightarrow{R}$-partition automatically an embedding of $\overrightarrow{H}$ into $\overrightarrow{G}$.
	
	With rather more work, we believe we could allow $\overrightarrow{R}$ to contain $2$-cycles. More generally, we could allow $G$ to be coloured from a palette of at most $\kappa$ colours, define $R$ to be the multicoloured graph (with edges permitted to have several colours) corresponding to relatively dense sparse-regular pairs in the given colour, and supply a coloured graph $H$ with a coloured graph homomorphism to $R$, which we require to be embedded with edges going to edges of $G$ with the correct colour. We believe that such coloured versions of all three of our blow-up lemmas can be proved, following the strategies given in this paper. However, to do so requires appropriate modifications to several definitions and recalculation of various parameters. We see no reason why this should cause difficulty, but we did not check the details.

 \section{Hypergraphs}\label{sec:hyp}
  It seems likely that the techniques developed in this paper will be very helpful for proving a blow-up lemma for uniform hypergraphs which works relative to sparse random or pseudorandom (appropriately defined) hypergraphs. In the dense case, Keevash~\cite{HypBlow} proved a hypergraph blow-up lemma. However, it has a serious limitation, in that it only allows for image restriction of single vertices. In many applications one needs image restrictions of vertex tuples of size up to $k-1$ in $k$-uniform hypergraphs.

 \section{Open problems}\label{sec:open}
  Beyond the question of improving on our main results (as discussed in Section~\ref{sec:opt}), we would like to pose the following problems.  
  
  \begin{problem} Is it true that for each $r,\Delta\ge 2$ and $n$ there exists a $K_{\Delta+1,\Delta+1}$-free graph $G$ which is $r$-partition universal for $\cH(n,\Delta)$, with $v(G)=O(n)$?
  \end{problem}
  We have seen that the answer is `yes' if $\Delta+1$ is replaced by $2\Delta$ (Theorem~\ref{thm:folkmantype}), and trivially the answer is `no' if $\Delta+1$ is replaced by $\Delta$. It is not clear that $G$ should be constructed randomly (which is how Theorem~\ref{thm:folkmantype} is proved) in order to obtain an affirmative answer to this problem. But it is also not clear how to construct sparse graphs with strong Ramsey properties any other way.
  
  \smallskip
  
    Before trying to improve the random graph blow-up lemmas in this paper, Lemmas~\ref{lem:rg_image} and~\ref{lem:degen}, we should at least know how to embed large subgraphs in the random graph itself in a robust way. 
  
  \begin{problem}
   For what $p$ is $G(n,p)$ typically $\cH(n,\Delta)$-universal, or typically $\cH(n,d,\Delta)$-universal? Does the answer change substantially if $G(n,p)$ is replaced with $G(Cn,p)$ for $C$ large, or for $C$ slightly larger than one?
  \end{problem}
So far, all the methods used to attack problems of this type construct a single embedding of some $H$ into $G(n,p)$ step by step, and rely on there being many ways to continue the embedding after each step.  
  In this paper, each step is a single vertex embedding. In order for there to be many ways to embed a vertex with $\Delta$ embedded neighbours, we certainly need $p^\Delta n\gg 1$, and Lemma~\ref{lem:rg_image} gets within a log-factor of this bound. Recent papers, in particular~\cite{CFNS15} and~\cite{FNuniv}, improve slightly on this bound by embedding more than one vertex at a time. But it seems reasonable to conjecture that if $p\ge Cn^{-2/(\Delta+1)}\log^{1/\binom{\Delta+1}{2}}$ then $G(n,p)$ should typically be $\cH(n,\Delta)$-universal. For large $\Delta$, this is much sparser than anything we can currently handle, and it seems likely that being more clever with the step-by-step embedding methods currently used will not prove this conjecture. Assuming the conjecture is correct, it can make at most a log-factor difference if $G(n,p)$ is replaced by $G(Cn,p)$ for $C$ slightly larger than $1$, or for $C$ large. However, it should be significantly easier to prove universality in the latter two cases.
  
  For $\cH(n,d,\Delta)$-universality of $G(Cn,p)$ with $C$ slightly
  larger than $1$, an almost optimal bound on $p$ is obtained
  in~\cite[Theorem~3.6]{nenadov16:_ramsey}. This problem is easier: it
  is necessary for $p^d n$ to be large in order for a vertex-by-vertex
  embedding method to work, but it is easy to see that this is also a
  lower bound for $\cH(n,d,\Delta)$-universality. It would still be
  nice to remove the extra log-factors
  from~~\cite[Theorem~3.6]{nenadov16:_ramsey}.  What seems much harder
  is to prove a similar spanning universality result.
  
  \begin{problem}
   For what $\beta$ are $(p,\beta)$-bijumbled $n$-vertex graphs $G$ with minimum degree $\tfrac12 p n$ always $\cH(n,\Delta)$-universal? Does the answer change if we allow $G$ to have $Cn$ vertices for $C$ large, or for $C$ slightly larger than one?
  \end{problem}
  This problem is a first step towards improving Lemma~\ref{lem:psr_main}. It would also be interesting to replace bijumbledness with one of the other standard notions of pseudorandomness. However, in this setting we do not really know how large $\beta$ can be: we do not have good constructions of graphs which are strongly bijumbled but are not $\cH(n,\Delta)$-universal, except for Alon's construction~\cite{AlonConstr} for $\Delta=2$.
  
  \begin{problem}
   Do there exist graphs $G$ which are $r$-partition universal for $\cH(n,\Delta)$ with only $O(n^{2-2/\Delta})$ edges?
  \end{problem}
Alon and Capalbo~\cite{AlCap} showed that for $r=1$ (i.e.\ just universality) the answer is yes and that this is best possible. This is the best known lower bound for larger $r$, and it would be interesting to know if it is correct. Here a simple random construction (which is used in~\cite{KRSS} to obtain the best known upper bound on the number of edges required) cannot work: a random graph with this many edges will not contain a $K_{\Delta+1}$-factor at all, let alone have the Ramsey property for it.  
  
  \begin{problem}
   For what bias $b$ can Maker win the $\cH(n,\Delta)$-universality game on $K_n$, or on $K_{Cn}$ for $C$ large, or for $C$ slightly larger than one?
  \end{problem}
  Although there exist universal graphs with $n^{2-2/\Delta}$ edges (Alon and Capalbo~\cite{AlCap}), Maker certainly cannot make them with a bias $\Omega\big(n^{2/\Delta}\big)$, since Maker requires $b=O\big(n^{2/(\Delta+2)}\big)$ in order to make just one copy of $K_{\Delta+1}$.

\bibliographystyle{amsplain}  
\bibliography{extracted-revised}

\providecommand{\bysame}{\leavevmode\hbox to3em{\hrulefill}\thinspace}
\providecommand{\MR}{\relax\ifhmode\unskip\space\fi MR }
\providecommand{\MRhref}[2]{%
  \href{http://www.ams.org/mathscinet-getitem?mr=#1}{#2}
}
\providecommand{\href}[2]{#2}
\begin{thebibliography}{10}

\bibitem{AlBo}
Peter Allen and Julia B{\"o}ttcher, \emph{Partition universality for graphs of
  bounded degeneracy and degree}.

\bibitem{ABEST}
Peter Allen, Julia B\"{o}ttcher, Julia Ehrenm\"{u}ller, Jakob Schnitzer, and
  Anusch Taraz, \emph{A spanning bandwidth theorem in random graphs}, Combin.
  Probab. Comput. \textbf{31} (2022), no.~4, 598--628. \MR{4439774}

\bibitem{ABET}
Peter Allen, Julia B\"{o}ttcher, Julia Ehrenm\"{u}ller, and Anusch Taraz,
  \emph{The bandwidth theorem in sparse graphs}, Adv. Comb. (2020), Paper No.
  6, 60. \MR{4125347}

\bibitem{CTdense}
Peter Allen, Julia B\"{o}ttcher, Simon Griffiths, Yoshiharu Kohayakawa, and
  Robert Morris, \emph{Chromatic thresholds in dense random graphs}, Random
  Structures Algorithms \textbf{51} (2017), no.~2, 185--214. \MR{3683361}

\bibitem{ABHKP14}
Peter Allen, Julia B\"{o}ttcher, Hi\d{\^{e}}p H\`an, Yoshiharu Kohayakawa, and
  Yury Person, \emph{Powers of {H}amilton cycles in pseudorandom graphs},
  Combinatorica \textbf{37} (2017), no.~4, 573--616. \MR{3694704}

\bibitem{ABKNP}
Peter Allen, Julia B{\"o}ttcher, Yoshiharu Kohayakawa, Humberto Naves, and Yury
  Person, \emph{Making spanning graphs}.

\bibitem{ABKR}
Peter Allen, Julia B\"{o}ttcher, Yoshiharu Kohayakawa, and Barnaby Roberts,
  \emph{Triangle-free subgraphs of random graphs}, Combin. Probab. Comput.
  \textbf{27} (2018), no.~2, 141--161. \MR{3778197}

\bibitem{ABSS}
Peter Allen, Julia B\"{o}ttcher, Jozef Skokan, and Maya Stein, \emph{Regularity
  inheritance in pseudorandom graphs}, Random Structures Algorithms \textbf{56}
  (2020), no.~2, 306--338. \MR{4060348}

\bibitem{AlonConstr}
Noga Alon, \emph{Explicit {R}amsey graphs and orthonormal labelings}, Electron.
  J. Combin. \textbf{1} (1994), Research Paper 12, 8pp. \MR{1302331}

\bibitem{AlonBourgain}
Noga Alon and Jean Bourgain, \emph{Additive patterns in multiplicative
  subgroups}, Geom. Funct. Anal. \textbf{24} (2014), no.~3, 721--739.
  \MR{3213827}

\bibitem{AlCap}
Noga Alon and Michael Capalbo, \emph{Sparse universal graphs for bounded-degree
  graphs}, Random Structures Algorithms \textbf{31} (2007), no.~2, 123--133.
  \MR{2343715}

\bibitem{AC08}
\bysame, \emph{Optimal universal graphs with deterministic embedding},
  Proceedings of the {N}ineteenth {A}nnual {ACM}-{SIAM} {S}ymposium on
  {D}iscrete {A}lgorithms, ACM, New York, 2008, pp.~373--378. \MR{2485323}

\bibitem{millenium}
Noga Alon, Michael Capalbo, Yoshiharu Kohayakawa, Vojt\v{e}ch R\"{o}dl, Andrzej
  Ruci\'{n}ski, and Endre Szemer\'{e}di, \emph{Universality and tolerance
  (extended abstract)}, 41st {A}nnual {S}ymposium on {F}oundations of
  {C}omputer {S}cience ({R}edondo {B}each, {CA}, 2000), IEEE Comput. Soc.
  Press, Los Alamitos, CA, 2000, pp.~14--21. \MR{1931800}

\bibitem{ACOHKRS}
Noga Alon, Amin Coja-Oghlan, Hi\^{e}p H\`an, Mihyun Kang, Vojt\v{e}ch R\"{o}dl,
  and Mathias Schacht, \emph{Quasi-randomness and algorithmic regularity for
  graphs with general degree distributions}, SIAM J. Comput. \textbf{39}
  (2010), no.~6, 2336--2362. \MR{2644348}

\bibitem{ADLRY}
Noga Alon, R.~A. Duke, H.~Lefmann, V.~R\"{o}dl, and R.~Yuster, \emph{The
  algorithmic aspects of the regularity lemma}, J. Algorithms \textbf{16}
  (1994), no.~1, 80--109. \MR{1251840}

\bibitem{AlFu}
Noga Alon and Zolt\'{a}n F\"{u}redi, \emph{Spanning subgraphs of random
  graphs}, Graphs Combin. \textbf{8} (1992), no.~1, 91--94. \MR{1157513}

\bibitem{AlonKriSpeSza}
Noga Alon, Michael Krivelevich, Joel Spencer, and Tibor Szab\'{o},
  \emph{Discrepancy games}, Electron. J. Combin. \textbf{12} (2005), Research
  Paper 51, 9pp. \MR{2176527}

\bibitem{AS00}
Noga Alon and Joel~H. Spencer, \emph{The probabilistic method}, third ed.,
  Wiley-Interscience Series in Discrete Mathematics and Optimization, John
  Wiley \& Sons, Inc., Hoboken, NJ, 2008, With an appendix on the life and work
  of Paul Erd\H{o}s. \MR{2437651}

\bibitem{BalLeeSam}
J\'{o}zsef Balogh, Choongbum Lee, and Wojciech Samotij, \emph{Corr\'{a}di and
  {H}ajnal's theorem for sparse random graphs}, Combin. Probab. Comput.
  \textbf{21} (2012), no.~1-2, 23--55. \MR{2900047}

\bibitem{BalMorSam}
J\'{o}zsef Balogh, Robert Morris, and Wojciech Samotij, \emph{Independent sets
  in hypergraphs}, J. Amer. Math. Soc. \textbf{28} (2015), no.~3, 669--709.
  \MR{3327533}

\bibitem{BCO16}
Victor Bapst and Amin Coja-Oghlan, \emph{Harnessing the {B}ethe free energy},
  Random Structures Algorithms \textbf{49} (2016), no.~4, 694--741.
  \MR{3570985}

\bibitem{Beck08}
J\'{o}zsef Beck, \emph{Combinatorial games}, Encyclopedia of Mathematics and
  its Applications, vol. 114, Cambridge University Press, Cambridge, 2008,
  Tic-tac-toe theory. \MR{2402857}

\bibitem{BLgame}
Ma{\l}gorzata Bednarska and Tomasz {\L}uczak, \emph{Biased positional games for
  which random strategies are nearly optimal}, Combinatorica \textbf{20}
  (2000), no.~4, 477--488. \MR{1804821}

\bibitem{Bol01book}
B\'{e}la Bollob\'{a}s, \emph{Random graphs}, second ed., Cambridge Studies in
  Advanced Mathematics, vol.~73, Cambridge University Press, Cambridge, 2001.
  \MR{1864966}

\bibitem{BKT}
Julia B\"{o}ttcher, Yoshiharu Kohayakawa, and Anusch Taraz, \emph{Almost
  spanning subgraphs of random graphs after adversarial edge removal}, Combin.
  Probab. Comput. \textbf{22} (2013), no.~5, 639--683. \MR{3094477}

\bibitem{BoeKohTarWue}
Julia B\"{o}ttcher, Yoshiharu Kohayakawa, Anusch Taraz, and Andreas W\"{u}rfl,
  \emph{An extension of the blow-up lemma to arrangeable graphs}, SIAM J.
  Discrete Math. \textbf{29} (2015), no.~2, 962--1001. \MR{3353133}

\bibitem{BoePruTarWur}
Julia B\"{o}ttcher, Klaas~P. Pruessmann, Anusch Taraz, and Andreas W\"{u}rfl,
  \emph{Bandwidth, expansion, treewidth, separators and universality for
  bounded-degree graphs}, European J. Combin. \textbf{31} (2010), no.~5,
  1217--1227. \MR{2644412}

\bibitem{BST}
Julia B\"{o}ttcher, Mathias Schacht, and Anusch Taraz, \emph{Proof of the
  bandwidth conjecture of {B}ollob\'{a}s and {K}oml\'{o}s}, Math. Ann.
  \textbf{343} (2009), no.~1, 175--205. \MR{2448444}

\bibitem{COPS15}
Amin Coja-Oghlan, Will Perkins, and Kathrin Skubch, \emph{Limits of discrete
  distributions and {G}ibbs measures on random graphs}, European J. Combin.
  \textbf{66} (2017), 37--59. \MR{3692136}

\bibitem{CFNS15}
David Conlon, Asaf Ferber, Rajko Nenadov, and Nemanja \v{S}kori\'{c},
  \emph{Almost-spanning universality in random graphs}, Random Structures
  Algorithms \textbf{50} (2017), no.~3, 380--393. \MR{3632416}

\bibitem{CFZ}
David Conlon, Jacob Fox, and Yufei Zhao, \emph{Extremal results in sparse
  pseudorandom graphs}, Adv. Math. \textbf{256} (2014), 206--290. \MR{3177293}

\bibitem{ConGowSamSch}
David Conlon, W.~T. Gowers, W.~Samotij, and M.~Schacht, \emph{On the {K}\l {R}
  conjecture in random graphs}, Israel J. Math. \textbf{203} (2014), no.~1,
  535--580. \MR{3273450}

\bibitem{ConNenTruj}
David Conlon, Rajko Nenadov, and Milo\v{s} Truji\'{c}, \emph{The size-{R}amsey
  number of cubic graphs}, Bull. Lond. Math. Soc. \textbf{54} (2022), no.~6,
  2135--2150. \MR{4528614}

\bibitem{dellamonica2008}
Domingos Dellamonica, Jr., Yoshiharu Kohayakawa, Martin Marciniszyn, and
  Angelika Steger, \emph{On the resilience of long cycles in random graphs},
  Electron. J. Combin. \textbf{15} (2008), no.~1, Research Paper 32, 26pp.
  \MR{2383452}

\bibitem{DKRR}
Domingos Dellamonica, Jr., Yoshiharu Kohayakawa, Vojt\v{e}ch R\"{o}dl, and
  Andrzej Ruci\'{n}ski, \emph{An improved upper bound on the density of
  universal random graphs}, Random Structures Algorithms \textbf{46} (2015),
  no.~2, 274--299. \MR{3302898}

\bibitem{DraPet}
Nemanja Dragani{\'c} and Kalina Petrova, \emph{Size-{R}amsey numbers of graphs
  with maximum degree three}.

\bibitem{ErdSze35}
Paul Erd{\H o}s and George Szekeres, \emph{A combinatorial problem in
  geometry}, Compositio Math. \textbf{2} (1935), 463--470. \MR{1556929}

\bibitem{FKN}
Asaf Ferber, Michael Krivelevich, and Humberto Naves, \emph{Generating random
  graphs in biased maker-breaker games}, Random Structures Algorithms
  \textbf{47} (2015), no.~4, 615--634. \MR{3418908}

\bibitem{FNuniv}
Asaf Ferber and Rajko Nenadov, \emph{Spanning universality in random graphs},
  Random Structures Algorithms \textbf{53} (2018), no.~4, 604--637.
  \MR{3875277}

\bibitem{FNP}
Asaf Ferber, Rajko Nenadov, and Ueli Peter, \emph{Universality of random graphs
  and rainbow embedding}, Random Structures Algorithms \textbf{48} (2016),
  no.~3, 546--564. \MR{3481273}

\bibitem{Folkman}
Jon Folkman, \emph{Graphs with monochromatic complete subgraphs in every edge
  coloring}, SIAM J. Appl. Math. \textbf{18} (1970), 19--24. \MR{268080}

\bibitem{GKRS}
Stefanie Gerke, Yoshiharu Kohayakawa, Vojt\v{e}ch R\"{o}dl, and Angelika
  Steger, \emph{Small subsets inherit sparse {$\varepsilon$}-regularity}, J.
  Combin. Theory Ser. B \textbf{97} (2007), no.~1, 34--56. \MR{2278123}

\bibitem{GreenTao}
Ben Green and Terence Tao, \emph{The primes contain arbitrarily long arithmetic
  progressions}, Ann. of Math. (2) \textbf{167} (2008), no.~2, 481--547.
  \MR{2415379}

\bibitem{HajSze70}
Andr\'as Hajnal and Endre Szemer\'{e}di, \emph{Proof of a conjecture of {P}.
  {E}rd{\H o}s}, Combinatorial theory and its applications, {I}-{III} ({P}roc.
  {C}olloq., {B}alatonf\"{u}red, 1969), Colloq. Math. Soc. J\'{a}nos Bolyai,
  vol.~4, North-Holland, Amsterdam-London, 1970, pp.~601--623. \MR{297607}

\bibitem{HLS}
Hao Huang, Choongbum Lee, and Benny Sudakov, \emph{Bandwidth theorem for random
  graphs}, J. Combin. Theory Ser. B \textbf{102} (2012), no.~1, 14--37.
  \MR{2871764}

\bibitem{JLRbook}
Svante Janson, Tomasz {\L}uczak, and Andrzej Ruci{\'n}ski, \emph{Random
  graphs}, Wiley-Interscience Series in Discrete Mathematics and Optimization,
  Wiley-Interscience, New York, 2000. \MR{1782847}

\bibitem{HypBlow}
Peter Keevash, \emph{A hypergraph blow-up lemma}, Random Structures Algorithms
  \textbf{39} (2011), no.~3, 275--376. \MR{2816936}

\bibitem{AlgHajSze}
Henry~A. Kierstead, Alexandr~V. Kostochka, Marcelo Mydlarz, and Endre
  Szemer\'{e}di, \emph{A fast algorithm for equitable coloring}, Combinatorica
  \textbf{30} (2010), no.~2, 217--224. \MR{2676836}

\bibitem{KimLee}
Jeong~Han Kim and Sang~June Lee, \emph{Universality of random graphs for graphs
  of maximum degree two}, SIAM J. Discrete Math. \textbf{28} (2014), no.~3,
  1467--1478. \MR{3259784}

\bibitem{Kohayakawa97Szemeredi}
Yoshiharu Kohayakawa, \emph{Szemer\'{e}di's regularity lemma for sparse
  graphs}, Foundations of computational mathematics ({R}io de {J}aneiro, 1997),
  Springer, Berlin, 1997, pp.~216--230. \MR{1661982}

\bibitem{KLRfour}
Yoshiharu Kohayakawa, Tomasz {\L}uczak, and Vojt\v{e}ch R\"{o}dl, \emph{On
  {$K^4$}-free subgraphs of random graphs}, Combinatorica \textbf{17} (1997),
  no.~2, 173--213. \MR{1479298}

\bibitem{KohRod:pairs}
Yoshiharu Kohayakawa and Vojt\v{e}ch R\"{o}dl, \emph{Regular pairs in sparse
  random graphs. {I}}, Random Structures Algorithms \textbf{22} (2003), no.~4,
  359--434. \MR{1980964}

\bibitem{KRSS}
Yoshiharu Kohayakawa, Vojt\v{e}ch R\"{o}dl, Mathias Schacht, and Jozef Skokan,
  \emph{On the triangle removal lemma for subgraphs of sparse pseudorandom
  graphs}, An irregular mind, Bolyai Soc. Math. Stud., vol.~21, J\'{a}nos
  Bolyai Math. Soc., Budapest, 2010, pp.~359--404. \MR{2815608}

\bibitem{ChvRand}
Yoshiharu Kohayakawa, Vojt\v{e}ch R\"{o}dl, Mathias Schacht, and Endre
  Szemer\'{e}di, \emph{Sparse partition universal graphs for graphs of bounded
  degree}, Adv. Math. \textbf{226} (2011), no.~6, 5041--5065. \MR{2775894}

\bibitem{Komlos_survey}
J\'{a}nos Koml\'{o}s, \emph{The blow-up lemma}, vol.~8, 1999, Recent trends in
  combinatorics (M\'{a}trah\'{a}za, 1995), pp.~161--176. \MR{1684627}

\bibitem{KSS_bl}
J\'{a}nos Koml\'{o}s, G\'{a}bor~N. S\'{a}rk\"{o}zy, and Endre Szemer\'{e}di,
  \emph{Blow-up lemma}, Combinatorica \textbf{17} (1997), no.~1, 109--123.
  \MR{1466579}

\bibitem{KSS_blalg}
\bysame, \emph{An algorithmic version of the blow-up lemma}, Random Structures
  Algorithms \textbf{12} (1998), no.~3, 297--312. \MR{1635264}

\bibitem{KomShoSimSze}
J\'{a}nos Koml\'{o}s, Ali Shokoufandeh, Mikl\'{o}s Simonovits, and Endre
  Szemer\'{e}di, \emph{The regularity lemma and its applications in graph
  theory}, Theoretical aspects of computer science ({T}ehran, 2000), Lecture
  Notes in Comput. Sci., vol. 2292, Springer, Berlin, 2002, pp.~84--112.
  \MR{1966181}

\bibitem{KriPersComm}
Michael Krivelevich, personal communication.

\bibitem{KriHam}
\bysame, \emph{The critical bias for the {H}amiltonicity game is
  {$(1+o(1))n/\ln n$}}, J. Amer. Math. Soc. \textbf{24} (2011), no.~1,
  125--131. \MR{2726601}

\bibitem{KriSurvey}
\bysame, \emph{Positional games}, Proceedings of the {I}nternational {C}ongress
  of {M}athematicians---{S}eoul 2014. {V}ol. {IV}, Kyung Moon Sa, Seoul, 2014,
  pp.~355--379. \MR{3727616}

\bibitem{KriLeeSud}
Michael Krivelevich, Choongbum Lee, and Benny Sudakov, \emph{Resilient
  pancyclicity of random and pseudorandom graphs}, SIAM J. Discrete Math.
  \textbf{24} (2010), no.~1, 1--16. \MR{2600649}

\bibitem{KLS}
\bysame, \emph{Robust {H}amiltonicity of {D}irac graphs}, Trans. Amer. Math.
  Soc. \textbf{366} (2014), no.~6, 3095--3130. \MR{3180741}

\bibitem{KriSud}
Michael Krivelevich and Benny Sudakov, \emph{Sparse pseudo-random graphs are
  {H}amiltonian}, J. Graph Theory \textbf{42} (2003), no.~1, 17--33.
  \MR{1943104}

\bibitem{KriSudSza}
Michael Krivelevich, Benny Sudakov, and Tibor Szab\'{o}, \emph{Triangle factors
  in sparse pseudo-random graphs}, Combinatorica \textbf{24} (2004), no.~3,
  403--426. \MR{2085364}

\bibitem{KuhnMatch}
Harold~W. Kuhn, \emph{The {H}ungarian method for the assignment problem}, Naval
  Res. Logist. Quart. \textbf{2} (1955), 83--97. \MR{75510}

\bibitem{Man07}
Willem Mantel, \emph{Vraagstuk {XXVIII}}, Wiskundige Opgaven \textbf{10}
  (1907), 60--61 (Dutch).

\bibitem{MontTree}
Richard Montgomery, \emph{Spanning trees in random graphs}, Adv. Math.
  \textbf{356} (2019), 106793, 92. \MR{3998769}

\bibitem{nenadov16:_ramsey}
Rajko Nenadov, \emph{Ramsey and universality properties of random graphs},
  Ph.D. thesis, ETH Zurich, 2016.

\bibitem{Ram30}
Frank~P. Ramsey, \emph{On a {P}roblem of {F}ormal {L}ogic}, Proc. London Math.
  Soc. (2) \textbf{30} (1929), no.~4, 264--286. \MR{1576401}

\bibitem{RR99}
Vojtech R\"{o}dl and Andrzej Ruci\'{n}ski, \emph{Perfect matchings in
  {$\varepsilon$}-regular graphs and the blow-up lemma}, Combinatorica
  \textbf{19} (1999), no.~3, 437--452. \MR{1723256}

\bibitem{RodRucTar}
Vojtech R\"{o}dl, Andrzej Ruci\'{n}ski, and Anusch Taraz, \emph{Hypergraph
  packing and graph embedding}, Combin. Probab. Comput. \textbf{8} (1999),
  no.~4, 363--376, Random graphs and combinatorial structures (Oberwolfach,
  1997). \MR{1723649}

\bibitem{SaxTho}
David Saxton and Andrew Thomason, \emph{Hypergraph containers}, Invent. Math.
  \textbf{201} (2015), no.~3, 925--992. \MR{3385638}

\bibitem{Scott}
Alexander Scott, \emph{Szemer\'{e}di's regularity lemma for matrices and sparse
  graphs}, Combin. Probab. Comput. \textbf{20} (2011), no.~3, 455--466.
  \MR{2784637}

\bibitem{SudVu}
Benny Sudakov and V.~H. Vu, \emph{Local resilience of graphs}, Random
  Structures Algorithms \textbf{33} (2008), no.~4, 409--433. \MR{2462249}

\bibitem{Szemeredi_AP}
Endre Szemer\'{e}di, \emph{On sets of integers containing no {$k$} elements in
  arithmetic progression}, Acta Arith. \textbf{27} (1975), 199--245.
  \MR{369312}

\bibitem{Szemeredi_RL}
\bysame, \emph{Regular partitions of graphs}, Probl\`emes combinatoires et
  th\'{e}orie des graphes ({C}olloq. {I}nternat. {CNRS}, {U}niv. {O}rsay,
  {O}rsay, 1976), Colloq. Internat. CNRS, vol. 260, CNRS, Paris, 1978,
  pp.~399--401. \MR{540024}

\bibitem{Tho87}
Andrew Thomason, \emph{Pseudorandom graphs}, Random graphs '85 ({P}ozna\'{n},
  1985), North-Holland Math. Stud., vol. 144, North-Holland, Amsterdam, 1987,
  pp.~307--331. \MR{930498}

\bibitem{Tur41}
Paul Tur\'{a}n, \emph{Eine {E}xtremalaufgabe aus der {G}raphentheorie}, Mat.
  Fiz. Lapok \textbf{48} (1941), 436--452. \MR{18405}

\end{thebibliography}

\begin{dajauthors}
\begin{authorinfo}[pa]    
  Peter Allen\\
  Department of Mathematics\\
  London School of Economics\\
  Houghton Street\\
  London WC2A 2AE, U.K.\\
  p\imagedot{}d\imagedot{}allen\imageat{}lse\imagedot{}ac\imagedot{}uk
\end{authorinfo}
\begin{authorinfo}[jb]
  Julia B\"ottcher\\
  Department of Mathematics\\
  London School of Economics\\
  Houghton Street\\
  London WC2A 2AE, U.K.\\
  j\imagedot{}boettcher\imageat{}lse\imagedot{}ac\imagedot{}uk
\end{authorinfo}
\begin{authorinfo}[hiep]
  Hi\d{\^{e}}p H\`an\\
  Departamento de Matem\'atica y Ciencia de la Computaci\'on\\
  Universidad de Santiago de Chile\\
  Las Sophoras 173\\
  Estaci\'on Central, Santiago, Chile\\
  hiep\imagedot{}han\imageat{}usach\imagedot{}cl
\end{authorinfo}
\begin{authorinfo}[yk]
  Yoshiharu Kohayakawa\\
  Instituto de Matem\'atica e Estat\'{\i}stica\\
  Universidade de S\~ao Paulo\\
  Rua do Mat\~ao 1010\\
  05508--090~S\~ao Paulo, Brazil\\
  yoshi\imageat{}ime\imagedot{}usp\imagedot{}br
\end{authorinfo}
\begin{authorinfo}[yp]
  Yury Person\\
  Institut f\"ur Mathematik\\
  Technische Universit\"at Ilmenau\\
  98684 Ilmenau, Germany \\
  yury\imagedot{}person\imageat{}tu-ilmenau\imagedot{}de
\end{authorinfo}
\end{dajauthors}

\end{document}